\documentclass[11pt, twoside]{article}
\usepackage{cite}
\usepackage{amssymb}
\usepackage{mathrsfs}
\usepackage{amsmath}
\usepackage{amsthm}
\usepackage{amsfonts}
\usepackage{latexsym}
\usepackage{indentfirst}
\usepackage{color}
\usepackage{enumerate}
\usepackage[english]{babel}
\usepackage[colorlinks=true,
linkcolor=blue,
citecolor=red,
urlcolor=magenta, backref=page]{hyperref}

\usepackage{txfonts}
\usepackage{anysize}

\allowdisplaybreaks

\newtheorem{theorem}{Theorem}[section]
\newtheorem{lemma}[theorem]{Lemma}
\newtheorem{corollary}[theorem]{Corollary}
\newtheorem{proposition}[theorem]{Proposition}

\theoremstyle{definition}
\newtheorem{remark}[theorem]{Remark}
\newtheorem{definition}[theorem]{Definition}

\newtheorem{myenv}{Theorem}

\numberwithin{equation}{section}

\begin{document}

\title{\bf\Large Real-Variable Characterizations and
Their Applications of Anisotropic Besov Spaces with Matrix $\mathcal A_\infty$ Weights
\footnotetext{\hspace{-0.35cm} 2020 {\it Mathematics Subject Classification}.
Primary 46E35; Secondary 47A56, 42B25, 42B35, 35S05.\endgraf
{\it Key words and phrases.}
matrix weight,
anisotropic Besov space,
$\mathcal A_{p,\infty}$-dimension,
$\varphi$-transform,
almost diagonal operator,
molecule,
pseudo-differential operator.\endgraf
This project is partially supported by
the National Natural Science Foundation of China
(Grant Nos. 12431006, 12371093, and 12271041),
the Beijing Natural Science Foundation (Grant No. 1262011), and
the Fundamental Research Funds for the Central Universities (Grant No. 2253200028).}}
\date{}
\author{Fan Bu, Shuaijun Feng, Qingying Xue,
Dachun Yang\footnote{Corresponding author, E-mail:
\texttt{dcyang@bnu.edu.cn}/{\color{red} \today}/Newest version.}
\ and Wen Yuan}

\maketitle

\vspace{-0.8cm}

\begin{center}
\begin{minipage}{13cm}
{\small {\bf Abstract}\quad
Let $\alpha\in\mathbb{R}$, $p\in(0,\infty)$, and $q\in(0,\infty]$. In this article, we develop a theory of matrix-weighted anisotropic
Besov spaces associated with an expansive matrix $A$ and an
$\mathcal A_{p,\infty}$-matrix weight $W$.
We first introduce the homogeneous spaces
$\dot B_{p,q}^{\alpha}(A,W)$ and establish their
$\varphi$-transform characterization. Then we construct counterexamples
to show that
the assumption $W\in\mathcal A_{p,\infty}$ in this characterization
cannot be relaxed to $W\in\bigcup_{r\in(0,\infty)}\mathcal A_r$.
The same counterexamples also show that
this weaker condition $W\in\bigcup_{r\in(0,\infty)}\mathcal A_r$
is insufficient to ensure the well-definedness of
$\dot B_{p,q}^{\alpha}(A,W)$.
Next we characterize $\mathcal A_{p,\infty}$-matrix weights
via the rescaled maximal operator, which leads naturally to a new concept of the critical rescaling
index that quantitatively captures the self-improving behavior
of matrix weights.
In terms of this index, we obtain optimal boundedness
for almost diagonal operators on the associated sequence spaces
$\dot b_{p,q}^{\alpha}(A,W)$.
Based on these, we further establish the molecular characterization of
$\dot B_{p,q}^{\alpha}(A,W)$ and some sharp boundedness results for
pseudo-differential operators on these spaces.
}
\end{minipage}
\end{center}


\tableofcontents

\vspace{0.1cm}

\section{Introduction}

The investigation of Besov spaces originates from
Bern\v{s}te\v{\i}n \cite{b47} and Zygmund \cite{z45}.
Building upon their contributions,
Nikol'ski\u{\i} \cite{n51} introduced the family of function spaces
now denoted by $B^\alpha_{p,\infty}$.
By introducing the third index $q$,
Besov \cite{b59, b61} subsequently generalized this framework to the spaces $B^\alpha_{p,q}$.
The theory was further advanced by Peetre \cite{p73, p76},
who extended the ranges of admissible parameters $p$ and $q$ to include values less than one.
The theory of Besov spaces has since been widely applied to various branches of analysis;
for systematic studies of these spaces, we refer to
the monographs of Triebel \cite{t83,t92,t06,t13a,t14} and Sawano \cite{Sa18, Sa20}.
Over the past two decades, there has been significant interest in
extending these spaces to various new frameworks
(see, for instance, \cite{GKP21,GN16,whhy21,YY08,YY10,YY13,YZ10,YHMSY15,YHSY15,YHSY152}).

In particular, one notable extension involves studying
Euclidean spaces endowed with non-isotropic dilation structures.
This direction can be traced to Calder\'on and Torchinsky \cite{CT1,CT2},
who introduced and investigated the parabolic Hardy space
associated with specific one-parameter dilation groups.
Subsequently, Folland and Stein \cite{FS} extended this research
to Hardy spaces on a class of homogeneous groups.
A notable development was later achieved by Bownik \cite{Bownik}, who
introduced anisotropic Hardy spaces associated with a general group of dilations and established the real-variable theory of these spaces.
Since then, the theory of anisotropic Besov spaces
has attracted considerable attention
(see, for instance, \cite{gjabownik05,LBYY,LBYY2,cf20,suv}).
For further developments of anisotropic function spaces,
we refer the reader to \cite{gjabownik07,gjabownik06,BLL,BLYZ,BLYZ10,LBY,LBYZ,lwyy}.
Very recently,  anisotropic Besov spaces were used to study the degenerate McKean--Vlasov equations in \cite{iprt}.

The investigation of function spaces with matrix weights
can be traced back to Wiener and Masani \cite[\S 4]{wm58},
who introduced matrix-weighted Lebesgue spaces
during their development of the prediction theory
for multivariate stochastic processes.
Later, Treil and Volberg \cite{tv97} found that
several problems arising in multivariate stationary stochastic processes
and the theory of Toeplitz operators
can be transformed into the boundedness of the Hilbert transform
on the matrix-weighted Lebesgue space $L^2(W)$.
In the same work, they introduced matrix $\mathcal A_2$ weights and proved that
the Hilbert transform is bounded on $L^2(W)$
if and only if $W$ is a matrix $\mathcal A_2$ weight.
This characterization was subsequently independently extended to
the general case $p\in(1,\infty)$ by
Nazarov and Treil \cite{nt96} and Volberg \cite{v97}.
Furthermore, Bownik \cite{b01} showed that matrix weights do not possess the self-improving property.
Christ and Goldberg \cite{cg01,g03}
introduced the matrix-weighted maximal operator
and established the boundedness of both this operator and
the convolution-type singular integral operator on
$L^p(W)$ with matrix $\mathcal A_p$ weights.
The theory of $L^p(W)$ spaces has grown considerably in recent years,
encompassing diverse areas such as extrapolation \cite{BCarx,CS25,Nieraeth2025},
sparse domination \cite{bbdpw,cdo18,DHL20,DPTVarx,NPTV17,DLY,IKP17,kn2026},
and sharp weighted inequalities \cite{hpv2019,LLOR24,LLOR242}.
For further studies concerning $L^p(W)$ spaces, we refer the reader to
\cite{bpw16,lyz2023,N2010,N2012,N2013,nh2021,nh2025a,nh2026,zz25,nr18}.

Alongside the developments concerning $L^p(W)$ spaces,
the matrix-weighted Besov spaces $\dot B^\alpha_{p,q}(W)$
were introduced and studied by Roudenko \cite{ro03} for $p\in (1,\infty)$
and by Frazier and Roudenko \cite{fr04} for $p\in (0,1]$.
Subsequently, the duality, traces, and extensions of $\dot B^\alpha_{p,q}(W)$
were further investigated by the same authors in \cite{fr08,ro04}.
Later, Frazier and Roudenko \cite{fr21} developed the real-variable theory of
matrix-weighted Triebel--Lizorkin spaces.
Very recently, Bu et al. \cite{bf3,bf6,bf5,bchyy26}
generalized this framework by introducing more general
matrix-weighted Besov--Triebel--Lizorkin-type spaces.
For other matrix-weighted function spaces, we refer the reader to
\cite{CMR16} for Sobolev spaces,
\cite{KS1,KS2,ipt22} for BMO spaces,
\cite{Chen2025,bcyy25} for Hardy spaces,
\cite{bxarXiv} for Bourgain--Morrey spaces,
\cite{Wgx2025,nie25a} for modulation spaces, and
\cite{N2025,lyy24,lyy,bf,bgx,bx24,bx242,mx25,wyy24}
for further extensions of Besov--Triebel--Lizorkin spaces.
Notably, Nielsen \cite{NarX1,NarX2} recently investigated
matrix-weighted Besov spaces associated with a one-parameter group of dilations,
which is a weaker anisotropic setting than the one introduced by Bownik \cite{Bownik}.
The function spaces mentioned above are all equipped with
$\mathcal A_p$-matrix weights.
However, for certain function spaces,
particularly Besov--Triebel--Lizorkin spaces,
it is more natural to consider $\mathcal A_{p,\infty}$-matrix weights.
This weight class, introduced independently by
Nazarov and Treil \cite{nt96} and Volberg \cite{v97},
is the matrix-valued analogue of the scalar $\mathcal A_\infty$ class.
Recently, Bu et al. \cite{bf4} provided an equivalent characterization of
$\mathcal A_{p,\infty}$-matrix weights
and systematically studied this weight class.
Building on this work, Bu et al. \cite{bf2} developed the real-variable theory of
Besov--Triebel--Lizorkin-type spaces with matrix $\mathcal{A}_\infty$ weights.
Motivated by these contributions,
Besov--Triebel--Lizorkin spaces of optimal scale \cite{byyz2026,yyz1}
and Hardy spaces \cite{cyyz}
associated with matrix $\mathcal{A}_\infty$ weights have also been investigated.

Despite these developments, several fundamental problems remain open.
First, Frazier and Roudenko \cite[Theorem 7.1]{fr04} proved that
the doubling condition on $W$ is insufficient
to guarantee the $\varphi$-transform characterization of
matrix-weighted Besov spaces.
Although it is now known that the condition
$W\in\mathcal A_{p,\infty}$ is sufficient for this characterization,
it remains unclear whether this condition is necessary
or can be relaxed to the weaker assumption
$W\in\bigcup_{r\in(0,\infty)}\mathcal A_r$.
Second, the sharp boundedness condition for almost diagonal operators
have been established for $\mathcal A_p$-matrix weights
with $p\in(0,\infty)$ (see \cite[Theorem 7.1]{bf6})
and for $\mathcal A_{p,\infty}$-matrix weights
with $p\in(0,1]$ (see \cite[Lemma 4.13]{bf2}),
whereas the corresponding sharp condition
for $\mathcal A_{p,\infty}$-matrix weights with $p\in(1,\infty)$ remains unknown.
Finally, although the boundedness of pseudo-differential operators on
matrix-weighted Besov spaces has been studied in several articles
(see, for instance, \cite{bf2,bf5,yyz1}),
the sharpness of these results remains unknown.

Motivated by these problems, we develop a real-variable theory of
matrix-weighted anisotropic Besov spaces
$\dot B_{p,q}^{\alpha}(A,W)$
associated with an expansive matrix $A$ and an
$\mathcal A_{p,\infty}$-matrix weight $W$.
We first establish their $\varphi$-transform characterization and show, by constructing a new class of matrix weights, that the assumption
$W\in\mathcal A_{p,\infty}$ is necessary in general for this characterization to hold.
As a consequence of the $\varphi$-transform characterization,
the spaces $\dot B_{p,q}^{\alpha}(A,W)$ are well-defined.
Moreover, we show that the assumption
$W\in\mathcal A_{p,\infty}$ is necessary in general for
$\dot B_{p,q}^{\alpha}(A,W)$ to be well-defined.
We also characterize $\mathcal A_{p,\infty}$-matrix weights in terms of a rescaled maximal operator and introduce the critical rescaling index to quantify their self-improving behavior.
Using this index, we obtain optimal boundedness condition
for almost diagonal operators on the corresponding sequence spaces
$\dot b_{p,q}^{\alpha}(A,W)$.
Combining the $\varphi$-transform characterization with this boundedness result, we establish the molecular characterization of
$\dot B_{p,q}^{\alpha}(A,W)$ and subsequently obtain
sharp boundedness results of pseudo-differential operators
from $\dot B_{p,q}^{\alpha+u}(A,W)$ to
$\dot B_{p,q}^{\alpha}(A,W)$.

Specifically, the main novelties of these results are highlighted as follows.

\begin{itemize}
\item In the proof of the $\varphi$-transform characterization of
$\dot B^{\alpha}_{p,q}(A,W)$,
the  method using the operators $\sup_{\mathbb A,\varphi}$
and $\inf_{\mathbb A,\varphi,N}$ from previous articles
\cite{bf2,byyz2026} is no longer applicable
due to the insufficient decay rate of
the diameters of dyadic cubes in the anisotropic setting.
To overcome this difficulty, we bypass the operator
$\inf_{\mathbb A,\varphi,N}$ and use the self-improving property
of matrix weights to establish the $\varphi$-transform
characterization of $\dot B^{\alpha}_{p,q}(A,W)$ (see Theorem \ref{dl1103}).

\item Recall that, for scalar weights, $A_\infty = \bigcup_{r\in(1,\infty)} A_r$.
However, for matrix weights of size $m\geq 2$,
the class $\mathcal A_{p,\infty}$
is a finer classification of $\bigcup_{r\in(0,\infty)}\mathcal A_r$,
that is $\bigcup_{p\in(0,\infty)}\mathcal A_{p,\infty}
=\bigcup_{r\in(0,\infty)}\mathcal A_r$
and, for any fixed $p\in(0,\infty)$, $\mathcal A_{p,\infty}\subsetneqq
\bigcup_{r\in(0,\infty)}\mathcal A_r$
(see \cite[(vi) and (vii) of Proposition 2.26]{byyz25}).
Therefore, a natural question is whether we can study
$\dot B^{\alpha}_{p,q}(A,W)$ under the weaker assumption that
$W \in \bigcup_{r\in(0,\infty)} \mathcal A_r$.
To address this question, we construct a special class of matrix weights
(see Proposition \ref{WAp})
and use it to prove that, for any $\varepsilon\in(0,\infty)$,
there exists $W \in \mathcal A_{p+\varepsilon,\infty}
\subsetneqq \bigcup_{r\in(0,\infty)} \mathcal A_r$
such that the $\varphi$-transform characterization of $\dot B^{\alpha}_{p,q}(A,W)$
fails to hold (see Theorem \ref{boundedfail_p})
and $\dot B^{\alpha}_{p,q}(A,W)$ fail to be well-defined
(see Theorem \ref{well_defined_fail}).
In this sense, the assumption $W \in \mathcal A_{p,\infty}$ is necessary in general.

\item
Based on a characterization of $\mathcal A_{p,\infty}$-matrix weights in terms of the rescaled maximal operator established in Theorem \ref{bounded B1}, we introduce a new concept,
the critical rescaling index of matrix weights,
to quantify a different self-improving property of matrix weights
(noting that Bownik \cite[Remark 5.4]{b01} proved that
the classical self-improving property
$A_p=\bigcup_{q\in[1,p)}A_q$ for all $p\in(1,\infty]$
no longer holds for matrix weights).
Using the critical rescaling index of matrix weights,
we obtain the boundedness of
almost diagonal operators on $\dot b_{p,q}^{\alpha}(A,W)$
(see Theorem \ref{ad FJ-YY}),
which is sharp when $W$ is a power matrix weight.
Moreover, this result not only recovers existing optimal results but also
improves upon all other known results (see Subsection \ref{compare}).

\item Park \cite[Theorem 2.6]{p20} proved that for
inhomogeneous unweighted Besov spaces $B^\alpha_{p,q}$,
the pseudo-differential operator of type $(1,1)$ and order $u$ is bounded $B^{\alpha+u}_{p,q}$ to $B^\alpha_{p,q}$
if and only if $\alpha>\frac{n}{1\wedge p}-n$.
We not only provide a counterpart to this result
in the $|\cdot|^\lambda$-weighted homogeneous anisotropic setting
(see Theorem \ref{sharppse2}),
but also prove that when $\alpha\leq \max \{0,\frac{1}{p}-1,
\frac{1+\lambda}{p}-1 \}$,
the order of the vanishing assumption on
the pseudo-differential operator is sharp (see Theorem \ref{sharppse1}).
\end{itemize}

The remainder of this article is organized as follows.

In Section \ref{MWABS}, we introduce the matrix-weighted
anisotropic Besov spaces $\dot{B}_{p,q}^{\alpha}(A,W)$
associated with $\mathcal{A}_{p,\infty}$-matrix weights,
and state their $\varphi$-transform characterization
(Theorem \ref{dl1103}),
whose proof is given in Section \ref{phi}.
In particular, we show that the assumption $W\in \mathcal{A}_{p,\infty}$
is necessary in general, not only for this characterization to hold (see Theorem \ref{boundedfail_p}),
but also for the space $\dot{B}_{p,q}^{\alpha}(A,W)$
to be well-defined (see Theorem \ref{well_defined_fail}).
The proof of this necessity relies on a special construction of
matrix weights (see Proposition \ref{WAp}).

In Section \ref{key}, we investigate various properties of
$\mathcal{A}_{p,\infty}$-matrix weights on spaces of homogeneous type,
of which the anisotropic setting is a special case.
This general framework is necessary because anisotropic dyadic cubes
fail to form a nested structure. In Subsection \ref{self},
we prove the self-improving property of
$\mathcal{A}_{p,\infty}$-matrix weights (see Proposition \ref{improve}).
In Subsection \ref{rescaled}, we introduce
the rescaled maximal operator $\mathcal{M}_{W,p}^{(v)}$
and characterize $\mathcal{A}_{p,\infty}$-matrix weights
via its $L^p$-boundedness (see Theorem \ref{bounded B1}),
further show that the exponent range $v\in(0,p)$ is sharp
(see Proposition \ref{bounded fail}).
In Subsection \ref{critical}, we introduce
the critical rescaling index, which quantitatively
characterizes the self-improving property of the matrix weight;
we also analyze its properties and compute it explicitly for power weights
(see Proposition \ref{prop of sp(W)} and Lemma \ref{value}).
In Subsection \ref{keylemma}, we return to the anisotropic setting,
as our analysis requires the geometric condition that
the measure of a ball is comparable to its radius.
Within this framework, we introduce the upper and lower dimensions of
$\mathcal{A}_{p,\infty}$-matrix weights (see Definition \ref{dim}),
which provide a finer classification of this weight class.
Using these concepts, we establish the sharp estimate of
the composition of reducing operators (see Lemma \ref{fuhe}).
This estimate is essential for both the $\varphi$-transform
characterization and the boundedness of almost diagonal operators.

In Section \ref{phi}, we prove the $\varphi$-transform characterization
of $\dot{B}_{p,q}^{\alpha}(A,W)$ (Theorem \ref{dl1103})
by using the self-improving property
of $\mathcal{A}_{p,\infty}$-matrix weights
and the sharp estimate for the composition of reducing operators
obtained in Section \ref{key}.
In Subsection \ref{BB}, we introduce the averaging matrix-weighted
anisotropic Besov spaces $\dot{B}_{p,q}^{\alpha}(A,\mathbb{A})$
and prove their equivalence to $\dot{B}_{p,q}^{\alpha}(A,W)$ (see Theorem \ref{dl111301}).
Since the diameters of anisotropic dyadic cubes decay slowly,
existing methods from \cite{bf2,byyz2026} are no longer valid.
To overcome this difficulty, the proof of Theorem \ref{dl111301}
crucially exploits the self-improving property
of $\mathcal{A}_{p,\infty}$-matrix weights.
In Subsection \ref{equibb}, we introduce the corresponding sequence spaces
$\dot{b}_{p,q}^{\alpha}(A,\mathbb{A})$
and establish their equivalence to $\dot{b}_{p,q}^{\alpha}(A,W)$ (see Theorem \ref{dj}).
Finally, in Subsection \ref{PTC},
we combine these equivalences with the aforementioned sharp estimate
to complete the proof of Theorem \ref{dl1103}.
In particular, we show that the space $\dot{B}_{p,q}^{\alpha}(A,W)$ is well-defined.

In Section \ref{jh dj}, we investigate the boundedness of almost diagonal operators
on the sequence space $\dot{b}_{p,q}^{\alpha}(A,W)$.
In Subsection \ref{BADO}, we establish this boundedness
on $\dot{b}_{p,q}^{\alpha}(A,W)$ (Theorem \ref{ad FJ-YY})
by using the sharp estimate for the composition of reducing operators
and the boundedness of rescaled maximal operator $\mathcal{M}_{W,p}^{(v)}$
obtained in Section \ref{key}.
In Subsection \ref{SADO}, we prove the sharpness of this result
(see Theorem \ref{ad Besov sharp}).
Finally, in Subsection \ref{compare},
we show that Theorem \ref{ad FJ-YY}
not only recovers existing optimal results
but also improves upon all other known bounds.

In Section \ref{f z}, as an application of the $\varphi$-transform characterization
of $\dot{B}_{p,q}^{\alpha}(A,W)$
and the boundedness of almost diagonal operators,
we establish the molecular characterization
of $\dot{B}_{p,q}^{\alpha}(A,W)$.
This result improves upon the work of Bownik
\cite[Theorems 5.5 and 5.7]{gjabownik05} (see Remark \ref{Bownik}).
Furthermore, using this molecular characterization,
we prove that $(\mathcal S_\infty)^m$ continuously embeds into
$\dot{B}_{p,q}^{\alpha}(A,W)$ (see Proposition \ref{distri}).

Finally, in Subsection \ref{7.1}, we apply
the $\varphi$-transform and molecular characterizations
to establish the boundedness of pseudo-differential operators
from $\dot{B}_{p,q}^{\alpha+u}(A,W)$ to $\dot{B}_{p,q}^{\alpha}(A,W)$
(Theorem \ref{pseudo}).
As detailed in Remark \ref{bijiao},
this result improves upon the earlier work of
B\'enyi and Bownik \cite[Theorem 4.8]{bb10}.
Subsequently, in Subsection \ref{7.2},
we prove the sharpness of Theorem \ref{pseudo}.

At the end of this introduction,
we make some conventions on notation.
For any $ r \in \mathbb{R} $, $ r_+ $ is defined as $ r_+ := \max \{0, r\} $
and $ r_- $ is defined as $ r_- := \max \{0, -r\} $. For any $t\in (0,\infty)$, $\log_{+}t:=\max\{0,\log t\}$.
For any $ a, b \in \mathbb{R} $, $ a \wedge b := \min \{a, b\} $
and $ a \vee b := \max \{a, b\} $.
The symbol $ C $ denotes a positive constant which is independent
of the main parameters involved,
but may vary from line to line.
The symbol $ A \lesssim B $ means
that $ A \leq CB $ for some positive constant $ C $,
while $ A \sim B $ means $ A \lesssim B \lesssim A $.
If $ f \leq Cg $ and $ g = h $ or $ g \leq h $,
we then write $ f \lesssim g = h $ or $ f \lesssim g \leq h $,
\emph{rather than} $ f \lesssim g \sim h $ or $ f \lesssim g \lesssim h $.
Let $\mathbb N:=\{1,2,\ldots\}$,
$\mathbb Z_+:=\mathbb N\cup\{0\}$,
and $\mathbb Z_+^n:=(\mathbb Z_+)^n$.
For any multi-index $\gamma:=(\gamma_1,\ldots,\gamma_n)\in
\mathbb Z_+^n$ and any $x:=(x_1,\ldots,x_n)\in{\mathbb{R}^n}$,
let $|\gamma|:=\gamma_1+\cdots+\gamma_n$,
$x^\gamma:=x_1^{\gamma_1}\cdots x_n^{\gamma_n}$,
and $\partial^\gamma:=
(\frac{\partial}{\partial x_1})^{\gamma_1}\cdots
(\frac{\partial}{\partial x_n})^{\gamma_n}$.
We use $\mathbf{0}$ to denote the
\emph{origin} of $\mathbb{R}^n$.
For any set $ E \subset \mathbb{R}^n $,
we use $ \mathbf 1_E $ to denote its \emph{characteristic function}.
For any $p\in(0,\infty]$, the \emph{Lebesgue space} $L^p$ has the usual meaning,
and the \emph{local Lebesgue space}
$ L^p_{\mathrm{loc}}$ is defined to be the set of
all measurable functions $f$ on $ \mathbb{R}^n $ such that
$$
\left\| f \right\|_{L^p(E)}
:= \left\| f \mathbf{1}_E \right\|_{L^p} < \infty
$$
for any bounded measurable set $E$.
For any $p\in(0,\infty)$, let $p':=\frac{p}{p-1}$
if $p\in(1,\infty)$ and let $p':= \infty$ if $p\in(0,1]$
be the conjugate index of $p$.
For any measurable function $ w $ on $\mathbb{R}^n $
and any measurable set $ E \subset \mathbb{R}^n $, let
$$
w(E) := \int_E w(x) \, dx.
$$
For any measurable function $ f $ on $ \mathbb{R}^n $
and any measurable set
$ E \subset \mathbb{R}^n $ with $ |E| \in (0, \infty) $,
let $\fint_E f(x) \, dx := \frac{1}{|E|} \int_E f(x) \, dx$
and $\widetilde{\mathbf{1}}_{E}:=\mathbf{1}_E|E|^{-\frac12}$.
For any space $ X $, the product space
$ X^m $ with $ m \in \mathbb{N} $
is defined by setting
\begin{align*} 
X^m := \left\{ \vec f := (f_1, \ldots, f_m)^{T} :\
\text{for any } i \in \{1, \ldots, m\},\ f_i \in X \right\}.
\end{align*}
Finally, in all proofs we consistently retain the notation
introduced in the original theorem (or related statement).

\section{Matrix-Weighted Anisotropic Besov Spaces}\label{MWABS}

In this section, we introduce matrix-weighted anisotropic Besov spaces
and present their $\varphi$-transform characterization.
Moreover, we construct an example showing that
the condition $W\in \mathcal A_{p,\infty}$
is necessary   to guarantee the validity of this characterization.
We begin by recalling the concept of expansive
matrices from \cite[Definition 2.1]{Bownik}.

\begin{definition}
A real $n\times n$ matrix $A$ is called an \emph{expansive matrix} if
$
\min_{\lambda \in \sigma(A)} |\lambda|>1,
$
where $\sigma (A)$ denotes the set of all eigenvalues of $A$.
\end{definition}

In general, the anisotropic dilation associated with $A$ is not homogeneous
with respect to the Euclidean norm.
To overcome this drawback, Bownik \cite[Definition 2.3]{Bownik}
introduced the homogeneous quasi-norm associated with $A$.

\begin{definition}\label{quasi-norm}
A \emph{homogeneous quasi-norm
associated with an expansive matrix $A$}
is a measurable mapping
$\rho_A:\ \mathbb{R}^n\to[0,\infty)$ such that
\begin{enumerate}[(i)]
\item $\rho_A (x)=0\Longleftrightarrow x=\bf{0}$;

\item for any $x\in\mathbb{R}^n$,
$\rho_A(Ax)=b\rho_A(x)$, here and thereafter, $b:=|\det A|$;

\item there exists $H\in[1,\infty)$ such that,
for any $x,y\in\mathbb{R}^n$,
$\rho_A(x+y)\leq H\,[\rho_A(x)+\rho_A(y)]$.
\end{enumerate}
\end{definition}

For a fixed expansive matrix $A$, Bownik \cite[Lemma 2.4]{Bownik}
showed that any two homogeneous quasi-norms associated with $A$ are equivalent.
Based on this, we always use a special quasi-norm associated with $A$.
Before recalling its definition,
we need the following lemma, which is precisely \cite[Lemma 2.2]{Bownik}.

\begin{lemma}\label{byl2d2}
Let $A$ be an expansive matrix.
Then there exists an invertible $n\times n$ matrix $P$ and $r_A\in(1,\infty)$
such that the set $\Delta:=\{x\in{\mathbb{R}^n}:\ |Px|<1\}$ has measure $1$ and
\begin{align*}
\Delta\subset r_A\Delta\subset A\Delta.
\end{align*}
\end{lemma}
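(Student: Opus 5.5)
The plan is to reduce to a linear-algebra statement about expansive matrices: find an inner product in which $A$ uniformly expands every vector, and then let $\Delta$ be a suitably normalized unit ball of that inner product. Since $A$ is expansive, every eigenvalue $\lambda\in\sigma(A)$ satisfies $|\lambda|>1$. Therefore $A^{-1}$ has spectral radius $\varrho(A^{-1})=\max_{\lambda\in\sigma(A)}|\lambda|^{-1}<1$. Fix $r_A\in(1,\infty)$ with $r_A<\min_{\lambda\in\sigma(A)}|\lambda|$, so that $r_A\varrho(A^{-1})<1$.

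First, I construct an adapted norm. By Gelfand's spectral radius formula, $\|(r_AA^{-1})^k\|\to0$ geometrically, so the series
$$\langle x,y\rangle_*:=\sum_{k=0}^\infty \left\langle (r_AA^{-1})^kx,\,(r_AA^{-1})^ky\right\rangle$$
converges and defines an inner product on $\mathbb R^n$. Writing $B:=r_AA^{-1}$ and $|x|_*^2:=\langle x,x\rangle_*$, we get
$$|x|_*^2=|x|^2+|Bx|_*^2\geq |Bx|_*^2,$$
so $|Bx|_*\le|x|_*$ for every $x$. Equivalently, replacing $x$ by $Ax$, this reads $r_A|x|_*\le|Ax|_*$ for all $x\in\mathbb R^n$. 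Since $\langle\cdot,\cdot\rangle_*$ is an inner product, there is an invertible $P_0$ with $|x|_*=|P_0x|$. (Alternatively one could use the real Jordan form with small off-diagonal entries; the series construction avoids that bookkeeping.)

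Next, I normalize and check the inclusions. Set $P:=cP_0$, with $c>0$ chosen so that $\Delta:=\{x:\ |Px|<1\}$ has Lebesgue measure $1$. This is possible because $|\{|P_0x|<1/c\}|=c^{-n}|\det P_0|^{-1}\,|B(\mathbf 0,1)|$, which is continuous and strictly monotone in $c$ with range $(0,\infty)$. The scaling $x\mapsto cx$ does not affect the inequality $|PAx|\ge r_A|Px|$.

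The inclusion $\Delta\subset r_A\Delta$ is trivial since $r_A>1$. For $r_A\Delta\subset A\Delta$, take $y=r_Ax$ with $x\in\Delta$ and set $z:=A^{-1}y=Bx$. Then $|Pz|=|PBx|\le|Px|<1$, so $z\in\Delta$ and $y=Az\in A\Delta$.

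The only genuinely nontrivial step is producing the norm in which $A^{-1}$ contracts by the factor $r_A^{-1}$. This rests on the spectral radius formula, which guarantees convergence of the series; once that is in hand, everything else is scaling.
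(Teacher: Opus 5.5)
Your proof is correct. The paper does not prove this lemma; it imports it verbatim as \cite[Lemma 2.2]{Bownik}. Your argument therefore supplies a self-contained proof of a result the paper treats as a black box.

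Your reduction loses nothing. For $\Delta=\{x:\ |Px|<1\}$, one has $r_A\Delta\subset A\Delta$ if and only if $\|r_A PA^{-1}P^{-1}\|\le 1$, that is, $|PAx|\ge r_A|Px|$ for all $x$. The remaining requirements cost nothing: $\Delta\subset r_A\Delta$ holds automatically since $r_A>1$, and the normalization $|\Delta|=1$ is obtained by rescaling $P$, which leaves $PA^{-1}P^{-1}$ unchanged. So the lemma is \emph{equivalent} to the existence of a Euclidean structure in which $A$ expands every vector by at least the factor $r_A$. Any proof, including the cited one, must construct such an adapted inner product.

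A standard alternative is to conjugate $A$ to real Jordan form and rescale the basis so the off-diagonal entries become small. You instead build the inner product from the Lyapunov-type series $\sum_{k\ge0}|(r_AA^{-1})^kx|^2$, whose convergence follows from Gelfand's formula. Your route avoids the Jordan bookkeeping, including the $2\times2$ rotation blocks coming from complex eigenvalues. It also shows that $r_A$ may be any number in $(1,\min_{\lambda\in\sigma(A)}|\lambda|)$, the same range allowed for $\lambda_-$ in Lemma \ref{edbj}. The citation in the paper simply buys brevity.
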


Based on Lemma \ref{byl2d2}, for any $k\in\mathbb{Z}$,
we define $B_k := A^k \Delta$,
which forms a family of anisotropic balls centered at the origin.
Using the definition of $B_k$ and Lemma \ref{byl2d2},
we conclude that, for any $k\in\mathbb{Z}$,
\begin{align}\label{B_k}
B_k\subset r_A B_k\subset B_{k+1}\ \ \text{and}\ \ |B_k|=b^k.
\end{align}
Let ${\mathcal B}:=\{x+B_k :\ x\in\mathbb{R}^n,\ k\in{\mathbb Z}\}$
be the set of all \emph{anisotropic balls} in $\mathbb R^n$.

\begin{definition}
The \emph{step homogeneous quasi-norm $\rho_A$
associated with an expansive matrix $A$} is defined by setting,
for any $x\in\mathbb R^n$,
\begin{align*}
\rho_A(x):= \left\lbrace
\begin{aligned}
&b^k\  &&{\mathrm {if}}\  x \in B_{k+1}\setminus B_k,\\
&0\  &&{\mathrm {if}}\  x=\bf{0}.
\end{aligned}
\right.
\end{align*}
\end{definition}

In what follows, we denote $\rho_A$ simply by $\rho$.
It is easy to verify that $\rho$ is a quasi-norm associated with $A$
and throughout this article we always work with this quasi-norm.
Using $\rho$, we can give another form of anisotropic balls.
For any $x\in\mathbb{R}^n$ and $r\in(0,\infty)$, define
\begin{align*}
B_{\rho}(x,r):=\{y\in\mathbb{R}^n:\ \rho(y-x)<r\}.
\end{align*}
Then $\mathcal B=\{B_{\rho}(x,r):\ x\in\mathbb{R}^n,\ r\in(0,\infty)\}$ and
\begin{align}\label{ballmea}
r \leq \left|B_{\rho}(x,r)\right| \leq b r.
\end{align}
For any ball $B$, let $c_B$ be its center and $r_B$ its radius.
Now, we recall the concept of dyadic cubes.
For any $j\in\mathbb{Z}$, let
$$
\mathcal{D}_{j}:=\{Q_{j,k}:=A^{-j}([0,1)^n+k):\ k\in{\mathbb{Z}}^{n}\}
\quad \text{and} \quad
\mathcal{D}:= \bigcup_{j\in\mathbb Z} \mathcal{D}_{j}.
$$
For any $Q:=Q_{j,k}\in\mathcal{D}_j$,
let $j_Q:=j$, $x_{Q}:=A^{-j}k$, and
$c_Q:=A^{-j}[k+(\frac{1}{2},\ldots,\frac{1}{2})]$.
For any complex-valued function $\varphi$,  $j\in\mathbb Z$, and $x\in\mathbb R^n$,
define $\varphi_j(x):=b^j\varphi(A^j x)$ and
$$
\varphi_Q(x)
:=|Q|^{\frac{1}{2}}\varphi_j(x-x_Q)
=b^{\frac{j}{2}}\varphi(A^jx-k).
$$

The \emph{Schwartz space}   $\mathcal S$
is defined to be the set of all $C^\infty$ functions $f$
such that, for any $\beta\in{\mathbb Z}_+$ and
any multi-index $\gamma\in{\mathbb Z}_+^n$,
\begin{align}\label{S}
\left\|f\right\|_{\beta,\gamma} := \sup_{x \in \mathbb{R}^n}
\left[1+\rho (x)\right]^\beta|\partial^\gamma f(x)|<\infty.
\end{align}
We denote the \emph{dual space} of ${\mathcal S}$ by ${\mathcal S}'$,
equipped with the weak-$*$ topology.
For any $f\in{\mathcal S}'$ and
$\varphi\in{\mathcal S}$, we denote
$\langle f,\varphi\rangle=f(\overline{\varphi})$.
For any $\xi \in\mathbb{R}^n$ and $f\in \mathcal S$, let
\begin{align*}
\widehat{f}(\xi):=\int_{\mathbb{R}^n}f(x)
e^{-ix\cdot\xi}\,dx
\end{align*}
denote its \emph{Fourier transform} and
$f^\vee(\xi):=(2\pi)^{-n}\widehat{f}(-\xi)$ its \emph{inverse Fourier transform}.
These definitions ensure that we still have $(\widehat{f})^\vee=f$.
For any complex-valued function $g$ on $\mathbb{R}^n$, let
\begin{align*}
\operatorname{supp}g:=\overline{\{x\in\mathbb{R}^n:\ g(x)\neq 0\}}.
\end{align*}
For any $f\in{\mathcal S}'$, let
\begin{align*}
\operatorname{supp}f:=\bigcap\left\{\text{closed set}\ K\subset\mathbb{R}^n:\
\langle f,\varphi\rangle=0,\
\text{for any }\ \varphi\in{\mathcal S}\
\text{with}\ \operatorname{supp}\varphi\subset\mathbb{R}^n\setminus K\right\},
\end{align*}
which can be found in \cite[Definition 2.3.16]{hsdyk01}.

The closed subspace ${\mathcal S}_\infty$
of the Schwartz class ${\mathcal S}$ is given by
\begin{align*}
\mathcal{S}_\infty
=\left\{\varphi\in\mathcal{S}:\
\int_{\mathbb{R}^n} x^\gamma \varphi(x)\,dx=0
\text{ for all } \gamma\in{\mathbb Z}_+^n\right\}.
\end{align*}
Following Triebel \cite{t83}, we view $\mathcal{S}_\infty$
as a subspace of $\mathcal{S}$ with  the same topology.
Thus, $\mathcal{S}_\infty$ is a complete metric space
(see, for example, \cite[(3.7)]{sw71}).
We denote the \emph{dual space} of $\mathcal S_\infty$
by $\mathcal S_\infty'$, equipped with the weak-$*$ topology.



For any $m,n\in\mathbb{N}$, the set of all $m\times n$
complex-valued matrices is denoted by $M_{m,n}({\mathbb C})$,
and $M_{m,m}({\mathbb C})$ is denoted simply by $M_m({\mathbb C})$.
The zero matrix in $M_{m,n}({\mathbb C})$ is denoted by
$O_{m,n}$ and $O_{m,m}$ is simply denoted by $O_m$.
For any matrix $M\in M_{m,n}({\mathbb C})$,
$M^*$ is the \emph{conjugate transpose} of $M$.
A matrix $M\in M_m({\mathbb C})$
is said to be \emph{positive definite}
(resp. \emph{nonnegative definite}) if,
for any $\vec{z}\in\mathbb{C}^m\setminus\{\vec{\mathbf{0}}\}$,
$(M\vec{z},\vec{z})>0$ (resp. $\geq 0$).
Furthermore, the operator norm of a matrix $M$ is defined by
\begin{align*}
\|M\|:=\sup_{\vec{z}\in\mathbb{C}^m,|\vec{z}|=1} |M\vec{z}|.
\end{align*}

Next, we recall the concept of matrix weights
(see, for instance, \cite{nt96,tv97, v97}).

\begin{definition} \label{MatrixWeight}
A matrix-valued function $W: \mathbb{R}^n\to M_m(\mathbb{C})$
is called a \emph{matrix weight} if $ W $ satisfies that,
for any $ x \in \mathbb{R}^n $, $ W(x) $ is nonnegative definite;
for almost every $ x \in \mathbb{R}^n $, $ W(x) $ is invertible;
the entries of $W$ are locally integrable.
When $m=1$, matrix weights reduce to \emph{scalar weights}.
\end{definition}


Let $p, q\in(0,\infty]$. For any sequence  $\{f_j\}_{j \in \mathbb{Z}}$ of measurable functions on $\mathbb{R}^n$,
let
\begin{align}\label{lqLp}
\left\|\{f_j\}_{j\in \mathbb Z}\right\|_{\ell^q L^p} := \left[\sum_{j \in \mathbb Z} \left\|f_j\right\|_{L^p}^q\right]^{\frac{1}{q}}
\end{align}
with the usual modification made when $q=\infty$.
Now, we introduce the homogeneous matrix-weighted anisotropic Besov space.

\begin{definition}
Let $\alpha\in\mathbb{R}$, $p\in(0,\infty)$,
$q\in(0,\,\infty]$,
and $W$ be a matrix weight.
Assume that $\varphi\in{\mathcal S}$ satisfies
\begin{align}\label{hs2}
\operatorname{supp}\widehat{\varphi}
\subset[-\pi,\pi]^n\setminus\{\mathbf0\}
\quad\text{and}\quad
\sup_{j\in\mathbb{Z}}
\left|\widehat{\varphi}
((A^{\ast})^{j}\xi)\right|>0
\text{ for all } \xi\in\mathbb{R}^n\setminus\{\mathbf 0\}.
\end{align}
The \emph{homogeneous matrix-weighted anisotropic Besov space}
$\dot{B}^{\alpha}_{p,q}(A,W,\varphi)$
is defined by setting
\begin{align*}
\dot{B}^{\alpha}_{p,q}(A,W,\varphi)
:=\left\{\vec{f}\in (\mathcal{S}'_\infty)^m:\
\left\|\vec{f}\right\|_{{\dot{B}^{\alpha}_{p,q}(A,W,\varphi)}}
<\infty\right\},
\end{align*}
where, for any
$\vec{f}\in (\mathcal{S}'_\infty)^m$,
\begin{align*}
\left\|\vec{f}\right\|_{{\dot{B}^{\alpha}_{p,q}(A,W,\varphi)}}
:= \left\| \left\{b^{j\alpha}\left|W^{\frac1p}\left( \varphi_j \ast\vec f\right) \right| \right\}_{j\in\mathbb Z} \right\|_{\ell^q L^p}.
\end{align*}
\end{definition}

In what follows, if there is no confusion,
we denote $\dot{B}^{\alpha}_{p,q}(A,W,\varphi)$ simply by $\dot{B}^{\alpha}_{p,q}(W,\varphi)$.

We also introduce the sequence space related to $\dot{B}^{\alpha}_{p,q}(W,\varphi)$.

\begin{definition}
Let $\alpha\in\mathbb{R}$, $p\in(0,\infty)$, $q\in(0,\,\infty]$,
and $W$ be a matrix weight.
The \emph{matrix-weighted anisotropic Besov sequence space}
$\dot{b}^{\alpha}_{p,q}(A,W)$ is defined to be the set of
all sequences $\vec{s}=\{\vec{s}_Q\}_{Q\in \mathcal{D}}$ in $\mathbb C^m$
such that
$\|\vec{s}\|_{\dot{b}^{\alpha}_{p,q}(A,W)}
:=\|\{b^{j\alpha}|W^{\frac1p}\vec s_j|\}_{j\in\mathbb Z}\|_{\ell^q L^p}
<\infty,$
where $\|\cdot\|_{\ell^q L^p}$ is as in \eqref{lqLp} and, for any $j\in\mathbb Z$,
\begin{align}\label{sj}
\vec s_j:=\sum_{Q\in\mathcal D_j} \widetilde{\mathbf1}_Q \vec s_Q.
\end{align}
In what follows, we denote $\dot{b}^{\alpha}_{p,q}(A,W)$ simply by $\dot{b}^{\alpha}_{p,q}(W)$ and
$\dot{b}^{\alpha}_{p,q}:=\dot{b}^{\alpha}_{p,q}(I_m)$,
where $I_m$ is the identity matrix.
\end{definition}

In this article, we study the space $\dot{B}^{\alpha}_{p,q}(W,\varphi)$ under the assumption that $W$ is a matrix $\mathcal A_{\infty}$ weight.
Recall that, in the scalar setting, Li et al. \cite{LBYY,LBYY2} studied
anisotropic Besov spaces with scalar $A_{\infty}$ weights.
Matrix $\mathcal A_{\infty}$ weights were first introduced by Volberg \cite{v97} and Nazarov and Treil \cite{nt96}.
Recently, Bu et al. \cite[Definition 3.1]{bf4} provided an equivalent characterization of matrix $\mathcal  A_{\infty}$ weights.
Using this equivalent characterization, we introduce matrix
$\mathcal A_{\infty}$ weights associated with expansive matrix $A$ as follows.

\begin{definition}
Let $p\in(0, \infty)$. A matrix weight $W$ on $\mathbb{R}^{n}$ is called an
\emph{$\mathcal A_{p,\infty}\left(\mathbb{R}^{n},\mathbb{C}^{m},A\right)$-matrix weight}
if, for any ball $B\in{\mathcal B}$,
$\log _{+}(\fint_{B}\|W^{\frac{1}{p}}(x) W^{-\frac{1}{p}}
(\cdot)\|^{p} \,dx) \in L^{1}(B)$
and
\begin{align*}
[W]_{\mathcal A_{p, \infty}\left(\mathbb{R}^{n}, \mathbb{C}^{m},A\right)}
:=\sup_{B\in{\mathcal B}}\,\, \exp \left(\fint_{B} \log \left(
\fint_{B}\left\|W^{\frac{1}{p}}(x) W^{-\frac{1}{p}}(y)\right\|^{p} \,dx
\right) \,dy\right)<\infty.
\end{align*}
In what follows, if there is no confusion,
we denote $\mathcal A_{p,\infty}(\mathbb{R}^n,\mathbb{C}^m,A)$ simply by $\mathcal A_{p,\infty}$.
\end{definition}

To show the space $\dot{B}^{\alpha}_{p,q}(W,\varphi)$ is well-defined, we establish its $\varphi$-transform characterization, which connects
$\dot{B}^{\alpha}_{p,q}(W,\varphi)$ to its related sequence space.
Recall that,  for $\varphi,\psi\in{\mathcal S}$ satisfying \eqref{hs2},
the \emph{$\varphi$-transform} is defined to be the
map taking each $\vec f\in({\mathcal S}_\infty')^m$
to the sequence $S_\varphi \vec{f}:=\{(S_\varphi \vec{f})_Q\}_{Q\in{\mathcal D}}$,
where $(S_\varphi \vec{f})_Q:=\langle \vec{f},\varphi_Q\rangle$
for any $Q\in{\mathcal D}$; the \emph{inverse $\varphi$-transform} is defined
to be the map taking a sequence
$\vec{s}:=\{\vec{s}_Q\}_{Q\in \mathcal{D}}\subset{\mathbb C}^m$
to $T_\psi \vec{s}:=\sum_{Q\in\mathcal{D}}
\vec{s}_Q\psi_Q$ whenever this series converges in
$({\mathcal S}_\infty')^m$ (see, for instance, \cite{ck110301,ck110302}).
For any complex-valued function $\varphi$ and $x\in \mathbb R^n$, let
$\widetilde{\varphi}(x):=\overline{\varphi(-x)}$.
Now, we state the  $\varphi$-transform characterization of $\dot{B}^\alpha_{p,q}(W,\varphi)$.

\begin{theorem}\label{dl1103}
Let $\alpha\in{\mathbb R},\,p\in(0,\infty),q\in(0,\infty]$, and $W\in \mathcal A_{p,\infty}$. Assume that
$\varphi,\psi\in{\mathcal S}$ satisfy \eqref{hs2}.
Then the following statements hold.
\begin{enumerate}[{\rm(i)}]
\item The operators
$S_\varphi:\dot{B}^\alpha_{p,q}(W,\widetilde{\varphi})
\to\dot{b}^\alpha_{p,q}(W)$
and $T_\psi:\dot{b}^\alpha_{p,q}(W)
\to\dot{B}^\alpha_{p,q}(W,\varphi)$ are bounded.

\item If $\varphi$ and $\psi$ further satisfy
\begin{align}\label{hs3}
\sum_{j\in\mathbb{Z}}
\overline{\widehat{\varphi}((A^*)^j\xi)}
\widehat{\psi}((A^*)^j\xi)=1
\text{ for all }
\xi\in\mathbb R^n\setminus\{\mathbf 0\},
\end{align}
then $T_\psi\circ S_\varphi$ is the identity on
$\dot{B}^\alpha_{p,q}(W,\widetilde{\varphi})$.

\item The space $\dot{B}^\alpha_{p,q}(W,\varphi)$ is independent of the choice of $\varphi$.
\end{enumerate}
\end{theorem}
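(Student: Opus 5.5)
We follow the Frazier--Jawerth scheme, routed through reducing operators. For each dyadic cube $Q\in\mathcal D$ we fix a reducing operator $A_Q$ of order $p$ for $W$, so that $|A_Q\vec z|\sim(\fint_Q|W^{\frac1p}(x)\vec z|^p\,dx)^{\frac1p}$ uniformly in $\vec z\in\mathbb C^m$. We then introduce the averaged sequence space $\dot b^\alpha_{p,q}(A,\mathbb A)$, in which $W^{\frac1p}$ is replaced by $\sum_{Q\in\mathcal D_j}A_Q\mathbf 1_Q$, and the averaged function space $\dot B^\alpha_{p,q}(A,\mathbb A)$ defined in the same way. The first step is the equivalence $\dot b^\alpha_{p,q}(W)=\dot b^\alpha_{p,q}(A,\mathbb A)$. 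This is immediate, because $\vec s_j$ is constant on each $Q\in\mathcal D_j$, so integrating over $Q$ reproduces $|A_Q\vec s_Q|^p$ exactly.

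The second step is the equivalence $\dot B^\alpha_{p,q}(W,\varphi)\sim\dot B^\alpha_{p,q}(A,\mathbb A,\varphi)$, and this is where $W\in\mathcal A_{p,\infty}$ is genuinely needed. Since $\widehat{\varphi_j}$ is supported in a dilate of $[-\pi,\pi]^n$, the function $\varphi_j*\vec f$ has band-limited spectrum. By a Plancherel--Pólya type estimate, for any $x,y\in Q\in\mathcal D_j$ we expect
\begin{align*}
|A_Q(\varphi_j*\vec f)(x)|\lesssim\sum_{R\in\mathcal D_j}\bigl[1+b^{j}\rho(x_Q-x_R)\bigr]^{-N}\sup_{z\in R}|A_Q(\varphi_j*\vec f)(z)|.
\end{align*}
Two ingredients control the resulting terms. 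The change of reducing operator $\|A_QA_R^{-1}\|$ between cubes of the same level is handled by the sharp composition estimate of Lemma \ref{fuhe}: it grows at most polynomially in $1+b^j\rho(x_Q-x_R)$, with exponent given by the dimensions of Definition \ref{dim}. Passing between $\sup_{z\in R}$ and averages in $L^p$ is done by applying the self-improving property of Proposition \ref{improve} on each cube, which lets us compare $|W^{\frac1p}(x)\vec v|$ to $|A_Q\vec v|$ in $L^{p+\epsilon}$ or in $L^{p-\epsilon}$ on $Q$.

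This second step is the main obstacle. Anisotropic cubes do not shrink geometrically in Euclidean diameter at a uniform rate, so the usual $\sup/\inf$ device from \cite{bf2,byyz2026} fails. I would first try to dominate the relevant maximal functions directly with the $L^{p-\epsilon}$ self-improvement, taking $N$ larger than the dimensions in Lemma \ref{fuhe} so that the sum over $R$ converges. If that fails, I would use the rescaled maximal operator of Theorem \ref{bounded B1}.

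With both equivalences in hand, (i) follows from the scalar-type arguments for the averaged spaces. For $S_\varphi$, we have $(S_\varphi\vec f)_Q=|Q|^{\frac12}(\widetilde\varphi_j*\vec f)(x_Q)$, and
\begin{align*}
|A_Q(S_\varphi\vec f)_Q|\le|Q|^{\frac12}\sup_{Q}|A_Q(\widetilde\varphi_j*\vec f)|,
\end{align*}
so the bound reduces to the sup-estimate of step two. For $T_\psi$, we write $\varphi_j*T_\psi\vec s=\sum_{|i-j|\le1}\sum_{R\in\mathcal D_i}\vec s_R\,\varphi_j*\psi_R$. The kernel decay $|\varphi_j*\psi_R(x)|\lesssim|R|^{-\frac12}[1+b^{i}\rho(x-x_R)]^{-N}$, combined with Lemma \ref{fuhe} to move between $A_Q$ and $A_R$, gives an almost-diagonal bound on $\dot b^\alpha_{p,q}(A,\mathbb A)$. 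We also verify that the series defining $T_\psi\vec s$ converges in $(\mathcal S_\infty')^m$; this uses $|\vec s_Q|\le\|A_Q^{-1}\||A_Q\vec s_Q|$ together with the polynomial growth of $\|A_Q^{-1}\|$, again from Lemma \ref{fuhe}.

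For (ii), condition \eqref{hs3} gives the Calderón reproducing formula $\vec f=\sum_j\psi_j*\widetilde\varphi_j*\vec f$ in $(\mathcal S'_\infty)^m$. Discretizing each $\psi_j*\widetilde\varphi_j*\vec f$ by sampling the band-limited function $\widetilde\varphi_j*\vec f$ on the lattice $A^{-j}\mathbb Z^n$ yields $\vec f=T_\psi S_\varphi\vec f$. For (iii), take $\psi$ dual to $\varphi$ and let $\varphi^{(1)},\varphi^{(2)}$ be two admissible functions. Then
\begin{align*}
\|\vec f\|_{\dot B(W,\varphi^{(1)})}=\|T_\psi S_{\varphi}\vec f\|_{\dot B(W,\varphi^{(1)})}\lesssim\|S_\varphi\vec f\|_{\dot b}\lesssim\|\vec f\|_{\dot B(W,\widetilde\varphi)}.
\end{align*}
The same argument with the roles exchanged, together with the observation that $\widetilde\varphi$ also satisfies \eqref{hs2}, shows that every admissible choice gives an equivalent quasi-norm, which proves (iii).
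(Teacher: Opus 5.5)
Your architecture matches the paper's, and several parts are fine as they stand:
\begin{itemize}
\item the identification $\dot b^\alpha_{p,q}(W)=\dot b^\alpha_{p,q}(\mathbb A)$;
\item the domination $|A_Q(S_\varphi\vec f)_Q|\le\sup_{\mathbb A,\widetilde\varphi,Q}(\vec f)$;
\item the treatment of $T_\psi$ through kernel decay, Lemma \ref{tl012001} and a scalar $s^*_\lambda$ bound (the overlap range is $|i-j|\le l$ for some $l$ depending on the supports, not $|i-j|\le1$);
\item the convergence of $T_\psi\vec s$, and your arguments for (ii) and (iii).
\end{itemize}
For $T_\psi$ you only need the trivial inequality $\|\vec g\|_{\dot B^\alpha_{p,q}(W,\varphi)}\lesssim\|\sup_{\mathbb A,\varphi}(\vec g)\|_{\dot b^\alpha_{p,q}}$, because the kernel estimate is uniform in $x\in Q$.

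The genuine gap is the reverse inequality $\|\sup_{\mathbb A,\varphi}(\vec f)\|_{\dot b^\alpha_{p,q}}\lesssim\|\vec f\|_{\dot B^\alpha_{p,q}(W,\varphi)}$. The boundedness of $S_\varphi$, and hence (iii), rests on it. You flag it as the main obstacle but do not prove it.

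The Plancherel--P\'olya inequality you display is trivially true: the term $R=Q$ on the right already dominates the left. It also points the wrong way. You need to dominate $\sup_{y\in Q}$ by an \emph{integral} of $\varphi_j*\vec f$ over a neighbourhood of $Q$, so that $W^{1/p}$ can be inserted. Comparing $|W^{1/p}(x)\vec v|$ with $|A_Q\vec v|$ in $L^{p\pm\epsilon}(Q)$ for a fixed vector $\vec v$ does not do this either. After sampling, the vector $(\varphi_j*\vec f)(z)$ moves with the integration variable, and such a comparison is not what Proposition \ref{improve} provides. What is needed is integrability of the matrix norm $\|A_QW^{-1/p}(\cdot)\|^u$ on $Q$, i.e.\ $W\in\mathcal A_{p,u}$ for some $u>0$ (Proposition \ref{improve}(i)), together with a mechanism that exploits it.

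The paper's mechanism, with $\vec g:=\varphi_j*\vec f$, runs as follows.
\begin{itemize}
\item \eqref{Reproduce} with a free shift, Lemma \ref{equivalent} with exponent $pr$, averaging over the shift, and \eqref{AiAj} give \eqref{kappa}.
\item Write $|A_j(z)\vec g(z)|\le\|A_j(z)W^{-1/p}(z)\|\,|W^{1/p}(z)\vec g(z)|$ and apply H\"older with exponents $\frac1{1-r}$ and $\frac1r$.
\item Lemma \ref{fsj} absorbs the weight mismatch; it needs $\frac{pr}{1-r}<u$, whence $r<[\max\{p,1+\frac pu\}]^{-1}$.
\item Lemma \ref{summary B} finishes.
\end{itemize}

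Your fallback through the rescaled maximal operator also works, provided you freeze $x$ instead of $A_Q$. In this step it even avoids \eqref{AiAj}.
\begin{itemize}
\item Choose $v\in(0,1]$ with $\mathcal M^{(v)}_{W,p}$ bounded on $L^p$ (Theorem \ref{bounded B1}, shrinking $v$ by H\"older if needed).
\item For a.e.\ $x\in Q\in\mathcal D_j$, the function $W^{1/p}(x)\vec g(\cdot)$ is band-limited. The same sampling argument and a decomposition into annuli around $x$ give $\sup_{y\in Q}|W^{1/p}(x)\vec g(y)|\lesssim\mathcal M^{(v)}_{W,p}(W^{1/p}\vec g)(x)$.
\item Combine this with $|Q|\sup_{y\in Q}|A_Q\vec g(y)|^p\lesssim\int_Q\sup_{y\in Q}|W^{1/p}(x)\vec g(y)|^p\,dx$ and Theorem \ref{bounded B2}(ii).
\end{itemize}
This yields the required bound level by level. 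As written, however, neither route is carried out.
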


To prove Theorem \ref{dl1103},
we first establish some properties of $\mathcal A_{p,\infty}$ in Section \ref{key},
and then give the proof in Section \ref{phi}.
Based on Theorem \ref{dl1103}(iii), if there exists no confusion,
we denote $\dot{B}^\alpha_{p,q}(W,\varphi)$ simply by $\dot{B}^\alpha_{p,q}(W)$.

Recall that the $\varphi$-transform characterization
was first established by Frazier and Jawerth \cite[Theorem 2.2]{FJ90}.
This characterization is useful because it allows us to
translate the boundedness of various operators on function spaces
into the boundedness of almost diagonal operators on sequence spaces
(see, for example, \cite{bl26,fsyy,ftw88,GJN17,syy,tor,YYZ14,YSY05,syy10}).

Note that for scalar weights, $A_\infty=\bigcup_{r\in[1,\infty)} A_r$.
However, for matrix weights with $m\geq 2$ and $p\in(0,\infty)$,
we have
$\mathcal A_p\subsetneqq\mathcal A_{p,\infty} \subsetneqq \bigcup_{r\in(0,\infty)} \mathcal A_r$
(see \cite[(vi) and (vii) of Proposition 2.26]{byyz25}).
A natural question is whether
the assumption $W\in \mathcal A_{p,\infty}$ in Theorem \ref{dl1103}
can be relaxed to $W\in \bigcup_{r\in(0,\infty)} \mathcal A_r$.
The following result shows that the assumption
$W\in \mathcal A_{p,\infty}$ is optimal
for Theorem \ref{dl1103} to hold.
To simplify the presentation, in the remainder of this section
we assume that $n=1$, $m=2$, $A=2$, and $\rho(\cdot)=|\cdot|$.

\begin{theorem}\label{boundedfail_p}
Let $p \in (0, \infty)$, $q\in (0,\infty]$, and $\alpha \in \mathbb{R}$.
Then, for any $r\in(p,\infty)$,
there exist $W\in \mathcal A_{r,\infty}\setminus\mathcal A_{p,\infty}$
and $\varphi\in\mathcal S$ satisfying \eqref{hs2}
such that
$S_\varphi:\dot{B}^\alpha_{p,q}(W,\widetilde{\varphi})
\to\dot{b}^\alpha_{p,q}(W)$ is unbounded.
\end{theorem}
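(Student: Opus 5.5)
We build an explicit $2\times 2$ weight on $\mathbb R$ that is $\mathcal A_{r,\infty}$ but has a singularity at the origin too strong for $\mathcal A_{p,\infty}$. The intended construction is the one the paper announces in Proposition \ref{WAp}: a power-type weight
\[
W(x)=U(x)^{*}\begin{pmatrix}|x|^{a}&0\\0&|x|^{c}\end{pmatrix}U(x),
\]
where $U(x)$ is a rotation by an angle depending on $x$ (for instance, oscillating with $\log|x|$, or jumping across dyadic annuli). Because of the rotation, the two eigen-directions mix, so the quantity
\[
\fint_B\bigl\|W^{\frac1r}(x)W^{-\frac1r}(y)\bigr\|^{r}\,dx
\]
on balls $B$ centered at $0$ behaves like a ratio of powers $|x|^{(a-c)}$ rather than a scalar $A_\infty$ quantity. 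We choose the exponents so that the logarithmic $\mathcal A_{r,\infty}$ condition holds, with the relevant integral of $|x|^{\text{exponent}}$ convergent, while for $p<r$ the analogous quantity diverges. This membership claim is taken directly from Proposition \ref{WAp}, tuned to the given $p<r$.

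To violate boundedness of $S_\varphi$ we test on a single vector distribution $\vec f$, for example
\[
\vec f=\psi_{Q_0}\vec e,
\]
with $Q_0$ the dyadic interval $[0,1)$ at level $0$ and a fixed direction $\vec e$. Alternatively we take a finite sum at one scale, so that $\varphi_j*\vec f$ vanishes for all but finitely many $j$ by frequency support. Then
\[
\|\vec f\|_{\dot B^\alpha_{p,q}(W,\widetilde\varphi)}\sim\bigl\|\,|W^{\frac1p}(\widetilde\varphi_0*\psi_{Q_0})\vec e|\,\bigr\|_{L^p},
\]
which is finite because $\widetilde\varphi_0*\psi$ is Schwartz and, for suitable $\vec e$, $\int|W^{1/p}\vec e|^p<\infty$ near $0$. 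The aim is to arrange that the direction $\vec e$, together with the smooth, nowhere-aligned profile of the convolution, keeps the function-space norm finite. Meanwhile the sequence norm contains the term
\[
\int_{Q}\bigl|W^{\frac1p}(x)\langle\vec f,\varphi_Q\rangle\bigr|^p\,dx\,|Q|^{-\frac p2}
\]
for $Q\ni 0$. There, $\langle\vec f,\varphi_Q\rangle$ is a fixed constant vector times a nonzero scalar, and this constant vector lies in a direction $\vec v$ with
\[
\int_{Q}\bigl|W^{\frac1p}(x)\vec v\bigr|^p\,dx=\infty,
\]
because for $p<r$ some direction has a nonintegrable power near the origin.

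A cleaner alternative, if the single-function test is too rigid, is to take $\vec f_N$ and show that the ratio of norms tends to infinity. This is the natural approach when $W^{1/p}$ is locally integrable in every direction; it is guaranteed since $W$ is a matrix weight, but the failure of $\mathcal A_{p,\infty}$ then shows up through the averages of $\|W^{1/p}(x)W^{-1/p}(y)\|^p$. In that case we pick $\vec f_N=T_\psi$ applied to coefficients $\vec s_Q=W^{-1/p}$-adapted vectors, namely reducing-operator directions at a point $y_N$ that is chosen close to $0$ and varies with $N$. By the rotation, $\varphi_j*\vec f_N$ near $y_N$ stays in the good direction pointwise, whereas the dyadic average over the $Q\ni y_N$ picks up the bad direction, whose $L^p$-average is a factor of at least $N$ larger.

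The main obstacle is the first half of this estimate: showing that $\|\vec f_N\|_{\dot B}$ is \emph{not} comparably large. One must control the oscillation of $W$ against the Schwartz tails of $\varphi_j*\vec f_N$ without using Theorem \ref{dl1103}, which is unavailable here. We would do this by direct computation in $n=1$, exploiting the explicit rotation angle.
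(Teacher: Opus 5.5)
\paragraph{Route 1 fails.} It rests on a false claim. No direction $\vec v$ can satisfy $\int_Q|W^{\frac1p}(x)\vec v|^p\,dx=\infty$, because $|W^{\frac1p}(x)\vec v|^p\le\|W(x)\|\,|\vec v|^p$ and $\|W\|\in L^1_{\mathrm{loc}}$ for every matrix weight; you concede this yourself in your next paragraph.

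For the weight of Proposition \ref{WAp}, the eigenvalues of $W(t)$ are $1$ and $|t|^r$, so $W$ is even bounded near $0$. The singularity that destroys $\mathcal A_{p,\infty}$ lives in $W^{-1}$, and no single coefficient can detect it.

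Worse, any test $\vec f=f\vec e$ whose direction $\vec e=(e_1,e_2)^T$ is constant in $t$ collapses both norms to scalar norms with the single weight
$|W^{\frac1p}\vec e|^p=(1+t^2)^{-\frac p2}(|e_1-te_2|^2+|t|^{\frac{2r}p}|te_1+e_2|^2)^{\frac p2}$.
This weight is comparable to a product of power weights: it vanishes only if $e_1=0$, and then like $|t|^p$ at $0$. Hence it is a scalar $A_\infty$ weight, and Theorem \ref{dl1103} with $m=1$ rules out any blow-up along such tests. The test functions must change direction with $t$.

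\paragraph{Route 2 has the right heuristic but misses the key construction.} You correctly describe the mechanism: a function pointwise aligned with the small eigendirection of $W$, against a sampled coefficient that is constant on a cube and hence misaligned. But you leave exactly the decisive estimate open. The coefficients you propose (reducing-operator directions at a point $y_N$) do not specify a function that tracks a rotating eigenvector, and neither norm is computed.

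The missing idea is this. For $W(t)=O(t)^TD(t)O(t)$, whose rotation angle is $\arctan t$, the eigenvector for the eigenvalue $|t|^r$ is the \emph{linear} field $(t,1)^T$. Multiplying a band-limited function by a polynomial does not enlarge its Fourier support.

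So take $\vec f_j(t):=\gamma(2^jt)(t,1)^T$ with $\gamma\not\equiv0$ and $\operatorname{supp}\widehat\gamma\subset J:=[-\frac32,-1]\cup[1,\frac32]$. Choose $\varphi$ satisfying \eqref{hs2} with $\widehat\varphi=1$ on $J$ and $\widehat\varphi(2^v\xi)=0$ for $\xi\in J$ and $v\neq0$. Then $\widetilde\varphi_v*\vec f_j$ equals $\vec f_j$ if $v=j$ and $0$ otherwise, exactly. No control of Schwartz tails against the oscillation of $W$ is needed.

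For the upper bound: since $|W^{\frac1p}(t)\vec f_j(t)|=|t|^{\frac rp}(1+t^2)^{\frac12}|\gamma(2^jt)|$, you get $\|\vec f_j\|_{\dot B^\alpha_{p,q}(W,\widetilde\varphi)}\lesssim2^{j(\alpha-\frac{r+1}p)}$.

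For the lower bound: $(S_\varphi\vec f_j)_{Q_{j,k_0}}=2^{-\frac j2}\vec f_j(2^{-j}k_0)$ for some $k_0$ with $\gamma(k_0)\neq0$. This constant vector satisfies $|W^{\frac1p}(t)\vec f_j(2^{-j}k_0)|\ge|\gamma(k_0)|(1+t^2)^{-\frac12}|t-2^{-j}k_0|$. Hence $\|S_\varphi\vec f_j\|_{\dot b^\alpha_{p,q}(W)}\gtrsim2^{j(\alpha-\frac{p+1}p)}$, and the ratio is $\gtrsim2^{j\frac{r-p}p}\to\infty$ as $j\to\infty$.

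This also shows why the specific rotation matters. With an angle oscillating in $\log|x|$ or jumping across dyadic annuli, as you suggest, the eigenvector field is no longer a polynomial, and the exact frequency computation is lost.
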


By giving a doubling matrix weight which
is not in $\bigcup_{r\in(0,\infty)} \mathcal A_r$, Frazier and Roudenko \cite[Theorem 7.1]{fr04}, who attribute this example to Fedor Nazarov,
proved that the doubling condition is insufficient to guarantee the boundedness of $S_\varphi$. Although the proof of Theorem
\ref{boundedfail_p} is inspired by their argument,
the construction of the matrix weight is essentially different.
Specifically, their example is a doubling weight that
does not belong to $\bigcup_{r\in(0,\infty)} \mathcal A_r$,
whereas our setting requires $\mathcal A_{p,\infty}$-matrix weights.
Furthermore, some other known existing nontrivial matrix weights,
including those constructed by Bownik \cite[Proposition 5.3]{b01}
and Bickel et al. \cite[Remark 3.6]{blm17},
are not suitable for our purposes. Consequently,
the proof of Theorem \ref{boundedfail_p} requires
the construction of a new nontrivial matrix weight.

\begin{proposition}\label{WAp}
Let $r,p \in (0, \infty)$.
For any $t\in\mathbb R$, let
$$
W(t):= [O(t)]^T D(t) O(t),
$$
where
$$
O(t):=\frac{1}{\sqrt{1+t^2}} \begin{pmatrix}
1 & -t \\
t & 1 \end{pmatrix}
\quad \text{and} \quad
D(t):= \begin{pmatrix} 1 & 0 \\ 0 & |t|^r \end{pmatrix}.
$$
Then $W\in \mathcal A_{p,\infty}$
if and only if $r\leq p$.
\end{proposition}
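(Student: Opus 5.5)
The plan is to reduce the matrix quantity to an explicit scalar expression and then test it on balls. Since $O(t)$ is orthogonal, $W^{\frac1p}(t)=[O(t)]^TD(t)^{\frac1p}O(t)$, and so
$$
\left\|W^{\frac1p}(x)W^{-\frac1p}(y)\right\|
=\left\|D(x)^{\frac1p}\,O(x)[O(y)]^T\,D(y)^{-\frac1p}\right\|.
$$
Write $O(t)$ as the rotation by $\theta(t):=\arctan t$. Then $O(x)[O(y)]^T$ is the rotation by $\theta(x)-\theta(y)$, with
$$
c:=\cos(\theta(x)-\theta(y))=\frac{1+xy}{\sqrt{(1+x^2)(1+y^2)}},\qquad
s:=\sin(\theta(x)-\theta(y))=\frac{x-y}{\sqrt{(1+x^2)(1+y^2)}}.
$$
The matrix inside the norm has entries $c$, $-s|y|^{-\frac rp}$, $s|x|^{\frac rp}$, and $c|x|^{\frac rp}|y|^{-\frac rp}$. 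Since all norms on $M_2(\mathbb C)$ are equivalent, this gives the two-sided estimate
$$
\left\|W^{\frac1p}(x)W^{-\frac1p}(y)\right\|^p\sim |c|^p+|s|^p|y|^{-r}+|s|^p|x|^r+|c|^p|x|^r|y|^{-r},
$$
with constants depending only on $p$. The whole proof is then a computation of
$\exp(\fint_B\log\fint_B[\cdots]\,dx\,dy)$ over intervals $B$. Here one uses $\log(a_1+\cdots+a_4)\le\log 4+\max_i\log a_i$ for the upper bound and keeps a single term for the lower bound. The local integrability condition $\log_+(\cdots)\in L^1(B)$ follows along the way, because the $x$-averages grow at most like a power of $|y|^{-1}$ near $0$, and $\log|y|$ is integrable.

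\emph{Necessity} ($r>p$ forces $W\notin\mathcal A_{p,\infty}$): take $B=(-R,R)$ with $R\to\infty$ and keep only the term $|s|^p|x|^r$. Fix $y$ with $1\le|y|\le R/4$. For $x\in(R/2,R)$ we have $|x-y|\sim R$ and $\sqrt{1+x^2}\sim R$, so $|s|\sim|y|^{-1}$. Hence $\fint_B|s|^p|x|^r\,dx\gtrsim R^r|y|^{-p}$. For the remaining $y$, the bound $\fint_B\|\cdots\|^p\,dx\gtrsim\fint_B|c|^p\,dx\gtrsim 1$ (or a similar crude bound) shows their contribution to the outer average is $O(1)$. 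Averaging $\log(R^r|y|^{-p})$ over $1\le|y|\le R/4$ gives $(r-p)\log R+O(1)$. Therefore the $\mathcal A_{p,\infty}$ quantity on $(-R,R)$ is at least $cR^{r-p}\to\infty$.

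\emph{Sufficiency} ($r\le p$): I would split balls into three types.
\begin{enumerate}[(a)]
\item Intervals $B$ with $|B|\le \frac12\operatorname{dist}(B,0)$. On such a $B$ the function $W$ is smooth and all of $W(x)$, $W(y)$, $O(x)[O(y)]^T$ are comparable uniformly. More precisely, $|x|\sim|y|$, so $|x|^r|y|^{-r}\sim1$, and $|s|\lesssim|B|/(1+|y|^2)$, so $|s|^p(|x|^r+|y|^{-r})\lesssim 1$ by the size condition. Hence the whole expression is bounded.
\item Intervals with $|B|\gtrsim\operatorname{dist}(B,0)$ and $|B|\le 1$. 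These are contained in some $(-2R,2R)$ with $R\le1$ and comparable measure. There $c\sim1$ and $|s|\lesssim R$, so the terms are $\lesssim 1+R^p|y|^{-r}+R^{p+r}+R^r|y|^{-r}$. Since $r\le p$ and $R\le 1$, this is $\lesssim 1+R^r|y|^{-r}$. The log-average of the latter over $|y|<2R$ is $O(1)$.
\item Intervals with $|B|\gtrsim\operatorname{dist}(B,0)$ and $|B|>1$. These reduce to $(-R,R)$ with $R\ge1$. I would bound $|s|\lesssim\min\{1,\,R/\max(1,|y|)\}$ and $|c|\le 1$. Then I would estimate each of the four $x$-averages as a power of $R$ times a power of $\max(1,|y|)$ or of $|y|^{-1}$, and log-average in $y$. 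The dangerous term is the one from the necessity computation, whose exponent is $(r-p)\log R\le 0$. The term $|x|^r|y|^{-r}$ gives $r\log R-r(\log R-1)=O(1)$, and the remaining terms are similar.
\end{enumerate}

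I expect case (c) to be the main obstacle. One must check that every term's logarithmic average has its $\log R$-coefficient nonpositive exactly when $r\le p$. The region $|y|\le1$ inside a large ball also needs care, since there $|s|\sim1$ and $|y|^{-r}$ is unbounded; it is handled because $\log|y|$ is integrable and the region has relative measure $R^{-1}$, which kills the $r\log R$ growth.
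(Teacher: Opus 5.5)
Your overall route is viable and differs from the paper's, but case (c) of the sufficiency part does not close as written.

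\textbf{The gap in case (c).} For $x,y\in(-R,R)$ with $R\ge1$ one has $R/\max(1,|y|)\ge1$. So your bound $|s|\lesssim\min\{1,R/\max(1,|y|)\}$ reduces to $|s|\lesssim1$. With it, the dangerous term only gives $\fint_{(-R,R)}|s|^p|x|^r\,dx\lesssim R^r$, whose logarithmic average is $r\log R$. The factor $|y|^{-p}$ behind your claimed exponent $(r-p)\log R$ cannot be extracted. Nor can you use $|s|\lesssim1/\max(1,|y|)$, which is false: at $x=0$, $|s|=|y|/\sqrt{1+y^2}\approx1$.

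The estimate you need keeps the whole denominator:
\[
|s|\le\frac{|x|+|y|}{\sqrt{(1+x^2)(1+y^2)}}\le\frac{1}{\sqrt{1+x^2}}+\frac{1}{\sqrt{1+y^2}},
\qquad\text{hence}\qquad
|s|^p|x|^r\lesssim\frac{|x|^r}{(1+|y|)^p}+\frac{|x|^r}{(1+|x|)^p}.
\]
The second piece is $\le1$ pointwise because $r\le p$. This is a second place where $r\le p$ enters. It comes from the region $|x|\lesssim|y|$, where $|s|$ is only $\lesssim(1+|x|)^{-1}$, and your bookkeeping of the dangerous term does not see it. The first piece averages in $x$ to $R^r(1+|y|)^{-p}$, and
\[
\fint_{(-R,R)}\log_+\bigl[R^r(1+|y|)^{-p}\bigr]\,dy\le r\fint_{(-R,R)}\log_+(R/|y|)\,dy=r.
\]

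Two further points. After $\log(a_1+\cdots+a_4)\le\log4+\max_i\log a_i$, you must control the $\log_+$-averages of the individual terms. A nonpositive $\log R$-coefficient in the plain logarithmic average of each term does not control the average of their maximum. Likewise, $\log_+$ is what lets you pass from $B$ to the enlarged intervals in (b) and (c), since the outer average is not monotone under enlargement. With these repairs, (a)--(c) go through.

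\textbf{Necessity.} Your argument on $(-R,R)$, $R\to\infty$, via the $(2,1)$ entry $s|x|^{r/p}$ is correct. The set $\{1\le|y|\le R/4\}$ has relative measure about $\frac14$, so you get $R^{(r-p)/4}$ rather than $R^{r-p}$, which is harmless. For $|y|<1$ you should use that same entry rather than $|c|$, since $\fint_{(-R,R)}|c|^p\,dx\to0$ when $y=0$.

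\textbf{Comparison with the paper.} The paper argues differently in both directions. It verifies only $W\in\mathcal A_{r,\infty}$. At exponent $r$, the factor $|x|$ coming from $D(x)^{1/r}$ is absorbed by $\sqrt{1+x^2}$. This gives the pointwise bound $\|W^{1/r}(x)W^{-1/r}(y)\|\le3(1+|x|/|y|)$ and a two-case logarithmic integral. All $p\ge r$ then follow from the inclusion $\mathcal A_{r,\infty}\subset\mathcal A_{p,\infty}$. For $q<r$, it shows $W\notin\mathcal A_{q,\infty}$ at small scales, on $(0,\varepsilon)$, via the $(1,2)$ entry $s|y|^{-r/q}$, obtaining $[W]_{\mathcal A_{q,\infty}}\gtrsim e^r\varepsilon^{q-r}$. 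This is the same mechanism that makes your case (b) need $r\le p$. Your route avoids the inclusion lemma and shows that for $r>p$ the condition fails at large scales as well as small ones, at the cost of a longer case analysis.
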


\begin{proof}
Note that, for any $p\in[r,\infty)$,
$\mathcal A_{r,\infty}\subset \mathcal A_{p,\infty}$
(see \cite[Proposition 4.2(i)]{bf4}).
Therefore, to prove the present proposition,
it suffices to show that $W\in \mathcal A_{r,\infty}
\setminus \bigcup_{q\in(0,r)} \mathcal A_{q,\infty}$.

We first prove that $W\in \mathcal A_{r,\infty}$.
For any interval $J\subset\mathbb R$, let
$$
\mathcal I(J)
:=\exp\left(\fint_J\log\left(\fint_J\left\|W^{\frac1r}(t)W^{-\frac1r}(s)\right\|^r \, dt\right) \, ds\right).
$$
Observe that, for any $U:=[u_{ij}]_{i,j\in\{1,2\}}\in M_2(\mathbb C)$,
\begin{align*}
\|U\|
&=\sup_{\genfrac{}{}{0pt}{}{(z_1,z_2)\in\mathbb C^2}{|z_1|^2+|z_2|^2=1}}
\left(\left|u_{11}z_1+u_{12}z_2\right|^2+\left|u_{21}z_1+u_{22}z_2\right|^2\right)^{\frac12} \\
&\leq \left[(|u_{11}|+|u_{12}|)^2+(|u_{21}|+|u_{22}|)^2\right]^{\frac12}
\leq |u_{11}|+|u_{12}|+|u_{21}|+|u_{22}|.
\end{align*}
For any $t\in\mathbb R$, $O(t)$ is orthogonal,
which further implies that $\|O(t)U\|=\|UO(t)\|=\|U\|$ for all $U\in M_2(\mathbb C)$.
From these, we deduce that, for any $t\in\mathbb R$ and $s\in\mathbb R \setminus\{0\}$,
\begin{align} \label{desired}
\left\| W^{\frac1r}(t) W^{-\frac1r}(s) \right\|
&=\left\| D^{\frac1r}(t) O(t) O^T(s) D^{-\frac1r}(s) \right\| \notag\\
&= \left\| \frac{1}{\sqrt{1+t^2}\sqrt{1+s^2}}
\begin{pmatrix} 1+ts & -\frac{1}{|s|}(t-s) \notag\\
|t|(t-s) & \frac{|t|}{|s|} (1+ts) \end{pmatrix} \right\| \notag\\
&\leq \left( 1+\frac{|t|}{|s|}\right) \frac{|1+ts|}{\sqrt{1+t^2}\sqrt{1+s^2}}
+ \left( |t|+\frac{1}{|s|}\right) \frac{|t-s|}{\sqrt{1+t^2}\sqrt{1+s^2}} \notag\\
&=: \mathrm{J}_1 + \mathrm{J}_2.
\end{align}
For the term $\mathrm{J}_1$, a direct computation shows that
\begin{align*}
\mathrm{J}_1
= \left( 1+\frac{|t|}{|s|}\right)
\left[ \frac{1+2ts+(ts)^2}{1+t^2+s^2+(ts)^2} \right]^{\frac12}
\leq 1+\frac{|t|}{|s|}.
\end{align*}
As for $\mathrm{J}_2$, we can bound it by
\begin{align*}
\mathrm{J}_2
&= \frac{|t|}{\sqrt{1+t^2}} \frac{|t-s|}{\sqrt{1+s^2}}
+ \frac{|t-s|}{|s|} \frac{1}{\sqrt{1+t^2}\sqrt{1+s^2}} \\
&\leq \frac{|t-s|}{\sqrt{1+s^2}} + \frac{|t-s|}{|s|}
\leq 2\frac{|t|+|s|}{|s|}
= 2 \left( 1+\frac{|t|}{|s|}\right) .
\end{align*}
Substituting the above estimates of $\mathrm{J}_1$ and $\mathrm{J}_2$
into \eqref{desired}, we obtain, for any $t\in\mathbb R$
and $s\in \mathbb R \setminus\{0\}$,
$$
\left\|W^{\frac1r}(t)W^{-\frac1r}(s)\right\|
\leq 3 \left( 1+ \frac{|t|}{|s|}\right).
$$
From this and \cite[Lemma 2.40]{bf3}, we deduce that,
for any $s \in J\setminus \{0\}$,
\begin{align*}
\fint_J \left\|W^{\frac1r}(t)W^{-\frac1r}(s)\right\|^r\,dt
\lesssim \fint_J 1+ \frac{|t|^r}{|s|^r}\, dt
\sim 1 + \frac{(|c_J|+|J|)^r}{|s|^r}.
\end{align*}
Therefore,
\begin{align}\label{logint}
\mathcal I(J)
\lesssim \exp\left(\fint_J \log\left( 1+ \frac{(|c_J|+|J|)^r}{|s|^r} \right)\,ds\right).
\end{align}
Note that, for any $s\in J$,
\begin{align}\label{sJ}
|c_J|-\frac12|J|
\leq |s|
\leq |c_J|+\frac12|J|<|c_J|+|J|,
\end{align}
If $|c_J|\leq |J|$, it follows from \eqref{sJ} that $J\subset[-2|J|,2|J|]$, and hence
\begin{align*}
\fint_J \log\left( 1+ \frac{(|c_J|+|J|)^r}{|s|^r} \right)\,ds
&\le \frac{1}{|J|} \int_{[-2|J|,2|J|]}
\log\left( 1+ \frac{(2|J|)^r}{|s|^r} \right)\,ds \\
&= 2 \int_{-1}^1
\log\left( 1+ \frac 1{|t|^r} \right)\,dt.
\end{align*}
If $|c_J|> |J|$, then \eqref{sJ} implies that $|s|>\frac12|c_J|$, and hence
\begin{align*}
\fint_J \log\left( 1+ \frac{(|c_J|+|J|)^r}{|s|^r} \right)\,ds
< \fint_J \log(1+4^r) \,ds
= \log(1+4^r).
\end{align*}
Substituting the above two estimates into \eqref{logint},
we obtain $\mathcal I(J)\lesssim 1$,
which further implies that $W\in \mathcal A_{r,\infty}$.

Next, we prove that $W\notin\mathcal A_{q,\infty}$
for all $q\in(0,r)$.
Observe that the operator norm of a matrix is bounded below by
the absolute value of any of its entries.
Using this and the orthogonality of $O(t)$, we find that, for any $t,s\in(0,1)$,
\begin{align*}
\left\| W^{\frac1q}(t) W^{-\frac1q}(s) \right\|^q
&=\left\| D^{\frac1q}(t) O(t) O^T(s) D^{-\frac1q}(s) \right\|^q \\
&= \left\| \frac{1}{\sqrt{1+t^2}\sqrt{1+s^2}}
\begin{pmatrix} 1+ts & -|s|^{-\frac rq}(t-s) \notag\\
|t|^{\frac rq}(t-s) & (\frac{|t|}{|s|})^{\frac rq} (1+ts) \end{pmatrix} \right\|^q \\
&\geq \left[\frac{1}{\sqrt{1+t^2}\sqrt{1+s^2}}
\left| |s|^{-\frac rq} (t-s) \right|\right]^q
\geq \frac{|t-s|^q}{2^q s^r}.
\end{align*}
This, together with \cite[Lemma 2.40]{bf3}, further implies that,
for any $\varepsilon\in(0,1)$ and $s\in(0,\varepsilon)$,
\begin{align*}
\fint_{(0,\varepsilon)} \left\| W^{\frac1q}(t) W^{-\frac1q}(s) \right\|^q \,dt
\gtrsim \fint_{(0,\varepsilon)} \frac{|t-s|^q}{s^r} \,dt
\sim \frac{(s+\varepsilon)^q}{s^r}
\sim \frac{\varepsilon^q}{s^r}.
\end{align*}
Thus, for any $\varepsilon\in(0,1)$,
\begin{align*}
[W]_{\mathcal A_{q,\infty}}
\gtrsim \exp\left(\fint_{(0,\varepsilon)}
\log \frac{\varepsilon^q}{s^r} \,ds\right)
= e^r \varepsilon^{q-r}.
\end{align*}
Since $q\in(0,r)$, letting $\varepsilon \to 0^+$ yields
$[W]_{\mathcal A_{q,\infty}}=\infty$.
Therefore, $W\notin \mathcal A_{q,\infty}$.
This completes the proof of Proposition \ref{WAp}.
\end{proof}

Now, we  prove Theorem \ref{boundedfail_p}.

\begin{proof}[Proof of Theorem \ref{boundedfail_p}]
For any $r\in(p,\infty)$, let $W$ be as in Proposition \ref{WAp}.
Then $W\in \mathcal A_{r,\infty}\setminus\mathcal A_{p,\infty}$.
To construct $\varphi$, we first choose a radially decreasing function
$\psi \in C_{\mathrm c}^\infty$ such that
$\psi(\xi) = 1$ for $|\xi| \le \frac32$,
$\psi(\xi) = \frac12$ for $|\xi| =\frac74 $,
and $\psi(\xi) = 0$ for $|\xi| \ge 2$.
Let $\varphi$ be defined via its Fourier transform
$\widehat{\varphi}(\cdot):= \psi(\cdot)- \psi(2\cdot)$.
A straightforward calculation shows that $\varphi\in\mathcal S$ satisfies
\begin{enumerate}[{\rm(i)}]
\item $\operatorname{supp} \widehat\varphi
\subset [-2, -\frac34] \cup [\frac34, 2]$,

\item for any $\xi\in J:= [-\frac32, -1] \cup [1, \frac32]$,
we have $\widehat{\varphi}(\xi) = 1$
and $\widehat{\varphi}(2^v \xi) = 0$ for all $v\in\mathbb Z\setminus\{0\}$,

\item for any $\xi\in [-\frac74, -\frac78] \cup [\frac78, \frac74]$,
we have $\widehat{\varphi}(\xi) \geq \frac12$.
\end{enumerate}
Statements (i) and (iii) imply that $\varphi$ satisfies \eqref{hs2}.

Now, we prove $S_\varphi:\ \dot{B}^\alpha_{p,q}(W,\widetilde{\varphi})
\to\dot{b}^\alpha_{p,q}(W)$ is unbounded.
Let $\gamma\in\mathcal S$ satisfy $\operatorname{supp} \widehat{\gamma} \subset J$
and $\gamma\not\equiv 0$.
For any $j\in\mathbb Z$ and $t\in\mathbb R$,
define $\vec{f}_j(t):=\gamma(2^j t)(t,1)^T$.
We first estimate $\| \vec{f}_j \|_{\dot{B}^{\alpha}_{p,q}(W,\widetilde{\varphi})}$.
A straightforward calculation shows that $\operatorname{supp} \widehat{\vec f_j}\subset 2^j J$.
This, together with (ii), further implies that, for any $v\in\mathbb Z$,
\begin{align}\label{phifj}
\widetilde{\varphi}_v * \vec{f}_j
= \left( \widehat{\widetilde{\varphi}_v} \widehat{\vec{f}_j} \right)^{\vee}
= \left( \overline{\widehat{\varphi}(2^{-v} \cdot)} \widehat{\vec{f}_j} (\cdot) \right)^{\vee}
=\begin{cases}
\vec{f}_j & \text{if } v=j, \\
0 & \text{if } v\neq j.
\end{cases}
\end{align}
From \eqref{phifj}, we infer that
\begin{align}\label{Ej}
\left\| \vec{f}_j \right\|_{\dot{B}^{\alpha}_{p,q}(W,\widetilde{\varphi})}
= 2^{j \alpha} \left[ \int_{\mathbb{R}}
\left| W^{\frac1p}(t) \vec{f}_j(t) \right|^p \,dt \right]^{\frac1p}.
\end{align}
By the definitions of $W$ and $\vec f_j$, we conclude that
\begin{align*}
\left|W^{\frac1p}(t)\vec{f}_j(t)\right|
&=\frac{|\gamma(2^j t)|}{1+t^2}\left|\begin{pmatrix}1&t\\ -t&1\end{pmatrix}
\begin{pmatrix}1&0\\0& |t|^{\frac{r}{p}}\end{pmatrix}
\begin{pmatrix}1&-t\\ t&1\end{pmatrix}
\begin{pmatrix} t\\1\end{pmatrix}\right| \\
&=|t|^{\frac rp} \left(1+t^2\right)^{\frac12} |\gamma(2^j t)|
\leq |t|^{\frac rp} (1+|t|) |\gamma(2^j t)|,
\end{align*}
and hence
\begin{align} \label{82}
\int_{\mathbb R}\left| W^{\frac1p}(t)\vec{f}_j(t) \right|^p \,dt
&\leq \int_{\mathbb R} |t|^{r} (1+|t|)^p |\gamma(2^j t)|^p \,dt
=2^{-j(r+1)} \int_{\mathbb R} |s|^{r}
(1+|2^{-j}s|)^p |\gamma(s)|^p \, ds \notag \\
&\leq 2^{-j(r+1)} \int_{\mathbb R} |s|^{r}
(1+|s|)^p |\gamma(s)|^p \, ds
\sim 2^{-j(r+1)}.
\end{align}
Substituting this estimate into \eqref{Ej},
we conclude that, for any $j\in\mathbb N$,
\begin{align}\label{Bj}
\left\| \vec{f}_j \right\|_{\dot{B}^{\alpha}_{p,q}(W,\widetilde\varphi)}
\lesssim 2^{j(\alpha-\frac{r+1}{p})},
\end{align}
where the implicit constant  depends only on $\gamma$, $p$, and $r$.

We next estimate $\|S_\varphi \vec f_j \|_{\dot{b}^{\alpha}_{p,q}(W)}$.
Using \eqref{phifj}, we conclude that, for any $j\in\mathbb N$ and any $k\in\mathbb Z$,
\begin{align}\label{fjQ}
\left\langle \vec{f}_j, \varphi_{Q_{j,k}} \right\rangle
=|Q_{j,k}|^{\frac12}\left(\widetilde{\varphi}_j*\vec f_j\right)(x_{Q_{j,k}})
=2^{-\frac j2} \vec{f}_j(2^{-j} k).
\end{align}
Note that $\operatorname{supp} \widehat{\gamma}\subset J
\subset\{\xi\in\mathbb R:\ |\xi| \leq \frac32 \}$.
By this and \cite[Lemma 3.4]{bf2}, we conclude that
there exists $k_0 \in\mathbb Z$ such that $\gamma(k_0) \neq 0$.
Since $O(t)^T$ is an orthogonal matrix, we obtain
\begin{align} \label{822}
\left|W^{\frac1p}(t)\vec{f}_j(2^{-j} k_0)\right|
&= \left|O(t)^TD^{\frac1p}(t)O(t)\vec{f}_j(2^{-j} k_0) \right|
=\left|D^{\frac1p}(t)O(t)\vec{f}_j(2^{-j} k_0) \right| \notag \\
&= |\gamma(k_0)| \frac1{\sqrt{1+t^2}}
\left|\begin{pmatrix}1&0\\0& |t|^{\frac{r}{p}}\end{pmatrix}
\begin{pmatrix}1&-t\\ t&1\end{pmatrix}
\begin{pmatrix} 2^{-j} k_0\\1\end{pmatrix}\right| \notag \\
&= |\gamma(k_0)| \frac1{\sqrt{1+t^2}}
\left|\begin{pmatrix}t-2^{-j} k_0\\|t|^{\frac rp}(t(2^{-j} k_0)+1)\end{pmatrix} \right| \notag \\
&\ge \left|\gamma(k_0)\right|\left(1+t^2\right)^{-\frac12}|t-2^{-j} k_0|.
\end{align}
Applying this and \eqref{fjQ}, we conclude that, for any $j\in\mathbb N$,
\begin{align*}
\left\|S_\varphi \vec f_j\right\|_{\dot{b}^{\alpha}_{p,q}(W)}
&\ge 2^{j \alpha} \left[ \int_{Q_{j, k_0}}
\left| W^{\frac1p}(t) \vec{f}_j(2^{-j} k_0)
\right|^p \,dt \right]^{\frac1p} \\
&\ge 2^{j\alpha}|\gamma(k_0)|\left( \int_{Q_{j,k_0}}
\left( 1+t^2\right)^{-\frac p2}|t-2^{-j} k_0|^{p}\,dt
\right)^{\frac 1p} \\
&\sim 2^{j\alpha}|\gamma(k_0)|\left( \int_{Q_{j,k_0}}
|t-2^{-j} k_0|^{p}\,dt\right)^{\frac 1p}
\sim 2^{j(\alpha-\frac{p+1}p)}.
\end{align*}
From this, \eqref{Bj}, and the assumption $r > p$, we infer that, for any $j\in\mathbb N$,
$$
\frac{\|S_\varphi \vec f_j\|_{\dot{b}^{\alpha}_{p,q}(W)}}{\| \vec{f}_j \|_{\dot{B}^{\alpha}_{p,q}(W,\widetilde\varphi)}}
\gtrsim \frac{2^{j(\alpha-\frac{p+1}p)}}{2^{j(\alpha-\frac{r+1}{p})}}
=2^{j\frac{r-p}{p}}
\to\infty
$$
as $j\to\infty$.
This completes the proof of Theorem \ref{boundedfail_p}.
\end{proof}

Next, we prove that under the same conditions as Theorem \ref{boundedfail_p},
the matrix-weighted Besov spaces fail to be well-defined.

\begin{theorem}\label{well_defined_fail}
Let $p \in (0, \infty)$, $q\in (0,\infty]$, and $\alpha \in \mathbb{R}$.
Then, for any $r\in(p,\infty)$,
there exist $W\in \mathcal A_{r,\infty}\setminus\mathcal A_{p,\infty}$
and $\varphi,\phi\in\mathcal S$ satisfying \eqref{hs2}
such that
$\dot{B}^\alpha_{p,q}(W,\varphi)
\neq\dot{B}^\alpha_{p,q}(W,\phi)$.
\end{theorem}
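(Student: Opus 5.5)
We take $W$ to be the matrix weight of Proposition \ref{WAp}, with the given $r\in(p,\infty)$; by that proposition, $W\in\mathcal A_{r,\infty}\setminus\mathcal A_{p,\infty}$. We take $\varphi$ to be the function built in the proof of Theorem \ref{boundedfail_p}, so that $\widehat\varphi\equiv1$ on $J=[-\frac32,-1]\cup[1,\frac32]$ and $\widehat\varphi(2^v\cdot)\equiv0$ on $J$ for $v\neq0$. The plan is to compare it with a second function $\phi\in\mathcal S$ that satisfies \eqref{hs2}, has $\operatorname{supp}\widehat\phi\subset[-2,-\frac34]\cup[\frac34,2]$, and whose Fourier transform is \emph{not} constant on $J$.

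\textbf{Test functions.} Fix $\gamma$ with $\operatorname{supp}\widehat\gamma\subset J$ and put $\vec f_j(t)=\gamma(2^jt)(t,1)^T$, as before. By \eqref{Ej} and \eqref{82}, $\|\vec f_j\|_{\dot B^\alpha_{p,q}(W,\varphi)}\lesssim 2^{j(\alpha-\frac{r+1}{p})}$; the same computation with $\varphi$ in place of $\widetilde\varphi$ is harmless. The reason this norm is small is that $(t,1)^T$ lies in the degenerate direction of $W(t)$ near $t=0$.

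\textbf{Key computation for $\phi$.} Only $v\in\{j-1,j,j+1\}$ contribute to $\phi_v*\vec f_j$. Writing $t=(t-x)+x$ inside the convolution gives
\begin{align*}
\phi_v*\big(t\gamma(2^jt)\big)=t\,\big(\phi_v*\gamma(2^j\cdot)\big)(t)-\big((x\phi_v)*\gamma(2^j\cdot)\big)(t).
\end{align*}
Hence
\begin{align*}
\phi_v*\vec f_j(t)=h_{v,j}(t)(t,1)^T-\big(k_{v,j}(t),0\big)^T,
\end{align*}
where $k_{j,j}(t)=2^{-j}k(2^jt)$ and $k:=(x\phi)*\gamma$. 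Since $\widehat{x\phi}=i\widehat\phi\,'$ and $\widehat\phi\,'\not\equiv0$ on $J$, we can choose $\gamma$ so that $k\not\equiv0$.

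The first term is controlled exactly as in \eqref{82}, giving $O(2^{j(\alpha-\frac{r+1}{p})})$. For the second term, as in \eqref{822}, one has $|W^{\frac1p}(t)(k,0)^T|\ge(1+t^2)^{-\frac12}|k|$, because the first row of $D^{\frac1p}O$ is $(1,-t)/\sqrt{1+t^2}$. Therefore
\begin{align*}
\|\vec f_j\|_{\dot B^\alpha_{p,q}(W,\phi)}\gtrsim 2^{j(\alpha-1-\frac1p)}-C2^{j(\alpha-\frac{r+1}{p})}.
\end{align*}
Since $r>p$, the ratio of the $\phi$-norm to the $\varphi$-norm grows like $2^{j\frac{r-p}{p}}$.

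\textbf{Single counterexample.} To get one element $\vec f$ that lies in $\dot B^\alpha_{p,q}(W,\varphi)$ but not in $\dot B^\alpha_{p,q}(W,\phi)$, we set
\begin{align*}
\vec f=\sum_{i\in\mathbb N}c_i\vec f_{N_i}
\end{align*}
with a sparse sequence $N_i$ (gaps at least $3$) and
\begin{align*}
c_i:=i^{-2}\,2^{-N_i(\alpha-\frac{r+1}{p})}.
\end{align*}
The sparseness makes the frequency pieces of the different terms non-interacting for both $\varphi$ and $\phi$, so each norm is just the $\ell^q$ combination of the individual pieces. The $\varphi$-norm is then at most $(\sum_i i^{-2q})^{1/q}<\infty$. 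The $\phi$-norm is at least $\sup_i i^{-2}2^{N_i\frac{r-p}{p}}=\infty$ once $N_i$ grows faster than logarithmically.

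\textbf{Main obstacle.} The hardest step is checking that this series converges in $(\mathcal S_\infty')^m$. The coefficients $c_i$ may grow, for instance when $\alpha$ is small. My first attempt is to pair with $\theta\in\mathcal S_\infty$ and use that $\widehat\theta$ vanishes to infinite order at $\mathbf 0$, which yields decay $2^{-MN_i}$ for every $M$ and so absorbs any exponential growth of $c_i$. If this becomes delicate, the fallback is to shift to $j\to-\infty$ or to rescale the construction.

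A secondary point is the non-vanishing of $k$ at scale $v=j$. Here I would choose $\phi$ with $\widehat\phi$ strictly monotone on $[1,\frac32]$, so that $\widehat\phi\,'\widehat\gamma\not\equiv0$. I would also check that the neighbouring scales $v=j\pm1$ cannot cancel this lower bound; this holds because the lower bound comes from a single summand in the $\ell^q$ norm.
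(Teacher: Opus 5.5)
Your construction is the paper's: same $W$, same $\varphi$, same $\vec f_j$. The paper's choice $\phi:=\varphi(\cdot-1)$ is the special case of your decomposition with $\widehat\phi(\xi)=e^{-i\xi}$ on $J$. There $k=\gamma(\cdot-1)$ and $\phi_j*\vec f_j=\vec f_j(\cdot-2^{-j})$, so allowing a general $\phi$ whose Fourier transform is nonconstant on $J$ is a harmless generalization. Two steps, however, fail as written.

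First, the weights $i^{-2}$ do not work for all $q$. The bound \eqref{82} is in fact two-sided, since $(1+t^2)^{p/2}\ge1$. Hence the $\varphi$-quasi-norm of your $\vec f$ is comparable to $(\sum_i i^{-2q})^{1/q}$, which is infinite for $q\in(0,\frac12]$, and then $\vec f\notin\dot B^\alpha_{p,q}(W,\varphi)$. Either use $i^{-M}$ with $Mq>1$, or do as the paper does: take coefficients $2^{j\delta}$ with $\delta:=-\alpha+\frac{r+1}{p}-\varepsilon$ and $\varepsilon\in(0,\frac rp-1)$, summed over all $j\in\mathbb N$. No sparsity is needed, because only $v=j$ meets the frequency support of $\vec f_j$ (compare \eqref{phifj new}).

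Second, your convergence step cites the wrong property. The vanishing of $\widehat\theta$ at $\mathbf 0$ is irrelevant here, because $\widehat{\vec f_{N_i}}$ is supported in $2^{N_i}J$, at frequencies tending to infinity. What you need is the rapid decay of $\widehat\theta$ at infinity, together with the uniform bound $\|\widehat{\vec f_j}\|_{L^1}\lesssim 2^{-j}\|(\widehat\gamma)'\|_{L^1}+\|\widehat\gamma\|_{L^1}$. Parseval then gives $|\langle\vec f_j,\theta\rangle|\lesssim 2^{-jN}\|\widehat\theta\|_{N,0}$ for every $N$. This absorbs any exponential growth of the coefficients and even yields convergence in $(\mathcal S')^2$; it is exactly the paper's argument.

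Your fallback of passing to $j\to-\infty$ would not work. The estimate \eqref{82} uses $j\ge0$. For $j\to-\infty$, both quasi-norms of $\vec f_j$ are of order $2^{j(\alpha-\frac{r+p+1}{p})}$, so their ratio stays bounded. The reason is that the degeneracy of $W$ at $t=0$ is detected only at fine scales.
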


\begin{proof}
Let $W,\varphi,\{\vec f_j\}_{j\in\mathbb Z}$
be as in the proof of Theorem \ref{boundedfail_p}
and $\phi(\cdot):=\varphi(\cdot-1)$.
Then $W\in \mathcal A_{r,\infty}\setminus\mathcal A_{p,\infty}$,
$\varphi,\phi\in\mathcal S$ satisfy \eqref{hs2}, and,
for any $v,j\in\mathbb Z$,
\begin{align}\label{phifj new}
\varphi_v * \vec{f}_j
=\begin{cases}
\vec{f}_j & \text{if } v=j, \\
0 & \text{if } v\neq j
\end{cases}
\quad\text{and}\quad
\phi_v * \vec{f}_j(\cdot)
=\begin{cases}
\vec{f}_j(\cdot-2^{-j}) & \text{if } v=j, \\
0 & \text{if } v\neq j.
\end{cases}
\end{align}
It remains to show $\dot{B}^\alpha_{p,q}(W,\varphi)
\neq\dot{B}^\alpha_{p,q}(W,\phi)$.
Let $\varepsilon\in(0,\frac rp-1)$,
$\delta:=-\alpha+\frac{r+1}p-\varepsilon$,
and $\vec g:= \sum_{j=1}^\infty 2^{j\delta} \vec f_j$.
Using Parseval's identity, we obtain,
for any $N\in\mathbb N$ and $h\in\mathcal S$,
\begin{align*}
\sum_{j=1}^\infty 2^{j\delta}
\left|\left\langle\vec f_j, h\right\rangle\right|
= \sum_{j=1}^\infty 2^{j\delta}
\left|\left\langle \widehat{\vec f_j}, \widehat{h}\right\rangle\right|
\leq \sum_{j=1}^\infty 2^{j\delta}
\left\| \widehat{\vec f_j} \right\|_{L^1}
\sup_{\xi\in \operatorname{supp} \widehat{\vec f_j}}
(1+|\xi|)^{-N} \|\widehat{h}\|_{N,0},
\end{align*}
where $\|\widehat{h}\|_{N,0}$ is as in \eqref{S}.
Note that $\operatorname{supp} \widehat{\vec f_j}
\subset 2^j ([-\frac32, -1] \cup [1, \frac32])$ and
\begin{align*}
\left\| \widehat{\vec f_j} \right\|_{L^1}
=\left\| \begin{pmatrix}
i2^{-2j}(\widehat\gamma)'(2^{-j}\cdot) \\
2^{-j}\widehat\gamma(2^{-j}\cdot)
\end{pmatrix} \right\|_{L^1}
\sim 2^{-j} \|(\widehat\gamma)'\|_{L^1} + \|\widehat\gamma\|_{L^1}.
\end{align*}
Thus, for any $N\in\mathbb N$ with $N>\delta$ and any $h\in\mathcal S$,
\begin{align*}
\sum_{j=1}^\infty 2^{j\delta}
\left|\left\langle\vec f_j, h\right\rangle\right|
\lesssim \sum_{j=1}^\infty 2^{j(\delta-N)} \|\widehat{h}\|_{N,0}
\sim \|\widehat{h}\|_{N,0},
\end{align*}
and hence $\vec g\in (\mathcal S')^2$.
Next, we show $\vec g\in\dot{B}^\alpha_{p,q}(W,\varphi)
\setminus\dot{B}^\alpha_{p,q}(W,\phi)$.
By \eqref{phifj new} and \eqref{82}, we conclude that
\begin{align*}
\left\|\vec g\right\|_{\dot{B}^\alpha_{p,q}(W,\varphi)}
&= \left\|\left\{2^{j(\alpha+\delta)}
\left[\int_{\mathbb R}\left| W^{\frac1p}(t)\vec{f}_j(t) \right|^p \,dt\right]^{\frac1p}
\right\}_{j=1}^\infty\right\|_{\ell^q} \\
&\lesssim \left\|\left\{2^{j\alpha} 2^{j\delta} 2^{-j\frac{r+1}p}
\right\}_{j=1}^\infty\right\|_{\ell^q}
= \left\|\left\{2^{-j\varepsilon}
\right\}_{j=1}^\infty\right\|_{\ell^q}<\infty.
\end{align*}
From \eqref{phifj new} and \eqref{822}
with $2^{-j}k_0$ replaced by $t-2^{-j}$,
it follows that, for any $j\in\mathbb Z$,
\begin{align*}
2^{j\alpha} \left\|\,\left| W^{\frac1p} \phi_j*\vec g \right|\,\right\|_{L^p}
&= 2^{j(\alpha+\delta)} \left[\int_{\mathbb R}
\left| W^{\frac1p}(t)\vec{f}_j(t-2^{-j}) \right|^p \,dt\right]^{\frac1p} \\
&\gtrsim 2^{j(\alpha+\delta)} \left[\int_{\mathbb R}
|\gamma(2^j t-1)|^p \left(1+t^2\right)^{-\frac p2} 2^{-jp} \,dt\right]^{\frac1p}.
\end{align*}
Note that $\operatorname{supp} \widehat{\gamma}
\subset\{\xi\in\mathbb R:\ |\xi| \leq \frac32 \}$.
By this and \cite[Lemma 3.4]{bf2}, we conclude that
$\gamma \neq 0$ almost everywhere.
Let $E\subset[-1,0]$ be a closed interval such that
the minimum of $|\gamma|$ on $E$ is greater than $0$.
Then
\begin{align*}
\int_{\mathbb R}
|\gamma(2^j t-1)|^p \left(1+t^2\right)^{-\frac p2} 2^{-jp} \,dt
\geq |2^{-j}(E+1)| \left[\min_{s\in E} |\gamma(s)|\right]^p
2^{-\frac p2} 2^{-jp}
\gtrsim 2^{-j(p+1)},
\end{align*}
and hence
\begin{align*}
2^{j\alpha} \left\|\,\left| W^{\frac1p} \phi_j*\vec g \right|\,\right\|_{L^p}
\gtrsim 2^{j(\alpha+\delta-1-\frac1p)}
=2^{j(\frac rp-1-\varepsilon)}.
\end{align*}
Note that $\frac rp-1-\varepsilon>0$. Thus,
\begin{align*}
\left\|\vec g\right\|_{\dot{B}^\alpha_{p,q}(W,\phi)}
&\gtrsim \left\|\left\{ 2^{j(\frac rp-1-\varepsilon)}
\right\}_{j=1}^\infty\right\|_{\ell^q}
=\infty.
\end{align*}
This completes the proof of Theorem \ref{well_defined_fail}.
\end{proof}

\section{$\mathcal A_{p,\infty}$-Matrix Weights on Spaces of Homogeneous Type} \label{key}

In this section, we investigate various properties of
$\mathcal A_{p,\infty}$-matrix weights on spaces of homogeneous type.
Since $(\mathbb{R}^n, \rho, dx)$ is a space of homogeneous type
(see \cite[Proposition 2.3]{gjabownik06}),
these results naturally yield the corresponding properties of
$\mathcal A_{p,\infty}$-matrix weights in the anisotropic setting,
thereby laying the foundation for the subsequent study of
matrix-weighted anisotropic Besov spaces.

A natural question is why we do not study anisotropic matrix weights directly.
This is because anisotropic dyadic cubes fail to form a nested structure,
a property that is fundamental in many standard arguments.
As a result, even when our interest lies in anisotropic weights,
it is often necessary to regard them as weights on a space of homogeneous type
and to treat them within this more general framework.

We first recall the concept of spaces of homogeneous type
in the sense of Coifman and Weiss \cite{CG,CG77}.

\begin{definition}\label{quasidis}
A \emph{quasi-metric space} $({\mathcal X}, d)$
is a non-empty set ${\mathcal X}$ equipped with
a \emph{quasi-metric} $d$,
namely, a non-negative function defined on
${\mathcal X} \times {\mathcal X}$ satisfying that,
for any $x,y,z\in{\mathcal X}$,
\begin{enumerate}[{\rm(i)}]
\item $d(x,y)=0$ if and only if $x=y$;

\item $d(x,y)=d(y,x)$;

\item there exists a constant
$A_0\in[1,\infty)$, independent of $x$,
$y$, and $z$, such that
\begin{align*}
d(x,z)\leq A_0[d(x,y)+d(y,z)].
\end{align*}
\end{enumerate}
\end{definition}
The ball $B$ of $\mathcal{X}$, centered at $x \in \mathcal{X}$ with radius $r \in (0, \infty)$, is defined by setting
$$B := \left\{y \in \mathcal{X} : d(y, x) < r\right\} =: B_d(x, r);$$
moreover, for any $\tau \in (0, \infty)$, $\tau B := B_d(x, \tau r)$.
For any ball $B$ in $\mathcal{X}$, let $c_B$ be its center.
For any point $x \in \mathcal{X}$, we assume that the balls $\{B_d(x, r)\}_{r \in (0, \infty)}$ form a basis of open neighborhoods of $x$.

\begin{definition}
Let $({\mathcal X}, d)$ be a quasi-metric space,
equipped with a Borel regular measure $\mu$, which
means that open sets are measurable and every measurable set
$F\subset{\mathcal X}$ is contained in a Borel set $E$ satisfying
that $\mu(F)=\mu(E)$. Moreover, assume that,
for any ball $B\subset{\mathcal X}$, $\mu(B)\in(0,\infty)$.
Then the triple $({\mathcal X}, d, \mu)$ is called
\emph{a space of homogeneous type} if $\mu$
satisfies the following doubling condition:
there exists a positive constant $C\in[1, \infty)$
such that, for any ball $B\subset{\mathcal X}$,
\begin{align}\label{double}
\mu(2B)\leq C\mu(B).
\end{align}
\end{definition}

Let $C_{\mu} := \sup_{\text{ball } B \subset \mathcal{X}} \mu(2B)/\mu(B)$.
Then $C_{\mu} \in [1, \infty)$ is the smallest constant such that \eqref{double} holds true and, for any ball $B \subset \mathcal{X}$ and any $\lambda \in [1, \infty)$,
\begin{align}\label{doublelambda}
\mu(\lambda B) \leq C_{\mu} \lambda^\omega \mu(B),
\end{align}
where $\omega := \log_2 C_{\mu}$.
Moreover, we always assume that $(\mathcal{X}, d, \mu)$ is nonatomic, that is, $\mu(\{x\}) = 0$ for any $x \in \mathcal{X}$.
Throughout this section, a positive constant $C$ is said to
depend on $\mathcal X$ if it depends on $A_0$ and $C_\mu$.

A matrix-valued function $W :\ \mathcal{X} \to M_m(\mathbb{C})$ is called
a \emph{matrix weight} if it satisfies all conditions
in Definition \ref{MatrixWeight} with $\mathbb R^n$ replaced by $\mathcal X$.

\begin{definition}
Let $p\in(0, \infty)$. A matrix weight $W$ on $\mathcal X$ is called an \emph{$\mathcal A_{p,\infty}\left(\mathcal X,\mathbb{C}^{m}\right)$-matrix weight}
if $ W $ satisfies that, for any ball $B\subset{\mathcal X}$,
$\log _{+}(\fint_{B}\|W^{\frac{1}{p}}(x) W^{-\frac{1}{p}}
(\cdot)\|^{p} \,d\mu(x)) \in L^{1}(B)$
and
\begin{align*}
[W]_{\mathcal A_{p, \infty}(\mathcal X, \mathbb{C}^{m})}
:=\sup_{\mathrm{ball}\,B\subset\mathcal X}
\exp \left(\fint_{B} \log \left(
\fint_{B}\left\|W^{\frac{1}{p}}(x) W^{-\frac{1}{p}}(y)\right\|^{p} \,d\mu(x)
\right) \,d\mu(y)\right)<\infty.
\end{align*}
The $\mathcal A_{p,\infty}(\mathcal X, \mathbb{C}^{m})$-matrix weights
reduce to $A_{\infty}(\mathcal X)$-weights when $m = 1$.
\end{definition}

The remainder of this section is organized as follows.
In Subsection \ref{self}, we establish the self-improvement
property of matrix weights.
In Subsection \ref{rescaled}, we study the boundedness of
rescaled matrix-weighted maximal operators.
Subsection \ref{critical} introduces the
critical rescaling index of matrix weights and investigates its properties.
Finally, in Subsection \ref{keylemma}, we establish a sharp estimate
for the composition of reducing operators.

\subsection{Self-Improving Property} \label{self}

In the Euclidean setting, Bu et al. \cite[Proposition 4.1]{bf4}
established the self-improving property of $\mathcal A_{p,\infty}$-matrix weights.
In this subsection, we extend this result to spaces of homogeneous type.
We begin with recalling the concept of
$\mathcal A_p(\mathcal X,\mathbb{C}^m)$-matrix weights
(see, for example, \cite[p.\,490]{fr21}).

\begin{definition} \label{Ap}
Let $p\in (0,\infty)$.
A matrix weight
$ W $ on $ \mathcal X $
is called an
$\mathcal A_p(\mathcal X,\mathbb{C}^m) $-\emph{matrix weight}
if $ W $ satisfies that when $ p \in (0, 1] $,
\begin{align*}
[W]_{\mathcal A_p(\mathcal X,\mathbb{C}^m)}
:= \sup_{\mathrm{ball}\, B\subset\mathcal X}\mathop{\operatorname{ess\,sup}}_{y\in B}
\fint_B \left\| W^{\frac{1}{p}}(x) W^{-\frac{1}{p}}(y) \right\|^p \, d\mu(x)
< \infty
\end{align*}
or that, when $p\in (1,\infty)$,
\begin{align*}
[W]_{\mathcal A_p(\mathcal X,\mathbb{C}^m)}
:= \sup_{\mathrm{ball}\, B\subset\mathcal X}
\fint_B \left[ \fint_B \left\| W^{\frac{1}{p}}(x) W^{-\frac{1}{p}}(y) \right\|^{p'}
\, d\mu(y) \right]^{\frac{p}{p'}} \, d\mu(x)
< \infty.
\end{align*}
In the case $m=1$, $\mathcal A_p(\mathcal X, \mathbb C^m)$-matrix weights
reduce to the classical scalar $A_{p\vee1}(\mathcal X)$-weights.
\end{definition}

Now, we introduce a new weight class $\mathcal A_{p,u}(\mathcal X,\mathbb{C}^m)$.

\begin{definition}
Let $p,u\in (0,\infty)$.
A matrix weight $ W $ on $ \mathcal X $ is called an
$\mathcal A_{p,u}(\mathcal X,\mathbb{C}^m)$-\emph{matrix weight}
if
\begin{align*}
[W]_{\mathcal A_{p,u}(\mathcal X,\mathbb{C}^m)}
:= \sup_{\mathrm{ball}\, B\subset\mathcal X}
\fint_B \left[ \fint_B \left\| W^{\frac{1}{p}}(x) W^{-\frac{1}{p}}(y) \right\|^u
\, d\mu(y) \right]^{\frac pu} \, d\mu(x)
< \infty.
\end{align*}
\end{definition}

The following self-improving property is the main result of this subsection.

\begin{proposition}\label{improve}
Let $p\in(0,\infty)$.
Then the following statements hold.
\begin{enumerate}[{\rm (i)}]
\item $\mathcal A_{p,\infty}(\mathcal X,\mathbb{C}^{m})
=\bigcup_{u\in(0,\infty)}\mathcal A_{p,u}(\mathcal X,\mathbb{C}^{m})$.

\item For any $u\in(0,\infty)$,
$\mathcal A_{p,u}(\mathcal X,\mathbb{C}^{m})
=\bigcup_{q\in(u,\infty)}\mathcal A_{p,q}(\mathcal X,\mathbb{C}^{m})$.

\item If $p\in(0,1]$ and $W$ is a matrix weight, then
$$
W\in\mathcal A_p(\mathcal X,\mathbb C^m)
\Longleftrightarrow
\sup_{u\in(0,\infty)}
[W]_{\mathcal A_{p,u}(\mathcal X,\mathbb{C}^{m})}<\infty.
$$

\item If $p\in(1,\infty)$, then $\mathcal A_p(\mathcal X,\mathbb C^m)
=\bigcup_{u\in(p',\infty)}\mathcal A_{p,u}(\mathcal X,\mathbb{C}^{m})$.
\end{enumerate}
\end{proposition}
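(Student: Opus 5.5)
The plan is to reduce all four statements to scalar estimates through reducing operators. Fix a ball $B\subset\mathcal X$. Let $A_B$ be a positive definite matrix with $|A_B\vec z|\sim(\fint_B|W^{\frac1p}(x)\vec z|^p\,d\mu(x))^{\frac1p}$ for all $\vec z\in\mathbb C^m$, with constants depending only on $m$ and $p$. Its existence follows from John's ellipsoid theorem and does not use the geometry of $\mathcal X$. Applying this to the columns of $W^{-\frac1p}(y)$ gives
$$
\fint_B\left\|W^{\frac1p}(x)W^{-\frac1p}(y)\right\|^p\,d\mu(x)\sim\left\|A_BW^{-\frac1p}(y)\right\|^p .
$$
Set $h_B(y):=\|A_BW^{-\frac1p}(y)\|$. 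Then $[W]_{\mathcal A_{p,\infty}}\sim\sup_B\exp(\fint_B\log h_B^p\,d\mu)$.

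For the $\mathcal A_{p,u}$ quantity, I plan to show by the same device that
$$
\fint_B\left[\fint_B\left\|W^{\frac1p}(x)W^{-\frac1p}(y)\right\|^u\,d\mu(y)\right]^{\frac pu}d\mu(x)\sim\left(\fint_B h_B^u\,d\mu\right)^{\frac pu}.
$$
The route is to introduce a second reducing operator $\bar A_B$ with $|\bar A_B\vec z|\sim(\fint_B|W^{-\frac1p}(y)\vec z|^u\,d\mu(y))^{\frac1u}$. Both sides then become comparable to $\|A_B\bar A_B\|^p$, using $\|M\|=\|M^*\|$ and the self-adjointness of $W^{\pm\frac1p}$. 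Thus every weight constant in the proposition becomes a norm of the scalar function $h_B$ over $B$: the geometric mean for $\mathcal A_{p,\infty}$, the $L^u$-mean for $\mathcal A_{p,u}$, and, when $p\in(0,1]$, $\operatorname{ess\,sup}_B h_B^p$ for $\mathcal A_p$.

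With this reduction, several inclusions are immediate from Jensen and H\"older. For $u<q$ we have $\exp\fint_B\log h_B\le(\fint_B h_B^u)^{\frac1u}\le(\fint_B h_B^q)^{\frac1q}\le\operatorname{ess\,sup}_B h_B$. Hence
\begin{itemize}
\item[] $\mathcal A_{p,q}\subset\mathcal A_{p,u}\subset\mathcal A_{p,\infty}$, which gives the inclusion ``$\supseteq$'' in both (i) and (ii);
\item[] in (iii), $\mathcal A_p$ controls $\sup_u[W]_{\mathcal A_{p,u}}$, and conversely $(\fint_B h_B^u)^{\frac1u}\to\operatorname{ess\,sup}_B h_B$ as $u\to\infty$ with the constants uniform in $B$, which gives the reverse implication;
\item[] in (iv), $\mathcal A_p=\mathcal A_{p,p'}$ by definition, so (iv) follows from (ii) with $u=p'$ together with the monotonicity in $u$.
\end{itemize}

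The substance is therefore the inclusion ``$\subseteq$'' in (i) and (ii). In both cases this is an openness statement: one needs a reverse H\"older inequality
$$
\left(\fint_B h_B^{q}\right)^{\frac1q}\lesssim\left(\fint_B h_B^{u}\right)^{\frac1u},
$$
for some $q>u$ in (ii), and for some $q>0$ with the right-hand side replaced by the geometric mean in (i). The plan is to prove that the family $\{h_B\}$ behaves like a scalar $A_\infty$ family uniformly in $B$. The key structural input is a comparison across nested balls: for a ball $B'$ contained in a dilate of $B$, the doubling condition gives $|A_{B'}\vec z|\lesssim(\mu(B)/\mu(B'))^{\frac1p}|A_B\vec z|$. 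In the other direction, the weight condition applied on $B'$ shows that $\exp\fint_{B'}\log h_{B'}$ is bounded, and hence the averages of $h_B$ over subballs are controlled by their geometric means.

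Using Christ's (or Hytönen--Kairema's) dyadic cubes on $(\mathcal X,d,\mu)$, I would then run a Calder\'on--Zygmund stopping-time argument on the level sets of $h_B$. The aim is a good-$\lambda$ or John--Nirenberg estimate
$$
\mu\bigl(\{y\in B:\ h_B(y)>\lambda\,\mathrm{avg}\}\bigr)\le C\lambda^{-\varepsilon}\mu(B),
$$
where $\mathrm{avg}$ is the relevant geometric or $L^u$ mean. This is the Gehring-type lemma, and it gives the reverse H\"older inequality with a small gain $q-u$ depending only on $\mathcal X$, $m$, $p$ and the weight constant.

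I expect this step to be the main obstacle. The difficulty is that $h_B$ depends on $B$: at each stopping cube one must replace $A_B$ by $A_{Q}$ and control the error by the comparison above, while the anisotropic dyadic cubes of the earlier sections are not nested, so the whole argument has to be carried out with Christ cubes. I would first try to mirror the Euclidean proof of \cite[Proposition 4.1]{bf4}, replacing each Euclidean covering step by the corresponding dyadic-system property in $\mathcal X$.
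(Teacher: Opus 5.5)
Your reduction to $h_B=\|A_BW^{-\frac1p}\|$ (this is what Lemmas \ref{Apcha} and \ref{Apu} do), the Jensen--H\"older inclusions, and parts (iii) and (iv) are fine and agree with the paper. The gap is in the two inclusions ``$\subseteq$'' that carry all the content, where the inputs you name would not work.

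In (i), on a stopping cube $Q_i\subset Q$ one has $h_Q\le h_{Q_i}\|A_{Q_i}^{-1}A_Q\|$, so $\|A_{Q_i}^{-1}A_Q\|$ must be bounded. Your doubling comparison bounds $\|A_{Q_i}A_Q^{-1}\|$, which is the wrong direction. Your other input, that averages of $h_B$ are controlled by its geometric means, is false under $\mathcal A_{p,\infty}$. For $m=1$ one has $(\fint_Bh_B)^p\sim\fint_Bw\,(\fint_Bw^{-\frac1p})^p$, so that claim would force $w\in A_{1+p}$, which fails for $w=\rho^a$ with $a\ge p$ (Lemmas \ref{example} and \ref{rhoinf}).

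Since $h_Q$ is not yet known to be integrable, the stopping is done on averages of $\log_+h_Q$. Let $M:=\sup_{Q\in\mathbb D}\fint_Q\log_+h_Q$ and take the maximal dyadic $Q_i\subset Q$ with $\fint_{Q_i}\log_+h_Q>2M$. Then $\sum_i\mu(Q_i)\le\frac12\mu(Q)$, and by Lebesgue differentiation $\log_+h_Q\le 2M$ a.e.\ off $\bigcup_iQ_i$. The switching error is controlled by averaging $\log_+\|A_{Q_i}^{-1}A_Q\|\le\log_+\|A_{Q_i}^{-1}W^{\frac1p}(y)\|+\log_+h_Q(y)$ over $y\in Q_i$:
\begin{itemize}
\item the first average is $O(1)$ for every matrix weight, by $\log_+a\lesssim a^p$ and the reducing-operator property of $A_{Q_i}$;
\item the second is $\lesssim M$ by maximality at the dyadic parent of $Q_i$, whose measure is comparable to $\mu(Q_i)$ (Lemma \ref{QQ}).
\end{itemize}
Iterating yields $\log_+h_Q\le C\sum_{k\ge0}\mathbf 1_{\Omega_k}$ with $\mu(\Omega_k)\le2^{-k}\mu(Q)$, hence $\fint_Qh_Q^u\le3$ for small $u$.

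In (ii) you are missing a short lemma that removes the obstacle you anticipate. Start from $\|W^{-\frac1p}(y)U\|\le\|W^{-\frac1p}(y)W^{\frac1p}(x)\|\,\|W^{-\frac1p}(x)U\|$. Average the $u$-th power in $y$ over $B$, take logarithms, average in $x$ and apply Jensen. This shows that $\|W^{-\frac1p}U\|^u$ is a scalar $A_\infty$ weight with constant at most $[W]_{\mathcal A_{p,u}}^{\frac up}$, for every fixed nonzero $U\in M_m(\mathbb C)$ (Lemma \ref{4.13}). Taking $U=A_B$, each $h_B^u$ is an $A_\infty$ weight with constant independent of $B$. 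Lemmas \ref{two Ainfty} and \ref{scalarrh} then give a single $r>1$ with $(\fint_Bh_B^{ur})^{\frac1r}\lesssim\fint_{2A_0B}h_B^u$ for all balls $B$. Your own doubling comparison $h_B\lesssim h_{2A_0B}$ then gives $[W]_{\mathcal A_{p,ur}}\lesssim[W]_{\mathcal A_{p,u}}$. No matrix-level stopping time and no switching of reducing operators is needed here: the dependence of $h_B$ on $B$ is harmless because the scalar theory is applied to one fixed weight per ball.
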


\begin{remark}
When $\mathcal X=\mathbb R^n$, Proposition \ref{improve}(i)
is equivalent to \cite[Proposition 4.1]{bf4},
as can be seen from Lemma \ref{Apu} below.
\end{remark}

To prove Proposition \ref{improve}, we need several technical lemmas.
The following conclusion is precisely \cite[Theorem 4.1]{ta12},
which establishes the adjacent dyadic cube systems on $\mathcal X$.

\begin{lemma}\label{dcs}
Let $\delta \in (0,1)$ satisfy $96A_0^6\delta\leq1$.
Then there exist a positive integer $K$,
a countable set of points,
$\{z_{\tau}^{k,t}:\, k\in \mathbb{Z},
\,\tau\in\mathcal{A}_{k},
\,t\in\{1,\ldots,K\}\}\subset\mathcal X$
with $\mathcal{A} _{k}$ being a set of indices,
and a finite number of dyadic grids,
$\{\mathbb{D}^t:=\{Q_\tau^{k,t}:k\in\mathbb{Z},
\,\tau\in \mathcal{A}_{k}\} :\,t\in\{1,\ldots,K\}\}$, such that
\begin{enumerate}[\rm(a)]
\item for any $t\in\{1,\ldots,K\}$,
\begin{enumerate}[\rm(i)]
\item for any $k\in \mathbb{Z},
\bigcup_{\tau\in\mathcal{A}_k}Q_\tau^{k,t}=\mathcal X$ and $\{Q_\tau^{k,t}:\tau\in\mathcal{A}_k\}$ is disjoint;

\item if $k,l\in\mathbb{Z}$ and $k\leq l$, then,
for any $\tau\in\mathcal{A}_k$ and $\beta\in\mathcal{A}_l$, either $Q_\beta^{l,t}\subset Q_\tau^{k,t}$ or
$Q_{\beta}^{l,t}\cap Q_{\tau}^{k,t}=\emptyset;$

\item for any $k\in\mathbb{Z}$ and $\tau\in\mathcal{A}_k,B_d(z_{\tau}^{k,t},(12A_0^{4})^{-1}\delta^{k})
\subset Q_{\tau}^{k,t}\subset B_d(z_{\tau}^{k,t},4A_0^{2}\delta^{k})=:
B(Q_{\tau}^{k,t})$;
\end{enumerate}

\item there exists a  positive constant $C\in(1,\infty)$,
depending only on $A_0$ and $\delta$, such that,
for any ball
$B\subset\mathcal X$, there exist $t\in\{1,\ldots,K\}$ and $Q(B)\in \mathbb{D}^t$ such that $B\subset Q(B)$ and
$\sup\{d(x,y):\ x,y\in Q(B)\}\leq Cr_B.$
\end{enumerate}
\end{lemma}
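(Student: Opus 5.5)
This lemma is exactly \cite[Theorem 4.1]{ta12}, so in the paper it suffices to cite it. If I had to reprove it, I would follow the Hytönen--Kairema construction in two stages: first one dyadic grid with properties (a)(i)--(iii), then finitely many "adjacent" grids whose union gives (b).

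\emph{Stage 1: a single grid.} For each $k\in\mathbb Z$ I choose a maximal $\delta^k$-separated set of points $\{x^k_\alpha\}_\alpha\subset\mathcal X$, nested in $k$ (each level-$k$ net contained in the level-$(k+1)$ net). Maximality means every point of $\mathcal X$ lies within $\delta^k$ of some $x^k_\alpha$.
\begin{enumerate}[(1)]
\item I define a parent relation $(k+1,\beta)\le(k,\alpha)$: each level-$(k+1)$ point is attached to one level-$k$ point at distance less than $\delta^k$, choosing the nearest one with ties broken by a fixed well-ordering.
\item Taking the transitive closure gives a tree. I let the preliminary cube be
\[
\widetilde Q^k_\alpha:=\overline{\{x^l_\beta:\ (l,\beta)\le(k,\alpha)\}}.
\]
\item The geometric-series bound $\sum_{l\ge k}A_0^{l-k+1}\delta^{l}\lesssim\delta^k$, valid because $96A_0^6\delta\le1$, gives $\widetilde Q^k_\alpha\subset B_d(x^k_\alpha,4A_0^2\delta^k)$.
\item Separation of the net together with the nearest-point rule gives the inner ball $B_d(x^k_\alpha,(12A_0^4)^{-1}\delta^k)$ inside the cube; this uses the constraint on $\delta$.
\item The closed cubes overlap only on boundaries. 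I make them half-open by assigning each point to the lexicographically minimal chain of ancestors, which yields disjointness (a)(i) and nestedness (a)(ii) without destroying (a)(iii).
\end{enumerate}

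\emph{Stage 2: adjacent grids.} I produce $K$ grids $\mathbb D^1,\dots,\mathbb D^K$ from the same nets by altering, at every level, which level-$k$ point serves as the reference for the parent relation. Concretely, I label the level-$k$ points by a finite coloring $t\in\{1,\dots,K\}$ and, in grid $t$, declare the points of color $t$ to be the distinguished "big" centers (alternatively one uses the random labeling of \cite{ta12}). By doubling \eqref{doublelambda}, any ball of radius $\sim\delta^k$ contains at most $N(A_0,C_\mu,\delta)$ net points, and this is what makes a finite $K$ suffice. Each $\mathbb D^t$ is then built exactly as in Stage 1, so each satisfies (a).

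\emph{Property (b), the main obstacle.} Given a ball $B$, I pick $k$ with $\delta^{k+1}<r_B\le\delta^k$ and then want a generation $k-j_0$, with $j_0$ fixed, and a color $t$ such that $B$ lies deep inside the inner ball of some $Q\in\mathbb D^t$ of that generation. Then $B\subset Q$ and $\operatorname{diam}Q\le 8A_0^3\delta^{k-j_0}\le Cr_B$. The difficulty is that in a single grid $B$ may straddle cube boundaries at every scale, and the coloring must be arranged so that every point is "central" in at least one grid at each scale. I would first try the pigeonhole argument of \cite{ta12}:
\begin{enumerate}[(1)]
\item A point $x$ can be within distance $\epsilon\delta^{k}$ of the boundary of cubes of generation $k$ only for boundedly many colors.
\item This uses the quantitative smallness of the boundary layer, i.e. the fact that descendants of a center stay at distance $\ge(12A_0^4)^{-1}\delta^k$ from centers of the other colors.
\item With $K$ larger than that bound, some grid $\mathbb D^t$ has $c_B$ well inside a generation-$(k-j_0)$ cube.
\item Taking $j_0$ large enough that the corresponding inner radius exceeds $r_B$ gives $B\subset Q(B)$, which completes (b).
\end{enumerate}
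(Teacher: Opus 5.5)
Your proposal is correct and matches the paper, which gives no proof of this lemma and simply invokes \cite[Theorem 4.1]{ta12}. One caution on your optional reconstruction: the pigeonhole step in (b) needs the number of ``bad'' colors to be bounded independently of $K$, which is unjustified, and it is not how \cite{ta12} argues---there each level-$(k+1)$ net point of color $t$ is promoted to a level-$k$ center of $\mathbb D^t$, so for $\delta^{k+2}<r_B\le\delta^{k+1}$ one picks a level-$(k+1)$ net point $y$ with $d(c_B,y)<\delta^{k+1}$, lets $t$ be its color, and gets $B\subset B_d(y,2A_0\delta^{k+1})\subset B_d(y,(12A_0^4)^{-1}\delta^k)\subset Q$, where $Q$ is the level-$k$ cube of $\mathbb D^t$ centred at $y$ (this uses $96A_0^6\delta\le1$) and $\operatorname{diam}Q\le 8A_0^3\delta^k\le 8A_0^3\delta^{-2}r_B$.
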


The collection $\{\mathbb{D}^t\}_{t=1}^K$ in Lemma \ref{dcs}
is called an \emph{adjacent dyadic cube system} on $\mathcal X$.
Moreover, let
\begin{align*}
\mathbb{D}:=\bigcup_{t=1}^K\mathbb{D}^t.
\end{align*}
The following conclusion is exactly \cite[Corollary 7.4]{ta12}.

\begin{lemma}\label{ballcube}
The following statements hold.
\begin{enumerate}[\rm(i)]
\item For any $Q \in \mathbb{D}$, there exists a positive constant $C\in [1, \infty)$, depending only on $\mathcal X$, such that $\mu(B(Q)) \leq C\mu(Q)$, where $B(Q)$ is as in Lemma \ref{dcs}{(a)(iii)} with $Q_{\tau}^{k,t}$ replaced by $Q$.

\item For any ball $B\subset\mathcal X$,
there exist $Q(B) \in \mathbb{D}$ and a positive constant $C\in[1, \infty)$, depending only on $\mathcal X$ and $\delta$, such that $B\subset Q(B)$ and $\mu(Q(B)) \leq C\mu(B)$.
\end{enumerate}
\end{lemma}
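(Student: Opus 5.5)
Both statements follow from the doubling estimate \eqref{doublelambda} combined with the geometric information in Lemma \ref{dcs}. In each part we sandwich the relevant dyadic cube between two concentric balls whose radii have a bounded ratio.

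For (i), let $Q=Q_\tau^{k,t}\in\mathbb D$. Lemma \ref{dcs}(a)(iii) gives
$$
B_d\left(z_\tau^{k,t},(12A_0^4)^{-1}\delta^k\right)\subset Q\subset B(Q)=B_d\left(z_\tau^{k,t},4A_0^2\delta^k\right).
$$
Write $B_0:=B_d(z_\tau^{k,t},(12A_0^4)^{-1}\delta^k)$. Then $B(Q)=\lambda B_0$ with $\lambda:=48A_0^6\in[1,\infty)$. Applying \eqref{doublelambda} to $B_0$ yields
$$
\mu(B(Q))\le C_\mu\lambda^\omega\mu(B_0)\le C_\mu(48A_0^6)^\omega\mu(Q).
$$
This constant depends only on $A_0$ and $C_\mu$, that is, only on $\mathcal X$. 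In particular it is independent of $k$, $\tau$, $t$ and $\delta$.

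For (ii), fix a ball $B=B_d(c_B,r_B)$. By Lemma \ref{dcs}(b) there exist $t$ and $Q(B)\in\mathbb D^t$ such that $B\subset Q(B)$ and $\sup\{d(x,y):x,y\in Q(B)\}\le Cr_B$, where $C>1$ depends only on $A_0$ and $\delta$. Since $c_B\in B\subset Q(B)$, every $x\in Q(B)$ satisfies $d(x,c_B)\le Cr_B<2Cr_B$. Hence $Q(B)\subset B_d(c_B,2Cr_B)=(2C)B$. Applying \eqref{doublelambda} with $\lambda=2C\ge1$ gives
$$
\mu(Q(B))\le\mu((2C)B)\le C_\mu(2C)^\omega\mu(B).
$$
This constant depends only on $\mathcal X$ and $\delta$, as required.

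There is no genuine obstacle here. The only points to check are that the enlargement factors are at least $1$, so that \eqref{doublelambda} applies, and that the dilated balls are taken about the same center. The strict inequality $Cr_B<2Cr_B$ handles the open-ball convention.
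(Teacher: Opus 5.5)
Your argument is correct. The paper gives no proof of this lemma; it quotes it verbatim as \cite[Corollary 7.4]{ta12}. You instead derive it directly from Lemma \ref{dcs} and the doubling estimate \eqref{doublelambda}.

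In (i) you observe that $B(Q)$ is exactly the $48A_0^6$-fold concentric dilate of the inner ball in Lemma \ref{dcs}(a)(iii). In (ii) you use $c_B\in B\subset Q(B)$ and the diameter bound of Lemma \ref{dcs}(b) to get $Q(B)\subset (2C)B$. Both parts then close with \eqref{doublelambda}. This is the same ball-containment-plus-doubling mechanism the paper itself uses in the proof of Lemma \ref{QQ}.

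Compared with the citation, your route costs a few lines but gives explicit constants, namely $C_\mu(48A_0^6)^\omega$ and $C_\mu(2C)^\omega$. It also makes visible that the constant in (i) is uniform in $Q$ and independent of $\delta$. The constant in (ii) depends on $\delta$ only through the constant of Lemma \ref{dcs}(b).
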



The following lemma is well known (see, for instance, \cite[Lemma 2.3]{bf3}).

\begin{lemma}\label{exchange}
Let $A,B\in M_m({\mathbb C})$ be two nonnegative definite matrices.
Then $\|AB\|=\|BA\|$.
\end{lemma}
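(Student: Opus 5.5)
The plan is to reduce the claim to the fact that a matrix and its conjugate transpose have the same operator norm, using only that nonnegative definite matrices are self-adjoint.

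First, I would check that any nonnegative definite $M\in M_m(\mathbb C)$ satisfies $M^*=M$. Since $(M\vec z,\vec z)\ge 0$ for every $\vec z\in\mathbb C^m$, the number $(M\vec z,\vec z)$ is real, so $(M\vec z,\vec z)=(\vec z,M\vec z)=(M^*\vec z,\vec z)$. By polarization, a matrix $N$ with $(N\vec z,\vec z)=0$ for all $\vec z\in\mathbb C^m$ is zero, and applying this to $N=M-M^*$ gives $M=M^*$. Applied to $A$ and $B$, this yields
$$
(AB)^*=B^*A^*=BA .
$$

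Second, I would show that $\|M^*\|=\|M\|$ for every $M\in M_m(\mathbb C)$. The starting point is the duality formula
$$
\|M\|=\sup_{|\vec z|=|\vec w|=1}|(M\vec z,\vec w)|,
$$
which holds because $|M\vec z|=\sup_{|\vec w|=1}|(M\vec z,\vec w)|$ by the Cauchy--Schwarz inequality, with equality at $\vec w=M\vec z/|M\vec z|$ when $M\vec z\neq\vec 0$. Since $(M\vec z,\vec w)=\overline{(M^*\vec w,\vec z)}$, taking the supremum over $\vec z$ and $\vec w$ on the unit sphere gives $\|M\|=\|M^*\|$.

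Combining the two steps with $M=AB$ gives $\|AB\|=\|(AB)^*\|=\|BA\|$. There is no real obstacle here; the only point needing care is that self-adjointness follows from nonnegative definiteness over $\mathbb C$, which is where polarization is used.
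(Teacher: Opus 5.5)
Your proof is correct: nonnegative definiteness over $\mathbb C$ gives $A=A^*$ and $B=B^*$, and then $\|AB\|=\|(AB)^*\|=\|B^*A^*\|=\|BA\|$. The paper gives no proof of its own and simply cites the lemma as well known (\cite[Lemma 2.3]{bf3}); your argument is the standard one behind that citation.
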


Now, we recall the concept of reducing operators (see \cite[(3.1)]{v97}).

\begin{definition}
Let $ p \in (0,\infty) $, $W$ be a matrix weight
and $E\subset\mathcal X$ be a measurable set satisfying $\mu(E) \in (0,\infty)$.
The positive matrix $A_{E}$ is called a \emph{reducing operator
of order $p$ for $W$} if,  for any $ \vec z \in \mathbb{C}^m$,
\begin{align} \label{equ_reduce}
\left| A_E \vec z \right|
\sim \left[ \fint_E \left| W^{\frac{1}{p}}(x)\vec z\right|^p \,d\mu(x)
\right]^{\frac{1}{p}},
\end{align}
where the positive equivalence constants depend only on $m$ and $p$.
\end{definition}

\begin{remark}
The existence of reducing operators in the Euclidean spaces was established in \cite[Proposition 1.2]{g03} for $p\in(1,\infty)$,
and in \cite[p.\,1237]{fr04} for $p\in(0,1]$.
Their arguments extend naturally to spaces of homogeneous type, as shown in \cite[Lemma 2.17]{bf}.
\end{remark}

Repeating the argument used in the proof of \cite[Proposition 2.14]{bcyy25},
we find that \eqref{equ_reduce} still holds with
$\vec z\in\mathbb{C}^m$ replaced by $M\in M_m(\mathbb{C})$;
we omit the details.

\begin{lemma} \label{reduceM}
Let $p\in (0,\infty)$, $W$ be a matrix weight,
and a measurable set $E\subset\mathcal X$
satisfy $ \mu(E) \in (0, \infty) $.
Then $A_E$ is a reducing operator of order $p$ for $W$ if and only if
for any matrix $M\in M_m(\mathbb{C})$,
\begin{align*}
\left\| A_E M \right\|\sim \left[ \fint_E
\left\| W^{\frac{1}{p}} (x) M \right\|^p \, d\mu(x) \right]^{\frac{1}{p}},
\end{align*}
where the positive equivalence constants depend only on $m$ and $p$.
\end{lemma}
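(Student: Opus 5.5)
The statement to prove is Lemma \ref{reduceM}: $A_E$ is a reducing operator of order $p$ if and only if the norm equivalence holds for every matrix $M$. Only the forward direction has content. Taking $M=\vec z\,\vec e_1^{\,*}$, a rank-one matrix whose first column is $\vec z$ and whose other entries vanish, gives $\|A_EM\|=|A_E\vec z|$ and $\|W^{\frac1p}(x)M\|=|W^{\frac1p}(x)\vec z|$. So the matrix equivalence contains \eqref{equ_reduce} as a special case.

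For the forward direction, fix an orthonormal basis $\{\vec e_i\}_{i=1}^m$ of $\mathbb C^m$. For any $m\times m$ matrix $U$ we have the elementary two-sided bound
$$\max_{1\le i\le m}|U\vec e_i|\le\|U\|\le\sum_{i=1}^m|U\vec e_i|\le m\max_{1\le i\le m}|U\vec e_i|,$$
where the upper bound comes from writing a unit $\vec z=\sum_i z_i\vec e_i$ with $|z_i|\le1$. Apply this with $U=A_EM$ and with $U=W^{\frac1p}(x)M$, so that $U\vec e_i=A_E(M\vec e_i)$ or $W^{\frac1p}(x)(M\vec e_i)$. Then
$$\|A_EM\|\sim\max_i|A_EM\vec e_i|\sim\max_i\Big[\fint_E|W^{\frac1p}(x)M\vec e_i|^p\,d\mu(x)\Big]^{\frac1p},$$
using \eqref{equ_reduce} with $\vec z=M\vec e_i$.

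It remains to compare $\max_i\fint_E|W^{\frac1p}M\vec e_i|^p$ with $\fint_E\|W^{\frac1p}M\|^p$. Pointwise in $x$, the display above gives
$$\|W^{\frac1p}(x)M\|^p\sim\max_i|W^{\frac1p}(x)M\vec e_i|^p\sim\sum_i|W^{\frac1p}(x)M\vec e_i|^p,$$
with constants depending only on $m$ and $p$; this holds for every $p\in(0,\infty)$, including $p<1$. Integrating over $E$ and using that a maximum of $m$ nonnegative numbers is comparable to their sum yields
$$\fint_E\|W^{\frac1p}M\|^p\,d\mu\sim\sum_i\fint_E|W^{\frac1p}M\vec e_i|^p\,d\mu\sim\max_i\fint_E|W^{\frac1p}M\vec e_i|^p\,d\mu.$$
Taking $p$-th roots and chaining with the previous display gives the claim, with constants depending only on $m$ and $p$.

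The argument uses nothing about the space of homogeneous type beyond measurability, and this is what the paper means by "repeating the argument of \cite[Proposition 2.14]{bcyy25}". The only step needing care is keeping every constant dependent on $m$ and $p$ alone and checking the quasi-norm case $p<1$. That case is harmless, because all comparisons are between finitely many nonnegative numbers, which requires no triangle inequality in $L^p$.
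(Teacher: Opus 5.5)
Your proof is correct and is essentially the argument the paper has in mind; the paper omits the details and defers to \cite[Proposition 2.14]{bcyy25}. That argument reduces operator norms to columns via $\|U\|\sim\sum_{i=1}^m|U\vec e_i|$, the same equivalence from \cite[Lemma 3.2]{ro03} that the paper uses in the proof of Theorem \ref{bounded B2}, applies \eqref{equ_reduce} to each column $M\vec e_i$, and recovers the vector statement from the rank-one matrices $M=\vec z\,\vec e_1^{\,*}$, with all constants depending only on $m$ and $p$.
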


We need the following equivalent characterization of
$\mathcal A_{p,\infty}(\mathcal X,\mathbb{C}^{m})$-matrix weights.

\begin{lemma}\label{Apcha}
Let $p\in (0,\infty)$ and $W$ be a matrix weight.
For any ball $B\subset\mathcal X$ and $Q\in\mathbb D$,
let $A_B$ and $A_Q$ be reducing operators of order $p$ for $W$.
Then the following conditions are equivalent:
\begin{enumerate}[\rm(i)]
\item $W\in \mathcal A_{p,\infty}(\mathcal X,\mathbb{C}^{m})$;

\item $\mathrm{I}:=\sup_{\mathrm{ball}\, B\subset\mathcal X}
\fint_B\log_+\|W^{-\frac{1}{p}}(x) A_B\|^p\,d\mu(x)<\infty$;

\item $\mathrm{II}:=\sup_{Q\in\mathbb D}
\fint_Q\log_+\|W^{-\frac{1}{p}}(x)A_Q\|^p\,d\mu(x)<\infty$.
\end{enumerate}
Moreover, we have
$$
[W]_{\mathcal A_{p,\infty}(\mathcal X,\mathbb{C}^{m})}
\sim \exp(\mathrm{I})\sim \exp(\mathrm{II}),$$
where the positive equivalence constants are independent of $W$.
\end{lemma}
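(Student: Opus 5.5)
The plan is to prove the cyclic chain (i)$\Rightarrow$(ii)$\Rightarrow$(iii)$\Rightarrow$(i), tracking constants at each step. All three steps rest on one pointwise identity. Fix a ball $B$ and a point $y\in B$. By Lemma~\ref{exchange} and Lemma~\ref{reduceM} applied with $M:=W^{-\frac1p}(y)$, we get
\begin{align*}
\fint_B\left\|W^{\frac1p}(x)W^{-\frac1p}(y)\right\|^p\,d\mu(x)
\sim\left\|A_BW^{-\frac1p}(y)\right\|^p
=\left\|W^{-\frac1p}(y)A_B\right\|^p,
\end{align*}
with constants depending only on $m$ and $p$. Taking logarithms and averaging over $y\in B$ then gives
\begin{align*}
\log[W]_{\mathcal A_{p,\infty}(\mathcal X,\mathbb C^m)}
=\sup_B\fint_B\log\left\|W^{-\frac1p}(y)A_B\right\|^p\,d\mu(y)+O(1).
\end{align*}
The same identity holds with $B$ replaced by any $Q\in\mathbb D$.

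The remaining issue is the gap between $\log$ and $\log_+$. We show that the negative part $\log_-\|W^{-\frac1p}(y)A_B\|^p$ has average bounded by a constant depending only on $m$ and $p$. For $y$ fixed, the matrix $A_BW^{-\frac1p}(y)$ satisfies
\begin{align*}
\left\|A_BW^{-\frac1p}(y)\right\|^p\gtrsim\fint_B\left\|W^{\frac1p}(x)W^{-\frac1p}(y)\right\|^p\,d\mu(x).
\end{align*}
The integrand can be small only if $W^{-\frac1p}(y)$ is small relative to $W^{-\frac1p}(x)$. Direct pointwise control seems unavailable, so we argue through determinants instead. Since the operator norm dominates $|\det|^{1/m}$, we have
\begin{align*}
\log\left\|W^{-\frac1p}(y)A_B\right\|\ge\frac1m\left(\log\det A_B-\frac1p\log\det W(y)\right).
\end{align*}
By Jensen's inequality for the concave function $\log\det(\cdot)^{1/m}$, namely
\begin{align*}
\fint_B\log\det W^{\frac1p}\le m\log\left(\fint_B\left|\det W^{\frac1p}\right|^{\frac pm}\right)^{\frac1p}\lesssim\log\det A_B+C,
\end{align*}
the average of the right-hand side is bounded below by $-C$. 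Hence
\begin{align*}
\fint_B\log\left\|W^{-\frac1p}(y)A_B\right\|^p\ge-C,
\end{align*}
and consequently
\begin{align*}
\fint_B\log_+\left\|W^{-\frac1p}A_B\right\|^p
\le\fint_B\log\left\|W^{-\frac1p}A_B\right\|^p+2C.
\end{align*}
The reverse bound $\log\le\log_+$ is trivial. This yields (i)$\Leftrightarrow$(ii) with $[W]\sim\exp(\mathrm I)$, and the same argument on cubes shows that (iii) is equivalent to the dyadic analogue of (i).

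It remains to connect balls and cubes. For (ii)$\Rightarrow$(iii), given $Q\in\mathbb D$ we use $Q\subset B(Q)$ and $\mu(B(Q))\lesssim\mu(Q)$ from Lemma~\ref{ballcube}(i). For $Q\subset B$ with comparable measure, the reducing operators satisfy $|A_Q\vec z|\lesssim|A_B\vec z|$, since the defining integral over $Q$ is dominated by a constant times that over $B$. This gives
\begin{align*}
\left\|W^{-\frac1p}(x)A_Q\right\|=\left\|A_QW^{-\frac1p}(x)\right\|\lesssim\left\|A_BW^{-\frac1p}(x)\right\|.
\end{align*}
Because $\log_+(Ct)\le\log C+\log_+t$ and the measures are comparable, we conclude $\mathrm{II}\lesssim\mathrm I+C$.

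For (iii)$\Rightarrow$(ii), we use Lemma~\ref{ballcube}(ii) in the same way, with $B\subset Q(B)$ and $\mu(Q(B))\lesssim\mu(B)$. The main point to verify is the comparison $A_B\lesssim A_{Q(B)}$ together with the nonnegativity of $\log_+$, which lets us enlarge the domain of integration from $B$ to $Q(B)$ at the cost of a multiplicative constant. This gives $\mathrm I\lesssim\mathrm{II}+C$.

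The step I expect to be the main obstacle is the lower bound on the average of $\log\|W^{-\frac1p}A_B\|^p$, which requires care with $\log\det$ when $p<m$ or $p\le1$. If the Jensen/determinant route fails there, I would instead cite the corresponding Euclidean argument in \cite{bf4}, which transfers verbatim to $\mathcal X$.
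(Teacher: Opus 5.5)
Your overall architecture matches the paper's: the pointwise identity $\fint_B\|W^{\frac1p}(x)W^{-\frac1p}(y)\|^p\,d\mu(x)\sim\|W^{-\frac1p}(y)A_B\|^p$ from Lemmas \ref{exchange} and \ref{reduceM}, and the ball/cube comparison via Lemma \ref{ballcube}. The gap is in controlling the negative part, which is exactly what (i)$\Rightarrow$(ii) needs (i.e.\ $\exp(\mathrm I)\lesssim[W]$). You prove only $\fint_B\log\|W^{-\frac1p}A_B\|^p\,d\mu\ge-C$. You then write ``consequently'' $\fint_B\log_+\|W^{-\frac1p}A_B\|^p\,d\mu\le\fint_B\log\|W^{-\frac1p}A_B\|^p\,d\mu+2C$. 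Since $\log_+=\log+\log_-$, this is the claim $\fint_B\log_-\|W^{-\frac1p}A_B\|^p\,d\mu\le 2C$. A lower bound on the mean of a function gives no upper bound on the mean of its negative part: a function equal to $K$ on half of $B$ and $-K$ on the other half has mean $0$, but its negative part has mean $K/2$. Jensen controls the mean of $g(y):=-\frac pm\log\frac{\det W^{1/p}(y)}{\det A_B}$, whereas you need the mean of $g_-$. Moreover, the inequality $\fint_B(\det W^{\frac1p})^{\frac pm}\,d\mu\lesssim(\det A_B)^{\frac pm}$ on which everything hinges is asserted without proof. Both points can be repaired inside your route. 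Your pointwise bound gives
\[
\log_-\left\|W^{-\frac1p}(y)A_B\right\|^p\le g_-(y)=\log_+\left[\left(\frac{\det W^{\frac1p}(y)}{\det A_B}\right)^{\frac pm}\right]\le\left(\frac{\det W^{\frac1p}(y)}{\det A_B}\right)^{\frac pm}.
\]
Now take an orthonormal eigenbasis $\{\vec e_i\}_{i=1}^m$ of $A_B$ with eigenvalues $\{\lambda_i\}_{i=1}^m$. Hadamard's inequality $\det W^{\frac1p}(y)\le\prod_{i=1}^m|W^{\frac1p}(y)\vec e_i|$, the generalized H\"older inequality, and \eqref{equ_reduce} give $\fint_B(\det W^{\frac1p})^{\frac pm}\,d\mu\le\prod_{i=1}^m(\fint_B|W^{\frac1p}\vec e_i|^p\,d\mu)^{\frac1m}\lesssim\prod_{i=1}^m\lambda_i^{\frac pm}=(\det A_B)^{\frac pm}$. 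This holds for every $p\in(0,\infty)$, so no special care is needed when $p<m$ or $p\le1$.

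The determinant detour is unnecessary, though: the direct pointwise control you thought unavailable is exactly what the paper uses. Since $1\le\|W^{-\frac1p}(x)A_B\|\,\|A_B^{-1}W^{\frac1p}(x)\|$, Lemma \ref{exchange} gives $\log_-\|W^{-\frac1p}(x)A_B\|^p\le\log_+\|A_B^{-1}W^{\frac1p}(x)\|^p\le\|W^{\frac1p}(x)A_B^{-1}\|^p$. Lemma \ref{reduceM} with $M:=A_B^{-1}$ shows the average of the right-hand side over $B$ is $\sim\|A_BA_B^{-1}\|^p=1$. This yields $\exp(\mathrm I)\lesssim[W]_{\mathcal A_{p,\infty}(\mathcal X,\mathbb C^m)}$ in one line.

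Finally, the containment-plus-comparable-measure argument (yours and the paper's alike) gives $\mathrm I\le C\,\mathrm{II}+C'$ and $\mathrm{II}\le C\,\mathrm I+C'$, i.e.\ $\exp(\mathrm I)\lesssim\exp(\mathrm{II})^{C}$ and vice versa. That suffices for the equivalence of (ii) and (iii). It does not literally give $\exp(\mathrm I)\sim\exp(\mathrm{II})$ with $W$-independent constants.
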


\begin{proof}
We first prove (i) $\Longleftrightarrow$ (ii).
By Lemmas \ref{reduceM} and \ref{exchange} and the inequality $\log\leq\log_+$,
we obtain
\begin{align*}
[W]_{\mathcal A_{p,\infty}(\mathcal X,\mathbb{C}^{m})}
\sim \sup_{\mathrm{ball}\, B\subset\mathcal X}
\exp\left( \fint_B\log
\left\|W^{-\frac{1}{p}}(x) A_B\right\|^p\,d\mu(x) \right)
\leq \exp(\mathrm{I}).
\end{align*}
Conversely, using Lemmas \ref{reduceM} and \ref{exchange}
and the equality $\log_+ t = \log t + \log_+ t^{-1}$
for all $t\in (0,\infty)$, we conclude that
\begin{align} \label{2.13}
\exp(\mathrm{I})
&= \sup_{\mathrm{ball}\, B\subset\mathcal X}
\exp\left( \fint_B\log \left\|W^{-\frac{1}{p}}(x) A_B\right\|^p\,d\mu(x)
+\fint_B\log_+\left\|W^{-\frac{1}{p}}(x) A_B\right\|^{-p}\,d\mu(x) \right) \notag \\
&\lesssim [W]_{\mathcal A_{p,\infty}(\mathcal X,\mathbb{C}^{m})}
\sup_{\mathrm{ball}\, B\subset\mathcal X}
\exp\left( \fint_B\log_+\left\|A_B^{-1}W^{\frac{1}{p}}(x) \right\|^p\,d\mu(x) \right),
\end{align}
where the last inequality used the fact that
$1\leq \|W^{-\frac{1}{p}}(x) A_B\|^p \|A_B^{-1}W^{\frac{1}{p}}(x) \|^p$.
From Lemmas \ref{reduceM} and \ref{exchange} and the inequality $\log_+t<t$
for all $t\in(0,\infty)$, we infer that
\begin{align*}
\fint_B\log_+\left\|A_B^{-1}W^{\frac{1}{p}}(x) \right\|^p\,d\mu(x)
\leq  \fint_B \left\|W^{\frac{1}{p}}(x)A_B^{-1} \right\|^p\,d\mu(x)
\sim  \left\|A_BA_B^{-1} \right\|^p
= 1.
\end{align*}
This, together with \eqref{2.13}, further implies that
$\exp(\mathrm{I})\lesssim [W]_{\mathcal A_{p,\infty}(\mathcal X,\mathbb{C}^{m})}$.
Thus, $[W]_{\mathcal A_{p,\infty}(\mathcal X,\mathbb{C}^{m})}
\sim \exp(\mathrm{I})$ and (i) $\Longleftrightarrow$ (ii).

Next, we show that (ii) $\Longleftrightarrow$ (iii).
From Lemmas \ref{exchange}, \ref{reduceM}, and \ref{ballcube}(ii),
we infer that, for any ball $B\subset\mathcal X$ and almost every $x\in\mathcal X$,
\begin{align*}
\left\|W^{-\frac{1}{p}}(x)A_B\right\|^p
&= \left\|A_B W^{-\frac{1}{p}}(x)\right\|^p
\sim \fint_B \left\|W^{\frac{1}{p}}(y)W^{-\frac{1}{p}}(x)\right\|^p\,d\mu(y) \notag\\
&\lesssim \fint_{Q(B)}\left\|W^{\frac{1}{p}}(y)W^{-\frac{1}{p}}(x)\right\|^p\,d\mu(y)
\sim\left\|W^{-\frac{1}{p}}(x)A_{Q(B)}\right\|^p,
\end{align*}
where $Q(B)$ is as in Lemma \ref{dcs}(b).
By this and Lemma \ref{ballcube}(ii),
we find that, for any ball $B\subset\mathcal X$,
\begin{align*}
&\exp\left(\fint_B \log_+\left\|W^{-\frac{1}{p}}(x)A_B\right\|^p\,d\mu(x)\right) \\
&\quad\lesssim\exp\left( \fint_B \log_+\left\|
W^{-\frac{1}{p}}(x)A_{Q(B)}\right\|^p\,d\mu(x)\right)\notag\\
&\quad\lesssim\exp\left(\fint_{Q(B)} \log_+\left\|W^{-\frac{1}{p}}(x)A_{Q(B)}\right\|^p\,d\mu(x)\right)
\leq \exp(\mathrm{II}),
\end{align*}
and hence $\exp(\mathrm{I})\lesssim \exp(\mathrm{II})$.
Repeating the argument above with Lemma \ref{ballcube}(ii)
replaced by Lemma \ref{ballcube}(i),
we obtain the reverse inequality.
This completes the proof of Lemma \ref{Apcha}.
\end{proof}

The following lemma is a special case of \cite[Lemma 2.5(i)]{bchyy26}
extended to the setting of spaces of homogeneous type.
Although the original result was established in the Euclidean setting,
its proof carries over naturally to $\mathcal X$; we omit the details.

\begin{lemma}\label{intexchange}
Let $p,q\in(0,\infty]$.
Then, for any balls $B_1,B_2\subset\mathcal X$
and any matrix-valued functions $V_1,V_2:\ \mathcal X\to M_m(\mathbb C)$
satisfying $\|V_1\|\in L^p(B_1)$ and $\|V_2\|\in L^q(B_2)$,
\begin{align*}
&\left\{\fint_{B_2}\left[\fint_{B_1} \|V_1(x)V_2(y)\|^p \,d\mu(x)
\right]^{\frac qp} \,d\mu(y) \right\}^{\frac 1q} \\
&\quad\sim \left\{\fint_{B_1}\left[\fint_{B_2} \|V_1(x)V_2(y)\|^q \,d\mu(y)
\right]^{\frac pq} \,d\mu(x) \right\}^{\frac 1p},
\end{align*}
where the positive equivalence constants are independent of
$V_1$, $V_2$, $B_1$, and $B_2$.
\end{lemma}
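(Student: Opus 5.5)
The plan is to reduce both sides to the operator norm of a product of two reducing operators and then swap the factors with Lemma \ref{exchange}. Reducing operators are naturally attached to nonnegative matrix-valued functions, so I first replace $V_1,V_2$ by such functions. By polar decomposition $V_1(x)=U_1(x)|V_1(x)|$ with $U_1(x)$ a partial isometry, where $|V_1(x)|:=(V_1^*(x)V_1(x))^{1/2}$. Hence $|V_1(x)\vec z|=||V_1(x)|\vec z|$ and $\|V_1(x)M\|=\||V_1(x)|M\|$ for all $\vec z\in\mathbb C^m$ and $M\in M_m(\mathbb C)$. Likewise,
$$\|V_1(x)V_2(y)\|=\|V_2^*(y)V_1^*(x)\|=\||V_2^*(y)|\,V_1^*(x)\|,$$
where $|V_2^*(y)|:=(V_2(y)V_2^*(y))^{1/2}$.

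\textbf{Step 1: reducing operators.} Put $W_1(x):=|V_1(x)|^p$ and $W_2(y):=|V_2^*(y)|^q$, so that $W_1^{1/p}=|V_1|$ and $W_2^{1/q}=|V_2^*|$. The map $\vec z\mapsto[\fint_{B_1}|V_1(x)\vec z|^p\,d\mu(x)]^{1/p}$ is a $p$-seminorm on $\mathbb C^m$; for $p=\infty$ the essential supremum replaces the average. By the John ellipsoid argument (for $p<1$, applied after the usual convexification, as in \cite[p.\,1237]{fr04}), there is a nonnegative definite $A_1$ with
$$|A_1\vec z|\sim\left[\fint_{B_1}|V_1(x)\vec z|^p\,d\mu(x)\right]^{1/p}.$$
In the same way we get $A_2$ for $W_2$ on $B_2$ with exponent $q$. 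Invertibility of $W_i$ is not needed. If one prefers to stay within the existing lemmas, one can replace $|V_i|$ by $|V_i|+\varepsilon I_m$ and let $\varepsilon\to0$ at the end, since all constants are independent of $\varepsilon$.

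The argument of Lemma \ref{reduceM} uses only \eqref{equ_reduce} together with the comparison between the norm of a matrix and the norms of its columns. It therefore upgrades these equivalences to matrices:
$$\|A_1M\|\sim\left[\fint_{B_1}\|V_1(x)M\|^p\,d\mu(x)\right]^{1/p},\qquad \|A_2M\|\sim\left[\fint_{B_2}\|V_2^*(y)M\|^q\,d\mu(y)\right]^{1/q},$$
with constants depending only on $m,p,q$.

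\textbf{Step 2: compute both sides.} For the left-hand side, fix $y$ and apply the first equivalence with $M=V_2(y)$; the inner average becomes $\sim\|A_1V_2(y)\|^p$. Then write $\|A_1V_2(y)\|=\|V_2^*(y)A_1\|$ and apply the second equivalence with $M=A_1$. This shows the left-hand side is $\sim\|A_2A_1\|$.

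For the right-hand side, fix $x$ and write $\|V_1(x)V_2(y)\|=\|V_2^*(y)V_1^*(x)\|$. The second equivalence with $M=V_1^*(x)$ turns the inner average into $\sim\|A_2V_1^*(x)\|^q$. Since $\|A_2V_1^*(x)\|=\|V_1(x)A_2\|$, the first equivalence with $M=A_2$ gives that the right-hand side is $\sim\|A_1A_2\|$.

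\textbf{Step 3: conclude.} Both $A_1$ and $A_2$ are nonnegative definite, so Lemma \ref{exchange} gives $\|A_2A_1\|=\|A_1A_2\|$, which proves the lemma. The integrability assumptions $\|V_1\|\in L^p(B_1)$ and $\|V_2\|\in L^q(B_2)$ only guarantee that the seminorms are finite, hence that $A_1$ and $A_2$ exist.

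\textbf{Main obstacle.} The only delicate point is Step 1: constructing reducing operators for possibly degenerate $W_i$, and for $p,q\in(0,1)\cup\{\infty\}$, while keeping constants depending only on $m,p,q$. I would handle this with the ellipsoid construction applied directly to the seminorm, or with the $\varepsilon I_m$ regularization described above.
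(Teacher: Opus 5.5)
Your proof is correct and follows the standard reducing-operator argument behind \cite[Lemma 2.5(i)]{bchyy26}; the paper omits the proof of this lemma and simply cites that result. The argument runs as follows: choose reducing operators $A_1$ for $V_1$ on $B_1$ (order $p$) and $A_2$ for $V_2^*$ on $B_2$ (order $q$); use the matrix form of \eqref{equ_reduce} to identify the two sides with $\|A_2A_1\|$ and $\|A_1A_2\|$; then swap the factors by Lemma \ref{exchange}. Nothing in this uses the geometry of $\mathcal X$, and your handling of degenerate $V_i$ and of the exponents $p,q\in(0,1)\cup\{\infty\}$ is adequate, with the integrability hypotheses entering exactly as you say, to make the defining seminorms finite.
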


We also need the following equivalent characterization of
$\mathcal A_{p,u}(\mathcal X,\mathbb{C}^{m})$-matrix weights.

\begin{lemma} \label{Apu}
Let $p,u\in (0,\infty)$ and $W$ be a matrix weight.
Then $W\in \mathcal A_{p,u}(\mathcal X,\mathbb{C}^{m})$
if and only if
\begin{align*}
[W]_{\mathcal A_{p,u}^*(\mathcal X,\mathbb{C}^{m})}
:=\sup_{\mathrm{ball}\,B\subset\mathcal X}
\left[ \fint_B \left\|A_B W^{-\frac1p}(y) \right\|^{u}\,d\mu(y)\right]^{\frac pu}<\infty.
\end{align*}
Moreover, for any matrix weight $W$,
$[W]_{\mathcal A_{p,u}(\mathcal X,\mathbb{C}^{m})}\sim
[W]_{\mathcal A_{p,u}^*(\mathcal X,\mathbb{C}^{m})}$,
where the implicit positive equivalence constants are independent of $W$.
\end{lemma}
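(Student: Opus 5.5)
The plan is to reduce the inner average over $x$ to a reducing operator, and then swap the order of integration using Lemma \ref{intexchange}. Fix a ball $B\subset\mathcal X$ and let $A_B$ be a reducing operator of order $p$ for $W$ on $B$. For almost every $y\in B$, apply Lemma \ref{reduceM} with the matrix $M:=W^{-\frac1p}(y)$ (fixed for that $y$). This gives
\begin{align*}
\fint_B \left\|W^{\frac1p}(x)W^{-\frac1p}(y)\right\|^p\,d\mu(x)
\sim \left\|A_B W^{-\frac1p}(y)\right\|^p,
\end{align*}
with constants depending only on $m$ and $p$. The quantity $[W]_{\mathcal A_{p,u}}$, however, has the $x$-integral on the outside and the $y$-integral on the inside. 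So I first rewrite the bracket
\begin{align*}
\left\{\fint_B\left[\fint_B \left\|W^{\frac1p}(x)W^{-\frac1p}(y)\right\|^u\,d\mu(y)\right]^{\frac pu}\,d\mu(x)\right\}^{\frac1p}.
\end{align*}
By Lemma \ref{intexchange}, applied with $B_1=B_2=B$, $V_1:=W^{\frac1p}$, $V_2:=W^{-\frac1p}$, and the exponent pair $(p,u)$, this is comparable to
\begin{align*}
\left\{\fint_B\left[\fint_B \left\|W^{\frac1p}(x)W^{-\frac1p}(y)\right\|^p\,d\mu(x)\right]^{\frac up}\,d\mu(y)\right\}^{\frac1u}.
\end{align*}
The equivalence constants are independent of $W$ and $B$.

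Next I substitute the reducing-operator equivalence into the inner integral. This shows the last expression is comparable to $[\fint_B\|A_BW^{-\frac1p}(y)\|^u\,d\mu(y)]^{\frac1u}$. Raising everything to the power $p$ and taking the supremum over all balls $B$ gives $[W]_{\mathcal A_{p,u}}\sim[W]_{\mathcal A^*_{p,u}}$. Because the constants are uniform, finiteness of one quantity is equivalent to finiteness of the other, which proves both assertions of the lemma.

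The main obstacle is the integrability hypothesis in Lemma \ref{intexchange}, which asks that $\|V_1\|\in L^p(B)$ and $\|V_2\|\in L^u(B)$. The first condition holds because $W$ has locally integrable entries. The second, $\|W^{-\frac1p}\|\in L^u(B)$, is not known in advance. I would handle this in one of two ways.
\begin{itemize}
\item Check that the proof of Lemma \ref{intexchange} only needs measurability and nonnegativity. Its natural proof passes to a finite orthonormal basis $\{\vec e_i\}$ via $\|V_1(x)V_2(y)\|\sim\sum_{i}|V_1(x)V_2(y)\vec e_i|$, applies reducing operators in both variables, and then uses Tonelli's theorem. In that case both sides may equal $+\infty$ simultaneously, and the equivalence still holds.
\item Alternatively, truncate, replacing $W^{-\frac1p}(y)$ by $W^{-\frac1p}(y)\mathbf 1_{\{\|W^{-\frac1p}(y)\|\le N\}}$, apply the lemma, and let $N\to\infty$ by monotone convergence.
\end{itemize}
Either route gives the equivalence in the extended sense, so no a priori integrability assumption is needed.
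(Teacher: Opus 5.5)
Your argument is correct and is the paper's proof: Lemma \ref{reduceM} with $M=W^{-\frac1p}(y)$ turns the inner $x$-average into $\|A_BW^{-\frac1p}(y)\|^p$, and Lemma \ref{intexchange} with $B_1=B_2=B$, $V_1=W^{\frac1p}$, $V_2=W^{-\frac1p}$ swaps the order of integration. The only difference is the integrability check: the paper observes that finiteness of either $[W]_{\mathcal A_{p,u}}$ or $[W]_{\mathcal A_{p,u}^*}$ already gives $\|W^{-\frac1p}\|^u\in L^1(B)$ (via $\|W^{-\frac1p}(y)\|\le\|W^{-\frac1p}(x)\|\,\|W^{\frac1p}(x)W^{-\frac1p}(y)\|$ for a.e.\ $x$, resp.\ $\|W^{-\frac1p}(y)\|\le\|A_B^{-1}\|\,\|A_BW^{-\frac1p}(y)\|$), and if both are infinite there is nothing to prove, so your truncation and monotone-convergence argument is valid but not needed.
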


\begin{proof}
By Lemma \ref{reduceM}, we conclude that,
for any ball $B\subset\mathcal X$,
\begin{align}\label{6.14}
\left[ \fint_B \left\|A_B W^{-\frac1p}(y) \right\|^{u}\,d\mu(y) \right]^{\frac1u}
\sim \left\{ \fint_B \left[ \fint_B
\left\|W^{\frac1p}(x) W^{-\frac1p}(y) \right\|^p \,d\mu(x)
\right]^{\frac up} \,d\mu(y) \right\}^{\frac1u}.
\end{align}
Note that $\|W^{\frac1p}\|^p=\|W\|\in L^1(B)$ (due to the local integrability of $W$)
and $\|W^{-\frac1p}\|^u\in L^1(B)$ (as a consequence of $[W]_{\mathcal A_{p,u}(\mathcal X,\mathbb{C}^{m})}<\infty$
or $[W]_{\mathcal A_{p,u}^*(\mathcal X,\mathbb{C}^{m})}<\infty$).
Applying Lemma \ref{intexchange} to the right-hand side of
\eqref{6.14}, we obtain
\begin{align*}
\left[ \fint_B \left\|A_B W^{-\frac1p}(y) \right\|^{u}\,d\mu(y) \right]^{\frac1u}
\sim \left\{ \fint_B \left[ \fint_B
\left\|W^{\frac1p}(x) W^{-\frac1p}(y) \right\|^u \,d\mu(y)
\right]^{\frac pu} \,d\mu(x) \right\}^{\frac1p}.
\end{align*}
Taking the $p$-th power on both sides of the above equality
and then taking the supremum over all balls $B\subset\mathcal X$,
we obtain $[W]_{\mathcal A_{p,u}(\mathcal X,\mathbb{C}^{m})}\sim
[W]_{\mathcal A_{p,u}^*(\mathcal X,\mathbb{C}^{m})}$.
This completes the proof of Lemma \ref{Apu}.
\end{proof}

\begin{lemma} \label{improve D}
Let $p,u\in(0,\infty)$ and $W$ be a matrix weight.
Then $W\in \mathcal A_{p,u}(\mathcal X,\mathbb{C}^{m})$ if and only if
\begin{align*}
\sup_{Q\in \mathbb D} \fint_Q \left\|A_Q W^{-\frac{1}{p}}(y)\right\|^u\,d\mu(y) <\infty.
\end{align*}
\end{lemma}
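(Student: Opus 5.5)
By Lemma \ref{Apu}, $W\in\mathcal A_{p,u}(\mathcal X,\mathbb C^m)$ is equivalent to
$$
\sup_{B}\fint_B\left\|A_BW^{-\frac1p}(y)\right\|^u\,d\mu(y)<\infty,
$$
where the supremum runs over all balls $B\subset\mathcal X$. (The outer power $\frac pu$ plays no role for finiteness.) The lemma therefore amounts to showing that this ball supremum is comparable to the same supremum taken over dyadic cubes $Q\in\mathbb D$. The argument mirrors the proof of (ii) $\Longleftrightarrow$ (iii) in Lemma \ref{Apcha}, with the $\log_+$ replaced by the power $u$.

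\emph{Step 1 (balls controlled by cubes).} Fix a ball $B$ and let $Q(B)\in\mathbb D$ be as in Lemma \ref{ballcube}(ii), so that $B\subset Q(B)$ and $\mu(Q(B))\lesssim\mu(B)$. For almost every $y$, Lemmas \ref{exchange} and \ref{reduceM} give
$$
\left\|A_BW^{-\frac1p}(y)\right\|^p\sim\fint_B\left\|W^{\frac1p}(x)W^{-\frac1p}(y)\right\|^p\,d\mu(x)\lesssim\fint_{Q(B)}\left\|W^{\frac1p}(x)W^{-\frac1p}(y)\right\|^p\,d\mu(x)\sim\left\|A_{Q(B)}W^{-\frac1p}(y)\right\|^p.
$$
Raising this to the power $\frac up$, averaging over $B$, and enlarging the domain to $Q(B)$ (at the cost of the factor $\mu(Q(B))/\mu(B)\lesssim1$) bounds the ball quantity by the cube supremum.

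\emph{Step 2 (cubes controlled by balls).} Fix $Q\in\mathbb D$ and take $B(Q)\supset Q$ from Lemma \ref{dcs}(a)(iii). By Lemma \ref{ballcube}(i), $\mu(B(Q))\lesssim\mu(Q)$. The same chain of inequalities with the roles of $Q$ and $B(Q)$ swapped gives $\|A_QW^{-\frac1p}(y)\|\lesssim\|A_{B(Q)}W^{-\frac1p}(y)\|$, and averaging over $Q\subset B(Q)$ yields the reverse bound. Here one should note that $A_Q$ and $A_{B(Q)}$ are reducing operators on measurable sets of positive finite measure, so Lemma \ref{reduceM} applies to both.

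Both steps are routine. The only point needing care is that the constants must be uniform, which they are because they depend only on $\mathcal X$, $m$, $p$, and $u$. Combining Steps 1 and 2 with Lemma \ref{Apu} proves the equivalence.
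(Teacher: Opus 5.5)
Your proposal is correct and follows essentially the same route as the paper. The paper also invokes Lemma \ref{Apu} to reduce the claim to comparing the ball supremum with the dyadic-cube supremum. It then repeats the (ii)$\Longleftrightarrow$(iii) argument of Lemma \ref{Apcha}, with $\log_+$ replaced by the $u$-th power, passing between $B$ and $Q(B)$ via Lemma \ref{ballcube}(ii) and between $Q$ and $B(Q)$ via Lemma \ref{ballcube}(i), exactly as in your two steps.
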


\begin{proof}
Repeating the proof of the equivalence between (ii) and (iii) in Lemma \ref{Apcha}
with the integrands $\log_+\|W^{-\frac{1}{p}}(x) A_Q\|$
and $\log_+\|W^{-\frac{1}{p}}(x) A_B\|$
replaced, respectively, by $\|A_Q W^{-\frac{1}{p}}(x)\|^u$
and $\|A_B W^{-\frac{1}{p}}(x)\|^u$, we obtain
\begin{equation*}
\sup_{Q\in \mathbb D}
\fint_Q \left\|A_Q W^{-\frac{1}{p}}(y)\right\|^u\,d\mu(y)
\sim \sup_{\mathrm{ball}\, B\subset \mathcal{X}}
\fint_B \left\|A_BW^{-\frac{1}{p}}(y)\right\|^u\,d\mu(y).
\end{equation*}
This, together with Lemma \ref{Apu},
then completes the proof of Lemma \ref{improve D}.
\end{proof}

Next, we recall the reverse H\"{o}lder inequality for matrix weights.
To this end, we first recall Wilson's $A_\infty(\mathcal X)$ constant.
For any scalar weight $w$,
\begin{align*}
[w]_{A_\infty(\mathcal X)}^*:=\sup_{{\mathrm {ball}}\, B \subset\mathcal{X}}\frac{1}{w(B)}\int_B\mathcal{M}(w\mathbf{1}_B)(x)\,d\mu(x).
\end{align*}

The following lemma was proved by Hyt\"onen and P\'erez
\cite[Proposition 2.2]{thcp} in the case $\mathcal X=\mathbb R^n$.
Furthermore, Hyt\"onen et al. \cite[p.\,3886]{tce} claim that
the same conclusion holds for general $\mathcal X$.

\begin{lemma} \label{two Ainfty}
For any scalar weight $w$ on $\mathcal X$,
we have $[w]_{A_\infty(\mathcal X)}^* \le C_{\mathcal X} [w]_{A_\infty(\mathcal X)}$,
where $C_{\mathcal X}$ is a positive constant depending only on $\mathcal X$.
\end{lemma}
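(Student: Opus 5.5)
The plan is to dominate $\mathcal M(w\mathbf 1_B)$ pointwise by $[w]_{A_\infty(\mathcal X)}$ times a geometric maximal function, and then to use the $L^1$ bound for that geometric maximal function. The $A_\infty$ constant here is the exponential form, the scalar case $m=1$ of $[W]_{\mathcal A_{p,\infty}}$:
$$[w]_{A_\infty(\mathcal X)}=\sup_{\mathrm{ball}\,B}\Big(\fint_B w\,d\mu\Big)\exp\Big(-\fint_B\log w\,d\mu\Big).$$
Define the geometric maximal operator
$$\mathcal M_0 f(x):=\sup_{B\ni x}\exp\Big(\fint_B\log|f|\,d\mu\Big).$$
By Jensen's inequality, $\exp(\fint_B\log|f|)\le(\fint_B|f|^{1/2})^2$, so $\mathcal M_0 f\le[\mathcal M(|f|^{1/2})]^2$ pointwise. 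The $L^2$-boundedness of $\mathcal M$ on the space of homogeneous type then gives
$$\int_{\mathcal X}\mathcal M_0 f\,d\mu\le C_{\mathcal X}\int_{\mathcal X}|f|\,d\mu .$$
This step is clean. The key pointwise fact is that, for every ball $B'$,
$$\fint_{B'}w\le[w]_{A_\infty(\mathcal X)}\exp\Big(\fint_{B'}\log w\Big).$$

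Now fix a ball $B$ and $x\in B$, and split the balls $B'\ni x$ in the supremum defining $\mathcal M(w\mathbf 1_B)(x)$ into two families. In the first family, $B'\cap B\neq\emptyset$ and $\mu(B')\gtrsim\mu(B)$. Here
$$\fint_{B'}w\mathbf 1_B\le\frac{w(B)}{\mu(B')}\lesssim\fint_B w ,$$
so this part contributes at most $C\,w(B)$ after integrating over $B$. In the second family, $B'$ is small; by the quasi-triangle inequality together with \eqref{doublelambda}, each such $B'$ lies in $\kappa B$ for some $\kappa$ depending only on $A_0$. For these,
$$\fint_{B'}w\mathbf 1_B\le\fint_{B'}w\mathbf 1_{\kappa B}\le[w]_{A_\infty(\mathcal X)}\exp\Big(\fint_{B'}\log(w\mathbf 1_{\kappa B})\Big),$$
since $w\mathbf 1_{\kappa B}=w$ on $B'$. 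Hence this part is bounded by $[w]_{A_\infty(\mathcal X)}\mathcal M_0(w\mathbf 1_{\kappa B})(x)$. Integrating over $B$ and using the $L^1$ bound for $\mathcal M_0$ gives
$$\int_B\mathcal M(w\mathbf 1_B)\,d\mu\lesssim w(B)+[w]_{A_\infty(\mathcal X)}\,w(\kappa B).$$
Instead of working with balls directly, one could also pass through the adjacent dyadic systems of Lemma \ref{dcs} and Lemma \ref{ballcube}, replacing $\mathcal M$ by $\sum_t\mathcal M^{\mathbb D^t}$. This would let the small cubes be kept inside $Q(B)$, but the same enlargement issue appears there.

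The main obstacle is that the bound ends with $w(\kappa B)$ rather than $w(B)$. Using the doubling property of $A_\infty$ weights would spoil the linear dependence on $[w]_{A_\infty(\mathcal X)}$. I would first try to avoid the enlargement by a Whitney-type localization. For $x\in B$ and a small ball $B'\ni x$, replace $B'$ by a comparable ball $B''\subset B$ whenever $B'$ is small relative to the distance from $x$ to $\mathcal X\setminus B$. When that fails, $B'$ is comparable to a ball adapted to the boundary of $B$; for those balls one controls $\fint_{B'}w\mathbf 1_B$ via $w(B)/\mu(B')$ summed over a bounded-overlap family. If this localization becomes too technical, the fallback is to use that Wilson's constant is unchanged, up to constants depending only on $\mathcal X$, when the defining integral over $B$ is replaced by one over $\kappa B$. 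Combined with a stopping-time (principal ball) argument at the scale of $B$, this would absorb $w(\kappa B)$ into a supremum of the same type as $[w]^*_{A_\infty(\mathcal X)}$, which closes the estimate.
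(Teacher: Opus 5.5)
You correctly reach the displayed bound, but the lemma is exactly the statement that its last term can be replaced by $[w]_{A_\infty(\mathcal X)}\,w(B)$, and your proposal does not prove this. The bound $\int_{\mathcal X}\mathcal M_0 f\,d\mu\le C_{\mathcal X}\int_{\mathcal X}|f|\,d\mu$ via $\mathcal M_0 f\le[\mathcal M(|f|^{1/2})]^2$ is fine. So is your treatment of balls $B'$ with $r_{B'}\ge r_B$, and so is the localization of the remaining balls to $\kappa B$. Together they give
\[
\int_B\mathcal M(w\mathbf 1_B)\,d\mu\lesssim w(B)+[w]_{A_\infty(\mathcal X)}\,w(\kappa B).
\]
As you note, going through the doubling property of $w$ costs a power of the constant. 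Jensen on $B$ and on $\kappa B\setminus B$ only yields $w(\kappa B)\lesssim[w]_{A_\infty(\mathcal X)}^{1/\theta}w(B)$ with $\theta:=\mu(B)/\mu(\kappa B)$. So the missing step is not a technicality; it is the entire content of the statement.

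Neither of your repairs closes this gap.

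\emph{Whitney localization.} Balls $B'\ni x$ with $B'\subset B$ are already controlled by $[w]_{A_\infty(\mathcal X)}\mathcal M_0(w\mathbf 1_B)(x)$. All the difficulty sits in the balls that leave $B$, and bounding those by $w(B)/\mu(B')$ discards the smallness of $w(B'\cap B)$. Such balls have radius $\gtrsim\delta(x):=\inf_{y\notin B}d(x,y)$, so this bound leads to $w(B)\int_B[\mu(B_d(x,c\,\delta(x)))]^{-1}\,d\mu(x)$. That integral is infinite already for Euclidean balls in $\mathbb R^n$, since each dyadic boundary layer contributes $\gtrsim w(B)$, and no bounded-overlap bookkeeping changes this. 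Doing better requires applying the $A_\infty$ condition on such $B'$. But the geometric mean of $w\mathbf 1_B$ over $B'$ vanishes once $\mu(B'\setminus B)>0$, while the geometric mean of $w$ involves $w$ on $B'\setminus B$ and returns you to $w(\kappa B)$.

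\emph{Fallback.} You assert, without proof, that Wilson's constant is equivalent to an enlarged version of itself. That is the same linear comparison of $w(\kappa B)$ with $w(B)$ in another form. An absorption argument would also need a small coefficient in front of $[w]^*_{A_\infty(\mathcal X)}$, which nothing in your estimate produces.

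The paper itself gives no proof; it cites Hyt\"onen--P\'erez for $\mathbb R^n$ and the claim of Hyt\"onen et al.\ for $\mathcal X$. In the Euclidean cube setting the leak can be avoided by localizing \emph{before} invoking $A_\infty$. If a cube $Q'$ contains a point of $Q$ and $\ell(Q')\le\ell(Q)$, then $Q'\cap Q\subset Q''\subset Q$ for some cube $Q''$ with $\ell(Q'')=\ell(Q')$. Hence $\mathcal M(w\mathbf 1_Q)$ is dominated on $Q$ by the maximal function over subcubes of $Q$. The exponential $A_\infty$ condition and the local $L^1$ bound for the geometric maximal function then apply with no enlargement. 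For balls in a general $\mathcal X$ (already for Euclidean balls) there is no containment $B'\cap B\subset B''\subset B$ with $\mu(B'')\sim\mu(B')$. A substitute for this localization is exactly what your argument is missing.
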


Repeating the proof of \cite[Lemma 5.3]{bf4} with Lemma 5.2 therein
replaced by Lemma \ref{two Ainfty}, we have the
following lemma, which gives the relation between scalar and matrix weights;
we omit the details.

\begin{lemma}\label{scalar}
Let	$p\in(0,\infty)$ and $W\in \mathcal A_{p,\infty}(\mathcal X,\mathbb{C}^{m})$.
Then, for any nonzero matrix $M\in M_m(\mathbb{C})$, we have
$w_M:=\|W^{\frac{1}{p}}M\|^p\in A_\infty(\mathcal X)$ with
$\left[w_M\right]_{A_\infty(\mathcal X)}
\leq[W]_{\mathcal A_{p,\infty}(\mathcal X,\mathbb{C}^{m})}$, and
$$
[W]_{\mathcal A_{p,\infty}(\mathcal X,\mathbb{C}^m)}^{\mathrm{sc}}
:=\sup_{M\in M_m(\mathbb{C})\setminus\{O_m\}}
[w_M]_{A_\infty(\mathcal X)}^*
\leq C_{\mathcal X} [W]_{\mathcal A_{p,\infty}(\mathcal X,\mathbb{C}^m)},
$$
where $C_{\mathcal X}$ is as in Lemma \ref{two Ainfty}.
\end{lemma}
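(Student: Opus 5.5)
The statement is Lemma \ref{scalar}, and the plan is to follow the proof of \cite[Lemma 5.3]{bf4}. It splits into two claims: first $[w_M]_{A_\infty(\mathcal X)}\le [W]_{\mathcal A_{p,\infty}(\mathcal X,\mathbb C^m)}$ for every nonzero $M$, and then the Wilson-constant bound. The second claim follows at once from the first by Lemma \ref{two Ainfty}: $[w_M]^*_{A_\infty}\le C_{\mathcal X}[w_M]_{A_\infty}\le C_{\mathcal X}[W]_{\mathcal A_{p,\infty}}$, and we then take the supremum over $M$. So the real content is the first inequality.

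Here $[w]_{A_\infty(\mathcal X)}$ is the Hru\v{s}\v{c}ev constant $\sup_B \fint_B w\,d\mu\,\exp(\fint_B\log w^{-1}\,d\mu)$. Fix a ball $B$ and write $M=W^{-\frac1p}(y)W^{\frac1p}(y)M$ for a.e. $y\in B$. Submultiplicativity of the operator norm gives, for a.e. $x,y\in B$,
\begin{align*}
w_M(x)=\left\|W^{\frac1p}(x)M\right\|^p\le \left\|W^{\frac1p}(x)W^{-\frac1p}(y)\right\|^p w_M(y).
\end{align*}
Averaging in $x$ over $B$ yields $\fint_B w_M\,d\mu\le w_M(y)\fint_B\|W^{\frac1p}(x)W^{-\frac1p}(y)\|^p\,d\mu(x)$. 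Taking logarithms, we get $\log\fint_B w_M\,d\mu+\log w_M(y)^{-1}\le \log\fint_B\|W^{\frac1p}(x)W^{-\frac1p}(y)\|^p\,d\mu(x)$. Averaging this in $y$ over $B$ and exponentiating gives
\begin{align*}
\fint_B w_M\,d\mu\,\exp\left(\fint_B\log w_M^{-1}\,d\mu\right)\le \exp\left(\fint_B\log\fint_B\left\|W^{\frac1p}(x)W^{-\frac1p}(y)\right\|^p\,d\mu(x)\,d\mu(y)\right)\le[W]_{\mathcal A_{p,\infty}}.
\end{align*}
Taking the supremum over balls $B$ gives the first claim.

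The one technical point is justifying the integrals of logarithms. The hypothesis in the definition of $\mathcal A_{p,\infty}(\mathcal X,\mathbb C^m)$ ensures that $\log_+$ of the inner average is integrable on $B$. Moreover, $w_M>0$ a.e. because $W$ is invertible a.e. and $M\neq O_m$, and $w_M\le\|W\|\|M\|^p$ is locally integrable. So $\log w_M^{-1}$ has an integrable negative part, and the averaged pointwise inequality shows that its positive part is also controlled. Once this bookkeeping is checked, the rest is routine. I would verify it first, being careful to handle $\log$ versus $\log_+$ through the identity $\log t=\log_+t-\log_+t^{-1}$, as in the proof of Lemma \ref{Apcha}.
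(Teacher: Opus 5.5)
Your proposal is correct and is essentially the paper's route: the paper defers to \cite[Lemma 5.3]{bf4} with its Lemma 5.2 replaced by Lemma \ref{two Ainfty}. That amounts to your pointwise bound $w_M(x)\le\|W^{\frac1p}(x)W^{-\frac1p}(y)\|^p w_M(y)$, averaged in $x$, logged, averaged in $y$, giving $[w_M]_{A_\infty(\mathcal X)}\le[W]_{\mathcal A_{p,\infty}(\mathcal X,\mathbb C^m)}$, followed by Lemma \ref{two Ainfty}. The same averaging pattern appears in the paper's proof of Lemma \ref{4.13}, and your integrability bookkeeping via $\log t=\log_+t-\log_+t^{-1}$ is adequate.
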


The following reverse H\"{o}lder inequality for scalar weights
is exactly \cite[Theorem 1.1]{tce}.

\begin{lemma} \label{scalarrh}
Let $w\in A_\infty(\mathcal X)$.
Then, for any ball $B\subset\mathcal X$,
\begin{align*}
\left[ \fint_B [w(x)]^{r(w)}\,d\mu(x) \right]^{\frac{1}{r(w)}}
\leq 2(4 A_0)^{\omega} \fint_{2 A_0 B} w(x)\,d\mu(x),
\end{align*}
where $\omega$ is as in \eqref{doublelambda} and
\begin{align*}
r(w):=1+\frac{1}{6[32 A_0^2 (4A_0^2+A_0)^2]^{\omega}
[w]_{A_\infty(\mathcal X)}^*}.
\end{align*}
\end{lemma}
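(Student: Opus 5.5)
We only sketch how one could reprove \cite[Theorem 1.1]{tce}. The plan is to run the Hyt\"onen--P\'erez--Rela argument, which trades an $A_\infty$ hypothesis for an estimate on the maximal function via Wilson's constant. By Lemma \ref{two Ainfty}, $[w]^*_{A_\infty(\mathcal X)}<\infty$, and the exponent $r(w)-1$ in the statement is inverse-proportional to $[w]^*_{A_\infty(\mathcal X)}$. So everything should reduce to the following \emph{local} estimate: for a suitable set $E$ attached to $B$ (a dyadic cube or a dilated ball) and the local maximal operator $M_E:=\mathcal M(\cdot\,\mathbf 1_E)$, one has
\begin{align*}
\int_E (M_E w)^{1+\delta}\,d\mu\le 2\left(w_E\right)^{\delta}\int_E M_E w\,d\mu
\qquad\text{whenever }\ 0<\delta\le \frac{1}{2c_{\mathcal X}-1},
\end{align*}
where $w_E:=\fint_E w\,d\mu$ and $c_{\mathcal X}$ is a covering/doubling constant. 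Granting this, $w\le M_Ew$ almost everywhere on $E$ (Lebesgue differentiation) gives $\int_E w^{1+\delta}\le \int_E (M_Ew)^{1+\delta}$. Then Wilson's constant gives $\int_E M_E w\le [w]^*_{A_\infty(\mathcal X)}\,w(E)$. Choosing $\delta\sim 1/[w]^*_{A_\infty(\mathcal X)}$ makes $(2[w]^*_{A_\infty(\mathcal X)})^{1/(1+\delta)}$ bounded by an absolute constant, and one gets $(\fint_E w^{1+\delta})^{1/(1+\delta)}\lesssim w_E$.

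To prove the local estimate, I would use the layer-cake formula
\begin{align*}
\int_E (M_Ew)^{1+\delta}=\int_0^\infty \delta\lambda^{\delta-1}\int_{\{M_Ew>\lambda\}}M_Ew\,d\mu\,d\lambda,
\end{align*}
and split the $\lambda$-integral at $\lambda=w_E$. The part $\lambda<w_E$ is bounded trivially by $(w_E)^\delta\int_E M_Ew$. For $\lambda\ge w_E$, I would Calder\'on--Zygmund decompose $\{M_Ew>\lambda\}$ into balls or cubes on which the average of $w$ is comparable to $\lambda$; a dyadic grid from Lemma \ref{dcs} makes this clean, since maximal cubes have parent average at most $\lambda$. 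On each piece, $M_Ew$ is controlled by a local maximal function of the piece plus $\lambda$, which yields
\begin{align*}
\int_{\{M_Ew>\lambda\}}M_Ew\le c_{\mathcal X}\,\lambda\,\mu(\{M_Ew>\lambda\}).
\end{align*}
Integrating in $\lambda$ produces a term $\frac{c_{\mathcal X}\delta}{1+\delta}\int_E (M_Ew)^{1+\delta}$, which is absorbed into the left side when $\delta$ is as above. Since the left side may be infinite, I would first run the argument with $w\wedge N$ and let $N\to\infty$.

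Finally, I would pass from $E$ to the given ball $B$. Enlarging $B$ to $2A_0B$ guarantees that the localized maximal function over $B$ only sees $w\mathbf 1_{2A_0B}$, via the quasi-triangle inequality. Doubling \eqref{doublelambda} with $\lambda=4A_0$ accounts for the factor $2(4A_0)^\omega$. The explicit $[32A_0^2(4A_0^2+A_0)^2]^\omega$ in $r(w)$ comes from the covering constant in the Vitali-type step. The main obstacle is exactly this geometric bookkeeping: showing $\int_{\{M w>\lambda\}}Mw\lesssim\lambda\,\mu(\{Mw>\lambda\})$ in a quasi-metric space with explicit constants. For this I would try a Vitali covering with dilation factor $4A_0^2+A_0$ rather than dyadic cubes, since that appears to be the source of the stated constants.
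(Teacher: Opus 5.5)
The paper does not reprove this lemma; it quotes it from \cite[Theorem 1.1]{tce}. Your architecture is the right one: Wilson's constant, a localized maximal function, layer-cake absorption, and doubling to pass between $B$ and $2A_0B$. But the sketch has a genuine gap, in two places.

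First, the level-set bound $\int_{\{M_Ew>\lambda\}}M_Ew\,d\mu\le c_{\mathcal X}\lambda\,\mu(\{M_Ew>\lambda\})$ cannot hold with a purely geometric constant. On a stopping ball $E_j$, the integral $\int_{E_j}\mathcal M(w\mathbf 1_{E_j})\,d\mu$ is controlled only through Wilson's constant, by $[w]^*_{A_\infty(\mathcal X)}w(E_j)\lesssim[w]^*_{A_\infty(\mathcal X)}\lambda\,\mu(E_j)$. So the correct bound carries the factor $[w]^*_{A_\infty(\mathcal X)}$, and absorption needs $\delta\lesssim1/(c_{\mathcal X}[w]^*_{A_\infty(\mathcal X)})$. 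This, not your last step, is where $[w]^*_{A_\infty(\mathcal X)}$ enters $r(w)$. Your local estimate, with $\delta\le1/(2c_{\mathcal X}-1)$ independent of $w$, is in fact false. Work in $\mathcal X=(\mathbb R^n,\rho,dx)$ and pick $a\in(-1,0)$ with $a(1+\delta)\le-1$. Then $w:=[\rho(\cdot)]^a\in A_1\subset A_\infty$ by Lemma \ref{example}, so $\int_EM_Ew\,d\mu\le[w]^*_{A_\infty(\mathcal X)}w(E)<\infty$ for any ball $E\ni\mathbf 0$. On the other hand, $\int_E(M_Ew)^{1+\delta}\,d\mu\ge\int_Ew^{1+\delta}\,d\mu=\infty$.

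Second, and decisively, the last step fails even after this correction. With $\int_E(M_Ew)^{1+\delta}\,d\mu\le2[w]^*_{A_\infty(\mathcal X)}(w_E)^\delta w(E)$, your chain $\int_Ew^{1+\delta}\le\int_E(M_Ew)^{1+\delta}$ gives the constant $(2[w]^*_{A_\infty(\mathcal X)})^{1/(1+\delta)}$. For small $\delta$ this is comparable to $2[w]^*_{A_\infty(\mathcal X)}$, not to an absolute constant; it is $(2[w]^*_{A_\infty(\mathcal X)})^{\delta}$ that stays bounded. The route cannot be repaired. By Jensen, $\fint_E(M_Ew)^{1+\delta}\ge(\fint_EM_Ew)^{1+\delta}$, which is of size $([w]^*_{A_\infty(\mathcal X)}w_E)^{1+\delta}$ on balls nearly extremal for Wilson's constant. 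Interpolating with $L^1$ instead loses a factor $\log(e[w]^*_{A_\infty(\mathcal X)})$ in $r(w)-1$.

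The missing idea (Hyt\"onen--P\'erez \cite{thcp}, also used in \cite{tce}) is to integrate against $w\,d\mu$:
\[ \int_Ew^{1+\delta}\,d\mu\le\int_Ew\,(M_Ew)^\delta\,d\mu=\int_0^\infty\delta\lambda^{\delta-1}w(\{x\in E:\ M_Ew(x)>\lambda\})\,d\lambda. \]
Levels $\lambda<w_E$ contribute $(w_E)^\delta w(E)$, with no Wilson constant. For $\lambda\ge w_E$, the stopping balls give the purely geometric bound $w(\{M_Ew>\lambda\})\le c_{\mathcal X}\lambda\,\mu(\{M_Ew>\lambda\})$, since it involves averages of $w$ rather than of $M_Ew$. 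So these levels contribute at most $\frac{c_{\mathcal X}\delta}{1+\delta}\int_E(M_Ew)^{1+\delta}\,d\mu\le2c_{\mathcal X}\delta[w]^*_{A_\infty(\mathcal X)}(w_E)^\delta w(E)$. For $\delta\le1/(2c_{\mathcal X}[w]^*_{A_\infty(\mathcal X)})$ the factor $\delta$ cancels $[w]^*_{A_\infty(\mathcal X)}$, and you get $\fint_Ew^{1+\delta}\,d\mu\le2(w_E)^{1+\delta}$. Together with $\mu(2A_0B)\le(4A_0)^\omega\mu(B)$, this accounts for the $w$-independent constant $2(4A_0)^\omega$.

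Finally, to make the absorbed quantity finite, truncate in $\lambda$: replace $(M_Ew)^\delta$ by $(\min\{M_Ew,N\})^\delta$. Do not replace $w$ by $w\wedge N$, whose Wilson constant is not obviously controlled by that of $w$.
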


Now we prove the reverse H\"older inequality
of $\mathcal A_{p,\infty}(\mathcal X,\mathbb C^m)$-matrix  weights.

\begin{proposition}\label{matrixrhi}
Let $p\in(0,\infty)$ and $W\in \mathcal A_{p,\infty}(\mathcal X,\mathbb{C}^{m})$.
Then, for any ball $B\subset\mathcal X$ and  $M\in M_m(\mathbb{C})$,
$$
\left[ \fint_B\left\|W^{\frac1p}(x)M\right\|^{p r(W)}\,d\mu(x) \right]^{\frac 1{r(W)}}
\leq2(4A_0)^{\omega}
\fint_{2A_0B}\left\| W^{\frac1p}(x)M\right\|^p\,d\mu(x),
$$
where $\omega$ is as in \eqref{doublelambda} and
\begin{align}\label{rW}
r(W):=1+\frac{1}{6[32 A_0^2 (4A_0^2+A_0)^2]^{\omega}
[W]^{\mathrm{sc}}_{\mathcal A_{p,\infty}(\mathcal X,\mathbb C^m)}}.
\end{align}
\end{proposition}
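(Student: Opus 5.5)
The plan is to reduce the statement to the scalar reverse H\"older inequality in Lemma \ref{scalarrh}, applied to the scalar weights $w_M:=\|W^{\frac1p}M\|^p$ given by Lemma \ref{scalar}. The one point needing care is that the exponent must not depend on $M$. We ensure this by showing that $r(W)\le r(w_M)$ for every nonzero $M$, and then using the monotonicity of $L^r$-averages in $r$.

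First we dispose of $M=O_m$. In that case both sides vanish and there is nothing to prove, so we assume $M\neq O_m$. Since $W\in\mathcal A_{p,\infty}(\mathcal X,\mathbb C^m)$, Lemma \ref{scalar} gives $w_M\in A_\infty(\mathcal X)$. The same lemma also gives
$$[w_M]^*_{A_\infty(\mathcal X)}\le [W]^{\mathrm{sc}}_{\mathcal A_{p,\infty}(\mathcal X,\mathbb C^m)}<\infty.$$
The quantity $r(w)$ in Lemma \ref{scalarrh} is decreasing in $[w]^*_{A_\infty(\mathcal X)}$. Comparing with \eqref{rW}, we therefore get $1<r(W)\le r(w_M)$.

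Next we apply Lemma \ref{scalarrh} to $w_M$ on the ball $B$. This yields
$$\left[\fint_B w_M^{r(w_M)}\,d\mu\right]^{\frac1{r(w_M)}}\le 2(4A_0)^{\omega}\fint_{2A_0B}w_M\,d\mu .$$
Because $r(W)\le r(w_M)$, H\"older's (Jensen's) inequality on the probability space $(B,\mu/\mu(B))$ gives
$$\left[\fint_B w_M^{r(W)}\,d\mu\right]^{\frac1{r(W)}}\le\left[\fint_B w_M^{r(w_M)}\,d\mu\right]^{\frac1{r(w_M)}} .$$
We then substitute $w_M^{r(W)}=\|W^{\frac1p}M\|^{pr(W)}$, which gives exactly the claimed inequality.

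The main obstacle is really just checking that the hypotheses of the invoked lemmas line up. Lemma \ref{scalar} is stated on $\mathcal X$ and relies on Lemma \ref{two Ainfty}, so we must verify that $[W]^{\mathrm{sc}}$ is finite. This follows from Lemma \ref{scalar} itself, since it bounds $[W]^{\mathrm{sc}}$ by $C_{\mathcal X}[W]_{\mathcal A_{p,\infty}}$. Once that is in place, the remaining argument is the monotonicity observation above.
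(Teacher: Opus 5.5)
Your proposal is correct and follows the paper's proof essentially verbatim: apply Lemma~\ref{scalar} to get $w_M\in A_\infty(\mathcal X)$ with $[w_M]^*_{A_\infty(\mathcal X)}\le[W]^{\mathrm{sc}}_{\mathcal A_{p,\infty}(\mathcal X,\mathbb C^m)}$, hence $r(W)\le r(w_M)$, then use H\"older's inequality to lower the exponent and finish with Lemma~\ref{scalarrh}. You state the monotonicity step $r(W)\le r(w_M)$ more explicitly than the paper does, which is a welcome clarification.
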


\begin{proof}
If $M$ is a zero matrix, then the present proposition is obvious,
so it is enough to consider the nonzero matrix $M$.
By Lemma \ref{scalar}, we find that the scalar weight
$w_M:=\|W^{\frac1p}M\|^p$ belongs to $A_\infty(\mathcal X)$.
From this,  H\"older's inequality, and Lemma \ref{scalarrh},
we infer that, for any ball $B\subset\mathcal X$,
\begin{align*}
\left\{\fint_B \left[w_M(x)\right]^{r(W)}\,d\mu(x)\right\}^{\frac 1{r(W)}}
&\leq \left\{\fint_B \left[w_M(x)\right]^{r(w_M)}\,d\mu(x) \right\}^{\frac 1{r(w_M)}}\\
&\leq2(4A_0)^{\omega} \fint_{2A_0B} w_M(x)\,d\mu(x).
\end{align*}
This completes the proof of Proposition \ref{matrixrhi}.
\end{proof}

The following conclusion is an analogue of Lemma \ref{scalar}.

\begin{lemma} \label{4.13}
Let $p,u\in(0,\infty)$ and $W\in \mathcal A_{p,u}(\mathcal X,\mathbb{C}^{m})$.
Then, for any $M\in M_m(\mathbb C)\setminus\{O_m\}$,
$w_M:=\|W^{-\frac1p}M\|^u\in A_{\infty}(\mathcal X)$.
Moreover, $[w_M]_{A_{\infty}(\mathcal X)}\leq [W]_{\mathcal A_{p,u}(\mathcal X,\mathbb{C}^{m})}^{\frac up}$.
\end{lemma}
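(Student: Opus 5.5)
The plan is a direct pointwise argument: compare $w_M(y)$ with $w_M(x)$ through the quantity $\|W^{\frac1p}(x)W^{-\frac1p}(y)\|$, and then apply Jensen's inequality to move from the logarithmic average to the power average that appears in $[W]_{\mathcal A_{p,u}(\mathcal X,\mathbb C^m)}$. Recall that the scalar $A_\infty(\mathcal X)$ constant is the case $m=1$ of $[\cdot]_{\mathcal A_{p,\infty}(\mathcal X,\mathbb C^m)}$. For a scalar weight the quantity $\|w^{\frac1p}(x)w^{-\frac1p}(y)\|^p$ equals $w(x)/w(y)$, so for every ball $B$ we must bound
\begin{align*}
\exp\left(\fint_B\log\left(\fint_B \frac{w_M(y)}{w_M(x)}\,d\mu(y)\right)d\mu(x)\right)
\end{align*}
by $[W]_{\mathcal A_{p,u}(\mathcal X,\mathbb C^m)}^{\frac up}$, with the roles of the variables named as convenient.

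\emph{Step 1: pointwise comparison.} Since $W(x)$ is invertible for almost every $x$, we may write $W^{-\frac1p}(y)M=W^{-\frac1p}(y)W^{\frac1p}(x)\,W^{-\frac1p}(x)M$ for almost every $x,y$. Hence
\begin{align*}
\left\|W^{-\frac1p}(y)M\right\|\le\left\|W^{-\frac1p}(y)W^{\frac1p}(x)\right\|\left\|W^{-\frac1p}(x)M\right\|.
\end{align*}
The first factor on the right is the norm of the adjoint of $W^{\frac1p}(x)W^{-\frac1p}(y)$, so it equals $\|W^{\frac1p}(x)W^{-\frac1p}(y)\|$. (Alternatively, Lemma \ref{exchange} gives the same identity.) Raising to the power $u$ and using $0<w_M(x)<\infty$ almost everywhere (invertibility and $M\neq O_m$), we get
\begin{align*}
\frac{w_M(y)}{w_M(x)}\le\left\|W^{\frac1p}(x)W^{-\frac1p}(y)\right\|^u .
\end{align*}
Averaging in $y$ over $B$ then gives $\fint_B w_M\,d\mu/w_M(x)\le F_B(x)$, where $F_B(x):=\fint_B\|W^{\frac1p}(x)W^{-\frac1p}(y)\|^u\,d\mu(y)$.

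\emph{Step 2: Jensen.} Since $\log$ is increasing,
\begin{align*}
\exp\left(\fint_B\log\frac{\fint_B w_M\,d\mu}{w_M(x)}\,d\mu(x)\right)
\le\exp\left(\frac up\fint_B\log [F_B(x)]^{\frac pu}\,d\mu(x)\right)
\le\left(\fint_B [F_B(x)]^{\frac pu}\,d\mu(x)\right)^{\frac up}.
\end{align*}
The last inequality is Jensen's inequality for the concave function $\log$. The right-hand side is at most $[W]_{\mathcal A_{p,u}(\mathcal X,\mathbb C^m)}^{\frac up}$. Taking the supremum over all balls yields the stated bound on $[w_M]_{A_\infty(\mathcal X)}$.

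\emph{Main obstacle: admissibility of $w_M$.} The only delicate point is checking that $w_M$ is a genuine weight, i.e., that it is locally integrable and that the $\log_+$-integrability condition holds. I would handle this with the same inequality. Since $\fint_B F_B^{\frac pu}\,d\mu<\infty$, the function $F_B$ is finite at some point $x_0\in B$ where $0<w_M(x_0)<\infty$. Then Step 1 gives $\fint_B w_M\,d\mu\le w_M(x_0)F_B(x_0)<\infty$ for every ball $B$, which is local integrability. For the $\log_+$ condition, the integrand $\log_+\bigl(\fint_B w_M\,d\mu/w_M(x)\bigr)$ is bounded by $\log_+F_B(x)$, and $\log_+F_B\lesssim F_B^{\frac pu}$ is integrable over $B$, so this condition holds as well.
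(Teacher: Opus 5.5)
Your proposal is correct and follows essentially the paper's argument. The steps are the pointwise bound $w_M(y)/w_M(x)\le\|W^{\frac1p}(x)W^{-\frac1p}(y)\|^u$ (the paper cites Lemma \ref{exchange} for the norm swap), averaging over the ball, and Jensen's inequality to pass from the logarithmic average to the $\mathcal A_{p,u}$ power average; your additional check that $w_M$ is locally integrable and satisfies the $\log_+$-integrability requirement supplies a detail the paper leaves implicit.
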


\begin{proof}
For any ball $B\subset\mathcal X$ and almost every $x\in B$,
\begin{align*}
\log\left(\fint_B w_M(y)\,d\mu(y)\right)
\le\log\left(\fint_B \left\|W^{-\frac1p}(y)W^{\frac1p}(x)\right\|^{u}\,d\mu(y)\right)
+\log\left(\left\|W^{-\frac1p}(x)M \right\|^{u}\right).
\end{align*}
Taking the integral average over $x\in B$ on both sides of the above inequality, we obtain
\begin{align*}
&\log\left(\fint_B w_M(y)\,d\mu(y)\right)
+\fint_B\log\left([w_M(x)]^{-1}\right)\,d\mu(x) \\
&\quad \le\fint_B\log\left(\fint_B \left\|W^{-\frac1p}(y)
W^{\frac1p}(x)\right\|^{u}\,d\mu(y)\right)\,d\mu(x).
\end{align*}
Applying this, Lemma \ref{exchange},
and  Jensen's inequality, we conclude that
\begin{align*}
&\fint_B w_M(y)\,d\mu(y)
\exp\left(\fint_B\log \left([w_M(x)]^{-1}\right)\,d\mu(x)\right)\\
&\quad\le \exp\left(\fint_B\log\left(\fint_B
\left\|W^{\frac1p}(x) W^{-\frac1p}(y)
\right\|^{u}\,d\mu(y)\right)\,d\mu(x)\right)\\
&\quad=
\left\{\exp\left(\fint_B\log\left(\left[\fint_B
\left\|W^{\frac1p}(x) W^{-\frac1p}(y) \right\|^{u}
\,d\mu(y)\right]^{\frac p{u}}
\right)\,d\mu(x)\right)\right\}^{\frac {u}p}\\
&\quad\le
\left\{\fint_B \left[\fint_B\
\left\|W^{\frac1p}(x) W^{-\frac1p}(y) \right\|^{u}
\,d\mu(y)\right]^{\frac p{u}} \,d\mu(x)\right\}^{\frac {u}p}
\leq [W]_{\mathcal A_{p,u}(\mathcal X,\mathbb{C}^{m})}^{\frac up}.
\end{align*}
This completes the proof of Lemma \ref{4.13}.
\end{proof}

%

The following lemma shows that if two dyadic cubes from
adjacent generations intersect, then their measures are comparable.

\begin{lemma}\label{QQ}
Let  $k\in\mathbb Z$,  $\tau\in\mathcal A_k$, and $t\in\{1,\ldots,K\}$
be as in Lemma \ref{dcs}. If $Q_{\tau}^{k,t}\subset Q_{\beta}^{k-1,t}$
for some $\beta\in\mathcal A_{k-1}$, then $\mu(Q_{\tau}^{k,t})\sim\mu(Q_{\beta}^{k-1,t})$,
where the positive equivalence constants  depend only on $A_0,\delta$, and $C_{\mu}$.
\end{lemma}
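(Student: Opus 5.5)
The plan is to sandwich both cubes between balls whose radii are comparable, using Lemma \ref{dcs}(a)(iii), and then to invoke the doubling property \eqref{doublelambda}. Write $Q:=Q_{\tau}^{k,t}$ and $Q':=Q_{\beta}^{k-1,t}$. The inclusion $Q\subset Q'$ gives $\mu(Q)\le\mu(Q')$ at once, so only the reverse inequality $\mu(Q')\lesssim\mu(Q)$ needs proof.

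For the reverse inequality, Lemma \ref{dcs}(a)(iii) gives
\begin{align*}
B_d\bigl(z_{\tau}^{k,t},(12A_0^4)^{-1}\delta^{k}\bigr)\subset Q
\quad\text{and}\quad
Q'\subset B_d\bigl(z_{\beta}^{k-1,t},4A_0^2\delta^{k-1}\bigr).
\end{align*}
Since $z_{\tau}^{k,t}\in Q\subset Q'$, we have $d(z_{\tau}^{k,t},z_{\beta}^{k-1,t})<4A_0^2\delta^{k-1}$. Take any $y\in Q'$. The quasi-triangle inequality then gives
\begin{align*}
d\bigl(y,z_{\tau}^{k,t}\bigr)\le A_0\bigl[d(y,z_{\beta}^{k-1,t})+d(z_{\beta}^{k-1,t},z_{\tau}^{k,t})\bigr]<8A_0^3\delta^{k-1}.
\end{align*}
Hence $Q'\subset B_d(z_{\tau}^{k,t},8A_0^3\delta^{k-1})=\lambda B_0$, where
\begin{align*}
B_0:=B_d\bigl(z_{\tau}^{k,t},(12A_0^4)^{-1}\delta^{k}\bigr)
\quad\text{and}\quad
\lambda:=96A_0^7\delta^{-1}\ge1.
\end{align*}

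Next, applying \eqref{doublelambda} yields
\begin{align*}
\mu(Q')\le\mu(\lambda B_0)\le C_\mu\lambda^{\omega}\mu(B_0)\le C_\mu\lambda^{\omega}\mu(Q).
\end{align*}
The constant $C_\mu\lambda^\omega$ depends only on $A_0$, $\delta$, and $C_\mu$, which is exactly the dependence the lemma asserts.

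No step is a real obstacle here. The only care needed is that both balls must be centered at the same point before \eqref{doublelambda} is applied. This is why we recenter the outer ball at $z_{\tau}^{k,t}$ through the quasi-triangle inequality, rather than comparing balls with different centers directly. An alternative route is Lemma \ref{ballcube}(i) applied to $Q'$ combined with doubling, but the direct recentering above already suffices.
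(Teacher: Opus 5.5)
Your proposal is correct and follows essentially the same argument as the paper. Both bound the parent cube by the ball of radius $4A_0^2\delta^{k-1}$, recenter it at $z_{\tau}^{k,t}$ via the quasi-triangle inequality, and apply \eqref{doublelambda} with $\lambda=96A_0^7\delta^{-1}$.

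Your explicit justification that $z_{\tau}^{k,t}\in Q\subset Q'$, and your use of a strict inequality so that the inclusion into the open ball is rigorous, are small clarifications that the paper leaves implicit.
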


\begin{proof}
Clearly $\mu(Q_{\tau}^{k,t})\le\mu(Q_{\beta}^{k-1,t})$.
We next prove $\mu(Q_{\beta}^{k-1,t})\lesssim\mu(Q_{\tau}^{k,t})$.
From Lemma \ref{dcs}(a)(iii), we infer that
\begin{align}\label{BQQB}
B\left(z_{\tau}^{k,t},(12A_0^{4})^{-1}\delta^{k}\right)
\subset Q_{\tau}^{k,t}\subset Q_{\beta}^{k-1,t}\subset B\left(z_{\beta}^{k-1,t},4A_0^{2}\delta^{k-1}\right).
\end{align}
Moreover, by Definition \ref{quasidis}(iii), we find that,
for any $x\in B_d(z_{\beta}^{k-1,t},4A_0^{2}\delta^{k-1})$,
\begin{align*}
d\left(x,z_{\tau}^{k,t}\right)
\le A_0\left[d\left(x,z_{\beta}^{k-1,t}\right)
+d\left(z_{\tau}^{k,t},z_{\beta}^{k-1,t}\right) \right]
\le 8A_0^{3}\delta^{k-1}.
\end{align*}
Thus, $B_d(z_{\beta}^{k-1,t},4A_0^{2}\delta^{k-1})\subset \lambda B_d(z_{\tau}^{k,t},(12A_0^{4})^{-1}\delta^{k}) $,
where $\lambda:=96A_0^7 \delta^{-1}$.
Using this and \eqref{doublelambda}, we conclude that,
\begin{align*}
\mu\left(B\left(z_{\beta}^{k-1,t},4A_0^{2}\delta^{k-1}\right)\right)
\le\mu\left(\lambda B\left(z_{\tau}^{k,t},(12A_0^{4})^{-1}\delta^{k}\right)\right)
\le C_{\mu} \lambda^\omega
\mu\left(B_d(z_{\tau}^{k,t},(12A_0^{4})^{-1}\delta^{k})\right),
\end{align*}
where $\omega := \log_2 C_{\mu}$.
From this and \eqref{BQQB}, we deduce that
\begin{align*}
\mu\left(Q_{\beta}^{k-1,t}\right)
\le \mu\left(B\left(z_{\beta}^{k-1,t},4A_0^{2}\delta^{k-1}\right)\right)
\le C_{\mu} \lambda^\omega
\mu\left(B\left(z_{\tau}^{k,t},(12A_0^{4})^{-1}\delta^{k}\right)\right)
\le C_{\mu} \lambda^\omega \mu\left(Q_{\tau}^{k,t}\right),
\end{align*}
which completes the proof of Lemma \ref{QQ}.
\end{proof}

Now, we   prove Proposition \ref{improve}.

\begin{proof}[Proof of Proposition \ref{improve}]
We first prove (i).
Repeating the argument used in the proof of
the equivalence between (ii) and (iii) in Lemma \ref{Apcha},
we obtain, for any $u\in(0,\infty)$,
\begin{align*}
K_{\mathbb D}(u)
&:=\sup_{Q\in\mathbb D}\fint_{Q}
\left\|W^{-\frac{1}{p}}(x)A_Q\right\|^u\,d\mu(x)
\sim\sup_{\mathrm{ball}\, B \subset\mathcal X}
\fint_{B}\left\|W^{-\frac{1}{p}}(x)A_B\right\|^u\,d\mu(x),
\end{align*}
where the positive equivalence constants are independent of $W$.
From this and Lemmas \ref{Apcha} and \ref{Apu}, it follows that
to prove (i), we need only to show that
\begin{align}\label{expQB}
M:=\sup_{Q\in\mathbb D}\fint_Q\log_+
\left\|W^{-\frac{1}{p}}(x)A_Q\right\|\,d\mu(x)<\infty
\end{align}
if and only if
there exists $u\in(0,\infty)$ such that $K_{\mathbb D}(u)<\infty$.

If $K_{\mathbb D}(u)<\infty$ for some $u\in(0,\infty)$,
then applying the inequality $\log_+t\leq t$ for all $t\in(0,\infty)$,
we obtain, for any $Q\in\mathbb D$,
\begin{align*}
\fint_Q\log_+\left\|W^{-\frac{1}{p}}(x)A_Q\right\|\,d\mu(x)
=\frac1u\fint_Q\log_+\left\|W^{-\frac{1}{p}}(x)A_Q\right\|^u\,d\mu(x)
\leq\frac1u K_{\mathbb D}(u),
\end{align*}
and hence \eqref{expQB} holds.

On the other hand, assume that \eqref{expQB} holds.
If $M=0$, then, for any $Q\in\mathbb D$ and almost every $x\in Q$, we have
$\|W^{-\frac{1}{p}}(x)A_Q\|\leq 1$, and hence $K_{\mathbb D}(u)\leq 1$.
It therefore remains only to consider $M\in(0,\infty)$.
Let $Q\in\mathbb D$ be fixed.
Then $Q\in\mathbb D^t$ for some $t\in\{1,\ldots,K\}$.
Applying Lemma
\ref{dcs}, we can choose  $\{Q_i\}_{i\in \mathscr I}\subset \mathbb D^t$
to be maximal adjacent dyadic cube within $Q$ such that,
for any $i\in \mathscr I$,
$$\fint_{Q_i}\log_+\left\|W^{-\frac{1}{p}}(x)A_Q\right\|\,d\mu(x)>2M.$$
From this and the definition of $M$, we infer that
\begin{align*}
\sum_{i\in \mathscr I}\mu(Q_i)&<\frac{1}{2M}\sum_{i\in \mathscr I}\int_{Q_i}\log_+\left\|
W^{-\frac{1}{p}}(x)A_Q\right\|\,d\mu(x)\notag\\
&\leq\frac{1}{2M}\mu(Q)\fint_Q\log_+\left\|W^{-\frac{1}{p}}(x)A_Q\right\|\,d\mu(x)
\le\frac{1}{2}\mu(Q).
\end{align*}
For any $x\in\mathcal X$,
\begin{align}\label{QQi}
\mathbf{1}_Q(x)\log_+\left\|W^{-\frac{1}{p}}(x)A_Q\right\|
&=\mathbf{1}_{Q\setminus\bigcup_{i\in\mathscr I}Q_i}(x)\log_+\left\|
W^{-\frac{1}{p}}(x)A_Q\right\|
+\sum_{i\in\mathscr I}\mathbf{1}_{Q_i}(x)
\log_+\left\|W^{-\frac{1}{p}}(x)A_Q\right\| \notag \\
&=:I(x)+II(x).
\end{align}

We first estimate $I(x)$.
Applying Lemma \ref{dcs}{(a)}, we conclude that,
for any $x\in \mathcal X$,
there exist
$\{Q^j\}_{j\in\mathbb N}\subset \mathbb D^t$ satisfying
$\bigcap_{j\in\mathbb N}Q^j=\{x\}$ and $\mu(Q^j)\to0$. Moreover, by \eqref{expQB},
we obtain $\log_+\|W^{-\frac1p}A_Q\|\mathbf 1_Q\in L_{\rm{loc}}^1$.
Using these and Lebesgue's differentiation theorem
(see, for instance, \cite[Corollary 2.6]{acm15}),
we conclude that, for almost every  $x\in\mathcal X$,
\begin{align}\label{LDT}
\log_+\big\|W^{-\frac{1}{p}}(x)A_Q\big\|
=\lim_{j\to\infty}\fint_{ Q^j}\log_+\left\|W^{-\frac{1}{p}}(y)A_Q\right\|\,d\mu(y)
.
\end{align}
From \eqref{LDT} and the choice of $\{Q_i\}_{i\in \mathscr I}$,
we deduce that,
for almost every
$x\in Q\setminus\bigcup_{i\in \mathscr I}Q_i$,
$\log_+\|W^{-\frac{1}{p}}(x)A_Q\|\le 2M$, and hence
\begin{align}\label{Q1}
I(x)\le 2M\mathbf{1}_{Q\setminus\bigcup_{i\in\mathscr I}Q_i}(x).
\end{align}
We next estimate $II(x)$. For any $i\in \mathscr I$ and $x\in Q_i$, we see that
\begin{align}\label{Q2}
\log_+\left\|W^{-\frac{1}{p}}(x)A_Q\right\|
\leq\log_+\left\|W^{-\frac{1}{p}}(x)A_{Q_i}\right\|
+\log_+\left\|A_{Q_i}^{-1}A_Q\right\|.
\end{align}
For any $y\in Q_i$,
$$\log_+\left\|A_{Q_i}^{-1}A_Q\right\|
\leq\log_+\left\|A_{Q_i}^{-1}W^{\frac{1}{p}}(y)\right\|
+\log_+\left\|W^{-\frac{1}{p}}(y)A_Q\right\|.$$
Taking the average over $y\in Q_i$,
we obtain
\begin{align}\label{Q3}
\log_+\left\|A_{Q_i}^{-1}A_Q\right\|
&\leq \fint_{Q_i}\log_+\left\|A_{Q_i}^{-1}W^{\frac{1}{p}}(y)\right\|\,d\mu(y)
+\fint_{Q_i}\log_+\left\|W^{-\frac{1}{p}}(y)A_Q\right\|\,d\mu(y) \notag \\
&=:\rm J_1+J_2.
\end{align}
Using the inequality $\log_+a\le(ep)^{-1}a^p$
with $a=\|A_{Q_i}^{-1}W^{\frac{1}{p}}(y)\|$, together with
Lemmas \ref{exchange} and \ref{reduceM}, we conclude that
\begin{align}\label{Q4}
{\rm J_1}
\lesssim\fint_{Q_i}\left\|A_{Q_i}^{-1}W^{\frac{1}{p}}(y)\right\|^p\,d\mu(y)
\sim\left\|A_{Q_i}^{-1}A_{Q_i}\right\|^p=1.
\end{align}
We then consider $\rm J_2$. For any $i\in \mathscr I$, suppose that $Q_i=Q_{\tau_i}^{k_i,t}$ for $k_i\in\mathbb Z$ and $\tau_i\in\mathcal A_{k_i}$.
Using  (i) and (ii) of Lemma \ref{dcs}(a), we conclude that there exists $\beta\in\mathcal A_{k_i-1}$ such that
$Q_i\subset Q_{\beta}^{k_i-1,t}\subset Q$.
From this, Lemma \ref{QQ}, and the maximality of $Q_i$,
it follows that, for any $i\in \mathscr I$,
\begin{align*}
{\rm J_2}
&\le \frac{ \mu(Q_{\beta}^{k_i-1,t})}{\mu\left(Q_i\right)}\fint_{Q_{\beta}^{k_i-1,t}}
\log_+\left\|W^{-\frac{1}{p}}(y)A_Q\right\|\,d\mu(y)\notag\\
&\sim\fint_{Q_{\beta}^{k_i-1,t}}
\log_+\left\|W^{-\frac{1}{p}}(y)A_Q\right\|\,d\mu(y)
\le 2M,
\end{align*}
which together with \eqref{Q4}, \eqref{Q3}, and \eqref{Q2},
further implies that there exist two positive constants
$C_1,C_2$, independent of $W$, such that,
for any $i\in \mathscr I$ and $x\in Q_i$,
$$
\log_+\left\|W^{-\frac{1}{p}}(x)A_Q\right\|
\le\log_+\left\|W^{-\frac{1}{p}}(x)A_{Q_i}\right\|+ C_1+C_2M,
$$
and hence
\begin{align}\label{Q5}
II(x)\leq \sum_{i\in \mathscr I}\mathbf{1}_{Q_i}(x)
\left(\log_+\left\|W^{-\frac{1}{p}}(x)A_{Q_i}\right\|+ C_1+C_2M\right).
\end{align}
From \eqref{QQi},   \eqref{Q1}, and \eqref{Q5},
it follows that
\begin{align}\label{iterate}
\mathbf{1}_{Q}\log_{+}\left\|W^{-\frac{1}{p}}A_{Q}\right\|
&\leq2M\mathbf{1}_{Q\setminus\bigcup_{i\in \mathscr I}Q_i}
+\sum_{i\in \mathscr  I}\mathbf{1}_{Q_i}
\left(\log_+\left\|W^{-\frac{1}{p}}A_{Q_i}\right\|
+C_1+C_2M\right)\notag\\
&\leq C_{\mathrm{max}}\mathbf{1}_Q+\sum_{i\in\mathscr  I}\mathbf{1}_{Q_i}\log_+\left\|W^{-\frac{1}{p}}A_{Q_i}\right\|,
\end{align}
where $C_{\mathrm{max}}:=\max\{2M,C_1+C_2M\}.$

Observe that the same argument for \eqref{iterate}  also holds for each $\log_+\|W^{-\frac{1}{p}}A_{Q_i}\|$. Iterating this process infinitely, and eventually, we obtain
$$\mathbf{1}_Q\log_+\left\|W^{-\frac{1}{p}}A_Q\right\|\leq C_{\mathrm{max}} \sum_{k=0}^\infty\mathbf{1}_{\Omega_k}
=C_{\mathrm{max}} \sum_{k=0}^\infty(k+1)\mathbf{1}_{\Omega_k\setminus\Omega_{k+1}}, $$
where $\Omega_0:=Q$ and $\Omega_1:=\bigcup_{i\in \mathscr I}Q_i$, and  $\Omega_k\subset\Omega_{k-1}$ is a union of adjacent dyadic cubes with $\mu(\Omega_k)\leq\frac12\mu(\Omega_{k-1})\leq\cdots\leq2^{-k}\mu(Q)$.
Consequently, for any $u\in(0,\frac{\log(6/5)}{C_{\mathrm{max}}})$, we conclude that
\begin{align*}
\fint_Q\left\|W^{-\frac1p}(x)A_Q\right\|^u\,d\mu(x)
&\leq\fint_Q\exp\left(u\log_+\left\|W^{-\frac1p}(x)A_Q\right\|\right)\,d\mu(x)\\
&\leq\frac{1}{\mu(Q)}\sum_{j=0}^{\infty}\int_{\Omega_j\setminus\Omega_{j+1}}
\exp\left(uC_{\mathrm{max}}\sum_{k=0}^{\infty}(k+1)\mathbf1_{\Omega_k\setminus\Omega_{k+1}}\right)\,d\mu(x)\\
&=\frac{1}{\mu(Q)}\sum_{j=0}^{\infty}\int_{\Omega_j\setminus\Omega_{j+1}}
e^{uC_{\mathrm{max}}(j+1)}\,d\mu(x) \\
&\leq\sum_{j=0}^\infty e^{uC_{\mathrm{max}}(j+1)}2^{-j}
= \frac{e^{uC_{\mathrm{max}}}}{1- \frac{e^{uC_{\mathrm{max}}}}{2}}
\leq 3.
\end{align*}
This completes the proof of (i).

Now, we prove (ii).
By H\"older's inequality, we obtain
$\bigcup_{q\in(u,\infty)}\mathcal A_{p,q}(\mathcal X,\mathbb{C}^{m})
\subset \mathcal A_{p,u}(\mathcal X,\mathbb{C}^{m})$.
It remains to show that
\begin{align*}
\mathcal A_{p,u}(\mathcal X,\mathbb{C}^{m})
\subset\bigcup_{q\in(u,\infty)}\mathcal A_{p,q}(\mathcal X,\mathbb{C}^{m}).
\end{align*}
Let $W\in \mathcal A_{p,u}(\mathcal X,\mathbb{C}^{m})$.
From Lemmas \ref{two Ainfty}
and  \ref{4.13}, it follows that,
for any ball $B\subset \mathcal X$,
$w_B:=\|W^{-\frac1p}A_B\|^u\in A_{\infty}(\mathcal X)$ and
$$
[w_B]_{A_{\infty}(\mathcal X)}^*
\lesssim [w_B]_{A_{\infty}(\mathcal X)}
\leq [W]_{\mathcal A_{p,u}(\mathcal X,\mathbb{C}^{m})}^{\frac up}.
$$
Thus, there exists a positive constant $C$,
independent of $B$ and $W$, such that
$$
[w_B]_{A_{\infty}(\mathcal X)}^*
\leq C [W]_{\mathcal A_{p,u}(\mathcal X,\mathbb{C}^{m})}^{\frac up},
$$
and hence
$$
r
:=1+\frac{1}{6[32 A_0^2 (4A_0^2+A_0)^2]^{\omega}C [W]_{\mathcal A_{p,u}(\mathcal X,\mathbb{C}^{m})}^{\frac up}}
\leq r(w_B),
$$
where $r(w_B)$ is as in Lemma \ref{scalarrh}.
Using this, Lemma \ref{exchange},  H\"older's inequality,
and Lemma \ref{scalarrh}, we obtain
\begin{align} \label{6.13(2)}
\left[ \fint_B \left\|A_B W^{-\frac1p}(y) \right\|^{ur}\,d\mu(y) \right]^{\frac1r}
&= \left[ \fint_B [w_B(y)]^r \,d\mu(y) \right]^{\frac1r} \notag \\
&\leq \left[\fint_B [w_B(y)]^{r(w_B)} \,d\mu(y) \right]^{\frac1{r(w_B)}}
\lesssim \fint_{2 A_0 B} w_B(y) \,d\mu(y).
\end{align}
By Lemmas \ref{reduceM} and \ref{exchange} and \eqref{doublelambda}, we conclude that
\begin{align*}
w_B(y)
&=\left\|A_B W^{-\frac1p}(y) \right\|^u
\sim \left[ \frac{\mu(2A_0B)}{\mu(B)}\frac1{\mu(2A_0B)}\int_{B} \left\|W^{\frac1p}(x) W^{-\frac1p}(y) \right\|^p \,d\mu(x) \right]^{\frac up} \\
&\lesssim \left[ \fint_{2A_0 B} \left\|W^{\frac1p}(x) W^{-\frac1p}(y) \right\|^p \,d\mu(x) \right]^{\frac up}
\sim \left\|A_{2A_0B} W^{-\frac1p}(y) \right\|^u.
\end{align*}
Substituting the above estimate for $w_B$ into \eqref{6.13(2)}
and applying Lemma \ref{Apu}, we obtain
\begin{align*}
[W]_{\mathcal A_{p,ur}(\mathcal X,\mathbb{C}^{m})}
&\sim [W]_{\mathcal A_{p,ur}^*(\mathcal X,\mathbb{C}^{m})}
\lesssim \sup_{{\mathrm {ball}}\,B\subset \mathcal X}
\left[\fint_{2 A_0 B} \left\|A_{2A_0B} W^{-\frac1p}(y) \right\|^u \,d\mu(y) \right]^{\frac pu} \\
&=[W]_{\mathcal A_{p,u}^*(\mathcal X,\mathbb{C}^{m})}
\sim [W]_{\mathcal A_{p,u}(\mathcal X,\mathbb{C}^{m})}<\infty.
\end{align*}
Therefore, $W\in \mathcal A_{p,ur}(\mathcal X,\mathbb{C}^{m})$.
This completes the proof of (ii).

Next, we prove (iii).
We first show ``$\Longrightarrow$''.
From Lemma \ref{intexchange},
it follows that, for any ball $B\subset\mathcal X$,
\begin{align} \label{729}
I(B)
&:=\fint_B \mathop{\operatorname{ess\,sup}}_{z\in B}
\left\|W^{\frac1p}(x) W^{-\frac1p}(z) \right\|^p\,d\mu(x) \notag \\
&\phantom{:}\sim \mathop{\operatorname{ess\,sup}}_{z\in B}
\fint_B \left\|W^{\frac1p}(x) W^{-\frac1p}(z) \right\|^p \,d\mu(x)
=: II(B).
\end{align}
By this and Lemmas \ref{Apu} and \ref{reduceM},
we find that, for any $W\in \mathcal A_p(\mathcal X,\mathbb C^m)$ and $u\in(0,\infty)$,
\begin{align*}
[W]_{\mathcal A_{p,u}(\mathcal X,\mathbb{C}^{m})}
\leq \sup_{\mathrm{ball}\,B\subset\mathcal X} I(B)
\sim \sup_{\mathrm{ball}\,B\subset\mathcal X} II(B)
= [W]_{\mathcal A_p(\mathcal{X},\mathbb C^m)}.
\end{align*}
This completes the proof of ``$\Longrightarrow$''.

We next prove ``$\Longleftarrow$'' by contradiction.
Assume that $W\notin\mathcal A_p(\mathcal{X},\mathbb C^m)$.
From the definition of $\mathcal A_p(\mathcal{X},\mathbb C^m)$
and \eqref{729}, it follows that,
for any $M\in(0,\infty)$, there exists a ball $B\subset\mathcal X$
such that $M<II(B)\sim I(B)$.
This, together with Lemma \ref{Apu}, further implies that
\begin{align*}
\sup_{u\in(0,\infty)} [W]_{\mathcal A_{p,u}(\mathcal X,\mathbb{C}^{m})}
&\geq \fint_{B} \lim_{u\to\infty} \left[
\fint_B \left\|W^{\frac1p}(x) W^{-\frac1p}(y) \right\|^{u}\,d\mu(y)\right]^{\frac pu} \,d\mu(x) \\
&= I(B) \gtrsim M.
\end{align*}
This contradicts the assumption that $\sup_{u\in(0,\infty)} [W]_{\mathcal A_{p,u}(\mathcal X,\mathbb{C}^{m})}<\infty$,
and hence $W\in\mathcal A_p(\mathcal{X},\mathbb C^m)$,
which completes the proof of (iii).

Finally, we prove (iv).
By the definitions of $\mathcal A_p(\mathcal X,\mathbb C^m)$
and $\mathcal A_{p,p'}(\mathcal X,\mathbb C^m)$, we obtain
$$
\mathcal A_p(\mathcal X,\mathbb C^m)
=\mathcal A_{p,p'}(\mathcal X,\mathbb C^m)
=\bigcup_{u\in(p',\infty)}\mathcal A_{p,u}(\mathcal X,\mathbb{C}^{m}),
$$
where the last equality used (ii).
This completes the proof of (iv) and hence Proposition \ref{improve}.
\end{proof}

From \cite{C87} (see also \cite[Theorem IX.2.1]{b97}), it follows that,
for any two nonnegative definite matrices $A,B\in M_m(\mathbb C)$
and any $s\in[0,1]$,
\begin{equation}\label{Cordes}
\|A^sB^s\|\leq \|AB\|^s.
\end{equation}
Repeating the proof of \cite[Proposition 4.2]{bf4} with
cube $Q$ replaced by ball $B$
and using \eqref{Cordes} instead of \cite[Lemma 2]{ipr21},
we obtain the following results; we omit the details.

\begin{proposition}\label{subset}
Let $p\in(0,\infty)$. Then the following two statements hold.
\begin{enumerate}[{\rm(i)}]
\item If $q\in(p,\infty)$, then $\mathcal A_{p,\infty}(\mathcal X,\mathbb C^m)
\subset \mathcal A_{q,\infty}(\mathcal X,\mathbb C^m)$.
Moreover, for any matrix weight $W$,
\begin{align*}
[W]_{\mathcal A_{q,\infty}(\mathcal X,\mathbb C^m)}
\leq [W]_{\mathcal A_{p,\infty}(\mathcal X,\mathbb C^m)}.
\end{align*}

\item $\mathcal A_p(\mathcal X,\mathbb{C}^m)
\subset
\mathcal A_{p,\infty}(\mathcal X,\mathbb{C}^m)
\subset
\bigcup_{q\in(0,\infty)}\mathcal A_q(\mathcal X,\mathbb{C}^m)$.
Moreover, there exists a positive constant $C$, depending only on $m$ and $p$, such that, for any matrix weight $W$,
$$[W]_{\mathcal A_{p,\infty}(\mathcal X,\mathbb{C}^m)}\leq C[W]_{\mathcal A_{p}(\mathcal X,\mathbb{C}^m)},$$
where $C=1$ when $p\in(0,1]$.
\end{enumerate}
\end{proposition}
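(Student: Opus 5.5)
The plan for (i) is to compare the defining quantities ball by ball, with the Cordes inequality \eqref{Cordes} as the main tool. Fix $q\in(p,\infty)$ and a ball $B\subset\mathcal X$, and set $s:=p/q\in(0,1)$. Since $W^{\frac1q}=(W^{\frac1p})^{s}$ and $W^{-\frac1q}=(W^{-\frac1p})^{s}$ are powers of nonnegative definite matrices, \eqref{Cordes} gives, for almost every $x,y$,
\[
\left\|W^{\frac1q}(x)W^{-\frac1q}(y)\right\|^q\le \left\|W^{\frac1p}(x)W^{-\frac1p}(y)\right\|^{sq}=\left\|W^{\frac1p}(x)W^{-\frac1p}(y)\right\|^{p}.
\]
I will average this in $x$ over $B$, take $\log$, average in $y$ over $B$, exponentiate, and then take the supremum over $B$. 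Because $\log$ and $\exp$ are monotone, this yields $[W]_{\mathcal A_{q,\infty}}\le[W]_{\mathcal A_{p,\infty}}$ with constant $1$. The same pointwise bound shows that the integrability requirement $\log_+(\cdots)\in L^1(B)$ passes from $p$ to $q$.

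For the first inclusion in (ii), $\mathcal A_p\subset\mathcal A_{p,\infty}$, I will split into two cases. If $p\in(0,1]$, then for almost every $y\in B$ the inner average $\fint_B\|W^{\frac1p}(x)W^{-\frac1p}(y)\|^p\,d\mu(x)$ is at most $[W]_{\mathcal A_p}$. Averaging its logarithm in $y$ and exponentiating therefore gives $[W]_{\mathcal A_{p,\infty}}\le[W]_{\mathcal A_p}$ with $C=1$. If $p\in(1,\infty)$, I will use Lemma \ref{Apcha} together with Lemma \ref{Apu}, which give $[W]_{\mathcal A_{p,\infty}}\sim\exp(\fint_B\log\|A_BW^{-\frac1p}(y)\|^p\,d\mu(y))$ and $[W]_{\mathcal A_p}=[W]_{\mathcal A_{p,p'}}\sim[W]^*_{\mathcal A_{p,p'}}$. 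Jensen's inequality then gives
\[
\exp\Bigl(\fint_B\log\|A_BW^{-\frac1p}\|^p\Bigr)\le\Bigl(\fint_B\|A_BW^{-\frac1p}\|^{p'}\Bigr)^{p/p'},
\]
so $[W]_{\mathcal A_{p,\infty}}\le C[W]_{\mathcal A_p}$ with $C$ depending only on $m$ and $p$ through the reducing-operator equivalences. To bypass the logarithmic formulation, I can instead apply Jensen directly to the definition and then use Lemma \ref{intexchange} to swap the order of the $x$- and $y$-integrals.

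For the second inclusion in (ii), let $W\in\mathcal A_{p,\infty}$. By Proposition \ref{improve}(i) there is $u\in(0,\infty)$ with $W\in\mathcal A_{p,u}$. I will then choose $r$ so that $\mathcal A_{p,u}\subset\mathcal A_r$.
\begin{itemize}
\item If $u\ge p'$, or if $p\le1$ and I am allowed to raise $p$, I first use (i) to pass to a larger exponent, which keeps $W\in\mathcal A_{q,\infty}$.
\item More directly, for $q\ge p$, \eqref{Cordes} with $s=p/q$ gives $\|W^{\frac1q}(x)W^{-\frac1q}(y)\|^{v}\le\|W^{\frac1p}(x)W^{-\frac1p}(y)\|^{vp/q}$. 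Choosing $q>p$ with $(q'\cdot p/q=)\ p/(q-1)\le u$, that is, $q\ge 1+p/u$, and using Hölder to lower exponents, I obtain $W\in\mathcal A_{q,q'}=\mathcal A_q$.
\end{itemize}
The step I expect to be the main obstacle is keeping the outer power correct in this last conversion. The outer exponent changes from $p/u$ to $q/q'$, so I must check that $\fint_B[\fint_B\|W^{\frac1p}(x)W^{-\frac1p}(y)\|^{pq'/q}dy]^{q/q'}dx$ is controlled by the $\mathcal A_{p,u}$ constant. Since $pq'/q\cdot(q/q')=p$, this outer power is exactly the one that makes the expression comparable, through Lemma \ref{intexchange} and the reducing operator $A_B$, to $[\fint_B\|A_BW^{-\frac1p}\|^{pq'/q}]^{q/q'}$. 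That quantity is bounded by Hölder whenever $pq'/q\le u$.
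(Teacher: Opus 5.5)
Your proof is correct and follows essentially the route the paper intends, which defers to \cite[Proposition 4.2]{bf4} with \eqref{Cordes}: the Cordes inequality gives (i); Jensen's inequality (plus Lemma \ref{intexchange} when $p>1$) gives $\mathcal A_p\subset\mathcal A_{p,\infty}$; and Proposition \ref{improve}(i) together with \eqref{Cordes} and H\"older gives $\mathcal A_{p,\infty}\subset\bigcup_q\mathcal A_q$. In that last step your first bullet is superfluous, and you do not need reducing operators there: for any $q\ge\max\{p,1+p/u\}$, \eqref{Cordes} gives directly $[W]_{\mathcal A_q}\le[W]_{\mathcal A_{p,p/(q-1)}}\le[W]_{\mathcal A_{p,u}}$.
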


\begin{remark}
If $\mathcal{X}=\mathbb R^n$ and $m\geq 2$,
then all inclusions in Proposition \ref{subset} are proper
(see (v) through (vii) of \cite[Proposition 2.26]{byyz25}).
\end{remark}

\subsection{Rescaled Maximal Operators}\label{rescaled}

The classical Hardy--Littlewood maximal operator $\mathcal M$
is bounded on $L^p$ for $p\in(1,\infty]$.
To extend this boundedness to $p\in(0,\infty)$,
a standard technique is to use the rescaled maximal operator
$[\mathcal M(|\cdot|^a)]^{\frac1a}$.
However, the situation is different for the matrix-weighted maximal operator
because the matrix and the vector cannot be separated.
Very recently, Yang et al. \cite{yyzong} and Bu et al. \cite{bchyy26}
introduced the rescaled matrix-weighted maximal operator
in the Euclidean setting and established its boundedness.
In this subsection, we systematically investigate the
rescaled matrix-weighted maximal operator on spaces of homogeneous type.

\begin{definition}
Let $p,v\in(0,\infty)$ and $W$ be a matrix weight on $\mathcal X$.
The \emph{rescaled maximal operator} $\mathcal M_{W,p}^{(v)}$ is defined by setting,
for any measurable matrix-valued function
$G:\ \mathcal X\to M_m(\mathbb C)$ and any $x\in\mathcal X$,
$$
\mathcal M_{W,p}^{(v)} (G)(x)
:=\sup_{\genfrac{}{}{0pt}{}{\mathrm{ball}\,B\subset\mathcal X}{B \ni x}}
\left[\fint_{B} \left\| W^{\frac1p}(x) W^{-\frac1p}(y) G(y) \right\|^v d\mu(y)\right]^{\frac1v}
$$
or for any measurable vector-valued function
$\vec f:\ \mathcal X\to \mathbb C^m$ and any $x\in\mathcal X$,
$$
\mathcal M_{W,p}^{(v)} \left(\vec f\right) (x)
:=\sup_{\genfrac{}{}{0pt}{}{\mathrm{ball}\,B\subset\mathcal X}{B \ni x}}
\left[\fint_{B} \left| W^{\frac1p}(x) W^{-\frac1p}(y) \vec f(y) \right|^v d\mu(y)\right]^{\frac1v}.
$$
Moreover, $\mathcal M_{W,p}^{(1)}$ is simply denoted by $\mathcal M_{W,p}$.
\end{definition}

For any $r\in(0,\infty)$, let $\mathcal{M}_{W,p}^{(v,r)}$ be
the restricted version of the rescaled maximal operator $\mathcal M_{W,p}^{(v)}$,
defined by taking the supremum over all balls containing $x$ with radius $r$.

\begin{definition}\label{general Lp}
Let $p \in (0, \infty)$. The \emph{Lebesgue space $L^p(\mathcal X, M_m(\mathbb C))$}
is defined to be the set of all matrix-valued measurable functions
$G$ on $\mathcal X$ such that
$$
\left\| G\right\|_{L^p(\mathcal X, M_m(\mathbb C))} := \left[ \int_{\mathcal{X}} \| G(x)\|^p \, d\mu(x) \right]^{\frac{1}{p}}<\infty.
$$
\end{definition}

The space $L^p(\mathcal X, \mathbb C^m)$
is defined analogously to Definition \ref{general Lp}
with $M_m(\mathbb C)$ replaced by $\mathbb C^m$.
The following three theorems are the main results of this subsection.
The first theorem provides an equivalent characterization of
the class $\mathcal A_{p,\infty}(\mathcal{X},\mathbb C^m)$
in terms of the boundedness of the rescaled maximal operator.

\begin{theorem} \label{bounded B1}
Let $p\in(0,\infty)$ and $W$ be a matrix weight on $\mathcal X$.
Then $W\in \mathcal A_{p,\infty}(\mathcal{X},\mathbb C^m)$
if and only if there exists $v\in(0,p)$ such that
$$
\left\| \mathcal M_{W,p}^{(v)} \right\|
_{L^p(\mathcal X,M_m(\mathbb C))\to L^p(\mathcal X)}
:=\sup_{\|G\|_{L^p(\mathcal X,M_m(\mathbb C))}=1}
\left\| \mathcal M_{W,p}^{(v)}(G) \right\|_{L^p(\mathcal X)}
<\infty.
$$
\end{theorem}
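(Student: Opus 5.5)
For sufficiency ($W\in\mathcal A_{p,\infty}\Rightarrow$ boundedness for some $v\in(0,p)$), the plan is to reduce $\mathcal M_{W,p}^{(v)}$ to a scalar maximal operator via reducing operators and the self-improving property in Proposition \ref{improve}. By Proposition \ref{improve}(i), $W\in\mathcal A_{p,u}$ for some $u\in(0,\infty)$, and Proposition \ref{improve}(ii) lets us enlarge $u$ as needed. Fix $x$ and a ball $B\ni x$, and let $A_B$ be a reducing operator of order $p$. Write
\[
\bigl\|W^{\frac1p}(x)W^{-\frac1p}(y)G(y)\bigr\|\le \bigl\|W^{\frac1p}(x)A_B^{-1}\bigr\|\,\bigl\|A_BW^{-\frac1p}(y)\bigr\|\,\|G(y)\|.
\]
Choose $v$ small and an exponent $s\in(v,p)$ with $\frac1v=\frac1u+\frac1s$. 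H\"older's inequality in $y$ together with Lemma \ref{Apu} gives
\[
\Bigl[\fint_B\bigl\|A_BW^{-\frac1p}(y)\bigr\|^v\|G(y)\|^v\,d\mu(y)\Bigr]^{\frac1v}\lesssim [W]_{\mathcal A_{p,u}}^{\frac1p}\Bigl[\fint_B\|G\|^s\,d\mu\Bigr]^{\frac1s}.
\]
It then remains to control $\sup_{B\ni x}\|W^{\frac1p}(x)A_B^{-1}\|\,[\fint_B\|G\|^s]^{1/s}$ in $L^p$.

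To handle the factor $\|W^{\frac1p}(x)A_B^{-1}\|$, pass from balls to the adjacent dyadic system $\mathbb D$ of Lemmas \ref{dcs} and \ref{ballcube}, so that it suffices to bound, for each $t$, the dyadic operator $\sup_{Q\ni x,\,Q\in\mathbb D^t}\|W^{\frac1p}(x)A_Q^{-1}\|\langle\|G\|^s\rangle_Q^{1/s}$. Fix a level set or a stopping family of principal cubes for the averages $\langle\|G\|^s\rangle_Q$. On each principal cube $P$, consider the scalar weights $w_{Q}:=\|W^{\frac1p}A_Q^{-1}\|^p$. By Lemma \ref{reduceM} each has average $\sim1$ on $Q$, and by Proposition \ref{matrixrhi} (via Lemma \ref{scalar}) each satisfies a reverse H\"older inequality with exponent $r(W)>1$. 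The aim is a sparse-type bound
\[
\Bigl\|\sup_{Q\ni x}\|W^{\frac1p}(x)A_Q^{-1}\|\langle \|G\|^s\rangle_Q^{\frac1s}\Bigr\|_{L^p}^p\lesssim\sum_{P}\langle\|G\|^s\rangle_P^{\frac ps}\int_P\sup_{Q\subset P,\,Q\ni x}\|W^{\frac1p}(x)A_Q^{-1}\|^p\,d\mu(x).
\]
For this I would show $\fint_P\sup_{Q\subset P}\|W^{1/p}A_Q^{-1}\|^p\lesssim1$. That estimate should follow from the $\mathcal A_{p,\infty}$ condition in the form of Lemma \ref{Apcha}: an $\exp(L)$-type control of $\log_+\|W^{-1/p}A_Q\|$ yields $\|A_Q^{-1}A_{Q'}\|$ bounds along chains, and the reverse H\"older gain absorbs the supremum. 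Finally, since $s<p$, the sparse sum is bounded by $\|\mathcal M(\|G\|^s)\|_{L^{p/s}}^{p/s}\lesssim\|G\|_{L^p}^p$.

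The main obstacle is exactly this maximal estimate $\fint_P\sup_{Q\subset P}\|W^{\frac1p}(x)A_Q^{-1}\|^p\,d\mu(x)\lesssim1$. My first attempt would bound it by summing over the stopping cubes where $\|A_Q^{-1}A_P\|$ jumps, using reverse H\"older to get geometric decay in measure. If that fails, the fallback is to show the whole operator is bounded by a Christ--Goldberg type argument, using $W^{-1/p}\in$ a matrix $\mathcal A_\infty$ dual class obtained from Lemma \ref{4.13}.

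For necessity, assume $\mathcal M_{W,p}^{(v)}$ is bounded with $v<p$. Test on $G:=\mathbf 1_B W^{\frac1p}M$ for a fixed ball $B$ and matrix $M$. For $x\in B$,
\[
\mathcal M^{(v)}_{W,p}G(x)\ge \bigl[\fint_B\|W^{\frac1p}(x)M\|^v\,d\mu\bigr]^{\frac1v}=\|W^{\frac1p}(x)M\|,
\]
which is trivial, so the test function must be chosen differently. Instead take $G:=\mathbf 1_B W^{\frac1p}A_B^{-1}$, which satisfies $\|G\|_{L^p}^p\sim\mu(B)$ by Lemma \ref{reduceM}. Then $\mathcal M G(x)\ge[\fint_B\|W^{\frac1p}(x)A_B^{-1}\|^v]^{1/v}$, and boundedness gives $\fint_B\|W^{\frac1p}(x)A_B^{-1}\|^p\,d\mu(x)\lesssim1$, which is already known and gives nothing new. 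So test instead with the dual factor: take $G(y):=\mathbf 1_B(y)\,\|A_BW^{-\frac1p}(y)\|^{-\theta}W^{\frac1p}(y)A_B^{-1}$ with a suitable $\theta$, and use Lemma \ref{intexchange} to swap the $x$ and $y$ integrals. The goal is to arrive at $\fint_B[\fint_B\|W^{\frac1p}(x)W^{-\frac1p}(y)\|^v\,d\mu(y)]^{p/v}\,d\mu(x)\lesssim1$, i.e. $W\in\mathcal A_{p,v}$, and then conclude $W\in\mathcal A_{p,\infty}$ by Proposition \ref{improve}(i). The cleanest version is to take $G(y)=\mathbf 1_B(y)\|W^{\frac1p}(y)\|^{-1}\cdot$(a unit matrix), truncated to make it $L^p$, and pass to the limit by monotone convergence.
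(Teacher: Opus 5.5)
The genuine gap is in your necessity direction. Your stated goal ($W\in\mathcal A_{p,v}$, then Proposition \ref{improve}(i)) is the right one, but none of your test functions reaches it.

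Every $G=\phi\,W^{\frac1p}N$ with $\phi$ scalar and $N$ a fixed matrix collapses to $\mathcal M_{W,p}^{(v)}(G)(x)=\|W^{\frac1p}(x)N\|\,[\mathcal M(|\phi|^v)(x)]^{\frac1v}$. This form covers $\mathbf 1_BW^{\frac1p}M$, $\mathbf 1_BW^{\frac1p}A_B^{-1}$, and your $\theta$-variant. Such $G$ therefore only test scalar weighted inequalities for $\|W^{\frac1p}N\|^p$. They never see the matrix quantity $\|W^{\frac1p}(x)W^{-\frac1p}(y)\|$ from which $\mathcal A_{p,u}$ is built.

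Your final choice $\mathbf 1_B\|W^{\frac1p}\|^{-1}I_m$ yields $\int_B[\fint_B\|W^{\frac1p}(x)W^{-\frac1p}(y)\|^v\|W^{\frac1p}(y)\|^{-v}\,d\mu(y)]^{\frac pv}\,d\mu(x)\lesssim\int_B\|W^{\frac1p}\|^{-p}\,d\mu$. That is not the $\mathcal A_{p,v}$ quantity, and you give no route from one to the other.

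The normalizing factor is the error. The operator acts on the \emph{unweighted} space $L^p(\mathcal X,M_m(\mathbb C))$, so $G:=\mathbf 1_BI_m$ is admissible, with $\|G\|^p_{L^p(\mathcal X,M_m(\mathbb C))}=\mu(B)$. For $x\in B$, $\mathcal M_{W,p}^{(v)}(G)(x)\ge[\fint_B\|W^{\frac1p}(x)W^{-\frac1p}(y)\|^v\,d\mu(y)]^{\frac1v}$. Integrating over $B$ gives $[W]_{\mathcal A_{p,v}(\mathcal X,\mathbb C^m)}\le\|\mathcal M_{W,p}^{(v)}\|^p_{L^p(\mathcal X,M_m(\mathbb C))\to L^p(\mathcal X)}$, and Proposition \ref{improve}(i) finishes. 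No truncation or Lemma \ref{intexchange} is needed.

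The paper instead tests with $\mathbf 1_B|h|^{\frac1v}I_m$, $h$ in the normalized unit ball of $L^{\frac pv}(B)$. It uses duality and Lemma \ref{intexchange} (Theorem \ref{bounded B2}, (iii)$\Rightarrow$(v)) to get the sharper $W\in\mathcal A_{p,\frac{pv}{p-v}}$. That exponent is needed for the equivalence in Theorem \ref{bounded B2}, but not for Theorem \ref{bounded B1}.

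Your sufficiency direction is the paper's argument (Theorem \ref{bounded B3} with $q=p$; taking $v<\frac{pu}{p+u}$ directly even spares Proposition \ref{improve}(ii)). The steps match: H\"older in $y$, reduction to $\sup_{Q\ni x}\|W^{\frac1p}(x)A_Q^{-1}\|\langle\|G\|^s\rangle_Q^{\frac1s}$, level sets of $\mathcal M^{\mathbb D^t}(\|G\|^s)$ decomposed into maximal cubes, and the estimate $\fint_R\sup_{x\in Q\subset R}\|W^{\frac1p}(x)A_Q^{-1}\|^p\,d\mu(x)\lesssim1$, which is Lemma \ref{lem1}.

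Your mechanism for that estimate is misattributed, though. Consider disjoint stopping cubes $Q_j\subset R$ with $\|A_RA_{Q_j}^{-1}\|^p\ge M$. The bound $\sum_j\mu(Q_j)\lesssim(\log M)^{-1}\mu(R)$ does not come from reverse H\"older, because the direction realizing $\|A_RA_{Q_j}^{-1}\|$ changes with $j$. It comes from the logarithmic $\mathcal A_{p,\infty}$ condition, via Lemma \ref{8 prepare}. Alternatively, it follows directly from Lemma \ref{Apcha}(iii) together with $\log_+\|A_{Q_j}^{-1}A_R\|\le\fint_{Q_j}\log_+\|A_{Q_j}^{-1}W^{\frac1p}\|\,d\mu+\fint_{Q_j}\log_+\|W^{-\frac1p}A_R\|\,d\mu$, as in \eqref{Q3}.

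For the exponent $p$, reverse H\"older is not needed at all. Off the stopping cubes you only use $\fint_R\|W^{\frac1p}A_R^{-1}\|^p\,d\mu\sim1$, and the iteration closes since the stopping cubes fill at most half of $R$.

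Finally, apply Lemma \ref{sumk} before the H\"older step, then Lemma \ref{improve D} on cubes, as the paper does. Doing H\"older on balls first forces a comparison of $A_B$ with $A_{Q(B)}$. That requires a uniform doubling property of $|W^{\frac1p}\vec z|^p$, which you have not justified.
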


\begin{remark}
\begin{enumerate}[\rm(i)]
\item The implication ``$\Longleftarrow$''
is new even in the Euclidean setting.

\item The range $v\in(0,p)$ is sharp, that is,
if $0<p\leq v<\infty$ and $W$ is a matrix weight, then
$\| \mathcal M_{W,p}^{(v)} \|
_{L^p(\mathcal X,M_m(\mathbb C))\to L^p(\mathcal X)}=\infty$;
see Proposition \ref{bounded fail} below.

\item In the Euclidean setting, \cite[Theorem 2.22]{yyzong}
established the boundedness of $\mathcal M_{W,p}^{(v)}$
for variable matrix $\mathcal A_{p(\cdot),\infty}$ weights,
while \cite[Corollary 4.32]{bchyy26}
proved the boundedness of $\mathcal M_{W,p}^{(v)}$
for product matrix $\mathcal A_p$ weights.
\end{enumerate}
\end{remark}

The second theorem provides  a further refinement of Theorem \ref{bounded B1},
because  $\mathcal A_{p,\infty}(\mathcal{X},\mathbb C^m)
=\bigcup_{u\in(0,\infty)} \mathcal A_{p,u}(\mathcal{X},\mathbb C^m)$
[see Proposition \ref{improve}(i) above].

\begin{theorem} \label{bounded B2}
Let $0<v<p<\infty$ and $W$ be a matrix weight on $\mathcal X$.
Then the following statements are equivalent:
\begin{enumerate}[\rm(i)]
\item $\| \mathcal M_{W,p}^{(v)} \|
_{L^p(\mathcal X,M_m(\mathbb C))\to L^p(\mathcal X)}<\infty$;

\item $\| \mathcal M_{W,p}^{(v)} \|
_{L^p(\mathcal X,\mathbb C^m)\to L^p(\mathcal X)}<\infty$;

\item $\sup_{r\in(0,\infty)} \| \mathcal M_{W,p}^{(v,r)} \|
_{L^p(\mathcal X,M_m(\mathbb C))\to L^p(\mathcal X)}<\infty$;

\item $\sup_{r\in(0,\infty)} \| \mathcal M_{W,p}^{(v,r)} \|
_{L^p(\mathcal X,\mathbb C^m)\to L^p(\mathcal X)}<\infty$;

\item $W\in\mathcal A_{p,\frac{pv}{p-v}}(\mathcal{X},\mathbb C^m)$.
\end{enumerate}
\end{theorem}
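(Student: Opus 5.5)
The chain of implications I will establish is (i)$\Rightarrow$(ii)$\Rightarrow$(iv), (i)$\Rightarrow$(iii)$\Rightarrow$(iv), (iv)$\Rightarrow$(v), and (v)$\Rightarrow$(i). Throughout, write $u:=\frac{pv}{p-v}$, so that $\frac1v=\frac1p+\frac1u$.

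The easy implications come first. To get (i)$\Rightarrow$(ii), take a vector $\vec f$ and set $G:=[\vec f,0,\ldots,0]$. Then $|W^{\frac1p}(x)W^{-\frac1p}(y)\vec f(y)|=\|W^{\frac1p}(x)W^{-\frac1p}(y)G(y)\|$ and $\|G\|=|\vec f|$. The same embedding gives (iii)$\Rightarrow$(iv). Since $\mathcal M^{(v,r)}_{W,p}\le\mathcal M^{(v)}_{W,p}$ pointwise, (i)$\Rightarrow$(iii) and (ii)$\Rightarrow$(iv) are immediate.

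For (iv)$\Rightarrow$(v), fix a ball $B$ of radius $r$. Every $x\in B$ lies in $B$ itself, and $B$ is a ball of radius $r$, so $\mathcal M^{(v,r)}_{W,p}(\vec f)(x)$ is at least the average over $B$. Hence, for $\vec f$ supported in $B$,
\[
\int_B\left[\fint_B\left|W^{\frac1p}(x)W^{-\frac1p}(y)\vec f(y)\right|^v\,d\mu(y)\right]^{\frac pv}d\mu(x)\le C\int_B\left|\vec f\right|^p\,d\mu .
\]
Testing with $\vec f=g\,\vec e_i$, where $g\ge0$ is scalar and $\vec e_i$ ranges over the standard basis, and summing over $i$ (the norm of a matrix is comparable to the sum of the norms of its columns) gives the same inequality with $\vec f(y)$ replaced by $g(y)I_m$. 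Next I exchange the order of integration. Lemma \ref{intexchange}, applied with the pair $(p,v)$ and the weight $V_2(y)=W^{-\frac1p}(y)g(y)$, together with Lemma \ref{reduceM}, turns the left-hand side into $\mu(B)[\fint_B\|A_BW^{-\frac1p}(y)\|^vg(y)^v\,d\mu(y)]^{p/v}$, up to constants. The resulting inequality is
\[
\fint_B\left\|A_BW^{-\frac1p}\right\|^vg^v\,d\mu\lesssim\left(\fint_Bg^p\,d\mu\right)^{\frac vp}.
\]
Taking the supremum over $g$ is $L^{p/v}$--$L^{(p/v)'}$ duality, and $(p/v)'\,v=u$. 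This yields $[\fint_B\|A_BW^{-\frac1p}\|^u\,d\mu]^{1/u}\lesssim1$ uniformly in $B$, i.e. $[W]_{\mathcal A^*_{p,u}}<\infty$, which is (v) by Lemma \ref{Apu}. One technical point: the duality needs a priori integrability. I handle this by truncating the weight, replacing $\|A_BW^{-\frac1p}\|$ by its minimum with $N$, and then letting $N\to\infty$ by monotone convergence.

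The main step is (v)$\Rightarrow$(i). Fix a ball $B\ni x$. I insert $A_B$ and apply H\"older with $\frac1v=\frac1p+\frac1u$:
\begin{align*}
\left[\fint_B\left\|W^{\frac1p}(x)W^{-\frac1p}(y)G(y)\right\|^v\,d\mu(y)\right]^{\frac1v}
&\le\left\|W^{\frac1p}(x)A_B^{-1}\right\|\left[\fint_B\left\|A_BW^{-\frac1p}\right\|^u\,d\mu\right]^{\frac1u}\left[\fint_B\|G\|^p\,d\mu\right]^{\frac1p}\\
&\lesssim\left\|W^{\frac1p}(x)A_B^{-1}\right\|\left[\fint_B\|G\|^p\,d\mu\right]^{\frac1p}.
\end{align*}
The factor $\|W^{\frac1p}(x)A_B^{-1}\|$ still depends on $B$, so a simple supremum bound is not available. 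This is where I expect the main obstacle to be.

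My plan for it is to use self-improvement. By Proposition \ref{improve}(ii), $W\in\mathcal A_{p,\tilde u}$ for some $\tilde u>u$. Define $\tilde v$ by $\frac1{\tilde v}=\frac1p+\frac1{\tilde u}$; then $\tilde v>v$ and still $\tilde v<p$. I then discretize into the adjacent dyadic systems $\mathbb D^t$ of Lemma \ref{dcs}, using Lemma \ref{ballcube} to replace each ball by a dyadic cube of comparable measure. For each $t$, a stopping-time (sparse) argument on the cubes of $\mathbb D^t$ should bound $\mathcal M^{(v)}_{W,p}G$ by a sum over a sparse family $\mathcal S$ of cubes:
\[
\mathcal M^{(v)}_{W,p}G(x)\lesssim\sum_{Q\in\mathcal S}\mathbf 1_Q(x)\left\|W^{\frac1p}(x)A_Q^{-1}\right\|\left[\fint_Q\|G\|^{\tilde p}\,d\mu\right]^{\frac1{\tilde p}}
\]
for some $\tilde p<p$ gained from the slack $\tilde v>v$. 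Two further ingredients then finish the estimate. The reverse H\"older inequality (Proposition \ref{matrixrhi}) controls $\fint_Q\|W^{\frac1p}A_Q^{-1}\|^{pr}$ for some $r>1$. The Hardy--Littlewood maximal operator applied to $\|G\|^{\tilde p}$ is bounded on $L^{p/\tilde p}$. If the sparse domination proves awkward, my fallback is a layer-cake argument: estimate the level sets of $\|W^{\frac1p}(x)A_Q^{-1}\|$ cube by cube with the reverse H\"older inequality, and sum the resulting geometric series over cubes.

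Finally, Theorem \ref{bounded B2} covers the ``only if'' half of Theorem \ref{bounded B1}: Proposition \ref{improve}(i) places $W$ in some $\mathcal A_{p,u}$, and choosing $v=\frac{pu}{p+u}\in(0,p)$ makes that class exactly $\mathcal A_{p,\frac{pv}{p-v}}$, so (v)$\Rightarrow$(i) applies.
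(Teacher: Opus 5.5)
Your easy implications and your step (iv)$\Rightarrow$(v) are fine; the latter is essentially the paper's duality argument. The integrability needed for Lemma \ref{intexchange} is automatic there: finiteness of the right-hand side at one point $x$ with $W(x)$ invertible already gives $\|W^{-\frac1p}g\|\in L^v(B)$.

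The gap is in (v)$\Rightarrow$(i), exactly at the obstacle you flagged. A stopping time on the averages $\fint_Q\|G\|^{\tilde p}\,d\mu$ only yields $\fint_Q\|G\|^{\tilde p}\,d\mu\le 2\fint_S\|G\|^{\tilde p}\,d\mu$ when $S$ is the minimal sparse cube containing $Q$. It gives no way to pass from $\|W^{\frac1p}(x)A_Q^{-1}\|$ to $\|W^{\frac1p}(x)A_S^{-1}\|$, and $\|A_SA_Q^{-1}\|$ is unbounded over $Q\subset S$ in general. For $m=1$ it equals $(\fint_Sw\,d\mu/\fint_Qw\,d\mu)^{1/p}$, which blows up for $w(x)=|x|^a$ on $\mathbb R$ with $0<a<p/u$ as $Q$ shrinks to the origin, even though this $w$ satisfies (v). So the sparse family must also stop on the weight, and you must then show that the weight-driven stopping cubes carry only a small fraction of the measure.

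Your listed tools do not give this. The reverse H\"older inequality controls $\fint_Q\|W^{\frac1p}A_Q^{-1}\|^{pr}\,d\mu$ one cube at a time, and summing those bounds along the chain of cubes between $x$ and $S$ diverges, since each generation contributes about $\mu(S)$. The layer-cake fallback fails for the same reason: nothing supplies decay across scales.

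The missing ingredient is the packing estimate coming from the $\mathcal A_{p,\infty}$ condition (note $\mathcal A_{p,u}\subset\mathcal A_{p,\infty}$ by Proposition \ref{improve}(i)). This is Lemma \ref{8 prepare}: pairwise disjoint cubes $Q_j\subset R$ with $\|A_{2A_0B(R)}A_{Q_j}^{-1}\|^p\ge M$ have total measure at most $\frac{\mathscr C}{\log M}\mu(R)$.

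With it your route closes. Add $\|A_{2A_0B(S)}A_Q^{-1}\|^p\ge M$ as a second stopping rule. For $M$ large the family stays sparse, and the pointwise bound holds with $A_{2A_0B(S)}$, hence with $A_S$, since $\|A_SA_{2A_0B(S)}^{-1}\|\lesssim1$. At exponent $p$ you then need only three things: $\fint_S\|W^{\frac1p}A_{2A_0B(S)}^{-1}\|^p\,d\mu\lesssim1$, sparseness, and the boundedness of $\mathcal M^{\mathbb D^t}$ on $L^{p/\tilde p}(\mathcal X)$. Reverse H\"older is not needed at this exponent.

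The paper argues differently. It iterates this stopping time into Lemma \ref{lem1}, which bounds $\fint_R\sup_{x\in Q\subset R}\|W^{\frac1p}(x)A_Q^{-1}\|^p\,d\mu(x)$ uniformly in $R$. It then decomposes the level sets of $\mathcal M^{\mathbb D^t}(\|G\|^{\tilde p})$ into maximal dyadic cubes (Lemma \ref{Whitney}) and sums with Cavalieri's principle. This is Theorem \ref{bounded B3} with $q=p$, applied after Proposition \ref{improve}(ii).
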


The third theorem shows that if $W\in \mathcal A_{p,u}(\mathcal{X},\mathbb C^m)$ for some $u\in(0,\infty)$,
then the rescaled maximal operator satisfies a stronger boundedness property.

\begin{theorem} \label{bounded B3}
Let $p,u\in(0,\infty)$ and $W\in \mathcal A_{p,u}(\mathcal{X},\mathbb C^m)$. Then, for any
$$
v\in \left(0, \frac{pu}{p+u}\right)
\quad\text{and}\quad
q\in \left( \frac{uv}{u-v}, pr(W)\right],
$$
where $r(W)$ is as in \eqref{rW},
  $\| \mathcal M_{W,p}^{(v)} \|
_{L^q(\mathcal X,M_m(\mathbb C))\to L^q(\mathcal X)}
<\infty$.
\end{theorem}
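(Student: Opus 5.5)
The proof factors the matrix inside the maximal function through reducing operators, reduces to a scalar maximal function of order $v<q$, and then removes the matrix-valued factor with Hölder's inequality and the reverse Hölder inequality of Proposition \ref{matrixrhi}.

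\textbf{Step 1 (pointwise factorization).} Fix $x$ and a ball $B\ni x$, and let $A_B$ be a reducing operator of order $p$ for $W$. For $y\in B$,
$$
\left\|W^{\frac1p}(x)W^{-\frac1p}(y)G(y)\right\|\le \left\|W^{\frac1p}(x)A_B^{-1}\right\|\,\left\|A_BW^{-\frac1p}(y)\right\|\,\|G(y)\|.
$$
Apply Hölder's inequality in $y$ with exponents $\frac uv$ and $\frac{u}{u-v}$; this is allowed because $v<\frac{pu}{p+u}<u$. Then Lemma \ref{Apu} gives
$$
\left[\fint_B\left\|W^{\frac1p}(x)W^{-\frac1p}(y)G(y)\right\|^v d\mu(y)\right]^{\frac1v}
\lesssim [W]^{\frac1p}_{\mathcal A_{p,u}}\left\|W^{\frac1p}(x)A_B^{-1}\right\|
\left[\fint_B\|G\|^{s}\,d\mu\right]^{\frac1s},\qquad s:=\frac{uv}{u-v}.
$$
The last factor is at most $\mathcal M(\|G\|^s)(x)^{1/s}$, and since $q>s$ this scalar maximal operator is bounded on $L^q$. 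The remaining task is to control $\|W^{1/p}(x)A_B^{-1}\|$ uniformly in $B$, which is not bounded pointwise.

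\textbf{Step 2 (splitting off the matrix factor).} I will not bound the supremum naively. Instead I would write
$$
\left\|W^{\frac1p}(x)A_B^{-1}\right\|\le \left\|W^{\frac1p}(x)A_{B}^{-1}\right\|\cdot \mathbf 1_B(x),
$$
and pass to a dyadic and sparse-type decomposition. Using Lemma \ref{dcs}, replace balls by cubes $Q\in\mathbb D$. Then
$$
\mathcal M^{(v)}_{W,p}G(x)\lesssim \sup_{Q\ni x}\left\|W^{\frac1p}(x)A_Q^{-1}\right\|\langle \|G\|^s\rangle_Q^{1/s}.
$$
Estimate the $L^q$ norm by $\sum_{Q}$ over a stopping family. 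Concretely, for each $k$ let $\Omega_k:=\{\mathcal M_s(\|G\|)>2^k\}$, decompose it into maximal cubes $Q$, and take the $L^q$ norm, which gives
$$
\int \big(\mathcal M^{(v)}_{W,p}G\big)^q\lesssim \sum_k 2^{kq}\sum_{Q\subset\Omega_k\ \mathrm{maximal}}\int_Q \sup_{Q'\subset Q}\left\|W^{\frac1p}(x)A_{Q'}^{-1}\right\|^q\,d\mu(x).
$$
The supremum over subcubes $Q'$ is harmless because $\|A_{Q'}^{-1}A_Q\|$ stays bounded on the portion of $Q$ where $\langle\|G\|^s\rangle_{Q'}$ does not jump a level.

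\textbf{Step 3 (reverse Hölder).} This step uses the restriction $q\le pr(W)$. By Lemma \ref{reduceM} and Proposition \ref{matrixrhi},
$$
\fint_Q\left\|W^{\frac1p}(x)A_Q^{-1}\right\|^{q}d\mu\le\left[\fint_Q\left\|W^{\frac1p}A_Q^{-1}\right\|^{pr(W)}\right]^{\frac q{pr(W)}}\lesssim\left[\fint_{2A_0Q}\left\|W^{\frac1p}A_Q^{-1}\right\|^{p}\right]^{\frac qp}\sim 1,
$$
where the final comparability $A_{2A_0Q}\sim A_Q$ follows from doubling. Hence each cube contributes $\lesssim\mu(Q)$, and $\sum_k 2^{kq}\mu(\Omega_k)\lesssim\|\mathcal M_s\|G\|\|_{L^q}^q\lesssim\|G\|_{L^q}^q$, which completes the argument.

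\textbf{Main obstacle.} The hardest part is Step 2: controlling $\sup_{Q'\subset Q}\|W^{1/p}(x)A_{Q'}^{-1}\|$ inside a stopping cube. My first attempt would be to first pick a nearly optimal cube $Q_x$ for each $x$, then group the points $x$ according to the maximal cube of $\Omega_k$ containing $Q_x$ with $\langle\|G\|^s\rangle_{Q_x}\sim2^k$. This reduces the matter to a sum over a sparse family, estimating
$$
\int_{Q}\left\|W^{\frac1p}A_{Q}^{-1}\right\|^q
$$
on each sparse cube via Step 3. The sparseness is obtained with the doubling argument of Lemma \ref{QQ}.
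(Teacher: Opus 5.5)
Your outer architecture matches the paper's: you factor through $A_Q$, apply H\"older with $s=\frac{uv}{u-v}$ and Lemma \ref{improve D}, cut the level sets $\Omega_k$ into maximal cubes via Lemma \ref{Whitney}, and finish with Cavalieri's principle and the $L^{q/s}$-boundedness of $\mathcal M^{\mathbb D^t}$. The gap is the step you flag yourself. After the level-set decomposition you need
\[
\sup_{R\in\mathbb D^t}\fint_R\sup_{Q'\in\mathbb D^t,\ x\in Q'\subset R}\left\|W^{\frac1p}(x)A_{Q'}^{-1}\right\|^q\,d\mu(x)<\infty\qquad\text{for } q\le pr(W),
\]
and nothing in your proposal proves it.

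The reason you give in Step 2 does not hold. $A_{Q'}$ does not depend on $G$, so the level structure of $\langle\|G\|^s\rangle_{Q'}$ says nothing about $\|A_QA_{Q'}^{-1}\|$, and this norm is unbounded over $Q'\subset Q$ in general. Take $\mathcal X=\mathbb R$, $W(x)=|x|^aI_m$ with $a>0$, $Q=[0,1)$ and $Q'=[0,2^{-k})$. Then $\|A_QA_{Q'}^{-1}\|^p\sim 2^{ka}$. For $x\in[2^{-k-1},2^{-k})$ the supremum over $Q'$ is $\sim1$, whereas $\|W^{1/p}(x)A_Q^{-1}\|\sim 2^{-ka/p}$. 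So Step 3, which only treats $Q'=Q$, cannot be transferred pointwise to the supremum.

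Your sparse regrouping has the same defect. Grouping $x$ by a stopping cube $P$ for the averages of $\|G\|^s$ does not let you replace $A_{Q_x}$ by $A_P$, since that again needs $\|A_PA_{Q_x}^{-1}\|\lesssim1$. Lemma \ref{QQ} only compares a cube with its parent and gives no sparseness adapted to $W$.

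The missing idea is a stopping argument on the matrices themselves; this is Lemma \ref{lem1} in the paper. Its engine is the Volberg-type packing estimate of Lemma \ref{8 prepare}: if $\{Q_j\}\subset\mathbb D^t$ are pairwise disjoint subcubes of $R$ with $\|A_{2A_0B(R)}A_{Q_j}^{-1}\|^p\ge M$, then $\sum_j\mu(Q_j)\le\frac{\mathscr C}{\log M}\mu(R)$. It follows from the $\mathcal A_{p,\infty}$ condition, which is available since $\mathcal A_{p,u}\subset\mathcal A_{p,\infty}$ by Proposition \ref{improve}(i). Concretely, one tests Proposition \ref{chawap} with $G=\sum_j\mathbf 1_{Q_j}A_{Q_j}^{-1}+\mathbf 1_{B\setminus\bigcup_jQ_j}A_B^{-1}$, where $B:=2A_0B(R)$.

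With $M=e^{2\mathscr C}$, the maximal \emph{bad} cubes cover at most half of $R$. Outside them, and on the part of each bad cube $P$ where the supremum is attained by a cube strictly containing $P$, the supremum is at most $M^{1/p}\|W^{1/p}(x)A_{2A_0B(R)}^{-1}\|$. Its $L^{pr(W)}$-average over $R$ is $\lesssim1$ by Proposition \ref{matrixrhi}. You then iterate inside the bad cubes, truncating at a finest generation so the iteration terminates, and obtain the displayed bound. This is where $q\le pr(W)$ genuinely enters, beyond the single-cube estimate of your Step 3.
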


%
%

To prove these theorems, we need to
the following dyadic rescaled maximal function.

\begin{definition}
Let $p,v\in(0,\infty)$, $t\in\{1,\ldots,K\}$, and $W$ be a matrix weight on $\mathcal X$. The \emph{dyadic rescaled maximal operator} $\mathcal M_{W,p}^{\mathbb D^t,(v)}$ is defined by setting,
for any measurable matrix-valued function
$G:\ \mathcal X\to M_m(\mathbb C)$ and any $x\in\mathcal X$,
$$
\mathcal M_{W,p}^{\mathbb D^t,(v)} (G)(x)
:=\sup_{\genfrac{}{}{0pt}{}{Q\in\mathbb D^t}{Q \ni x}}
\left[\fint_{Q}
\left\| W^{\frac1p}(x) W^{-\frac1p}(y) G(y) \right\|^v \,  d\mu(y)\right]^{\frac1v}.
$$
\end{definition}

The following lemma establishes the equivalence
between the rescaled and dyadic rescaled maximal functions.
Its proof is standard (see, for instance, \cite[Lemma A.34]{bf}), and we omit the details.

\begin{lemma}\label{sumk}
Let $p,v\in(0,\infty)$ and $W$ be a matrix weight on $\mathcal X$.
Then, for any measurable matrix-valued function
$G:\ \mathcal X\to M_m(\mathbb C)$ and any $x\in\mathcal X$,
\begin{align*}
\mathcal M_{W,p}^{(v)} (G)(x)
\sim \sum_{t=1}^K \mathcal M_{W,p}^{\mathbb D^t,(v)} (G)(x),
\end{align*}
where the positive equivalence constants depend only on $\mathcal X$
and the parameter $\delta$ from Lemma \ref{dcs}.
\end{lemma}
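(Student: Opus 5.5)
The two inequalities are proved separately, using only the geometry of the adjacent dyadic system in Lemma \ref{dcs} and the fact that the integrand $\|W^{\frac1p}(x)W^{-\frac1p}(y)G(y)\|^v$ is nonnegative in $y$. Throughout, $x$ and $G$ are fixed and the point $x$ at which $W^{\frac1p}(x)$ is evaluated is held fixed, so both maximal functions are suprema of averages of the same fixed nonnegative function $h_x(y):=\|W^{\frac1p}(x)W^{-\frac1p}(y)G(y)\|^v$. The matrix weight therefore plays no role, and the lemma reduces to the scalar comparison between ball averages and adjacent-dyadic averages of $h_x$, taken before the outer power $\frac1v$.

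For the upper bound $\mathcal M_{W,p}^{(v)}(G)(x)\lesssim\sum_t\mathcal M_{W,p}^{\mathbb D^t,(v)}(G)(x)$, take any ball $B\ni x$. By Lemma \ref{dcs}(b) and Lemma \ref{ballcube}(ii) there are $t$ and $Q(B)\in\mathbb D^t$ with $B\subset Q(B)$ and $\mu(Q(B))\le C\mu(B)$. Since $h_x\ge0$, we have $\fint_B h_x\,d\mu\le C\fint_{Q(B)}h_x\,d\mu$. Because $x\in B\subset Q(B)$, the cube $Q(B)$ is admissible in the supremum defining $\mathcal M_{W,p}^{\mathbb D^t,(v)}(G)(x)$. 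Taking the $\frac1v$ power and then the supremum over $B$ gives the bound with constant $C^{\frac1v}$, since one term dominates the sum.

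For the lower bound, fix $t$ and a cube $Q\in\mathbb D^t$ with $x\in Q$. By Lemma \ref{dcs}(a)(iii), $Q\subset B(Q)$, which is a ball containing $x$. By Lemma \ref{ballcube}(i), $\mu(B(Q))\le C\mu(Q)$, hence $\fint_Q h_x\le C\fint_{B(Q)}h_x\le C[\mathcal M_{W,p}^{(v)}(G)(x)]^v$. Taking the supremum over $Q$ and summing over the $K$ grids gives $\sum_t\mathcal M^{\mathbb D^t,(v)}_{W,p}(G)(x)\le KC^{\frac1v}\mathcal M^{(v)}_{W,p}(G)(x)$.

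There is no real obstacle here. The only point that needs attention is the dependence of the constants: they involve $v$ through $C^{\frac1v}$, which is harmless for the statement as given, since $v$ is fixed. Alternatively, the constants can be stated as depending on $\mathcal X$, $\delta$, and $v$. The vector-valued case $\vec f$ is handled identically.
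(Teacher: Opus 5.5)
Your proof is correct and is the standard argument the paper has in mind (the paper omits the details as standard). Since $x$ is fixed, both sides are suprema of averages of the same nonnegative function $y\mapsto\|W^{\frac1p}(x)W^{-\frac1p}(y)G(y)\|^v$, and the comparison follows from $B\subset Q(B)$ with $\mu(Q(B))\lesssim\mu(B)$ (Lemma~\ref{ballcube}(ii)) in one direction and from $x\in Q\subset B(Q)$ with $\mu(B(Q))\lesssim\mu(Q)$ (Lemma~\ref{ballcube}(i)) in the other. Your remark on the constants is also accurate: they are of the form $C^{1/v}$ and $KC^{1/v}$, so they also depend on $v$, which the statement suppresses but which is harmless since $v$ is fixed wherever the lemma is used.
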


Repeating the proof of \cite[Proposition 3.8]{bf4} with cube $Q$ replaced by ball $B$, we have
the following result, which provides an equivalent characterization of
$\mathcal A_{p,\infty}(\mathcal X,\mathbb{C}^{m})$-matrix weights.

\begin{proposition}\label{chawap}
Let $p\in (0,\infty)$ and $W$ be a matrix weight.
Assume that, for any ball $B\subset\mathcal X$,
$$
\log_+\left(\int_B\left\|W^{\frac1p}(x)W^{-\frac1p}(\cdot)\right\|^p\,d\mu(x)
\right)\in L^1(B).
$$
Then
\begin{align*}
[W]_{\mathcal A_{p,\infty}(\mathcal X,\mathbb{C}^{m})}
&=\sup_{\mathrm{ball}\,B\subset\mathcal X}
\sup_{G\in\mathcal{G}_B}\left[
\fint_B\left\|W^{\frac1p}(y)G(y)\right\|^p\,d\mu(y)\right]^{-1}\notag\\
&\quad\times\exp\left(\fint_B\log\left(\fint_B\left\|W^{\frac1p}(x)G(y)\right\|^p\,d\mu(x)
\right)\,d\mu(y)\right),
\end{align*}
where
\begin{align*}
\mathcal{G}_{B}:=
&\left\{G:\:\mathcal X\to M_m(\mathbb{C})\:
measurable:\:\fint_B\left\|
W^{\frac1p}(y)G(y)\right\|^p\,d\mu(y)\neq0,\right. \\
&\quad\left. \log_+\left(\fint_B\left\|W^{\frac1p}(x)
G(\cdot)\right\|^p\,d\mu(x)\right)\in L^1(B)\right\}.
\end{align*}
\end{proposition}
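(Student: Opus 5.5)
We prove the two inequalities ``$\le$'' and ``$\ge$'' separately for a fixed ball $B$, and then take the supremum over $B$.

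\emph{Lower bound (the right-hand side dominates $[W]_{\mathcal A_{p,\infty}}$).} We take $G(y):=W^{-\frac1p}(y)$, defined for almost every $y$. Then $\|W^{\frac1p}(y)G(y)\|=1$ almost everywhere, so the prefactor $[\fint_B\|W^{\frac1p}G\|^p]^{-1}$ equals $1$. The exponential factor becomes exactly the $B$-term in the definition of $[W]_{\mathcal A_{p,\infty}}$. The standing assumption on $\log_+$ ensures that $G\in\mathcal G_B$; the normalization there uses $\int_B$ rather than $\fint_B$, which only shifts the logarithm by the constant $\log\mu(B)$ and so preserves integrability. Taking the supremum over $B$ gives this direction.

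\emph{Upper bound (every admissible $G$ is controlled by $[W]_{\mathcal A_{p,\infty}}$).} Fix $G\in\mathcal G_B$. For almost every $y$, write $W^{\frac1p}(x)G(y)=W^{\frac1p}(x)W^{-\frac1p}(y)\,W^{\frac1p}(y)G(y)$. Submultiplicativity of the operator norm then gives
\[
\fint_B\|W^{\frac1p}(x)G(y)\|^p\,d\mu(x)\le \Big(\fint_B\|W^{\frac1p}(x)W^{-\frac1p}(y)\|^p\,d\mu(x)\Big)\,\|W^{\frac1p}(y)G(y)\|^p .
\]
Taking logarithms and averaging over $y\in B$, the exponential factor is at most the $B$-term of $[W]_{\mathcal A_{p,\infty}}$ multiplied by $\exp(\fint_B\log\|W^{\frac1p}(y)G(y)\|^p\,d\mu(y))$. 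Jensen's inequality (concavity of $\log$) bounds this last quantity by $\fint_B\|W^{\frac1p}G\|^p\,d\mu$, and that factor cancels against the prefactor. Hence each term on the right is at most $[W]_{\mathcal A_{p,\infty}}$, with constant exactly $1$. Combining both bounds yields equality.

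\emph{Main obstacle: justifying the manipulations of integrals of logarithms.} The point needing care is that the splitting $\fint\log(ab)=\fint\log a+\fint\log b$ requires the individual integrals to be well defined, possibly equal to $-\infty$. The term $\log\|W^{\frac1p}G\|^p$ has its positive part integrable because $\|W^{\frac1p}G\|^p\in L^1(B)$, which follows from the nonvanishing, finite average in the definition of $\mathcal G_B$. The $\mathcal A_{p,\infty}$-term has integrable positive part by the standing hypothesis. Their negative parts may only make the right side smaller, so the inequality still holds, interpreting $e^{-\infty}=0$. This is the same bookkeeping as in \cite[Proposition 3.8]{bf4}, with cubes replaced by balls; nothing specific to $\mathbb R^n$ is used.
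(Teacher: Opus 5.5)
Your argument is correct and matches the proof the paper intends, which it defers to \cite[Proposition 3.8]{bf4}: testing with $G=W^{-\frac1p}$ gives one inequality, and the factorization $W^{\frac1p}(x)G(y)=W^{\frac1p}(x)W^{-\frac1p}(y)\,W^{\frac1p}(y)G(y)$ followed by Jensen's inequality gives the other with constant $1$. One small correction: $\mathcal G_B$ only requires $\fint_B\|W^{\frac1p}G\|^p\,d\mu\neq0$, not that this average be finite, so the case $\fint_B\|W^{\frac1p}G\|^p\,d\mu=\infty$ must be treated separately---there the term is simply $0$, because the $\log_+$-integrability condition in $\mathcal G_B$ makes the exponential factor finite.
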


The following result was first obtained by Volberg \cite[Lemma 3.1]{v97}
in the Euclidean setting. Later, Bu et al. \cite[Lemma 3.10]{bf4}
provided an alternative proof.
We extend this result to the spaces of homogeneous type.

\begin{lemma}\label{8 prepare}
Let $t\in\{1,\ldots,K\}$, $p\in(0,\infty)$,
$W\in \mathcal A_{p,\infty}(\mathcal{X},\mathbb C^m)$,
and $\{A_Q\}_{Q\in\mathbb D^t}$ be a family of
reducing operators of order $p$ for $W$.
Let $R\in\mathbb D^t$ and let
$\{Q_j\}_{j\in J}\subset \mathbb D^t$ be a pairwise disjoint family of
dyadic cubes contained in $R$.
Then there exists a positive constant $\mathscr C$,
depending on $[W]_{\mathcal A_{p,\infty}(\mathcal{X},\mathbb C^m)}$,
such that, for any $M\in(1,\infty)$,
if $\|A_{2A_0 B(R)} A_{Q_j}^{-1}\|^p\geq M$ for all $j\in J$, then
\begin{align*}
\sum_{j\in J} \mu(Q_j)
\leq \frac{\mathscr C}{\log M} \mu(R),
\end{align*}
where $B(R)$ is as in Lemma \ref{dcs}(a)(iii).
\end{lemma}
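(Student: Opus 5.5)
We reduce the statement to an estimate on the logarithmic integral in Lemma \ref{Apcha}(ii), applied to the ball $\widetilde B:=2A_0B(R)$. By Lemma \ref{dcs}(a)(iii), $R\subset B(R)\subset\widetilde B$ and each $Q_j\subset R\subset\widetilde B$. The doubling condition \eqref{doublelambda} together with Lemma \ref{ballcube}(i) gives $\mu(\widetilde B)\lesssim\mu(R)$, with constants depending only on $\mathcal X$.

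First we bound $\|A_{\widetilde B}A_{Q_j}^{-1}\|^p$ pointwise on $Q_j$. For any $y\in Q_j$ we write $A_{\widetilde B}A_{Q_j}^{-1}=A_{\widetilde B}W^{-\frac1p}(y)\,W^{\frac1p}(y)A_{Q_j}^{-1}$. Taking logarithms gives
$$\log\|A_{\widetilde B}A_{Q_j}^{-1}\|^p\le \log_+\|A_{\widetilde B}W^{-\frac1p}(y)\|^p+\log_+\|W^{\frac1p}(y)A_{Q_j}^{-1}\|^p .$$
We average over $y\in Q_j$. The second term is handled exactly as the term $\mathrm J_1$ in \eqref{Q4}: using $\log_+a\le a$, Lemma \ref{exchange}, and Lemma \ref{reduceM}, it is at most $\fint_{Q_j}\|W^{\frac1p}(y)A_{Q_j}^{-1}\|^p\,d\mu(y)\sim\|A_{Q_j}A_{Q_j}^{-1}\|^p=1$. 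Hence there is a constant $C_0$, independent of $W$ and $j$, such that
$$\log M\le C_0+\fint_{Q_j}\log_+\left\|W^{-\frac1p}(y)A_{\widetilde B}\right\|^p\,d\mu(y)\qquad\text{for every } j\in J.$$
Here we used Lemma \ref{exchange} to switch the order of the factors inside the norm.

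Next we sum over $j$. Multiplying by $\mu(Q_j)$, summing, and using that the $Q_j$ are pairwise disjoint subsets of $\widetilde B$, we get
$$(\log M-C_0)\sum_{j\in J}\mu(Q_j)\le\int_{\widetilde B}\log_+\left\|W^{-\frac1p}(y)A_{\widetilde B}\right\|^p\,d\mu(y)\le \mathrm I\,\mu(\widetilde B)\lesssim \mathrm I\,\mu(R).$$
Lemma \ref{Apcha} gives $\mathrm I\lesssim 1+\log[W]_{\mathcal A_{p,\infty}(\mathcal X,\mathbb C^m)}$, so $\mathrm I$ is finite and controlled by the weight constant.

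The main obstacle is the additive constant $C_0$: the inequality above is only useful when $\log M>C_0$. We treat two cases.

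\textbf{Case $\log M\ge 2C_0$.} Then $\log M-C_0\ge\frac12\log M$, and dividing gives $\sum_{j\in J}\mu(Q_j)\lesssim \frac{\mathrm I}{\log M}\mu(R)$.

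\textbf{Case $1<M<e^{2C_0}$.} Here we use only disjointness: $\sum_{j\in J}\mu(Q_j)\le\mu(R)\le\frac{2C_0}{\log M}\mu(R)$.

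Taking $\mathscr C$ to be the maximum of the two constants, a quantity of the form $C(1+\log[W]_{\mathcal A_{p,\infty}(\mathcal X,\mathbb C^m)})$, completes the argument.
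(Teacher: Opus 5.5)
Your argument is correct, but it takes a somewhat different route from the paper.

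\textbf{The paper's route.} On $B:=2A_0B(R)$ it builds the auxiliary matrix function $G:=\sum_{j\in J}\mathbf 1_{Q_j}A_{Q_j}^{-1}+\mathbf 1_{B\setminus\bigcup_j Q_j}A_B^{-1}$, checks that $G\in\mathcal G_B$, and applies the test-function characterization of Proposition \ref{chawap}. The logarithmic side is comparable to $\exp(\fint_B\log\|A_BG(y)\|^p\,d\mu(y))$, which is at least $\exp(\frac{\log M}{\mu(B)}\sum_j\mu(Q_j))$. The normalizing factor $\fint_B\|W^{1/p}(y)G(y)\|^p\,d\mu(y)$ is $\lesssim 1$. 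Together these give $\sum_j\mu(Q_j)\le\frac{\log(C[W])}{\log M}\mu(B)$ in a single inequality, with no additive error term.

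\textbf{Your route.} You factor $A_{\widetilde B}A_{Q_j}^{-1}$ through $W^{\mp 1/p}(y)$ cube by cube. You control the factor $W^{1/p}(y)A_{Q_j}^{-1}$ by the reducing-operator property, and the factor $W^{-1/p}(y)A_{\widetilde B}$ by the $\log_+$-characterization in Lemma \ref{Apcha}(ii). The core mechanism is the same as the paper's: submultiplicativity through $W^{\pm1/p}(y)$. The difference is packaging. You replace Proposition \ref{chawap}, whose proof the paper omits, by Lemma \ref{Apcha}, which the paper proves in full. You also avoid constructing $G$ and verifying $G\in\mathcal G_B$.

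\textbf{The cost, and a simplification.} Working with $\log_+$ instead of $\log$ produces the additive constant $C_0$. Your case distinction is not needed to deal with it. Since $C_0\sum_j\mu(Q_j)\le C_0\mu(R)$, you can move that term to the right-hand side and get $\sum_j\mu(Q_j)\le\frac{C_0+C\,\mathrm I}{\log M}\,\mu(R)$ directly.
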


\begin{proof}
Let $B:=2A_0 B(R)$ and
$G:=\sum_{j\in J} \mathbf{1}_{Q_j} A_{Q_j}^{-1}
+\mathbf{1}_{B\setminus\bigcup_j Q_j} A_B^{-1}$. Then
\begin{align*}
&\int_B\log_+\left(\fint_B\left\|W^{\frac1p}(x)
G(y)\right\|^p\,d\mu(x)\right)\,d\mu(y) \\
&\quad\leq \int_B\log_+\left(\fint_B\left\|W^{\frac1p}(x)
W^{-\frac1p}(y)\right\|^p\,d\mu(x)\right)\,d\mu(y) \\
&\qquad+\int_B\log_+\left(\left\|W^{\frac1p}(y)
G(y)\right\|^p\right)\,d\mu(y) \\
&\quad=:\mathrm{J}_1+\mathrm{J}_2.
\end{align*}
By $W\in \mathcal A_{p,\infty}(\mathcal{X},\mathbb C^m)$,
we find that $\mathrm{J}_1<\infty$.
From the inequality $\log_+t<t$ for all $t\in(0,\infty)$,
it follows that
\begin{align*}
\mathrm{J}_2
&\leq \sum_{j\in J} \int_{Q_j} \left\|W^{\frac1p}(y) A_{Q_j}^{-1}\right\|^p \,d\mu(y)
+ \int_{B\setminus\bigcup_j Q_j} \left\|W^{\frac1p}(y) A_B^{-1}\right\|^p \,d\mu(y) \\
&\lesssim \sum_{j\in J} \mu(Q_j) \left\|A_{Q_j} A_{Q_j}^{-1}\right\|^p
+ \mu(B) \left\|A_B A_B^{-1}\right\|^p
\leq 2 \mu(B).
\end{align*}
Thus, $G\in \mathcal{G}_{B}$,
where $\mathcal{G}_{B}$ is as in Proposition \ref{chawap}.
Consequently, the same proposition shows that
\begin{align} \label{final}
\mathrm{I}
&:=\exp\left( \fint_B \log \left( \fint_B \left\|W^{\frac{1}{p}}(x)G(y)\right\|^p \,d\mu(x)\right) \,d\mu(y) \right) \notag \\
&\phantom{:}\leq [W]_{\mathcal A_{p,\infty}(\mathcal X,\mathbb{C}^{m})}
\fint_B \left\|W^{\frac{1}{p}}(y)G(y)\right\|^p \,d\mu(y)
=: [W]_{\mathcal A_{p,\infty}(\mathcal X,\mathbb{C}^{m})} \mathrm{II}.
\end{align}
By Lemma \ref{reduceM}, we find that
\begin{align*}
\mathrm{I}
&\sim\exp \left( \fint_B \log \|A_B G(y)\|^p \,d\mu(y) \right) \\
&=\exp\left( \frac{1}{\mu(B)} \left[\sum_{j\in J} \mu(Q_j) \log\left\|A_B A_{Q_j}^{-1}\right\|^p+ \mu\left( B\setminus\bigcup_{j\in J} Q_j\right)
\log\left\|A_B A_B^{-1}\right\|^p \right] \right)  \\
&=\exp\left( \frac{1}{\mu(B)}\sum_{j\in J} \mu(Q_j) \log\left\|A_B A_{Q_j}^{-1}\right\|^p \right)
\geq \exp \left( \frac{1}{\mu(B)}\sum_{j\in J} \mu(Q_j) \log M \right).
\end{align*}
On the other hand, by Lemmas \ref{dcs}(a)(iii) and \ref{ballcube}(i),
we find that $R\subset B$ and $\mu(R)\sim\mu(B)$.
From these and Lemma \ref{reduceM}, we infer that
\begin{align*}
\mathrm{II}
&=\frac{1}{\mu(B)} \left[ \sum_{j\in J} \int_{Q_j}\left\|W^{\frac{1}{p}}(y)A_{Q_j}^{-1}\right\|^p \,d\mu(y)
+\int_{B\setminus\bigcup_j Q_j}\left\|W^{\frac{1}{p}}(y) A_B^{-1}\right\|^p \,d\mu(y) \right] \\
&\leq\frac{1}{\mu(B)} \left[ \sum_{j\in J}\int_{Q_j}\left\|W^{\frac{1}{p}}(y)A_{Q_j}^{-1}\right\|^p \,d\mu(y)
+\int_B \left\|W^{\frac{1}{p}}(y)A_B^{-1}\right\|^p \,d\mu(y) \right] \\
&\sim\frac{1}{\mu(B)} \left[ \sum_{j\in J} \mu(Q_j) \left\|A_{Q_j}A_{Q_j}^{-1}\right\|^p
+\mu(B) \left\|A_B A_B^{-1}\right\|^p \right]
= \frac{1}{\mu(B)} \left[ \sum_{j\in J} \mu(Q_j) +\mu(B) \right]
\sim 1.
\end{align*}
Substituting the estimates of $\mathrm{I}$ and
$\mathrm{II}$ into \eqref{final}, we obtain
\begin{align*}
\exp\left( \frac{1}{\mu(B)}\sum_{j\in J} \mu(Q_j) \log M \right)
\lesssim [W]_{\mathcal A_{p,\infty}(\mathcal X,\mathbb{C}^{m})},
\end{align*}
which completes the proof of Lemma \ref{8 prepare}.
\end{proof}

The following result was already established for $W\in \mathcal A_p(\mathcal X,\mathbb C^m)$ in \cite[Lemma A.32]{bf};
indeed, it remains valid for the more general class $\mathcal A_{p,\infty}(\mathcal X,\mathbb C^m)$.

\begin{lemma} \label{lem1}
Let $t\in\{1,\ldots,K\}$, $p\in(0,\infty)$,
$W\in \mathcal A_{p,\infty}(\mathcal X,\mathbb C^m)$, and
$\{A_Q\}_{Q \in \mathbb{D}^t}$ be a sequence of reducing operators of order $p$ for $W$.
Then there exists a positive constant $C$,
depending on $[W]_{\mathcal A_{p,\infty}(\mathcal{X},\mathbb C^m)}$,
such that, for any $q\in(0,p r(W)]$,
$$
\sup_{R\in \mathbb{D}^t}
\left[\fint_R \sup_{\genfrac{}{}{0pt}{}{Q\in \mathbb{D}^t}{x\in Q\subset R}}
\left\| W^{\frac1p} (x) A_Q^{-1} \right\|^q \, d\mu(x) \right]^{\frac1q}
\leq C.
$$
\end{lemma}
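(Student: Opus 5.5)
We would follow the stopping-time argument of \cite[Lemma A.32]{bf}, with the $\mathcal A_p$ input replaced by Lemma \ref{8 prepare} and the reverse H\"older inequality of Proposition \ref{matrixrhi}. Fix $t$, $R\in\mathbb D^t$, and $q\in(0,pr(W)]$. By H\"older's inequality it suffices to treat $q=pr(W)$. Set $B_R:=2A_0B(R)$. For $x\in Q\subset R$ we split
$$
\left\|W^{\frac1p}(x)A_Q^{-1}\right\|\le \left\|W^{\frac1p}(x)A_{B_R}^{-1}\right\|\left\|A_{B_R}A_Q^{-1}\right\|,
$$
which reduces the lemma to two tasks: controlling the factor $\|W^{\frac1p}(x)A_{B_R}^{-1}\|$ in $L^q$, and controlling how large $\|A_{B_R}A_Q^{-1}\|$ can become over the cubes $Q$ containing $x$.

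The first factor is handled directly by Proposition \ref{matrixrhi} with $M=A_{B_R}^{-1}$:
$$
\left[\fint_R\|W^{\frac1p}A_{B_R}^{-1}\|^{pr(W)}\right]^{\frac1{r(W)}}\lesssim\fint_{2A_0B(R)}\|W^{\frac1p}A_{B_R}^{-1}\|^p\sim 1,
$$
using $\mu(R)\sim\mu(B(R))\sim \mu(2A_0 B(R))$ together with Lemma \ref{reduceM}.

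The ratio factor is not controlled pointwise, so a single split will not do and we run a stopping time. Fix a large $\Lambda>1$. Let $\mathcal F_1$ be the maximal cubes $Q\subset R$ in $\mathbb D^t$ with $\|A_{B_R}A_Q^{-1}\|^p\ge\Lambda$. Lemma \ref{8 prepare} gives $\sum_{Q\in\mathcal F_1}\mu(Q)\le \frac{\mathscr C}{\log\Lambda}\mu(R)\le \frac1{2^N}\mu(R)$ once $\Lambda$ is large, where $N$ is chosen below. Then iterate inside each $P\in\mathcal F_1$ with $R$ replaced by $P$, producing generations $\mathcal F_k$ and sets $\Omega_k:=\bigcup_{\mathcal F_k}Q$ with $\mu(\Omega_k\cap P)\le 2^{-N}\mu(P)$ for each stopping cube $P$ of the previous generation.

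Now take $x\in\Omega_{k}\setminus\Omega_{k+1}$ and $Q\ni x$. Let $P\in\mathcal F_k\cup\{R\}$ be the smallest stopping cube containing $Q$. Then
$$
\|W^{\frac1p}(x)A_Q^{-1}\|\le\|W^{\frac1p}(x)A_{B_P}^{-1}\|\,\|A_{B_P}A_Q^{-1}\|\le \Lambda^{\frac1p}\|W^{\frac1p}(x)A_{B_P}^{-1}\|.
$$
Hence the supremum is bounded by $\Lambda^{1/p}\sup_{P\ni x,\,P\text{ stopping}}\|W^{\frac1p}(x)A_{B_P}^{-1}\|\mathbf 1_{P}$, and
$$
\int_R(\sup)^q\le\Lambda^{q/p}\sum_{k}\sum_{P\in\mathcal F_k}\int_P\|W^{\frac1p}A_{B_P}^{-1}\|^q\lesssim \Lambda^{q/p}\sum_k\mu(\Omega_k)\lesssim\Lambda^{q/p}\mu(R).
$$
Here the geometric decay $\mu(\Omega_k)\le 2^{-kN}\mu(R)$ and the reverse H\"older estimate above, applied on each $P$, give the sum.

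The main obstacle is the replacement of $A_Q$-ratios by the enlarged ball $B_P=2A_0B(P)$ rather than $P$ itself, in the term $\|A_{B_P}A_Q^{-1}\|$ for $Q$ not yet stopped. The stopping criterion must be phrased with $A_{2A_0B(P)}$ exactly as in Lemma \ref{8 prepare}, which is why we stop with respect to $A_{B_P}$. The residual issue is that we also need $\|W^{\frac1p}A_{B_P}^{-1}\|$ rather than $A_P^{-1}$. This is harmless, because Proposition \ref{matrixrhi} already produces the average over $2A_0B(P)$, so the normalization matches.
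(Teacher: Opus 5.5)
Your proposal is correct and is essentially the paper's argument: stopping cubes are selected by $\|A_{2A_0B(P)}A_Q^{-1}\|^p\ge\Lambda$ and controlled by Lemma \ref{8 prepare}, the factor $\|W^{\frac1p}A_{2A_0B(P)}^{-1}\|$ is handled by the reverse H\"older inequality of Proposition \ref{matrixrhi}, and the stopping generations decay geometrically. The difference is only presentational: you unroll the whole stopping tree and dominate the supremum by the sum over the stopping cubes containing $x$, whereas the paper uses the truncated functions $N_{R,t,l}$ and iterates $\int_R N_{R,t,l}^{pr(W)}\,d\mu\le\widetilde C\mu(R)+\sum_{P\in D_1}\int_P N_{P,t,l}^{pr(W)}\,d\mu$. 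You should also state explicitly that when $Q=P$ is itself a stopping cube you need $\|A_{2A_0B(P)}A_P^{-1}\|^p<\Lambda$; this follows from Lemma \ref{8 prepare} applied with $J=\{P\}$ once $\log\Lambda>\mathscr C$, which is the same observation the paper uses to get $P\subsetneqq R$.
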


\begin{proof}
By H\"older's inequality, to prove the present lemma it suffices to show
$$
\sup_{R\in \mathbb{D}^t}
\left[\fint_R \sup_{\genfrac{}{}{0pt}{}{Q\in \mathbb{D}^t}{x\in Q\subset R}}
\left\| W^{\frac1p} (x) A_Q^{-1} \right\|^{pr(W)} \, d\mu(x)\right]^{\frac1{pr(W)}}
<\infty.
$$
For any $l\in\mathbb Z$, $R\in\mathbb{D}^t$, and $x\in R$, let
$$
N_{R, t, l}(x) := \sup_{\{ Q_\alpha^{k, t} :\,
k \leq l,\, \alpha \in \mathcal{A}_k,\,
x \in Q_\alpha^{k, t} \subset R\}}
\left\| W^{\frac1p} (x) A_{Q_{\alpha}^{k, t}}^{-1} \right\|,
$$
with the convention that the supremum over an empty index set is $0$.
To prove the present lemma, it suffices to show that,
for any $l\in\mathbb Z$ and $R\in \mathbb{D}^t $,
\begin{equation} \label{1010}
\int_R [N_{R,t,l}(x)]^{pr(W)} \, d\mu(x) \lesssim \mu(R),
\end{equation}
where the implicit positive constant is independent of $l$ and $R$.

Now, we show \eqref{1010}. To this end, fix $R:=Q_\alpha^{k_0,t}\in \mathbb{D}^t$.
Let $l\in\{k_0,k_0+1,k_0+2,\ldots\}$ and
$M:= \exp(2\mathscr C)$,
where $\mathscr C$ is as in Lemma \ref{8 prepare}.
Let
$$
F_1:=\left\{ Q_\alpha^{k,t} \subset R :\ k \leq l,\ \alpha \in \mathcal{A}_k,\
\left\|A_{2A_0 B(R)} A_{Q_{\alpha}^{k, t}}^{-1} \right\|^p \geq M \right\},
$$
and
$$
D_1:=\{P\in F_1 :\ P \text{ is the maximal element of } F_1 \}.
$$
It is easy to show that $D_1$ is pairwise disjoint
and $\bigcup_{P\in D_1} P\subset R$.
From this, the definition of $D_1$, and Lemma \ref{8 prepare},
we deduce that
\begin{align} \label{haha5}
\sum_{P \in D_1} \mu(P)
\leq \frac12 \mu(R).
\end{align}
Note that, by the definition of $D_1$,
we have, for any $x\in R\setminus \bigcup_{P\in D_1} P$
and $ Q_{\alpha}^{k,t} $ satisfying
$ x \in Q_{\alpha}^{k,t} \subset R $ with $ k \leq l $ and $ \alpha \in \mathcal{A}_k $,
\begin{align*}
\left\| W^{\frac1p}(x) A_{Q_{\alpha}^{k, t}}^{-1} \right\|
\leq \left\| W^{\frac1p}(x) A_{2A_0 B(R)}^{-1} \right\|
\left\| A_{2A_0 B(R)} A_{Q_{\alpha}^{k, t}}^{-1} \right\|
\leq \left\| W^{\frac1p}(x) A_{2A_0 B(R)}^{-1} \right\| M^{\frac1p}.
\end{align*}
From this, we deduce that
\begin{align}\label{intmea}
\int_{R \setminus (\bigcup_{P \in D_1} P)} [ N_{R,t,l}(x)]^{pr(W)} \, d\mu(x)
\lesssim  \int_R \left\| W^{\frac1p}(x) A_{2A_0 B(R)}^{-1} \right\|^{pr(W)} \, d\mu(x).
\end{align}
By Lemma \ref{dcs}, Proposition \ref{matrixrhi}, and Lemma \ref{reduceM}, we have
\begin{align} \label{2.14}
\fint_R \left\| W^{\frac1p}(x) A_{2A_0 B(R)}^{-1} \right\|^{pr(W)} \, d\mu(x)
&\lesssim \fint_{B(R)} \left\| W^{\frac1p}(x) A_{2A_0 B(R)}^{-1} \right\|^{pr(W)} \, d\mu(x) \notag \\
&\lesssim \left[\fint_{2 A_0 B(R)} \left\| W^{\frac1p}(x) A_{2A_0 B(R)}^{-1} \right\|^p \, d\mu(x) \right]^{r(W)} \notag \\
&\sim \left\|A_{2 A_0 B(R)} A_{2A_0 B(R)}^{-1}\right\|^{p r(W)}
=1,
\end{align}
which, combined with \eqref{intmea}, further implies that
\begin{align} \label{haha3}
\int_{R \setminus (\bigcup_{P \in D_1} P)} [N_{R,t,l}(x)]^{pr(W)} \, d\mu(x)
\lesssim \mu(R).
\end{align}

If $D_1=\emptyset$, then \eqref{haha3}
finishes the proof of the present lemma in this case.
If $D_1\neq\emptyset$, then by \eqref{haha5},
we find that, for any $P := Q_\alpha^{k, t} \in D_1 $,
one has $P \subsetneqq R$, and hence
$
k \in [k_0+1, l].
$
For any $P \in D_1 $, let
\begin{equation*}
F_P := \{ x \in P :\ N_{R,t,l}(x) > N_{P,t,l}(x) \}.
\end{equation*}
From the definitions of $N_{R,t,l}$ and $F_P$ and Lemma \ref{dcs}(a)(ii),
we deduce that, for any $P\in D_1$ and $x \in F_P$,
\begin{equation} \label{3.5x}
N_{R, t, l}(x) = \max \left\{ \sup_{P \subsetneqq Q_{\alpha}^{k, t} \subset R}
\left\| W^{\frac1p} (x) A_{Q_{\alpha}^{k,t}}^{-1} \right\|, N_{P, t, l}(x) \right\}
= \sup_{P \subsetneqq Q_{\alpha}^{k, t} \subset R}
\left\| W^{\frac1p} (x) A_{Q_{\alpha}^{k,t}}^{-1} \right\|.
\end{equation}
By the definition of $ D_1 $,
we find that, for any $ Q_{\alpha}^{k,t} \in \mathbb{D}^t$
satisfying $P \subsetneqq Q_{\alpha}^{k,t} \subset R$,
$$
\left\|A_{2A_0 B(R)} A_{Q_{\alpha}^{k, t}}^{-1} \right\| \leq M^{\frac1p},
$$
which, together with \eqref{3.5x},
further implies that, for any $x\in F_P$,
\begin{align}  \label{haha8}
N_{R,t,l}(x)
\leq \left\| W^{\frac1p}(x) A_{2A_0 B(R)}^{-1} \right\|
\sup_{\{Q_{\alpha}^{k, t} :\, P \subsetneqq Q_{\alpha}^{k, t} \subset R\}}
\left\|A_{2A_0 B(R)} A_{Q_{\alpha}^{k, t}}^{-1} \right\|
\leq \left\| W^{\frac1p}(x) A_{2A_0 B(R)}^{-1} \right\| M^{\frac1p}.
\end{align}
Since $ D_1 $ is pairwise disjoint, it then follows that
$ \{F_P\}_{P \in D_1} $ is pairwise disjoint.
From this, the definitions of $F_P$ and $D_1$,
\eqref{haha8}, and \eqref{2.14}, we deduce that
\begin{align*}
\sum_{P \in D_1} \int_{F_P} [N_{R,t,l}(x)]^{pr(W)} \, d\mu(x)
\lesssim \int_R \left\| W^{\frac1p}(x) A_{2A_0 B(R)}^{-1} \right\|^{pr(W)} \, d\mu(x)
\lesssim \mu(R),
\end{align*}
which, together with the fact that $D_1$ is pairwise disjoint,
\eqref{haha3}, and the definition of $F_P$, further implies that
there exists a positive constant $\widetilde{C}$,
independent of $l$ and $R$, such that
\begin{align*}
\int_R [N_{R,t,l}(x)]^{pr(W)} \, d\mu(x)
&= \int_{R\setminus (\bigcup_{P \in D_1} P)}\cdots
+ \sum_{P\in D_1} \int_{F_P} \cdots
\quad+ \sum_{P\in D_1} \int_{P\setminus F_P} \cdots \\
&\leq \widetilde{C} \mu(R) + \sum_{P \in D_1} \int_P [N_{P,t,l}(x)]^{pr(W)} \, d\mu(x).
\end{align*}
Each term in the last sum has the same form as $\int_R [N_{R,t,l}(x)]^{pr(W)} \, d\mu(x)$.
Using stopping time argument as in the proof of \cite[Lemma A.32]{bf}
(see also \cite[Lemma 3.6]{IM19}),
we obtain \eqref{1010}.
This completes the proof of Lemma \ref{lem1}.
\end{proof}

\begin{definition}
Let $t\in\{1,\ldots,K\}$. The \emph{Hardy--Littlewood maximal operator}
$\mathcal M^{\mathbb D^t}$ is defined by setting, for any
$ f \in L^1_{\mathrm{loc}}(\mathcal X)$ and $x\in\mathcal X$,
$$\mathcal M^{\mathbb D^t}(f)(x):=\sup_{\genfrac{}{}{0pt}{}{Q\in\mathbb D^t}{Q \ni x}}
\fint_Q  |f(y)|\,d\mu(y).$$
\end{definition}

In Euclidean spaces, the level sets of the Hardy--Littlewood maximal operator are open,
and hence each of them can be decomposed into
a union of maximal dyadic cubes via the Whitney decomposition.
In spaces of homogeneous type, the level sets might be not open;
nevertheless, the following decomposition property remains valid.

\begin{lemma} \label{Whitney}
Let $t\in\{1,\ldots,K\}$, $\lambda\in(0,\infty)$, and $g\in L^1_{\mathrm{loc}}(\mathcal X)$.
If the set
$$
\Omega:=\left\{x\in\mathcal X:\ \mathcal M^{\mathbb D^t}(g) (x) > \lambda \right\}
$$
has finite measure, then it can be decomposed into a sequence of maximal dyadic cubes
$\{Q_i\}_{i\in\mathcal{I}}$ in $\mathbb D^t$, that is,
\begin{enumerate}[\rm(i)]
\item $\Omega=\bigcup_{i\in\mathcal{I}} Q_i$,
the cubes $\{Q_i\}_{i\in\mathcal{I}}$ are pairwise disjoint,

\item for any dyadic cube $Q\in \mathbb D^t$ with $Q\subset \Omega$,
there exists $i\in\mathcal{I}$ such that $Q\subset Q_i$.
\end{enumerate}
\end{lemma}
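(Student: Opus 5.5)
The plan is to take as candidate cubes the maximal dyadic cubes of $\mathbb D^t$ on which the average of $|g|$ exceeds $\lambda$. Let
$$
\mathcal F:=\left\{Q\in\mathbb D^t:\ \fint_Q|g(y)|\,d\mu(y)>\lambda\right\}.
$$
By definition of $\mathcal M^{\mathbb D^t}$, a point $x$ lies in $\Omega$ if and only if $x\in Q$ for some $Q\in\mathcal F$. Hence $\Omega=\bigcup_{Q\in\mathcal F}Q$.

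The key step is to show that every $Q\in\mathcal F$ is contained in a maximal element of $\mathcal F$. Fix $Q\in\mathcal F$. By Lemma \ref{dcs}(a)(i) and (ii), the cubes of $\mathbb D^t$ containing $Q$ form a nested chain $Q=Q^{(0)}\subset Q^{(1)}\subset\cdots$, where $Q^{(j)}$ is the unique cube of generation $k_Q-j$ containing $Q$. Every $Q^{(j)}\in\mathcal F$ satisfies $Q^{(j)}\subset\Omega$, so $\mu(Q^{(j)})\le\mu(\Omega)<\infty$. On the other hand, Lemma \ref{dcs}(a)(iii) gives $Q^{(j)}\supset B_d(z^{(j)},(12A_0^4)^{-1}\delta^{k_Q-j})$. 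I will argue that $\mu(Q^{(j)})\to\infty$ as $j\to\infty$, or at least that the chain cannot stay in $\mathcal F$ for all $j$.

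This is the main obstacle, since $\mathcal X$ might be bounded, in which case $\mu(\mathcal X)<\infty$ and the measures stay bounded. I would split into two cases.
\begin{enumerate}[(a)]
\item If $\mu(\mathcal X)=\infty$, then the doubling property \eqref{doublelambda} implies that balls with radius tending to infinity, all containing a fixed point $z^{(0)}$, have measure tending to infinity. This uses the standard fact that $\mu(B_d(x,r))\to\mu(\mathcal X)$ as $r\to\infty$. Consequently only finitely many $Q^{(j)}$ can lie in $\mathcal F$, and the largest of them is maximal in $\mathcal F$.
\item If $\mu(\mathcal X)<\infty$, then $\mathcal X$ is bounded, and for $j$ large, $Q^{(j)}$ contains a ball of radius exceeding $\operatorname{diam}\mathcal X$, so $Q^{(j)}=\mathcal X$. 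The chain therefore stabilizes at $\mathcal X$. Either $\mathcal X\in\mathcal F$, and then $\mathcal X$ itself is the unique maximal cube, or some largest $Q^{(j)}\in\mathcal F$ exists. In both situations a maximal element containing $Q$ exists.
\end{enumerate}

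Let $\{Q_i\}_{i\in\mathcal I}$ be the maximal elements of $\mathcal F$. This family is countable because $\mathbb D^t$ is countable. Two distinct maximal cubes are disjoint: by the nestedness in Lemma \ref{dcs}(a)(ii), intersecting cubes are comparable, and two comparable maximal cubes must coincide. Combined with $\Omega=\bigcup_{Q\in\mathcal F}Q$ and the key step above, this gives $\Omega=\bigcup_i Q_i$, which is (i).

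For (ii), let $Q\in\mathbb D^t$ with $Q\subset\Omega$. By (i), $Q$ meets some $Q_i$, so by nestedness either $Q\subset Q_i$, which is what we want, or $Q_i\subsetneqq Q$. In the second case, every point of $Q$ lies in some $Q_{i'}$, and each such $Q_{i'}$ meets $Q$; none of them can contain $Q$, since otherwise $Q_i\subsetneqq Q\subset Q_{i'}$ would contradict disjointness or the maximality of $Q_i$. Hence all these $Q_{i'}$ are strictly contained in $Q$. Then $Q$ is a disjoint union of cubes from $\mathcal F$, so $\int_Q|g|=\sum_{Q_{i'}\subset Q}\int_{Q_{i'}}|g|>\lambda\mu(Q)$. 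Therefore $Q\in\mathcal F$, which contradicts the maximality of $Q_i$. This proves (ii).
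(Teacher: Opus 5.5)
Your proof is correct, but it uses a different stopping family from the paper. The paper takes $\{Q_i\}_{i\in\mathcal I}$ to be the maximal cubes of $\{Q\in\mathbb D^t:\ Q\subset\Omega\}$. Each $x\in\Omega$ lies in a cube $Q_x$ with $\fint_{Q_x}|g|\,d\mu>\lambda$, and such a cube is automatically contained in $\Omega$, so $\Omega=\bigcup\{Q\in\mathbb D^t:\ Q\subset\Omega\}$. With that choice (ii) is a tautology, because every dyadic cube inside $\Omega$ is itself a member of the family.

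You instead use the Calder\'on--Zygmund family $\mathcal F$ of cubes with $\fint_Q|g|\,d\mu>\lambda$. This costs you the additivity step in (ii): a countable disjoint union of members of $\mathcal F$ again has average larger than $\lambda$. In return you get the extra information $\fint_{Q_i}|g|\,d\mu>\lambda$, and your argument shows that the two families of maximal cubes coincide.

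Both routes need the same existence step for maximal elements, which the paper compresses into ``since $\mu(\Omega)<\infty$''. Your split into $\mu(\mathcal X)=\infty$ (ancestor measures blow up) and $\mu(\mathcal X)<\infty$ (the ancestor chain stabilizes at $\mathcal X$) spells this out. In the second case finiteness of $\mu(\Omega)$ is automatic and plays no role.

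Two details to tighten:
\begin{itemize}
\item In case (a), the inner ball $B_d(z^{(j)},(12A_0^4)^{-1}\delta^{k_Q-j})$ need not contain $z^{(0)}$. It is the outer ball $B_d(z^{(j)},4A_0^2\delta^{k_Q-j})\supset Q\ni z^{(0)}$ that does. Use doubling to compare the two and obtain $\mu(Q^{(j)})\gtrsim\mu(B_d(z^{(0)},(12A_0^4)^{-1}\delta^{k_Q-j}))\to\mu(\mathcal X)$.
\item The implication from $\mu(\mathcal X)<\infty$ to boundedness of $\mathcal X$ is the standard reverse-doubling fact for spaces of homogeneous type. It deserves a reference or a short proof.
\end{itemize}
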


\begin{proof}
For any $x\in \Omega$, there exists $Q_x\in \mathbb D^t$ such that
$Q_x\ni x$ and
$
\fint_{Q_x} |g(y)| \, d\mu(y) > \lambda.
$
By this and the definition of $\Omega$, we conclude that
$Q_x\subset\Omega$. Thus,
$\Omega= \bigcup\{Q\in\mathbb D^t:\ Q\subset\Omega \}$.
Since $\mu(\Omega)<\infty$,
we can let $\{Q_i\}_{i\in\mathcal{I}}$ be the collection of
all maximal dyadic cubes in $\{Q\in\mathbb D^t:\ Q\subset\Omega \}$.
It is easy to show that $\{Q_i\}_{i\in\mathcal{I}}$ satisfies (i) and (ii).
This completes the proof of Lemma \ref{Whitney}.
\end{proof}

Now, we can prove Theorem \ref{bounded B3}.

\begin{proof}[Proof of Theorem \ref{bounded B3}]
Due to Lemma \ref{sumk}, to prove the present theorem, we need only to show that,
for any $t\in\{1,\ldots,K\}$ and $G\in L^q(\mathcal X,M_m(\mathbb C))$,
$$
\left\| \mathcal M_{W,p}^{\mathbb D^t,(v)} (G) \right\|_{L^q(\mathcal X)}
\lesssim \| G \|_{L^q(\mathcal X,M_m(\mathbb C))}.
$$

Since $\frac{uv}{u-v}$ is monotonically increasing with respect to $v$,
it follows that, for any $v\in(0, \frac{pu}{p+u}]$,
$\frac{uv}{u-v}\le p < p{r(W)}$.
Consequently, the interval $(\frac{uv}{u-v}, pr(W)]$ is well-defined.
Let $t\in\{1,\ldots,K\}$ and $\{A_Q\}_{Q\in\mathbb{D}^t}$ be
a sequence of reducing operators of order $p$ for $W$.
Note that $v\leq\frac{pu}{p+u}<u$. From this,  H\"older's inequality, and Lemma \ref{improve D},
it follows that, for any $x\in\mathcal X$,
\begin{align*}
\mathcal M_{W,p}^{\mathbb D^t,(v)} (G) (x)
&\leq \sup_{\genfrac{}{}{0pt}{}{Q\in\mathbb D^t}{Q \ni x}}
\left\| W^{\frac1p}(x) A_Q^{-1} \right\|
\left[\fint_{Q} \left\| A_Q W^{-\frac1p}(y) G(y) \right\|^v d\mu(y)\right]^{\frac1v}\\
&\leq \sup_{\genfrac{}{}{0pt}{}{Q\in\mathbb D^t}{Q \ni x}}
\left\| W^{\frac1p}(x) A_Q^{-1} \right\|
\left[\fint_{Q} \left\| A_Q W^{-\frac1p}(y)\right\|^{u} d\mu(y)\right]^{\frac 1{u}}
\left[\fint_{Q} \left\|G(y)\right\|^r d\mu(y)\right]^{\frac 1r} \\
&\lesssim \sup_{\genfrac{}{}{0pt}{}{Q\in\mathbb D^t}{Q \ni x}}
\left\| W^{\frac1p}(x) A_Q^{-1} \right\|
\left[\fint_{Q} \left\|G(y)\right\|^r d\mu(y)\right]^{\frac 1r},
\end{align*}
where $r:=\frac{uv}{u-v}$.
For every $\kappa\in\mathbb Z$, let
$$
\mathscr R_\kappa := \left\{Q\in\mathbb D^t:\
2^\kappa<\left[\fint_{Q} \left\|G(y)\right\|^r d\mu(y)\right]^{\frac 1r}
\leq  2^{\kappa+1} \right\}.
$$
Then
\begin{align} \label{2.11}
\mathcal M_{W,p}^{\mathbb D^t,(v)} (G)(x)
&\lesssim \sup_{\kappa\in\mathbb Z} \sup_{\genfrac{}{}{0pt}{}{Q\in \mathscr R_\kappa}{Q \ni x}}
\left\| W^{\frac1p}(x) A_Q^{-1} \right\|
\left[\fint_{Q} \left\|G(y)\right\|^r d\mu(y)\right]^{\frac 1r} \notag \\
&\sim \sup_{\kappa\in\mathbb Z} 2^\kappa \sup_{\genfrac{}{}{0pt}{}{Q\in \mathscr R_\kappa}{Q \ni x}}
\left\| W^{\frac1p}(x) A_Q^{-1} \right\|
\leq \sup_{\kappa\in\mathbb Z} 2^\kappa
\sup_{\genfrac{}{}{0pt}{}{Q\in \mathbb D^t}{x\in Q\subset \Omega_\kappa}}
\left\| W^{\frac1p}(x) A_Q^{-1} \right\|,
\end{align}
where
$$
\Omega_\kappa:=\left\{ y\in\mathcal X:\
\left[ \mathcal M^{\mathbb D^t}( \|G\|^r ) (y) \right]^{\frac 1r} >2^\kappa \right\}.
$$
Note that $\frac{q}{r}\in(1,\infty)$. Applying the boundedness of
$\mathcal{M}^{\mathbb D^t}$ on $L^{\frac{q}{r}}(\mathcal X)$
(see, for instance, \cite{CG} or \cite[(3.6)]{CG77}), we conclude that
\begin{align*}
\mu(\Omega_\kappa)
\leq \int_{\Omega_\kappa} 2^{-\kappa q}
\left[ \mathcal M^{\mathbb D^t}( \|G\|^r )(x) \right]^{\frac q r}\,d\mu(x)
\lesssim 2^{-\kappa q} \int_{\mathcal X} \|G(x)\|^q\,d\mu(x)  <\infty,
\end{align*}
which, together with Lemma \ref{Whitney}, further implies that
$\Omega_\kappa$ can be decomposed into a sequence of maximal dyadic cubes
$\{Q_i\}_{i\in\mathcal{I}}$ in $\mathbb D^t$.
By this, \eqref{2.11}, the assumption
$q\leq pr(W)$, and Lemma \ref{lem1}, we obtain
\begin{align}\label{MDt}
\left\| \mathcal M_{W,p}^{\mathbb D^t,(v)} (G) \right\|_{L^q(\mathcal X)}^q
&\lesssim \sum_{\kappa\in\mathbb Z} 2^{\kappa q}
\int_{\mathcal X} \sup_{\genfrac{}{}{0pt}{}{Q\in \mathbb{D}^t}{x\in Q\subset \Omega_\kappa}}
\left\| W^{\frac1p}(x) A_Q^{-1} \right\|^q d\mu(x)\notag\\
&= \sum_{\kappa\in\mathbb Z} 2^{\kappa q}
\sum_{i\in\mathcal{I}}
\mu(Q_i) \fint_{Q_i} \sup_{\genfrac{}{}{0pt}{}{Q\in \mathbb{D}^t}{x\in Q\subset Q_i}}
\left\| W^{\frac1p}(x) A_Q^{-1} \right\|^q d\mu(x)\notag\\
&\lesssim \sum_{\kappa\in\mathbb Z} 2^{\kappa q}
\sum_{i\in\mathcal{I}} \mu(Q_i)
= \sum_{\kappa\in\mathbb Z} 2^{\kappa q} \mu(\Omega_\kappa)
\sim \left\| \mathcal M^{\mathbb D^t} (\|G\|^r) \right\|_{L^{\frac{q}{r}}(\mathcal X)}^{\frac{q}{r}},
\end{align}
where, in the last step, we used Cavalieri's principle
(see, for instance, \cite[Proposition 1.1.4]{hsdyk01}).
From $\frac qr\in(1,\infty)$ and the boundedness of $\mathcal{M}^{\mathbb D^t}$ on $L^{\frac qr}(\mathcal X)$, we infer that
\begin{align*}
\left\| \mathcal M^{\mathbb D^t} (\|G\|^r) \right\|_{L^{\frac qr}(\mathcal X)}^{\frac qr}
\lesssim \left\| \|G\|^r \right\|_{L^{\frac qr}(\mathcal X)}^{{\frac{q}{r}}}
= \| G \|_{L^q(\mathcal X,M_m(\mathbb C))}^q,
\end{align*}
which, combined with \eqref{MDt}, completes the proof of Theorem \ref{bounded B3}.
\end{proof}

Next, we show Theorem \ref{bounded B2}.

\begin{proof}[Proof of Theorem \ref{bounded B2}]
We first prove (i) $\Longleftrightarrow$ (ii).
Note that
\begin{align*}
\mathcal M_{W,p}^{(v)} \left( \vec f \right)
=\mathcal M_{W,p}^{(v)} \left( \left[\vec f, \vec{\mathbf 0}, \ldots,  \vec{\mathbf 0} \right] \right),
\end{align*}
and hence (i) $\Longrightarrow$ (ii).
On the other hand, let $ \{\vec{e}_i\}_{i=1}^m$ be any orthonormal basis of $\mathbb C^m$.
Using the equivalence $\|A\|\sim\sum_{i=1}^m|Ae_i|$ for all $A\in M_m(\mathbb C)$
(see, for instance, \cite[Lemma 3.2]{ro03}),
we find that, for any $x\in\mathcal X$,
\begin{align*}
\mathcal M_{W,p}^{(v)} (G)(x)
&\sim\sup_{\genfrac{}{}{0pt}{}{\mathrm{ball}\,B\subset\mathcal X}{B \ni x}}
\left[\fint_{B} \sum_{i=1}^m
\left| W^{\frac1p}(x) W^{-\frac1p}(y) G(y) \vec e_i \right|^v\, d\mu(y)\right]^{\frac1v} \\
&\sim \sum_{i=1}^m \mathcal M_{W,p}^{(v)} (G \vec e_i)(x),
\end{align*}
and hence (ii) $\Longrightarrow$ (i).
This completes the proof of (i) $\Longleftrightarrow$ (ii).

Repeating the above proof with
$\mathcal M_{W,p}^{(v)}$ replaced by $\mathcal M_{W,p}^{(v,r)}$,
we obtain (iii) $\Longleftrightarrow$ (iv).
Therefore, it remains to prove
(i) $\Longleftrightarrow$ (iii) $\Longleftrightarrow$ (v).

Note that, for any $r\in(0,\infty)$,
$$
\mathcal M_{W,p}^{(v,r)} (G) (x)
\leq \mathcal M_{W,p}^{(v)} (G) (x),
$$
and hence (i) $\Longrightarrow$ (iii).

Next, we show (iii) $\Longrightarrow$ (v).
Let $u:=\frac{pv}{p-v}$ and $q:=\frac{p}{v}\in(1,\infty)$.
Then, from a dual argument and Lemmas \ref{reduceM} and \ref{intexchange}, we infer that,
for any $r\in(0,\infty)$ and any ball $B\subset \mathcal X$ with radius $r$,
\begin{align*}
\mathrm{I}
&:=\left[\fint_B\left\|A_BW^{-\frac1p}(y)\right\|^u\,d\mu(y) \right]^{\frac1{q'}}
=\left[\fint_B\left\|A_BW^{-\frac1p}(y)\right\|^{vq'}\,d\mu(y) \right]^{\frac1{q'}}\\
&\phantom{:}=\sup_{\fint_B|h(y)|^q\,d\mu(y)=1}
\fint_B\left\|A_BW^{-\frac1p}(y)\right\|^{v}|h(y)|\,d\mu(y) \\
&\phantom{:}\sim\sup_{\fint_B|h(y)|^q\,d\mu(y)=1}
\fint_B\left[\fint_B \left\|W^{\frac1p}(x)W^{-\frac1p}(y)|h(y)|^{\frac1v}
\right\|^{p}\,d\mu(x)\right]^{\frac vp}\,d\mu(y).
\end{align*}
Applying this, Lemma \ref{intexchange}, and (iii), we obtain
\begin{align*}
\mathrm{I}
&\sim\sup_{\fint_B|h(y)|^q\,d\mu(y)=1}
\left\{\fint_B\left[\fint_B\left\|W^{\frac1p}(x)W^{-\frac1p}(y) |h(y)|^{\frac1v}
\right\|^{v} \,d\mu(y)\right]^{\frac pv}\,d\mu(x) \right\}^{\frac vp}\\
&\leq \sup_{\fint_B|h(y)|^q\,d\mu(y)=1}
\left\{\fint_B\left[ \mathcal M_{W,p}^{(v,r)}\left( \mathbf1_B|h|^{\frac1v}I_m\right) \right]^{p}\,d\mu(x) \right\}^{\frac vp}\\
&\lesssim \sup_{\fint_B|h(y)|^q\,d\mu(y)=1}
\left[\fint_B|h(y)|^{\frac pv}\,d\mu(y) \right]^{\frac vp}=1.
\end{align*}
Thus, $\sup_{\mathrm{ball}\, B\subset \mathcal{X}}
\fint_B\|A_BW^{-\frac1p}(y)\|^u\,d\mu(y)<\infty$.
From this and Lemma \ref{Apu}, we infer that
$W\in \mathcal A_{p,u}(\mathcal X,\mathbb C^m)$,
which completes the proof of (iii) $\Longrightarrow$ (v).

Finally, we prove (v) $\Longrightarrow$ (i).
From (v) [$W\in \mathcal A_{p,u}(\mathcal X,\mathbb C^m)$]
and the self-improving property of $\mathcal A_{p,u}(\mathcal X,\mathbb C^m)$
[see Proposition \ref{improve}(ii)], we deduce that
there exists $r\in(1,\infty)$ such that
$W\in \mathcal A_{p,ur}(\mathcal X,\mathbb C^m)$.
By the assumption $v<p$, we conclude that
$$
\frac{pur}{p+ur}
=\frac{pr}{p+v(r-1)}v
>\frac{pr}{p+p(r-1)}v
=v
$$
and
$$
\frac{urv}{ur-v}
=\frac{vr}{p(r-1)+v} p
<\frac{vr}{v(r-1)+v} p
=p.
$$
Using these and Theorem \ref{bounded B3} with $q$ replaced by $p$, we obtain (i).
This completes the proof of (v) $\Longrightarrow$ (i)
and hence Theorem \ref{bounded B2}.
\end{proof}

Now, we prove Theorem \ref{bounded B1}.

\begin{proof}[Proof of Theorem \ref{bounded B1}]
If $\| \mathcal M_{W,p}^{(v)} \|
_{L^p(\mathcal X,M_m(\mathbb C))\to L^p(\mathcal X)}<\infty$
for some $v\in(0,p)$,
then Theorem \ref{bounded B2} shows that
$W\in\mathcal A_{p,\frac{pv}{p-v}}(\mathcal{X},\mathbb C^m)$.
This, together with Proposition \ref{improve}(i),
further implies that $W\in \mathcal A_{p,\infty}(\mathcal{X},\mathbb C^m)$.

Conversely, if $W\in \mathcal A_{p,\infty}(\mathcal{X},\mathbb C^m)$,
then Proposition \ref{improve}(i) shows that
$W\in\mathcal A_{p,u}(\mathcal{X},\mathbb C^m)$
for some $u\in(0,\infty)$.
Let $v:=\frac{pu}{p+u}$ (so that $u=\frac{pv}{p-v}$).
Applying Theorem \ref{bounded B2}, we obtain
$$
\| \mathcal M_{W,p}^{(v)} \|
_{L^p(\mathcal X,M_m(\mathbb C))\to L^p(\mathcal X)}<\infty,
$$
which completes the proof of Theorem \ref{bounded B1}.
\end{proof}

To show that $v\in (0,p)$ is sharp, we also need the following lemma.

\begin{lemma} \label{small}
Let $t\in\{1,\ldots,K\}$, $l\in\mathbb Z$,
$\beta\in\mathcal A_l$, and $Q:=Q_{\beta}^{l,t}$.
For any $\varepsilon\in(0,\infty)$, there exists
$k\in\mathbb Z\cap[l,\infty)$ such that,
for any $\tau\in\mathcal A_k$ with $Q_{\tau}^{k,t}\subset Q$,
$\mu(Q_{\tau}^{k,t})<\varepsilon$.
\end{lemma}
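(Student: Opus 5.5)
The plan is to use the ball inclusion in Lemma \ref{dcs}(a)(iii) to bound the measure of each small dyadic cube by the measure of a ball centred at a point of $Q$ with small radius. Then we show that such balls have uniformly small measure, using compactness-type control from the doubling property and nonatomicity. Fix $\varepsilon$. For $k\ge l$ and $\tau\in\mathcal A_k$ with $Q_\tau^{k,t}\subset Q$, Lemma \ref{dcs}(a)(iii) gives
$$
Q_\tau^{k,t}\subset B_d\left(z_\tau^{k,t},4A_0^2\delta^k\right).
$$
The center $z_\tau^{k,t}$ lies in $Q\subset B_d(z_\beta^{l,t},4A_0^2\delta^l)=:B_0$. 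So it suffices to show that
$$
\sup_{z\in B_0}\mu\left(B_d(z,s)\right)\to 0 \quad \text{as } s\to0^+ .
$$

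The key step is this uniform smallness. I would argue by contradiction. Suppose there are $\eta>0$, $s_j\to0$, and $z_j\in B_0$ with $\mu(B_d(z_j,s_j))\ge\eta$.

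First I would try to use doubling directly. For $z\in B_0$ and $s\le \delta^l$, we have $B_0\subset B_d(z,Cs\cdot s^{-1}\delta^l)$, i.e. $B_0$ sits inside the dilate of $B_d(z,s)$ by a factor $\lambda\sim \delta^l/s$. Doubling \eqref{doublelambda} then only gives the lower bound $\mu(B_d(z,s))\gtrsim (s/\delta^l)^\omega\mu(B_0)$, which points the wrong way. So doubling alone is insufficient, and the upper bound must come from nonatomicity. This is the main obstacle.

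To handle it, I would avoid metric compactness (balls need not be precompact) and instead work inside the nested dyadic structure. Consider the decreasing chains of cubes $Q\supset Q^{(1)}\supset Q^{(2)}\supset\cdots$ in $\mathbb D^t$, one per generation.

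Define $m_k:=\sup\{\mu(Q_\tau^{k,t}):Q_\tau^{k,t}\subset Q\}$. This sequence is nonincreasing in $k$ by nestedness, so it has a limit $m\ge0$. Suppose $m>0$. Each cube of generation $k$ inside $Q$ has measure comparable to its parent (Lemma \ref{QQ}), and disjointness gives at most $\mu(Q)/(m/2)$ cubes of generation $k$ with measure above $m/2$. Every such cube has a parent of measure at least $m/2$.

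Hence the family $T$ of cubes in $Q$ with measure at least $m/2$ forms a tree with finitely many vertices at each level. The tree is infinite, since each level $k$ contains a cube of measure greater than $m/2$ because $m_k\ge m$. By König's lemma there is an infinite chain $Q^{(k)}\in T$ with $\mu(Q^{(k)})\ge m/2$ for all $k$.

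The intersection of this chain has measure at least $m/2$ by continuity from above. However, $Q^{(k)}\subset B_d(z^{(k)},4A_0^2\delta^k)$ with nested sets. Any two points $x,y$ in the intersection satisfy $d(x,y)\le 8A_0^3\delta^k\to0$, so the intersection contains at most one point. Nonatomicity then gives measure $0$, a contradiction.

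Therefore $m=0$, and choosing $k$ with $m_k<\varepsilon$ proves the lemma. This dyadic König argument replaces the ball-based route above entirely, so the first two paragraphs only motivate why doubling does not suffice.
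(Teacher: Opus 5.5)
Your proof is correct and follows the same route as the paper: argue by contradiction, extract a nested chain of cubes in $Q$ whose measures are bounded below, and note that their intersection has positive measure yet contains at most one point, which contradicts nonatomicity. Your finite-branching/K\"onig's lemma step adds rigor the paper lacks. The negation of the statement only gives one large cube per generation, not a nested sequence, yet the paper simply asserts the existence of the nested chain $\{Q_k\}$ without argument.
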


\begin{proof}
Arguing by contradiction, suppose that the conclusion of the present lemma does not hold.
Then there exists $\varepsilon\in(0,\infty)$ and
a sequence $\{Q_k\}_{k=l}^\infty\in\mathbb D^t$ such that,
for any $k\in\{l,l+1,\ldots\}$,
$Q_k=Q_{\tau}^{k,t}$ for some $\tau\in \mathcal A_k$,
$Q_{k+1} \subset Q_k\subset Q$, and $\mu(Q_k)\geq \varepsilon$.
Then
\begin{equation}\label{zero}
\mu\left(\bigcap_{k=l}^\infty Q_k\right)
=\lim_{k\to\infty} \mu(Q_k)
\geq\varepsilon.
\end{equation}
Choose $x\in\bigcap_{k=l}^\infty Q_k$.
From Lemma \ref{dcs}(a)(iii), it follows that,
for any $k\in\{l,l+1,\ldots\}$,
$x\in Q_k\subset B_d(z_{\tau}^{k,t},4A_0^{2}\delta^{k})$.
For any $y\in B_d(z_{\tau}^{k,t},4A_0^{2}\delta^{k})$,
$$
d(y,x)
\leq A_0[d(y,z_{\tau}^{k,t})+d(z_{\tau}^{k,t},x)]
< 8A_0^{3}\delta^{k},
$$
and hence $B_d(z_{\tau}^{k,t},4A_0^{2}\delta^{k})
\subset B_d(x,8A_0^{3}\delta^{k})$.
Therefore,
$$
\mu\left(\bigcap_{k=l}^\infty Q_k\right)
\leq \mu\left(\bigcap_{k=l}^\infty B_d(x,8A_0^{3}\delta^{k})\right)
= \mu(\{x\}) =0,
$$
which yields a contradiction to \eqref{zero}.
This completes the proof of Lemma \ref{small}.
\end{proof}

\begin{proposition} \label{bounded fail}
If $0<p\leq v<\infty$ and $W$ is a matrix weight, then
$\| \mathcal M_{W,p}^{(v)} \|
_{L^p(\mathcal X,M_m(\mathbb C))\to L^p(\mathcal X)}=\infty$.
\end{proposition}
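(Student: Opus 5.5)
The plan is to reduce to a scalar phenomenon: the scalar rescaled maximal operator $[\mathcal M(|g|^v)]^{1/v}$ is unbounded on $L^p$ whenever $p/v\le 1$. The matrix weight is then absorbed into the test function.

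\emph{Reduction to a scalar weight.} For a measurable scalar $g$, set $G(y):=g(y)W^{\frac1p}(y)$. Then $W^{\frac1p}(x)W^{-\frac1p}(y)G(y)=g(y)W^{\frac1p}(x)$ for almost every $y$, so
\begin{align*}
\mathcal M_{W,p}^{(v)}(G)(x)=\left\|W^{\frac1p}(x)\right\|\left[\mathcal M(|g|^v)(x)\right]^{\frac1v}.
\end{align*}
Here $\mathcal M$ is the (ball) Hardy--Littlewood maximal operator. Moreover $\|G\|_{L^p(\mathcal X,M_m(\mathbb C))}^p=\int|g|^p w\,d\mu$ with $w:=\|W^{\frac1p}\|^p=\|W\|$, which is locally integrable and positive almost everywhere because $W$ is invertible almost everywhere. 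Take $g:=\mathbf 1_E$ with $E$ small. Since $p/v\le 1$ and $\mathcal M(\mathbf 1_E)\le 1$, we have $[\mathcal M(\mathbf 1_E)]^{p/v}\ge \mathcal M(\mathbf 1_E)$. It therefore suffices to show
\begin{align*}
\sup_E\frac{\int_{\mathcal X} w\,\mathcal M(\mathbf 1_E)\,d\mu}{w(E)}=\infty .
\end{align*}
By Lemma \ref{sumk}, $\mathcal M$ may be replaced by the dyadic $\mathcal M^{\mathbb D^t}$ for a fixed $t$, at the cost of a constant.

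\emph{Choice of the sets.} Fix $x_0$ that is a Lebesgue differentiation point of $w$ along the dyadic cubes of $\mathbb D^t$ containing it (as in \eqref{LDT}) and satisfies $w(x_0)\in(0,\infty)$. Let $Q^{(0)}\supset Q^{(1)}\supset\cdots$ be the chain of cubes of $\mathbb D^t$ containing $x_0$. Using Lemma \ref{small}, I pass to a subsequence $\{Q_j\}_{j\ge0}$ with $\mu(Q_{j+1})\le\frac12\mu(Q_j)$. Along it, $w(Q_j)/\mu(Q_j)\to w(x_0)$, and hence also
\begin{align*}
\frac{w(Q_j\setminus Q_{j+1})}{\mu(Q_j\setminus Q_{j+1})}\to w(x_0),
\end{align*}
because $Q_j\setminus Q_{j+1}$ occupies at least half of $Q_j$. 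For $x\in Q_j$ we have $\mathcal M^{\mathbb D^t}(\mathbf 1_{Q_k})(x)\ge\mu(Q_k)/\mu(Q_j)$. This gives
\begin{align*}
\frac{\int w\,\mathcal M^{\mathbb D^t}(\mathbf 1_{Q_k})\,d\mu}{w(Q_k)}
\ge\sum_{j=0}^{k-1}\frac{w(Q_j\setminus Q_{j+1})}{\mu(Q_j)}\cdot\frac{\mu(Q_k)}{w(Q_k)}
\ge\sum_{j=j_0}^{k-1}\frac{\frac12\cdot\frac12 w(x_0)}{2w(x_0)},
\end{align*}
where $j_0$ is chosen so that the averages above lie within a factor $2$ of $w(x_0)$ for $j\ge j_0$. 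The right-hand side tends to $\infty$ as $k\to\infty$. Taking $G_k:=\mathbf 1_{Q_k}W^{\frac1p}$, which lies in $L^p$ since $w(Q_k)<\infty$, and normalizing then proves the proposition.

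\emph{Main obstacle.} The delicate point is ensuring that the annuli $Q_j\setminus Q_{j+1}$ carry a fixed proportion of $Q_j$. In a space of homogeneous type, consecutive dyadic generations may coincide or shrink only slightly. Lemma \ref{small} together with the passage to a subsequence handles this, since only the lower bound $\mu(Q_j\setminus Q_{j+1})\ge\frac12\mu(Q_j)$ is needed. A related caveat is the dyadic Lebesgue differentiation along a subsequence of a nested chain; this is justified as in the proof of Proposition \ref{improve}(i) via \cite[Corollary 2.6]{acm15}.
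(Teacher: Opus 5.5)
Your argument is correct, but it neutralizes the weight by a different mechanism than the paper.

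\textbf{Common reduction.} Both proofs start the same way. Testing with $G=gW^{1/p}$ turns $\mathcal M_{W,p}^{(v)}(G)$ into $\|W^{1/p}(x)\|\,[\mathcal M(|g|^v)(x)]^{1/v}$. The assumption $v\ge p$ then reduces everything to the linear quantity $\int w\,\mathcal M(\mathbf 1_E)\,d\mu$ with $w=\|W\|$. The paper does this via H\"older's inequality $\mathcal M^{(p)}_{W,p}\le\mathcal M^{(v)}_{W,p}$; you do it via $t^{p/v}\ge t$ on $[0,1]$. These are equivalent in effect.

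\textbf{The paper's route.} The paper never compares $w$ with a constant. It fixes one cube $Q\in\mathbb D^t$ and uses Lemma \ref{small} to choose generations $k_l$ such that every cube of generation $k_{l+1}$ inside $Q$ has at most half the measure of every cube of generation $k_l$ inside $Q$. It then tests simultaneously with all the functions $W^{1/p}\mathbf 1_{Q_\tau^{k_L,t}}$ and sums over $\tau$. This gives the pointwise bound $\sum_\tau\mathcal M^{\mathbb D^t}(\mathbf 1_{Q_\tau^{k_L,t}})\ge L/2$ on all of $Q$. The weight therefore enters only through $0<\int_Q w\,d\mu<\infty$.

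\textbf{Your route.} You use one test cube at a time. You localize at a dyadic Lebesgue point $x_0$ with $w(x_0)\in(0,\infty)$, so $w$ is essentially constant along the chain $Q_j\downarrow x_0$. Your observation that the annular averages $w(Q_j\setminus Q_{j+1})/\mu(Q_j\setminus Q_{j+1})$ also converge to $w(x_0)$ is correctly justified by $\mu(Q_{j+1})\le\frac12\mu(Q_j)$. This reduces the problem to the unweighted logarithmic divergence of $\int\mathcal M^{\mathbb D^t}(\mathbf 1_{Q_k})\,d\mu/\mu(Q_k)$.

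\textbf{Trade-offs.} Your approach needs the halving property only along a single chain, which already follows from nonatomicity, since $\mu(Q_j)\le\mu(B_d(x_0,8A_0^3\delta^{k_j}))\to0$. It also gives explicit single-cube test functions. The price is reliance on the dyadic Lebesgue differentiation theorem, which the paper itself uses in the proof of Proposition \ref{improve}(i). The paper's averaging trick avoids any differentiation theorem and any regularity of $w$, at the cost of the more uniform max/min choice of generations.
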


\begin{proof}
Arguing by contradiction, suppose that
$
\| \mathcal M_{W,p}^{(v)} \|
_{L^p(\mathcal X,M_m(\mathbb C))\to L^p(\mathcal X)}<\infty
$
for some $0<p\leq v<\infty$.
This, together with Lemma \ref{sumk} and H\"older's inequality, further implies that,
for any $t\in\{1,\ldots,K\}$ and $G\in L^p(\mathcal X,M_m(\mathbb C))$,
\begin{equation}\label{bounded B mat2y}
\left\| \mathcal M_{W,p}^{\mathbb D^t,(p)} (G) \right\|_{L^p(\mathcal X)}
\lesssim \left\| \mathcal M_{W,p}^{(p)} (G) \right\|_{L^p(\mathcal X)}
\leq\left\| \mathcal M_{W,p}^{(v)} (G) \right\|_{L^p(\mathcal X)}
\lesssim \| G \|_{L^p(\mathcal X,M_m(\mathbb C))}.
\end{equation}

Let $t\in\{1,\ldots,K\}$, $\beta\in\mathcal A_0$, and $Q:=Q_{\beta}^{0,t}$.
Due to Lemma \ref{small}, there exists a sequence $\{k_l\}_{l\in\mathbb N}$ such that,
for any $l\in\mathbb N$, $k_{l+1}> k_l\geq 0$ and
\begin{align} \label{onetwo}
\max_{{\tau\in\mathcal A_{k_{l+1}}},{Q_{\tau}^{k_{l+1},t}\subset Q}}
\mu\left( Q_{\tau}^{k_{l+1},t} \right)
\leq \frac12 \min_{{\tau\in\mathcal A_{k_l}},{Q_{\tau}^{k_l,t}\subset Q}}
\mu\left( Q_{\tau}^{k_l,t} \right).
\end{align}
Let $L\in\mathbb N$ and $w:=\|W^{\frac1p}\|^p$.
From \eqref{bounded B mat2y}, it follows that,
for any $\tau\in\mathcal{A}_{k_L}$,
\begin{align*}
\int_{\mathcal{X}} \mathcal M^{\mathbb D^t}
\left( \mathbf 1_{Q_{\tau}^{k_L,t}} \right) (x) w(x) \, d\mu(x)
&=\int_{\mathcal{X}} \sup_{\genfrac{}{}{0pt}{}{Q\in\mathbb D^t}{Q \ni x}}
\fint_{Q} \left\| W^{\frac1p}(x) \mathbf 1_{Q_{\tau}^{k_L,t}}(y) \right\|^p d\mu(y)  \,d\mu(x)\\
&=\left\| \mathcal M_{W,p}^{\mathbb D^t,(p)} \left( W^{\frac1p} \mathbf 1_{Q_{\tau}^{k_L,t}} \right) \right\|_{L^p(\mathcal X)}^p
\lesssim \int_{Q_{\tau}^{k_L,t}} w(x) \,d\mu(x).
\end{align*}
Summing over all $Q_{\tau}^{k_L,t}\subset Q$ on both sides of the above inequality, we obtain
\begin{align} \label{3.17}
\int_{\mathcal{X}} \sum_{\tau\in\mathcal{A}_{k_L}, Q_{\tau}^{k_L,t} \subset Q}
\mathcal M^{\mathbb D^t} \left( \mathbf 1_{Q_{\tau}^{k_L,t}} \right) (x) w(x) \, d\mu(x)
\lesssim \int_{Q} w(x) \,d\mu(x).
\end{align}
For any $x\in Q$, there exists a sequence $\{Q_l\}_{l=1}^L$ such that,
for any $l\in\{1,\ldots,L\}$, $x\in Q_l$ and
$Q_l = Q_{\tau}^{k_l,t}$ for some $\tau\in \mathcal{A}_{k_l}$.
Using \eqref{onetwo}, we obtain, for any $l \in \{0, \ldots, L-1\}$,
$\mu(Q_l) \ge 2 \mu(Q_{l+1})$, where $Q_0:=Q$.
This further implies that
\begin{align*}
\sum_{\tau\in\mathcal{A}_{k_L}, Q_{\tau}^{k_L,t} \subset Q_0}
\mathcal M^{\mathbb D^t} \left( \mathbf 1_{Q_{\tau}^{k_L,t}} \right) (x)
&> \sum_{l=0}^{L-1} \sum_{\tau\in\mathcal{A}_{k_L}, Q_{\tau}^{k_L,t} \subset Q_l\setminus Q_{l+1}}
\mathcal M^{\mathbb D^t} \left( \mathbf 1_{Q_{\tau}^{k_L,t}} \right) (x) \\
&\geq \sum_{l=0}^{L-1} \sum_{\tau\in\mathcal{A}_{k_L}, Q_{\tau}^{k_L,t} \subset Q_l\setminus Q_{l+1}}
\frac{\mu(Q_{\tau}^{k_L,t})}{\mu(Q_l)}   \\
&= \sum_{l=0}^{L-1} \frac{\mu(Q_l\setminus Q_{l+1})}{\mu(Q_l)}
\geq \frac L2.
\end{align*}
Substituting this back into \eqref{3.17}, we obtain
\begin{align*}
\frac L2 \int_Q w(x) \, d\mu(x)
\lesssim \int_Q w(x) \,d\mu(x).
\end{align*}
Since $L\in\mathbb N$ is arbitrary, the above inequality fails,
yielding a contradiction. This completes the proof of Proposition \ref{bounded fail}.
\end{proof}

The following result is a direct consequence of Proposition \ref{bounded fail}; we omit the details.

\begin{corollary} \label{3.20x}
If $p\in(0,1]$ and $w$ is a scalar weight,
then $\| \mathcal M \|_{L^p(w)\to L^p(w)}=\infty$.
\end{corollary}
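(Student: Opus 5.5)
The plan is to specialize Proposition \ref{bounded fail} to the scalar case $m=1$. When $m=1$, a matrix weight is just a scalar weight $w$, and $W^{\frac1p}(x)W^{-\frac1p}(y)=[w(x)/w(y)]^{\frac1p}$. We take $v=p$, which is allowed since $p\le v$. For any scalar measurable $G$ and any $x\in\mathcal X$ this gives
\begin{align*}
\mathcal M_{w,p}^{(p)}(G)(x)
=[w(x)]^{\frac1p}\left[\mathcal M\left(|G|^p w^{-1}\right)(x)\right]^{\frac1p},
\end{align*}
where $\mathcal M$ is the Hardy--Littlewood maximal operator on $\mathcal X$. Proposition \ref{bounded fail} then says that $\|\mathcal M_{w,p}^{(p)}\|_{L^p(\mathcal X)\to L^p(\mathcal X)}=\infty$.

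Next we translate this into a statement about $\mathcal M$ on $L^p(w)$. Suppose, for contradiction, that $\|\mathcal M\|_{L^p(w)\to L^p(w)}<\infty$. For $G\in L^p(\mathcal X)$ we set $f:=|G|^pw^{-1}$. Then
\begin{align*}
\left\|\mathcal M_{w,p}^{(p)}(G)\right\|_{L^p(\mathcal X)}^p
=\int_{\mathcal X}\mathcal M(f)\,w\,d\mu
\quad\text{and}\quad
\int_{\mathcal X}|f|\,w\,d\mu=\|G\|_{L^p(\mathcal X)}^p .
\end{align*}
So what we actually need is the $L^1(w)$ bound $\int\mathcal M(f)w\lesssim\int|f|w$, which is not the $L^p(w)$ bound we assumed. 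This mismatch is the main obstacle.

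Since $p\le1$, we replace $f$ by $h:=f^{\frac1p}=|G|w^{-\frac1p}$. We have $\mathcal M(f)=\mathcal M(h^p)$, and the key inequality to use is $\mathcal M(h^p)\le[\mathcal M(h)]^p$? This fails for $p<1$: Jensen's inequality gives the opposite direction, $\fint h^p\le(\fint h)^p$, that is $\mathcal M(h^p)\le(\mathcal M h)^p$. That direction is exactly the one we want. It gives
\begin{align*}
\int_{\mathcal X}\mathcal M(f)\,w\,d\mu
\le\int_{\mathcal X}[\mathcal M(h)]^p\,w\,d\mu
=\|\mathcal M(h)\|_{L^p(w)}^p
\lesssim\|h\|_{L^p(w)}^p
=\|G\|_{L^p(\mathcal X)}^p .
\end{align*}
Therefore $\mathcal M_{w,p}^{(p)}$ would be bounded on $L^p(\mathcal X)$, which contradicts Proposition \ref{bounded fail} with $m=1$ and $v=p$. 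This proves the corollary.

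The only points left to check are routine measurability and finiteness issues. We need $h$ to be measurable, and $w^{-1}$ is finite almost everywhere because scalar weights are positive almost everywhere. With those in place, the step $\mathcal M(h^p)\le(\mathcal M h)^p$ for $p\in(0,1]$ via Jensen is the crux, and I expect it to be straightforward.
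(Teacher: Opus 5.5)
Your argument is correct and is essentially the paper's intended route: you specialize Proposition~\ref{bounded fail} to $m=1$, and the Jensen step $\mathcal M(h^p)\le(\mathcal M h)^p$ for $p\in(0,1]$ is valid (despite the garbled sentence that first calls it false). That step is just the H\"older monotonicity in $v$ already used inside that proposition's proof. A cleaner choice is $v=1\ge p$, for which $\mathcal M^{(1)}_{w,p}(G)=w^{\frac1p}\mathcal M(|G|w^{-\frac1p})$; then boundedness of $\mathcal M$ on $L^p(w)$ is literally boundedness of $\mathcal M^{(1)}_{w,p}$ on $L^p(\mathcal X)$, and no Jensen detour is needed.
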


\begin{remark}
In the case $\mathcal X=\mathbb R^n$ and $w\equiv1$,
Corollary \ref{3.20x} coincides with the well-known result that,
for any $p\in(0,1]$, $\| \mathcal M \|_{L^p(\mathbb R^n)\to L^p(\mathbb R^n)}=\infty.
$
\end{remark}

When restricted to the class $\mathcal A_p(\mathcal X,\mathbb C^m)$,
Theorem \ref{bounded B3} yields the following noteworthy result.

\begin{proposition}\label{Apbd}
Let $p\in(0,\infty)$, $W\in\mathcal A_p(\mathcal X,\mathbb C^m)$,
and $r(W)$ be as in \eqref{rW}.
\begin{enumerate}[\rm(i)]
\item If $p\in(0,1]$, then, for any $v\in(0,p)$ and $q\in(v,pr(W)]$,
$\| \mathcal M_{W,p}^{(v)} \|_{L^q(\mathcal X,M_m (\mathbb C))\to L^q(\mathcal X)} <\infty$.

\item If $p\in(1,\infty)$, then there exists $u\in(p',\infty)$ such that,
for any  $v\in(0,\frac{pu}{p+u})$ and
$q\in(\frac{uv}{u-v},pr(W)]$,
$
\| \mathcal M_{W,p}^{(v)} \|_{L^q(\mathcal X,M_m (\mathbb C))\to L^q(\mathcal X)} <\infty.
$
\end{enumerate}
\end{proposition}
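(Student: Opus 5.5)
The plan is to deduce both parts from Theorem \ref{bounded B3}, using Proposition \ref{improve} to produce a suitable $u$ with $W\in\mathcal A_{p,u}(\mathcal X,\mathbb C^m)$. Throughout, note that $W\in\mathcal A_p(\mathcal X,\mathbb C^m)\subset\mathcal A_{p,\infty}(\mathcal X,\mathbb C^m)$ by Proposition \ref{subset}(ii). Hence $r(W)$ in \eqref{rW} is well defined and finite-valued, and $pr(W)>p$.

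For part (ii), with $p\in(1,\infty)$, Proposition \ref{improve}(iv) gives some $u\in(p',\infty)$ with $W\in\mathcal A_{p,u}(\mathcal X,\mathbb C^m)$. Applying Theorem \ref{bounded B3} with this $u$ gives the conclusion verbatim for all $v\in(0,\frac{pu}{p+u})$ and $q\in(\frac{uv}{u-v},pr(W)]$. Nothing more is needed.

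For part (i), with $p\in(0,1]$, fix $v\in(0,p)$ and $q\in(v,pr(W)]$. By Proposition \ref{improve}(iii), $W\in\mathcal A_{p,u}(\mathcal X,\mathbb C^m)$ for every $u\in(0,\infty)$. I would choose $u$ large enough that two conditions hold. First, $v<\frac{pu}{p+u}$; this is possible because $\frac{pu}{p+u}\uparrow p$ as $u\to\infty$ and $v<p$. Second, $\frac{uv}{u-v}<q$; this is possible because $\frac{uv}{u-v}\downarrow v$ as $u\to\infty$ and $q>v$. Then $q\in(\frac{uv}{u-v},pr(W)]$, and Theorem \ref{bounded B3} yields the boundedness.

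The only point needing care is that $r(W)$ in Theorem \ref{bounded B3} is the same quantity \eqref{rW} and does not depend on $u$. This holds because \eqref{rW} involves only $[W]^{\mathrm{sc}}_{\mathcal A_{p,\infty}}$. So letting $u$ grow does not shrink the admissible upper endpoint $pr(W)$. I do not expect any genuine obstacle beyond checking these elementary limits.
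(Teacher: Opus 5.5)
Your proposal is correct and follows the paper's proof essentially verbatim. Part (ii) is Proposition \ref{improve}(iv) combined with Theorem \ref{bounded B3}, and part (i) uses Proposition \ref{improve}(iii) to get $W\in\mathcal A_{p,u}(\mathcal X,\mathbb C^m)$ for every $u$, then takes $u$ large using $\frac{pu}{p+u}\to p$ and $\frac{uv}{u-v}\to v$ before invoking Theorem \ref{bounded B3}; your remark that $r(W)$ in \eqref{rW} does not depend on $u$ makes explicit a point the paper leaves implicit.
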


\begin{proof}
By Proposition \ref{improve}(iii), we conclude that
$W\in \mathcal A_{p,u}(\mathcal X,\mathbb C^m)$ for all $u\in(0,\infty)$.
Note that $\lim_{u\to\infty} \frac{pu}{p+u}=p$
and $\lim_{u\to\infty} \frac{uv}{u-v}=v$.
Then there exists $u\in(0,\infty)$ such that
$$
v\in \left(0, \frac{pu}{p+u}\right)
\quad\text{and}\quad
q\in \left( \frac{uv}{u-v}, pr(W)\right].
$$
These, together with Theorem \ref{bounded B3},
finish the proof of (i).

From Proposition \ref{improve}(iv), it follows that
$W\in \mathcal A_{p,u}(\mathcal X,\mathbb C^m)$ for some $u\in(p',\infty)$.
This, together with Theorem \ref{bounded B3}, further implies (ii),
which completes the proof of Proposition \ref{Apbd}.
\end{proof}

\begin{remark}\label{Apbd rem}
In Proposition \ref{Apbd}, the range of $q$ contains $p$.
Indeed, this statement is obvious for $p\in(0,1]$.
For $p\in(1,\infty)$, this assertion follows from the estimate
$$
\frac{uv}{u-v}
= \frac{u^2}{u-v} - u
< \frac{u^2}{u-\frac{pu}{p+u}} - u
=p.
$$
In particular, in Proposition \ref{Apbd}(ii), the range of $v$ contains $1$,
which follows from the estimate
$$
\frac{pu}{p+u}
= p- \frac{p^2}{p+u}
> p- \frac{p^2}{p+p'}
= \frac{pp'}{p+p'}
= 1.
$$
In the case where $\mathcal X=\mathbb R^n$ and $v=1$,
Proposition \ref{Apbd}(ii) coincides with \cite[Theorem 3.2]{g03}.
\end{remark}

\subsection{Critical Rescaling Index of Matrix Weights}\label{critical}

In this subsection, we introduce a new concept
to characterize the self-improving property of matrix weights.
For any scalar weight $w\in A_\infty(\mathcal X)$,
the \emph{critical self-improvement index} of $w$
(see, for instance, \cite[p.\,11]{st89}) is defined by
$$
q_w:= \inf \{q\in[1,\infty):\ w\in A_q(\mathcal X)\}.
$$
However, matrix weights do not possess such kind of self-improving property
(see \cite[Remark 5.4]{b01} or \cite[Proposition 2.25(vii)]{byyz25}), that is,
for $m\geq2$ and $p\in(1,\infty)$,
there exists $W\in \mathcal A_p(\mathbb R^n,\mathbb C^m)$
such that $W\notin \bigcup_{q\in(0,p)} \mathcal A_q(\mathbb R^n,\mathbb C^m)$.
Consequently, we need to characterize the self-improving property of
matrix weights from a different perspective.

Let $p\in(0,\infty)$. Theorem \ref{bounded B1} shows that
$W\in \mathcal A_{p,\infty}(\mathcal X,\mathbb C^m)$
if and only if there exists $v\in(0,p)$ such that
$\| \mathcal M_{W,p}^{(v)}\|_{L^p(\mathcal X,M_m(\mathbb C)) \to L^p(\mathcal X)} <\infty.$
Motivated by this, for any
$W\in \mathcal A_{p,\infty}(\mathcal X,\mathbb C^m)$, we call
\begin{align}\label{spW}
v_{W,p}:= \sup \left\{v\in(0,p):\ \left\| \mathcal M_{W,p}^{(v)}\right\|_{L^p(\mathcal X,M_m(\mathbb C)) \to L^p(\mathcal X)} <\infty\right\}
\end{align}
the \emph{critical rescaling index} of $W$.
It follows from H\"older's inequality
that the set defining the supremum in \eqref{spW} is an interval.

\begin{proposition} \label{prop of sp(W)}
The following statements hold.
\begin{enumerate}[\rm(i)]
\item If $p\in(0,\infty)$ and $W\in \mathcal A_{p,\infty}(\mathcal X,\mathbb C^m)$, then $v_{W,p}\in (0,p]$.

\item If $p\in(0,1]$ and $W\in \mathcal A_p(\mathcal X,\mathbb C^m)$, then $v_{W,p}=p$.

\item If $p\in(1,\infty)$ and $W\in \mathcal A_p(\mathcal X,\mathbb C^m)$, then $v_{W,p}\in(1,p]$.
\end{enumerate}
\end{proposition}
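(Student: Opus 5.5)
The plan is to read off all three statements from Theorem \ref{bounded B1}, Theorem \ref{bounded B2}, and Proposition \ref{Apbd}, together with the definition \eqref{spW}.

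For (i), suppose $W\in\mathcal A_{p,\infty}(\mathcal X,\mathbb C^m)$. Theorem \ref{bounded B1} then gives some $v\in(0,p)$ with $\|\mathcal M_{W,p}^{(v)}\|_{L^p(\mathcal X,M_m(\mathbb C))\to L^p(\mathcal X)}<\infty$. So the set in \eqref{spW} is nonempty and contains a positive number, which gives $v_{W,p}>0$. Every element of this set lies in $(0,p)$, so $v_{W,p}\le p$.

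For (ii), let $p\in(0,1]$ and $W\in\mathcal A_p(\mathcal X,\mathbb C^m)$. Proposition \ref{subset}(ii) gives $W\in\mathcal A_{p,\infty}(\mathcal X,\mathbb C^m)$, so $v_{W,p}$ is defined. Fix any $v\in(0,p)$. Proposition \ref{Apbd}(i) applies with $q=p$, which is admissible because $p\in(v,pr(W)]$ since $r(W)>1$. It shows that $\mathcal M_{W,p}^{(v)}$ is bounded from $L^p(\mathcal X,M_m(\mathbb C))$ to $L^p(\mathcal X)$. Hence every $v\in(0,p)$ lies in the set in \eqref{spW}, so $v_{W,p}\ge p$, and combined with (i) we get $v_{W,p}=p$. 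Equivalently, Proposition \ref{improve}(iii) gives $W\in\mathcal A_{p,u}$ for every $u$, and Theorem \ref{bounded B2} (v)$\Rightarrow$(i) with $u=\frac{pv}{p-v}$ yields the same conclusion.

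For (iii), let $p\in(1,\infty)$ and $W\in\mathcal A_p(\mathcal X,\mathbb C^m)$. Proposition \ref{improve}(iv) gives some $u\in(p',\infty)$ with $W\in\mathcal A_{p,u}(\mathcal X,\mathbb C^m)$. Set $v_0:=\frac{pu}{p+u}$. By Theorem \ref{bounded B2} (v)$\Rightarrow$(i), applied with $v=v_0$ (so that $\frac{pv_0}{p-v_0}=u$), the operator $\mathcal M_{W,p}^{(v_0)}$ is bounded on $L^p$, and therefore $v_{W,p}\ge v_0$. The computation in Remark \ref{Apbd rem} gives $v_0>\frac{pp'}{p+p'}=1$, because $u>p'$. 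Together with (i) this yields $v_{W,p}\in(1,p]$.

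The only delicate point is that (iii) requires a strict inequality $v_{W,p}>1$, not just $v_{W,p}\ge1$. The strictness comes from the strict self-improvement $u>p'$ in Proposition \ref{improve}(iv), which in turn relies on Proposition \ref{improve}(ii). So the step to check carefully is that Theorem \ref{bounded B2} is applied with the exact value $v_0=\frac{pu}{p+u}$, which is strictly greater than $1$, rather than with some smaller $v$.
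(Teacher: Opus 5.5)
Your proof is correct and follows essentially the paper's route: (i) comes from Theorem \ref{bounded B1}, and (ii)--(iii) come from Proposition \ref{Apbd} (equivalently, Proposition \ref{improve}(iii)--(iv) combined with Theorems \ref{bounded B2} and \ref{bounded B3}) together with the computation $\frac{pu}{p+u}>\frac{pp'}{p+p'}=1$ from Remark \ref{Apbd rem}. One small remark: using the exact endpoint $v_0=\frac{pu}{p+u}$ is not actually needed for the strict inequality, because boundedness for every $v\in(0,v_0)$ (as in Proposition \ref{Apbd}(ii)) already gives $v_{W,p}\ge v_0>1$.
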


\begin{proof}
Statement (i) follows from Theorem \ref{bounded B1}.
Statements (ii) and (iii) follow from
Proposition \ref{Apbd} and Remark \ref{Apbd rem}.
This completes the proof of Proposition \ref{prop of sp(W)}.
\end{proof}


To further understand $v_{W,p}$,
we investigate its properties in the scalar case ($m=1$).
In this case, the \emph{critical rescaling index} of $w\in A_\infty(\mathcal X)$
admits a simpler equivalent definition, namely
\begin{align} \label{sc scaling}
v_{w,p}
&:= \sup \left\{v\in(0,\infty):\
\left\| \left[\mathcal M (|f|^v)\right]^{\frac1v} \right\|_{L^p(w)}
\lesssim \| f \|_{L^p(w)}
\text{ for all } f\in L^p(w) \right\} \notag \\
&\phantom{:}= \sup \left\{v\in(0,\infty):\
\| \mathcal M (g) \|_{L^{\frac pv}(w)}
\lesssim \| g \|_{L^{\frac pv}(w)}
\text{ for all } g\in L^{\frac pv}(w) \right\}.
\end{align}

\begin{proposition} \label{prop of sp(w)}
The following statements hold.
\begin{enumerate}[\rm(i)]
\item If $p\in(0,\infty)$ and $w\in A_{\infty}(\mathcal X)$, then $v_{w,p}\in (0,p]$.

\item If $p\in(0,1]$ and $w\in A_1(\mathcal X)$, then $v_{w,p}=p$.

\item If $p\in(1,\infty)$ and $w\in A_p(\mathcal X)$, then $v_{w,p}\in(1,p]$.

\item If $p\in(0,\infty)$ and $w\in A_{\infty}(\mathcal X)$, then $v_{w,p} q_w=p$.
\end{enumerate}
\end{proposition}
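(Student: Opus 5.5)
The whole proposition rests on the second expression in \eqref{sc scaling}, which turns $v_{w,p}$ into a statement about the Hardy--Littlewood maximal operator on $L^{p/v}(w)$. I would therefore prove (iv) first and derive (i)--(iii) from it together with Proposition \ref{prop of sp(W)} specialized to $m=1$.

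The key input is the scalar Muckenhoupt theorem on $\mathcal X$: for $s\in(1,\infty)$, $\mathcal M$ is bounded on $L^s(w)$ if and only if $w\in A_s(\mathcal X)$. For $s\in(0,1]$, Corollary \ref{3.20x} shows that $\mathcal M$ is never bounded on $L^s(w)$.

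With this, \eqref{sc scaling} reads
$$
v_{w,p}=\sup\{v\in(0,p):\ w\in A_{p/v}(\mathcal X)\}.
$$
The restriction $v<p$ comes from Corollary \ref{3.20x}, since $p/v\le 1$ is excluded. Substituting $s:=p/v\in(1,\infty)$, the map $s\mapsto p/s$ is decreasing. Hence
$$
v_{w,p}=\frac{p}{\inf\{s\in(1,\infty):\ w\in A_s(\mathcal X)\}}.
$$
The classes $A_s$ increase in $s$, so this infimum equals $q_w$. This holds even when $w\in A_1$, because then $w\in A_s$ for every $s>1$ and the infimum is $1=q_w$. Therefore $v_{w,p}q_w=p$, which is (iv).

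The remaining statements follow from (iv).
\begin{enumerate}[(i)]
\item Since $w\in A_\infty(\mathcal X)=\bigcup_{s\ge1}A_s(\mathcal X)$, we have $q_w\in[1,\infty)$, so $v_{w,p}=p/q_w\in(0,p]$.
\item If $w\in A_1$, then $q_w=1$, so $v_{w,p}=p$.
\item If $w\in A_p$ with $p>1$, the classical openness property (reverse H\"older) gives $w\in A_{p-\varepsilon}$ for some $\varepsilon>0$. Hence $q_w<p$, and so $v_{w,p}=p/q_w>1$.
\end{enumerate}
Alternatively, (i)--(iii) are exactly Proposition \ref{prop of sp(W)} with $m=1$. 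That route requires checking that \eqref{spW} reduces to \eqref{sc scaling} when $m=1$. It does: $\|w^{1/p}(x)w^{-1/p}(y)g(y)\|$ is the product of scalars, and setting $f:=w^{-1/p}g$ turns the $L^p$ bound for $\mathcal M^{(v)}_{w,p}$ into the $L^p(w)$ bound for $[\mathcal M(|f|^v)]^{1/v}$.

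The main obstacle is justifying that Muckenhoupt's characterization, including its necessity direction and the openness $A_s=\bigcup_{r<s}A_r$, holds on a general space of homogeneous type. For the "if" direction and openness, I would use the scalar reverse H\"older inequality of Lemma \ref{scalarrh}. For the "only if" direction, I would test boundedness on $f=\mathbf 1_B w^{1-s'}$ in the standard way; this works on $\mathcal X$ using only the doubling property. A second point needing care is the endpoint where the supremum in the definition of $v_{w,p}$ is not attained. The monotonicity argument above only uses infima and suprema, so it does not depend on attainment.
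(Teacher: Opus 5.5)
Your proof is correct. For (iv) you use the same mechanism as the paper: rewrite $v_{w,p}$ via \eqref{sc scaling} as a statement about $\mathcal M$ on $L^{p/v}(w)$, then apply the scalar Muckenhoupt characterization, which the paper simply cites from \cite{st89}. The paper argues with two $\varepsilon$-inequalities, $p/(q_w+\varepsilon)\le v_{w,p}$ and $q_w\le p/(v_{w,p}-\varepsilon)$. These rely on the admissible $v$ forming an interval and on $v_{w,p}\le p$ from (i). Your exact identity $v_{w,p}=p/\inf\{s\in(1,\infty):\ w\in A_s(\mathcal X)\}$ is a tidier version of the same idea: Corollary \ref{3.20x} excludes $v\ge p$, and the nesting of the $A_s$ classes handles the $A_1$ endpoint.

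The genuine difference is in (i)--(iii). The paper obtains them by specializing Proposition \ref{prop of sp(W)} to $m=1$. They therefore rest on the matrix machinery (Theorem \ref{bounded B1}, Proposition \ref{Apbd}, Remark \ref{Apbd rem}, and ultimately Proposition \ref{improve}), and the reduction of \eqref{spW} to \eqref{sc scaling} is only asserted there. You verify that reduction. Your main route, however, reads (i)--(iii) directly off $v_{w,p}=p/q_w$, using only two classical scalar facts:
\begin{itemize}
\item $A_\infty(\mathcal X)=\bigcup_{s\ge1}A_s(\mathcal X)$, which the paper's proof of (iv) also needs tacitly in order to have $q_w<\infty$;
\item the openness $A_p=\bigcup_{r<p}A_r$, which is exactly the $m=1$ case of Proposition \ref{improve}(iv).
\end{itemize}

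What your route buys is a self-contained scalar proof. It shows that (i)--(iii) amount to $q_w\ge 1$, $q_w=1$ on $A_1$, and $q_w<p$ on $A_p$. The paper's route avoids re-deriving these scalar facts by reusing results already proved for matrix weights. Its only external input is the Muckenhoupt equivalence needed for (iv).
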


\begin{proof}
Statements (i) through (iii) follow directly from Proposition \ref{prop of sp(W)} with $m=1$.

Now, we prove (iv).
For any $\varepsilon\in(0,\infty)$, we have
$w\in A_{q_\varepsilon}(\mathcal X)$, where $q_\varepsilon:=q_w+\varepsilon$.
Recall that, for any $q\in(1,\infty)$,
\begin{align} \label{iff}
w\in A_q(\mathcal X)
\Longleftrightarrow
\| \mathcal M (g) \|_{L^q(w)} \lesssim \| g \|_{L^q(w)}
\text{ for all } g\in L^q(w);
\end{align}
see, for instance, \cite[Theorem 9 of Page 5]{st89}. Therefore,
$$
\| \mathcal M (g) \|_{L^{q_\varepsilon}(w)}
\lesssim \| g \|_{L^{q_\varepsilon}(w)}
\text{ for all } g\in L^{q_\varepsilon}(w).
$$
This, together with \eqref{sc scaling}, further implies that
$\frac{p}{q_\varepsilon}\leq v_{w,p}$.
Letting $\varepsilon\to 0^+$, we obtain $p\leq v_{w,p}q_w$.

On the other hand, by \eqref{sc scaling}, we find that,
for any $\varepsilon\in(0,v_{w,p})$,
$$
\| \mathcal M (g) \|_{L^{\frac p{v_\varepsilon}}(w)}
\lesssim \| g \|_{L^{\frac p{v_\varepsilon}}(w)}
\text{ for all } g\in L^{\frac p{v_\varepsilon}}(w),
$$
where $v_\varepsilon:=v_{w,p}-\varepsilon\in(0,p)$.
From this and \eqref{iff}, we deduce that
$w\in A_{\frac p{v_\varepsilon}}(\mathcal X)$.
Thus, $q_w\leq\frac p{v_\varepsilon}$.
Letting $\varepsilon\to 0^+$, we obtain $v_{w,p}q_w\leq p$.
This completes the proof of (iv) and hence Proposition \ref{prop of sp(w)}.
\end{proof}

Proposition \ref{prop of sp(w)}(iv) shows that
the critical rescaling index $v_{W,p}$
is equivalent to the   critical self-improvement index $q_w$ when $m=1$.
From their definitions, we also easily deduce the  following
relationship between $v_{W,p}$ and $v_{w,p}$.

\begin{lemma}\label{scamtr1}
Let $p\in(0,\infty)$, let $w$ be a scalar weight, and set $W:=wI_m$. Then
\begin{enumerate}[\rm(i)]
\item $W\in \mathcal A_{p}(\mathcal X,\mathbb C^m)$
if and only if $w\in A_{p\vee1}(\mathcal X)$;

\item $v_{W,p}=v_{w,p}$.
\end{enumerate}
\end{lemma}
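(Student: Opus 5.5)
For part (i), I will compute $\|W^{\frac1p}(x)W^{-\frac1p}(y)\|$ directly. Since $W=wI_m$, we have $W^{\frac1p}(x)W^{-\frac1p}(y)=[w(x)/w(y)]^{\frac1p}I_m$, and so
\begin{align*}
\left\|W^{\frac1p}(x)W^{-\frac1p}(y)\right\|^{s}=\left[\frac{w(x)}{w(y)}\right]^{\frac sp}
\end{align*}
for every exponent $s$. Substituting this into Definition \ref{Ap} gives two cases. When $p\in(0,1]$, the quantity $[W]_{\mathcal A_p(\mathcal X,\mathbb C^m)}$ equals
\begin{align*}
\sup_{B}\fint_B w\,d\mu\ \mathop{\operatorname{ess\,sup}}_{y\in B} w(y)^{-1},
\end{align*}
which is exactly the $A_1(\mathcal X)$ constant of $w$. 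When $p\in(1,\infty)$, the quantity becomes
\begin{align*}
\sup_B \fint_B w\,d\mu\left[\fint_B w^{-\frac{p'}{p}}\,d\mu\right]^{\frac p{p'}},
\end{align*}
and since $\frac{p'}{p}=\frac1{p-1}$, this is the $A_p(\mathcal X)$ constant of $w$. Part (i) is therefore immediate; it is the remark already made after Definition \ref{Ap}.

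For part (ii), I will compare the two maximal operators pointwise. For a matrix-valued $G$,
\begin{align*}
\mathcal M_{W,p}^{(v)}(G)(x)=w(x)^{\frac1p}\sup_{B\ni x}\left[\fint_B w(y)^{-\frac vp}\|G(y)\|^v\,d\mu(y)\right]^{\frac1v}=w(x)^{\frac1p}\left[\mathcal M\left(\left|w^{-\frac1p}\|G\|\right|^v\right)(x)\right]^{\frac1v}.
\end{align*}
Setting $f:=w^{-\frac1p}\|G\|$, we get $\|G\|_{L^p(\mathcal X,M_m(\mathbb C))}=\|f\|_{L^p(w)}$ and $\|\mathcal M_{W,p}^{(v)}(G)\|_{L^p(\mathcal X)}=\|[\mathcal M(|f|^v)]^{\frac1v}\|_{L^p(w)}$. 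Every measurable scalar $f$ arises this way, for instance from $G:=w^{\frac1p}fI_m$, using that $w>0$ almost everywhere. It follows that, for each $v$,
\begin{align*}
\left\|\mathcal M_{W,p}^{(v)}\right\|_{L^p(\mathcal X,M_m(\mathbb C))\to L^p(\mathcal X)}<\infty
\end{align*}
holds if and only if the scalar rescaled inequality appearing in \eqref{sc scaling} holds. Hence the two sets defining the suprema coincide on $(0,p)$.

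The only subtlety in part (ii) is the range of $v$. The definition \eqref{spW} takes the supremum over $(0,p)$, whereas \eqref{sc scaling} takes it over $(0,\infty)$. By Proposition \ref{bounded fail}, applied with $m=1$ via the identity above, the scalar inequality fails for every $v\ge p$, so restricting to $(0,p)$ does not change the supremum, and $v_{W,p}=v_{w,p}$. The definition of $v_{W,p}$ presupposes $W\in\mathcal A_{p,\infty}$, so I will also note that $W\in\mathcal A_{p,\infty}(\mathcal X,\mathbb C^m)$ if and only if $w\in A_\infty(\mathcal X)$. This follows from the same scalar computation applied to the $\mathcal A_{p,\infty}$ constant, which reduces to the scalar $A_\infty$ constant $\exp(\fint\log\fint w\cdot w^{-1})$. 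I expect no real obstacle beyond this bookkeeping.
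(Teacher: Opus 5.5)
Your proposal is correct and matches the route the paper intends. The paper gives no written proof, saying only that the lemma follows easily from the definitions, and your pointwise identity $\mathcal M_{W,p}^{(v)}(G)=w^{\frac1p}[\mathcal M(|w^{-\frac1p}\|G\||^v)]^{\frac1v}$ with the substitution $f=w^{-\frac1p}\|G\|$ is exactly that unpacking. Your use of Proposition \ref{bounded fail} to reconcile the ranges $(0,p)$ in \eqref{spW} and $(0,\infty)$ in \eqref{sc scaling} settles a point the paper leaves implicit, and you handle it correctly.
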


The following result extends the classical result
(see, for instance, \cite[Lemma 2.40]{bf3}) to the anisotropic setting.
Although the Euclidean metric is replaced by a quasi-metric $\rho$,
the proof requires only standard modifications; we omit the details.

\begin{lemma}\label{ballsim}
Let $a\in(-1,\infty)$.
Then,
\begin{enumerate}[\rm(i)]
\item for any $x_0\in\mathbb{R}^n$ and $r\in(0,\infty)$,
\begin{align*}
\fint_{B_{\rho}(x_0,r)}\left[\rho(x)\right]^a\,dx\sim\left[\rho(x_0)+r\right]^a;
\end{align*}

\item for any $j\in\mathbb Z$ and $Q\in\mathcal D_j$,
\begin{align*}
\fint_{Q}\left[\rho(x)\right]^a\,dx\sim\left[\rho(x_Q)+b^{-j}\right]^a.
\end{align*}
\end{enumerate}
Here all the positive equivalence constants depend only on $a$, $\rho$, and $A$.
\end{lemma}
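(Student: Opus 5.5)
The plan is to reduce both statements to the unit scale by the anisotropic dilation structure, and then compute the average separately in the regime where the ball (or cube) is far from the origin and the regime where it is close.

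\emph{Reduction to unit scale.} For (i), pick $k\in\mathbb Z$ with $b^{k}\le r<b^{k+1}$. The change of variables $x=A^{k}y$ turns $B_\rho(x_0,r)$ into a ball of radius comparable to $1$ centered at $A^{-k}x_0$, and we have $\rho(A^{k}y)=b^k\rho(y)$ and $dx=b^k\,dy$. Both sides of (i) scale like $b^{ka}$, so it suffices to prove (i) for $r\in[1,b)$. Here \eqref{ballmea} gives $|B_\rho(x_0,r)|\sim 1$.

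\emph{Far regime: $\rho(x_0)\ge 2H^2 r$.} For $x\in B_\rho(x_0,r)$, the quasi-triangle inequality gives
\[
\rho(x_0)\le H[\rho(x)+\rho(x_0-x)] \quad\text{and}\quad \rho(x)\le H[\rho(x_0)+r].
\]
We may use $\rho(-y)=\rho(y)$, since $\Delta$ is symmetric. Hence $\rho(x)\sim\rho(x_0)\sim\rho(x_0)+r$ on the whole ball, and the average of $\rho^a$ is comparable to $[\rho(x_0)+r]^a$ for every real $a$.

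\emph{Near regime: $\rho(x_0)<2H^2r$.} Now $\rho(x_0)+r\sim r\sim1$.
\begin{itemize}
\item Upper bound. The ball is contained in $B_\rho(\mathbf 0,Cr)$, and its measure is comparable to that of this larger ball. We integrate over dyadic annuli $B_{i+1}\setminus B_i$, on which $\rho=b^i$ and whose measure is at most $b^{i+1}$:
\[
\int_{B_\rho(\mathbf 0,Cr)}\rho^a\lesssim\sum_{b^i\lesssim r}b^{i(a+1)}\sim r^{a+1}.
\]
This uses $a>-1$ for summability as $i\to-\infty$, which is the only place the restriction on $a$ enters. If $a\ge0$, use instead the trivial bound $\rho\lesssim r$.
\item Lower bound. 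It suffices to find a subset of the ball of measure $\gtrsim r$ on which $\rho\sim r$. The set $\{x\in B_\rho(x_0,r):\ \rho(x)\ge c r\}$ works for small $c$: it contains $B_\rho(x_0,r)\setminus B_\rho(\mathbf 0,cr)$, whose measure is at least $r-bcr\gtrsim r$ by \eqref{ballmea}. On this set, $cr\le\rho\lesssim r$ gives $\rho^a\sim r^a$ for either sign of $a$.
\end{itemize}

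\emph{Statement (ii).} Each $Q\in\mathcal D_j$ equals $A^{-j}([0,1)^n+k)$. It contains a ball and is contained in a ball, each of radius comparable to $b^{-j}$: take $x_Q$ as a point of the closure and dilate the unit cube's inscribed and circumscribed sets. Also $|Q|=b^{-j}$. The same two-regime argument then applies verbatim with $x_0=x_Q$ and $r=b^{-j}$. In the far regime, $\rho\sim\rho(x_Q)$ on $Q$. In the near regime, use containment in $B_\rho(\mathbf 0,Cb^{-j})$ for the upper bound, and the removal of a small ball $B_\rho(\mathbf 0,cb^{-j})$ from the inscribed ball for the lower bound.

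\emph{Main obstacle.} The only delicate point is the near-origin lower bound when $a>0$, together with controlling constants through $H$. The measure-subtraction trick above handles the former uniformly.
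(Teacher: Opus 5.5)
Your proof is correct, and the two-regime split is the standard argument the paper has in mind when it omits the proof as a routine modification of the Euclidean result \cite[Lemma 2.40]{bf3}. Far from the origin you get $\rho\sim\rho(x_0)$ on the whole ball. Near the origin you place the ball inside $B_\rho(\mathbf 0,Cr)$, sum over the annuli $B_{i+1}\setminus B_i$ for the upper bound (the only place $a>-1$ is used), and remove a small ball $B_\rho(\mathbf 0,cr)$ for the lower bound. The rescaling to unit radius is harmless but unnecessary, since every near-regime estimate is already homogeneous in $r$. Also, (ii) follows directly from (i) via the inclusions of Lemma \ref{yl1201}.
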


The following conclusion is well known in the Euclidean setting
(see, for instance, \cite[Example 7.1.7]{hsdyk01})
and can be extended to the anisotropic setting
via a standard argument based on Lemma \ref{ballsim}; we omit the details.

\begin{lemma}\label{example}
Let $a\in(-1,\infty)$ and  $w(\cdot):=[\rho(\cdot)]^a$. Then
\begin{enumerate}[\rm(i)]
\item $w\in A_1(\mathbb{R}^n)$ if and only if $a\in(-1,0]$;

\item if $p\in(1,\infty)$, then $w\in A_p(\mathbb{R}^n)$ if and only if $a\in(-1,p-1)$.
\end{enumerate}
\end{lemma}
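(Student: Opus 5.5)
The plan is to follow the classical Euclidean argument for power weights, with Euclidean balls replaced by anisotropic balls $B_\rho(x_0,r)$ and with Lemma \ref{ballsim}(i) used to compute all averages. Write $w(x)=[\rho(x)]^a$ with $a>-1$; by Lemma \ref{ballsim}(i), $w$ is locally integrable, so $w$ is a weight. Both (i) and (ii) reduce to estimating $\fint_B w$ and either $\operatorname{ess\,inf}_B w$ or $\fint_B w^{-1/(p-1)}$, uniformly over $B=B_\rho(x_0,r)$. The case split is the usual one: $\rho(x_0)\le 2Hr$ (ball near the origin) versus $\rho(x_0)>2Hr$ (ball far from the origin), where $H$ is the quasi-triangle constant.

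\emph{Sufficiency in (i)}, $a\in(-1,0]$. Lemma \ref{ballsim}(i) gives $\fint_B w\sim[\rho(x_0)+r]^a$. For $y\in B$ we have $\rho(y)\le H[\rho(x_0)+r]$, so $w(y)\ge H^a[\rho(x_0)+r]^a$ because $a\le0$. Hence $\fint_B w\lesssim\operatorname{ess\,inf}_B w$.

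\emph{Sufficiency in (ii)}, $a\in(-1,p-1)$. Set $a':=-a/(p-1)$. Then $a'>-1$ exactly when $a<p-1$. Lemma \ref{ballsim}(i) applies to both exponents and gives
\[
\Big(\fint_B w\Big)\Big(\fint_B w^{-\frac1{p-1}}\Big)^{p-1}\sim[\rho(x_0)+r]^{a}\,[\rho(x_0)+r]^{a'(p-1)}=1,
\]
so no case split is even needed.

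\emph{Necessity.} In (ii), if $a\ge p-1$, then $w^{-1/(p-1)}=\rho^{-a/(p-1)}$ has exponent $\le-1$. I would show it fails to be integrable on $B_0$ by summing over the annuli $B_{k+1}\setminus B_k$: there $\rho\sim b^k$ and $|B_{k+1}\setminus B_k|\sim b^k$, so the sum over $k\to-\infty$ diverges. The $A_p$ constant is then infinite. In (i), if $a>0$, take $B=B_\rho(\mathbf 0,r)$: $\operatorname{ess\,inf}_B w=0$ since $\rho$ takes arbitrarily small values near $\mathbf 0$, while $\fint_B w>0$, so $w\notin A_1$. The range $a\le-1$ is excluded by hypothesis; in any case it fails local integrability by the same annulus computation.

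The only potentially delicate step is justifying Lemma \ref{ballsim}(i) uniformly for balls not centered at the origin. Since that lemma is assumed, everything else is routine, and I expect no real obstacle.
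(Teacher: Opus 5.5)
Your proof is correct and follows the standard argument the paper intends; the paper omits the details, saying only that the Euclidean proof of \cite[Example 7.1.7]{hsdyk01} transfers via Lemma \ref{ballsim}. Lemma \ref{ballsim}(i), applied to the exponents $a$ and $-a/(p-1)$, gives sufficiency uniformly over all anisotropic balls, so the near/far case split you announce is never needed; necessity follows from the non-integrability of $\rho^{-a/(p-1)}$ at $\mathbf 0$ when $a\ge p-1$, and from $\operatorname{ess\,inf}_{B_\rho(\mathbf 0,r)}w=0$ when $a>0$.
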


Now, we can compute the critical rescaling index of the matrix weight $W:= w I_m$.

\begin{lemma}\label{value}
Let $a\in(-1,\infty)$, $w(\cdot):=[\rho(\cdot)]^a$, and $W(\cdot):=w(\cdot)I_m$.
Then, for any $p\in(0,\infty)$,
$$
v_{W,p}=v_{w,p}=\frac{p}{1+a_+}.
$$
\end{lemma}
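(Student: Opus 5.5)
The first equality $v_{W,p}=v_{w,p}$ is exactly Lemma \ref{scamtr1}(ii). It therefore remains to compute $v_{w,p}$ for the scalar power weight $w=[\rho(\cdot)]^a$. Since $a>-1$, Lemma \ref{example} shows that $w\in A_\infty(\mathbb R^n)$: if $a\le 0$, then $w\in A_1$; if $a>0$, then $w\in A_q$ for every $q>1+a$. The space $(\mathbb R^n,\rho,dx)$ is of homogeneous type, so Proposition \ref{prop of sp(w)}(iv) applies and gives $v_{w,p}=p/q_w$. The whole statement thus reduces to computing the critical self-improvement index $q_w=\inf\{q\in[1,\infty):\ w\in A_q\}$.

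\textbf{Case $a\in(-1,0]$.} By Lemma \ref{example}(i), $w\in A_1$, so $q_w=1=1+a_+$.

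\textbf{Case $a>0$.} By Lemma \ref{example}(ii), $w\in A_q$ with $q>1$ exactly when $a<q-1$, i.e.\ $q>1+a$. Also $w\notin A_1$ by Lemma \ref{example}(i). Hence $\{q\ge1:\ w\in A_q\}=(1+a,\infty)$, and its infimum is $q_w=1+a=1+a_+$.

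In both cases $v_{w,p}=\frac{p}{1+a_+}$, which proves the lemma.

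I do not expect a real obstacle. The only points to check are that Proposition \ref{prop of sp(w)}(iv) is used in the anisotropic setting, which is legitimate because it was proved on a general space of homogeneous type, and that the scalar index \eqref{sc scaling} agrees with the matrix definition \eqref{spW} when $m=1$. The latter is exactly what Lemma \ref{scamtr1}(ii) records, and in any case it follows because $\mathcal M_{w,p}^{(v)}(g)(x)=w^{1/p}(x)[\mathcal M(|g w^{-1/p}|^v)(x)]^{1/v}$, so that substituting $f=gw^{-1/p}$ identifies the two boundedness conditions.
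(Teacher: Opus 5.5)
Your proposal is correct and follows the paper's proof: the paper also gets $q_w=1+a_+$ from Lemma \ref{example} and concludes via Proposition \ref{prop of sp(w)}(iv) and Lemma \ref{scamtr1}. Your case split on the sign of $a$ and your check that $m=1$ reduces $\mathcal M_{W,p}^{(v)}$ to the rescaled scalar maximal operator just spell out details the paper leaves implicit.
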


\begin{proof}
From Lemma \ref{example}, we infer that $q_w=1+a_+$.
This, together with Proposition \ref{prop of sp(w)}(iv)
and Lemma \ref{scamtr1}, then finishes the proof of Lemma \ref{value}.
\end{proof}

\begin{remark} \label{6.16}
The critical rescaling index $v_{W,p}$
can range over the entire interval $(0,p]$.
Indeed, for any $t\in(0,p]$, let $W:=[\rho(\cdot)]^{\frac pt-1} I_m$.
It then follows from Lemma \ref{value} that $v_{W,p}=t$.
\end{remark}

\subsection{A Key Lemma}\label{keylemma}


Let $W$ be a matrix weight, $Q,R$ two cubes in $\mathbb R^n$, and
$A_Q,A_R$ the reducing operators of order $p$ for $W$.
The estimate of $\|A_QA_R^{-1}\|$ was first established by Frazier and Roudenko in \cite{fr21},
and was subsequently refined by Bu et al.
(see \cite[Lemma 2.28]{bf3} and \cite[Proposition 6.6]{bf4}).
This estimate plays a key role in obtaining the $\varphi$-transform characterization of $\dot B_{p,q}^{\alpha}(W)$
and establishing boundedness of almost diagonal operators on $\dot b_{p,q}^{\alpha}(W)$.
In this subsection, we establish the sharp estimate of $\|A_QA_R^{-1}\|$ in the anisotropic setting.

Let us begin with some properties of $\mathcal A_{p,\infty}$-matrix weights.
Repeating the argument of \cite[Lemma 3.5 and Propositions 6.1]{bf4} with
cube $Q$ replaced by ball $B$,
we have the following two conclusions; we omit the details.

\begin{lemma}\label{logabm}
Let $p\in(0,\infty)$, $W\in \mathcal A_{p,\infty}$, and
$M\in M_m(\mathbb{C})$ be nonzero. Then, for any ball $B\in\mathcal{B}$,
$
\log(\|W^{-\frac{1}{p}}(\cdot)M\|^p)\in L^1(B)
$
and, for any ball $B\in\mathcal{B}$,
\begin{align*}
\left\|A_B^{-1}M\right\|^p\sim\exp\left(\fint_B\log
\left(\left\|W^{-\frac1p}(x)M\right\|^p\right)\,dx\right),
\end{align*}
where the positive equivalence constants depend
only on $m$, $p$, and $[W]_{\mathcal A_{p,\infty}}$.
\end{lemma}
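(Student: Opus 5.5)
To prove Lemma \ref{logabm}, we fix a ball $B\in\mathcal B$ and a nonzero matrix $M$, and compare the quantity $\|A_B^{-1}M\|^p$ with the geometric mean $\exp(\fint_B\log\|W^{-\frac1p}(x)M\|^p\,dx)$. Both the upper and the lower bound come from Jensen's inequality applied in two different directions. Integrability of the logarithm will be a by-product of these two bounds.

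\emph{Upper bound for the geometric mean.} For almost every $x\in B$ we use submultiplicativity:
$$\|W^{-\frac1p}(x)M\|\le\|W^{-\frac1p}(x)A_B\|\,\|A_B^{-1}M\|.$$
Taking $\log$ and averaging over $B$ gives
$$\fint_B\log\|W^{-\frac1p}(x)M\|^p\,dx\le\fint_B\log\|W^{-\frac1p}(x)A_B\|^p\,dx+\log\|A_B^{-1}M\|^p.$$
By Lemma \ref{Apcha}(ii), the first term on the right is bounded by $\mathrm I\lesssim\log[W]_{\mathcal A_{p,\infty}}+C$. This yields $\exp(\cdots)\lesssim\|A_B^{-1}M\|^p$. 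It also shows that the positive part of $\log\|W^{-\frac1p}M\|^p$ is integrable on $B$, since $\log_+\|W^{-\frac1p}A_B\|^p\in L^1(B)$ and the second factor is a constant.

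\emph{Lower bound for the geometric mean.} For almost every $x$ we write $A_B^{-1}M=A_B^{-1}W^{\frac1p}(x)\,W^{-\frac1p}(x)M$, so that
$$\log\|A_B^{-1}M\|^p\le\log\|A_B^{-1}W^{\frac1p}(x)\|^p+\log\|W^{-\frac1p}(x)M\|^p.$$
Averaging over $B$ and applying Jensen's inequality (concavity of $\log$) together with Lemmas \ref{exchange} and \ref{reduceM}, we get
$$\fint_B\log\|A_B^{-1}W^{\frac1p}(x)\|^p\,dx\le\log\fint_B\|W^{\frac1p}(x)A_B^{-1}\|^p\,dx\le\log\bigl(C\|A_BA_B^{-1}\|^p\bigr)=\log C.$$
Hence $\log\|A_B^{-1}M\|^p-C'\le\fint_B\log\|W^{-\frac1p}(x)M\|^p\,dx$. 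Since $M\neq0$ and $A_B^{-1}$ is invertible, the left-hand side is finite, so the negative part of the logarithm is integrable as well. This completes both the $L^1$ claim and the two-sided equivalence.

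\emph{The delicate point} is justifying the averaging in the second step. We cannot integrate the inequality termwise before knowing that $\log\|W^{-\frac1p}M\|^p$ is integrable. The way around this is to rearrange the inequality as
$$\log\|W^{-\frac1p}(x)M\|^p\ge\log\|A_B^{-1}M\|^p-\log\|A_B^{-1}W^{\frac1p}(x)\|^p.$$
The right-hand side has an integrable negative part, because $\log_+\|A_B^{-1}W^{\frac1p}\|^p\le\|A_B^{-1}W^{\frac1p}\|^p\in L^1(B)$ by the definition of reducing operators. Therefore the negative part of $\log\|W^{-\frac1p}M\|^p$ is integrable, and every average above is legitimate. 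The constants depend only on $m$, $p$, and $[W]_{\mathcal A_{p,\infty}}$ through Lemma \ref{Apcha}, as required.
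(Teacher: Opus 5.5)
Your proof is correct and is essentially the argument the paper intends when it defers to \cite[Lemma 3.5]{bf4}. You sandwich $\log\|W^{-\frac1p}(x)M\|^p$ between $\log\|A_B^{-1}M\|^p-\log\|A_B^{-1}W^{\frac1p}(x)\|^p$ and $\log\|W^{-\frac1p}(x)A_B\|^p+\log\|A_B^{-1}M\|^p$, control the upper side by Lemma \ref{Apcha}(ii) and the lower side by Jensen together with $\fint_B\|W^{\frac1p}(x)A_B^{-1}\|^p\,dx\sim 1$, and use the same two pointwise bounds to get the $L^1(B)$ claim before averaging, with constants depending only on $m$, $p$, and $[W]_{\mathcal A_{p,\infty}}$.
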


\begin{lemma}\label{2sim}
Let $p\in(0,\infty)$, $W\in \mathcal A_{p,\infty}$,
and $\{A_{B}\}_{B\in\mathcal{B}}$ be a family
of reducing operators of order $p$ for $W$.
Then, for all balls $B, \widetilde{B} \in\mathcal{B}$,
\begin{align*}
\left\|A_{B}A_{\widetilde{B}}^{-1}\right\|^{p}
&\sim\fint_{B}\exp\left(\fint_{\widetilde{B}}\log\left(\left\|W^{{\frac{1}{p}}}(x)
W^{{-\frac{1}{p}}}(y)\right\|^{p}\right)\,dy\right)\,dx\\
&\sim\exp\left(\fint_{\widetilde{B}}\log\left(\fint_{B}\left\|
W^{{\frac{1}{p}}}(x)W^{{-\frac{1}{p}}}(y)\right\|^{p}\,dx\right)\,dy\right),
\end{align*}
where the positive equivalence constants depend only on $m$, $p$,
and $[W]_{{\mathcal A_{p,\infty}}}$.
\end{lemma}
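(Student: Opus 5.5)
The plan is to reduce both equivalences to Lemma \ref{logabm} and Lemma \ref{reduceM}, applied with the matrix $M$ chosen depending on the variable of integration. The first step is to rewrite the left-hand side. By Lemma \ref{exchange}, applied to the nonnegative definite matrices $A_B$ and $A_{\widetilde B}^{-1}$, we have $\|A_BA_{\widetilde B}^{-1}\|=\|A_{\widetilde B}^{-1}A_B\|$. Applying Lemma \ref{logabm} with $M:=A_B$ on the ball $\widetilde B$, we obtain
\begin{align*}
\left\|A_BA_{\widetilde B}^{-1}\right\|^p
\sim\exp\left(\fint_{\widetilde B}\log\left\|W^{-\frac1p}(y)A_B\right\|^p\,dy\right).
\end{align*}
Next we use Lemma \ref{exchange} again together with Lemma \ref{reduceM}, now with $M:=W^{-\frac1p}(y)$ and $y$ fixed. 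This gives, for almost every $y$,
\begin{align*}
\left\|W^{-\frac1p}(y)A_B\right\|^p=\left\|A_BW^{-\frac1p}(y)\right\|^p\sim\fint_B\left\|W^{\frac1p}(x)W^{-\frac1p}(y)\right\|^p\,dx.
\end{align*}
The constants here depend only on $m$ and $p$. After taking logarithms, averaging over $\widetilde B$ and exponentiating, these constants are preserved, and this yields the second equivalence. Integrability of the logarithms is supplied by Lemma \ref{logabm}.

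For the first expression, the plan is to start from $\|A_BA_{\widetilde B}^{-1}\|^p\sim\fint_B\|W^{\frac1p}(x)A_{\widetilde B}^{-1}\|^p\,dx$, which is Lemma \ref{reduceM} with $M:=A_{\widetilde B}^{-1}$. Then, for almost every fixed $x$, we apply Lemma \ref{exchange} and Lemma \ref{logabm} with the nonzero matrix $M:=W^{\frac1p}(x)$ on the ball $\widetilde B$:
\begin{align*}
\left\|W^{\frac1p}(x)A_{\widetilde B}^{-1}\right\|^p=\left\|A_{\widetilde B}^{-1}W^{\frac1p}(x)\right\|^p\sim\exp\left(\fint_{\widetilde B}\log\left\|W^{-\frac1p}(y)W^{\frac1p}(x)\right\|^p\,dy\right).
\end{align*}
We then replace $\|W^{-\frac1p}(y)W^{\frac1p}(x)\|$ by $\|W^{\frac1p}(x)W^{-\frac1p}(y)\|$, again by Lemma \ref{exchange}, and integrate over $x\in B$. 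The step to check carefully is that the constants in Lemma \ref{logabm} are uniform in $M$. They depend only on $m$, $p$ and $[W]_{\mathcal A_{p,\infty}}$, so the pointwise equivalence in $x$ survives the averaging over $B$, and the first equivalence follows.

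The main obstacle is to make sure Lemma \ref{logabm} is really available for balls $B$ and $\widetilde B$ that are unrelated: no containment and arbitrary sizes. Here no geometric relation between the two balls is used at all. Lemma \ref{logabm} is a single-ball statement valid for every nonzero $M$, and the interaction between $B$ and $\widetilde B$ enters only through the choice of $M$. So no doubling argument is required. The only measure-theoretic points are that $W^{\frac1p}(x)$ is nonzero and invertible for almost every $x$, and that Fubini-type manipulations are avoided altogether, since we only average equivalences that hold pointwise.
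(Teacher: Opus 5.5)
Your proposal is correct and is essentially the argument the paper intends; the paper omits the proof and defers to \cite[Proposition 6.1]{bf4}. That argument obtains the first equivalence from Lemma \ref{reduceM} with $M:=A_{\widetilde B}^{-1}$ followed by Lemma \ref{logabm} applied pointwise with $M:=W^{\frac1p}(x)$, and the second from Lemma \ref{exchange} and Lemma \ref{logabm} with $M:=A_B$ followed by Lemma \ref{reduceM} with $M:=W^{-\frac1p}(y)$, relying, as you note, on the constants in Lemma \ref{logabm} being uniform in $M$.
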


We recall the following concept of upper and lower dimensions
of $\mathcal A_{p,\infty}$-matrix weights (see, for instance, \cite[Definition 6.2]{bf4}),
which provide a finer classification of $\mathcal A_{p,\infty}$-matrix weights
and play a key role in establishing the sharp
estimate of $\|A_QA_R^{-1}\|$ (see Lemma \ref{fuhe} below).

\begin{definition} \label{dim}
Let  $p\in(0,\infty)$ and $d \in \mathbb{R}$.
A matrix weight $ W $ is said to have \emph{$\mathcal A_{p, \infty}$-lower dimension $d$},
denoted by $W\in\mathbb{D}_{p,\infty,d}^{\mathrm{lower}}
(\mathbb{R}^n,\mathbb{C}^m,{A})$,
if there exists a positive constant $C$ such that,
for any  $\lambda\in[1,\infty)$ and any $B\in \mathcal B$,
$$\exp \left(\fint_{\lambda B} \log \left(\fint_{B}\left\|W^{\frac{1}{p}}(x)
W^{-\frac{1}{p}}(y)\right\|^{p} \,dx\right) \,dy\right)
\leq C \lambda^d.$$
A matrix weight $W$ is said to have \emph{$\mathcal A_{p,\infty}$-upper dimension $d$},
denoted by $W\in\mathbb{D}_{p,\infty,d}^\mathrm{upper}
{(\mathbb{R}^n,\mathbb{C}^m,A)}$, if there exists a positive constant $C$
such that, for any  $\lambda \in[1, \infty) $ and any $B\in{\mathcal B}$,
$$\exp \left(\fint_{B} \log \left(\fint_{\lambda B}\left\|W^{\frac{1}{p}}(x)
W^{-\frac{1}{p}}(y)\right\|^{p} \,dx\right) \,dy\right) \leq C \lambda^{d}.$$
\end{definition}

In what follows, we denote  $\mathbb{D}_{p,\infty,d}^{\mathrm{lower}}(\mathbb{R}^n,\mathbb{C}^m,{A})$
[resp. $\mathbb{D}_{p,\infty,d}^\mathrm{upper}{(\mathbb{R}^n,\mathbb{C}^m,A)}$]
simply by $\mathbb{D}_{p,\infty,d}^{\mathrm{lower}}$
(resp. $\mathbb{D}_{p,\infty,d}^\mathrm{upper}$).
Repeating the proofs of \cite[Propositions 6.4 and 6.5]{bf4}
with Lemmas 2.3, 2.9, and 3.5, Propositions 3.8 and 5.6,
and Proposition 6.1 therein replaced, respectively,  by
Lemmas \ref{exchange}, \ref{reduceM}, and \ref{logabm},
Propositions \ref{chawap} and \ref{matrixrhi}, and Lemma \ref{2sim},
we obtain the following conclusions; we omit the details.

\begin{proposition}
Let $p\in(0,\infty)$. Then the following statements hold.
\begin{enumerate}[\rm(i)]
\item  For any $d\in(-\infty,0)$,
one has $\mathbb{D}_{p, \infty, d}^{\mathrm {upper }}=\mathbb{D}_{p,\infty,d}^\mathrm{lower}=\emptyset$.

\item $\bigcup_{d \in[0, \infty)} \mathbb{D}_{p, \infty, d}^{\mathrm {upper }}
=\mathcal A_{p, \infty}$.

\item $\bigcup_{d\in[0,1)}\mathbb{D}_{p,\infty,d}^\mathrm{lower}=\mathcal A_{p,\infty}$.

\item For any $d\in[1,\infty)$,
one has $\mathbb{D}_{p,\infty,d}^\mathrm{lower}=\mathcal A_{p,\infty}$.
\end{enumerate}
\end{proposition}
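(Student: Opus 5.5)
Throughout, let $\Phi(B,\widetilde B):=\exp(\fint_{\widetilde B}\log(\fint_{B}\|W^{\frac1p}(x)W^{-\frac1p}(y)\|^p\,dx)\,dy)$. By Lemma \ref{2sim}, $\Phi(B,\widetilde B)\sim\|A_BA_{\widetilde B}^{-1}\|^p$. The plan is to translate each statement into an estimate for $\|A_BA_{\widetilde B}^{-1}\|$ with $B\subset\widetilde B=\lambda B$ or $\widetilde B=B\subset\lambda B$. In this language, upper dimension $d$ is the bound $\Phi(\lambda B,B)\lesssim\lambda^d$ and lower dimension $d$ is the bound $\Phi(B,\lambda B)\lesssim\lambda^d$.

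For (i), the point is that both quantities are bounded below by a constant. Pick any unit vector $\vec e$ and use Lemma \ref{reduceM} together with submultiplicativity: $1=\|A_BA_{\widetilde B}^{-1}A_{\widetilde B}A_B^{-1}\|\le\|A_BA_{\widetilde B}^{-1}\|\,\|A_{\widetilde B}A_B^{-1}\|$. This alone is not enough, so I will use the doubling structure more concretely. When $B\subset\widetilde B=\lambda B$, we have $\fint_{\lambda B}\|W^{\frac1p}M\|^p\gtrsim\lambda^{-1}\fint_B\|W^{\frac1p}M\|^p$, because $|\lambda B|\sim\lambda|B|$ by \eqref{ballmea}. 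Hence $\|A_{\lambda B}M\|^p\gtrsim\lambda^{-1}\|A_BM\|^p$, which bounds $\Phi(\lambda B,B)=\|A_{\lambda B}A_B^{-1}\|^p\gtrsim\lambda^{-1}$ from below. The lower bound needed is in fact $\Phi\gtrsim1$. For the upper dimension, Jensen's inequality and Lemma \ref{logabm} give $\Phi(\lambda B,B)\ge\fint\ldots\gtrsim\|A_{\lambda B}A_B^{-1}\|^p\ge\|A_BA_B^{-1}\|^p\cdot c=c$, using $\fint_{\lambda B}\|W^{\frac1p}(x)W^{-\frac1p}(y)\|^p\,dx\gtrsim 1$. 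This last bound comes from the trivial inequality $\|A_{\lambda B}\vec z\|\ge\|A_B\vec z\|\lambda^{-1/p}$; that is not quite enough, so instead I use $\|A_{\lambda B}A_B^{-1}\|\,\|A_BA_{\lambda B}^{-1}\|\ge1$ together with $\|A_BA_{\lambda B}^{-1}\|^p\lesssim\lambda$. If $d<0$, either dimension condition, letting $\lambda\to\infty$, forces one of the factors to tend to $0$ while the other is at most polynomial. To get a genuine contradiction I will instead test at fixed $\lambda=1$ against large $\lambda$, combining with $[W]_{\mathcal A_{p,\infty}}\ge1$. I expect this step, (i), to be routine but fiddly.

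For (ii), the inclusion $\subset$ follows from $\lambda=1$, which recovers the $\mathcal A_{p,\infty}$ condition. For $\supset$, I use a doubling estimate for reducing operators coming from the reverse Hölder inequality (Proposition \ref{matrixrhi}). With $w_M=\|W^{\frac1p}M\|^p\in A_\infty$ uniformly (Lemma \ref{scalar}), scalar $A_\infty$ doubling gives $\int_{\lambda B}w_M\lesssim\lambda^{D}\int_Bw_M$ for some $D$ depending on $[W]$. Taking $M=A_B^{-1}$ then yields $\|A_{\lambda B}A_B^{-1}\|^p\lesssim\lambda^{D-1}$, and therefore upper dimension $d=(D-1)_+$.

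For (iii) and (iv), the lower-dimension statements, I would write
\[
\Phi(B,\lambda B)\sim\|A_BA_{\lambda B}^{-1}\|^p\sim\fint_B\|W^{\frac1p}A_{\lambda B}^{-1}\|^p\le\frac{|\lambda B|}{|B|}\fint_{\lambda B}\|W^{\frac1p}A_{\lambda B}^{-1}\|^p\lesssim\lambda .
\]
This gives (iv) with $d=1$. For (iii) I sharpen the estimate using reverse Hölder: Hölder's inequality with exponent $r(W)$ combined with Proposition \ref{matrixrhi} gives $\fint_B w\le(|\lambda B|/|B|)^{1/r(W)}(\fint_{\lambda B}w^{r})^{1/r}\lesssim\lambda^{1/r(W)}\fint_{2A_0\lambda B}w$. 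A doubling adjustment is still needed, namely comparing $\fint_{2A_0\lambda B}w$ with $\fint_{\lambda B}w$; I will handle it by applying reverse Hölder on $\lambda B$ and shrinking to $\lambda B/(2A_0)$. This gives $d=1/r(W)<1$.

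The main obstacle is making the lower bound in (i) rigorous. The cleanest route is to use $W\in\mathcal A_{p,\infty}$ to show $\|A_BA_{\widetilde B}^{-1}\|\gtrsim1$ when $B\subset\widetilde B$ are comparable, via Lemma \ref{logabm} and Jensen's inequality, and then to iterate this over dyadic scales.
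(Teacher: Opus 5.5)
\textbf{Assessment.} Parts (ii)--(iv) of your plan are sound, but (i) is not proved, and each route you sketch for it fails.

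\emph{The uniform lower bound you aim for is false.} Take $W:=[\rho(\cdot)]^aI_m$ with $a\in(-1,\infty)\setminus\{0\}$ and $B:=B_\rho(\mathbf 0,r)$. Lemma \ref{ballsim} gives $\|A_{\lambda B}A_B^{-1}\|^p\sim\lambda^{a}$ and $\|A_BA_{\lambda B}^{-1}\|^p\sim\lambda^{-a}$. Hence one of $\Phi(\lambda B,B)$, $\Phi(B,\lambda B)$ tends to $0$ as $\lambda\to\infty$. In particular, your claimed bound $\fint_{\lambda B}\|W^{\frac1p}(x)W^{-\frac1p}(y)\|^p\,dx\gtrsim1$ for $y\in B$ fails when $a<0$: it is $\lesssim\lambda^{a}$.

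\emph{The product trick is too weak.} Combining $\|A_{\lambda B}A_B^{-1}\|\,\|A_BA_{\lambda B}^{-1}\|\ge1$ with $\|A_BA_{\lambda B}^{-1}\|^p\lesssim\lambda$ only gives $\|A_{\lambda B}A_B^{-1}\|^p\gtrsim\lambda^{-1}$. A lower bound that decays polynomially in $\lambda$ cannot contradict $\lambda^{d}$ for $d\in[-1,0)$.

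\emph{Iterating over dyadic scales cannot work.} The inequality $\|XY\|\le\|X\|\,\|Y\|$ transports upper bounds along a chain of balls, not lower bounds. Even a per-scale constant $c<1$ would again only produce polynomial decay.

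\textbf{The missing idea.} A dimension inequality must hold for \emph{every} ball. So to rule out $d<0$ it suffices to find, for each fixed $\lambda$, \emph{some} ball $B$ with $\Phi(\lambda B,B)\gtrsim1$ (resp.\ $\Phi(B,\lambda B)\gtrsim1$), with implicit constant independent of $\lambda$.

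First note that the case $\lambda=1$ of either condition is exactly $[W]_{\mathcal A_{p,\infty}}\lesssim1$, so Lemma \ref{2sim} applies. Fix a unit vector $\vec e$ and set $w:=|W^{\frac1p}\vec e|^p$. Then $w\le\|W\|$ is locally integrable and positive almost everywhere. Pick a Lebesgue point $x_0$ with $w(x_0)\in(0,\infty)$; Lebesgue differentiation holds for anisotropic balls. For $B:=B_\rho(x_0,r)$ and fixed $\lambda$,
\[
\left\|A_{\lambda B}A_B^{-1}\right\|^p\ge\frac{|A_{\lambda B}\vec e|^p}{|A_B\vec e|^p}\sim\frac{\fint_{\lambda B}w(x)\,dx}{\fint_B w(x)\,dx}\to1
\quad\text{as } r\to0^+ .
\]
The same holds for $\|A_BA_{\lambda B}^{-1}\|^p$. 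Together with Lemma \ref{2sim}, this gives $1\lesssim\lambda^d$ for all $\lambda\in[1,\infty)$ with a constant independent of $\lambda$, hence $d\ge0$. A packing argument using the doubling of $w$ would also work.

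\textbf{A small detail in (iii).} Your shrinking trick (reverse H\"older on $\frac{\lambda}{2H}B$) requires $B\subset\frac{\lambda}{2H}B$, i.e.\ $\lambda\ge2H$. The remaining range $\lambda\in[1,2H)$ is covered by your bound $\lesssim\lambda$ from (iv).
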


\begin{remark}\label{dimint}
Let $p\in ( 0, \infty )$ and $W$ be a matrix weight.
Then the following statements hold.
\begin{enumerate}[(i)]
\item The possible forms of the set \{$d\in\mathbb{R}:$ $W$ has
$\mathcal A_{p,\infty}$-lower dimension $d$\} are the empty set and
all the intervals of the form $(a,\infty)$ and $[a,\infty )$, where $a\in [0, 1).$
\item The possible forms of the set \{$d\in\mathbb{R}:W$
has $\mathcal A_{p,\infty}$-upper dimension $d$\} are the empty set and
all the intervals of the form $(b,\infty)$ and $[b,\infty)$, where $b\in[0,\infty).$
\item In both (i) and (ii), the empty set corresponds to $W\notin \mathcal A_{p,\infty}.$
\end{enumerate}
\end{remark}

\begin{definition}\label{lower}
Let $p\in(0,\infty)$.
For any matrix weight  $W\in \mathcal A_{p,\infty}$, let
\begin{align*}
d_{p,\infty}^{\mathrm{lower}}(W)& :=\inf\{d\in[0,\infty):
W\text{ has }\mathcal A_{p,\infty}\text{-lower dimension }d\},  \\
[\![d_{p,\infty}^{\mathrm{lower}}(W),\infty)& :=
\begin{cases}
[d_{p,\infty}^{\mathrm{lower}}(W),\infty)&\text{if }W\text{ has }
\mathcal A_{p,\infty}\text{-lower dimension }d_{p,\infty}^{\mathrm{lower}}(W),\\
(d_{p,\infty}^{\mathrm{lower}}(W),\infty)&\text{otherwise.}
\end{cases}
\end{align*}
Similarly, we can define $d_{p,\infty}^{\mathrm{upper}}(W)$ and
$[\![d_{p,\infty}^\mathrm{upper}(W),\infty)$
with lower dimension replaced by upper dimension.
\end{definition}

\begin{remark}
It follows from Remark \ref{dimint}(i) that  $d_{p,\infty}^{\mathrm{lower}}(W) \in [0,1)$.
\end{remark}

Applying the concepts of upper and lower dimensions of $\mathcal A_{p,\infty}$,
we obtain the following estimate for the composition of reducing operators.
This result recovers the corresponding sharp estimate in the Euclidean setting \cite[Proposition 6.6]{bf4}.

\begin{lemma}\label{fuhe}
Let $p\in (0,\infty)$, $W\in \mathcal A_{p,\infty}$, and
$\{A_{B}\}_{B\in\mathcal{B}}$ be a family of
reducing operators of order $p$ for $W$.
Let $d_1\in[\![d_{p,\infty}^\mathrm{lower}(W),\infty)$
and $d_2\in[\![{d}_{p,\infty}^\mathrm{upper}(W),\infty)$.
Then there exists a positive constant $C$ such that,
\begin{enumerate}[{\rm(i)}]
\item  for any balls $B,
\widetilde{B}\in\mathcal{B}$ with $B\cap\widetilde B\neq\emptyset$,
$$\left\|A_BA_{\widetilde{B}}^{-1}\right\|^p
\leq C \max\left\{\left(\frac{r_{\widetilde{B}}}{r_B}\right)^{d_1},
\left(\frac{r_{B}}{r_{\widetilde{B}}}\right)^{d_2}\right\};
$$

\item
for any balls $B,\widetilde{B}\in\mathcal{B}$,
$$
\left\|A_BA_{\widetilde{B}}^{-1}\right\|^p
\leq C \max\left\{\left(\frac{r_{\widetilde{B}}}{r_B}\right)^{d_1},
\left(\frac{r_{B}}{r_{\widetilde{B}}}\right)^{d_2}\right\}
\left[1+\frac{\rho(c_{B}-c_{\widetilde{B}})}
{r_{B} \vee r_{\widetilde{B}}}\right]^{d_1+d_2}.
$$
\end{enumerate}
\end{lemma}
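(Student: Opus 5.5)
The plan is to reduce everything to Lemma \ref{2sim} together with the definitions of lower and upper dimension. The anisotropic geometry enters only through the facts that balls with comparable radii and nearby centers are nested up to a fixed dilation, and that $|B_\rho(x,r)|\sim r$ by \eqref{ballmea}.

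\textbf{Part (i).} Take intersecting balls $B,\widetilde B$ and split into two cases according to which radius is larger.

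First suppose $r_B\le r_{\widetilde B}$. By the quasi-triangle inequality there is a constant $c_0$, depending only on $H$, with $\widetilde B\subset c_0\lambda B$ for $\lambda:=r_{\widetilde B}/r_B\ge1$. Since $|c_0\lambda B|\sim|\widetilde B|$ by \eqref{ballmea}, we can replace averages over $\widetilde B$ by averages over $c_0\lambda B$. Concretely, I will use the second expression in Lemma \ref{2sim}:
\begin{align*}
\left\|A_BA_{\widetilde B}^{-1}\right\|^p\sim\exp\left(\fint_{\widetilde B}\log\left(\fint_B\left\|W^{\frac1p}(x)W^{-\frac1p}(y)\right\|^p\,dx\right)dy\right).
\end{align*}
The integrand in $y$ may be negative, so enlarging the domain of the outer average is not immediate. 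To handle this, write the integrand as $\log_+(\cdot)-\log_+(\cdot)^{-1}$. The negative part is controlled because $\fint_B\|W^{\frac1p}(x)W^{-\frac1p}(y)\|^p\,dx\sim\|A_BW^{-\frac1p}(y)\|^p$, and $\log_+\|W^{\frac1p}(y)A_B^{-1}\|^p$ has bounded averages over any ball. This uses Lemma \ref{reduceM}, the reverse H\"older inequality of Proposition \ref{matrixrhi} to pass from $B$ to $c_0\lambda B$, and the comparability of the two measures.

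A cleaner route that I would try first: apply Lemma \ref{2sim} to the pair $(B,c_0\lambda B)$ and the pair $(c_0\lambda B,\widetilde B)$, and factor
\[
\left\|A_BA_{\widetilde B}^{-1}\right\|\le\left\|A_BA_{c_0\lambda B}^{-1}\right\|\left\|A_{c_0\lambda B}A_{\widetilde B}^{-1}\right\|.
\]
For the first factor, the lower-dimension definition with $\lambda$ replaced by $c_0\lambda$ gives the bound $\lesssim\lambda^{d_1}$. For the second factor, $\widetilde B\subset c_0\lambda B$ and $r_{c_0\lambda B}\sim r_{\widetilde B}$, and $\|A_{E}A_{F}^{-1}\|\lesssim1$ whenever $F\subset E$ have comparable measures. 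This last fact follows from the first expression in Lemma \ref{2sim}: the average over $F$ is dominated by a constant times the average over $E$ for nonnegative integrands, and the outer exponentials of averages are controlled via Lemma \ref{logabm}.

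The case $r_B>r_{\widetilde B}$ is symmetric. Now $B\subset c_0\lambda\widetilde B$ with $\lambda:=r_B/r_{\widetilde B}$, and we factor through $c_0\lambda\widetilde B$. The piece $\|A_{c_0\lambda\widetilde B}A_{\widetilde B}^{-1}\|^p$ is bounded by $\lambda^{d_2}$ by the upper-dimension definition together with Lemma \ref{2sim}. The piece $\|A_BA_{c_0\lambda\widetilde B}^{-1}\|$ is $\lesssim1$ by the same comparability argument as before.

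\textbf{Part (ii).} For general balls, set $R:=r_B\vee r_{\widetilde B}$ and $t:=1+\rho(c_B-c_{\widetilde B})/R$. Let $B^*$ be the dilate of the smaller-radius ball to radius $c_0tR$ about its own center, so that $B^*$ contains both balls. Then factor
\[
\left\|A_BA_{\widetilde B}^{-1}\right\|\le\left\|A_BA_{B^*}^{-1}\right\|\left\|A_{B^*}A_{\widetilde B}^{-1}\right\|
\]
and apply part (i) to each factor, since $B\cap B^*\neq\emptyset$ and $\widetilde B\cap B^*\ne\emptyset$. The radius ratios are $r_{B^*}/r_B$ and $r_{B^*}/r_{\widetilde B}$. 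The resulting exponents then combine to the stated bound $\max\{(r_{\widetilde B}/r_B)^{d_1},(r_B/r_{\widetilde B})^{d_2}\}\,t^{d_1+d_2}$ after a short case check on which radius is larger.

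\textbf{Main obstacle.} I expect the delicate step to be the comparability $\|A_EA_F^{-1}\|\lesssim1$ for nested balls $F\subset E$ with $|E|\sim|F|$, in the direction where the log-average is taken over the larger set. If the factorization route becomes messy, I would fall back on the approach of \cite[Proposition 6.6]{bf4} and transplant it verbatim, using Lemmas \ref{2sim} and \ref{logabm}.
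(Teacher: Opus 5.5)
Your architecture is the paper's. For $r_B\le r_{\widetilde B}$ you factor through a dilate $c_0\lambda B\supset\widetilde B$ and get $\|A_BA_{c_0\lambda B}^{-1}\|^p\lesssim\lambda^{d_1}$ from Lemma \ref{2sim} and the lower-dimension definition. For $r_{\widetilde B}<r_B$ you use the upper-dimension definition, and (ii) follows from (i) applied through a ball $B^*$ containing both balls.

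The gap is exactly the step you flagged: $\|A_{c_0\lambda B}A_{\widetilde B}^{-1}\|\lesssim1$ for $\widetilde B\subset c_0\lambda B$ with $|\widetilde B|\sim|c_0\lambda B|$. The reason you give for it does not work. For $F\subset E$, the first expression in Lemma \ref{2sim} reads $\|A_EA_F^{-1}\|^p\sim\fint_E\exp(\fint_F\log\|W^{\frac1p}(x)W^{-\frac1p}(y)\|^p\,dy)\,dx$. Here the outer average is already over the larger set, and the inner average, over the smaller set $F$, is of a signed function. So there is no nonnegative average over $F$ to dominate by one over $E$. Converting the inner exponential with Lemmas \ref{exchange} and \ref{logabm} just gives back $\fint_E\|W^{\frac1p}(x)A_F^{-1}\|^p\,dx\sim\|A_EA_F^{-1}\|^p$, which is circular. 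This is the direction that needs a genuine property of $W$. The opposite bound $\|A_FA_E^{-1}\|\lesssim1$ is trivial from \eqref{equ_reduce}, and that trivial bound is all your second case uses. So ``the same comparability argument'' there actually refers to a different statement.

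The missing idea is to turn the signed log-average into an average of a nonnegative power \emph{before} enlarging the domain, as the paper does for its factor $\mathrm{I_2}$. The ingredients are Lemmas \ref{exchange} and \ref{logabm}, Jensen's inequality, $|\widetilde B|\sim|c_0\lambda B|$, and $W\in\mathcal A_{p,u}$ for some $u\in(0,\infty)$ (by Proposition \ref{improve}(i) and Lemma \ref{Apu}):
\begin{align*}
\left\|A_{c_0\lambda B}A_{\widetilde B}^{-1}\right\|^p
&=\left\|A_{\widetilde B}^{-1}A_{c_0\lambda B}\right\|^p
\sim\left[\exp\left(\fint_{\widetilde B}\log\left\|W^{-\frac1p}(x)A_{c_0\lambda B}\right\|^u\,dx\right)\right]^{\frac pu}\\
&\le\left[\fint_{\widetilde B}\left\|W^{-\frac1p}(x)A_{c_0\lambda B}\right\|^u\,dx\right]^{\frac pu}
\lesssim\left[\fint_{c_0\lambda B}\left\|W^{-\frac1p}(x)A_{c_0\lambda B}\right\|^u\,dx\right]^{\frac pu}
\lesssim1.
\end{align*}
Two other repairs also work. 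You can bound $\log$ by $\log_+$ and invoke Lemma \ref{Apcha}(ii). Alternatively, $B\cap\widetilde B\neq\emptyset$ gives $c_0\lambda B\subset\mu\widetilde B$ with $\mu$ depending only on $H$ and $c_0$. Then enlarge the inner (nonnegative) average in the second expression of Lemma \ref{2sim} from $c_0\lambda B$ to $\mu\widetilde B$, and apply the upper-dimension bound $\lesssim\mu^{d_2}$ exactly as in your second case.

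Finally, drop your first route. Enlarging a log-average from $\widetilde B$ to $c_0\lambda B$ puts the factor $|c_0\lambda B|/|\widetilde B|$ \emph{inside} the exponent, which destroys the sharp power $\lambda^{d_1}$. In any case, $\log_+\|W^{\frac1p}(y)A_B^{-1}\|^p$ does not have uniformly bounded averages over balls much larger than $B$.
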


\begin{proof}
We first prove (i) by considering  two cases for $r_B$ and $r_{\widetilde B}$.

\emph{Case (1)}
$r_B\le r_{\widetilde{B}}$. Let $\lambda=3H^2r_{\widetilde B}/r_{B}$ and
$y\in B\cap\widetilde B$. Then,
for any $x\in \widetilde B$,
\begin{align*}
\rho(x-c_{B})\le H^2\left[\rho(x-c_{\widetilde{B}})+\rho(c_{\widetilde{B}}-y)
+\rho(y-c_{B})\right]
\le 3H^2r_{\widetilde B}=\lambda r_B,
\end{align*}
and hence $\widetilde B\subset \lambda B$. Write
\begin{align}\label{necessity}
\left\|A_BA_{\widetilde B}^{-1} \right\|^p
\le\left\|A_BA_{\lambda B}^{-1}\right\|^p\left\|A_{\lambda B}A_{\widetilde B}^{-1}\right\|^p=:\rm{I_1I_2}.
\end{align}
From Lemma \ref{2sim} and  $d_1\in[\![d_{p,\infty}^\mathrm{lower}(W),\infty)$, we infer that
\begin{align*}
{\rm{I_1}}\sim\exp\left(\fint_{\lambda B}\log\left(\fint_B\left\| W^{\frac{1}{p}}(x) W^{-\frac{1}{p}}(y)\right\|^p \,dx\right)\,dy\right)\lesssim \lambda^{d_1}\sim \left(\frac{r_{\widetilde B}}{r_{B}}\right)^{d_1}.
\end{align*}
Using Lemmas \ref{exchange} and \ref{logabm},
Jensen's inequality, and Proposition \ref{improve}(i),
we conclude that
\begin{align*}
\rm{I_2}
&=\left\|A_{\widetilde B}^{-1}A_{\lambda B}\right\|^p
\sim\left[\exp\left(\fint_{\widetilde B}\log\left(\left\|W^{-\frac{1}{p}}(x)A_{\lambda B}\right\|\right)\,dx\right)\right]^p\\
&=\left[\exp\left(\fint_{\widetilde B}\log\left(\left\|W^{-\frac{1}{p}}(x)A_{\lambda B}\right\|^{u}\right)\,dx\right)\right]^\frac{p}{u}
\leq\left[\fint_{\widetilde B}\left\|W^{-\frac{1}{p}}(x)A_{\lambda B}\right\|^{u}\,dx\right]^{\frac{p}{u}}\\
&\lesssim\left[\fint_{\lambda B}\left\|W^{-\frac{1}{p}}(x)A_{\lambda B}\right\|^{u}dx\right]^{\frac{p}{u}}
\lesssim1,
\end{align*}
where $u\in(0,\infty)$ is as in Proposition \ref{improve}.
Substituting the above estimates of $\rm{I_1}$
and $\rm{I_2}$ into \eqref{necessity}, we obtain
\begin{align*}
\left\|A_BA_{\widetilde{B}}^{-1}\right\|^p
\lesssim\left(\frac{r_{\widetilde{B}}}{r_B}\right)^{d_1}.
\end{align*}

\emph{Case (2)} $r_{\widetilde B}<r_{B}$.
In this case, by symmetry, we have
$B\subset\lambda\widetilde{B}$, where  $\lambda=3H^2r_{B}/r_{\widetilde{B}}$. This, together with Lemma \ref{2sim} and Definition \ref{dim}, further implies that
\begin{align*}
\left\|A_BA_{\widetilde{B}}^{-1}\right\|^p
&\sim\exp\left(\fint_{\widetilde{B}}\log\left(\fint_B\left\|W^{\frac1p}(x)
W^{-\frac1p}(y)\right\|^p\,dx\right)\,dy\right)\notag\\
&\lesssim\exp\left(\fint_{\widetilde{B}}\log\left(\fint_{\lambda\widetilde{B}}
\left\|W^{\frac1p}(x)W^{-\frac1p}(y)\right\|^p\,dx\right)\,dy\right)
\lesssim\left(\frac{r_B}{r_{\widetilde{B}}}\right)^{d_2},
\end{align*}
which completes  the proof of (i).

We  next prove (ii). For any $B,\widetilde B\in\mathcal B$,
we can choose a ball $B^*\in\mathcal B$ such that
$B\cup\widetilde{B}\subset B^*$. Indeed, for any $x\in\widetilde{B}$,
\begin{align*}
\rho(x-c_B)
\leq H\left[\rho\left(x-c_{\widetilde{B}}\right)
+\rho\left(c_{\widetilde{B}}-c_B\right)\right]
\leq H\left[r_{\widetilde{B}}
+\rho\left(c_B-c_{\widetilde{B}}\right)\right],
\end{align*}
and hence
\begin{align*}
B\cup\widetilde{B}
\subset \max\left\{1,\frac{H[r_{\widetilde{B}}
+\rho(c_B-c_{\widetilde{B}})]}{r_B}\right\} B=:B^*,
\end{align*}
where
$r_{B^*}=\max\{r_B,H[r_{\widetilde{B}}
+\rho(c_B-c_{\widetilde{B}})]\}\sim r_B+r_{\widetilde{B}}+\rho(c_B-c_{\widetilde{B}})$.
From these and the just proven (i), we deduce that
\begin{align*}
\left\|A_BA_{\widetilde{B}}^{-1}\right\|^p
&\leq\left\|A_BA_{B^*}^{-1}\right\|^p
\,\left\|A_{B^*}A_{\widetilde{B}}^{-1}\right\|^p
\lesssim\left(\frac{r_{B^*}}{r_B}\right)^{{d_1}}
\left(\frac{r_{B^*}}{r_{\widetilde{B}}}\right)^{{d_2}}\\
&=\left(\frac{r_B\vee r_{\widetilde{B}}}{r_B}\right)^{{d_1}}
\left(\frac{r_{B^*}}{r_B\vee r_{\widetilde{B}}}\right)^{d_1+d_2}
\left(\frac{r_B\vee r_{\widetilde{B}}}{r_{\widetilde{B}}}\right)^{{d_2}}\\
&\sim\max\left\{\left(
\frac{r_{\widetilde{B}}}{r_B}\right)^{d_1},
\left(\frac{r_B}{r_{\widetilde{B}}}\right)^{d_2}\right\}
\left[1+\frac{\rho(c_B-c_{\widetilde{B}})}
{r_B\vee r_{\widetilde{B}}}\right]^{d_1+d_2}.
\end{align*}
This completes the proof of (ii) and hence Lemma \ref{fuhe}.
\end{proof}

The following lemma establishes the relationship between the families
$\mathcal B$ and $\mathcal D$.
Using this result, we can extend Lemma \ref{fuhe} to anisotropic cubes.

\begin{lemma}\label{yl1201}
For any $j\in\mathbb{Z}$ and $Q\in{\mathcal D}_{j}$, there exist  constants
$C_1, C_2\in(0,\infty)$, depending only on $\rho$ and $A$, such that
\begin{align}\label{BQB}
B_{\rho}(c_Q,C_1b^{-j})
\subset Q\subset B_{\rho}(x_Q,C_2b^{-j}).
\end{align}
Moreover,
\begin{align}\label{BsimQ}
|B_{\rho}(c_Q,C_1b^{-j})|\sim|B_{\rho}(x_Q,C_2b^{-j})|\sim |Q|,
\end{align}
where the positive equivalence constants depend only on $\rho$ and $A$.
\end{lemma}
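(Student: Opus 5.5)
The plan is to reduce to the unit cube by the anisotropic dilation $A^{-j}$ and a translation, check the two inclusions there using only the geometry of the balls $B_k=A^k\Delta$, and then read off \eqref{BsimQ} from \eqref{ballmea}.

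\emph{Reduction.} For any $y\in\mathbb R^n$ and $r\in(0,\infty)$, Definition \ref{quasi-norm}(ii) gives $\rho(A^{-j}z-A^{-j}y)=b^{-j}\rho(z-y)$. Hence
$$
A^{-j}B_\rho(y,r)=B_\rho\left(A^{-j}y,b^{-j}r\right).
$$
Translations by $k\in\mathbb Z^n$ map $\rho$-balls to $\rho$-balls with the same radius. Since $Q=A^{-j}([0,1)^n+k)$, $x_Q=A^{-j}k$ and $c_Q=A^{-j}[k+(\frac12,\ldots,\frac12)]$, it suffices to find $C_1,C_2\in(0,\infty)$, depending only on $A$ and $\rho$, such that
$$
B_\rho(c,C_1)\subset[0,1)^n\subset B_\rho(\mathbf 0,C_2),\qquad c:=\left(\tfrac12,\ldots,\tfrac12\right).
$$
Applying $A^{-j}$ and translating then yields \eqref{BQB}.

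\emph{Inner inclusion.} Since $A$ is expansive, the spectral radius of $A^{-1}$ is less than $1$, so $\|A^{k}\|\to0$ as $k\to-\infty$. As $\Delta$ is bounded, the Euclidean diameter of $B_k=A^k\Delta$ tends to $0$ as $k\to-\infty$. Choose $k_1\in\mathbb Z$ with $B_{k_1}\subset(-\frac12,\frac12)^n$ and set $C_1:=b^{k_1}$. Because $\rho$ only takes the values $0$ and $b^{i}$, $i\in\mathbb Z$, the condition $\rho(z)<b^{k_1}$ forces $\rho(z)\le b^{k_1-1}$. By the definition of the step quasi-norm this means $z\in B_{k_1}$. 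Thus
$$
B_\rho(c,C_1)\subset c+B_{k_1}\subset(0,1)^n.
$$

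\emph{Outer inclusion.} By \eqref{B_k}, $r_A^{k}\Delta\subset B_k$ for $k\in\mathbb N$, and $\Delta$ contains a Euclidean ball around $\mathbf 0$. Hence $\bigcup_k B_k=\mathbb R^n$, with the $B_k$ increasing. The bounded set $[0,1)^n$ is therefore contained in some $B_{k_2}$. Every $z\in B_{k_2}$ has $\rho(z)\le b^{k_2-1}<b^{k_2}=:C_2$, so $[0,1)^n\subset B_\rho(\mathbf 0,C_2)$.

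\emph{Measures.} We have $|Q|=b^{-j}$, since $|\det A^{-j}|=b^{-j}$. By \eqref{ballmea}, the two balls in \eqref{BQB} have measures between $C_ib^{-j}$ and $bC_ib^{-j}$ for $i=1,2$. This gives \eqref{BsimQ} with constants depending only on $b$, $C_1$ and $C_2$.

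There is no real obstacle here. The only points needing care are the dilation identity for $\rho$-balls and the fact that $\rho$ is a step function, which turns the strict inequality $\rho<b^{k}$ into membership in $B_{k}$.
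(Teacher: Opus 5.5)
Your proof is correct and follows essentially the same route as the paper: choose $k_1,k_2$ so that a dilate of $\Delta$ centered at $(\frac12,\ldots,\frac12)$ lies inside $[0,1)^n$ and $[0,1)^n$ lies inside another dilate of $\Delta$, transport both inclusions by $A^{-j}$ and a translation, and then read off the measures from \eqref{ballmea}. The differences are only in presentation. You rescale $\rho$-balls using the homogeneity $\rho(A^{-j}z)=b^{-j}\rho(z)$, and you spell out why $k_1,k_2$ exist and why $B_\rho(x,b^m)=x+B_m$ for the step quasi-norm; the paper leaves these details implicit behind Lemma \ref{byl2d2}.
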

\begin{proof}
By Lemma \ref{byl2d2},   there exists $k_1\in\mathbb Z$ such that
$[0,1)^n\supset (\frac{1}{2},\ldots,\frac{1}{2})+A^{-k_1}\Delta$.
Then, for any $j\in\mathbb{Z}$ and $Q\in\mathcal{D}_j$,
\begin{align*}
Q=x_Q+A^{-j}[0,1)^n\supset c_Q+A^{-j-k_1}\Delta
=B_{\rho}\left(c_Q,b^{-k_1}b^{-j}\right)=:B_{\rho}(c_Q,C_1b^{-j}).
\end{align*}
Similarly,  there exists $k_2\in\mathbb Z_+$ such that
$[0,1)^n\subset A^{k_2}\Delta$.
Then, for any $j\in\mathbb{Z}$ and $Q\in\mathcal{D}_j$,
\begin{align*}
Q=x_Q+A^{-j}[0,1)^n\subset x_Q+A^{-j+k_2}\Delta
=B_{\rho}(x_Q,b^{k_2}b^{-j})=:B_{\rho}(x_Q,C_2b^{-j}),
\end{align*}
and hence \eqref{BQB} holds. Moreover,
from  \eqref{ballmea}, it follows that \eqref{BsimQ} holds.
This completes the proof of Lemma \ref{yl1201}.
\end{proof}

The following two basic lemmas are widely used in this article.

\begin{lemma} \label{yl1101}
Let $i\in\mathbb Z$. Then the following statements hold.
\begin{enumerate}[\rm(i)]
\item There exists a positive constant $C_{0}$,
depending only on $\rho$ and $A$,
such that,	for any $x\in(-1,1)^n$,
$\rho(x)\leq C_{0}.$

\item For any $i\in\mathbb{Z}$ and $x,y\in Q\in\mathcal D_i$,
$\rho(x-y)\leq C_0 b^{-i}=C_0|Q|$.
\end{enumerate}
\end{lemma}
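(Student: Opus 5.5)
(i) follows from compactness and continuity. The set $[-1,1]^n$ is compact. Since $A$ is expansive and $\Delta$ is the open ellipsoid $\{|Px|<1\}$, the balls $B_k=A^k\Delta$ exhaust $\mathbb R^n$ as $k\to\infty$. Indeed, \eqref{B_k} gives $r_A^kB_0\subset B_k$ with $r_A>1$, so $B_k$ contains a Euclidean ball of radius $\to\infty$. Hence there is $k_0\in\mathbb Z_+$ with $[-1,1]^n\subset B_{k_0}$. Now take $x\in(-1,1)^n\setminus\{\mathbf 0\}$. Then $x\in B_{k_0}$, so $x\in B_{k+1}\setminus B_k$ for some $k\le k_0-1$. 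The definition of the step quasi-norm gives $\rho(x)=b^k\le b^{k_0-1}$. Also $\rho(\mathbf 0)=0$. Thus $C_0:=b^{k_0}$ works, and it depends only on $A$ and $\Delta$, that is, on $\rho$ and $A$.

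For (ii), fix $Q=Q_{i,k}\in\mathcal D_i$ and $x,y\in Q$. Write $x=A^{-i}(k+s)$ and $y=A^{-i}(k+t)$ with $s,t\in[0,1)^n$. Then $x-y=A^{-i}(s-t)$ with $s-t\in(-1,1)^n$. The homogeneity $\rho(Az)=b\rho(z)$, iterated (and inverted for negative powers), gives $\rho(A^{-i}z)=b^{-i}\rho(z)$ for all $i\in\mathbb Z$. Therefore
$$\rho(x-y)=b^{-i}\rho(s-t)\le C_0b^{-i}$$
by (i). Finally, $|Q|=|\det A^{-i}|\,|[0,1)^n|=b^{-i}$, which gives the equality $C_0b^{-i}=C_0|Q|$.

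The only point requiring care is the exhaustion $\bigcup_kB_k=\mathbb R^n$, used in (i). It is immediate from the nesting $r_AB_k\subset B_{k+1}$ of \eqref{B_k}, which gives $B_k\supset r_A^kB_0$. No other obstacle is expected.
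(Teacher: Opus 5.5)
Your proof is correct and follows essentially the paper's route: Lemma \ref{byl2d2} and \eqref{B_k} give $(-1,1)^n\subset B_{k_0}$ for some $k_0$, and the step definition of $\rho$ then bounds $\rho(x)$ by a power of $b$; part (ii) follows from (i) via $\rho(A^{-i}z)=b^{-i}\rho(z)$ and $|Q|=b^{-i}$, details the paper leaves implicit. Your constant $b^{k_0}$ is slightly wasteful, since $b^{k_0-1}$ already works, but this is immaterial.
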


\begin{proof}
Statement (ii) follows from (i), so it suffices to prove the latter.
From Lemma \ref{byl2d2}, we infer that
there exists $k_0\in\mathbb N$ such that $(-1,1)^n\subset B_{k_0+1}$.
Then, by the definition of $\rho$, we find that, for any $x\in(-1,1)^n$, $\rho(x)\le b^{k_0}=:C_0$.
This completes the proof of Lemma \ref{yl1101}.
\end{proof}

%

\begin{lemma}\label{yl111503}
Let $i, j\in\mathbb{Z}$, $P\in \mathcal{D}_i $, and
$ Q \in \mathcal{D}_j $. Then the following statements hold.
\begin{enumerate}[\rm(i)]
\item  For any $ x $, $x'\in P $, and $y$, $y'\in Q $,
one has $1 + b^{i\wedge j}\rho(x-y)
\sim 1 + b^{i\wedge j}\rho(x'-y')$.

\item For any $x\in P$,
one has
$1+b^{i\wedge j}\rho(x-x_Q)
\sim 1+ b^{i\wedge j}\rho(x_P-x_Q)$.

\item For any $x\in P$ and $y\in \mathbb R^n$,
one has $1+b^{i}\rho(x-y)
\sim 1+ b^{i}\rho(x_P-y)$.

\item If $i\ge0$, then, for any $x,x'\in P$ and $y\in \mathbb R^n$, one has $1+\rho(x-y)
\sim 1+\rho(x'-y)$.
\end{enumerate}
Here all the positive equivalence constants depend only on $\rho$ and $A$.
\end{lemma}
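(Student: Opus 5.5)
The plan is to reduce everything to two facts: the quasi-triangle inequality (iii) of Definition \ref{quasi-norm} with constant $H$, and the diameter bound of Lemma \ref{yl1101}(ii). The latter says that any two points of a cube $R\in\mathcal D_k$ satisfy $\rho(z-z')\le C_0 b^{-k}$.

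For (i), fix $x,x'\in P$ and $y,y'\in Q$, and apply the quasi-triangle inequality twice:
\begin{align*}
\rho(x-y)\le H^2\left[\rho(x-x')+\rho(x'-y')+\rho(y'-y)\right].
\end{align*}
Lemma \ref{yl1101}(ii) gives $\rho(x-x')\le C_0b^{-i}$ and $\rho(y'-y)\le C_0b^{-j}$. Multiplying by $b^{i\wedge j}$, both $b^{i\wedge j}b^{-i}$ and $b^{i\wedge j}b^{-j}$ are at most $1$. Hence
\begin{align*}
1+b^{i\wedge j}\rho(x-y)\le 1+2H^2C_0+H^2b^{i\wedge j}\rho(x'-y')\lesssim 1+b^{i\wedge j}\rho(x'-y').
\end{align*}
Exchanging the roles of $(x,y)$ and $(x',y')$ gives the reverse inequality, so (i) holds.

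Statement (ii) is the special case of (i) with $y=y'=x_Q\in Q$ and $x'=x_P\in P$; note that $x_Q=A^{-j}k$ lies in $Q=A^{-j}([0,1)^n+k)$.

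For (iii) the same argument works with $y=y'$ arbitrary, so there is no $Q$ term. We get
\begin{align*}
\rho(x-y)\le H\left[\rho(x-x_P)+\rho(x_P-y)\right]\le H\left[C_0b^{-i}+\rho(x_P-y)\right].
\end{align*}
Multiplying by $b^i$ gives one direction, and symmetry between $x$ and $x_P$ gives the other.

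For (iv), the condition $i\ge0$ gives $\rho(x-x')\le C_0b^{-i}\le C_0$. Therefore
\begin{align*}
1+\rho(x-y)\le 1+H\left[C_0+\rho(x'-y)\right]\lesssim 1+\rho(x'-y),
\end{align*}
and again we conclude by symmetry.

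There is no real obstacle. The only point needing care is to check that the implicit constants depend only on $H$ and $C_0$, and hence only on $\rho$ and $A$. This holds because the factor $b^{i\wedge j}$ exactly absorbs the diameters of both cubes.
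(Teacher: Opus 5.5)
Your proposal is correct and follows essentially the same argument as the paper: the quasi-triangle inequality combined with the diameter bound of Lemma \ref{yl1101}(ii), with the factor $b^{i\wedge j}$ absorbing both cube diameters. The only cosmetic difference is in (iii) and (iv): the paper places $y$ in a dyadic cube of level $j\ge i$ (respectively, level $0$) and then invokes (i), whereas you apply the quasi-triangle inequality directly.
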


\begin{proof}
We first prove (i). Without loss of generality, we  may assume that
$j\le i$.
In this case, $i\wedge j=j$.
By Definition \ref{quasi-norm}(iii) and  Lemma \ref{yl1101}(ii),
we find that,
for any $ P\in \mathcal{D}_i $, $ Q \in \mathcal{D}_j $,
$ x, x' \in P $, and $ y, y' \in Q $,
\begin{align*}
1+b^j\rho(x-y)
&\lesssim 1+ b^j\left[\rho(x-x')
+\rho(x'-y')+\rho(y'-y)\right]\\
&\lesssim 1+b^j \left[ b^{-i}
+  \rho(x'-y')
+b^{-j}
\right]
\sim 1+b^j\rho(x'-y').
\end{align*}
By symmetry, we obtain the reverse inequality.
This completes the proof of (i).

From the just proven (i), it is simple to see that (ii) holds.

We then prove (iii).
For any $y\in\mathbb R^n$, there exists $j\in\mathbb Z$ satisfying $j\ge i$ and $Q\in\mathcal D_j $
such that $y\in Q$. This, combined with the just proven (i),
finishes the proof of (iii).

Finally, we prove (iv).
For any $y\in\mathbb R^n$, there exists $k\in\mathbb Z^n$ such that $y\in Q_{0,k}$.
Using this and the just proven (i) with $j=0$,
we conclude that
$1+\rho(x-y)\sim1+\rho(x'-y)$. This completes the proof of (iv) and hence Lemma \ref{yl111503}.
\end{proof}

%

We can now establish an analogue of Lemma \ref{fuhe} for
anisotropic dyadic cubes.

\begin{lemma}\label{tl012001}
Let $p\in(0,\infty), W\in \mathcal A_{p,\infty}$, and
$\{A_Q\}_{Q\in\mathcal{D}}$ be a family of reducing operators of order $p$ for $W$.
Let $d_{1}\in[\![d_{p,\infty}^{\mathrm{lower}}(W),\infty)$
and $d_{2}\in[\![d_{p,\infty}^{\mathrm{upper}}(W),\infty)$.
Then there exists a positive constant $C$ such that,
\begin{enumerate}[(\rm i)]
\item for any $Q,R\in \mathcal{D}$ with $Q\bigcap R\neq\emptyset$,
$$
\left\|A_QA_R^{-1}\right\|^p\leq C\max\left\{\left(\frac{|R|}{|Q|}\right)^{d_1},
\left(\frac{|Q|}{|R|}\right)^{d_2}\right\};
$$

\item for any $Q,R\in \mathcal{D}$,
$$
\left\|A_QA_R^{-1}\right\|^p\leq C\max\left\{\left(\frac{|R|}{|Q|}\right)^{d_1},
\left(\frac{|Q|}{|R|}\right)^{d_2}\right\}\left[1+\frac{\rho(x_Q-x_R)}
{\max\{|Q|,|R|\}}\right]^{d_1+d_2}.
$$
\end{enumerate}
\end{lemma}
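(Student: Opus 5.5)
The plan is to reduce Lemma \ref{tl012001} to Lemma \ref{fuhe} by replacing each anisotropic cube with a comparable anisotropic ball, using Lemma \ref{yl1201}. For $Q\in\mathcal D_{j}$, set $B_Q:=B_\rho(x_Q,C_2b^{-j})$, so that $Q\subset B_Q$ and $|B_Q|\sim|Q|$ by \eqref{BsimQ}. The first step is to show that the reducing operators of $Q$ and $B_Q$ are interchangeable, namely
\begin{align*}
\left\|A_QA_{B_Q}^{-1}\right\|\lesssim1
\quad\text{and}\quad
\left\|A_{B_Q}A_Q^{-1}\right\|\lesssim1.
\end{align*}
The first bound is a doubling-type estimate. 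By Lemma \ref{reduceM}, for any matrix $M$,
\begin{align*}
\left\|A_QM\right\|^p
\sim\fint_Q\left\|W^{\frac1p}M\right\|^p
\lesssim\fint_{B_Q}\left\|W^{\frac1p}M\right\|^p
\sim\left\|A_{B_Q}M\right\|^p .
\end{align*}
Taking $M=A_{B_Q}^{-1}$ gives the first bound.

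The reverse bound is the only point where $W\in\mathcal A_{p,\infty}$ is needed. I would proceed exactly as in the estimate of $\mathrm I_2$ in the proof of Lemma \ref{fuhe}. First, Lemma \ref{exchange} gives
\begin{align*}
\left\|A_{B_Q}A_Q^{-1}\right\|=\left\|A_Q^{-1}A_{B_Q}\right\|.
\end{align*}
The logarithmic formula of Lemma \ref{logabm} is stated for balls, so I would not apply it over $Q$ directly. Instead, take the inner ball $B'_Q:=B_\rho(c_Q,C_1b^{-j})$, which satisfies $B'_Q\subset Q\subset B_Q$, $|B'_Q|\sim|B_Q|$, and $B_Q\subset \lambda B'_Q$ for some fixed $\lambda$. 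The same doubling argument as above gives $\|A_{B'_Q}M\|\lesssim\|A_QM\|$, and hence
\begin{align*}
\left\|A_Q^{-1}A_{B_Q}\right\|\lesssim\left\|A_{B'_Q}^{-1}A_{B_Q}\right\|.
\end{align*}
The last quantity is $\lesssim1$. This follows either from Lemma \ref{fuhe}(i), since the two balls intersect and have comparable radii, or directly from Lemma \ref{logabm} combined with Jensen's inequality and Proposition \ref{improve}(i), as in the treatment of $\mathrm I_2$.

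With these bounds, for (i) I would write
\begin{align*}
\left\|A_QA_R^{-1}\right\|^p\le\left\|A_QA_{B_Q}^{-1}\right\|^p\left\|A_{B_Q}A_{B_R}^{-1}\right\|^p\left\|A_{B_R}A_R^{-1}\right\|^p\lesssim\left\|A_{B_Q}A_{B_R}^{-1}\right\|^p.
\end{align*}
If $Q\cap R\neq\emptyset$, then $B_Q\cap B_R\neq\emptyset$, and $r_{B_Q}\sim|Q|$ by \eqref{ballmea}. Lemma \ref{fuhe}(i) then gives the bound with $|R|/|Q|$ and $|Q|/|R|$ in place of the radius ratios, at the cost of constants.

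For (ii), Lemma \ref{fuhe}(ii) applies with $c_{B_Q}=x_Q$ and $c_{B_R}=x_R$, and $r_{B_Q}\vee r_{B_R}\sim\max\{|Q|,|R|\}$, which gives (ii) directly. I expect the main obstacle to be justifying the interchange $\|A_{B_Q}A_Q^{-1}\|\lesssim1$ for a non-ball set $Q$. The inner-ball device above resolves it, since every comparison then passes through balls, where the earlier lemmas apply.
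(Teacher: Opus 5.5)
Your argument is correct, but it takes a slightly longer route than the paper.

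\textbf{The paper's route.} The paper never needs the reverse interchange $\|A_{B_R}A_R^{-1}\|\lesssim1$. By Lemma \ref{reduceM}, $\|A_QA_R^{-1}\|^p\sim\sup_{\vec z\neq\vec{\mathbf 0}}\fint_Q|W^{\frac1p}\vec z|^p\big/\fint_R|W^{\frac1p}\vec z|^p$. The paper then takes, from Lemma \ref{yl1201}, an \emph{outer} ball $B\supset Q$ for the numerator and an \emph{inner} ball $\widetilde B\subset R$ for the denominator. Both comparisons then go in the favorable direction, and they use only $|B|\sim|Q|$ and $|\widetilde B|\sim|R|$. This gives $\|A_QA_R^{-1}\|^p\lesssim\|A_BA_{\widetilde B}^{-1}\|^p$ with no use of $\mathcal A_{p,\infty}$ at this stage. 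The paper applies Lemma \ref{fuhe}(ii) once, and deduces (i) from (ii) via Lemma \ref{yl1101}(ii). The price is that the ball centers are $x_Q$ and $c_R$. One therefore needs the extra quasi-triangle estimate $\rho(x_Q-c_R)\lesssim(|Q|\vee|R|)+\rho(x_Q-x_R)$.

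\textbf{What your route buys.} Using outer balls for both cubes gives centers exactly $x_Q,x_R$ and radii exactly proportional to $|Q|,|R|$. It also yields the two-sided comparability $\|A_QA_{B_Q}^{-1}\|+\|A_{B_Q}A_Q^{-1}\|\lesssim1$, which is a symmetric and reusable fact.

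\textbf{A shortcut you missed.} Your inner-ball device already contains the paper's trick. Insert $B'_R$ directly into the chain:
\[
\|A_QA_R^{-1}\|\le\|A_QA_{B_Q}^{-1}\|\,\|A_{B_Q}A_{B'_R}^{-1}\|\,\|A_{B'_R}A_R^{-1}\|.
\]
The last factor is $\lesssim1$ trivially, since $B'_R\subset R$ and $|B'_R|\sim|R|$. The detour through $B_R$ would then be unnecessary. So would the additional appeal to $\mathcal A_{p,\infty}$, whether via Lemma \ref{fuhe}(i) for the nested pair $(B_R,B'_R)$ or via Lemma \ref{logabm} and Proposition \ref{improve}(i).
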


\begin{proof}
Statement (i) follows from (ii) and Lemma \ref{yl1101}(ii),
so it suffices to prove (ii).
By Lemma \ref{yl1201}, we find that, for any $Q,R\in\mathcal D$,
there exist $B,\widetilde B\in\mathcal B$ such that,
$Q\subset B$ with $|Q|\sim |B|$ and
$\widetilde B\subset R$ with $|R|\sim |\widetilde B|$.
From these and Lemmas \ref{reduceM} and \ref{fuhe}(ii), we infer that
\begin{align} \label{key2}
\left\|A_QA_R^{-1}\right\|^p
&= \sup_{\vec z\in\mathbb C^m\setminus\{\vec{\mathbf 0}\}}
\frac{|A_QA_R^{-1}\vec z|^p}{|\vec z|^p}
= \sup_{\vec z\in\mathbb C^m\setminus\{\vec{\mathbf 0}\}}
\frac{|A_Q\vec z|^p}{|A_R\vec z|^p}
\sim \sup_{\vec z\in\mathbb C^m\setminus\{\vec{\mathbf 0}\}}
\frac{\fint_Q|W^{\frac1p}(x)\vec z|^p\,dx}
{\fint_R|W^{\frac1p}(x)\vec z|^p\,dx} \notag\\
&\lesssim \sup_{\vec z\in\mathbb C^m\setminus\{\vec{\mathbf 0}\}}
\frac{\fint_B|W^{\frac1p}(x)\vec z|^p\,dx}
{\fint_{\widetilde B}|W^{\frac1p}(x)\vec z|^p\,dx}
\sim \sup_{\vec z\in\mathbb C^m\setminus\{\vec{\mathbf 0}\}}
\frac{|A_B\vec z|^p}{|A_{\widetilde B}\vec z|^p}
=\left\|A_B A_{\widetilde B}^{-1}\right\|^p \notag\\
&\lesssim \max\left\{\left(\frac{r_{\widetilde{B}}}{r_B}\right)^{d_1},
\left(\frac{r_{B}}{r_{\widetilde{B}}}\right)^{d_2}\right\}
\left[1+\frac{\rho(c_{B}-c_{\widetilde{B}})}
{r_{B} \vee r_{\widetilde{B}}}\right]^{d_1+d_2}.
\end{align}
Note that $r_{B}\sim |B|\sim |Q|$
and $r_{\widetilde B}\sim |\widetilde B|\sim |R|$.
Using this, $x_Q\in Q\subset B$, $c_{\widetilde{B}}\in \widetilde{B}\subset R$,
and Lemma \ref{yl1101}(ii), we obtain
\begin{align*}
\rho(c_{B}-c_{\widetilde{B}})
&\lesssim \rho(c_{B}-x_Q)+\rho(x_Q-x_R)+\rho(x_R-c_{\widetilde{B}}) \\
&\leq r_B+\rho(x_Q-x_R)+|R|
\sim (|Q|\vee |R|) +\rho(x_Q-x_R).
\end{align*}
Substituting the above estimates of $r_{B}$, $r_{\widetilde B}$,
and $\rho(c_{B}-c_{\widetilde{B}})$
into \eqref{key2} yields the desired result  and then completes the proof of Lemma \ref{tl012001}.
\end{proof}

Let $\{A_Q\}_{Q\in\mathcal D}$ be
a sequence of positive definite matrices.
For any $j\in\mathbb Z$, let
\begin{align}\label{Aj}
A_j:=\sum_{Q\in\mathcal D_j}\mathbf 1_QA_Q.
\end{align}

\begin{definition}
Let $p\in(0,\infty)$ and $d_1,d_2\in[0,\infty)$.
A sequence of positive definite matrices
$\{A_Q\}_{Q\in\mathcal{D}}$ is called \emph{doubling of order
$(d_1,d_2; p)$} if it satisfies Lemma \ref{tl012001}(ii).
\end{definition}

If $\{A_Q\}_{Q\in\mathcal{D}}$ is doubling of order $(d_1,d_2; p)$,
then, by Lemma \ref{tl012001},
we find that,  for any $ j \in \mathbb{Z} $ and $ Q, R\in \mathcal{D}_j $,
\begin{align*}
\left\|A_QA_R^{-1}\right\|^p\lesssim\left[1+b^j\rho(x_Q-x_R)\right]^{d_1+d_2}.
\end{align*}
From this and Lemma \ref{yl111503}, we further deduce that,
for any $ j \in \mathbb{Z} $, $ Q,R \in \mathcal{D}_j $,
$ x \in Q $, and $ y \in R $,
\begin{align*}
\left\|A_QA_R^{-1}\right\|^p\lesssim\left[1+b^j\rho(x-y)\right]^{d_1+d_2}.
\end{align*}
Using the definition of $A_j$, we can rewrite the above estimate as,
for any $j\in\mathbb Z$ and $x,y\in\mathbb R^n$,
\begin{align}\label{AiAj}
\left\|A_j(x)[A_j(y)]^{-1}\right\|^p
\lesssim\left[1+b^j\rho(x-y)\right]^{d_1+d_2}.
\end{align}

\section{Proof of the $\varphi$-Transform Characterization}
\label{phi}

In this section, we prove the $\varphi$-transform characterization of $\dot{B}^{\alpha}_{p,q}(W,\varphi)$ (Theorem \ref{dl1103}).
We first introduce the averaging matrix-weighted anisotropic Besov spaces
$\dot B_{p,q}^{\alpha}(\mathbb A,\varphi)$ (see Definition \ref{AMW})
and prove their equivalence to $\dot B_{p,q}^{\alpha}(W,\varphi)$
in Subsection \ref{BB}.
Next, we  establish the equivalence relation for
the corresponding sequence space in Subsection \ref{equibb}.
Using these equivalences, we finally prove the $\varphi$-transform characterization of $\dot B_{p,q}^{\alpha}(W,\varphi)$ in Subsection \ref{PTC}.

\subsection{Relations between $\dot{B}^{\alpha}_{p,q}(W,\varphi)$
and $\dot B^{\alpha}_{p,q}(\mathbb{A},\varphi)$}\label{BB}

We introduce the averaging matrix-weighted anisotropic Besov spaces.

\begin{definition}\label{AMW}
Let $\alpha\in\mathbb{R}$, $p\in(0,\infty)$,
$q\in(0,\infty]$, and $\varphi\in{\mathcal S}$
satisfy \eqref{hs2}. Assume that
${\mathbb A}:=\{A_Q\}_{Q\in\mathcal{D}}$ is a sequence of
positive definite matrices. The \emph{averaging matrix-weighted anisotropic Besov space
$\dot{B}^{\alpha}_{p,q}(A,{\mathbb A},\varphi)$} is defined by setting
\begin{align*}
\dot{B}^{\alpha}_{p,q}(A,{\mathbb A},\varphi)
:=\left\{\vec f\in\left(\mathcal{S}'_\infty\right)^m
:\  \left\|\vec{f}\right\|_{\dot{B}^{\alpha}_{p,q}(A,{\mathbb A},\varphi)}
<\infty\right\},
\end{align*}
where, for any $\vec f\in(\mathcal{S}'_\infty)^m$,
\begin{align*}
\left\|\vec{f}\right
\|_{\dot{B}^{\alpha}_{p,q}(A,{\mathbb A},\varphi)}
:= \left\|\left\{b^{j\alpha}
\left|A_j\left(\varphi_j\ast\vec{f}\right)\right|\right\}_{j\in\mathbb Z}\right\|_{\ell^qL^p}
\end{align*}
with $A_j$ and $\|\cdot\|_{\ell^qL^p}$ as, respectively, in \eqref{Aj} and \eqref{lqLp}.
In what follows, if there is no confusion,
we denote $\dot{B}^{\alpha}_{p,q}(A,{\mathbb A},\varphi)$
simply by $\dot{B}^{\alpha}_{p,q}(\mathbb A,\varphi)$.
\end{definition}

For any $ \vec f \in ({\mathcal S}'_\infty)^m $ and
$ \varphi \in {\mathcal S}_\infty$,
let $ \sup_{\mathbb{A}, \varphi} (\vec f)
:= \{\sup_{\mathbb{A}, \varphi, Q}(\vec f)\}_{Q \in \mathcal{D}},$
where, for any $ Q \in \mathcal{D}$,
\begin{align} \label{sup}
\sup_{\mathbb{A}, \varphi, Q} \left( \vec f \right)
:= |Q|^{\frac{1}{2}} \sup_{y \in Q}
\left| A_Q \left( \varphi_{j_Q} * \vec f \right) (y) \right|.
\end{align}
The following theorem is the main result of this subsection.

\begin{theorem}\label{dl111301}
Let $\alpha\in\mathbb{R}$, $p\in(0,\infty)$,
$q\in(0,\,\infty]$, and $\varphi\in{\mathcal S}$ satisfy \eqref{hs2}.
Assume that $W\in \mathcal A_{p,\infty}$ and ${\mathbb A}:=\{A_Q\}_{Q\in\mathcal{D}}$ is
a sequence of reducing operators of order $ p $ for $ W $.
Then,  for any $\vec f\in ({\mathcal S}'_\infty)^m$,
$$
\left\| \vec f \right\|_{\dot B^{\alpha}_{p,q}(\mathbb{A},\varphi)}
\sim\left\|\sup_{\mathbb{A},\varphi}
\left(\vec{f}\right)\right\|_{\dot b^{\alpha}_{p,q}}
\sim
\left\|\vec f\right\|_{{\dot{B}^{\alpha}_{p,q}(W,\varphi)}},
$$
where the positive equivalence constants are independent of $\vec f$.
\end{theorem}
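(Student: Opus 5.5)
The plan is to prove a cycle of inequalities. Write $g_j:=\varphi_j\ast\vec f$; this is a $\mathbb C^m$-valued smooth function whose Fourier transform is supported in $(A^*)^{j}[-\pi,\pi]^n$. Throughout, $\sup_{\mathbb A,\varphi,Q}$ is as in \eqref{sup}. The cycle is
\begin{align*}
\left\|\vec f\right\|_{\dot B^{\alpha}_{p,q}(\mathbb A,\varphi)}+\left\|\vec f\right\|_{\dot B^{\alpha}_{p,q}(W,\varphi)}
\lesssim\left\|\sup_{\mathbb A,\varphi}\left(\vec f\right)\right\|_{\dot b^\alpha_{p,q}}
\lesssim\min\left\{\left\|\vec f\right\|_{\dot B^{\alpha}_{p,q}(\mathbb A,\varphi)},\left\|\vec f\right\|_{\dot B^{\alpha}_{p,q}(W,\varphi)}\right\}.
\end{align*}

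\textbf{Step 1: the first inequality (easy).} By \eqref{sj}, the $j$-th layer of the sequence $\sup_{\mathbb A,\varphi}(\vec f)$ is the function
\begin{align*}
\sum_{Q\in\mathcal D_j}\mathbf 1_Q\sup_{y\in Q}\left|A_Q g_j(y)\right|.
\end{align*}
This function dominates $|A_jg_j|$ pointwise, which gives the bound for $\|\vec f\|_{\dot B^{\alpha}_{p,q}(\mathbb A,\varphi)}$. For the $W$-norm, fix $x\in Q\in\mathcal D_j$. Then
\begin{align*}
\left|W^{\frac1p}(x)g_j(x)\right|\le\left\|W^{\frac1p}(x)A_Q^{-1}\right\|\sup_{y\in Q}\left|A_Qg_j(y)\right|.
\end{align*}
By Lemmas \ref{exchange} and \ref{reduceM}, $\int_Q\|W^{\frac1p}(x)A_Q^{-1}\|^p\,dx\sim|Q|$. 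Integrating over each $Q$ and summing over $Q\in\mathcal D_j$ therefore gives the bound for $\|\vec f\|_{\dot B^{\alpha}_{p,q}(W,\varphi)}$.

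\textbf{Step 2: an anisotropic Peetre/Plancherel--P\'olya estimate.} Fix $r\in(0,p)$, to be chosen small, and a large $N$. Since $\widehat{g_j}$ is supported in $(A^*)^j[-\pi,\pi]^n$, the standard argument (reproduce $g_j=g_j\ast\Phi_j$ with $\widehat\Phi=1$ on $[-\pi,\pi]^n$, then use the usual $r<1$ trick together with Lemma \ref{yl111503}) should give, for any $Q\in\mathcal D_j$ and any constant matrix $M$,
\begin{align*}
\sup_{y\in Q}\left|Mg_j(y)\right|\lesssim\sum_{R\in\mathcal D_j}\left[1+b^j\rho(x_Q-x_R)\right]^{-N}\left[\fint_R\left|Mg_j(z)\right|^r\,dz\right]^{\frac1r}.
\end{align*}
I would take $M=A_Q$ and then insert $A_Q=A_QA_R^{-1}A_R$. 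Lemma \ref{tl012001}(ii), in the form \eqref{AiAj}, bounds $\|A_QA_R^{-1}\|$ by $[1+b^j\rho(x_Q-x_R)]^{(d_1+d_2)/p}$. Taking $N$ large absorbs this growth, and I get
\begin{align*}
\sup_{y\in Q}\left|A_Qg_j(y)\right|\lesssim\sum_{R\in\mathcal D_j}\left[1+b^j\rho(x_Q-x_R)\right]^{-N'}\left[\fint_R\left|A_Rg_j(z)\right|^r\,dz\right]^{\frac1r}.
\end{align*}
Since the kernel is summable over a single level, taking the $\ell^p$ norm in $Q$ (via the $p$-triangle inequality if $p\le1$, or Young's inequality on $\mathbb Z^n$ if $p>1$) yields
\begin{align*}
\left\|\sum_{Q\in\mathcal D_j}\mathbf 1_Q\sup_{y\in Q}\left|A_Qg_j(y)\right|\right\|_{L^p}\lesssim\left[\sum_{R\in\mathcal D_j}|R|\left(\fint_R\left|A_Rg_j\right|^r\right)^{\frac pr}\right]^{\frac1p}.
\end{align*}
Hölder's inequality with $r<p$ bounds the right-hand side by $\|A_jg_j\|_{L^p}$. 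This closes the cycle for the $\mathbb A$-norm.

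\textbf{Step 3: replacing $A_R$ by $W$ (the main obstacle).} This is the step that needs the self-improving property. By Proposition \ref{improve}(i) and Lemma \ref{Apu}, after transferring from balls to cubes via Lemma \ref{yl1201} as in Lemma \ref{improve D}, there is $u\in(0,\infty)$ with
\begin{align*}
\sup_{R\in\mathcal D}\fint_R\left\|A_RW^{-\frac1p}(z)\right\|^u\,dz<\infty.
\end{align*}
I would choose $r$ by $\frac1r=\frac1u+\frac1p$; Step 2 holds for every $r>0$, so this choice is admissible. Writing $|A_Rg_j(z)|\le\|A_RW^{-\frac1p}(z)\|\,|W^{\frac1p}(z)g_j(z)|$ and applying Hölder's inequality gives
\begin{align*}
\left[\fint_R\left|A_Rg_j\right|^r\right]^{\frac1r}\lesssim\left[\fint_R\left|W^{\frac1p}g_j\right|^p\right]^{\frac1p}.
\end{align*}
Feeding this into the $\ell^p$ sum from Step 2 gives $\|\sup_{\mathbb A,\varphi}(\vec f)\|_{\dot b^{\alpha}_{p,q}}\lesssim\|\vec f\|_{\dot B^{\alpha}_{p,q}(W,\varphi)}$, which completes the cycle.

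The delicate points are two. First, the Peetre-type inequality must be proved with only a polynomial kernel, because anisotropic dyadic diameters decay slowly. Second, the exponent bookkeeping must keep $N$ large relative to $(d_1+d_2)/p$ while $r$ stays small. I expect the anisotropic Plancherel--P\'olya step to be the main technical work; I would first prove it by the standard pointwise maximal-function argument with $\rho$ in place of $|\cdot|$, using Lemma \ref{yl111503} to make the weights constant on dyadic cubes.
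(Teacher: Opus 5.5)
Your proposal is correct and is essentially the paper's argument (Lemmas \ref{yl111601} and \ref{WsimA}) written in discrete form. It uses the same ingredients: the reproducing-formula $r$-trick, the same-level bound \eqref{AiAj} on $\|A_QA_R^{-1}\|$, and the self-improvement $W\in\mathcal A_{p,u}$ followed by H\"older. Your per-cube H\"older and Young/$p$-triangle summation over $\mathcal D_j$ replace the paper's kernel-weighted H\"older with Lemmas \ref{fsj} and \ref{summary B}.

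One point to add concerns Step 2 for $r<1$. There the $r$-trick literally gives $\sup_{y\in Q}|Mg_j(y)|\lesssim\bigl(\sum_{R\in\mathcal D_j}[1+b^j\rho(x_Q-x_R)]^{-Lr}\fint_R|Mg_j|^r\bigr)^{1/r}$. Your $\ell^1$-form follows from this by a discrete H\"older inequality that lowers the kernel exponent by any fixed amount exceeding $\frac1r-1$, which is harmless since $L$ is arbitrary.
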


To prove Theorem \ref{dl111301}, we need several technical lemmas.
The proof of the following result is standard; we omit the details.

\begin{lemma} \label{yl111505}
Let $a \in \mathbb R$ and $j\in\mathbb Z$. Then the following statements hold.
\begin{enumerate}[\rm(i)]
\item If $a\in(1,\infty)$, then, for any $ y \in \mathbb{R}^n $,
$$
\int_{B_\rho(y,b^{-j})} \frac{b^{j}}{[1 + b^j\rho( x - y)]^a} \, dx
\sim 1
\sim \int_{\mathbb{R}^n} \frac{b^{j}}{[1 + b^j\rho( x - y)]^a} \, dx.
$$

\item If $a\in(1,\infty)$ and $j \leq 0$, then,
for any $y \in \mathbb{R}^n $,
$$
\sum_{k \in \mathbb{Z}^n\cap B_\rho(y, C_0 b^{-j})} \frac{b^{j}}{ [1 + b^j\rho( k - y)]^a}
\sim 1
\sim \sum_{k \in \mathbb{Z}^n} \frac{b^{j}}{ [1 + b^j\rho( k - y)]^a},
$$
where $C_0$ is as in Lemma \ref{yl1101}.

\item If
$\int_{\mathbb{R}^n} \frac{b^{j}}{[1 + b^j\rho( x - y)]^a} \, dx
<\infty$
for some $y\in\mathbb R^n$, then $a\in(1,\infty)$.

\item If $ j \leq 0 $ and
$\sum_{k \in \mathbb{Z}^n} \frac{b^{j}}{ [1 + b^j\rho( k - y)]^a}
< \infty$
for some $y\in\mathbb R^n$, then $a\in(1,\infty)$.
\end{enumerate}
Here all the positive equivalence constants depend only on  $a$, $\rho$, and $ A $.
\end{lemma}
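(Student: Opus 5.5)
The plan is to reduce everything to the single model integral
$$
I_a:=\int_{\mathbb R^n}\frac{dz}{[1+\rho(z)]^a}
$$
by anisotropic rescaling, and then to compute $I_a$ through the dyadic annuli $B_{k+1}\setminus B_k$ on which the step quasi-norm $\rho$ is constant.

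\emph{Statements (i) and (iii).} In both integrals I substitute $x=y+A^{-j}z$. Then $dx=b^{-j}\,dz$, and by Definition \ref{quasi-norm}(ii), $b^j\rho(x-y)=\rho(z)$. The region $B_\rho(y,b^{-j})$ becomes $B_\rho(\mathbf 0,1)=B_0$. So the two integrals equal $\int_{B_0}[1+\rho(z)]^{-a}\,dz$ and $I_a$, respectively.

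For the first one, $\rho<1$ on $B_0$ and $|B_0|=1$ by \eqref{B_k}, so the integral lies in $[2^{-a},1]$ when $a>0$. This gives the lower bound $\gtrsim 1$, and it also bounds the $B_0$ part of $I_a$.

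For the rest of $I_a$ I write $\mathbb R^n\setminus B_0=\bigcup_{k\ge0}(B_{k+1}\setminus B_k)$. On each annulus $\rho\equiv b^k$, and the annulus has measure $b^{k+1}-b^k\sim b^k$. Hence
$$
I_a\sim 1+\sum_{k\ge0}b^{k}(1+b^k)^{-a}\sim\sum_{k\ge0}b^{k(1-a)}.
$$
This sum is finite, and $\sim1$ with constants depending only on $a$, $b$, exactly when $a>1$, which proves (i). The same two-sided annular comparison shows that $I_a=\infty$ when $a\le1$; this is (iii), since the original integral equals $I_a$ for every $y$ and $j$.

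\emph{Statements (ii) and (iv).} Here $j\le0$, and I compare the lattice sum with the integral. Take $k\in\mathbb Z^n$ and $x\in Q_{0,k}$. By Lemma \ref{yl1101}(ii), $\rho(x-k)\le C_0$, so $b^j\rho(x-k)\le C_0$ because $b^j\le1$. The quasi-triangle inequality then gives
$$
1+b^j\rho(x-y)\sim 1+b^j\rho(k-y),
$$
with constants depending only on $H$ and $C_0$. This is the same mechanism as in Lemma \ref{yl111503}(iv). Since $|Q_{0,k}|=1$ and the cubes $Q_{0,k}$ tile $\mathbb R^n$, the full sum is comparable to $\int_{\mathbb R^n}b^j[1+b^j\rho(x-y)]^{-a}\,dx$. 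Statement (iv) then follows from (iii), and the upper bound in (ii) follows from (i).

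For the lower bound in (ii), I would restrict to the lattice points $k$ with $Q_{0,k}\cap B_\rho(y,c\,b^{-j})\neq\emptyset$, for a small constant $c$ depending only on $H$ and $C_0$. Every such $k$ satisfies $\rho(k-y)\le H(c\,b^{-j}+C_0)\le C_0b^{-j}$, using $b^{-j}\ge1$ and taking $c$ small. These cubes cover $B_\rho(y,c\,b^{-j})$, so there are at least $c\,b^{-j}$ of them by \eqref{ballmea}. Each corresponding term is at least $b^j(1+C_0)^{-a}$, so the restricted sum is $\gtrsim1$.

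The only delicate point is this bookkeeping of constants in the lattice lower bound: showing that the lattice points near $y$ stay inside the prescribed radius $C_0b^{-j}$. That is where the hypothesis $j\le0$ (so $b^{-j}\ge1$ absorbs the additive $C_0$) is used. Everything else is routine, so I expect no genuine obstacle.
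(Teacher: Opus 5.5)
Your arguments for (i), (iii), (iv) and for the upper bound in (ii) are correct. The substitution $x=y+A^{-j}z$ works. So does the exact annular computation
\[
\int_{\mathbb R^n}\frac{dz}{[1+\rho(z)]^a}=\int_{B_0}\frac{dz}{[1+\rho(z)]^a}+(b-1)\sum_{k\ge0}\frac{b^k}{(1+b^k)^a},
\]
and so does the comparison $1+b^j\rho(x-y)\sim1+b^j\rho(k-y)$ for $x\in Q_{0,k}$ and $j\le0$.

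\textbf{The gap is in the lower bound of (ii)}, exactly at the step you call routine bookkeeping. The inequality $H(c\,b^{-j}+C_0)\le C_0b^{-j}$ does not follow from $b^{-j}\ge1$ and a small $c$. Because $H\ge1$, it forces $b^{-j}\ge HC_0/(C_0-Hc)>H$. For $j=0$ it reads $H(c+C_0)\le C_0$, which fails for every $c>0$. The additive term $HC_0$ is absorbed only when $b^{-j}$ is large compared with $H$. For instance, when $b^{-j}\ge 2H$ your counting argument goes through with any $c<C_0/(2H)$.

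The finitely many $j$ with $1\le b^{-j}<2H$ need a separate, easy argument. There $b^j>(2H)^{-1}$, so a single lattice point in $B_\rho(y,C_0b^{-j})$ already contributes more than $(2H)^{-1}(1+C_0)^{-a}$. For $j\le-1$, the point $k$ with $y\in Q_{0,k}$ qualifies, since $\rho(k-y)\le C_0<C_0b^{-j}$.

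At $j=0$, however, Lemma \ref{yl1101} only gives $\rho(k-y)\le C_0$, and with the open ball this is a real obstruction. For an arbitrary admissible choice of $k_0$ in Lemma \ref{yl1101}, the set $\mathbb Z^n\cap B_\rho(y,C_0)=\mathbb Z^n\cap(y+B_{k_0})$ can be empty. Here is an instance:
\begin{itemize}
\item take $n=2$ and $A=10I_2$;
\item let $\Delta$ be the ellipse of area $1$ with semi-axis $1.5\cdot 10^{-3}$ in the direction $(1,-1)$;
\item take the minimal $k_0$, which is $k_0=2$, so that $B_{2}$ has half-width $0.15$ across the diagonal;
\item take $y=(\frac12,0)$, whose lattice translates all lie at distance at least $1/(2\sqrt2)$ across the diagonal.
\end{itemize}
So at $j=0$ you must do one of two things. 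You can read the ball as closed, which is how this lower bound is actually used later in the paper, through $Z_\rho(x,C_0)$ defined with $\le$. Alternatively, you can choose $k_0$ with $(-1,1)^n\subset B_{k_0}$, so that $\rho<C_0$ on $[0,1)^n$.
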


Inspired by \cite[Appendix B.1]{g14}, we obtain the following conclusion.

\begin{lemma}\label{intmn}
Let $M, N\in(1, \infty)$.
Then there exists a positive constant $C$, depending only on $M, N$, and $\rho$,
such that, for any  $y, z\in\mathbb{R}^n$, $j, k\in\mathbb{Z}$,
\begin{align}\label{jkmn}
\int_{\mathbb{R}^n}\frac{b^{j}}{[1+b^j\rho(x-y)]^M}
\frac{b^k}{[1+b^k\rho(x-z)]^N}\,dx
\leq C\frac{b^{j\wedge k}}{[1+b^{j\wedge k}\rho(y-z)]^{M\wedge N}}.
\end{align}
\end{lemma}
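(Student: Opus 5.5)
By symmetry in $(j,y,M)$ and $(k,z,N)$, it suffices to treat the case $j\le k$, so that $j\wedge k=j$. The target is then
\[
\frac{b^{j}}{[1+b^{j}\rho(y-z)]^{M\wedge N}}.
\]
I would split $\mathbb R^n$ according to which of the two points $y$ and $z$ the variable $x$ is closer to, measured in $\rho$. The tool is the quasi-triangle inequality of Definition \ref{quasi-norm}(iii): for every $x$,
\[
\rho(y-z)\le H[\rho(x-y)+\rho(x-z)],
\]
so at least one of $\rho(x-y)$ and $\rho(x-z)$ is at least $\rho(y-z)/(2H)$. Set
\[
E_1:=\{x:\ \rho(x-y)\ge \rho(y-z)/(2H)\},\qquad E_2:=\mathbb R^n\setminus E_1.
\]
On $E_2$ we have $\rho(x-z)\ge \rho(y-z)/(2H)$.

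On $E_1$, since $H\ge1$ is a fixed constant,
\[
1+b^j\rho(x-y)\gtrsim 1+b^j\rho(y-z),
\]
so I bound the first factor by
\[
b^j[1+b^j\rho(y-z)]^{-M}\le b^j[1+b^j\rho(y-z)]^{-M\wedge N}.
\]
What remains is the integral of the second factor over $\mathbb R^n$, which is $\sim1$ by Lemma \ref{yl111505}(i) because $N>1$.

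On $E_2$, we have $\rho(x-z)\gtrsim\rho(y-z)$, and since $k\ge j$,
\[
1+b^k\rho(x-z)\gtrsim 1+b^k\rho(y-z)\ge 1+b^j\rho(y-z).
\]
Write the second factor as $b^k[1+b^k\rho(x-z)]^{-N}$. I use only part of its decay: since $b^{k-j}\ge1$,
\[
1+b^k\rho(x-z)\ge 1+b^j\rho(x-z)\gtrsim 1+b^j\rho(y-z),
\]
and so
\[
\frac{b^k}{[1+b^k\rho(x-z)]^{N}}\lesssim \frac{b^k}{[1+b^j\rho(y-z)]^{N}}.
\]
This leaves $b^k\int b^j[1+b^j\rho(x-y)]^{-M}\,dx\sim b^k$, which carries the wrong power $b^k$ in place of $b^j$.

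This is the main obstacle. I would fix it by reversing the roles: bound the first factor pointwise by $b^j$, and integrate the second. Precisely, on $E_2$ take the sup of the first factor, which is at most $b^j$. Then split the second factor as
\[
[1+b^k\rho(x-z)]^{-N}=[1+b^k\rho(x-z)]^{-N+\epsilon}\,[1+b^k\rho(x-z)]^{-\epsilon}.
\]
This still fails to give a clean bound, so instead I split $E_2$ once more according to the size of $\rho(y-z)$.

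Case $\rho(y-z)\le b^{-j}$. The target is $\sim b^j$, and the whole integral is trivially at most
\[
\sup_x\frac{b^j}{[1+b^j\rho(x-y)]^{M}}\cdot\int\frac{b^k}{[1+b^k\rho(x-z)]^{N}}\,dx\lesssim b^j,
\]
by Lemma \ref{yl111505}(i).

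Case $\rho(y-z)>b^{-j}$. On $E_2$,
\[
[1+b^k\rho(x-z)]^{-N}\le [b^k\rho(y-z)/(2H)]^{-N}\lesssim b^{-kN}\rho(y-z)^{-N},
\]
and the first factor integrates to $\sim1$. The $E_2$ part is therefore
\[
\lesssim b^{k}\,b^{-kN}\rho(y-z)^{-N}=b^{k(1-N)}\rho(y-z)^{-N}\le b^{j(1-N)}\rho(y-z)^{-N},
\]
where the last step uses $N>1$ and $k\ge j$. Since $b^j\rho(y-z)>1$, this is $\sim b^j[1+b^j\rho(y-z)]^{-N}$, as required. Together with the $E_1$ estimate this proves \eqref{jkmn}, with constants depending only on $M$, $N$, $H$ and $\rho$.
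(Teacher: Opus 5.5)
Your argument is correct and is essentially the paper's proof. The ingredients match: the same quasi-triangle splitting of $\mathbb R^n$ into the points far from $y$ and the points far from $z$; the trivial bound when $b^{j\wedge k}\rho(y-z)\lesssim 1$; and the key step $b^{k(1-N)}\le b^{j(1-N)}$ (using $N>1$) on the region far from the fine-scale center. Only the order of the case split and the labeling ($j\le k$ instead of the paper's $k\le j$) differ. The abandoned detours in the middle of your write-up (the ``reversing the roles'' attempt and the $\epsilon$-splitting) play no part in the final argument and should simply be deleted.
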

\begin{proof}
By symmetry, without loss  of generality, we  may assume that $k\le j$.
We prove the present lemma
by considering  two cases for $b^k\rho(y-z)$.
For brevity, for any measurable set $E\subset\mathbb R^n$, let
$$I(E):=\int_{E}\frac{b^{j}}{[1+b^j\rho(x-y)]^M}
\frac{b^k}{[1+b^k\rho(x-z)]^N}\,dx.$$

\emph{Case (1)} ${b^k}\rho(y-z)\le{2H}$. In this case,
we have
$$\frac{b^k}{[1+b^k\rho(x-z)]^{N}}
\le b^{k}\le\frac{b^{k}(1+2H)^{M\wedge N}}{[1+b^k\rho(y-z)]^{M\wedge N}}.
$$
Using this	and  Lemma \ref{yl111505}(i), we obtain
\begin{align*}
{I}(\mathbb R^n)
\leq\frac{b^{k}(1+2H)^{M\wedge N}}{[1+b^k\rho(y-z)]^{M\wedge N}}
\int_{\mathbb{R}^n}\frac{b^{j}}{[1+b^j\rho(x-y)]^M}
\,dx
\lesssim\frac{b^{k}}{[1+b^k\rho(y-z)]^{M\wedge N}}.
\end{align*}
This completes the proof of \eqref{jkmn} in this case.

\emph{Case (2)} ${b^k}\rho(y-z)>{2H}$.
For any $x\in\mathbb{R}^n$,
$$
\rho(y-z)\le H[\rho(x-y)+\rho(x-z)]
\le 2H\max\{\rho(x-y),\rho(x-z)\},
$$
and hence $x\in E_y\bigcup E_z$,
where, for any $a\in\mathbb R^n$,
$$ E_a:=\left\{x\in\mathbb R^n:\,\rho(y-z)\le2H\rho(x-a)\right\}.$$
Therefore, ${I}(\mathbb R^n)\le{I}(E_y)+{I}(E_z)$.
By the definition of ${E}_z$ and Lemma \ref{yl111505}(i), we obtain
\begin{align*}
{I}(E_z)
&\lesssim\frac{b^{k}}{[1+{b^k}\rho(y-z)]^N}
\int_{E_{z}}\frac{b^j}{[1+b^j\rho(x-y)]^M}\,dx\notag\\
&\lesssim\frac{b^{k}}{[1+{b^k}\rho(y-z)]^N}
\le\frac{b^{k}}{[1+{b^k}\rho(y-z)]^{M\wedge N}}.
\end{align*}
From the definition of ${E}_y$ and Lemma \ref{yl111505}(i), it follows that
\begin{align}\label{jz}
{I}(E_y)
&\lesssim\frac{b^j}{[1+{b^{j}}\rho(y-z)]^M}\int_{E_{y}}
\frac{b^k}{[1+b^k\rho(x-z)]^N}\,dx\notag\\
&\lesssim\frac{b^j}{[1+{b^{j}}\rho(y-z)]^M}
\le \frac{b^j}{[b^j\rho(y-z)]^M}.
\end{align}
Moreover, by $M\in(1,\infty), k\le j$, and the assumption that
${b^k}\rho(y-z)>{2H}$, we obtain
\begin{align*}
\frac{b^j}{[b^j\rho(y-z)]^M}
\le\frac{b^{k(1-M)}}{[\rho(y-z)]^M}
\sim\frac{b^k}{[1+b^k\rho(y-z)]^M}
\le\frac{b^{k}}{[1+{b^k}\rho(y-z)]^{M\wedge N}},
\end{align*}
which, together with \eqref{jz}, further implies that
$$ {I}(E_y)\lesssim \frac{b^{k}}{[1+{b^k}\rho(y-z)]^{M\wedge N}}.$$
Combining the above estimates, we obtain \eqref{jkmn} in this case.
This completes the proof of Lemma \ref{intmn}.
\end{proof}

Applying Lemma \ref{intmn},
we obtain the following conclusion; we omit the details.

\begin{lemma}\label{summn}
Let $M, N\in( 1, \infty)$. Then there exists a positive constant
$C$, depending only on $M, N$, and  $\rho$, such that, for any
$y, z\in \mathbb{R} ^n$, $j,k\in \mathbb Z\cap(-\infty, 0]$,
$$\sum_{l\in\mathbb{Z}^n}\frac{b^j}{[1+b^j\rho(l-y)]^M}
\frac{b^k}{[1+b^k\rho(l-z)]^N}
\leq C\frac{b^{j\wedge k}}{[1+b^{j\wedge k}\rho(y-z)]^{M\wedge N}}.$$
\end{lemma}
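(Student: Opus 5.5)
The plan is to reduce the lattice sum to the integral estimate of Lemma \ref{intmn} by comparing each summand with an average over the unit cube $Q_{0,l}=l+[0,1)^n\in\mathcal D_0$. The key observation is that, since $j,k\le 0$, the functions
\begin{align*}
x\mapsto \frac{b^j}{[1+b^j\rho(x-y)]^M}
\quad\text{and}\quad
x\mapsto \frac{b^k}{[1+b^k\rho(x-z)]^N}
\end{align*}
are essentially constant on each such cube.

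\emph{Step 1: comparability on unit cubes.} For $l\in\mathbb Z^n$ and $x\in Q_{0,l}$, Lemma \ref{yl1101}(ii) gives $\rho(x-l)\le C_0$. Hence, by the quasi-triangle inequality and $b^j\le 1$,
\begin{align*}
1+b^j\rho(l-y)\lesssim 1+b^j\rho(x-y)+b^jC_0\lesssim 1+b^j\rho(x-y),
\end{align*}
and the reverse inequality holds symmetrically. Alternatively, Lemma \ref{yl111503}(iv) combined with the scaling $b^j\rho(\cdot)=\rho(A^j\cdot)$ gives the same conclusion. The identical argument applies with $k,z,N$ in place of $j,y,M$. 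Therefore, for every $x\in Q_{0,l}$, the $l$-th summand is comparable to the integrand of \eqref{jkmn} evaluated at $x$, with constants depending only on $M,N,\rho,A$.

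\emph{Step 2: summation.} Because $|Q_{0,l}|=1$ and the cubes $\{Q_{0,l}\}_{l\in\mathbb Z^n}$ partition $\mathbb R^n$, I will write each summand as the integral of itself over $Q_{0,l}$ and apply Step 1 pointwise. This gives
\begin{align*}
\sum_{l\in\mathbb Z^n}\frac{b^j}{[1+b^j\rho(l-y)]^M}\frac{b^k}{[1+b^k\rho(l-z)]^N}
\lesssim \int_{\mathbb R^n}\frac{b^j}{[1+b^j\rho(x-y)]^M}\frac{b^k}{[1+b^k\rho(x-z)]^N}\,dx .
\end{align*}
Lemma \ref{intmn} then bounds the right-hand side by $C\,b^{j\wedge k}[1+b^{j\wedge k}\rho(y-z)]^{-(M\wedge N)}$, which is the desired estimate.

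The only point requiring care is Step 1: the constant absorbed there must be uniform in $j$ and $k$. This is exactly where the hypothesis $j,k\le 0$ enters, since it guarantees $b^jC_0\le C_0$. Apart from this, the argument is routine.
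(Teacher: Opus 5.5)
Your proof is correct and is essentially the argument the paper intends: the paper derives this lattice-sum estimate directly from Lemma \ref{intmn} and omits the details. Your comparison of each summand with the integrand on the unit cube $Q_{0,l}$ supplies exactly that omitted step, and the constants are uniform because $j,k\le 0$.
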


By \cite[Theorem 2.3.21]{hsdyk01}, we find that if
$g\in\mathcal S'$ and $\widehat g$ has compact support, then
$g$ coincides with a $C^{\infty}$ function.
We now recall the following reproducing formula.

\begin{lemma}\label{repro}
The following statements hold.
\begin{enumerate}[\rm(i)]
\item If $g\in\mathcal{S}'$,
$h\in\mathcal{S}$ satisfy
$\operatorname{supp}\widehat{g}$, $\operatorname{supp}\widehat{h}\subset(A^*)^j[-\pi,\pi]^n$
for some $j\in\mathbb Z$, then
\begin{align}\label{reproduce2}
(g\ast h)(x)
=\sum_{k\in\mathbb{Z}^n}
b^{-j}g(A^{-j}k)h(x-A^{-j}k)
\end{align}
converges pointwise and  in $\mathcal{S}'$.

\item If $\varphi,\psi\in\mathcal{S}$ satisfy \eqref{hs3} and
$\operatorname{supp}\widehat{\varphi},\operatorname{supp}\widehat{\psi}
\subset[-\pi,\pi]^n\setminus\{\mathbf0\}$,
then, for any $f\in L^2$ (resp. $\mathcal S_{\infty}$ or $\mathcal S_{\infty}'$),
\begin{align}\label{reproduce}
f=\sum_{Q\in\mathcal{D}}\langle f,\varphi_Q\rangle\psi_Q,
\end{align}
where the series converges in $L^2$ (resp. $\mathcal S_{\infty}$ or $\mathcal S_{\infty}'$).
\end{enumerate}
\end{lemma}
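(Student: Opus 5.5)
The plan is to reduce part (i) to the classical sampling identity at scale $j=0$ by an anisotropic dilation, and then to obtain part (ii) by applying (i) scale by scale to the Calder\'on decomposition produced by \eqref{hs3}.

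For (i), set $G(x):=g(A^{-j}x)$ and $H(x):=b^{-j}h(A^{-j}x)$. Since $\widehat G(\xi)=b^{j}\widehat g((A^*)^j\xi)$, both $\widehat G$ and $\widehat H$ are supported in $[-\pi,\pi]^n$. A direct change of variables turns \eqref{reproduce2} into
$$(G\ast H)(x)=\sum_{k\in\mathbb Z^n}G(k)H(x-k),$$
so it suffices to treat $j=0$.

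\begin{itemize}
\item By \cite[Theorem 2.3.21]{hsdyk01}, $G$ is a $C^\infty$ function of at most polynomial growth, together with all its derivatives.
\item Since $H\in\mathcal S$, the series $\sum_k G(k)H(x-k)$ converges absolutely and locally uniformly. The partial sums are bounded by $C(1+|x|)^{N_0}$ uniformly, so they also converge in $\mathcal S'$; Lemma \ref{summn} or a crude lattice sum suffices for these bounds.
\item For the identity, I use Poisson summation in $\mathcal S'$: the Fourier transform of $\sum_kG(k)\delta_k$ is $\sum_k G(k)e^{-ik\cdot\xi}=\sum_{l\in\mathbb Z^n}\widehat G(\cdot+2\pi l)$. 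Multiplying by $\widehat H$ and taking inverse Fourier transforms gives the claim, provided $\widehat G(\cdot+2\pi l)\widehat H=0$ for $l\neq\mathbf 0$.
\end{itemize}

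The main obstacle is this last vanishing. The supports of $\widehat G(\cdot+2\pi l)$ and $\widehat H$ can meet on faces of the cube, because the supports are closed. My approach is to note that $\widehat H$ is smooth and vanishes outside $[-\pi,\pi]^n$, so $\widehat H$ and all its derivatives vanish on $\partial[-\pi,\pi]^n$. The tempered distribution $\widehat G(\cdot+2\pi l)$ has finite order near that boundary. A distribution of finite order multiplied by a smooth function vanishing to infinite order on the common support is zero, by the standard structure of distributions supported on a closed set. This yields the identity.

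For (ii), I first take $f\in\mathcal S_\infty$. By \eqref{hs3} and the support assumptions, only finitely many terms of the sum over $j$ are nonzero at each $\xi\neq\mathbf 0$. The partial sums $\sum_{|j|\le N}\psi_j\ast\widetilde{\varphi}_j\ast f$ then converge to $f$ in $\mathcal S_\infty$. This follows by estimating the Fourier side, using that $\widehat f$ vanishes to infinite order at $\mathbf 0$ and decays rapidly at $\infty$.

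For each $j$, I apply (i) with $g:=\widetilde{\varphi}_j\ast f$ and $h:=\psi_j$; both have Fourier support in $(A^*)^j[-\pi,\pi]^n$. Since $b^{-j}(\widetilde\varphi_j\ast f)(A^{-j}k)\psi_j(x-A^{-j}k)=\langle f,\varphi_Q\rangle\psi_Q(x)$ with $Q=Q_{j,k}$, this gives $\psi_j\ast\widetilde\varphi_j\ast f=\sum_{Q\in\mathcal D_j}\langle f,\varphi_Q\rangle\psi_Q$.

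Convergence of the full series in $\mathcal S_\infty$ comes from decay estimates on $|\langle f,\varphi_Q\rangle|$ of the form $b^{-|j|L}(1+b^{j\wedge0}\rho(x_Q))^{-L}$ for every $L$. These are obtained from the vanishing moments of $f$ and $\varphi$, combined with Lemma \ref{yl111505}.

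For $f\in\mathcal S_\infty'$, the conjugate of \eqref{hs3} lets me interchange the roles of $\varphi$ and $\psi$. I pair with $\phi\in\mathcal S_\infty$, apply the $\mathcal S_\infty$ case to $\phi$ with $\varphi$ and $\psi$ swapped, and use the continuity of $f$ to obtain \eqref{reproduce} in $\mathcal S_\infty'$.

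For $f\in L^2$, for each fixed $j$ the $L^2$ identity of the pieces follows from (i) and Plancherel. Almost orthogonality, coming from the finite overlap of the Fourier supports in $j$, together with dominated convergence on $\sum_j\overline{\widehat\varphi((A^*)^{-j}\cdot)}\widehat\psi((A^*)^{-j}\cdot)$, gives convergence in $L^2$.
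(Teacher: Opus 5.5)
The paper gives no argument of its own for this lemma. It imports (i) and the $\mathcal S_\infty'$ case from \cite[Lemma 2.8]{gjabownik06}, remarking that pointwise convergence is implicit in that proof. The $L^2$ case comes from \cite[(2.8)]{gjabownik05} and the $\mathcal S_\infty$ case from \cite[p.\,63]{bb10}. Your proposal is a correct, self-contained substitute built on the same mechanism: sampling of band-limited functions combined with the Calder\'on identity \eqref{hs3}. It makes explicit several things the citation route leaves implicit.
\begin{itemize}
\item The reduction to $j=0$ and the polynomial growth of $G$ give the pointwise (indeed locally uniform) convergence in \eqref{reproduce2} directly.
\item You confront the one delicate point of the Poisson-summation argument, namely that the closed cubes $[-\pi,\pi]^n$ and $[-\pi,\pi]^n-2\pi l$ touch along faces.
\item By swapping $\varphi$ and $\psi$ via the conjugate of \eqref{hs3}, you get the $\mathcal S_\infty'$ case as a short corollary of the $\mathcal S_\infty$ case, with unconditional convergence.
\end{itemize}
As a result, all three function classes follow from (i) plus one almost-orthogonality estimate, instead of three separate sources.

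Several steps are only sketched and should be written out.
\begin{itemize}
\item For the boundary issue, the precise fact is H\"ormander's theorem: $T(\chi)=0$ whenever $T$ has order at most $N$ and $\partial^\beta\chi=0$ on $\operatorname{supp}T$ for all $|\beta|\le N$. Apply it to $T=\widehat G(\cdot+2\pi l)$ and $\chi=\widehat H\phi$ with $\phi\in\mathcal S$. For $l\neq\mathbf 0$ the set $[-\pi,\pi]^n-2\pi l$ misses $(-\pi,\pi)^n$, and outside $(-\pi,\pi)^n$ the function $\widehat H$ vanishes to infinite order.
\item In the $L^2$ case, (i) only gives convergence in $\mathcal S'$ within a fixed scale. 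To upgrade to $L^2$, use $\sum_{Q\in\mathcal D_j}|\langle f,\varphi_Q\rangle|^2=\|\widetilde\varphi_j\ast f\|_{L^2}^2$ (Plancherel on the cube) and the scale-uniform Bessel bound for $\{\psi_Q\}_{Q\in\mathcal D_j}$. Then sum over $j$ using the bounded overlap of the Fourier supports, which yields frame bounds and unconditional $L^2$ convergence.
\item In the $\mathcal S_\infty$ case, record that $\|\psi_Q\|_{\beta,\gamma}$ grows at most polynomially in $b^{|j_Q|}$ and $1+\rho(x_Q)$. This is what the arbitrary-order decay of $|\langle f,\varphi_Q\rangle|$ has to absorb.
\item Once the full series converges absolutely in every seminorm, identifying its limit as $f$ only requires $\sum_j\psi_j\ast\widetilde\varphi_j\ast f=f$ in $\mathcal S'$. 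That follows by dominated convergence on the Fourier side, so your separate argument for $\mathcal S_\infty$-convergence of the partial sums in $j$ is not needed.
\end{itemize}
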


Lemma \ref{repro}(i) and \eqref{reproduce} for $f\in\mathcal S_{\infty}'$
are from \cite[Lemma 2.8]{gjabownik06}.
Indeed, although the pointwise convergence in \eqref{reproduce2}
is not stated in \cite[Lemma 2.8]{gjabownik06},
it is clearly demonstrated in its proof.
The fact that \eqref{reproduce} holds for $f\in L^2$
is precisely \cite[(2.8)]{gjabownik05}.
Moreover, its validity for $f\in\mathcal S_{\infty}$
was proved in \cite[p.\,63]{bb10}.

The following lemma can be found in the proof of \cite[Lemma 3.1]{gjabownik05}
with some modifications. For the convenience of the reader, we give the details of its proof.

\begin{lemma}\label{dl111410}
Let $\gamma\in\mathcal{S}$ satisfy $\operatorname{supp}\widehat{\gamma}\subset [-5,5]^n$ and
$\widehat{\gamma}(\xi)=1$ for all $\xi \in [-\pi,\pi]^n$.
Assume that $j\in\mathbb{Z}$ and $f\in\mathcal{S}'$
with $ \operatorname{supp} \widehat{f} \subset (A^*)^j[-\pi, \pi]^n $.
Then $f\in C^{\infty}$ and,
for any $x,y\in\mathbb{R}^n$, the following pointwise identity holds
\begin{align*}
f(x)=\sum_{R\in\mathcal{D}_{j+j_0}}b^{-(j+j_0)}f(x_R+y)\gamma_j(x-x_R-y),
\end{align*}
where $j_0$ is a positive integer depending only on $A$.
\end{lemma}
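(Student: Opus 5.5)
Our approach is to obtain the identity as a discrete reproducing formula, namely Lemma \ref{repro}(i) applied after dilating the frequency support so that it fits inside the fundamental box of a finer lattice. Smoothness is immediate: since $\widehat f$ has compact support, \cite[Theorem 2.3.21]{hsdyk01} shows that $f$ coincides with a $C^\infty$ function. We therefore only need to establish the pointwise expansion.

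\textbf{Step 1: choice of $j_0$.} Since $A$ is expansive, $(A^*)^{-1}$ is contracting. Hence there is $j_0\in\mathbb N$, depending only on $A$, such that
\begin{align*}
(A^*)^{-j_0}[-5,5]^n\subset[-\pi,\pi]^n .
\end{align*}
This is the only place where the anisotropy enters. To produce $j_0$ we use Lemma \ref{byl2d2} (via $A^*$), or simply $\|(A^*)^{-k}\|\to0$, which gives a uniform $j_0$.

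\textbf{Step 2: set-up.} Put $\gamma_j(x)=b^j\gamma(A^jx)$. Then $\widehat{\gamma_j}(\xi)=\widehat\gamma((A^*)^{-j}\xi)$, which equals $1$ on $(A^*)^j[-\pi,\pi]^n\supset\operatorname{supp}\widehat f$. Consequently $f=f*\gamma_j$ in $\mathcal S'$, and pointwise since both sides are smooth. To handle the shift by $y$, introduce $g:=f(\cdot+y)$. Its Fourier transform is a modulation of $\widehat f$, so it has the same support. Moreover, by Step 1,
\begin{align*}
\operatorname{supp}\widehat g\subset(A^*)^j[-\pi,\pi]^n\subset(A^*)^{j+j_0}[-\pi,\pi]^n,
\qquad
\operatorname{supp}\widehat{\gamma_j}\subset(A^*)^{j}[-5,5]^n\subset(A^*)^{j+j_0}[-\pi,\pi]^n .
\end{align*}

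\textbf{Step 3: apply the sampling formula.} Apply Lemma \ref{repro}(i) at level $j+j_0$ with $h=\gamma_j\in\mathcal S$. This gives, pointwise,
\begin{align*}
(g*\gamma_j)(z)=\sum_{k\in\mathbb Z^n}b^{-(j+j_0)}g(A^{-(j+j_0)}k)\gamma_j(z-A^{-(j+j_0)}k).
\end{align*}
Since $g*\gamma_j=g$, we set $z=x-y$ and write $A^{-(j+j_0)}k=x_R$ for $R\in\mathcal D_{j+j_0}$. This yields
\begin{align*}
f(x)=\sum_{R\in\mathcal D_{j+j_0}}b^{-(j+j_0)}f(x_R+y)\gamma_j(x-x_R-y),
\end{align*}
which is the claimed identity.

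\textbf{Main obstacle.} The delicate point is the justification that $g*\gamma_j=g$ pointwise, together with the pointwise convergence in Lemma \ref{repro}(i) for $g\in\mathcal S'$ rather than $g\in\mathcal S$. Lemma \ref{repro}(i) already asserts pointwise convergence, and $g$ has at most polynomial growth while $\gamma_j$ decays rapidly, so the series converges absolutely. Thus the only genuine check is the support inclusion of Step 1.
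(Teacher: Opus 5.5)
Your proposal is correct and follows essentially the same route as the paper. Both choose $j_0$ with $[-5,5]^n\subset(A^*)^{j_0}[-\pi,\pi]^n$, pass to $g=f(\cdot+y)$, use $g=g*\gamma_j$ because $\widehat{\gamma_j}=1$ on $(A^*)^j[-\pi,\pi]^n$, and then apply Lemma \ref{repro}(i) at level $j+j_0$ with $h=\gamma_j$, evaluating at $x-y$.
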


\begin{proof}
Let $j_0\in\mathbb N$ satisfy
$[-5,5]^n\subset(A^*)^{j_0}[-\pi,\pi]^n$.
Then $$\operatorname{supp}\widehat{\gamma_{j}}\subset (A^*)^{j}[-5,5]^n
\subset(A^*)^{j+j_0}[-\pi,\pi]^n.$$
For any $y\in\mathbb R^n$, let $g(\cdot)=f(\cdot+y)$.
Then $\operatorname{supp}\widehat g
=\operatorname{supp}\widehat f\subset (A^*)^{j}[-\pi,\pi]^n$.
Note that, for any $\xi\in(A^*)^{j}[-\pi,\pi]^n$,
$\widehat{\gamma_{j}}(\xi)
=\widehat{\gamma}([(A^{-1})^*]^j\xi)=1$.
These further imply that $g=g\ast\gamma_{j}$.
From this and Lemma \ref{repro}(i) with $j$ replaced by $j+j_0$,
we deduce that, for any $x\in\mathbb R^n$,
\begin{align*}
f(x+y)
=g(x)
=(g*\gamma_{j})(x)
=\sum_{k\in\mathbb Z^n} b^{-{(j+j_0)}} g\left(A^{-{(j+j_0)}}k\right)
\gamma_{j}\left(x-A^{-{(j+j_0)}}k\right),
\end{align*}
and hence
$$
f(x)=\sum_{R\in\mathcal{D}_{j+j_0}} b^{-(j+j_0)} f(x_R+y) \gamma_j(x-x_R-y).
$$
This completes the proof of Lemma \ref{dl111410}.
\end{proof}

\begin{remark}
Let $ f \in {\mathcal S}'_\infty$
and $\varphi\in\mathcal S$ satisfy \eqref{hs2}.
Note that, for any $ j \in \mathbb{Z} $,
\begin{align*}
\operatorname{supp} \widehat{\varphi_j * f}
\subset	\operatorname{supp} \widehat{\varphi_j}
\subset (A^*)^j[-\pi,\pi]^n.
\end{align*}
Applying Lemma \ref{dl111410} with $f$ replaced by $\varphi_j*f$,
we conclude that, for any $j\in\mathbb{Z}$ and $x,y\in\mathbb{R}^n$,
\begin{align} \label{Reproduce}
(\varphi_j * f) (x)
= \sum_{R\in\mathcal{D}_{j+j_0}} b^{-(j+j_0)} (\varphi_j * f) (x_R + y)
\gamma_j (x - x_R - y),
\end{align}
where $\gamma$ and $j_0$
are  as in Lemma \ref{dl111410}.
\end{remark}

The following lemma is well known; we omit the details.

\begin{lemma}\label{equivalent}
Let $ a \in (0, 1] $. Then, for any
$ \{z_i\}_{i \in \mathbb Z}$ in $\mathbb{C}$,
$
( \sum_{i \in \mathbb Z} |z_i| )^{a}
\leq \sum_{i \in \mathbb Z} |z_i|^{a}.
$
\end{lemma}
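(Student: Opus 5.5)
The plan is to prove the pointwise inequality $\bigl(\sum_{i\in\mathbb Z}|z_i|\bigr)^a\le\sum_{i\in\mathbb Z}|z_i|^a$ in three steps: reduce to a two-term inequality, extend it to finite sums by induction, and pass to countable sums by a limit.

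First I dispose of trivial cases. If $a=1$ the claim is an equality. If the right-hand side is infinite there is nothing to prove. So I assume $a\in(0,1)$ and $\sum_i|z_i|^a<\infty$.

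The key step is the two-term inequality: for any $s,t\in[0,\infty)$,
\begin{align*}
(s+t)^a\le s^a+t^a.
\end{align*}
I prove it as follows. If $s+t=0$ it is trivial. Otherwise, set $\theta:=s/(s+t)\in[0,1]$. Since $x^a\ge x$ for every $x\in[0,1]$ and every $a\in(0,1]$, we get
\begin{align*}
\theta^a+(1-\theta)^a\ge\theta+(1-\theta)=1.
\end{align*}
Multiplying through by $(s+t)^a$ gives the two-term inequality. An equivalent route is to note that $t\mapsto s^a+t^a-(s+t)^a$ vanishes at $t=0$ and is nondecreasing, because $t^{a-1}\ge(s+t)^{a-1}$.

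By induction on the number of terms, the two-term inequality gives $\bigl(\sum_{|i|\le N}|z_i|\bigr)^a\le\sum_{|i|\le N}|z_i|^a$ for every $N\in\mathbb N$.

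Finally, I let $N\to\infty$. The partial sums $\sum_{|i|\le N}|z_i|$ increase to $\sum_i|z_i|$, possibly $+\infty$. Since $t\mapsto t^a$ is continuous and increasing on $[0,\infty]$ (with $\infty^a:=\infty$), monotone convergence on both sides yields the claim. In particular, $\sum_i|z_i|$ is finite whenever $\sum_i|z_i|^a$ is.

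The only substantive step is the two-term inequality, which is elementary. I anticipate no real obstacle; the one point needing care is the convention for infinite sums in the limiting step, which the monotone limit above handles.
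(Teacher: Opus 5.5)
Your proof is correct: the two-term inequality via $\theta^a+(1-\theta)^a\ge\theta+(1-\theta)=1$, the induction to finite sums, and the monotone passage to the limit (with the convention $\infty^a=\infty$) are all sound. The paper gives no proof of this lemma (it calls it well known and omits the details), and your argument is the standard one it implicitly relies on.
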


\begin{definition}
The \emph{anisotropic Hardy--Littlewood maximal operator $\mathcal M$ }
is defined by setting, for any
$f\in L_{\mathrm{loc}}^1$ and $x\in\mathbb{R}^n$,
\begin{align*}
\mathcal M(f)(x):=
\sup_{\genfrac{}{}{0pt}{}{B\in\mathcal B}{B\ni x}} \fint_{B} |f(y)|\,dy.
\end{align*}
\end{definition}

\begin{lemma} \label{summary B}
Let $\alpha\in\mathbb R$, $p\in(0,\infty)$,
$q\in(0,\,\infty]$,  $ M \in (1, \infty) $, and $r\in(0,p]$.
Suppose two sequences $ \{g_j\}_{j \in \mathbb{Z}} $
and $ \{h_j\}_{j \in \mathbb{Z}} $
of measurable functions on $\mathbb{R}^n $
satisfy that there exists a positive constant $C$ such that,
for any $j\in\mathbb{Z}$ and $x\in\mathbb{R}^n$,
\begin{align} \label{sz111511}
\left|g_j(x)\right|^r\leq C^r b^{j}
\int_{\mathbb{R}^n}
\frac{1}{[1+b^{j}\rho(x-z)]^M}
\left|h_j(z)\right|^r\,dz.
\end{align}
Then there exists a positive constant $\widetilde{C}$,
depending only on $p$, $q$, $r$, $M$, $\rho$, and $A$, such that
\begin{align}\label{intgj}
\left\|\left\{b^{j\alpha} g_j\right\}_{j\in\mathbb{Z}} \right\|_{\ell^q L^p}
\leq \widetilde{C} C
\left\|\left\{b^{j\alpha} h_j\right\}_{j\in\mathbb{Z}} \right\|_{\ell^q L^p},
\end{align}
where $\|\cdot\|_{\ell^q L^p}$ is as in \eqref{lqLp}.
\end{lemma}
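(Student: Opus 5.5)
The plan is to reduce \eqref{intgj} to the $L^{p/r}$-boundedness of a scalar convolution-type majorant, treating the cases $q\le p$ and $q>p$ uniformly by working at each fixed scale $j$. Since the $\ell^q$ norm is taken only after the $L^p$ norm at each level, it suffices to show, for each fixed $j\in\mathbb Z$,
\begin{align*}
\left\|g_j\right\|_{L^p}\le \widetilde C C\left\|h_j\right\|_{L^p}.
\end{align*}
Multiplying by $b^{j\alpha}$ and taking the $\ell^q$ quasi-norm then gives \eqref{intgj} immediately. The parameters $\alpha$ and $q$ play no role beyond this step.

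Fix $j$ and set $F:=|h_j|^r$ and $s:=p/r\in[1,\infty)$. By \eqref{sz111511},
\begin{align*}
\left\|g_j\right\|_{L^p}^r=\left\||g_j|^r\right\|_{L^{s}}\le C^r\left\|K_j*F\right\|_{L^{s}},
\qquad K_j(x):=\frac{b^j}{[1+b^j\rho(x)]^M}.
\end{align*}
By Lemma \ref{yl111505}(i), using $M>1$, we have $\|K_j\|_{L^1}\sim 1$ uniformly in $j$. Minkowski's integral inequality (Young's inequality with $L^1$ kernel), which is valid since $s\ge1$, then gives $\|K_j*F\|_{L^s}\le\|K_j\|_{L^1}\|F\|_{L^s}\lesssim\|h_j\|_{L^p}^r$. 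Taking $r$-th roots yields the claimed per-level bound, with a constant depending only on $M$, $\rho$, $A$, $p$, and $r$.

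The only point needing care is that the kernel bound must be uniform in $j$. This is exactly the content of Lemma \ref{yl111505}(i) after the anisotropic change of variables $x\mapsto A^{-j}x$, which uses $|\det A^{-j}|=b^{-j}$ and $\rho(A^{-j}x)=b^{-j}\rho(x)$. The hypothesis $r\le p$ is used only so that $s\ge1$. No maximal-function estimate is needed, because the kernel is integrable and we never require $r<p$ strictly.
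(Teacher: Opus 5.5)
Your proof is correct and is essentially the paper's argument. The paper sets $\alpha=0$ and handles $r=p$ by Tonelli's theorem together with the uniform bound $\int_{\mathbb R^n} b^j[1+b^j\rho(x-z)]^{-M}\,dx\sim 1$ from Lemma \ref{yl111505}(i); it then reduces $r<p$ to that case by H\"older's inequality with exponent $p/r$, and the two steps together are just the standard proof of the Young/Minkowski bound $\|K_j*F\|_{L^{p/r}}\le\|K_j\|_{L^1}\|F\|_{L^{p/r}}$ that you invoke directly.
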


\begin{proof}
Without loss of generality, we may assume that $\alpha=0$.
We prove \eqref{intgj} by considering the two cases for $r$.

\emph{Case (1)} $r=p$. In this case,
by \eqref{sz111511} with $r=p$,  Tonelli's theorem, Lemma \ref{yl111505}(i),
and $M\in(1,\infty)$,
we find that, for any $j\in{\mathbb Z}$,
\begin{align*}
\left\| g_j\right\|_{L^p}^p
&\leq C^p \int_{\mathbb{R}^n}b^{j}\int_{\mathbb{R}^n}
\frac{1}{[1 + b^{j} \rho(x - z)]^M}
\left| h_j(z) \right|^p \, dz\,dx\\
&= C^p \int_{\mathbb{R}^n}\int_{\mathbb{R}^n}
\frac{b^{j}}{[1 + b^{j} \rho(x - z)]^M} \, dx
\left| h_j(z) \right|^p \, dz\\
&\sim C^p \int_{\mathbb{R}^n} \left|h_j(z)\right|^p\,dz
= C^p \left\| h_j\right\|_{L^p}^p.
\end{align*}
Taking the $\frac1p$-th power on both sides and then applying the $\ell^q$ norm
yields \eqref{intgj}. This completes the proof of \eqref{intgj} in this case.

\emph{Case (2)} $r\in(0,p)$. In this case,
applying \eqref{sz111511}, H\"older's inequality,
and  Lemma \ref{yl111505}(i),
we obtain, for any $j\in{\mathbb Z}$ and $x\in\mathbb R^n$,
\begin{align} \label{sz111512}
\left|g_j(x)\right|^r
&\leq C^r \left\{b^{j}\int_{\mathbb{R}^n}\frac{1}{[1+b^{j}\rho(x-z)]^M}
\,dz\right\}^{1-\frac{r}{p}}
\left\{b^{j}\int_{\mathbb{R}^n}\frac{|h_j(z)|^p}{[1+b^{j}\rho(x-z)]^M}
\,dz\right\}^{\frac{r}{p}}\notag\\
&\sim C^r \left\{b^{j}\int_{\mathbb{R}^n}\frac{|h_j(z)|^p}{[1+b^{j}\rho(x-z)]^M}
\,dz\right\}^{\frac{r}{p}}.
\end{align}
Taking the $\frac{p}{r}$-th power on both sides of \eqref{sz111512}, we return to Case (1).
This completes the proof of \eqref{intgj} in this case and hence
Lemma \ref{summary B}.
\end{proof}


We can now prove the first equivalence of Theorem \ref{dl111301}.

\begin{lemma} \label{yl111601}
Let $ \alpha \in \mathbb{R} $, $p\in(0,\infty)$,
$q\in(0,\,\infty]$, and
$\varphi \in \mathcal{S}$ satisfy \eqref{hs2}. Assume that
$ \mathbb{A} := \{ A_Q \}_{Q \in \mathcal{D}} $ is
doubling of order $(d_1,d_2; p)$ for some $d_1,d_2\in[0,\infty)$.
Then, for any
$ \vec f \in ({\mathcal S}'_\infty)^m $,
\begin{align*}
\left\|\vec{f}\right\|_{\dot B^{\alpha}_{p,q}(\mathbb{A},\varphi)}
\sim\left\|\sup_{\mathbb{A},\varphi}
\left(\vec{f}\right)\right\|_{\dot b^{\alpha}_{p,q}},
\end{align*}
where the positive equivalence constants are independent of $\vec f$.
\end{lemma}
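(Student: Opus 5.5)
One direction is immediate. For any $j\in\mathbb Z$ and $x\in Q\in\mathcal D_j$ we have $A_j(x)=A_Q$. Hence
\[
\bigl|A_j(x)(\varphi_j*\vec f)(x)\bigr|\le \sup_{y\in Q}\bigl|A_Q(\varphi_j*\vec f)(y)\bigr| = |Q|^{-\frac12}\sup_{\mathbb A,\varphi,Q}(\vec f).
\]
Summing over $Q\in\mathcal D_j$ gives $|A_j(\varphi_j*\vec f)|\le \sum_{Q\in\mathcal D_j}\widetilde{\mathbf 1}_Q\sup_{\mathbb A,\varphi,Q}(\vec f)$ pointwise. Taking $L^p$ and then $\ell^q$ norms yields $\|\vec f\|_{\dot B^{\alpha}_{p,q}(\mathbb A,\varphi)}\le\|\sup_{\mathbb A,\varphi}(\vec f)\|_{\dot b^{\alpha}_{p,q}}$.

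For the reverse inequality, fix $j$, $Q\in\mathcal D_j$ and $y\in Q$, and apply the reproducing identity \eqref{Reproduce} to $\varphi_j*\vec f$ at the point $y$. Taking the translation parameter arbitrary and then averaging, write
\[
A_Q(\varphi_j*\vec f)(y)=\sum_{R\in\mathcal D_{j+j_0}}b^{-(j+j_0)}\,A_QA_{j}(x_R+z)^{-1}\,A_j(x_R+z)(\varphi_j*\vec f)(x_R+z)\,\gamma_j(y-x_R-z).
\]
Use the decay $|\gamma_j(w)|\lesssim b^j[1+b^j\rho(w)]^{-N}$ for large $N$. Use \eqref{AiAj} to bound $\|A_j(y)A_j(x_R+z)^{-1}\|\lesssim[1+b^j\rho(y-x_R-z)]^{(d_1+d_2)/p}$; this is where the doubling of order $(d_1,d_2;p)$ enters, and it only costs a polynomial factor absorbed by $N$. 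Set $r:=\min\{1,p\}$ and apply Lemma \ref{equivalent} to take the $r$-th power inside the sum. Then average over $z\in Q_{j+j_0}$-type cells, that is, integrate in $z$ over a cube of side $b^{-(j+j_0)}$, so that the discrete sum over $R$ together with the integral in $z$ becomes an integral over $\mathbb R^n$. Lemma \ref{yl111503} lets $y$ be replaced by any $x\in Q$ in the kernel at the cost of constants. The result is, for all $x\in Q$,
\[
\Bigl[\sup_{y\in Q}\bigl|A_Q(\varphi_j*\vec f)(y)\bigr|\Bigr]^r\lesssim b^j\int_{\mathbb R^n}\frac{|A_j(w)(\varphi_j*\vec f)(w)|^r}{[1+b^j\rho(x-w)]^{M}}\,dw,
\]
with $M=(N-(d_1+d_2)/p)r>1$ once $N$ is large.

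The left side equals $|g_j(x)|^r$ with $g_j:=\sum_{Q\in\mathcal D_j}\widetilde{\mathbf 1}_Q\sup_{\mathbb A,\varphi,Q}(\vec f)$, and the right side is of the form \eqref{sz111511} with $h_j:=|A_j(\varphi_j*\vec f)|$. Lemma \ref{summary B} with $r\le p$ then gives $\|\sup_{\mathbb A,\varphi}(\vec f)\|_{\dot b^\alpha_{p,q}}\lesssim\|\vec f\|_{\dot B^\alpha_{p,q}(\mathbb A,\varphi)}$.

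The main obstacle is the $r<1$ case. Lemma \ref{equivalent} only controls a sum, not an integral, so the averaging step must be done after the $r$-th power is taken. Concretely, one raises the pointwise identity, valid for each fixed $z$, to the power $r$, and only then integrates over $z$ in a fundamental cell of $A^{-(j+j_0)}\mathbb Z^n$. One must also check that the quasi-triangle inequality constants and the mismatch between the levels $j$ and $j+j_0$ are harmless; since $j_0$ is fixed, they are.
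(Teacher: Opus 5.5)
Your proposal is correct and follows essentially the same route as the paper. You reproduce $\varphi_j*\vec f$ via \eqref{Reproduce} with a free translation, take a power at most one inside the sum by Lemma \ref{equivalent}, average the translation over $Q_{j+j_0,\mathbf 0}$ to turn the sum into an integral, absorb $\|A_j(x)[A_j(z)]^{-1}\|$ via \eqref{AiAj}, and conclude with Lemma \ref{summary B}. The only differences are cosmetic. The paper applies \eqref{AiAj} after averaging rather than before. It also uses the exponent $pr$ with $r<\frac{1}{1\vee p}$ strictly, so that \eqref{kappa} can be reused with H\"older's inequality in Lemma \ref{WsimA}, whereas your endpoint exponent $\min\{1,p\}$ is enough for this lemma alone.
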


\begin{proof}
By the definition of $\sup_{\mathbb{A}, \varphi} (\vec f) $,
we find that, for any $ \vec f \in ({\mathcal S}'_\infty)^m $,
\begin{align*}
\left\| \vec f \right\|_{\dot B^{\alpha}_{p,q}(\mathbb{A}, \varphi)}
\leq \left\| \sup_{\mathbb{A}, \varphi} \left( \vec f \right)
\right\|_{\dot b^{\alpha}_{p,q}}.
\end{align*}

To prove the reverse inequality, fix $r\in(0,\frac{1}{1\vee p})$ and $M\in (\frac1{rp}+\frac{d_1+d_2}{p}, \infty)$.
Let $\gamma$ and $j_0$ be as in Lemma \ref{dl111410}.
Applying \eqref{Reproduce}, Lemma \ref{equivalent},
$\gamma \in \mathcal{S}$,
and Lemma \ref{yl111503}(iii), we conclude that,
for any $j \in \mathbb{Z} $, $ Q \in \mathcal{D}_j $,
$ x \in Q $, and $ y \in \mathbb{R}^n $,
\begin{align*}
\left| A_Q \left( \varphi_j * \vec f \right) (x) \right|^{pr}
&=\left|\sum_{R \in \mathcal{D}_{j+j_0}} b^{-(j+j_0)}
A_Q \left( \varphi_j * \vec f \right) (x_R + y)
\gamma_j (x - x_R - y)\right|^{pr}\\
&\leq \sum_{R \in \mathcal{D}_{j+j_0}}\left|b^{-(j+j_0)}\gamma_j(x-x_R-y)\right|^{pr}
\left| A_Q \left( \varphi_j * \vec f \right) (x_R + y) \right|^{pr}\\
&\lesssim \sum_{R \in \mathcal{D}_{j+j_0}}
\frac{1}{[1 + b^j\rho(x - x_R-y)]^{Mpr}}
\left| A_Q \left( \varphi_j * \vec f \right) (x_R + y) \right|^{pr}\\
&\sim \sum_{R \in \mathcal{D}_{j+j_0}} \frac{1}{[1 + b^j\rho(x_Q-x_R-y)]^{Mpr}}
\left| A_Q \left( \varphi_j * \vec f \right) (x_R + y) \right|^{pr}.
\end{align*}
This, together with  Lemma \ref{yl111503}(iii) again, further implies that
\begin{align*}
\left[ |Q|^{-\frac{1}{2}} \sup_{\mathbb{A}, \varphi, Q}
\left( \vec f \right) \right]^{pr}
\lesssim \sum_{R \in \mathcal{D}_{j+j_0}} \frac{1}{[1 + b^j\rho(x - x_R-y)]^{Mpr}}
\left| A_Q \left( \varphi_j * \vec f \right) (x_R + y) \right|^{pr}.
\end{align*}
Using this, Tonelli's theorem, and \eqref{AiAj},
we conclude that, for any $ j \in \mathbb{Z} $,
$ Q \in \mathcal{D}_j $, and $ x \in \mathbb{R}^n $,
\begin{align}\label{kappa}
\left[\widetilde{\mathbf{1}}_Q(x) \sup_{\mathbb{A}, \varphi, Q} \left( \vec f \right)
\right]^{pr}
&\lesssim \sum_{R \in \mathcal{D}_{j+j_0}}
\fint_{Q_{{j+j_0},\mathbf{0}}}\frac{1}{[1 + b^j\rho(x - x_R-y)]^{Mpr}}
\left| A_Q \left( \varphi_j * \vec f \right) (x_R + y) \right|^{pr}\,dy\notag\\
&\sim b^j \sum_{R \in \mathcal{D}_{j+j_0}} \int_R
\frac{1}{[1 +b^j\rho(x - z)]^{Mpr}}
\left| A_j(x) \left( \varphi_j * \vec f \right) (z) \right|^{pr}\, dz\notag\\
&\leq b^j \sum_{R \in \mathcal{D}_{j+j_0}} \int_R
\frac{\| A_j(x) [A_j(z)]^{-1} \|^{pr}}{[1 + b^{j} \rho(x - z)]^{Mpr}}
\left| A_j(z) \left( \varphi_j * \vec f \right) (z) \right|^{pr} \, dz\notag\\
&\lesssim b^j \sum_{R \in \mathcal{D}_{j+j_0}} \int_R
\frac{1}{[1 + b^{j} \rho(x - z)]^{[Mp-(d_1+d_2)]r}}
\left|A_j(z) \left( \varphi_j * \vec f \right) (z) \right|^{pr} \, dz\notag\\
&= b^j \int_{\mathbb{R}^n}
\frac{1}{[1 + b^{j} \rho(x - z)]^{[Mp-(d_1+d_2)]r}}
\left|A_j(z)
\left( \varphi_j * \vec{f} \right) (z) \right|^{pr} \, dz,
\end{align}
where $A_j$ is as in \eqref{Aj}.
For any $ j \in \mathbb{Z} $, let
\begin{align} \label{gj}
g_j := \sum_{Q\in\mathcal{D}_j}\widetilde{\mathbf{1}}_Q
\sup_{\mathbb{A}, \varphi, Q} \left(\vec f \right)
\ \ \text{and} \ \
h_j :=A_{j}\left( \varphi_j * \vec{f} \right).
\end{align}
Then, from \eqref{kappa} and Lemma \ref{summary B} with $r$ and $ M $
replaced, respectively, by $pr$ and $ [Mp-(d_1+d_2)]r$,
it follows that
$$
\left\| \sup_{\mathbb{A}, \varphi} \left( \vec f \right)
\right\|_{\dot{b}^\alpha_{p,q}}
= \left(\sum_{j\in\mathbb{Z}}
\left\|b^{j\alpha} g_j\right\|^{q}_{L^{p}}\right)^{\frac{1}{q}}
\lesssim \left( \sum_{j\in\mathbb{Z}}
\left\|b^{j\alpha} h_j\right\|^{q}_{L^{p}}
\right)^{\frac{1}{q}}
= \left\| \vec f \right\|_{\dot B^{\alpha}_{p,q}(\mathbb{A}, \varphi)}.
$$
This completes the proof of Lemma \ref{yl111601}.
\end{proof}

To prove the second equivalence of Theorem \ref{dl111301},
we need the following lemma.

\begin{lemma}\label{fsj}
Let $p,u\in(0,\infty)$ and $M\in(1,\infty)$.
Assume that $W\in \mathcal A_{p,u}$
and ${\mathbb A}:=\{A_Q\}_{Q\in\mathcal{D}}$ is
a sequence of reducing operators of order $ p $ for $ W $.
Then there exists a positive constant $C$,
depending only on $[W]_{\mathcal A_{p,u}}$, $M$, $\rho$, and $A$, such that,
for any $j\in\mathbb Z$ and $x\in\mathbb R^n$,
$$
b^j \int_{\mathbb{R}^n}
\frac{\|A_{j}(z)W^{-\frac1p}(z)\|^u}{[1 + b^{j} \rho(x - z)]^{M}} \,dz
\leq C,
$$
where $A_j$ is as in \eqref{Aj}.
\end{lemma}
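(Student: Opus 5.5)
The plan is to decompose $\mathbb R^n$ into the cubes of $\mathcal D_j$ and control each cube's contribution using the $\mathcal A_{p,u}$ condition together with the doubling of $\{A_Q\}$. Fix $j\in\mathbb Z$ and $x\in\mathbb R^n$, and write
$$
b^j\int_{\mathbb R^n}\frac{\|A_j(z)W^{-\frac1p}(z)\|^u}{[1+b^j\rho(x-z)]^M}\,dz
=\sum_{Q\in\mathcal D_j} b^j\int_Q\frac{\|A_QW^{-\frac1p}(z)\|^u}{[1+b^j\rho(x-z)]^M}\,dz .
$$
By Lemma \ref{yl111503}(iii), for $z\in Q\in\mathcal D_j$ we have $1+b^j\rho(x-z)\sim 1+b^j\rho(x-x_Q)$, so the weight factor can be pulled out of each integral. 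This gives the bound
$$
\sum_{Q\in\mathcal D_j}\frac{b^j|Q|}{[1+b^j\rho(x-x_Q)]^M}\fint_Q\left\|A_QW^{-\frac1p}(z)\right\|^u\,dz .
$$

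The key step is the uniform bound $\sup_{Q\in\mathcal D}\fint_Q\|A_QW^{-\frac1p}(z)\|^u\,dz\lesssim1$, with the constant depending only on $[W]_{\mathcal A_{p,u}}$. To get it, use Lemma \ref{yl1201} to find a ball $B\supset Q$ with $|B|\sim|Q|$. Lemma \ref{reduceM} and $Q\subset B$ give, for each $z$,
$$
\|A_QW^{-\frac1p}(z)\|^p\sim\fint_Q\|W^{\frac1p}(x)W^{-\frac1p}(z)\|^p\,dx\lesssim\fint_B\|W^{\frac1p}(x)W^{-\frac1p}(z)\|^p\,dx\sim\|A_BW^{-\frac1p}(z)\|^p .
$$
Hence
$$
\fint_Q\|A_QW^{-\frac1p}\|^u\lesssim\fint_B\|A_BW^{-\frac1p}\|^u\lesssim[W]^*_{\mathcal A_{p,u}}{}^{u/p},
$$
which is bounded by Lemma \ref{Apu} in its anisotropic form, since $(\mathbb R^n,\rho,dx)$ is a space of homogeneous type and the balls there are exactly $\mathcal B$.

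Since $b^j|Q|=1$ for $Q\in\mathcal D_j$, it remains to bound $\sum_{k\in\mathbb Z^n}[1+b^j\rho(x-A^{-j}k)]^{-M}$ uniformly in $j$ and $x$. For this I would compare the sum with an integral: by Lemma \ref{yl111503}(iii) again, each summand is comparable to $b^j\int_{Q_{j,k}}[1+b^j\rho(x-z)]^{-M}\,dz$. The sum is then comparable to $\int_{\mathbb R^n}b^j[1+b^j\rho(x-z)]^{-M}\,dz\sim1$ by Lemma \ref{yl111505}(i), which applies because $M>1$.

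I expect the only real point needing care to be the uniform reducing-operator bound over dyadic cubes. The cubes are not balls, so the comparison must go through Lemma \ref{yl1201}, and the constant must not depend on $j$. Everything else is routine summation.
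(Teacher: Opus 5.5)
Your proof is correct and follows the paper's argument. You split over $\mathcal D_j$, freeze the kernel at $x_Q$ via Lemma \ref{yl111503}(iii), bound $\fint_Q\|A_QW^{-\frac1p}\|^u$ uniformly, and sum the lattice series. The differences are minor. Where the paper cites Lemma \ref{improve D}, which is stated for the adjacent dyadic system $\mathbb D$, you justify the bound for the anisotropic cubes directly through Lemmas \ref{yl1201} and \ref{Apu}, which is the more careful route. For the lattice sum, the paper quotes Lemma \ref{yl111505}(ii), while you compare with the integral in Lemma \ref{yl111505}(i).
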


\begin{proof}
Applying Lemmas \ref{yl111503}(iii), \ref{improve D},
and \ref{yl111505}(ii), we obtain
\begin{align*}
b^j \int_{\mathbb{R}^n}
\frac{\|A_{j}(z)W^{-\frac1p}(z)\|^u}{[1 + b^{j} \rho(x - z)]^{M}} \,dz
&\sim b^j\sum_{R\in\mathcal D_{j}} \int_{R}
\frac{\|A_RW^{-\frac1p}(z)\|^{u}}{[1 + b^{j} \rho(x -x_R)]^{M}} \,dz \\
&\lesssim b^j\sum_{R\in\mathcal D_{j}}
\frac{|R|}{[1 + \rho(A^jx-A^{j}x_R)]^{M}}\\
&=\sum_{k\in\mathbb Z^n}
\frac{1}{[1 +\rho(A^jx -k)]^{M}} \sim 1.
\end{align*}
This completes the proof of Lemma \ref{fsj}.
\end{proof}

We now prove the second equivalence of Theorem \ref{dl111301}.

\begin{lemma}\label{WsimA}
Let $\alpha\in\mathbb{R}$, $p\in(0,\infty)$,
$q\in(0,\,\infty]$,
and	$\varphi\in{\mathcal S}$
satisfy \eqref{hs2}. Assume that $W\in \mathcal A_{p,\infty}$
and ${\mathbb A}:=\{A_Q\}_{Q\in\mathcal{D}}$ is
a sequence of reducing operators of order $ p $ for $ W $.
Then,  for any $\vec f\in ({\mathcal S}'_\infty)^m$,
$$
\left\|\vec f\right\|_{{\dot{B}^{\alpha}_{p,q}(W,\varphi)}}
\sim \left\| \sup_{\mathbb{A}, \varphi} \left( \vec f\right)
\right\|_{\dot{b}^\alpha_{p,q}},
$$
where the positive equivalence constants are independent of $\vec f$.
\end{lemma}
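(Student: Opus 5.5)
We prove the two inequalities separately. Throughout, fix $u\in(0,\infty)$ with $W\in\mathcal A_{p,u}$, which exists by Proposition \ref{improve}(i). By Lemma \ref{tl012001}, $\mathbb A$ is doubling of order $(d_1,d_2;p)$ for suitable $d_1,d_2\in[0,\infty)$, so \eqref{AiAj} is available.

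\emph{The bound $\|\vec f\|_{\dot B^\alpha_{p,q}(W,\varphi)}\lesssim\|\sup_{\mathbb A,\varphi}(\vec f)\|_{\dot b^\alpha_{p,q}}$.} We work pointwise on each $Q\in\mathcal D_j$, and for $x\in Q$ we write
\begin{align*}
\left|W^{\frac1p}(x)(\varphi_j*\vec f)(x)\right|\le \left\|W^{\frac1p}(x)A_Q^{-1}\right\|\,|Q|^{-\frac12}\sup_{\mathbb A,\varphi,Q}\left(\vec f\right).
\end{align*}
Taking $p$-th powers and integrating over $Q$, Lemma \ref{reduceM} gives $\fint_Q\|W^{\frac1p}(x)A_Q^{-1}\|^p\,dx\sim\|A_QA_Q^{-1}\|^p=1$. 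Summing over $Q\in\mathcal D_j$ then bounds $\|W^{\frac1p}(\varphi_j*\vec f)\|_{L^p}$ by $\|\sum_{Q\in\mathcal D_j}\widetilde{\mathbf 1}_Q\sup_{\mathbb A,\varphi,Q}(\vec f)\|_{L^p}$. This direction is immediate and uses no self-improvement.

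\emph{The reverse bound.} We aim for an analogue of \eqref{kappa} with $A_j(z)$ replaced by $W^{\frac1p}(z)$. Fix $r\in(0,1\wedge p)$ small, to be chosen, and $M$ large. We start from the estimate of Lemma \ref{yl111601}: for $x\in Q\in\mathcal D_j$,
\begin{align*}
\left[\widetilde{\mathbf 1}_Q(x)\sup_{\mathbb A,\varphi,Q}\left(\vec f\right)\right]^{r}\lesssim b^j\int_{\mathbb R^n}\frac{|A_j(x)(\varphi_j*\vec f)(z)|^{r}}{[1+b^j\rho(x-z)]^{Mr}}\,dz.
\end{align*}
This is the same argument as in Lemma \ref{yl111601}: the reproducing formula \eqref{Reproduce}, Lemma \ref{equivalent}, and averaging in $y$, with exponent $r$ in place of $pr$. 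Next we insert $W$:
\begin{align*}
|A_j(x)(\varphi_j*\vec f)(z)|\le\|A_j(x)A_j(z)^{-1}\|\,\|A_j(z)W^{-\frac1p}(z)\|\,|W^{\frac1p}(z)(\varphi_j*\vec f)(z)|.
\end{align*}
The first factor is at most $[1+b^j\rho(x-z)]^{(d_1+d_2)/p}$ by \eqref{AiAj}, and is absorbed into the kernel by taking $M$ large. For the remaining two factors we apply H\"older's inequality with exponents $s:=p/r$ (on the $W^{\frac1p}(\varphi_j*\vec f)$ factor) and $s'$ (on the $\|A_jW^{-\frac1p}\|^r$ factor), splitting the kernel between them. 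We choose $r$ small enough that $rs'\le u$, i.e.\ $\frac1r\ge\frac1p+\frac1u$. Then Lemma \ref{fsj}, combined with H\"older's inequality to pass from exponent $rs'$ up to $u$, shows that the $\|A_jW^{-\frac1p}\|$-factor is uniformly bounded. This leaves
\begin{align*}
\left[\widetilde{\mathbf 1}_Q(x)\sup_{\mathbb A,\varphi,Q}\left(\vec f\right)\right]^{p}\lesssim b^j\int_{\mathbb R^n}\frac{|W^{\frac1p}(z)(\varphi_j*\vec f)(z)|^{p}}{[1+b^j\rho(x-z)]^{M'}}\,dz
\end{align*}
for some $M'>1$. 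Lemma \ref{summary B}, applied with its exponent equal to $p$, then concludes the proof with $g_j$ as in \eqref{gj} and $h_j:=W^{\frac1p}(\varphi_j*\vec f)$.

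\emph{Main obstacle.} The delicate step is the H\"older split. Two things must happen at once: the kernel must keep enough decay in both factors so that Lemma \ref{fsj} and the final integral are still controlled, and the integrability exponent falling on $\|A_jW^{-\frac1p}\|$ must not exceed $u$. This is exactly where self-improvement enters, through the existence of a positive $u$ from Proposition \ref{improve}(i). If the bookkeeping is tight, we would first apply Proposition \ref{improve}(ii) to enlarge $u$ and so gain room. After that, we choose $M$ large enough to cover the $(d_1+d_2)/p$ loss together with both halves of the kernel.
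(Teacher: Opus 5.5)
Your proposal is correct and follows the paper's proof essentially step for step. The easy direction uses Lemma \ref{reduceM} on each cube. The reverse direction goes through \eqref{kappa}, the factorization through $\|A_j(z)W^{-\frac1p}(z)\|$, H\"older's inequality against the kernel, Lemma \ref{fsj} (with the exponent lowered below $u$), and Lemma \ref{summary B}.

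Your exponent $r$ plays the role of the paper's $pr$, and under this reparametrization your constraint $\frac1r\ge\frac1p+\frac1u$ is essentially the paper's condition $r<[\max\{p,1+\frac pu\}]^{-1}$, so the optional appeal to Proposition \ref{improve}(ii) is unnecessary.
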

\begin{proof}
We first prove $\|\vec f\|_{{\dot{B}^{\alpha}_{p,q}(W,\varphi)}}
\lesssim\| \sup_{\mathbb{A}, \varphi} (\vec f)
\|_{\dot{b}^\alpha_{p,q}}$.
For any $j\in\mathbb{Z}$, let $g_j$  be as in \eqref{gj} and
$
l_j :=| W^{\frac{1}{p}} ( \varphi_j * \vec f)|.
$
By Lemma \ref{reduceM},
we find that, for any $j\in\mathbb Z$ and $x\in\mathbb{R}^n$,
\begin{align*}
\int_{\mathbb{R}^n} \left| l_j(x)\right|^p\, dx
&\leq \sum_{Q \in \mathcal{D}_j}\int_{Q}
\left\| W^{\frac{1}{p}}(x) A_Q^{-1} \right\|^p
\left| A_Q \left( \varphi_j * \vec f \right)(x) \right|^p \,dx \\
&\leq \sum_{Q \in \mathcal{D}_j}|Q|\sup_{y\in Q}\left|
A_Q \left( \varphi_j * \vec f \right)(y) \right|^p\fint_{Q}
\left\| W^{\frac{1}{p}}(x) A_Q^{-1} \right\|^p\,dx\\
&\sim\sum_{Q \in \mathcal{D}_j} |Q|
\sup_{y\in Q}\left| A_Q \left( \varphi_j * \vec f \right)(y) \right|^p \\
&=\int_{\mathbb{R}^n} \left|\sum_{Q\in \mathcal{D}_j}\widetilde{\mathbf{1}}_Q
\sup_{\mathbb{A}, \varphi, Q}\left(\vec f \right)
\right|^p\,dx
=\int_{\mathbb{R}^n}\left|g_j(x)\right|^p\,dx.
\end{align*}
Therefore,
\begin{align*}
\left\| \vec f \right\|_{\dot B^{\alpha}_{p,q}(W,\varphi)}
=\left[\sum_{j\in\mathbb{Z}}\left\|b^{j\alpha} l_j
\right\|^{q}_{L^{p}}\right]^{\frac{1}{q}}
\lesssim\left[\sum_{j\in\mathbb{Z}}
\left\|b^{j\alpha} g_j\right\|^{q}_{L^{p}}
\right]^{\frac{1}{q}}
= \left\| \sup_{\mathbb{A}, \varphi}
\left( \vec f \right) \right\|_{\dot b^{\alpha}_{p,q}}.
\end{align*}

We next prove  the reverse inequality.
Applying Proposition \ref{improve}(i), we conclude that
$W\in \mathcal A_{p,u}$ for some $u\in(0,\infty)$.
By  Lemma \ref{tl012001}, we obtain $\mathbb{A}$
is doubling of order $(d_1,d_2; p)$ for some $d_1,d_2\in[0,\infty)$.
Let $r\in(0,[\max\{p,1+\frac{p}{u}\}]^{-1})$
and $M \in (\frac{1}{pr}+\frac{d_1+d_2}{p}, \infty) $.
From \eqref{kappa}, H\"older's inequality, and Lemma \ref{fsj} with $u$ and $M$ replaced by
$\frac{pr}{1-r}$ and $[Mp-(d_1+d_2)]r$ respectively,
we infer that,
for any $j\in\mathbb Z$ and $x\in\mathbb R^n$,
\begin{align*}
\left[\sum_{Q\in \mathcal{D}_j} \widetilde{\mathbf{1}}_Q(x)
\sup_{\mathbb{A}, \varphi, Q} \left( \vec f \right)
\right]^{pr}
&\lesssim b^j \int_{\mathbb{R}^n}
\frac{\|A_{j}(z)W^{-\frac1p}(z)\|^{pr}}{[1 + b^{j} \rho(x - z)]^{[Mp-(d_1+d_2)]r}}
\left| l_j (z) \right|^{pr} \, dz\notag\\
&\le \left[ b^j \int_{\mathbb{R}^n}
\frac{\|A_{j}(z)W^{-\frac1p}(z)\|^{\frac{pr}{1-r}}}{[1 + b^{j} \rho(x - z)]^{[Mp-(d_1+d_2)]r}} \,dz\right]^{1-r}\notag\\
&\quad\times
\left[b^j\int_{\mathbb{R}^n}\frac{1}{[1 + b^{j} \rho(x - z)]^{[Mp-(d_1+d_2)]r}}\left| l_j (z) \right|^{p} \, dz\right]^{r}\notag\\
&\lesssim \left[b^j\int_{\mathbb{R}^n}\frac{1}{[1 + b^{j} \rho(x - z)]^{[Mp-(d_1+d_2)]r}}\left| l_j (z) \right|^{p} \, dz\right]^{r}.
\end{align*}
Using this and Lemma \ref{summary B} with $M$
replaced by $[Mp-(d_1+d_2)]r$,
we obtain
$$
\left\| \sup_{\mathbb{A}, \varphi} \left( \vec f \right)
\right\|_{\dot{b}^\alpha_{p,q}}
=\left[\sum_{j\in\mathbb{Z}}
\left\|b^{j\alpha} g_j\right\|^{q}_{L^{p}}\right]^{\frac{1}{q}}
\lesssim\left[\sum_{j\in\mathbb{Z}}
\left\|b^{j\alpha} l_j\right\|^{q}_{L^{p}}
\right]^{\frac{1}{q}}
=\left\|\vec f\right\|_{{\dot{B}^{\alpha}_{p,q}(W,\varphi)}}.
$$
This completes the proof of Lemma \ref{WsimA}.
\end{proof}

Finally, the desired result of Theorem \ref{dl111301} follows immediately from Lemmas \ref{yl111601} and \ref{WsimA}.

\subsection{Relations Between $\dot{b}^\alpha_{p,q}(W)$
and $\dot{b}^\alpha_{p,q}(\mathbb{A})$}\label{equibb}

In this subsection, we introduce the
averaging matrix-weighted anisotropic Besov sequence space $\dot{b}^{\alpha}_{p,q}({\mathbb A})$
and establish its equivalence to $\dot b_{p,q}^{\alpha}(W)$.

\begin{definition}
Let $\alpha\in\mathbb{R}$, $p\in(0,\infty)$, and $q\in(0,\,\infty]$.
Assume that ${\mathbb A}:=\{A_Q\}_{Q\in\mathcal{D}}$ is
a sequence of positive definite matrices.
The \emph{averaging matrix-weighted anisotropic Besov sequence space}
$\dot{b}^{\alpha}_{p,q}(A,{\mathbb A})$ is defined
to be the set of all sequences
$\vec{s}:=\{\vec{s}_Q\}_{Q\in\mathcal{D}}$ in ${\mathbb C}^m$ such that
\begin{align*}
\left\|\vec{s}\right\|_{\dot{b}^{\alpha}_{p,q}(A,{\mathbb A})}
:=\left\|\left\{b^{j\alpha}
\left|A_j\vec{s}_j \right|\right\}_{j\in\mathbb Z}
\right\|_{\ell^q L^p}
<\infty,
\end{align*}
where $A_j$, $\vec s_j$, and $\|\cdot\|_{\ell^q L^p}$ are, respectively,
 as in \eqref{Aj}, \eqref{sj}, and \eqref{lqLp}.
In what follows, if there is no confusion,
we denote $\dot{b}^{\alpha}_{p,q}(A,{\mathbb A})$ simply by $\dot{b}^{\alpha}_{p,q}(\mathbb A)$.
\end{definition}

\begin{theorem}\label{dj}
Let $\alpha\in\mathbb{R}$, $p\in(0,\infty)$, $q\in(0,\infty]$.
Assume that $W\in \mathcal A_{p,\infty}$ and
${\mathbb A}:=\{A_Q\}_{Q\in\mathcal{D}}$
is a sequence of reducing operators of order $p$ for $W$.
Then $\dot{b}^{\alpha}_{p,q}(W)=\dot{b}^{\alpha}_{p,q}(\mathbb A)$.
Moreover, for any $\vec{s}\in\dot{b}^{\alpha}_{p,q}(W)$,
\begin{align*}
\left\|\vec{s}\right\|_{\dot{b}^{\alpha}_{p,q}(W)}
\sim\left\|\vec{s}\right\|_{\dot{b}^{\alpha}_{p,q}(\mathbb A)},
\end{align*}
where the positive equivalence constants are independent of $\vec{s}$.
\end{theorem}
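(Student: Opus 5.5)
The argument reduces to a cube-by-cube comparison, because $\vec s_j$ is constant on each $Q\in\mathcal D_j$. For any $j\in\mathbb Z$ we have $\vec s_j=|Q|^{-\frac12}\vec s_Q$ on $Q\in\mathcal D_j$. Hence
\begin{align*}
\left\|W^{\frac1p}\vec s_j\right\|_{L^p}^p
=\sum_{Q\in\mathcal D_j}|Q|^{-\frac p2}\int_Q\left|W^{\frac1p}(x)\vec s_Q\right|^p\,dx
\quad\text{and}\quad
\left\|A_j\vec s_j\right\|_{L^p}^p
=\sum_{Q\in\mathcal D_j}|Q|^{1-\frac p2}\left|A_Q\vec s_Q\right|^p.
\end{align*}
By the defining property \eqref{equ_reduce} of reducing operators of order $p$ (applied with $E=Q$ and $\vec z=\vec s_Q$), we get
$$
\int_Q\left|W^{\frac1p}(x)\vec s_Q\right|^p\,dx\sim|Q|\left|A_Q\vec s_Q\right|^p,
$$
with constants depending only on $m$ and $p$. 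Substituting this gives $\|W^{\frac1p}\vec s_j\|_{L^p}\sim\|A_j\vec s_j\|_{L^p}$ uniformly in $j$. Multiplying by $b^{j\alpha}$ and taking the $\ell^q$ (quasi-)norm over $j$ yields $\|\vec s\|_{\dot b^\alpha_{p,q}(W)}\sim\|\vec s\|_{\dot b^\alpha_{p,q}(\mathbb A)}$. The equality of the two sets follows at once, since both are defined by finiteness of these quantities.

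The argument uses no cross-cube interaction, so it needs neither the $\mathcal A_{p,\infty}$ assumption (only existence of reducing operators, which holds for any matrix weight) nor Lemma \ref{tl012001}. The only care needed is that reducing operators exist for every anisotropic cube $Q$, which is guaranteed since $|Q|\in(0,\infty)$ and $W$ is locally integrable. I do not expect any real obstacle; the hardest point is simply keeping track of the normalization $\widetilde{\mathbf 1}_Q=|Q|^{-\frac12}\mathbf 1_Q$ in the two computations above.
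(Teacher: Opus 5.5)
Your proof is correct and is essentially the paper's own argument: you compare $\|\,|W^{\frac1p}\vec s_j|\,\|_{L^p}^p$ and $\|\,|A_j\vec s_j|\,\|_{L^p}^p$ cube by cube via the reducing-operator equivalence (the paper cites Lemma \ref{reduceM}, whose vector case is exactly \eqref{equ_reduce}), then take $\frac1p$-th powers and the $\ell^q$ quasi-norm over $j$. Your remark that the hypothesis $W\in\mathcal A_{p,\infty}$ plays no role in this step is also accurate.
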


\begin{proof}
By Lemma \ref{reduceM}, we find that, for any $j\in\mathbb Z$,
\begin{align*}
\left\|\,\left| W^{\frac1p}\vec s_j\right|\,\right\|_{L^p}^p
=\sum_{Q\in\mathcal{D}_j}|Q|^{1-\frac{p}{2}}\fint_Q
\left|W^{\frac{1}{p}}(x)\vec{s}_Q
\right|^pdx
\sim
\sum_{Q\in\mathcal{D}_j}|Q|^{1-\frac{p}{2}}
\left|A_Q\vec{s}_Q\right|^p
=\left\|\,\left| A_j\vec s_j\right|\,\right\|_{L^p}^p.
\end{align*}
Taking the $\frac1p$-th power on both sides and then applying the $\ell^q$ norm,
we obtain the desired result.
This completes the proof of Theorem \ref{dj}.
\end{proof}

\subsection{Proof of the $\varphi$-Transform Characterization}\label{PTC}

In this subsection, we prove the $\varphi$-transform characterization
of $\dot B^\alpha_{p,q}(W)$ (Theorem \ref{dl1103}).
To this end, we first show that $T_\psi$ is well defined.

\begin{lemma}\label{hhhk}
Let $\alpha\in{\mathbb R}$, $p\in(0,\infty)$,
$q\in(0,\,\infty]$, and $W\in \mathcal A_{p,\infty}$.
Then, for any $\vec{s}:=\{\vec s_Q\}_{Q\in\mathcal{D}}
\in\dot{b}^\alpha_{p,q}(W)$
and $\psi\in{\mathcal S}_\infty$,
$T_\psi \vec{s}=\sum_{Q\in\mathcal{D}} \vec{s}_Q\psi_Q$
converges in $(\mathcal{S}'_\infty)^m$.
Moreover, there exist $N\in(0,\infty)$ and a
positive constant $C$ such that,
for any $\vec s \in \dot{b}^\alpha_{p,q}(W) $
and $ \psi,\phi \in \mathcal{S}_\infty$,
\begin{align*}
\sum_{Q \in \mathcal{D}} \left|\vec{s}_Q\right| |\langle \psi_Q, \phi \rangle|
\leq C \left\| \vec s \right\|_{\dot{b}^\alpha_{p,q}(W)}\|\psi\|_{N}\|\phi\|_{N},
\end{align*}
where
\begin{align}\label{SM}
\|\phi\|_N:=\sup_{x\in\mathbb{R}^n}\sup_{|\gamma|\leq N}
\left(1 + |x|\right)^N|\partial^\gamma\phi(x)|.
\end{align}
\end{lemma}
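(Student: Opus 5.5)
The plan is to reduce the estimate to a bound on individual coefficients $|\vec s_Q|$ in terms of $\|\vec s\|_{\dot b^\alpha_{p,q}(W)}$ and then sum against the decay of $\langle\psi_Q,\phi\rangle$. The convergence of $T_\psi\vec s$ in $(\mathcal S_\infty')^m$ follows from the inequality: absolute convergence of $\sum_Q \vec s_Q\langle\psi_Q,\phi\rangle$ for each $\phi\in\mathcal S_\infty$, with a bound by a seminorm of $\phi$.

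\textbf{Step 1: control of a single coefficient.} By Theorem \ref{dj}, $\|\vec s\|_{\dot b^\alpha_{p,q}(W)}\sim\|\vec s\|_{\dot b^\alpha_{p,q}(\mathbb A)}$. Keeping only one cube $Q\in\mathcal D_j$ gives
$$b^{j\alpha}|Q|^{\frac1p-\frac12}|A_Q\vec s_Q|\lesssim\|\vec s\|_{\dot b^\alpha_{p,q}(W)}.$$
Fix a reference cube $Q_0:=Q_{0,\mathbf 0}$. Then $|\vec s_Q|\le\|A_Q^{-1}A_{Q_0}^{-1}\|\cdots$; more precisely,
$$|\vec s_Q|\le \|A_{Q}^{-1}\|\,|A_Q\vec s_Q|,\qquad \|A_Q^{-1}\|\le\|A_{Q_0}^{-1}\|\,\|A_{Q_0}A_Q^{-1}\|.$$
Lemma \ref{tl012001}(ii) with $d_1,d_2$ admissible gives
$$\|A_{Q_0}A_Q^{-1}\|^p\lesssim\max\{|Q|^{d_1},|Q|^{-d_2}\}\,[1+\rho(x_Q)/\max\{1,|Q|\}]^{d_1+d_2}.$$
Combining, with $|Q|=b^{-j}$,
$$|\vec s_Q|\lesssim \|\vec s\|_{\dot b^\alpha_{p,q}(W)}\, b^{-j(\alpha+\frac12-\frac1p)}\max\{b^{-jd_1/p},b^{jd_2/p}\}\,[1+b^{-(j\vee0)}\rho(x_Q)]^{(d_1+d_2)/p},$$
which is polynomial growth in $b^{|j|}$ and in $\rho(x_Q)$.

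\textbf{Step 2: decay of $\langle\psi_Q,\phi\rangle$.} Use the standard anisotropic almost-orthogonality estimate for $\psi,\phi\in\mathcal S_\infty$ (vanishing moments of all orders; Taylor expansion on whichever function is more concentrated, as in \cite{gjabownik05,bb10}). For any $L>0$ there is an $N$ with
$$|\langle\psi_Q,\phi\rangle|\lesssim\|\psi\|_N\|\phi\|_N\,|Q|^{\frac12}b^{-|j|L}\,[1+b^{j\wedge0}\rho(x_Q)]^{-L}.$$
Here I use the relation between Euclidean and $\rho$ decay through $\lambda_\pm$ eigenvalue bounds of $A$, which costs only constant multiples of $L$.

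\textbf{Step 3: summation.} Multiply the bounds from Steps 1 and 2. For fixed $j$, sum over $k\in\mathbb Z^n$ using Lemma \ref{yl111505}(ii) (for $j\le0$), or the analogous lattice sum at scale $b^{-j}$ (for $j>0$, which is bounded by $b^{j}$), provided $L$ exceeds $(d_1+d_2)/p+1$. This leaves a factor $b^{|j|(C-L)}$, where $C$ depends on $\alpha,p,d_1,d_2$. Choosing $L$ large makes the sum over $j\in\mathbb Z$ converge, and this fixes $N$.

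\textbf{Main obstacle.} The main obstacle is Step 2: a clean anisotropic molecule-pairing estimate with arbitrary decay in both scale directions, expressed through $\|\cdot\|_N$ using the Euclidean weight $(1+|x|)^N$. This requires comparing $\rho$ with $|\cdot|$ via $\rho(x)^{\zeta_-}\lesssim|x|\lesssim\rho(x)^{\zeta_+}$ for $\rho(x)\ge1$ (Bownik), and it is the step I would write out carefully.
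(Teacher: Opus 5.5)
Your proposal is correct and follows the paper's proof essentially step for step. The paper likewise bounds $\|A_Q^{-1}\|$ through the reference cube $Q_{0,\mathbf 0}$ via Lemma \ref{tl012001} and bounds $|A_Q\vec s_Q|\le|Q|^{\frac12+\alpha-\frac1p}\|\vec s\|_{\dot b^\alpha_{p,q}(\mathbb A)}$ via Theorem \ref{dj}. It does not reprove your Step 2 estimate: it writes $\phi=\phi_{Q_{0,\mathbf 0}}$ and quotes \cite[(3.18)]{gjabownik07}. It then sums using Lemma \ref{yl111505}(ii).

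One small slip in Step 1: $\rho(x_Q)/\max\{1,|Q|\}$ equals $b^{j\wedge 0}\rho(x_Q)$, not $b^{-(j\vee 0)}\rho(x_Q)$. With the correct exponent this growth factor matches the decay factor $[1+b^{j\wedge 0}\rho(x_Q)]^{-L}$, so Step 3 goes through as you describe.
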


\begin{proof}
Let $\mathbb A:=\{A_Q\}_{Q\in\mathcal{D}}$
be a sequence of reducing operators of order $p$ for $W$.
Assume that $W$ has $\mathcal A_{p,\infty}$-lower dimension $d_1\in[0,1)$
and $\mathcal A_{p,\infty}$-upper dimension $d_2\in[0,\infty)$.
By Lemma \ref{tl012001}, we find that, for any $Q\in\mathcal{D}$,
\begin{align}\label{bb}
\left\|A_Q^{-1}\right\|
\leq\left\|A_{Q_{0,\mathbf{0}}}^{-1}\right\|
\left\|A_{Q_{0,\mathbf{0}}}A_Q^{-1}\right\|
\lesssim
\max\left\{|Q|^{\frac {d_1}{p}},|Q|^{-\frac{d_2}{p}}\right\}
\left[1+\frac{\rho(x_Q)}{\max\{1,|Q|\}}\right]^{\frac{d_1+d_2}{p}}.
\end{align}
Using  Theorem \ref{dj}, we conclude that, for any $Q\in\mathcal D$,
\begin{align}\label{AQSQ}
\left|A_{Q}\vec{s}_{Q}\right|
\le |Q|^{\frac{1}{2}+\alpha-\frac{1}{p}}\left\|\vec{s}
\right\|_{\dot{b}^{\alpha}_{p,q}(\mathbb A)}
\sim |Q|^{\frac{1}{2}+\alpha-\frac{1}{p}}
\left\|\vec{s}\right\|_{\dot{b}^{\alpha}_{p,q}(W)}.
\end{align}
Let
\begin{align*}
L>\max\left\{1+\frac{d_1+d_2}{p}, \frac{1+d_2}{p}+\frac{1}{2}-
\alpha, \frac{d_1-1}{p}+\frac{3}{2}+\alpha
\right\}.
\end{align*}
Applying \cite[(3.18)]{gjabownik07}, we obtain,
for any $Q\in\mathcal{D}$,
\begin{align*}
|\langle\psi_Q,\phi\rangle|
=|\langle\psi_Q,\phi_{Q_{0,\mathbf{0}}}\rangle|
\lesssim\|\psi\|_N\|\phi\|_N
\left[1+\frac{\rho(x_Q)}{1 \vee |Q|}\right]^{-L}
\min\left\{|Q|,|Q|^{-1}\right\}^L.
\end{align*}
From this, \eqref{bb},  \eqref{AQSQ},
and Lemma \ref{yl111505}(ii),
we deduce that
\begin{align*}
\sum_{Q\in\mathcal{D}}
\left|\vec{s}_Q\right||\langle\psi_Q,\phi\rangle|
&\le\sum_{Q\in\mathcal{D}}
\left\|A_{Q}^{-1}\right\|\left|A_{Q}\vec{s}_{Q}\right| |\langle\psi_Q,\phi\rangle|\\
&\lesssim\sum_{Q\in\mathcal{D}}
\max\left\{|Q|^{\frac {d_1}{p}},|Q|^{-\frac{d_2}{p}}\right\}
|Q|^{\frac{1}{2}+\alpha-\frac{1}{p}}
\left\|\vec{s}\right\|_{{\dot{b}^{\alpha}_{p,q}(W)}}
\|\psi\|_N\|\phi\|_N\\
&\quad\times\left[1+\frac{\rho(x_Q)}{1 \vee |Q|}\right]^{\frac{d_1+d_2}{p}-L}
\min\left\{|Q|^{L},|Q|^{-L}\right\}\\
&\lesssim\left\|\vec{s}\right\|_{{\dot{b}^{\alpha}_{p,q}(W)}}
\|\psi\|_N\|\phi\|_N
\left\{\sum_{j=0}^{\infty}b^{-j(\frac12+\alpha+L-\frac{1+d_2}{p})}
\sum_{k\in \mathbb Z^n}\left[1+\rho\left(A^{-j}k \right)
\right]^{\frac{d_1+d_2}{p}-L}\right.\\
&\quad\left.+\sum_{j=-\infty}^{-1}b^{-j(\frac12+\alpha-L+\frac{d_1-1}{p})}
\sum_{k\in \mathbb Z^n}
\left[1+\rho\left(k\right)
\right]^{\frac{d_1+d_2}{p}-L}\right\}\\
&\lesssim\left\|\vec{s}\right\|_{{\dot{b}^{\alpha}_{p,q}(W)}}
\|\psi\|_N\|\phi\|_N
\left[\sum_{j=0}^{\infty}b^{-j(\frac12+\alpha+L-\frac{1+d_2}{p}-1)}
+\sum_{j=-\infty}^{-1}b^{-j(\frac12+\alpha-L+\frac{d_1-1}{p}+1)}
\right]\\
&\sim\left\|\vec{s}\right\|_{{\dot{b}^{\alpha}_{p,q}(W)}}
\|\psi\|_N\|\phi\|_N.
\end{align*}
This completes the proof of Lemma \ref{hhhk}.
\end{proof}

For any sequence
$s:=\{s_P\}_{P\in\mathcal{D}}$ in ${\mathbb C}$
and $\lambda\in(0,\infty)$,
let $s^*_{\lambda}:=\{(s^*_{\lambda})_Q\}_{Q\in\mathcal{D}}$,
where, for any $Q\in\mathcal{D}$,
\begin{align*}
\left(s^*_{\lambda}\right)_Q:=
\sum_{\genfrac{}{}{0pt}{}{P\in\mathcal{D}}{|P|=|Q|}}
\frac{|s_P|}{[1+|P|^{-1}
\rho(x_P-x_Q)]^\lambda}.
\end{align*}



\begin{lemma}\label{dj3}
Let $\alpha\in{\mathbb R}$, $p\in(0,\infty)$, $ q\in(0,\infty]$,
and $\lambda\in(\frac1{1\wedge p},\infty)$. Then, for any
$s:=\{s_Q\}_{Q\in\mathcal{D}}$ in $\mathbb{C}$,
$
\|s\|_{\dot{b}^{\alpha}_{p,q}}
\sim \|s^*_{\lambda}\|_{\dot{b}^{\alpha}_{p,q}},
$
where the positive equivalence constants are independent of $s$.
\end{lemma}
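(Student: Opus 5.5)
The lower bound $\|s\|_{\dot b^{\alpha}_{p,q}}\le\|s^*_\lambda\|_{\dot b^{\alpha}_{p,q}}$ is immediate, because the term $P=Q$ in the definition of $(s^*_\lambda)_Q$ gives $|s_Q|\le(s^*_\lambda)_Q$. All the work is in the reverse inequality. Since $\alpha$ and the $\ell^q$ sum act level by level, it is enough to fix $j\in\mathbb Z$ and show
\begin{align*}
\Big\|\sum_{Q\in\mathcal D_j}\widetilde{\mathbf 1}_Q (s^*_\lambda)_Q\Big\|_{L^p}\lesssim \Big\|\sum_{Q\in\mathcal D_j}\widetilde{\mathbf 1}_Q |s_Q|\Big\|_{L^p}
\end{align*}
with a constant independent of $j$. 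Both sides equal $|Q|^{\frac1p-\frac12}$ (with $|Q|=b^{-j}$) times the $\ell^p$ norm over $Q\in\mathcal D_j$. So the task reduces to the discrete convolution bound $\|\{(s^*_\lambda)_Q\}_{Q\in\mathcal D_j}\|_{\ell^p}\lesssim\|\{s_Q\}_{Q\in\mathcal D_j}\|_{\ell^p}$.

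Write $Q=Q_{j,k}$ and $P=Q_{j,l}$. Then $|P|^{-1}\rho(x_P-x_Q)=b^j\rho(A^{-j}(l-k))=\rho(l-k)$ by homogeneity of $\rho$, so
\begin{align*}
(s^*_\lambda)_{Q_{j,k}}=\sum_{l\in\mathbb Z^n}\frac{|s_{Q_{j,l}}|}{[1+\rho(l-k)]^\lambda}.
\end{align*}
This is independent of the scale. The whole problem is therefore one convolution estimate on $\mathbb Z^n$ with kernel $[1+\rho(\cdot)]^{-\lambda}$.

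I would split by the size of $p$.

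\emph{Case $p\in(0,1]$.} Apply Lemma \ref{equivalent} to get $[(s^*_\lambda)_Q]^p\le\sum_l|s_{Q_{j,l}}|^p[1+\rho(l-k)]^{-\lambda p}$. Sum over $k$ and use Tonelli. The inner sum $\sum_k[1+\rho(l-k)]^{-\lambda p}$ is a constant by Lemma \ref{yl111505}(ii) with $j=0$, since $\lambda p>1$.

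\emph{Case $p\in(1,\infty)$.} Here $\lambda>1$, so the kernel lies in $\ell^1(\mathbb Z^n)$, again by Lemma \ref{yl111505}(ii). Young's inequality then gives the bound directly. Equivalently, one can apply Hölder's inequality with the weights split as $[1+\rho]^{-\lambda/p'}\cdot[1+\rho]^{-\lambda/p}$ and then use Tonelli, as in Case (2) of Lemma \ref{summary B}.

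The only point needing care is the scaling identity $b^j\rho(A^{-j}\cdot)=\rho(\cdot)$, which follows from Definition \ref{quasi-norm}(ii). It is what makes the constants uniform in $j$. Note also that the threshold $\lambda>\frac{1}{1\wedge p}$ is exactly the summability condition $\lambda(1\wedge p)>1$ that Lemma \ref{yl111505}(ii) requires.
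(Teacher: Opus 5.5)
Your proof is correct, but it takes a different route from the paper. The paper gets the reverse inequality by citing Bownik's \cite[Lemma 3.4]{gjabownik05}. That lemma is proved for general doubling measures and only states the existence of a suitable $\lambda$, so the paper has to add that the argument there actually works for every $\lambda\in(\frac1{1\wedge p},\infty)$.

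You argue directly. You use the exact homogeneity $b^j\rho(A^{-j}x)=\rho(x)$, and note that at each level $j$ both quasi-norms equal $b^{-j(\frac1p-\frac12)}$ times an $\ell^p$ norm over $\mathcal D_j\cong\mathbb Z^n$. This turns the whole estimate into one $j$-independent convolution on $\mathbb Z^n$ with kernel $[1+\rho(\cdot)]^{-\lambda}$. You then handle it with Lemma \ref{equivalent} and Tonelli when $p\le1$, and with Young's inequality when $p>1$, taking summability from Lemma \ref{yl111505}(ii).

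What your route gains is a short, self-contained argument. The uniformity of the constants in $j$ and the exact role of the threshold $\lambda(1\wedge p)>1$ are visible directly, rather than being read off from a proof written for doubling measures. The citation route is more general only in that it allows non-translation-invariant measures, which this unweighted lemma does not need.
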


\begin{proof}
By the definition of $s^*_{\lambda}$, we obtain,
for any $Q\in\mathcal{D}$,
$|s_Q|\leq(s^*_\lambda)_Q$, and hence
$\|s\|_{\dot{b}^{\alpha}_{p,q}}
\leq\|s^*_\lambda\|_{\dot{b}^{\alpha}_{p,q}}.$
The reverse inequality follows from \cite[Lemma 3.4]{gjabownik05}.
Indeed, although Bownik only stated the existence of
$\lambda\in(0,\infty)$ in \cite[Lemma 3.4]{gjabownik05},
his proof actually shows that the inequality holds for all $\lambda\in(\frac1{1\wedge p},\infty)$.
This completes the proof of Lemma \ref{dj3}.
\end{proof}

\begin{lemma}\label{ddjj}
Let $\alpha\in{\mathbb R}$,
$p\in(0,\infty)$,
$q\in(0,\infty]$, and
$\lambda\in(\frac1{1\wedge p},\infty)$.
Assume that $W\in \mathcal A_{p,\infty}$ and
$\{A_Q\}_{Q\in\mathcal{D}}$ is a sequence
of reducing operators of order $p$ for $W$.
Then, for any $\vec{s}\in{\dot{b}^{\alpha}_{p,q}(W)}$,
\begin{align*}
\left\|\left(\left\{\left|A_Q\vec{s}_Q\right|\right\}_{Q\in\mathcal{D}}
\right)^*_{\lambda}\right\|_{\dot{b}^{\alpha}_{p,q}}
\sim\left\|\vec{s}\right\|_{\dot{b}^{\alpha}_{p,q}(W)},
\end{align*}
where the positive equivalence constants are independent of $\vec{s}$.
\end{lemma}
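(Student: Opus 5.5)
The plan is to reduce the statement to the scalar Lemma \ref{dj3} combined with the sequence-space equivalence of Theorem \ref{dj}. Set $t:=\{t_Q\}_{Q\in\mathcal D}$ with $t_Q:=|A_Q\vec s_Q|$ for every $Q\in\mathcal D$; this is a scalar sequence in $\mathbb C$ (indeed in $[0,\infty)$). Since $\lambda\in(\frac1{1\wedge p},\infty)$, Lemma \ref{dj3} applied to $t$ gives
\begin{align*}
\left\|\left(\left\{\left|A_Q\vec{s}_Q\right|\right\}_{Q\in\mathcal{D}}\right)^*_{\lambda}\right\|_{\dot{b}^{\alpha}_{p,q}}
=\left\|t^*_\lambda\right\|_{\dot b^\alpha_{p,q}}
\sim\|t\|_{\dot b^\alpha_{p,q}},
\end{align*}
with constants independent of $t$, and hence independent of $\vec s$.

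It remains to identify $\|t\|_{\dot b^\alpha_{p,q}}$. For each $j$, the cubes in $\mathcal D_j$ are pairwise disjoint, so pointwise
\[
\Bigl|\sum_{Q\in\mathcal D_j}\widetilde{\mathbf 1}_Q t_Q\Bigr|
=\sum_{Q\in\mathcal D_j}\widetilde{\mathbf 1}_Q|A_Q\vec s_Q|
=\bigl|A_j\vec s_j\bigr|,
\]
where $A_j$ and $\vec s_j$ are as in \eqref{Aj} and \eqref{sj}. It follows that $\|t\|_{\dot b^\alpha_{p,q}}=\|\vec s\|_{\dot b^\alpha_{p,q}(\mathbb A)}$ exactly.

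Finally, because $W\in\mathcal A_{p,\infty}$ and $\mathbb A$ consists of reducing operators of order $p$ for $W$, Theorem \ref{dj} gives $\|\vec s\|_{\dot b^\alpha_{p,q}(\mathbb A)}\sim\|\vec s\|_{\dot b^\alpha_{p,q}(W)}$. Chaining the three relations yields the claimed equivalence.

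There is no real obstacle in this argument. The substantive input is the validity of Lemma \ref{dj3} for every $\lambda>\frac1{1\wedge p}$, which was already recorded there. One should also check that $(\cdot)^*_\lambda$ depends only on the moduli $|s_P|$, so applying it to the nonnegative sequence $t$ is legitimate.
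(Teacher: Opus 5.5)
Your proposal is correct and is essentially the paper's own proof. You apply Lemma \ref{dj3} to the scalar sequence $\{|A_Q\vec s_Q|\}_{Q\in\mathcal D}$, identify its $\dot b^\alpha_{p,q}$ quasi-norm with $\|\vec s\|_{\dot b^\alpha_{p,q}(\mathbb A)}$, and then invoke Theorem \ref{dj}; your explicit pointwise check that $\sum_{Q\in\mathcal D_j}\widetilde{\mathbf 1}_Q|A_Q\vec s_Q|=|A_j\vec s_j|$ just spells out what the paper calls ``the definitions.''
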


\begin{proof}
By Lemma \ref{dj3} and the definitions of
$\dot{b}^{\alpha}_{p,q}$ and $\dot{b}^{\alpha}_{p,q}(W)$, we find that
\begin{align*}
\left\|\left(\left\{\left|
A_Q\vec{s}_Q\right|\right\}_{Q\in\mathcal{D}}\right)^*_{\lambda}
\right\|_{\dot{b}^{\alpha}_{p,q}}
\sim
\left\|\left\{\left|A_Q\vec{s}_Q\right|\right\}_{Q\in\mathcal D}\right\|_{\dot{b}^{\alpha}_{p,q}}
=
\left\|\vec{s}\right\|_{\dot{b}^{\alpha}_{p,q}({\mathbb A})}
\sim
\left\|\vec{s}\right\|_{\dot{b}^{\alpha}_{p,q}(W)},
\end{align*}
where the last equivalence follows from Theorem \ref{dj}.
This completes the proof of Lemma \ref{ddjj}.
\end{proof}

We now prove Theorem \ref{dl1103}.

\begin{proof}[Proof of Theorem \ref{dl1103}]
We first prove (i).
Let $\mathbb A:=\{A_Q\}_{Q\in\mathcal{D}}$
be a sequence of reducing operators of order $p$ for $W$.
Assume that $\vec{f}\in\dot{B}^{\alpha}_{p,q}(W,\widetilde{\varphi})$.
From the definition of $S_{\varphi}$, \cite[Theorem 2.3.20]{hsdyk01}, and \eqref{sup},
we infer that, for any $Q\in\mathcal{D}$,
\begin{align*}
\left|A_Q\left(S_\varphi\vec{f}\right)_Q\right|
=\left|A_Q\left\langle \vec f,\varphi_Q \right\rangle\right|
=|Q|^{\frac{1}{2}}
\left|A_Q\left(\widetilde{\varphi}_{j_Q}
\ast\vec{f}\right)(x_Q)\right|
\leq\sup_{\mathbb{A},\widetilde{\varphi},Q}\left(\vec f \right),
\end{align*}
which, together with Theorems \ref{dj} and
\ref{dl111301}, further implies that
\begin{align*}
\left\| S_{\varphi} \vec f \right\|_{\dot{b}^\alpha_{p,q}(W)}
\sim \left\| S_{\varphi}
\vec f \right\|_{\dot{b}^\alpha_{p,q}(\mathbb{A})}
\leq \left\| \sup_{\mathbb{A},\widetilde{\varphi}}
\left( \vec f \right) \right\|_{\dot{b}^\alpha_{p,q}}
\sim \left\| \vec f \right\|_{\dot{B}^{\alpha}_{p,q}
(W,\widetilde{\varphi})}.
\end{align*}
This completes the proof of the boundedness of $S_{\varphi}$.

Next, we prove the boundedness of
$T_\psi$.
Let $\vec s := \{\vec s_Q\}_{Q \in \mathcal{D}}
\in \dot b^{\alpha}_{p,q}(W)  $.
By \eqref{hs2}, we find that there exists $l\in\mathbb N$ such that,
for any $i,j\in\mathbb Z$ with $|i-j|>l$,
$\operatorname{supp} \widehat{\varphi}_j\cap\operatorname{supp} \widehat{\psi}_i=\emptyset$.
Using this and Lemma \ref{hhhk}, we conclude that,
for any $j\in\mathbb{Z}$, $Q\in\mathcal{D}_j$, and $x\in Q$,
\begin{align} \label{71}
\left|A_Q\left[\varphi_j *\left(T_\psi \vec s \right)\right](x)\right|
&=\left|A_Q\left[\varphi_j *\left(\sum_{R \in \mathcal{D}}
\vec s_R\psi_R\right)\right](x)\right|\notag\\
&\le \sum_{i \in \mathbb{Z}} \sum_{R \in \mathcal{D}_i}
\left|A_Q \vec s_R\right|\left|\left(\varphi_j *\psi_R\right)(x) \right| \notag\\
&= \sum_{i = j - l}^{j + l} \sum_{R \in \mathcal{D}_i}
\left|A_Q \vec s_R\right| \left|\left(\varphi_j *\psi_R\right)(x)\right|\notag\\
&\leq \sum_{i=j-l}^{j+l}\sum_{R \in \mathcal{D}_i}
\left\|A_Q A_R^{-1}\right\|
\left| A_R \vec s_R \right|
\left| \left( \varphi_j * \psi_R \right) (x) \right|.
\end{align}
Let $d_{1}\in[\![d_{p,\infty}^{\mathrm{lower}}(W),\infty)$
and $d_{2}\in[\![d_{p,\infty}^{\mathrm{upper}}(W),\infty)$.
Applying Lemma \ref{tl012001}, we obtain,
for any $i,j\in\mathbb{Z}$ with $|i-j|\le l$,
$Q\in\mathcal{D}_j $, and  $R\in\mathcal{D}_i$,
\begin{align*}
\left\|A_Q A_R^{-1} \right\|
&\lesssim \max \left\{\left(\frac{|R|}{|Q|}
\right)^\frac{d_1}{p},\left(\frac{|Q|}{|R|}
\right)^\frac{d_2}{p} \right\}
\left[1+\frac{\rho(x_Q-x_R)}{\max\{|R|
, |Q|\}}\right]^{\frac{d_1+d_2}{p}}\notag\\
&\sim \left[ 1+|R|^{-1}\rho(x_Q-x_R)\right]^{\frac{d_1+d_2}{p}}.
\end{align*}
Let $\lambda \in(\frac{d_1+d_2}{p}+\max\{1,\frac1p\},\infty)$.
It follows from the proof of \cite[Theorem 3.5]{gjabownik06} that,
for any $i,j\in \mathbb{Z} $ with $|i-j|\le l$,
any $R\in \mathcal{D}_i $, and $ x \in \mathbb{R}^n$,
\begin{align*}
\left| \left( \varphi_j * \psi_R \right) (x) \right|
\lesssim |R|^{-\frac{1}{2}}
\frac{1}{[1+b^{i}\rho(x - x_R)]^{\lambda}}
\sim |R|^{-\frac{1}{2}} \frac{1}{[1+|R|^{-1}\rho(x-x_R)]^{\lambda}}.
\end{align*}

Let $ u := \{u_R\}_{R \in \mathcal{D}} $,
where $ u_R := |A_R \vec s_R| $.
Substituting the estimates of $\|A_Q A_R^{-1} \|$
and $| ( \varphi_j * \psi_R ) (x) |$ into \eqref{71}
and applying  Lemma \ref{yl111503}(ii), we find that,
for any $ j \in \mathbb{Z} $, $ Q \in \mathcal{D}_j $, and $ x \in Q $,
\begin{align*}
\left| A_Q \left[ \varphi_j * \left( T_\psi \vec s \right) \right] (x) \right|
&\lesssim\sum_{i=j-l}^{j + l}\sum_{R \in \mathcal{D}_i}
u_R|R|^{-\frac{1}{2}}
\frac{[1+|R|^{-1}\rho(x_Q-x_R)]^{\frac{d_1+d_2}{p}}}
{[1+|R|^{-1}\rho(x - x_R)]^{\lambda}} \\
&\sim |Q|^{-\frac{1}{2}} \sum_{i=j-l}^{j + l}
\sum_{R \in \mathcal{D}_i}
\frac{u_R}{[1+|R|^{-1}\rho(x-x_R)]^{\widetilde\lambda}} \\
&= \sum_{i=j-l}^{j + l}
\left( u_{\widetilde{\lambda}}^* \right)_i(x)
=\sum_{i=-l}^{l}
\left( u_{\widetilde{\lambda}}^* \right)_{j+i}(x),
\end{align*}
where $\widetilde{\lambda} := \lambda-\frac{d_1+d_2}{p}$ and
$$
\left( u_{\widetilde{\lambda}}^*\right)_{i}
:=\sum_{Q\in\mathcal{D}_i}\widetilde{\mathbf{1}}_Q \left( u_{ \widetilde{\lambda}}^*\right)_{Q}.
$$
Therefore,
\begin{align*}
\left\| T_\psi \vec s \right\|_{\dot B^{\alpha}_{p,q}(\mathbb{A},\varphi)}
&\lesssim\left[\sum_{j\in\mathbb{Z}}b^{j\alpha q}
\left\|\sum_{i=-l}^{l} \left( u_{\widetilde{\lambda}}^*\right)_{j+i}\right\|^{q}_{L^{p}}
\right]^{\frac{1}{q}} \\
&\sim \sum_{i=-l}^{l} b^{-i\alpha}
\left[\sum_{j\in\mathbb{Z}}b^{(j+i)\alpha q}
\left\| \left( u_{\widetilde{\lambda}}^*\right)_{j+i}\right\|^{q}_{L^{p}}
\right]^{\frac{1}{q}}
\sim\left\| u_{\widetilde{\lambda}}^*\right\|_{\dot b^{\alpha}_{p,q}}.
\end{align*}
From this, Theorem \ref{dl111301}, and Lemma \ref{ddjj},
it follows that
$$
\left\|T_\psi \vec s \right\|_{\dot B^{\alpha}_{p,q}(W,\varphi)}
\sim \left\|T_\psi \vec s \right\|_{
\dot B^{\alpha}_{p,q}(\mathbb{A},\varphi)}
\lesssim\left\|u_{\widetilde{\lambda}}^*
\right\|_{\dot b^{\alpha}_{p,q}}
\sim \left\|\vec s \right\|_{\dot b^{\alpha}_{p,q}(W)}.
$$
This completes the proof of the boundedness of $ T_\psi $
and hence (i).

Statement (ii) follows from \eqref{reproduce}.

Finally, we prove (iii).
Let $ \varphi^{(1)}, \varphi^{(2)}, \psi^{(2)}\in\mathcal{S}$
satisfy \eqref{hs2}.
Assume that $\varphi^{(2)},\psi^{(2)}$ satisfy \eqref{hs3}.
Then, from  Lemma \ref{repro}(ii) and the just proven (i), we deduce that,
for any $ \vec{f} \in \dot B^{\alpha}_{p,q}(W,\varphi^{(2)}) $,
\begin{align*}
\left\| \vec f \right\|_{\dot B^{\alpha}_{p,q}(W,\varphi^{(1)})}
&= \left\| \left( T_{\widetilde{\psi^{(2)}}}
\circ S_{\widetilde{\varphi^{(2)}}} \right)
\left(\vec f \right) \right\|_{\dot B^{\alpha}_{p,q}(W,\varphi^{(1)})} \\
&\lesssim \left\| S_{\widetilde{\varphi^{(2)}}}
\vec f \right\|_{\dot b^{\alpha}_{p,q}(W)}
\lesssim \left\| \vec f \right\|_{\dot B^{\alpha}_{p,q}(W, \varphi^{(2)})}.
\end{align*}
By symmetry, we obtain the reverse inequality.
This completes the proof of Theorem \ref{dl1103}.
\end{proof}

Based on Theorem \ref{dl1103}(iii), in what follows,
we denote $\dot B^{\alpha}_{p,q}(W,\varphi)$
simply by $\dot B^{\alpha}_{p,q}(W)$.

The following  proposition shows that $\dot{B}_{p,q}^{\alpha}(W)$
is continuously embedded into  $(\mathcal S'_{\infty})^m$.

\begin{proposition} \label{174}
Let $ \alpha \in \mathbb{R} $,
$ p\in(0, \infty) $, $q\in(0, \infty]$,
and $ W\in\mathcal A_{p,\infty}$.
Then  there exist $N\in(0,\infty)$ and
a positive constant $C$ such that,
for any $ \vec f \in\dot B^{\alpha}_{p,q}(W)$ and
$ \phi \in \mathcal{S}_\infty$,
$
|\langle \vec f, \phi \rangle|
\leq C \| \vec f \|_{\dot B^{\alpha}_{p,q}(W)}
\| \phi \|_{N},
$
where $\|\cdot\|_{N}$ is as in \eqref{SM}.
\end{proposition}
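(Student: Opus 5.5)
The plan is to reduce the statement to Lemma \ref{hhhk} through the $\varphi$-transform characterization. Fix $\varphi,\psi\in\mathcal S$ satisfying \eqref{hs2} and \eqref{hs3}; such a pair exists by the standard construction of \cite{gjabownik05,gjabownik06}. Let $\vec f\in\dot B^{\alpha}_{p,q}(W)$. By Theorem \ref{dl1103}(iii) we may compute the norm of $\vec f$ with $\widetilde\varphi$ in place of $\varphi$, up to constants. By Theorem \ref{dl1103}(ii), or directly by Lemma \ref{repro}(ii), we have
$$
\vec f=T_\psi S_\varphi\vec f=\sum_{Q\in\mathcal D}\langle\vec f,\varphi_Q\rangle\psi_Q
$$
in $(\mathcal S_\infty')^m$. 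Hence, for any $\phi\in\mathcal S_\infty$,
$$
\langle\vec f,\phi\rangle=\sum_{Q\in\mathcal D}(S_\varphi\vec f)_Q\,\langle\psi_Q,\phi\rangle .
$$
Convergence in $\mathcal S_\infty'$ is exactly what allows us to pair the series with $\phi$ termwise.

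Next we apply Lemma \ref{hhhk} with $\vec s:=S_\varphi\vec f$. That lemma provides $N$ and $C$, independent of $\vec s$, $\psi$ and $\phi$, such that
$$
|\langle\vec f,\phi\rangle|\le\sum_{Q}|(S_\varphi\vec f)_Q|\,|\langle\psi_Q,\phi\rangle|\le C\|S_\varphi\vec f\|_{\dot b^\alpha_{p,q}(W)}\|\psi\|_N\|\phi\|_N .
$$
Since $\psi$ is fixed once and for all, $\|\psi\|_N$ is just a constant. It remains to invoke the boundedness of $S_\varphi$ from Theorem \ref{dl1103}(i), namely $\|S_\varphi\vec f\|_{\dot b^\alpha_{p,q}(W)}\lesssim\|\vec f\|_{\dot B^\alpha_{p,q}(W,\widetilde\varphi)}\sim\|\vec f\|_{\dot B^\alpha_{p,q}(W)}$, where the last equivalence is Theorem \ref{dl1103}(iii). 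Combining these gives the claimed inequality with the same $N$ as in Lemma \ref{hhhk}.

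The only delicate point is the bookkeeping. Lemma \ref{hhhk} is stated for $\psi\in\mathcal S_\infty$, and this holds here because $\widehat\psi$ is supported away from the origin. The pairing convention $\langle f,\varphi\rangle=f(\overline\varphi)$ must also be matched with the reproducing formula \eqref{reproduce}. Apart from that, no step should be a real obstacle: all the analytic work sits in Lemma \ref{hhhk} and Theorem \ref{dl1103}.
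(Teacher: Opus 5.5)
Your proposal is correct and follows essentially the same route as the paper. The paper writes $\vec f=(T_\psi\circ S_\varphi)\vec f$ and bounds $\sum_{Q}|(S_\varphi\vec f)_Q|\,|\langle\psi_Q,\phi\rangle|$ via Lemma \ref{hhhk}, then finishes with the boundedness of $S_\varphi$ and the independence of $\varphi$ from Theorem \ref{dl1103}, exactly as you do.
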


\begin{proof}
Let $ \varphi, \psi \in \mathcal{S}$
satisfy \eqref{hs2} and \eqref{hs3}.
By  Lemma \ref{hhhk} and Theorem \ref{dl1103}, we find that,
for any $ \vec f \in \dot B^{\alpha}_{p,q}(W) $
and $ \phi \in \mathcal{S}_\infty$,
\begin{align*}
\left|\left\langle \vec f, \phi \right\rangle\right|
&= \left|\left\langle \left( T_\psi \circ S_\varphi \right)
\vec f, \phi \right\rangle\right|
\leq \sum_{Q \in \mathcal{D}} \left|
\left(S_\varphi \vec f \right)_Q \right|
|\langle \psi_Q, \phi \rangle| \\
&\lesssim \left\| S_\varphi \vec f \right\|_{\dot b^{\alpha}_{p,q}(W)}
\left\| \phi \right\|_{N}
\lesssim \left\| \vec f \right\|_{\dot B^{\alpha}_{p,q}(W)}
\left\| \phi \right\|_{N}.
\end{align*}
This completes the proof of Proposition \ref{174}.
\end{proof}

Applying Proposition \ref{174} with
an argument similar to that used in the proof of
\cite[Proposition 2.3.1]{g14},
we obtain the following conclusion; we omit the details.

\begin{proposition}
Let $\alpha \in \mathbb{R}$,
$p \in (0, \infty) $, $ q \in (0, \infty]$,
and $W \in \mathcal A_{p,\infty}$.
Then $\dot B^{\alpha}_{p,q}(W) $ is a complete quasi-normed space.
\end{proposition}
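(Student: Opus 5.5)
The plan is to follow the standard route of \cite[Proposition 2.3.1]{g14}, with Proposition \ref{174} as the key input. Quasi-triangle inequality: since $\|\cdot\|_{\ell^q L^p}$ is a quasi-norm and $\vec f\mapsto W^{1/p}(\varphi_j*\vec f)$ is linear, we get
\[
\|\vec f+\vec g\|_{\dot B^\alpha_{p,q}(W)}\le C\bigl(\|\vec f\|_{\dot B^\alpha_{p,q}(W)}+\|\vec g\|_{\dot B^\alpha_{p,q}(W)}\bigr).
\]
Definiteness: if $\|\vec f\|_{\dot B^\alpha_{p,q}(W)}=0$, then Proposition \ref{174} gives $\langle \vec f,\phi\rangle=0$ for every $\phi\in\mathcal S_\infty$, so $\vec f=\vec{\mathbf 0}$ in $(\mathcal S'_\infty)^m$. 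Hence $\|\cdot\|_{\dot B^\alpha_{p,q}(W)}$ is a quasi-norm.

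For completeness, I will use the $\min\{1,p,q\}$-power trick. Set $\theta:=\min\{1,p,q\}$. By Lemma \ref{equivalent} and Minkowski's inequality in $L^{p/\theta}$ and $\ell^{q/\theta}$, the functional $\|\cdot\|^\theta_{\dot B^\alpha_{p,q}(W)}$ is subadditive. It therefore suffices to show that every series $\sum_k \vec f_k$ with $\sum_k\|\vec f_k\|^\theta<\infty$ converges in $\dot B^\alpha_{p,q}(W)$. Equivalently, one can work directly with a Cauchy sequence, which is the route I take below.

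Step 1: let $\{\vec f_k\}$ be Cauchy in $\dot B^\alpha_{p,q}(W)$. By Proposition \ref{174}, $\{\langle\vec f_k,\phi\rangle\}$ is Cauchy in $\mathbb C^m$ uniformly for $\|\phi\|_N\le1$. Define $\vec f(\phi):=\lim_k\langle\vec f_k,\phi\rangle$. The bound $|\langle \vec f_k,\phi\rangle|\lesssim \sup_k\|\vec f_k\|\,\|\phi\|_N$ passes to the limit, so $\vec f$ is continuous on $\mathcal S_\infty$ and thus $\vec f\in(\mathcal S'_\infty)^m$. Moreover, $\vec f_k\to\vec f$ in $(\mathcal S'_\infty)^m$.

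Step 2: since $\varphi_j(x-\cdot)\in\mathcal S_\infty$ by \eqref{hs2}, we have $(\varphi_j*\vec f_k)(x)\to(\varphi_j*\vec f)(x)$ pointwise for every $j$ and $x$. Fatou's lemma, applied first in $L^p$ and then in $\ell^q$ (using the same monotone/liminf argument when $q=\infty$), gives
\[
\|\vec f-\vec f_k\|_{\dot B^\alpha_{p,q}(W)}\le\liminf_{l\to\infty}\|\vec f_l-\vec f_k\|_{\dot B^\alpha_{p,q}(W)}.
\]
The right-hand side is small for large $k$ by the Cauchy property. This shows $\vec f\in\dot B^\alpha_{p,q}(W)$ and $\vec f_k\to\vec f$.

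The only delicate point is the uniformity in Step 1: we must get a single $N$ and a constant independent of $\vec f$. Proposition \ref{174} provides exactly this, and it is where $W\in\mathcal A_{p,\infty}$ enters, through the $\varphi$-transform. Everything else is routine.
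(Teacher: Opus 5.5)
Your proposal is correct and follows essentially the paper's route: the paper gets completeness by combining the continuous embedding of Proposition~\ref{174} with the standard argument of \cite[Proposition 2.3.1]{g14}. That argument passes from a Cauchy sequence to a limit in $(\mathcal S_\infty')^m$ and then applies Fatou's lemma in $L^p$ and in $\ell^q$, exactly as you do. The only point you leave implicit is that $\|\cdot\|_N$, which uses the Euclidean weight $(1+|x|)^N$, is dominated by finitely many of the $\rho$-based seminorms in \eqref{S} (via Lemma~\ref{edbj}); this is what makes your limit functional continuous on $\mathcal S_\infty$.
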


\section{Almost Diagonal Operators}
\label{jh dj}

In this section, we focus on the boundedness  and sharpness of almost diagonal operators on
$\dot b_{p,q}^{\alpha}(W)$.
Let $ B:=\{ b_{Q,R}\}_{Q,R\in\mathcal{D}}$ in $\mathbb{C}$.
For any sequence $ \vec s := \{ \vec s_R \}_{R \in \mathcal{D}}$
in $ \mathbb{C}^m $,  define $ B\vec s:= \{(B\vec s)_Q\}_{Q\in\mathcal{D}}$ by setting,
for any $Q\in\mathcal{D}$,
$\left(B\vec s\right)_Q :=\sum_{R\in \mathcal{D}}b_{Q,R}\vec s_R$
if this series converges absolutely.

We now recall the concept of almost diagonal operators.
Unlike the traditional notation in \cite{FJ90,gjabownik06},
we use a new equivalent definition from \cite[Definition 4.1]{bf6}.

\begin{definition}\label{almdia}
Let $D,E,F\in\mathbb{R}$. The special infinite matrix
$B^{D,E,F}:=\{b_{Q,R}^{D,E,F}\}_{Q,R\in\mathcal{D}}$
is defined by setting, for any $Q,R\in\mathcal{D}$,
\begin{align}\label{BDEF}
b_{Q,R}^{D,E,F}:=\left[1+\frac{\rho(x_Q-x_R)}{|Q|\vee|R|}\right]^{-D}
\begin{cases}\displaystyle
\left(\frac{|Q|}{|R|}\right)^E & \text{if }|Q|\leq|R|, \\
\displaystyle
\left(\frac{|R|}{|Q|}\right)^F & \text{if }|R|<|Q|.
\end{cases}
\end{align}
An infinite matrix $ B:=\{b_{Q, R}\}_{Q,R\in\mathcal{D}}$ in $\mathbb{C} $
is said to be \emph{$(D,E,F)$-almost diagonal}
if there exists a positive constant $ C $ such that, for any $Q,R\in\mathcal D$, $|b_{Q,R}|\le C b_{Q,R}^{D,E,F}$.
\end{definition}

The remainder of this section is organized as follows.
In Subsection \ref{BADO}, we prove the boundedness of
almost diagonal operators on $\dot b_{p,q}^{\alpha}(W)$.
Moreover, we also prove that the composition of two almost diagonal operators
remains an almost diagonal operator.
In Subsection \ref{SADO}, we prove the sharpness of almost diagonal conditions.
In Subsection \ref{compare}, we compare the results obtained in
Subsection \ref{BADO} with existing ones.

\subsection{Boundedness of Almost Diagonal Operators}
\label{BADO}

In this subsection, we establish the boundedness of almost diagonal operators.
The following theorem is the main result of this subsection.

\begin{theorem}\label{ad FJ-YY}
Let $\alpha\in\mathbb{R}$,
$p\in(0,\infty)$, $q\in(0,\infty]$, and
$W\in\mathcal A_{p,\infty}$.  Assume that
$B:=\{b_{Q, R}\}_{Q, R\in\mathcal{D}}$ in $\mathbb{C}$
is $(D,E,F)$-almost diagonal with parameters
\begin{align}\label{ad FJ}
D>J,\
E>\frac{1}{2}+\alpha, \text{ and }
F>J-\frac{1}{2}-\alpha,
\end{align}
where
\begin{equation}\label{J}
J
:= \frac{1}{1\wedge p}+\mathrm{AT}(p,W)
:= \frac{1}{1\wedge p} + \min\left\{\frac{d_{p,\infty}^{\mathrm{upper}}(W)}{p},
\left( \frac1{v_{W,p}}-\frac1{1\wedge p} \right)_+\right\}
\end{equation}
with $d_{p,\infty}^{\mathrm{upper}}(W)$ and $v_{W,p}$
as, respectively, in Definition \ref{lower} and \eqref{spW}.
Then $B\vec s$ is well defined for all $\vec s \in\dot{b}_{p,q}^{\alpha}(W)$,
and $B$ is bounded on $\dot b^\alpha_{p,q}(W)$.
\end{theorem}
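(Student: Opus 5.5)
We will prove Theorem \ref{ad FJ-YY} by splitting according to which term attains the minimum in $\mathrm{AT}(p,W)$. Since $J$ is defined through a minimum, it suffices to establish boundedness under each of the two conditions
\begin{align*}
\text{(a) } D>\frac1{1\wedge p}+\frac{d_2}{p}
\quad\text{and}\quad
\text{(b) } D>\frac1{1\wedge p}+\left(\frac1v-\frac1{1\wedge p}\right)_+
\end{align*}
separately, together with the corresponding constraints on $E$ and $F$. Here $d_2\in[\![d^{\mathrm{upper}}_{p,\infty}(W),\infty)$ and $v\in(0,v_{W,p})$ are chosen so that the strict inequalities in \eqref{ad FJ} survive. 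First, as in the unweighted theory, we absorb $\alpha$ by replacing $b_{Q,R}$ with $(|R|/|Q|)^{\alpha+\frac12}b_{Q,R}$ (up to normalization). This shifts $E,F$ by $\alpha+\frac12$, so it suffices to treat $\alpha=-\frac12$, i.e.\ the $\ell^qL^p$ norm of $\sum_Q\mathbf 1_Q\vec s_Q$. Well-definedness of $B\vec s$, meaning absolute convergence of $\sum_R b_{Q,R}\vec s_R$, will follow from the same estimates applied to a single $Q$, combined with \eqref{AQSQ} and \eqref{bb} as in Lemma \ref{hhhk}.

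\emph{Regime (a).} For $x\in Q\in\mathcal D_j$ we write
$|W^{\frac1p}(x)(B\vec s)_Q|\le\sum_R|b_{Q,R}|\,\|W^{\frac1p}(x)A_R^{-1}\|\,|A_R\vec s_R|$,
and group $R$ by level $i=j+k$. For $p\le1$ we use Lemma \ref{equivalent} and integrate over $Q$; for $p>1$ we first apply H\"older's inequality in $R$, spending an $\varepsilon$ of the decay in $D$. The key quantity is $\int_Q\|W^{\frac1p}(x)A_R^{-1}\|^p\,dx$. Here we would \emph{not} use Lemma \ref{tl012001}, which produces $d_1+d_2$. Instead we enlarge: $Q$ is contained in a dilate $\lambda B_R$ of a ball comparable to $R$, with $\lambda\sim(|Q|\vee|R|)/|R|\cdot[1+\rho(x_Q-x_R)/(|Q|\vee|R|)]$. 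Then, by Lemmas \ref{reduceM} and \ref{2sim} and the definition of upper dimension (the first argument in Case (2) of Lemma \ref{fuhe}),
$$\int_Q\|W^{\frac1p}(x)A_R^{-1}\|^p\,dx\le|\lambda B_R|\fint_{\lambda B_R}\cdots\lesssim|\lambda B_R|\,\lambda^{d_2}.$$
Only $d_2$ appears, at the cost of the volume factor $|\lambda B_R|/|Q|$. Feeding this in, the sum over $R\in\mathcal D_i$ for fixed $Q$ becomes an unweighted almost diagonal sum. Its decay exponent $pD-d_2$ must exceed $1$ (after the $\frac1{1\wedge p}$ adjustment and Lemma \ref{yl111505}(ii)), and the geometric sums in $k$ converge precisely when $E>0$ and $F>J$ in the shifted variables. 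Then Lemma \ref{dj3} and Theorem \ref{dj} finish this case.

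\emph{Regime (b).} Here we work with the functions $\vec s_i=\sum_{R\in\mathcal D_i}\widetilde{\mathbf1}_R\vec s_R$ and pass to the rescaled maximal operator. For $x\in Q$ and $R\in\mathcal D_i$ we write $W^{\frac1p}(x)\vec s_R$ as an average of $W^{\frac1p}(x)W^{-\frac1p}(y)W^{\frac1p}(y)\vec s_R$ over $y\in R$ in the $L^v$ sense. The Frazier--Jawerth annular decomposition (summing $R$ in dyadic annuli $\rho(x_Q-x_R)\sim b^{\ell}(|Q|\vee|R|)$) then gives
$$\sum_{R\in\mathcal D_i}|b_{Q,R}|\,|W^{\frac1p}(x)\vec s_R|\lesssim b^{-|k|\cdot(\text{E or F})}\,b^{(\cdot)k/v}\,\mathcal M^{(v)}_{W,p}\bigl(W^{\frac1p}\vec s_i\bigr)(x),$$
provided $D>\frac1v$ (or $D>1$ when $v\ge1$, after a H\"older step). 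By Theorem \ref{bounded B2}, $\mathcal M^{(v)}_{W,p}$ is bounded on $L^p$ for every $v<v_{W,p}$. Summing in $k$ with the $\ell^q$ norm (using Lemma \ref{equivalent} when $q\le1$, Minkowski/H\"older otherwise) then closes the estimate, and the loss $\frac1v$ versus $\frac1{1\wedge p}$ is exactly $\mathrm{AT}(p,W)$ in this branch.

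The main obstacle is regime (a): obtaining only $d_2$ rather than $d_1+d_2$. This needs a careful choice of the enlarging ball and the bookkeeping between $|\lambda B_R|$, $|Q|$, and the decay factor. The case $|R|<|Q|$ is also delicate, since $\lambda\sim|Q|/|R|$ multiplies the spatial factor, and this must be matched exactly to $F>J-\frac12-\alpha$. In regime (b) the delicate point is the case $|R|>|Q|$, where the average over $R$ must be compared with balls containing $x$ of radius $\sim|R|[1+\ldots]$; this produces the $b^{k/v}$ factor that is absorbed by $E$.
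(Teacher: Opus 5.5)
\textbf{Regime (a) has a genuine gap; regime (b) is sound.}

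Splitting the minimum in $\mathrm{AT}(p,W)$ into two regimes is legitimate. Regime (b) is the maximal-operator half of the paper's Lemma \ref{bq0}, combined with the annular decomposition of Lemma \ref{ad prelim}. The paper takes $a=1\wedge p$ there and passes to $L^v$ averages via Lemma \ref{yl022301}; you go to $L^v$ directly. Both give $D>\frac1v$ and $F>\frac1v-\frac12-\alpha$.

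The problem in regime (a) is that you enlarge each cube $Q$ separately. Your bound then reads
\[
\frac1{|Q|}\int_Q\left\|W^{\frac1p}(x)A_R^{-1}\right\|^p\,dx
\lesssim\frac{|\lambda B_R|}{|Q|}\,\lambda^{d_2}
\sim\max\left\{\frac{|R|}{|Q|},\left(\frac{|Q|}{|R|}\right)^{d_2}\right\}
\left[1+\frac{\rho(x_Q-x_R)}{|Q|\vee|R|}\right]^{1+d_2}.
\]
This is exactly Lemma \ref{tl012001} with lower dimension $d_1=1$, which is always admissible since $\mathbb D^{\mathrm{lower}}_{p,\infty,1}=\mathcal A_{p,\infty}$. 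So the trick does not remove $d_1$; it replaces it by $1$, which is worse than any admissible $d_1<1$.

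In your summation the annulus volume is then paid twice. It is paid once through the number $\sim b^{\ell}(|Q|\vee|R|)/|Q|$ of cubes $Q\in\mathcal D_j$ in the $\ell$-th annulus around $R$. It is paid again through the factor $|\lambda B_R|/|Q|\sim b^{\ell}(|Q|\vee|R|)/|Q|$ attached to each such cube. Carried through, this gives only conditions of the form $D>\frac1{1\wedge p}+\frac{1+d_2}p$ and $E>\frac12+\alpha+\frac1p$. The exponents you assert (``$pD-d_2>1$'' and ``$E>0$'' after the shift) do not follow.

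\textbf{The fix: pay the volume once per annulus.} After the $p$-triangle inequality (or H\"older in $R$ when $p>1$), use Tonelli to sum over $Q\in\mathcal D_j$ with $R$ fixed. Then enlarge the \emph{union} of the cubes in the $\ell$-th annulus, not each cube. Let $B\supset R$ be a ball of radius $r_B\sim b^{\ell}(|Q|\vee|R|)$ containing that union. Passing through a ball $\widetilde B_R\subset R$ with $|\widetilde B_R|\sim|R|$, Lemmas \ref{reduceM} and \ref{fuhe}(i) give
\[
\sum_{Q}\int_Q\left\|W^{\frac1p}(x)A_R^{-1}\right\|^p\,dx
\le\int_B\left\|W^{\frac1p}(x)A_R^{-1}\right\|^p\,dx
\lesssim|B|\left(\frac{r_B}{|R|}\right)^{d_2}.
\]
With this, the bookkeeping yields exactly $D>J$, $E>\frac12+\alpha$ and $F>J-\frac12-\alpha$, where $J=\frac1{1\wedge p}+\frac{d_2}p$.

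The paper implements the same principle differently. Lemma \ref{ad prelim} first turns each annulus into the normalized average $\fint_{B_\rho(x,b^{l+k_+-i})}$. Lemma \ref{bq0} then re-tiles at the annulus scale, so the only factor that appears is $\|A_{B_Q}A_{\widetilde B_R}^{-1}\|^p\lesssim b^{(l+k_+)d_2}$, for small $R$ meeting $B_Q$.

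\textbf{Well-definedness.} Avoid \eqref{bb} and \eqref{AQSQ}: they reintroduce $d_1+d_2$ and discard the $\ell^p$-summability. Your estimates already bound $\sum_R|b_{Q,R}||W^{\frac1p}(x)\vec s_R|$ for almost every $x\in Q$. Inverting $W^{\frac1p}(x)$, as in Lemma \ref{ad conv}, is enough.
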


\begin{remark}
\begin{enumerate}[\rm(i)]
\item When $W$ is a power weight, condition \eqref{ad FJ} is sharp
(see Theorem \ref{ad Besov sharp}).

\item When $p\in(0,1]$, we have
$\mathrm{AT}(p,W)=\frac{d_{p,\infty}^{\mathrm{upper}}(W)}{p}$,
whereas, when $p\in(1,\infty)$, the term $\mathrm{AT}(p,W)$
cannot be simplified further, since the minimum in its definition
may equal either of its two arguments;
see Remark \ref{rem1} for further details.
\end{enumerate}
\end{remark}

To prove Theorem \ref{ad FJ-YY}, we need several technical lemmas.
The following lemma allows us to restrict consideration to $\alpha=0$
when studying the boundedness of almost diagonal operators.

\begin{lemma}\label{alpha0}
Let $\alpha\in\mathbb{R}$,
$p\in(0,\infty)$, $q\in(0,\infty]$, and
$W\in\mathcal A_{p,\infty}$.
Let $D,E,F\in\mathbb R$,
$B:=\{b_{Q, R}\}_{Q,R\in\mathcal{D}}$ in $\mathbb{C}$,
and $\widetilde B:=\{\widetilde b_{Q,R}\}_{Q,R\in\mathcal{D}}$,
where, for any $Q$, $R\in\mathcal{D}$,
$ \widetilde{b}_{Q, R}:=(|R|/|Q|)^\alpha b_{Q, R}$.
Then
\begin{enumerate}[\rm(i)]
\item $B$ is $(D,E,F)$-almost diagonal if and only if
$\widetilde B$ is $(D,E-\alpha,F+\alpha)$-almost diagonal;

\item $B$ is bounded on $\dot b^{\alpha}_{p,q}(W)$ if and only if
$\widetilde B$ is bounded on $\dot b^{0}_{p,q}(W)$.
\end{enumerate}
\end{lemma}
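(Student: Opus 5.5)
Both parts reduce to direct computations with the definitions \eqref{BDEF} and \eqref{sj}. For (i), fix $Q,R\in\mathcal D$ and compare $b^{D,E-\alpha,F+\alpha}_{Q,R}$ with $(|R|/|Q|)^\alpha b^{D,E,F}_{Q,R}$. The factor $[1+\rho(x_Q-x_R)/(|Q|\vee|R|)]^{-D}$ is the same on both sides, so only the size factor needs checking, and we split into two cases.

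If $|Q|\le|R|$, then
\[
(|R|/|Q|)^\alpha(|Q|/|R|)^E=(|Q|/|R|)^{E-\alpha}.
\]
If $|R|<|Q|$, then
\[
(|R|/|Q|)^\alpha(|R|/|Q|)^F=(|R|/|Q|)^{F+\alpha}.
\]
Hence $(|R|/|Q|)^\alpha b^{D,E,F}_{Q,R}=b^{D,E-\alpha,F+\alpha}_{Q,R}$ identically. Consequently $|b_{Q,R}|\le Cb^{D,E,F}_{Q,R}$ holds for all $Q,R$ if and only if $|\widetilde b_{Q,R}|\le Cb^{D,E-\alpha,F+\alpha}_{Q,R}$ does, with the same constant $C$.

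For (ii), introduce the diagonal rescaling $(\Lambda\vec s)_Q:=|Q|^{-\alpha}\vec s_Q$, so that $\Lambda^{-1}$ multiplies by $|Q|^{\alpha}$. Since $|Q|=b^{-j}$ for $Q\in\mathcal D_j$, we get $(\Lambda\vec s)_j=b^{j\alpha}\vec s_j$ pointwise, with $\vec s_j$ as in \eqref{sj}. Therefore
\[
\|\Lambda\vec s\|_{\dot b^0_{p,q}(W)}=\|\vec s\|_{\dot b^\alpha_{p,q}(W)},
\]
so $\Lambda$ is an isometric bijection from $\dot b^\alpha_{p,q}(W)$ onto $\dot b^0_{p,q}(W)$.

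Next we check that $\widetilde B=\Lambda B\Lambda^{-1}$ as formal matrices. Indeed,
\[
(\Lambda B\Lambda^{-1}\vec t)_Q=|Q|^{-\alpha}\sum_R b_{Q,R}|R|^\alpha\vec t_R=\sum_R\widetilde b_{Q,R}\vec t_R .
\]
The series for $B$ at $\vec s=\Lambda^{-1}\vec t$ and the series for $\widetilde B$ at $\vec t$ differ termwise by the positive factor $|Q|^{-\alpha}$, which does not depend on $R$. So one converges absolutely if and only if the other does, and well-definedness transfers in both directions.

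It follows that $\|\widetilde B\vec t\|_{\dot b^0_{p,q}(W)}=\|B\Lambda^{-1}\vec t\|_{\dot b^\alpha_{p,q}(W)}$ for every $\vec t$. Combined with the isometry of $\Lambda$, the operator norms of $B$ on $\dot b^\alpha_{p,q}(W)$ and of $\widetilde B$ on $\dot b^0_{p,q}(W)$ coincide, which gives (ii).

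No step here is a real obstacle; the only point needing care is the bookkeeping of absolute convergence in (ii). The hypothesis $W\in\mathcal A_{p,\infty}$ plays no role beyond making the spaces meaningful.
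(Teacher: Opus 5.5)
Your proposal is correct and matches the paper's proof: (i) is the identity $(|R|/|Q|)^\alpha b^{D,E,F}_{Q,R}=b^{D,E-\alpha,F+\alpha}_{Q,R}$, and (ii) uses the same isometric isomorphism $(J_\alpha\vec s)_R:=|R|^{-\alpha}\vec s_R$ with $B=J_\alpha^{-1}\widetilde B J_\alpha$. The one thing you add is an explicit check that absolute convergence transfers between the two series, which the paper leaves implicit.
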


\begin{proof}
From Definition \ref{almdia}, it is simple to see that (i) holds.
To prove (ii), for any $R\in\mathcal{D}$, define $(J_\alpha\vec{s})_R:=|R|^{-\alpha}\vec{s}_R$.
It is simple to check that $J_\alpha:\dot b^{\alpha}_{p,q}(W)\to\dot b^{0}_{p,q}(W)$
is an isometric isomorphism. Moreover,
a direct computation gives that,
for any $\vec s\in \dot b^{\alpha}_{p,q}(W)$,
$B\vec{s}=J_{\alpha}^{-1}\widetilde{B}J_\alpha\vec{s}$.
From these observations, we infer that (ii) holds.
This completes the proof of Lemma \ref{alpha0}.
\end{proof}

\begin{lemma}\label{yl022301}
Let $i,l\in\mathbb{Z}$ satisfy $l+i\ge0$ and $a_1,a_2\in(0,\infty)$.
Then there exists a positive constant $C$,
depending only on $\rho$, $A$, $a_1$, and $a_2$, such that,
for any $\vec s:=\{\vec s_Q\}_{Q\in\mathcal D}$ in $\mathbb C^m$, $x\in\mathbb{R}^n$,
and $U\in M_m(\mathbb{C})$,
\begin{align*}
\left[\fint_{B_{\rho}(x,b^{l})}\left|U\vec s_i(y)\right|^{a_1}\,dy\right]^{\frac1{a_1}}
\leq C b^{(l+i)(\frac{1}{a_2}-\frac{1}{a_1})_+}
\left[\fint_{B_{\rho}(x,C_3b^{l})}
\left|U\vec s_i(y)\right|^{a_2}\,dy
\right]^{\frac{1}{a_2}},
\end{align*}
where $\vec s_i$ is as in \eqref{sj} and $C_3:=H(1+C_0)$ with $C_0$ as in Lemma \ref{yl1101}.
\end{lemma}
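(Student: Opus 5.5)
The plan is to exploit that $U\vec s_i$ is a step function, constant on each cube $Q\in\mathcal D_i$ with value $|Q|^{-\frac12}U\vec s_Q$. Since $l+i\ge0$, the ball $B_\rho(x,b^l)$ has measure $\sim b^l\ge b^{-i}=|Q|$ for $Q\in\mathcal D_i$. The idea is to cover the ball by the cubes of $\mathcal D_i$ that meet it and to compare averages cube by cube.

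First, let $\mathcal Q:=\{Q\in\mathcal D_i:\ Q\cap B_\rho(x,b^l)\neq\emptyset\}$. For $Q\in\mathcal Q$ and $y\in Q$, pick $z\in Q\cap B_\rho(x,b^l)$. Lemma \ref{yl1101}(ii) and Definition \ref{quasi-norm}(iii) give
\[
\rho(y-x)\le H[\rho(y-z)+\rho(z-x)]<H(C_0b^{-i}+b^l)\le H(1+C_0)b^l=C_3b^l .
\]
Hence $\bigcup_{Q\in\mathcal Q}Q\subset B_\rho(x,C_3b^l)$. By \eqref{ballmea}, $|B_\rho(x,b^l)|\sim|B_\rho(x,C_3b^l)|\sim b^l$. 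In particular $\#\mathcal Q\lesssim b^{l+i}$, since the cubes in $\mathcal Q$ are disjoint, each has measure $b^{-i}$, and all lie in a set of measure $\lesssim b^l$.

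Write $c_Q:=|Q|^{-\frac12}|U\vec s_Q|$. Then
\[
\int_{B_\rho(x,b^l)}|U\vec s_i|^{a_1}\le b^{-i}\sum_{Q\in\mathcal Q}c_Q^{a_1}
\quad\text{and}\quad
\int_{B_\rho(x,C_3b^l)}|U\vec s_i|^{a_2}\ge b^{-i}\sum_{Q\in\mathcal Q}c_Q^{a_2}.
\]
It therefore suffices to prove
\[
\Big(b^{-i-l}\sum_{Q\in\mathcal Q}c_Q^{a_1}\Big)^{\frac1{a_1}}\lesssim b^{(l+i)(\frac1{a_2}-\frac1{a_1})_+}\Big(b^{-i-l}\sum_{Q\in\mathcal Q}c_Q^{a_2}\Big)^{\frac1{a_2}}.
\]
The argument splits into two cases.

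If $a_1\le a_2$, the exponent $(\frac1{a_2}-\frac1{a_1})_+$ vanishes. Apply H\"older's inequality over the finite set $\mathcal Q$:
\[
\sum_{Q\in\mathcal Q}c_Q^{a_1}\le(\#\mathcal Q)^{1-\frac{a_1}{a_2}}\Big(\sum_{Q\in\mathcal Q}c_Q^{a_2}\Big)^{\frac{a_1}{a_2}}.
\]
Together with $\#\mathcal Q\lesssim b^{l+i}$, this gives the bound with constant independent of $l+i$.

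If $a_1>a_2$, use the embedding $\ell^{a_2}\subset\ell^{a_1}$ (Lemma \ref{equivalent} applied with exponent $a_2/a_1$):
\[
\Big(\sum_{Q\in\mathcal Q}c_Q^{a_1}\Big)^{\frac1{a_1}}\le\Big(\sum_{Q\in\mathcal Q}c_Q^{a_2}\Big)^{\frac1{a_2}}.
\]
Multiplying by $b^{-(i+l)/a_1}$ and rewriting this as $b^{(l+i)(\frac1{a_2}-\frac1{a_1})}\cdot b^{-(i+l)/a_2}$ yields exactly the claimed factor.

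No step looks genuinely hard. The only point needing care is the geometric containment producing the constant $C_3=H(1+C_0)$, which must use $l+i\ge0$ to absorb $C_0b^{-i}$ into $C_0b^l$. The measure comparisons follow from \eqref{ballmea}.
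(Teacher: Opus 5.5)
Your proof is correct and matches the paper's in the case $a_2<a_1$, which is where the factor $b^{(l+i)(\frac1{a_2}-\frac1{a_1})}$ comes from. There, as in the paper, you cover the ball by the cubes of $\mathcal D_i$ that meet it, use $l+i\ge0$ to get the containment in $B_\rho(x,C_3b^l)$, and apply $\ell^{a_2}\subset\ell^{a_1}$ via Lemma \ref{equivalent}.

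The only difference is the case $a_1\le a_2$. The paper applies H\"older's inequality directly on $B_\rho(x,b^l)$ and then enlarges the ball using \eqref{ballmea}. That argument works for arbitrary functions and needs neither the step structure nor $l+i\ge0$. Your route through the cube sums, discrete H\"older, and the count $\#\mathcal Q\lesssim b^{l+i}$ is also valid, just slightly more roundabout.
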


\begin{proof}
If $a_1\leq a_2$, then from  H\"older's inequality, it follows that
\begin{align*}
\left[\fint_{B_{\rho}(x,b^{l})}\left|U\vec s_i(y)\right|^{a_1}\,dy\right]^{\frac1{a_1}}
\leq \left[\fint_{B_{\rho}(x,b^{l})}\left|U\vec s_i(y)\right|^{a_2}\,dy\right]^{\frac1{a_2}}
\lesssim \left[\fint_{B_{\rho}(x,C_3b^{l})}
\left|U\vec s_i(y)\right|^{a_2}\,dy
\right]^{\frac{1}{a_2}}.
\end{align*}
This completes the proof of Lemma \ref{yl022301} in this case.

Now, we consider the case where $a_2 < a_1$.
For any $x\in\mathbb R^n$,
we first claim that
\begin{align}\label{4.5.1}
B_{\rho}\left( x,b^{l}\right) \subset
\bigcup_{\genfrac{}{}{0pt}{}{Q\in\mathcal{D}_i}{Q\cap B_\rho(x,b^{l})\neq\emptyset}}Q
\subset  B_{\rho}\left( x, C_3b^{l}\right).
\end{align}
The first inclusion is obvious.
To prove the second inclusion, it suffices to show that, for any
$Q\in\mathcal D_i$ with $Q\cap B_{\rho}(x,b^{l})\neq\emptyset$, $Q\subset B_{\rho}(x,C_3b^{l})$.
Fix $z\in Q\cap B_{\rho}(x,b^{l})$.
Applying Lemma \ref{yl1101}(ii) and $l+i\geq0$,
we obtain, for any $y\in Q$,
\begin{align*}
\rho(x-y)\le H[\rho(x-z)+\rho(z-y)]\le
H\left(b^{l}+C_0b^{-i}\right)
\le C_3 b^{l},
\end{align*}
and hence $Q\subset B_{\rho}(x,C_3b^{l})$. This proves \eqref{4.5.1}.
Using this and Lemma \ref{equivalent}, we conclude that
\begin{align*}
\left[\fint_{B_{\rho}(x,b^{l})}\left|U\vec s_i(y)\right|^{a_1}\,dy\right]^{\frac1{a_1}}
&\lesssim b^{\frac i2-\frac{i+l}{a_1}} \left[
\sum_{\genfrac{}{}{0pt}{}{Q\in\mathcal{D}_i}{Q\cap B_{\rho}(x,b^{l})\neq\emptyset}}
\left|U\vec s_Q\right|^{a_1} \right]^{\frac1{a_1}}
\leq b^{\frac i2-\frac{i+l}{a_1}}
\left[\sum_{\genfrac{}{}{0pt}{}{Q\in\mathcal{D}_i}{Q\cap B_{\rho}(x,b^{l})\neq\emptyset}}
\left|U\vec s_Q\right|^{a_2}
\right]^\frac{1}{a_2}
\\
&\lesssim b^{-\frac{i+l}{a_1}} \left[ b^{i+l} \fint_{B_{\rho}(x,C_3b^{l})}
\left|U\vec s_i(y)\right|^{a_2}\,dy \right]^\frac{1}{a_2} \\
&= b^{(l+i)(\frac 1{a_2}-\frac 1{a_1})}
\left[ \fint_{B_{\rho}(x,C_3b^{l})}
\left|U\vec s_i(y)\right|^{a_2} \,dy \right]^\frac{1}{a_2}.
\end{align*}
This completes the proof of Lemma \ref{yl022301}.
\end{proof}

The following lemma generalizes \cite[Lemma 4.8]{bf2} to the anisotropic setting.

\begin{lemma}\label{ad prelim}
Let $p\in(0,\infty)$, $q\in(0,\infty]$, $a\in (0,1]$,
and $B$ be $(D,E,F)$-almost diagonal for some $D,E,F\in\mathbb R$.
Then there exists a positive constant $ C $ such that,
for any $\vec{s}:=\{\vec{s}_R\}_{R\in\mathcal D}$ in $\mathbb C^m$
satisfying that $B\vec s$ is well defined
and, for any sequence $\{H_j :\ \mathbb{R}^n
\to M_m(\mathbb{C})\}_{j \in \mathbb{Z}}$
of locally integrable matrix-valued functions, one has
\begin{align}\label{ABt}
\left\|\left\{H_j(B\vec s)_j\right\}_{j\in\mathbb Z} \right\|_{\ell^qL^p}^r
&\le C \sum_{k\in\mathbb{Z}}
\sum_{l=0}^\infty \left[ b^{-(E-\frac{1}{2}) k_-}
b^{-k_+(F+\frac{1}{2}-\frac{1}{a})}
b^{-(D-\frac{1}{a})l}\right]^r\notag \\
&\quad\times\left\|\left\{\left[\fint_{B_{\rho}(\cdot, b^{l+k_+-i})}
\left| H_{i-k}(\cdot)
\vec{s}_i(y) \right|^a\,dy
\right]^{\frac{1}{a}} \right\}_{i\in\mathbb Z} \right\|_{\ell^qL^p}^r,
\end{align}
where $r:=\min\{1,p,q\}$ and $\|\cdot\|_{\ell^q L^p}$ and $\vec s_i$
are as, respectively, in \eqref{lqLp} and \eqref{sj}.
\end{lemma}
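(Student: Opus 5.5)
We split the operator according to the relative size of cubes. Write $k:=j_R-j_Q$ for the scale difference, so that $Q\in\mathcal D_j$ and $R\in\mathcal D_{j-k}$. For fixed $j$ and $k$, split $R$ further by the dyadic ``distance level'' $l\in\mathbb Z_+$. The shells are defined as follows: $l=0$ means $\rho(x_Q-x_R)\lesssim|Q|\vee|R|$, and $l\ge1$ means $\rho(x_Q-x_R)\sim b^l(|Q|\vee|R|)$. Note that $|Q|\vee|R|=b^{-j+k_+}$. On the $(k,l)$-shell the almost diagonal bound reads $|b_{Q,R}|\lesssim b^{-Dl}$ times $b^{-Ek_-}$ (when $|Q|\le|R|$, i.e.\ $k<0$) or $b^{-Fk_+}$ (when $k>0$).

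\emph{Pointwise bound.} For $x\in Q$ we have $|\widetilde{\mathbf 1}_Q(x)|=|Q|^{-1/2}=b^{j/2}$. By Lemma \ref{equivalent} with $a\in(0,1]$,
\begin{align*}
\Bigl|H_j(x)(B\vec s)_Q\Bigr|^a
\lesssim\sum_{k,l}\bigl(b^{-Ek_-}b^{-Fk_+}b^{-Dl}\bigr)^a\sum_{R\in\text{shell}}\bigl|H_j(x)\vec s_R\bigr|^a .
\end{align*}
All $R$ in the $(k,l)$-shell lie inside the ball $B_\rho(x,Cb^{l+k_+-j})$; this uses Lemma \ref{yl1101}(ii) and the quasi-triangle inequality. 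We then replace the sum by an integral of $|H_j(x)\vec s_{j-k}(y)|^a$ via
\begin{align*}
|H_j(x)\vec s_R|^a=|R|^{a/2-1}\int_R\bigl|H_j(x)\vec s_{j-k}(y)\bigr|^a\,dy .
\end{align*}
This gives
\begin{align*}
\sum_R|H_j(x)\vec s_R|^a\lesssim b^{(j-k)(1-a/2)}\,b^{l+k_+-j}\fint_{B_\rho(x,Cb^{l+k_+-j})}\bigl|H_j(x)\vec s_{j-k}(y)\bigr|^a\,dy .
\end{align*}
Combining with the factor $b^{ja/2}$, taking $1/a$-th powers, and collecting exponents (with $k=k_+-k_-$) yields
\begin{align*}
b^{-(E-\frac12)k_-}\,b^{-k_+(F+\frac12-\frac1a)}\,b^{-(D-\frac1a)l}
\end{align*}
up to constants. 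The radius constant $C$ is removed by enlarging $l$ by a fixed amount, or by noting that $b^l$-dilates are comparable via \eqref{ballmea}.

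\emph{Summing.} We take the $\ell^qL^p$ quasi-norm over $j$. Since $r=\min\{1,p,q\}$, the quantity $\|\cdot\|_{\ell^qL^p}^r$ is subadditive, and we use Lemma \ref{equivalent} on the outer sum over $(k,l)$, raised to the power $r$. Finally we reindex $i:=j-k$, so that $H_j=H_{i+k}$. This must be matched with the notation $H_{i-k}$ in \eqref{ABt}, which amounts to a sign convention for $k$ (replace $k$ by $-k$ and swap the roles of $k_\pm$ accordingly). We will fix the convention at the outset so that the exponents match \eqref{ABt}: with $k:=j_Q-j_R$, $|Q|\le|R|$ corresponds to $k\ge0$, so the $E$-term carries $k_+$. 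We will adjust so that the statement holds as written.

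\emph{Main obstacle.} The difficulty is the bookkeeping. First, the shell must be contained in a ball of the right radius $b^{l+k_+-i}$ uniformly in both size regimes. Second, we have to check that the factor $b^{k_+}$ arising from the count/volume of small cubes produces exactly the exponent $\frac1a$ in the $F$-term and none in the $E$-term. We expect this to work because for $|R|\ge|Q|$ the averaging ball is at scale $|R|$, so no volume loss occurs.
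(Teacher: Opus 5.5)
\textbf{The gap: where you apply Lemma \ref{equivalent}.} Several parts of your argument are correct: the shell containment, the identity $|H_j(x)\vec s_R|^a=|R|^{\frac a2-1}\int_R|H_j(x)\vec s_i(y)|^a\,dy$, the removal of the radius constant, and the exponent bookkeeping. This is a discrete version of the paper's argument. The paper writes the $R$-sum as an $L^1$ integral of $|H_j(x)\vec s_i(y)|$, splits it into annuli in $\rho(x-y)$, and passes to $L^a$-averages via Lemma \ref{yl022301}.

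The problem is that you apply Lemma \ref{equivalent} to the whole sum over $(k,l)$ and $R$ at once. What you actually obtain is
\[
\bigl|H_j(x)(B\vec s)_j(x)\bigr|\lesssim\Bigl\{\sum_{k,l}\bigl[c_{k,l}\,Y_{k,l,j}(x)\bigr]^a\Bigr\}^{\frac1a},
\]
where $c_{k,l}$ is the coefficient in \eqref{ABt} and $Y_{k,l,j}(x)$ is the corresponding $L^a$-average. Since $\sum_{k,l}z_{k,l}\le(\sum_{k,l}z_{k,l}^a)^{1/a}$, ``taking $1/a$-th powers'' does not give the linear bound $\sum_{k,l}c_{k,l}Y_{k,l,j}(x)$ that your summation step needs. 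From the displayed bound, working in $\ell^{q/a}L^{p/a}$, you only get \eqref{ABt} with $r$ replaced by $\min\{a,p,q\}$. This equals $r=\min\{1,p,q\}$ exactly when $a\ge r$. For $a<\min\{1,p,q\}$, which the lemma allows, it does not imply \eqref{ABt}, because $(\sum z^r)^{1/r}\le(\sum z^a)^{1/a}$. For example, $N$ equal $(k,l)$-contributions cost you $N^{1/a}$, where \eqref{ABt} allows only $N^{1/r}$.

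\textbf{The repair.} Keep the outer sum over $(k,l)$ linear, and use the $a$-inequality only inside a fixed shell:
\[
\sum_{R\in\text{shell}(k,l)}|b_{Q,R}|\,|H_j(x)\vec s_R|\lesssim b^{-Ek_-}b^{-Fk_+}b^{-Dl}\Bigl(\sum_{R\in\text{shell}(k,l)}|H_j(x)\vec s_R|^a\Bigr)^{\frac1a}.
\]
Your volume computation then yields $|H_j(x)(B\vec s)_j(x)|\lesssim\sum_{k,l}c_{k,l}Y_{k,l,j}(x)$, which is exactly the paper's pointwise estimate. The $r$-subadditivity of $\|\cdot\|_{\ell^qL^p}$ then finishes the proof for every $a\in(0,1]$. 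In the paper's only application, $a=1\wedge p\ge r$, so your version would suffice there, but not for the lemma as stated.

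\textbf{Fix the sign convention once.} Take $k:=j_R-j_Q$. Then:
\begin{itemize}
\item $R\in\mathcal D_{j+k}$, not $\mathcal D_{j-k}$;
\item with $i=j+k$, we have $|Q|\vee|R|=b^{-j+k_-}=b^{-i+k_+}$;
\item the $E$-regime is $k\le0$;
\item $H_j=H_{i-k}$.
\end{itemize}
This matches \eqref{ABt} with no sign flip. Your proposed final choice $k:=j_Q-j_R$ would attach $E$ to $k_+$, contrary to \eqref{ABt}.
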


\begin{proof}
Let $j\in\mathbb{Z}$ and $x\in Q\in\mathcal{D}_j$.
Then, from Lemma \ref{yl111503}(ii), we infer that
\begin{align}\label{ad step 1}
\left| H_j(x)(B\vec s )_j(x) \right|
&= b^{\frac j2} \left| H_j(x)( B\vec{s} )_Q\right|
\le b^{\frac j2} \sum_{R\in\mathcal{D}} |b_{Q,R}| \left|H_j(x)\vec{s}_R\right|\notag\\ &\lesssim\sum_{i\in\mathbb{Z}}
b^{-(j-i)_+E} b^{-(i-j)_+ F} b^{\frac j2}
\sum_{R\in\mathcal{D}_i}
\left[ 1+b^{i\wedge j}\rho(x-x_R) \right]^{-D} \left| H_j(x)\vec{s}_R\right|.
\end{align}
Using Lemma \ref{yl111503}(ii) again, we conclude that
\begin{align*}
&\sum_{R\in\mathcal{D}_i}
\left[ 1+b^{i\wedge j}\rho(x-x_R) \right]^{-D} \left| H_j(x)\vec{s}_R\right|\\
&\quad\sim\sum_{R\in\mathcal {D}_i} b^{\frac{i}{2}}\int_R
\left[1+b^{i\wedge j}\rho(x-y)
\right] ^{-D} |R|^{-\frac{1}{2}}\left| H_j(x)\vec{s}_R\right| \, dy \\
&\quad= b^{\frac{i}{2}}\int_{\mathbb{R}^n}
\left[1+b^{i\wedge j}\rho(x-y)
\right] ^{-D} \left| H_j(x)\vec s_i(y)\right|\, dy\\
&\quad\lesssim b^{\frac{i}{2}}\int_{B_{\rho}(x,b^{-(i\wedge j)})}
\left| H_j(x)\vec s_i(y)\right|\,dy \\
&\qquad+b^{\frac{i}{2}}\sum_{l=1}^\infty\int_{B_{\rho}(x,b^{l-(i\wedge j)})
\setminus B_{\rho}(x,b^{l-1-(i\wedge j)})}
b^{-Dl} \left| H_j(x)\vec s_i(y)\right|\,dy \\
&\quad\lesssim b^{\frac{i}{2}}\sum_{l=0}^\infty b^{-Dl}
b^{l-(i\wedge j)}
\fint_{B_{\rho}(x,b^{l-(i\wedge j)})} \left| H_j(x)\vec s_i(y)\right|\, dy,
\end{align*}
which, combined with \eqref{ad step 1}, further implies that
\begin{align}\label{Bsj}
\left|H_{j}(x)(B\vec s)_j(x)\right|
&\lesssim\sum_{i\in\mathbb{Z}}b^{-(j-i)_+E} b^{-(i-j)_+ F}
b^{\frac{i+j}{2}} b^{-(i\wedge j)}\sum_{l=0}^\infty b^{-(D-1)l}
\fint_{B_{\rho}(x,b^{l-(i\wedge j)})}
\left|H_j(x)\vec s_i(y)\right|\,dy\notag\\
&=\sum_{i\in\mathbb{Z}}
b^{-(j-i)_+(E-\frac{1}{2})}b^{-(i-j)_+(F-\frac{1}{2})}
\sum_{l=0}^\infty b^{-(D-1)l}\fint_{B_{\rho}(x,b^{l-(i\wedge j)})}
\left|H_j(x)\vec s_i(y)\right|\,dy.
\end{align}
By Lemma \ref{yl022301} with $U$ and $l$ replaced by $H_j(x)$ and $l-(i\wedge j)$  respectively,
we find that
\begin{align*}
\fint_{B_{\rho}(x,b^{l-(i\wedge j)})}
\left| H_j(x)\vec s_i(y)\right|\,dy
&\lesssim b^{[l-(i\wedge j)+i](\frac{1}{a}-1)}\left[\fint_{B_{\rho}(x,C_3b^{l-(i\wedge j)})}
\left|H_j(x)\vec s_i(y)\right|^a\,dy
\right]^{\frac{1}{a}}\\
&=b^{[l+(i-j)_+](\frac{1}{a}-1)}\left[\fint_{B_{\rho}(x,C_3b^{l-(i\wedge j)})}
\left|H_j(x)\vec s_i(y)\right|^a\,dy
\right]^{\frac{1}{a}}.
\end{align*}
Applying this and \eqref{Bsj}, we obtain
\begin{align*}
\left| H_j(x)\left(B\vec s\right)_j(x)\right|
&\lesssim\sum_{i\in\mathbb{Z}}
b^{-(j-i)_+(E-\frac{1}{2})}b^{-(i-j)_+(F+\frac{1}{2}-\frac{1}{a})}\\
&\quad\times\sum_{l=0}^\infty b^{-(D-\frac{1}{a})l}
\left[\fint_{B_{\rho}(x, C_3b^{l-(i\wedge j)})}
\left| H_j(x)\vec s_i(y)\right|^a\,dy\right]^{\frac{1}{a}}\\
&\lesssim\sum_{i\in\mathbb{Z}}
b^{-(j-i)_+(E-\frac{1}{2})}b^{-(i-j)_+(F+\frac{1}{2}-\frac{1}{a})}\\
&\quad\times\sum_{l=0}^\infty b^{-(D-\frac{1}{a})(l+\lceil\log_b{ C_3\rceil})}
\left[\fint_{B_{\rho}(x, b^{l+\lceil\log_b{ C_3\rceil}-(i\wedge j)})}
\left| H_j(x)\vec s_i(y)\right|^a\,dy\right]^{\frac{1}{a}}\\
&
\le\sum_{k\in\mathbb{Z}}b^{-(E-\frac{1}{2})k_-}
b^{-k_+(F+\frac{1}{2}-\frac{1}{a})}\\
&\quad\times\sum_{l=0}^\infty b^{-(D-\frac{1}{a})l}
\left[ \fint_{B_{\rho}(x, b^{l-[(j+k)\wedge j]})}
\left| H_j(x)\vec s_{j+k}(y)\right|^a\,dy\right]^{\frac{1}{a}}.
\end{align*}
Since  $\|\cdot\|_{\ell^qL^p}^r $  satisfies the triangle inequality, we  obtain,
\begin{align*}
\left\|\left\{H_j\left(B\vec s\right)_j\right\}_{j\in\mathbb Z} \right\|_{\ell^qL^p}^r
&\lesssim \sum_{k\in\mathbb{Z}}
\sum_{l=0}^\infty \left[b^{-(E-\frac{1}{2}) k_-}
b^{-k_+(F+\frac{1}{2}-\frac{1}{a})}b^{-(D-\frac{1}{a})l}\right]^r\\
&\quad\times
\left\|\left\{\left[ \fint_{B_{\rho}(\cdot, b^{l-[(j+k)\wedge j]})}
\left| H_j(\cdot)\vec s_{j+k}(y)\right|^a \,dy
\right]^{\frac{1}{a}}\right\}_{j\in\mathbb Z}\right\|_{\ell^qL^p}^r\\
&=\sum_{k\in\mathbb{Z}}\sum_{l=0}^\infty \left[ b^{-(E-\frac{1}{2})k_-}
b^{-k_+(F+\frac{1}{2}-\frac{1}{a})}b^{-(D-\frac{1}{a})l}\right]^r\\
&\quad\times
\left\|\left\{\left[ \fint_{B_{\rho}(\cdot, b^{l-[i\wedge(i-k)]})}
\left| H_{i-k}(\cdot)\vec s_{i}(y)\right|^a \,dy
\right]^{\frac{1}{a}}\right\}_{i\in\mathbb Z}\right\|_{\ell^qL^p}^r,
\end{align*}
where $i\wedge(i-k)=i-k_+$.
This completes the proof of Lemma \ref{ad prelim}.
\end{proof}

\begin{lemma}\label{ad conv}
Let all the symbols be the same as in Lemma \ref{ad prelim}.
Assume that, for any $j\in\mathbb Z$
and almost every $x\in \mathbb R^n$,
the matrix $H_j(x)$ is invertible.
If the right-hand side of \eqref{ABt} is finite, then $B\vec s$ is well defined.
\end{lemma}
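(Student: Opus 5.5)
The argument runs the chain of estimates in the proof of Lemma \ref{ad prelim} once more, now on absolute values, and uses finiteness of the right-hand side of \eqref{ABt} to conclude absolute convergence of $\sum_{R}b_{Q,R}\vec s_R$ for every $Q\in\mathcal D$. Fix $j\in\mathbb Z$ and $Q\in\mathcal D_j$. The proof of Lemma \ref{ad prelim} never used convergence of $B\vec s$: it started from $\sum_{R}|b_{Q,R}||H_j(x)\vec s_R|$. So for every $x\in Q$ we have the pointwise bound
\begin{align*}
b^{\frac j2}\sum_{R\in\mathcal D}|b_{Q,R}|\left|H_j(x)\vec s_R\right|
\lesssim \sum_{k\in\mathbb Z}\sum_{l=0}^\infty b^{-(E-\frac12)k_-}b^{-k_+(F+\frac12-\frac1a)}b^{-(D-\frac1a)l}
\left[\fint_{B_\rho(x,b^{l+k_+-j-k})}\left|H_j(x)\vec s_{j+k}(y)\right|^a\,dy\right]^{\frac1a}
=:G_j(x).
\end{align*}

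The next step is to show $G_j\in L^p(Q)$. Since $r\le1$ and $r\le p,q$, the quasi-triangle inequality $\|\sum_\nu g_\nu\|_{L^p}^r\le\sum_\nu\|g_\nu\|_{L^p}^r$ holds. Applying it to the double sum over $(k,l)$ and then bounding a single $j$-th term by the full $\ell^qL^p$ norm (reindexed with $i=j+k$) gives
\[
\|G_j\|_{L^p}^r\le \text{(right-hand side of \eqref{ABt})}<\infty .
\]
Hence $G_j(x)<\infty$ for almost every $x\in Q$, because $|Q|>0$.

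Now pick a point $x\in Q$ where both $G_j(x)<\infty$ and $H_j(x)$ is invertible; such points form a set of full measure in $Q$. At that point $|\vec s_R|\le\|H_j(x)^{-1}\||H_j(x)\vec s_R|$, so
\[
\sum_R|b_{Q,R}||\vec s_R|\le b^{-\frac j2}\|H_j(x)^{-1}\|\,G_j(x)<\infty .
\]
This is exactly absolute convergence of $(B\vec s)_Q$, so $B\vec s$ is well defined.

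The main obstacle is the claim that the first display holds as an inequality among possibly infinite nonnegative quantities, with no convergence assumption on $B\vec s$. To settle this I would go back through the proof of Lemma \ref{ad prelim} and confirm that every step only manipulates nonnegative series: Tonelli's theorem, the annular decomposition, and Lemma \ref{yl022301}. Each of these is valid for $[0,\infty]$-valued sums, so the inequalities remain true even when the sums are infinite.
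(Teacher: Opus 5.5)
Your proposal is correct and follows essentially the same route as the paper. The paper likewise observes that the proof of Lemma \ref{ad prelim} works only with the nonnegative quantities $\sum_{R}|H_j(x)b_{Q,R}\vec s_R|$, so finiteness of the right-hand side of \eqref{ABt} gives a.e.\ finiteness of these sums on each $Q\in\mathcal D_j$; it then uses $|b_{Q,R}\vec s_R|\le\|[H_j(x)]^{-1}\||H_j(x)b_{Q,R}\vec s_R|$ at an admissible point. Your version differs only in making the per-$j$ bound $\|G_j\|_{L^p}^r\lesssim$ (right-hand side of \eqref{ABt}) explicit, where the paper controls the whole $\ell^qL^p$ quasi-norm at once.
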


\begin{proof}
By the proof of Lemma \ref{ad prelim}, we find that the left-hand side of \eqref{ABt} can be replaced by
\begin{align*}
\left\|\left\{\sum_{Q\in\mathcal{D}_j}
\widetilde{\mathbf{1}}_Q(\cdot)\sum_{R\in\mathcal{D}}
\left|H_j(\cdot)b_{Q,R}\vec{s}_R \right| \right\}_{j\in\mathbb Z}  \right\|_{\ell^qL^p}^r.
\end{align*}
In this case, if the right-hand side of \eqref{ABt} is finite, then, for any $j\in\mathbb Z$, $Q\in\mathcal{D}_j$, and almost every $x\in Q$, we have
$\sum_{R\in\mathcal{D}}
| H_j(x)b_{Q,R}\vec{s}_R | < \infty.$
Moreover,  by the invertibility of $H_j(x)$,  we obtain
\begin{align*}
\left| b_{Q,R}\vec{s}_R \right|
\leq \left\| \left[H_j(x)\right]^{-1} \right\|
\left| H_j(x)b_{Q,R}\vec{s}_R \right|,
\end{align*}
which further implies that
$\sum_{R\in\mathcal{D}} |b_{Q,R}\vec{s}_R|<\infty$.
This completes the proof of Lemma \ref{ad conv}.
\end{proof}

\begin{lemma}\label{geshu}
For any $j\in\mathbb Z$,  $\lambda\in(0,\infty)$, and $x\in\mathbb R^n$,
$$\sum_{Q\in\mathcal D_j}\mathbf 1_{B_\rho(x_Q,\lambda b^{-j})}(x)\le \#\left\{k\in\mathbb Z^n:\ \rho(k)\le H(C_0+\lambda)\right\},$$
where $C_0$ is as in Lemma \ref{yl1101}.
\end{lemma}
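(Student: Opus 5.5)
The plan is to bound the count pointwise by an injection into lattice points. Fix $j\in\mathbb Z$, $\lambda\in(0,\infty)$, and $x\in\mathbb R^n$. Write each $Q\in\mathcal D_j$ as $Q=Q_{j,k}$, so that $x_Q=A^{-j}k$ with $k\in\mathbb Z^n$; the map $Q\mapsto k$ is a bijection between $\mathcal D_j$ and $\mathbb Z^n$. The left-hand side is then
\[
\#\bigl\{k\in\mathbb Z^n:\ \rho(x-A^{-j}k)<\lambda b^{-j}\bigr\}.
\]
Next we rescale. By Definition \ref{quasi-norm}(ii), $\rho(A^{j}y)=b^{j}\rho(y)$, so the condition above is equivalent to $\rho(A^jx-k)<\lambda$. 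Thus the count equals the number of lattice points $k$ with $\rho(A^jx-k)<\lambda$, and $j$ no longer appears.

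To remove the dependence on $x$, put $z:=A^jx$ and choose $k_0\in\mathbb Z^n$ with $z\in k_0+[0,1)^n$, so that $z-k_0\in[0,1)^n\subset(-1,1)^n$. Lemma \ref{yl1101}(i) then gives $\rho(z-k_0)\le C_0$, and by the quasi-triangle inequality (Definition \ref{quasi-norm}(iii)), together with the symmetry $\rho(-y)=\rho(y)$ (which holds because $B_k=A^k\Delta$ and $\Delta$ is symmetric),
\[
\rho(k-k_0)\le H\bigl[\rho(k-z)+\rho(z-k_0)\bigr]< H(\lambda+C_0).
\]
Hence the translation $k\mapsto k-k_0$ maps the set being counted injectively into $\{k'\in\mathbb Z^n:\ \rho(k')\le H(C_0+\lambda)\}$, which gives the stated bound; that this set is finite follows from the boundedness of anisotropic balls.

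There is no real obstacle here. The only points needing care are the symmetry of $\rho$, which comes from $\Delta$ being a symmetric ellipsoid in Lemma \ref{byl2d2}, and the correct use of the homogeneity $\rho(A^jy)=b^j\rho(y)$ in the rescaling step.
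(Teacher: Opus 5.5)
Your proof is correct and is essentially the paper's argument: the paper takes the cube $R\in\mathcal D_j$ containing $x$, applies the quasi-triangle inequality together with Lemma \ref{yl1101}(ii) at scale $b^{-j}$, and then passes to the lattice differences $k_R-k_Q$, whereas you rescale by $A^j$ first and use Lemma \ref{yl1101}(i), which amounts to the same thing. Your appeal to the symmetry of $\rho$ is valid, though it can be avoided, as the paper implicitly does, by writing $k_0-k=(k_0-z)+(z-k)$ and counting via the injection $k\mapsto k_0-k$.
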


\begin{proof}
For any $x\in\mathbb R^n$, let $\mathcal D_j^{(x)}:=\{Q\in\mathcal D_j:\ \rho(x-x_Q)<\lambda b^{-j} \}$. Then
$$
\sum_{Q\in\mathcal D_j} \mathbf 1_{B_\rho(x_Q,\lambda b^{-j})}(x) =\#\mathcal D_j^{(x)}.
$$
For any $x\in\mathbb R^n$, there exists $R\in \mathcal D_j$ such that $x\in R$.
By Definition \ref{quasi-norm} and  Lemma \ref{yl1101}(ii), we obtain, for any $Q\in \mathcal D_j^{(x)} $,
$$\rho(x_R-x_Q)\le H\left[\rho(x_R-x)+\rho(x-x_Q)\right]\le H(C_0+\lambda)b^{-j},$$
and hence
\begin{align*}
\mathcal D_j^{(x)}
&\subset\left\{Q\in\mathcal D_j:\ \rho(x_R-x_Q)\le H(C_0+\lambda )b^{-j} \right\} \\
&=\left\{Q\in\mathcal D_j:\ \rho(k_R-k_Q)\le H(C_0+\lambda) \right\}.
\end{align*}
This further implies that, for any $x\in\mathbb R^n$,
$$
\#\mathcal D_j^{(x)}
\le \#\left\{Q\in\mathcal D_j:\ \rho(k_R-k_Q)\le H(C_0+\lambda) \right\}
= \#\left\{k\in\mathbb Z^n:\ \rho(k)\le H(C_0+\lambda)\right\},
$$
which completes the proof of Lemma \ref{geshu}.
\end{proof}

Now, we estimate the quantities on the right-hand side of Lemma \ref{ad prelim}.

\begin{lemma}\label{bq0}
Let $p\in(0,\infty)$, $q\in(0,\infty]$, and
$W\in\mathcal A_{p,\infty}$ have $\mathcal A_{p,\infty}$-upper dimension $d_2\in[0,\infty)$.
Assume that, for some $v\in(0,p)$,
the operator $\mathcal M_{W,p}^{(v)}:\ L^p(\mathbb R^n,\mathbb C^m)\to L^p$ is bounded.
Then there exists a positive constant $C$ such that,
for any $k\in \mathbb{Z}$, $l\in \mathbb{Z}_+$, and $\vec{s} \in\dot b_{p,q}^{0}(W)$,
$$\left\|\left\{\left[\fint_{B_{\rho}(\cdot,b^{l+k_{+}-i})}
\left|W^{\frac{1}{p}}(\cdot)
\vec s_i(y)\right|^{1\wedge p} \,dy\right]^{\frac1{1\wedge p}} \right\}_{i\in\mathbb Z} \right\|_{\ell^qL^p}
\leq Cb^{(l+k_{+})\min\{\frac{d_{2}}{p}, (\frac1v-\frac1{1\wedge p})_+\}}
\|\vec{s}\|_{\dot b_{p,q}^{0}(W)},
$$
where $\|\cdot\|_{\ell^qL^p}$ and $\vec s_i$ are as, respectively, in \eqref{lqLp} and \eqref{sj}.
\end{lemma}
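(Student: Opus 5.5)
Write $a:=1\wedge p$, $\eta:=l+k_+\ge0$, and $Q\in\mathcal D_i$, with $x\in\mathbb R^n$ fixed. The claim is a minimum of two bounds, so we prove each separately and take the better one.

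\emph{First bound, with exponent $d_2/p$.}
Reduce to reducing operators. For $x\in Q$, bound
\[
\left|W^{\frac1p}(x)\vec s_i(y)\right|\le \left\|W^{\frac1p}(x)A_Q^{-1}\right\|\,\left\|A_QA_R^{-1}\right\|\,\left|A_R\vec s_i(y)\right|
\]
for $y\in R\in\mathcal D_i$. Since $\rho(x-y)<b^{\eta-i}$, the cubes $Q$ and $R$ have the same size and centers at distance $\lesssim b^{\eta-i}$. Lemma \ref{tl012001}(ii) then gives a bound of order $b^{\eta(d_1+d_2)/p}$, which loses $d_1$.

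To avoid this loss we go through balls. Let $B:=B_\rho(x_Q,Cb^{\eta-i})$, which contains both $Q$ and $R$. Write
\[
\left\|A_QA_R^{-1}\right\|\le\left\|A_QA_B^{-1}\right\|\,\left\|A_BA_R^{-1}\right\|.
\]
Case (2) of Lemma \ref{fuhe} (the small ball inside the big one) gives $\|A_QA_B^{-1}\|^p\lesssim b^{\eta d_2}$. The $\mathrm{I}_2$-type argument in Case (1) of Lemma \ref{fuhe} (Jensen plus Proposition \ref{improve}(i)) gives $\|A_BA_R^{-1}\|\lesssim1$. Together these give
\[
\left\|A_QA_R^{-1}\right\|\lesssim b^{\eta d_2/p}.
\]
Next take the $L^p$ norm in $x$ over $Q$. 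By Lemma \ref{reduceM}, $\fint_Q\|W^{1/p}(x)A_Q^{-1}\|^p\,dx\sim1$. This reduces the left-hand side to
\[
b^{\eta d_2/p}\left\|\left\{\left[\fint_{B_\rho(\cdot,Cb^{\eta-i})}\left|A_i(y)\vec s_i(y)\right|^a\,dy\right]^{\frac1a}\right\}_{i}\right\|_{\ell^qL^p}.
\]
Since the sum of $\mathbf 1_Q(x)\mathbf 1_{B}$-type overlaps is bounded (Lemma \ref{geshu}), this last quantity is controlled by an averaging operator. We bound it as follows:
\begin{itemize}
\item Replace the average by the scalar maximal function: $\left[\fint|g|^a\right]^{1/a}\le[\mathcal M(|g|^a)]^{1/a}$.
\item Since $p/a>1$ when $p>1$, and $p/a=1$ with a Fubini average when $p\le1$, apply Fubini with a fixed radius. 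This gives $\|\fint_{B(\cdot,r)}|g|^a\|_{L^{p/a}}\le\|g\|_{L^p}^a$ for $p\ge a$, using Minkowski for the fixed-radius average.
\end{itemize}
This yields $b^{\eta d_2/p}\|\vec s\|_{\dot b^0_{p,q}(\mathbb A)}$, and Theorem \ref{dj} converts it to $b^{\eta d_2/p}\|\vec s\|_{\dot b^0_{p,q}(W)}$.

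\emph{Second bound, with exponent $(1/v-1/a)_+$.}
If $v\ge a$, Hölder raises the inner exponent to $v$ at no cost. If $v<a$, Lemma \ref{yl022301} with $a_1=a$ and $a_2=v$ gives the factor $b^{\eta(1/v-1/a)}$ and lowers the exponent to $v$.

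With exponent $v$, write $W^{1/p}(x)\vec s_i(y)=W^{1/p}(x)W^{-1/p}(y)\,W^{1/p}(y)\vec s_i(y)$. The average over the ball containing $x$ is then at most
\[
\mathcal M_{W,p}^{(v)}\bigl(W^{\frac1p}\vec s_i\bigr)(x).
\]
The assumed boundedness of $\mathcal M_{W,p}^{(v)}:L^p(\mathbb R^n,\mathbb C^m)\to L^p$ bounds its $L^p$ norm by $\|W^{1/p}\vec s_i\|_{L^p}$. Taking $\ell^q$ gives $b^{\eta(1/v-1/a)_+}\|\vec s\|_{\dot b^0_{p,q}(W)}$, with no extra loss.

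\emph{Conclusion and main obstacle.}
Taking the better of the two bounds gives the claim. The main obstacle is getting the sharp $d_2$-only exponent in the first bound. This requires the detour through a large ball and using the self-improvement (Proposition \ref{improve}(i)) to kill the $d_1$ contribution.
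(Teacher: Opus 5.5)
Your second bound, with exponent $(\frac1v-\frac1{1\wedge p})_+$, is correct and is exactly the paper's argument: Lemma \ref{yl022301}, then pointwise domination by $\mathcal M_{W,p}^{(v)}(W^{\frac1p}\vec s_i)$. The first bound, however, rests on a false estimate. You attach to $x$ the reducing operator $A_Q$ of the \emph{small} cube $Q\in\mathcal D_i$. You then need $\|A_QA_R^{-1}\|^p\lesssim b^{\eta d_2}$ uniformly for $R\in\mathcal D_i$ at distance $\lesssim b^{\eta-i}$, and both halves of your detour are oriented the wrong way.

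First, by Lemma \ref{2sim},
\[
\left\|A_QA_B^{-1}\right\|^p\sim\exp\left(\fint_B\log\left(\fint_Q\left\|W^{\frac1p}(x)W^{-\frac1p}(y)\right\|^p\,dx\right)dy\right),
\]
with $Q$ small and $B$ large. This is the \emph{lower}-dimension quantity of Definition \ref{dim}, i.e., the orientation of Case (1) of Lemma \ref{fuhe}, and it gives $b^{\eta d_1}$. Case (2) controls $\|A_BA_Q^{-1}\|$, with the large ball in front.

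Second, the $\mathrm I_2$ argument in Case (1) passes from $\fint_{\widetilde B}$ to $\fint_{\lambda B}$ using $|\widetilde B|\sim|\lambda B|$. With $|R|/|B|\sim b^{-\eta}$, it only yields $\|A_BA_R^{-1}\|^p\lesssim b^{\eta p/u}$ (with $u$ as in Proposition \ref{improve}(i)). This large-over-small quantity is precisely the upper-dimension one, bounded by $b^{\eta d_2}$, not by $1$.

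So the detour returns $b^{\eta(d_1+d_2)}$, no better than Lemma \ref{tl012001}(ii), and no uniform bound can do better. Take $W:=[\rho(\cdot)]^aI_m$ with $a\in(-1,0)$. Lemmas \ref{scamtr} and \ref{rhoinf} show that $W$ has $\mathcal A_{p,\infty}$-upper dimension $0$. Yet Lemma \ref{ballsim}(ii) gives $\|A_{Q_{0,\mathbf 0}}A_{Q_{0,k}}^{-1}\|^p\sim[1+\rho(k)]^{|a|}\sim b^{\eta|a|}$ for $Q_{0,k}$ meeting $B_\rho(x,b^{\eta})$ with $\rho(k)\sim b^{\eta}$ (take $i=0$). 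As written, your first bound therefore only gives the exponent $(d_1+d_2)/p$.

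The missing idea is that only an averaged statement holds. By Lemma \ref{reduceM},
\[
\sum_{Q\in\mathcal D_i,\,Q\subset B}|Q|\left\|A_QA_R^{-1}\right\|^p\lesssim|B|\left\|A_BA_R^{-1}\right\|^p\lesssim|B|\,b^{\eta d_2},
\]
and the supremum over $Q$ destroys this. The paper exploits the averaging by never placing a small-scale reducing operator on $x$. Let $I_i$ be the $L^p$ norm of the $i$-th function on the left-hand side and set $j:=i-l-k_+$. For $Q\in\mathcal D_j$, let $B_Q:=B_\rho(x_Q,C_3b^{-j})$, which contains $B_\rho(x,b^{-j})$ for every $x\in Q$. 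Raise the exponent $1\wedge p$ to $p$ by H\"older and apply Tonelli to get
\[
I_i^p\lesssim\sum_{Q\in\mathcal D_j}\int_{B_Q}\fint_{B_Q}\left|W^{\frac1p}(x)\vec s_i(y)\right|^p\,dx\,dy\sim\sum_{Q\in\mathcal D_j}\int_{B_Q}\left|A_{B_Q}\vec s_i(y)\right|^p\,dy.
\]
Now only $\|A_{B_Q}A_R^{-1}\|^p$ for $R\in\mathcal D_i$ meeting $B_Q$ is needed, with the large ball in front. Passing through a ball $\widetilde B_R\subset R$ of comparable measure, Lemma \ref{fuhe}(ii) bounds it by $b^{(l+k_+)d_2}$. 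Here $d_1$ enters only through the bounded factor $[1+\rho(c_{B_Q}-c_{\widetilde B_R})/r_{B_Q}]^{d_1+d_2}$. Bounded overlap (Lemma \ref{geshu}) and Theorem \ref{dj} then finish.
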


\begin{proof}
By Lemma \ref{yl022301} and Theorem \ref{bounded B1},
we find that, for any $i\in\mathbb Z$,
\begin{align*}
I_i
&:=\left\| \left[\fint_{B_{\rho}(\cdot,b^{l+k_{+}-i})}
\left|W^{\frac{1}{p}}(\cdot) \vec s_i(y)\right|^{1\wedge p}
\,dy\right]^{\frac1{1\wedge p}} \right\|_{L^p} \\
&\phantom{:}\lesssim b^{(l+k_{+})(\frac1v-\frac1{1\wedge p})_+}
\left\|\left[\fint_{B_{\rho}(\cdot,C_3b^{l+k_{+}-i})}
\left|W^{\frac{1}{p}}(\cdot)
\vec s_i(y)\right|^v \,dy\right]^{\frac1v} \right\|_{L^p} \\
&\phantom{:}\leq b^{(l+k_{+})(\frac1v-\frac1{1\wedge p})_+}
\left\| \mathcal M_{W,p}^{(v)} \left( W^{\frac1p} \vec s_i \right) \right\|_{L^p}
\lesssim b^{(l+k_{+})(\frac1v-\frac1{1\wedge p})_+}
\left\| \, \left|W^{\frac1p} \vec s_i \right| \, \right\|_{L^p},
\end{align*}
which is one desired estimate.

Let $k,i\in\mathbb{Z}$, $l\in\mathbb{Z}_{+}$,
and $\vec{s}\in\dot{b}_{p,q}^{0}(W)$ be fixed.
Let $j:=i-l-k_{+}$ and $C_3:=H(1+C_0)$, where $C_0$ is as in Lemma \ref{yl1101}.
For any $Q\in\mathcal D_j$,  suppose that
$B_Q:=B_{\rho}(x_Q,C_3b^{-j})$.
Then,  by Lemma \ref{yl1201}, we obtain $Q\subset B_Q$.
Moreover,  using Definition \ref{quasi-norm}(iii) and  Lemma \ref{yl1101}(ii), we conclude that,
for any $x\in Q$ and  $y\in B_{\rho}(x,b^{-j})$,
$$
\rho(y-x_Q)
\le H \left[\rho(y-x)+\rho(x-x_Q)\right]
\le H \left(b^{-j}+C_0b^{-j}\right)
\leq C_3 b^{-j},
$$
and hence  $B_{\rho}(x,b^{-j})\subset B_Q$.
From these,  H\"older's inequality, Tonelli's theorem,
and \eqref{equ_reduce}, we infer that
\begin{align}\label{es0}
I_i^p
&\leq\sum_{Q\in\mathcal{D}_j}\int_Q
\fint_{B_{\rho}(x,b^{-j})}\left|W^{\frac{1}{p}}(x)\vec s_i(y)\right|^p\,dy\,dx
\lesssim\sum_{Q\in\mathcal{D}_j}\int_Q\fint_{B_Q}
\left|W^{\frac{1}{p}}(x)\vec s_i(y)\right|^p\,dy\,dx\notag\\
&\leq\sum_{Q\in\mathcal{D}_j}\int_{B_Q}
\fint_{B_Q}\left|W^{\frac{1}{p}}(x)\vec s_i(y)\right|^p\,dx\,dy
\sim\sum_{Q\in\mathcal{D}_j}\int_{B_Q}\left|A_{B_Q}\vec s_i(y)\right|^p\,dy.
\end{align}
By Lemma \ref{yl1201}, we find that, for any $R\in\mathcal D_i$,
there exists $\widetilde B_R\in \mathcal B$ such that
$r_{\widetilde B_R}\sim b^{-i}$ and $\widetilde B_R\subset R$. Then
\begin{align}\label{516}
\int_{B_Q}\left|A_{B_Q}\vec s_i(y)\right|^p\,dy
&\lesssim\sum_{\genfrac{}{}{0pt}{}{R\in\mathcal{D}_i}{R\cap B_Q\neq\emptyset}}
\left\|A_{B_Q}A_R^{-1}\right\|^p
\int_R\left|A_{R}\vec s_i(y)\right|^p\,dy\notag\\
&\lesssim\sum_{\genfrac{}{}{0pt}{}{R\in\mathcal{D}_i}{R\cap B_Q\neq\emptyset}}
\left\|A_{B_Q}A_{\widetilde B_R}^{-1}\right\|^p
\left\|A_{\widetilde B_R} A_{R}^{-1}\right\|^p
\int_R\left|A_{R}\vec s_i(y)\right|^p\,dy.
\end{align}
Repeating the proof of \eqref{key2}, we obtain
$$
\left\|A_{\widetilde B_R} A_{R}^{-1}\right\|^p
\lesssim \left\|A_{\widetilde B_R} A_{\widetilde B_R}^{-1}\right\|^p
=1.
$$
Let $W$ have $\mathcal A_{p,\infty}$-lower dimension $d_1\in[0,n)$.
From Lemma \ref{fuhe}(ii) and  Lemma \ref{yl1101}(ii), it follows that,
for any $R\in\mathcal D_i$ with $R\cap B_Q\neq\emptyset$,
\begin{align*}
\left\|A_{B_Q}A_{\widetilde B_R}^{-1}\right\|^p
&\lesssim \max\left\{\left(\frac{r_{\widetilde B_R}}{r_{B_Q}}\right)^{d_1},
\left(\frac{r_{B_Q}}{r_{\widetilde B_R}}\right)^{d_2}\right\}
\left[1+\frac{\rho(x_Q-c_{\widetilde B_R})}
{r_{B_Q} \vee r_{\widetilde B_R}}\right]^{d_1+d_2} \\
&\lesssim b^{(l+k_+)d_2}
\left[1+\frac{b^{-i+l+k_+}+b^{-i}}{b^{-i+l+k_+}} \right]^{d_1+d_2}
\sim b^{(l+k_+)d_2}.
\end{align*}
Substituting the estimates of $\|A_{\widetilde B_R} A_{R}^{-1}\|^p$
and $\|A_{B_Q}A_{\widetilde B_R}^{-1}\|^p$ back into \eqref{516}, we obtain
\begin{align}\label{IntBQ}
\int_{B_Q}\left|A_{B_Q}\vec s_i(y)\right|^p\,dy
\lesssim b^{(l+k_+)d_2}\sum_{\genfrac{}{}{0pt}{}{R\in\mathcal{D}_i}{R\cap B_Q\neq\emptyset}}
\int_R\left|A_{R}\vec s_i(y)\right|^p\,dy.
\end{align}
For any $R\in\mathcal D_i$
with $R\bigcap B_Q\neq \emptyset$, let $y\in R\bigcap B_Q$ be fixed.
Then Lemma \ref{yl1101}(ii) implies that,
for any $x\in R$,
\begin{align*}
\rho(x-x_Q)
\le H[\rho(x-y)+\rho(y-x_Q) ]\le H(C_0b^{-i}+C_3b^{-j})
\le H(C_0+C_3)b^{-j}
=: \lambda b^{-j}.
\end{align*}
Therefore,
$$
\bigcup_{\genfrac{}{}{0pt}{}{R\in\mathcal{D}_i}{R\cap B_Q\neq\emptyset}}R
\subset B_{\rho}\left(x_Q, \lambda b^{-j}\right).
$$
This, together with \eqref{IntBQ}, further implies that
\begin{align*}
\int_{B_Q}\left|A_{B_Q}\vec s_i(y)\right|^p\,dy
\lesssim b^{(l+k_+)d_2} \int_{B_{\rho}(x_Q, \lambda b^{-j})}
\left|A_i(y)\vec {s_i}(y)\right|^p\,dy,
\end{align*}
where $A_i$ is as in \eqref{Aj}.
Substituting this estimate back into \eqref{es0},
and using Lemma \ref{geshu}, we conclude that
\begin{align*}
I_i^p
&\lesssim b^{(l+k_+)d_2}\sum_{Q\in\mathcal{D}_j}
\int_{B_{\rho}(x_Q, \lambda b^{-j})}\left|A_i(y)\vec {s_i}(y)\right|^p\,dy \\
&=b^{(l+k_+)d_2}
\int_{\mathbb R^n} \sum_{Q\in\mathcal{D}_j}\mathbf1_{B_{\rho}(x_Q, \lambda b^{-j})}(y)
\left|A_i(y)\vec {s_i}(y)\right|^p\,dy\\
&\lesssim  b^{(l+k_+)d_2}\int_{\mathbb{R}^n}\left|
A_i(y)\vec {s_i}(y)\right|^p\,dy.
\end{align*}
From this and Theorem \ref{dj}, it follows that
\begin{align*}
\|\{I_i \}_{i\in\mathbb Z} \|_{\ell^q}
\lesssim b^{(l+k_+)\frac{d_2}{p}}\left\|\vec{s}\right\|_{\dot b^0_{p,q}(\mathbb{A})}
\sim b^{(l+k_+)\frac{d_2}{p}}\left\|\vec{s}\right\|_{\dot b^0_{p,q}(W)},
\end{align*}
which completes the proof of Lemma \ref{bq0}.
\end{proof}

Now, we  prove Theorem \ref{ad FJ-YY}.
\begin{proof}[Proof of Theorem \ref{ad FJ-YY}]
We first note that it is sufficient to prove the theorem for
$\alpha=0$. To see this, suppose the result holds in that case and let $\widetilde{B}$  be as in Lemma \ref{alpha0}.
Clearly, since $B$ satisfies \eqref{ad FJ} for  $\alpha\in\mathbb R$,
by Lemma \ref{alpha0}(i), $\widetilde{B}$
satisfies \eqref{ad FJ} with $\alpha=0$.
By the assumption,
  $\widetilde{B}$ is bounded on $\dot b^0_{p,q}(W)$,
which, combined with Lemma \ref{alpha0}(ii),
further implies that  $B$ is bounded on $\dot b^\alpha_{p,q}(W)$.
Thus, the proof reduces to the case  $\alpha=0$.

From the definitions of $d_{p,\infty}^\text{upper}(W)$ and $v_{W,p}$,
we infer that there exists $d_2\in[\![d_{p,\infty}^\text{upper}(W),\infty)$ and
$v\in(0,v_{W,p})$
such that $W$ has $\mathcal A_{p,\infty}$-upper dimension $d_2$ and
\begin{equation}\label{DEFcoff}
D> \frac1{1\wedge p} +\varepsilon,\quad E>\frac 12,
\quad\text{and} \quad F> \frac1{1\wedge p} -\frac 12+\varepsilon,
\end{equation}
where $\varepsilon:=\min\{\frac{d_2}{p},
(\frac1{v}-\frac1{1\wedge p})_+\}$.
Applying Lemmas \ref{ad prelim} and \ref{bq0}, we conclude that
\begin{align} \label{163}
\left\|\left\{W^{\frac1p}(B\vec s)_j\right\}_{j\in\mathbb Z} \right\|_{\ell^qL^p}^r
&\lesssim\sum_{k\in\mathbb{Z}}
\sum_{l=0}^\infty \left[b^{-(E-\frac{1}{2}) k_-}
b^{-k_+(F+\frac{1}{2}-\frac{1}{1\wedge p})}
b^{-(D-\frac{1}{1\wedge p})l}\right]^r\notag \\
&\quad\times
\left\|\left\{\left[\fint_{B_\rho(\cdot,b^{l+k_{+}-i})}
\left|W^{\frac{1}{p}}(\cdot)
\vec s_i(y)\right|^{1\wedge p} \,dy\right]^{\frac1{1\wedge p}} \right\}_{i\in\mathbb Z} \right\|_{\ell^qL^p}^r\notag\\
&\lesssim\sum_{k\in\mathbb{Z}}
\sum_{l=0}^\infty \left[b^{-(E-\frac{1}{2}) k_-}
b^{-k_+(F+\frac{1}{2}-\frac{1}{1\wedge p})}
b^{-(D-\frac{1}{1\wedge p})l} b^{(l+k_{+})\varepsilon}\right]^r
\left\|\vec{s}\right\|_{\dot b_{p,q}^{0}(W)}^r,
\end{align}
where $r:=\min\{1,p,q\}$.
Note that the coefficient in \eqref{163} takes the form
\begin{align*}
b^{-(E-\frac{1}{2})k_-}
b^{-[F-(\frac{1}{1\wedge p}-\frac{1}{2}+\varepsilon)]k_+}
b^{-[D-(\frac{1}{1\wedge p}+\varepsilon)]l},
\end{align*}
which, together with \eqref{DEFcoff}, further implies that the series in \eqref{163} converges.
By this and  Lemma \ref{ad conv} with $H_j(x)$ replaced by $W^{\frac1p}(x)$, we find that,
for any $\vec s\in \dot b_{p,q}^0(W)$,
$B\vec s$ is well defined and
$$\left\|B\vec s \right\|_{\dot b_{p,q}^{0}(W)}
=\left\|\left\{W^{\frac1p}(B\vec s)_j\right\}_{j\in\mathbb Z} \right\|_{\ell^qL^p}
\lesssim\left\|\vec{s}\right\|_{\dot b_{p,q}^{0}(W)}.$$
This completes the proof of Theorem \ref{ad FJ-YY}.
\end{proof}

\begin{definition}\label{almodiag}
Let $p\in(0,\infty)$, $q\in(0,\infty]$, $\alpha\in\mathbb{R}$,
and $W\in \mathcal A_{p,\infty}.$ An infinite matrix
$B:=\{b_{Q,R}\}_{Q,R\in\mathcal{D}}$ in $\mathbb{C}$ is said to be
\emph{$\dot{b}_{p,q}^{\alpha}(W)$-almost diagonal}
if it is $(D,E,F)$-almost diagonal
for some $D,E,F\in\mathbb R$ satisfying \eqref{ad FJ}.
\end{definition}

As a corollary of Theorem \ref{ad FJ-YY},
we readily obtain the boundedness of almost diagonal operators
for $W\in\mathcal A_{p}$.

\begin{corollary}\label{ad FJ-YY2}
Let $\alpha\in\mathbb{R} $,
$p\in(0,\infty)$, $q\in(0,\infty]$, and
$W\in\mathcal A_{p}$.  Assume that
$B:=\{b_{Q, R}\}_{Q, R\in\mathcal{D}}$ in $\mathbb{C}$
is $(D,E,F)$-almost diagonal with parameters
\begin{align*}
D>\frac{1}{1\wedge p},\quad
E>\frac{1}{2}+\alpha,
\quad\text{and}\quad
F>\frac{1}{1\wedge p}-\frac{1}{2}-\alpha.
\end{align*}
Then $B\vec s$ is well defined for all $\vec s \in\dot{b}_{p,q}^{\alpha}(W)$,
and $B$ is bounded on $\dot b^\alpha_{p,q}(W)$.
\end{corollary}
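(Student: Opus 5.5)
This is a direct consequence of Theorem \ref{ad FJ-YY}. We check that for $W\in\mathcal A_p$ the quantity $\mathrm{AT}(p,W)$ in \eqref{J} is zero, or at least can be made arbitrarily small; then $J=\frac1{1\wedge p}$ or slightly above it. Condition \eqref{ad FJ} then follows from the strict inequalities assumed in the corollary. Throughout we use $\mathcal A_p\subset\mathcal A_{p,\infty}$ from Proposition \ref{subset}(ii), so Theorem \ref{ad FJ-YY} applies.

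\emph{Case $p\in(0,1]$.} Proposition \ref{prop of sp(W)}(ii) gives $v_{W,p}=p$. Hence
\[
\Bigl(\frac1{v_{W,p}}-\frac1{1\wedge p}\Bigr)_+=\Bigl(\frac1p-\frac1p\Bigr)_+=0,
\]
and therefore $\mathrm{AT}(p,W)=0$, since the minimum of a nonnegative number and $0$ is $0$.

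\emph{Case $p\in(1,\infty)$.} Proposition \ref{prop of sp(W)}(iii) gives $v_{W,p}\in(1,p]$. Then $\frac1{v_{W,p}}<1=\frac1{1\wedge p}$, so the positive part vanishes and again $\mathrm{AT}(p,W)=0$. Thus $J=\frac1{1\wedge p}$ in both cases.

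Substituting $J=\frac1{1\wedge p}$ into \eqref{ad FJ} gives exactly the assumed conditions on $D$, $E$, $F$. Theorem \ref{ad FJ-YY} then shows that $B\vec s$ is well defined for every $\vec s\in\dot b^\alpha_{p,q}(W)$ and that $B$ is bounded on $\dot b^\alpha_{p,q}(W)$.

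The only step needing care is that $v_{W,p}$ is defined as a supremum, which may not be attained. This causes no difficulty, because Theorem \ref{ad FJ-YY} is stated directly in terms of $v_{W,p}$ itself. If one instead argues through the proof of that theorem, one must choose $v<v_{W,p}$ close to $v_{W,p}$ so that $\varepsilon=(\frac1v-\frac1{1\wedge p})_+$ is small enough that \eqref{DEFcoff} still holds. For $p>1$ any $v\in(1,v_{W,p})$ gives $\varepsilon=0$. For $p\le1$, Proposition \ref{Apbd}(i) gives boundedness for every $v<p$, so $\varepsilon$ can be taken arbitrarily small, and the strict inequalities on $D$ and $F$ absorb it.
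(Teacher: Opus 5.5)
Your proposal is correct and follows essentially the same route as the paper: Proposition \ref{prop of sp(W)}(ii)--(iii) gives $\bigl(\frac1{v_{W,p}}-\frac1{1\wedge p}\bigr)_+=0$, hence $\mathrm{AT}(p,W)=0$ and $J=\frac1{1\wedge p}$, and Theorem \ref{ad FJ-YY} then applies. Your added remarks on the inclusion $\mathcal A_p\subset\mathcal A_{p,\infty}$ and on the supremum defining $v_{W,p}$ not needing to be attained are accurate but not needed beyond what the theorem already provides.
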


\begin{proof}
From (ii) and (iii) of Proposition \ref{prop of sp(W)},
we infer that $(\frac{1}{v_{W,p}}-\frac{1}{p\wedge1})_+=0$,
which further implies that ${\rm{AT}}(p,W)=0$.
This, together with Theorem \ref{ad FJ-YY},
completes the proof of Corollary \ref{ad FJ-YY2}.
\end{proof}

\begin{remark}
In the case where $A=2I_n$ and $\rho(\cdot):=|\cdot|^n$,
Corollary \ref{ad FJ-YY2} coincides with the
sharp boundedness of almost diagonal operators with $\mathcal A_p$-matrix weights
obtained by Bu et al. \cite[Theorem 6.2]{bf6}
(see Remark \ref{sharp1} below).
\end{remark}

Finally, we focus on the composition of two almost diagonal operators.

\begin{lemma}\label{fuheyinli}
Let $D,E,F,D',E',F'\in\mathbb R$ satisfy
$$D,D'>1,\ E+F'>\min\{D,D'\},\ F+E'>1,\ E\neq E', \ \text{and}\ F\neq F'.$$
If $A: =\{a_{P,Q}\}_{P, Q\in \mathcal{D}}$ is
$(D,E,F)$-almost diagonal and
$B:=\{b_{Q,R}\}_{Q,R\in\mathcal{D}}$ is
$(D',E',F')$-almost diagonal, then
$$A\circ B:=\left\{\sum\limits_{Q\in\mathcal{D}}
a_{P,Q}b_{Q,R}\right\}_{P,R\in\mathcal{D}}$$
is $(D\wedge D', E\wedge E', F\wedge F')$-almost diagonal.
\end{lemma}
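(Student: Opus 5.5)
We have to bound $\sum_{Q\in\mathcal D}|a_{P,Q}||b_{Q,R}|\lesssim\sum_Q b^{D,E,F}_{P,Q}\,b^{D',E',F'}_{Q,R}$ by a constant times $b^{D\wedge D',E\wedge E',F\wedge F'}_{P,R}$. The plan is to follow the classical Frazier--Jawerth/Yang--Yuan argument for compositions, transported to the anisotropic setting. Group the sum by levels $Q\in\mathcal D_k$ with $|Q|=b^{-k}$, and let $|P|=b^{-i}$, $|R|=b^{-j}$.

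\textbf{Spatial sum at a fixed level.} For fixed $k$, the sum over $Q\in\mathcal D_k$ of
$$
\left[1+\frac{\rho(x_P-x_Q)}{|P|\vee|Q|}\right]^{-D}\left[1+\frac{\rho(x_Q-x_R)}{|Q|\vee|R|}\right]^{-D'}
$$
is comparable, by Lemma \ref{yl111503}, to $b^{k}$ times the integral $\int_{\mathbb R^n}\cdots\,dz$ with $x_Q$ replaced by $z$. Rewrite each factor as $b^{i\wedge k}[1+b^{i\wedge k}\rho(x_P-z)]^{-D}$ divided by $b^{i\wedge k}$, and similarly for the second factor. Lemma \ref{intmn}, which needs $D,D'>1$, then gives
$$
\sum_{Q\in\mathcal D_k}(\cdots)\lesssim \frac{b^{k}}{b^{i\wedge k}\,b^{j\wedge k}}\; \frac{b^{i\wedge j\wedge k}}{[1+b^{i\wedge j\wedge k}\rho(x_P-x_R)]^{D\wedge D'}}.
$$

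\textbf{Sum over levels.} It remains to sum over $k$ the product of this bound with the scale factors $(|P|/|Q|)^{E}$ or $(|Q|/|P|)^F$ and $(|Q|/|R|)^{E'}$ or $(|R|/|Q|)^{F'}$. By symmetry of the bookkeeping, assume $i\le j$, so that $|P|\ge|R|$. Split $k$ into the three ranges $k\le i$, $i<k\le j$, and $k>j$.

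\emph{Range $k\le i$.} Here $Q$ is the largest cube. The spatial factor is $b^{-k}[1+b^{k}\rho]^{-D\wedge D'}$. Use $[1+b^k\rho]^{-D\wedge D'}\le b^{(i-k)(D\wedge D')}[1+b^{i}\rho]^{-D\wedge D'}$. The geometric series in $i-k$ then converges because the total exponent involves $E+F'-\min\{D,D'\}>0$.

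\emph{Range $i<k\le j$.} Here $Q$ lies between $P$ and $R$ in size. The spatial factor is $b^{k-j}$ times the decay at scale $b^{-i}$. The scale factors are $b^{-(k-i)F}b^{-(j-k)F'}$, so the sum is a two-sided geometric sum of the form $\sum_k b^{-(k-i)F}b^{-(j-k)(F'-1)}$. Since $F\neq F'$, this sum is dominated by $b^{-(j-i)\min\{F,F'\}}$, up to adjusting the bookkeeping of the $b^{k-j}$ factor. I expect the hypotheses to match exactly after the normalization in Definition \ref{almdia}; the conditions $F\ne F'$ and $E\ne E'$ are precisely there to avoid a logarithmic factor $(j-i)$.

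\emph{Range $k>j$.} Here $Q$ is the smallest cube. The spatial factor is $b^{k}b^{-i}b^{-j}$ times the decay at scale $b^{-i}$. The scale factors give $b^{-(k-i)F}b^{-(k-j)E'}$, and the sum over $k$ converges because $F+E'>1$. The result is a factor $b^{-(j-i)F}$, which is at most $b^{-(j-i)(F\wedge F')}$.

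The symmetric case $i>j$ is handled the same way, with the roles of $E,E'$ and $F,F'$ exchanged, and gives the factor $(|P|/|R|)^{E\wedge E'}$.

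\textbf{Main obstacle.} The delicate step is the exponent bookkeeping in the middle and top ranges. The factors $b^{k}/(b^{i\wedge k}b^{j\wedge k})$ must be matched precisely against the scale factors so that the stated hypotheses, and no more, give convergence. In the first range one must also make sure that losing $D\wedge D'$ when passing from scale $b^{k}$ to scale $b^{i}$ is absorbed by $E+F'>\min\{D,D'\}$. I would check all three ranges explicitly in both orderings of $i$ and $j$.
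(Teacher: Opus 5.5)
Your route is the paper's. You split the levels $k$ of $Q$ into $k\le i\wedge j$, $i\wedge j<k\le i\vee j$ and $k>i\vee j$, bound the spatial sum at each level by a convolution estimate, and finish with geometric series. The paper applies the lattice version, Lemma \ref{summn}, directly; your passage to integrals via Lemma \ref{yl111503} and Lemma \ref{intmn} is equivalent. Your general level bound is also correct.

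The gap is that you then misevaluate its prefactor $b^{k}b^{i\wedge j\wedge k}/(b^{i\wedge k}b^{j\wedge k})$ in every range. This is exactly the bookkeeping you flagged as the main obstacle. For $i\le j$ the prefactor equals:
\begin{itemize}
\item $1$, not $b^{-k}$, when $k\le i$;
\item $1$, not $b^{k-j}$, when $i<k\le j$;
\item $b^{k-j}$, not $b^{k-i-j}$, when $k>j$.
\end{itemize}
Taken literally, your factor $b^{-k}$ in the first range would force $E+F'>D\wedge D'+1$. Your middle-range exponent $F'-1$ and the last-range factor are not consistent with your own formula either.

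With the correct values, and writing $t=i-k$ and $t=k-j$ in the outer ranges, the three level sums are
\[
b^{(i-j)F'}\sum_{t\ge0}b^{-t(E+F'-D\wedge D')},\qquad \sum_{k=i+1}^{j}b^{-(k-i)F-(j-k)F'},\qquad b^{(i-j)F}\sum_{t\ge1}b^{-t(F+E'-1)}.
\]
Each is multiplied by $[1+b^{i}\rho(x_P-x_R)]^{-(D\wedge D')}$. In the first range the loss $b^{(i-k)(D\wedge D')}$ from rescaling the decay is already absorbed into the first sum. These sums use precisely $E+F'>D\wedge D'$, $F\neq F'$ and $F+E'>1$. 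They are bounded by $(|R|/|P|)^{F'}$, $(|R|/|P|)^{F\wedge F'}$ and $(|R|/|P|)^{F}$, respectively.

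For $i>j$ the prefactor becomes $1$, $1$, $b^{k-i}$ and the decay is at scale $b^{-j}$. The same computation gives $(|P|/|R|)^{E}$, $(|P|/|R|)^{E\wedge E'}$ and $(|P|/|R|)^{E'}$. The outer ranges still use $E+F'>D\wedge D'$ and $F+E'>1$; only the middle range switches to $E\neq E'$.

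With these corrections your argument is complete. It also spells out the two ranges that the paper dismisses with ``as in the estimation of $\mathrm{J}_1$''.
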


\begin{proof}
By Definition \ref{almdia}, we find that, for any $P,R\in\mathcal D$,
\begin{align*}
|(A\circ B)_{P,R}|
&\lesssim\sum_{Q\in\mathcal D} b_{P,Q}^{D,E,F} b_{Q,R}^{D',E',F'} \\
&=\sum_{j=-\infty}^{j_P\wedge j_R}\sum_{Q\in\mathcal D_j}\cdots
+\sum_{j=(j_P \wedge j_R)+1}^{j_P\vee j_R}\sum_{Q\in\mathcal D_j}\cdots
+\sum_{j=(j_P\vee j_R)+1}^{\infty}\sum_{Q\in\mathcal D_j}\cdots\\
&=: \mathrm{J}_1+\mathrm{J}_2+\mathrm{J}_3.
\end{align*}
Using Lemma \ref{summn}, we conclude that
\begin{align*}
\mathrm{J}_1
&=\sum_{j=-\infty}^{j_P\wedge j_R}
\sum_{k\in\mathbb Z^n}\left[1+\rho(A^{j}x_P-k) \right]^{-D}
\left[1+\rho(k-A^{j}x_R) \right]^{-D'}b^{(j-j_P)E}b^{(j-j_R)F'}\\
&\lesssim \sum_{j=-\infty}^{j_P\wedge j_R}
\left[1+\rho(A^j(x_P-x_R))\right]^{-(D\wedge D')}
b^{(j-j_P)E}b^{(j-j_R)F'}\\
&= \sum_{j=-\infty}^{j_P\wedge j_R}
\left[1+b^j\rho(x_P-x_R)\right]^{-(D\wedge D')}
b^{(j-j_P)E}b^{(j-j_R)F'}.
\end{align*}
This, together with the fact that $b^j(|P|\vee|R|)\le 1$
for all $j\in\mathbb Z\cap (-\infty, j_P\wedge j_R]$, further implies that
\begin{align*}
\mathrm{J}_1
&\lesssim \sum_{j=-\infty}^{j_P\wedge j_R}
\left[1+\frac{\rho(x_P-x_R)}{|P|\vee|R|}\right]^{-(D\wedge D')}
\left[b^j(|P|\vee|R|)\right]^{-(D\wedge D')}
b^{(j-j_P)E}b^{(j-j_R)F'}\\
&\sim \left[1+\frac{\rho(x_P-x_R)}{|P|\vee|R|}\right]^{-(D\wedge D')}
b^{(j_P\wedge j_R)(E+F')-j_PE-j_RF'} \\
&=b_{P,R}^{D\wedge D',E,F'}
\leq b_{P,R}^{D\wedge D', E\wedge E', F\wedge F'}.
\end{align*}
As in the estimation of $\mathrm{J}_1$,
using Lemma \ref{summn} again, we obtain
the same bound for both $\mathrm{J}_2$ and $\mathrm{J}_3$.
This completes the proof of Lemma \ref{fuheyinli}.
\end{proof}

The following proposition shows that the composition of
two $\dot b_{p, q}^{\alpha}(W)$-almost diagonal operators
is also a $\dot b_{p, q}^{\alpha}(W)$-almost diagonal operator.

\begin{proposition}\label{compose}
Let $p\in(0,\infty)$, $q\in(0,\infty ]$,  $\alpha\in \mathbb{R}$,
and $W\in\mathcal A_{p, \infty }$.
If, for any $i\in\{1,2\}$, $A^{(i)}:=\{a_{P,Q}^{(i)}\}_{P, Q\in \mathcal{D}}$
is $\dot b_{p, q}^{\alpha}(W)$-almost diagonal,
then
$A^{(1)}\circ A^{(2)}$
is also $\dot b_{p, q}^{\alpha}(W)$-almost diagonal.
\end{proposition}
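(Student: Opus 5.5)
The plan is to reduce everything to Lemma \ref{fuheyinli}, perturbing the exponents slightly so that its hypotheses hold. For $i\in\{1,2\}$, let $A^{(i)}$ be $(D_i,E_i,F_i)$-almost diagonal, where $D_i>J$, $E_i>\frac12+\alpha$ and $F_i>J-\frac12-\alpha$, with $J$ as in \eqref{J}. The first observation is monotonicity: if $D'\le D$, $E'\le E$ and $F'\le F$, then $b^{D,E,F}_{Q,R}\le b^{D',E',F'}_{Q,R}$. Indeed, every base appearing in \eqref{BDEF} is either $\geq1$ and raised to a negative power, or $\leq 1$ and raised to a positive power. Hence, whenever convenient, we may lower any parameter, as long as it stays strictly above its threshold.

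The next step is to fix a small $\delta>0$ so that the new parameters still exceed their thresholds. We replace the parameters by
\[
D_1'=D_2'=D:=\min\{D_1,D_2\},\quad E_1'=E-\delta,\ E_2'=E,\quad F_1'=F,\ F_2'=F-\delta,
\]
where $E:=\min\{E_1,E_2\}$ and $F:=\min\{F_1,F_2\}$. Then $E_1'\neq E_2'$ and $F_1'\neq F_2'$, and all the new parameters still satisfy \eqref{ad FJ}. The remaining hypotheses of Lemma \ref{fuheyinli} are checked as follows.
\begin{itemize}
\item $D_1',D_2'>1$, because $D>J\ge\frac1{1\wedge p}\ge1$.
\item $E_1'+F_2'>\min\{D_1',D_2'\}$. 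We have $E_1'+F_2' =E+F-2\delta$, and $E+F>J$. Since $D$ may be lowered freely as long as $D>J$, we first reduce $D$ to $J+\eta$, and then choose $\delta$ and $\eta$ small enough that $E+F-2\delta>J+\eta$.
\item $F_1'+E_2'>1$. Here $F_1'+E_2'=F+E>J\ge1$.
\end{itemize}

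With these choices, Lemma \ref{fuheyinli} shows that $A^{(1)}\circ A^{(2)}$ is $(D,E-\delta,F-\delta)$-almost diagonal. These parameters still satisfy \eqref{ad FJ} because $\delta$ is small, so the composition is $\dot b^{\alpha}_{p,q}(W)$-almost diagonal.

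The only point that needs care is the strict inequality $E+F'>\min\{D,D'\}$ required by the lemma. We expect this to be the main obstacle, and it is resolved by first lowering $D$ to just above $J$, using $E+F>J$. One should also note that the composed sums converge absolutely; this is built into the proof of Lemma \ref{fuheyinli} through Lemma \ref{summn}.
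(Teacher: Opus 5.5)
Your proposal is correct and follows essentially the same route as the paper: you lower the parameters, using $E+F>J\ge1$, so that the hypotheses of Lemma \ref{fuheyinli} ($E^{(1)}+F^{(2)}>\min D^{(i)}$, $F^{(1)}+E^{(2)}>1$, $E^{(1)}\neq E^{(2)}$, $F^{(1)}\neq F^{(2)}$) hold, and then apply that lemma, with your explicit choice of $\delta$ and $\eta$ making precise the paper's ``without loss of generality'' step. One small wording point: the monotonicity of $b^{D,E,F}_{Q,R}$ in each parameter holds for all real exponents, since each base is $\ge1$ or $\le1$, so your justification need not assume the exponents are positive ($E$ or $F$ can be negative for some $\alpha$).
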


\begin{proof}
From Definition \ref{almodiag}, we infer that, for any $i\in\{1,2\}$,
there exist $D^{(i)},E^{(i)},F^{(i)}\in\mathbb R$ such that
$A^{(i)}$ is $(D^{(i)},E^{(i)},F^{(i)})$-almost diagonal with
\begin{align}\label{DEF}
D^{(i)}>J,\quad E^{(i)}>\frac{1}{2}+\alpha
,\quad F^{(i)}>J-\frac12-\alpha,
\end{align}
where $J$ is as in \eqref{J}. Observe that
\begin{align}\label{E+F}
\left(\frac12+\alpha \right)+\left(J-\frac12-\alpha \right)
= J
\ge 1.
\end{align}
Since $D^{(i)},E^{(i)},F^{(i)}$ can be made as close as desired to
their lower bounds specified in \eqref{DEF},
without loss of generality, we may assume that
$$E^{(1)}+F^{(2)}>\min_{i\in\{1,2\}}D^{(i)}, \quad E^{(1)}\neq E^{(2)},\quad {\rm{and}}\quad F^{(1)}\neq F^{(2)}.$$
Moreover, applying \eqref{DEF} and \eqref{E+F}, we also obtain $ D^{(1)}, D^{(2)}>1$ and
$ F^{(1)}+E^{(2)}>1$.
By these and Lemma \ref{fuheyinli}, we find that $A^{(1)}\circ A^{(2)}$ is
$(D,E,F)$-almost diagonal, where
$$
D=\min_{i\in\{1,2\}} D^{(i)}> J,\quad
E= \min_{i\in\{1,2\}} E^{(i)}>\frac{1}{2}+\alpha,
$$
and
$$
F
=\min_{i\in\{1,2\}}F^{(i)}
>J-\frac12-\alpha.
$$
Thus, $A^{(1)}\circ A^{(2)}$ is also $\dot{b}_{p,q}^{\alpha}(W)$-almost diagonal.
This completes the proof of Proposition \ref{compose}.
\end{proof}

\subsection{Sharpness of Almost Diagonal Conditions}\label{SADO}

In \cite[Theorem 7.1]{bf6}, Bu et al. characterized the boundedness of
$(D,E,F)$-almost diagonal operators on
unweighted Besov spaces in the Euclidean setting.
In this subsection, we extend this result not only to
the anisotropic setting but also to the power-weighted case.

\begin{theorem}\label{ad Besov sharp}
Let $\alpha\in\mathbb R$, $p\in(0,\infty)$, $q\in(0,\infty]$,
and $D,E,F\in \mathbb{R}$.
Assume that $\gamma\in(-1,\infty)$, $W(\cdot):=[\rho(\cdot)]^\gamma I_m$, and
\begin{align*}
J_{p,\gamma}:=\max\left\{1,\frac{1}{p},\frac{1+\gamma}{p}\right\}.
\end{align*}
Then every $(D,E,F)$-almost diagonal operator
$B$ is bounded on $\dot b^\alpha_{p,q}(W)$ if and only if
$D>J_{p,\gamma}$, $E>\frac12+\alpha$,
and $F>J_{p,\gamma}-\frac{1}{2}-\alpha$.
\end{theorem}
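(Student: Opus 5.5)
For sufficiency I will apply Theorem \ref{ad FJ-YY} directly and show that for $W=[\rho(\cdot)]^\gamma I_m$ one has $J=J_{p,\gamma}$. By Lemma \ref{value}, $v_{W,p}=\frac{p}{1+\gamma_+}$, so
\[
\Bigl(\frac1{v_{W,p}}-\frac1{1\wedge p}\Bigr)_+=\Bigl(\frac{1+\gamma_+}{p}-\frac1{1\wedge p}\Bigr)_+ .
\]
Adding $\frac1{1\wedge p}$ gives $\max\{\frac1{1\wedge p},\frac{1+\gamma_+}{p}\}=\max\{1,\frac1p,\frac{1+\gamma}{p}\}=J_{p,\gamma}$; when $\gamma<0$ the term $\frac{1+\gamma}p$ is dominated by $\frac1p$. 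Since $J$ is a minimum that includes this quantity, $J\le J_{p,\gamma}$. Hence $D>J_{p,\gamma}$, $E>\frac12+\alpha$ and $F>J_{p,\gamma}-\frac12-\alpha$ imply \eqref{ad FJ}, and boundedness follows. It is not even necessary to compute $d^{\mathrm{upper}}_{p,\infty}(W)$; we only need $W\in\mathcal A_{p,\infty}$. This holds because $w=\rho^\gamma\in A_\infty$ by Lemma \ref{example}, and for scalar multiples of the identity the $\mathcal A_{p,\infty}$ condition reduces to the scalar $A_\infty$ condition.

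For necessity, Lemma \ref{alpha0} lets us reduce to $\alpha=0$. It also suffices to take $m=1$ and test the single positive matrix $B^{D,E,F}$ of \eqref{BDEF}, since we need only one bounded-failing operator. Because all entries are positive, we may test on nonnegative sequences supported on few scales. Three counterexamples will be built, each with the unweighted $\dot b^0_{p,q}$-type computation $\|s\|^p\sim\sum_Q |Q|^{1-\frac p2}|s_Q|^p\fint_Q\rho^\gamma$, using Lemma \ref{ballsim}(ii).

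(a) \textbf{$E\le\frac12$.} Take $s$ supported on one large cube $R$ of level $-N$ near the origin, with $s_R=1$. Then $(Bs)_Q\sim(|Q|/|R|)^E$ for small cubes $Q\subset R$ at level $j\ge0$. Summing over $j$ in $\ell^q$ of the level-$j$ $L^p$ norms gives terms like $b^{-j(E-\frac12)}\cdot(\text{constant})$ that do not decay. Alternatively, take $s_R$ on infinitely many nested large cubes, with the norm computed in $\ell^q$ so that finitely many coefficients suffice as $q=\infty$ is worst. I expect the $q<\infty$ case to need a sum over $N$ scales, giving a ratio $\to\infty$.

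(b) \textbf{$F\le J_{p,\gamma}-\frac12$.} Use $s$ supported on fine cubes at one level $j=N$, and look at the output at one coarse cube $Q$ of level $0$. Three regimes correspond to the three entries of the max:
\begin{itemize}
\item[] For the entry $1$, take a single fine cube ($s_R=1$, $R$ away from the origin); this gives ratio $b^{N(\frac12-F)}\cdot b^{N(\frac12)}$-type growth.
\item[] For the entry $\frac1p$, take all fine cubes in $Q$ with equal coefficients; $\ell^p$ versus $\ell^1$ counting yields the $\frac1p$ exponent when $p<1$.
\item[] For the entry $\frac{1+\gamma}p$, take a single fine cube at the origin, where $\fint_R\rho^\gamma\sim b^{-N\gamma}$ is small while the output at $Q$ sees weight $\sim1$.
\end{itemize}
In each case I will compute both norms up to constants and check that $F\le$ the corresponding threshold forces unboundedness.

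(c) \textbf{$D\le J_{p,\gamma}$.} Analogously, fix one level $0$ and let a single $s_R$ (at the origin for the $\gamma$ term, or elsewhere for the $1$ term), or a spread of $s_R$ for the $\frac1p$ term, interact with far cubes $Q$ through the decay $[1+\rho(x_Q-x_R)]^{-D}$. The $\ell^p$ sum $\sum_k(1+\rho(k))^{-Dp}\rho(k)^\gamma$ diverges iff $Dp\le1+\gamma$. Lemma \ref{yl111505}(iii),(iv) handle the $\frac1p$ and $\frac{1+\gamma}p$ thresholds; for threshold $1$ the dual example (many $s_R$, one output) uses $\ell^1$ summability.

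The main obstacle is the bookkeeping in (b) and (c) to match exactly all three entries of $J_{p,\gamma}$, especially the $\frac{1+\gamma}{p}$ entry, where cubes near the origin must be handled carefully via Lemma \ref{ballsim}.
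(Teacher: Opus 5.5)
Your sufficiency half is correct, and slightly leaner than the paper's. Because $J$ is defined through a minimum, Lemma \ref{value} alone gives $J\le\max\{\frac1{1\wedge p},\frac{1+\gamma_+}{p}\}=J_{p,\gamma}$, so you never need $d^{\mathrm{upper}}_{p,\infty}(W)=\gamma_+$. The paper uses Lemma \ref{rhoinf} to get the equality $J=J_{p,\gamma}$, which the equivalence does not require.

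The necessity half, however, has a genuine gap: it never reaches the endpoints, which is exactly where the strict inequalities of the theorem live. In (b) you put the input on a single level $N$ and read the output at level $0$. After your reduction to $\alpha=0$, each of your three tests gives $\|B^{D,E,F}s\|^p/\|s\|^p\sim b^{Np(\theta-\frac12-F)}$ with $\theta\in\{1,\frac1p,\frac{1+\gamma}{p}\}$. This proves only $F\ge J_{p,\gamma}-\frac12$. At $F=J_{p,\gamma}-\frac12$ the ratio is bounded above and below, so nothing is contradicted. Reading all output levels $0\le j\le N$ would add a factor $N^{1/q}$; this rescues $q<\infty$ but not $q=\infty$.

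The same problem appears in (a). At $E=\frac12$, a single large cube makes every finer output level comparable in norm to the input. That diverges in $\ell^q$ only when $q<\infty$, so it is $q=\infty$, not $q<\infty$, that needs many scales.

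The missing idea is accumulation across scales, which is what the paper does. Put coefficients on all levels $0\le j\le N$, normalized so that each level has $L^p(W)$-norm $\sim1$; then $\|\vec s\|\sim N^{1/q}$. Arrange them so that at the endpoint every coarser level $i\le j$ (for $E$) or finer level $i>j$ (for $F$) contributes $\gtrsim1$ to level $j$ of $B^{D,E,F}\vec s$. This gives $\|B^{D,E,F}\vec s\|\gtrsim N^{1+\frac1q}$ for every $q\in(0,\infty]$. The three entries of $J_{p,\gamma}$ correspond to three placements:
\begin{itemize}
\item all cubes in a fixed neighborhood of the origin, for entry $1$ (this also handles $E$);
\item the cubes $Q_{j,\mathbf 0}$, for entry $\frac{1+\gamma}{p}$;
\item cubes $Q_{j,k_j}$ with $\rho(k_j-A^jx_0)\le C_0$, tracking a fixed point $x_0$ away from the origin, for entry $\frac1p$.
\end{itemize}

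Your assignment of test sequences to regimes is also reversed, in (b) and again in (c). A single fine cube away from the origin is the test with $\theta=\frac1p$. The spread of $\sim b^N$ equal coefficients is the test with $\theta=1$, and it is the weaker one when $p<1$. The rate $b^{N(1-F)}$ you quote corresponds to neither.

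In (c), reading a single output cube from a spread input only yields the constraint $p(1-D)\le1$. To get $D>1$ you must sum over the $\sim b^N$ output cubes and conclude that $\sum_{\rho(h)\lesssim b^N}[1+\rho(h)]^{-D}\lesssim1$ uniformly in $N$.

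Finally, for $\gamma<0$ a single input at any fixed location $k_0$ still yields only $D>\frac{1+\gamma}{p}$. The reason is that $\sum_k[1+\rho(k-k_0)]^{-Dp}[1+\rho(k)]^{\gamma}$ has the same tail for every $k_0$. The bound $D>\frac1p$ requires placing the input at $k_N$ with $\rho(k_N)>2Hb^N$, where the weight is essentially constant on $\{k:\ \rho(k-k_N)\le b^N\}$, and then letting $N\to\infty$, as the paper does.
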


To prove Theorem \ref{ad Besov sharp},
corresponding to Definition \ref{dim}, we introduce the
concept of upper dimensions of scalar weights.

\begin{definition}
Let $d\in[0,\infty).$
A scalar weight $w$ is said to have \emph{$A_\infty$-upper dimension $d$},
denoted by $w\in\mathbb{D}_{\infty,d}^\text{upper}$,
if there exists a positive constant $C$ such that,
for any $\lambda\in[1,\infty)$ and any ball $B\in\mathcal B$,
$$\fint_{\lambda B}w(x)\,dx\exp\left(\fint_B
\log\left([w(x)]^{-1}\right)\,dx\right)\leq C\lambda^d.$$
For any scalar weight $w$, let
$
d_\infty^{\mathrm{upper}}(w):=\inf\{d\in[0,\infty):\:w
\text { has } A_\infty\text{-upper dimension }d\}.
$
\end{definition}

The following lemma gives the relation between scalar and matrix weights,
which follows immediately from their definitions;
we omit the details.

\begin{lemma}\label{scamtr}
Let $p\in(0,\infty)$ and $d\in [0, \infty)$.
Assume that $w$ is a scalar weight and $W:=wI_m$.
Then the following statements hold.
\begin{enumerate}[\rm(i)]
\item $W\in \mathcal A_{p,\infty}$
if and only if $w\in A_{\infty}$.

\item $W\in \mathbb{D}_{p,\infty,d}^\mathrm{upper}$ if and only if
$w\in\mathbb{D}_{\infty, d}^\mathrm{upper}$.
\end{enumerate}
\end{lemma}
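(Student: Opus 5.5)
The plan is to compare the two definitions directly, using that $W=wI_m$ is scalar times the identity. Then all matrix norms reduce to scalar quantities. For any $x,y$ with $w(x),w(y)\in(0,\infty)$ we have
\[
W^{\frac1p}(x)W^{-\frac1p}(y)=[w(x)]^{\frac1p}[w(y)]^{-\frac1p}I_m ,
\]
so its operator norm raised to the power $p$ equals $w(x)/w(y)$. Since $W$ is a matrix weight, $w$ is positive almost everywhere and locally integrable, so this identity holds for almost every pair $(x,y)$.

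For (i), substitute this identity into the definition of $[W]_{\mathcal A_{p,\infty}}$. For any ball $B$, the inner average becomes $\fint_B w(x)\,dx\,[w(y)]^{-1}$. Taking logarithms and averaging in $y$ gives
\[
\exp\left(\fint_B\log\left(\fint_B\left\|W^{\frac1p}(x)W^{-\frac1p}(y)\right\|^p\,dx\right)dy\right)
=\fint_B w(x)\,dx\,\exp\left(\fint_B\log [w(y)]^{-1}\,dy\right).
\]
The right-hand side is exactly the Hru\v{s}\v{c}ev-type $A_\infty$ quantity for $w$ over anisotropic balls, i.e.\ the case $m=1$ of the definition, which the paper identifies with $A_\infty$. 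Taking the supremum over $B\in\mathcal B$ shows $[W]_{\mathcal A_{p,\infty}}=[w]_{A_\infty}$.

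The integrability requirement also transfers. The function $\log_+(\fint_B w\,[w(\cdot)]^{-1})$ lies in $L^1(B)$ if and only if the corresponding condition for $w$ holds. Any such requirement is implicit in finiteness of the supremum via Jensen, exactly as in the scalar definition. Hence $W\in\mathcal A_{p,\infty}$ if and only if $w\in A_\infty$.

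For (ii), repeat the same computation with the outer average over $B$ and the inner average over $\lambda B$. Definition \ref{dim} for $W$ then reads
\[
\fint_{\lambda B}w(x)\,dx\,\exp\left(\fint_B\log [w(y)]^{-1}\,dy\right)\le C\lambda^d ,
\]
which is verbatim the defining inequality for $w\in\mathbb D^{\mathrm{upper}}_{\infty,d}$, with the same constant. So the two memberships are equivalent.

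I expect no real obstacle here. The only points to check are that the computation is valid almost everywhere, which follows from the invertibility of $W$ almost everywhere, and that the integrability side conditions in the matrix definition match those for scalar $A_\infty$.
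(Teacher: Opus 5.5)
Your proposal is correct and is the same direct verification from the definitions that the paper has in mind (the paper omits the details). For $W=wI_m$ one has $\|W^{\frac1p}(x)W^{-\frac1p}(y)\|^p=w(x)/w(y)$ almost everywhere, and this turns both the $\mathcal A_{p,\infty}$ constant (together with its integrability side condition) and the upper-dimension inequality verbatim into their scalar counterparts.
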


Using Lemma \ref{ballsim}(i), we obtain the critical upper dimension of power weights.

\begin{lemma}\label{rhoinf}
Let $a\in(-1,\infty)$ and $w(\cdot):=[\rho(\cdot)]^a$.
Then $w\in A_{\infty}$ and $d_{\infty}^{\mathrm{upper}}(w)=a_+$.
\end{lemma}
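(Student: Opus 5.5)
To prove Lemma \ref{rhoinf}, I would first show that $w\in A_\infty$ by a direct computation with Lemma \ref{ballsim}(i). I will not appeal to Lemma \ref{example} for this step. Fix a ball $B=B_\rho(x_0,r)$. By Lemma \ref{ballsim}(i), $\fint_B w\sim[\rho(x_0)+r]^a$.

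For the geometric mean I will use the elementary estimate
\[
\fint_B\log\rho(x)\,dx=\log[\rho(x_0)+r]+O(1),
\]
with the $O(1)$ uniform in $x_0$ and $r$. To prove it I would split into two cases.
\begin{itemize}
\item If $\rho(x_0)\ge 2Hr$, then $\rho(x)\sim\rho(x_0)\sim\rho(x_0)+r$ on $B$, so the estimate is immediate.
\item If $\rho(x_0)<2Hr$, then $B\subset B_\rho(\mathbf 0,Cr)$ and $\rho(x)\le C r$ on $B$, which gives the upper bound. For the lower bound, the distribution of $\rho$ on $B_\rho(\mathbf 0,Cr)$ comes from the layers $B_{k+1}\setminus B_k$, whose measures are comparable to $b^k$. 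Hence $\int_B\log_+\bigl(Cr/\rho(x)\bigr)\,dx\lesssim\sum_{k}b^{-k}\,k\,|B|\lesssim|B|$.
\end{itemize}

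Combining the two, $\exp\bigl(\fint_B\log w^{-1}\bigr)\sim[\rho(x_0)+r]^{-a}$. Multiplying by $\fint_B w$ gives the uniform bound $[w]_{A_\infty}\lesssim1$, so $w\in A_\infty$.

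Next, for the upper dimension I use the same two estimates on the balls $\lambda B$ and $B$:
\[
\fint_{\lambda B}w\,\exp\Bigl(\fint_B\log w^{-1}\Bigr)\sim\Bigl[\frac{\rho(x_0)+\lambda r}{\rho(x_0)+r}\Bigr]^a .
\]
Since $\lambda\ge1$, the ratio $\frac{\rho(x_0)+\lambda r}{\rho(x_0)+r}$ lies in $[1,\lambda]$.
\begin{itemize}
\item If $a\le0$, the quantity is $\lesssim1=\lambda^0$, so $w$ has $A_\infty$-upper dimension $0$.
\item If $a>0$, the quantity is $\lesssim\lambda^a$, so $w$ has $A_\infty$-upper dimension $a$.
\end{itemize}
Together these give $d^{\mathrm{upper}}_\infty(w)\le a_+$.

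For the matching lower bound when $a>0$, take $x_0=\mathbf 0$ in the displayed equivalence. The ratio then equals $\lambda$ exactly, so the quantity is $\sim\lambda^a$. If $w$ had upper dimension $d<a$, we would get $\lambda^a\lesssim\lambda^d$ for all $\lambda\ge1$, which fails as $\lambda\to\infty$. Hence $d^{\mathrm{upper}}_\infty(w)=a_+$.

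The main obstacle is the geometric-mean estimate near the origin, since $\log\rho$ is unbounded there. The layer decomposition of $\rho$ controls it, because $\rho$ takes the value $b^k$ on a set of measure comparable to $b^k$, and this makes the logarithmic integral summable.
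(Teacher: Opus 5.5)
Your proof is correct. Its second half is exactly what the paper does: the ratio $\frac{\rho(x_0)+\lambda r}{\rho(x_0)+r}\in[1,\lambda]$, the case split $a\le 0$ versus $a>0$, and the lower bound from centering the ball at $\mathbf 0$.

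The genuine difference is in how you obtain $w\in A_\infty$ and the comparability \eqref{expsim}, namely $\exp(\fint_B\log w^{-1})\sim[\fint_B w]^{-1}$.
\begin{itemize}
\item The paper takes $w\in A_\infty$ from Lemma \ref{example}, i.e.\ $w$ lies in $A_1$ or in some $A_p$, combined with $A_p\subset A_\infty$; the proof of Lemma \ref{example} is omitted there. It then gets \eqref{expsim} from Jensen's inequality in one direction and the $A_\infty$ condition in the other.
\item You instead compute the logarithmic mean directly, $\fint_B\log\rho=\log[\rho(x_0)+r]+O(1)$, uniformly in $x_0$ and $r$. For this you use the far/near dichotomy (the same mechanism as Lemma \ref{far}) and the layer structure $|B_{k+1}\setminus B_k|=(b-1)b^k$ of the step quasi-norm to absorb the logarithmic singularity at the origin.
\end{itemize}

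Your single two-sided estimate gives several things at once:
\begin{itemize}
\item both directions of \eqref{expsim}, for either sign of $a$;
\item the uniform bound on $[w]_{A_\infty}$;
\item the local integrability of $\log w$ that the definition of $A_\infty$ requires.
\end{itemize}

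The paper's route is shorter given its earlier lemmas. Yours is self-contained: it needs only Lemma \ref{ballsim}(i) and the geometry of $\rho$, and never passes through the Muckenhoupt $A_p$ classes.

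One small point: for $a\le 0$, the lower bound $d^{\mathrm{upper}}_\infty(w)\ge 0$ is automatic, since $d^{\mathrm{upper}}_\infty(w)$ is defined as an infimum over $[0,\infty)$. You leave this implicit, and it is worth stating.
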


\begin{proof}
Lemma \ref{example} implies that $w\in A_{\infty}$.
Applying Jensen's inequality and $w\in A_{\infty}$,
we conclude that, for any $B\in\mathcal B$,
\begin{align}\label{expsim}
\exp{\left(\fint_{B}\log\left(\left[w(x)\right]^{-1}\right)\,dx\right)}
\sim\left[\fint_{B}w(x)\,dx\right]^{-1}.
\end{align}
Next, we prove that $d_{\infty}^{\text{upper}}(w)=a_+$
by  considering two cases for $a$.

\emph{Case (1)} $a\in(0,\infty)$. In this case, $a_+=a$.
Using \eqref{expsim} and Lemma \ref{ballsim}(i), we conclude that,
for any $\lambda\in[1,\infty)$ and any $B\in\mathcal B$,
\begin{align*}
I(\lambda, B)
&:= \fint_{\lambda B}w(x)\,dx \exp \left(\fint_{B}\log\left(\left[w(x)\right]^{-1}\right)\,dx\right) \\
&\phantom{:}\sim\fint_{\lambda B}w(x)\,dx
\left[\fint_{B}w(x)\,dx\right]^{-1}
\sim\frac{[\rho(c_B)+\lambda r_B]^a}{[\rho(c_B)+r_B]^a}
\le\frac{[\lambda\rho(c_B)+\lambda r_B]^a}{[\rho(c_B)+r_B]^a}
=\lambda^a,
\end{align*}
which further implies that $w$ has $A_{\infty}$-upper dimension $a$.
On the other hand, if $w$ has $A_{\infty}$-upper dimension $d$,
then, taking $c_B=\mathbf{0}$, by Lemma \ref{ballsim}(i) and \eqref{expsim}, we find that
\begin{align*}
\lambda^a\sim\fint_{\lambda B}w(x)\,dx
\left[\fint_{B}w(x)\,dx\right]^{-1}
\sim I(\lambda, B)
\lesssim \lambda^d,
\end{align*}
and hence $a\le d$. Thus, $d_\infty^{\text{upper}}(w)=a=a_+$ in the case.

\emph{Case (2)} $a\in(-1,0]$. In this case, $a_+=0$.
From \eqref{expsim} and Lemma \ref{ballsim}(i), we infer that,
for any $\lambda\in[1,\infty)$ and any $B\in\mathcal B$,
\begin{align*}
I(\lambda,B)
\sim\fint_{\lambda B}w(x)\,dx\left[\fint_{B}w(x)\,dx\right]^{-1}
\sim\frac{[\rho(c_B)+\lambda r_B]^a}{[\rho(c_B)+r_B]^a}
\le\frac{[\rho(c_B)+\lambda r_B]^a}{[\rho(c_B)+\lambda r_B]^a}=1,
\end{align*}
which indicates that $w$ has $A_{\infty}$-upper dimension $0$,
and hence $d_{\infty}^{\text{upper}}(w)=0=a_+$.
This completes the proof of Lemma \ref{rhoinf}.
\end{proof}

\begin{lemma} \label{far}
Let $N\in\mathbb Z$ and $\lambda\in(0,\infty)$.
Then, for any $x\in \mathbb R^n \setminus B_\rho(\mathbf 0, 2H \lambda b^{N})$
and $y\in B_\rho(x,\lambda b^N)$,
\begin{align*}
\frac1{2H}\rho(y)< \rho(x) < 2H\rho(y).
\end{align*}
\end{lemma}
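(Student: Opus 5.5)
The plan is to get both inequalities from the quasi-triangle inequality in Definition \ref{quasi-norm}(iii), using that the hypothesis on $x$ makes $\rho(x)$ large compared with $\rho(x-y)$. Fix $x\notin B_\rho(\mathbf 0,2H\lambda b^N)$, so $\rho(x)\ge 2H\lambda b^N$, and $y\in B_\rho(x,\lambda b^N)$, so $\rho(y-x)<\lambda b^N$. Since the step quasi-norm satisfies $\rho(-z)=\rho(z)$ (the set $\Delta$ is symmetric, hence so is every $B_k$), we also have $\rho(x-y)<\lambda b^N$. Combining the two bounds gives the key inequality
\[
\rho(x-y)<\lambda b^N\le \frac{1}{2H}\,\rho(x).
\]

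For the upper bound $\rho(x)<2H\rho(y)$, I write $x=y+(x-y)$ and apply the quasi-triangle inequality. This gives $\rho(x)\le H[\rho(y)+\rho(x-y)]<H\rho(y)+\frac12\rho(x)$. Since $\rho(x)\geq 2H\lambda b^N>0$ is finite, I can subtract $\frac12\rho(x)$ from both sides, which yields $\frac12\rho(x)<H\rho(y)$, that is, $\rho(x)<2H\rho(y)$.

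For the lower bound $\frac{1}{2H}\rho(y)<\rho(x)$, I write $y=x+(y-x)$ and again apply the quasi-triangle inequality. This gives $\rho(y)\le H[\rho(x)+\rho(y-x)]<H\rho(x)+\frac12\rho(x)\le 2H\rho(x)$, where the last step uses $H\ge1$. Hence $\rho(y)<2H\rho(x)$, as required.

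The only point that needs care is the symmetry $\rho(-z)=\rho(z)$. It follows because $\Delta=\{|Px|<1\}$ is symmetric and $A^k$ is linear, so each $B_k$ is symmetric. If one wants to avoid this, one can instead redo the estimates with the ball written as $\{y:\rho(y-x)<\lambda b^N\}$ and use $\rho(y-x)$ directly in each step, which works equally well. I do not expect any genuine obstacle here.
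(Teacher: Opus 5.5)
Your proof is correct and is essentially the paper's argument: both apply the quasi-triangle inequality to $x=y+(x-y)$ and $y=x+(y-x)$. The only cosmetic difference is that the paper first derives $\rho(y)>\lambda b^N$ and uses it to bound $H\rho(x-y)$ by $H\rho(y)$, whereas you absorb $H\rho(x-y)<\frac12\rho(x)$ into the left-hand side. Your explicit check that $\rho(-z)=\rho(z)$, which the paper uses tacitly, is the right justification. Your closing aside that the symmetry could be avoided is not accurate for the upper bound, since that step genuinely needs a bound on $\rho(x-y)$ rather than $\rho(y-x)$; this plays no role in your actual argument.
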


\begin{proof}
By the quasi-triangle inequality of $\rho$, we obtain
$\rho(y)\geq \frac1H\rho(x)-\rho(x-y)>\lambda b^N$, and hence
\begin{align*}
\rho(x)
\leq  H [\rho(x-y) + \rho(y)]
< H [\lambda b^N + \rho(y)]
< 2H\rho(y).
\end{align*}
On the other hand,
\begin{align*}
\rho(y)
\leq  H [\rho(y-x) + \rho(x)]
< H [\lambda b^N + \rho(x)]
< 2H\rho(x).
\end{align*}
This completes the proof of Lemma \ref{far}.
\end{proof}

We now prove Theorem \ref{ad Besov sharp}.

\begin{proof}[Proof of Theorem \ref{ad Besov sharp}]
We first prove the sufficiency.
By Lemmas \ref{scamtr}, \ref{rhoinf}, and \ref{value}, we have
\begin{align*}
W\in\mathcal A_{p,\infty}, \quad
d_{p,\infty}^{\mathrm{upper}}(W)=\gamma_+, \quad\text{and}\quad
v_{W,p}=\frac{p}{1+\gamma_+}.
\end{align*}
Therefore the exponent $J$ appearing in Theorem \ref{ad FJ-YY} satisfies
\begin{align*}
J
&=\frac{1}{1\wedge p}
+\min\left\{\frac{\gamma_+}{p},
\left(\frac{1+\gamma_+}{p}-\frac{1}{1\wedge p}\right)_+\right\} \\
&=\max\left\{1,\frac{1+\gamma_+}{p}\right\}
=\max\left\{1,\frac{1}{p},\frac{1+\gamma}{p}\right\}
=J_{p,\gamma},
\end{align*}
where the second equality follows from the definition of $\gamma_+$.
Thus Theorem \ref{ad FJ-YY} directly gives the sufficiency.

We now prove the necessity.
Let $\mathbb A:=\{A_Q\}_{Q\in\mathcal D}$
be a sequence of reducing operators of order $p$ for $W$.
By the definition of reducing operators
and Lemma \ref{ballsim}(ii), we conclude that,
for any $Q:=Q_{j,k}\in\mathcal D$ and $\vec z\in\mathbb C^m$,
\begin{align*}
\left|A_Q\vec z\right|
\sim \left\{\fint_Q[\rho(x)]^\gamma\,dx\right\}^{\frac1p}|\vec z|
\sim b^{-j\frac\gamma p}[1+\rho(k)]^{\frac\gamma p}|\vec z|.
\end{align*}
Thus, we can set
$A_Q:=b^{-j\frac\gamma p}[1+\rho(k)]^{\frac\gamma p}I_m$.
Combining Theorem \ref{dj} with the assumption that
every $(D,E,F)$-almost diagonal operator
$B$ is bounded on $\dot b_{p,q}^{\alpha}(W)$, we conclude that
the specific operator $B^{D,E,F}$ (see Definition \ref{almdia})
is also bounded on $\dot b_{p,q}^{\alpha}(\mathbb A)$.

We begin by estimating $D$.
We first prove $D>\frac{1+\gamma}p$.
Fix $\vec e\in\mathbb C^m$ with $|\vec e|=1$.
Let $\vec s$ be nonzero only at $Q_{0,\mathbf0}$, with value $\vec e$.
Then $\|\vec s\|_{\dot b_{p,q}^{\alpha}(\mathbb A)}<\infty$, and hence
\begin{align*}
\infty
&>\left\|B^{D,E,F}\vec s\right\|_{\dot b_{p,q}^{\alpha}(\mathbb A)}^p
> \sum_{k\in\mathbb Z^n}|Q_{0,k}|^{1-\frac p2}
\left|A_{Q_{0,k}}(B^{D,E,F}\vec s)_{Q_{0,k}}\right|^p \notag\\
&=\sum_{k\in\mathbb Z^n}
[1+\rho(k)]^\gamma [1+\rho(k)]^{-Dp}
=\sum_{k\in\mathbb Z^n}[1+\rho(k)]^{-(Dp-\gamma)}.
\end{align*}
This, together with Lemma \ref{yl111505}(iv),
further implies $D>\frac{1+\gamma}{p}$.

We now prove the remaining restrictions on $D$.
For $N\in\mathbb N$, choose $k_N\in\mathbb Z^n$ such that
$\rho(k_N)> 2H b^N.$
For any $x\in\mathbb R^n$ and $r\in(0,\infty)$,
$Z_\rho(x,r):=\{k\in \mathbb Z^n:\ \rho(k-x) \leq r \}$.
Let $C_0$ be as in Lemma \ref{yl1101}(ii).
By this lemma, we conclude that,
for any $r\in [C_0,\infty)$, we have
\begin{equation} \label{noempty}
\# Z_\rho(x,r) \ge 1.
\end{equation}
Furthermore, the lattice-counting estimate implies that, for any $r\in [C_0,\infty)$,
\begin{equation}\label{number}
\# Z_\rho(x,r)\sim |B_\rho(x,r)| \sim r.
\end{equation}
By Lemma \ref{far} with $\lambda$ replaced by $1$,
we conclude that, for any $k\in Z_\rho(k_N, b^N)$,
\begin{align}\label{far-grid-comparable}
1+\rho(k)\sim1+\rho(k_N).
\end{align}
For any nonnegative family
$\{a_h\}_{h\in Z_\rho(k_N, b^N)}$, define
$\vec s_{Q_{0,h}}
:=a_hA_{Q_{0,h}}^{-1}\vec e$
for all $h\in Z_\rho(k_N, b^N)$,
and let all other coefficients vanish.
From the boundedness of $B^{D,E,F}$
and \eqref{far-grid-comparable}, we deduce that
\begin{align} \label{D-convolution-test}
\sum_{h\in Z_\rho(k_N, b^N)}a_h^p
&=\|\vec s\|_{\dot b_{p,q}^{\alpha}(\mathbb A)}^p
\gtrsim \left\|B^{D,E,F}\vec s\right\|_{\dot b_{p,q}^{\alpha}(\mathbb A)}^p \notag \\
&> \sum_{k\in Z_\rho(k_N, b^N)}|Q_{0,k}|^{1-\frac p2}
\left|A_{Q_{0,k}}(B^{D,E,F}\vec s)_{Q_{0,k}}\right|^p \notag \\
&= \sum_{k\in Z_\rho(k_N, b^N)}
\left\{ \sum_{h\in Z_\rho(k_N, b^N)}
\left[\frac{1+\rho(k)}{1+\rho(h)}\right]^{\frac\gamma p}
\frac{a_h}{[1+\rho(k-h)]^D} \right\}^p \notag \\
&\sim \sum_{k\in Z_\rho(k_N, b^N)}
\left\{ \sum_{h\in Z_\rho(k_N, b^N)}
\frac{a_h}{[1+\rho(k-h)]^D} \right\}^p.
\end{align}

Taking $a_{k_N}:=1$ and all other $a_h:=0$, and then letting
$N\to\infty$, we obtain
\begin{align*}
\sum_{h\in\mathbb Z^n}[1+\rho(h)]^{-Dp}<\infty.
\end{align*}
This, together with Lemma \ref{yl111505}(iv), further implies $D>\frac1p$.

Next take $a_h:=1$ for every $h\in Z_\rho(k_N, b^N)$. Then
\begin{equation} \label{7.10}
\sum_{h\in Z_\rho(k_N, b^N)} a_h^p =\# Z_\rho(k_N, b^N) \sim b^N.
\end{equation}
On the other hand,
\begin{align*}
\mathrm{RHS\ of}\ \eqref{D-convolution-test}
&= \sum_{k\in Z_\rho(\mathbf 0, b^N)}
\left\{ \sum_{h\in Z_\rho(\mathbf 0, b^N)}
\frac{1}{[1+\rho(k-h)]^D} \right\}^p \\
&= \sum_{k\in Z_\rho(\mathbf 0, b^N)}
\left\{ \sum_{h\in Z_\rho(k, b^N)}
\frac{1}{[1+\rho(h)]^D} \right\}^p.
\end{align*}
For any $k,h\in Z_\rho(\mathbf 0, (2H)^{-1}b^N)$,
$\rho(k-h) \le H[\rho(k)+\rho(h)] <b^N$,
and hence $h\in Z_\rho(k, b^N)$. Thus,
\begin{align*}
\mathrm{RHS\ of}\ \eqref{D-convolution-test}
&> \sum_{k\in Z_\rho(\mathbf 0, (2H)^{-1}b^N)}
\left\{ \sum_{h\in Z_\rho(\mathbf 0, (2H)^{-1}b^N)}
\frac{1}{[1+\rho(h)]^D} \right\}^p \\
&= \# Z_\rho(\mathbf 0, (2H)^{-1}b^N)
\left\{ \sum_{h\in Z_\rho(\mathbf 0, (2H)^{-1}b^N)}
\frac{1}{[1+\rho(h)]^D} \right\}^p \\
&\sim b^N
\left\{ \sum_{h\in Z_\rho(\mathbf 0, (2H)^{-1}b^N)}
\frac{1}{[1+\rho(h)]^D} \right\}^p.
\end{align*}
Substituting this estimate and \eqref{7.10} back into \eqref{D-convolution-test},
we obtain, for any $N\in\mathbb N$,
$$
\sum_{h\in Z_\rho(\mathbf 0, (2H)^{-1}b^N)}
\frac{1}{[1+\rho(h)]^D}\lesssim 1.
$$
This, together with Lemma \ref{yl111505}(iv), further implies $D>1$,
which completes the estimation of $D$.

Now, we estimate $E$ and $F$.
For any sequence $\vec s\in \dot b_{p,q}^{\alpha}(\mathbb A)$, write
\begin{align*}
\|\vec s\|_{\dot b_{p,q}^{\alpha}(\mathbb A)}
&= \left\| \left\{ \left( \sum_{Q\in\mathcal D_j}
b^{j\alpha p} |Q|^{1-\frac p2} |A_Q\vec s_Q|^p
\right)^{\frac1p} \right\}_{j\in\mathbb Z} \right\|_{\ell^q} \\
&= \left\| \left\{ \left( \sum_{k\in\mathbb Z^n}
b^{jp(\alpha +\frac 12- \frac{1+\gamma}{p})}
[1+\rho(k)]^{\gamma} |\vec s_{Q_{j,k}}|^p
\right)^{\frac1p} \right\}_{j\in\mathbb Z} \right\|_{\ell^q}
=:\|\{\mathcal N_j(\vec s)\}_{j\in\mathbb Z}\|_{\ell^q}.
\end{align*}
For any $N\in\mathbb N$ and $Q:=Q_{j,k}\in \mathcal D$, define
\begin{align*}
\vec s_Q^{(N)} :=
\begin{cases}
b^{-j(\alpha+\frac12)} \vec e
&\text{if } 0\le j\le N \text{ and } \rho(k)< 2C_0 H b^j,\\
\mathbf 0 &\text{otherwise}.
\end{cases}
\end{align*}
From \eqref{number}, it follows that,
for any $j\in\mathbb Z$ with $0\le j\le N$,
\begin{align} \label{7292}
\left[\mathcal N_j\left(\vec s^{(N)}\right)\right]^p
&=\sum_{k\in Z_\rho(\mathbf 0, 2C_0 H b^j)}
b^{-j(1+\gamma)} [1+\rho(k)]^{\gamma} \notag \\
&=b^{-j(1+\gamma)}
\left\{ \sum_{k\in Z_\rho(\mathbf 0, 2C_0 H)}
[1+\rho(k)]^{\gamma}
+ \sum_{i=1}^j \sum_{k\in Z_\rho(\mathbf 0, 2C_0 H b^i)
\setminus Z_\rho(\mathbf 0, 2C_0 H b^{i-1})}
[1+\rho(k)]^{\gamma} \right\} \notag \\
&\sim b^{-j(1+\gamma)}
\left[ 1 + \sum_{i=1}^j b^{i(1+\gamma)} \right]
\sim 1,
\end{align}
and hence
\begin{align}\label{block-input}
\left\|\vec s^{(N)}\right\|_{\dot b_{p,q}^{\alpha}(\mathbb A)}
\sim N^{\frac 1q}.
\end{align}
For any $j\in\mathbb Z$ with $0\le j\le N-1$
and any $k\in Z_\rho(\mathbf 0, C_0 b^j)$,
\begin{align*}
\left|\left( B^{D,E,F}\vec s^{(N)} \right)_{Q_{j,k}}\right|
&= \sum_{i=0}^j \sum_{h\in Z_\rho(\mathbf 0, 2C_0 H b^i)}
b^{(i-j)E} [1+b^i\rho(A^{-j}k-A^{-i}h)]^{-D} b^{-i(\alpha+\frac12)} \\
&\quad+ \sum_{i=j+1}^N \sum_{h\in Z_\rho(\mathbf 0, 2C_0 H b^i)}
b^{(j-i)F} [1+b^j\rho(A^{-j}k-A^{-i}h)]^{-D} b^{-i(\alpha+\frac12)} \\
&= \sum_{i=0}^j b^{(i-j)(E-\alpha-\frac12)} b^{-j(\alpha+\frac12)}
\sum_{h\in Z_\rho(\mathbf 0, 2C_0 H b^i)}
[1+\rho(A^{i-j}k-h)]^{-D} \\
&\quad+ \sum_{i=j+1}^N b^{(j-i)(F+\alpha+\frac12)} b^{-j(\alpha+\frac12)}
\sum_{h\in Z_\rho(\mathbf 0, 2C_0 H b^i)}
[1+b^{j-i}\rho(A^{i-j}k-h)]^{-D}.
\end{align*}
From Lemma \ref{yl111505}(ii), we deduce that, for any $i\in\{0,\ldots,j\}$,
$$
\sum_{h\in Z_\rho(\mathbf 0, 2C_0 H b^i)}
[1+\rho(A^{i-j}k-h)]^{-D}
\geq \sum_{h\in Z_\rho(A^{i-j}k, C_0)} \cdots
\sim 1
$$
and, for any $i\in\{j+1,\ldots,N\}$,
$$
\sum_{h\in Z_\rho(\mathbf 0, 2C_0 H b^i)}
[1+b^{j-i}\rho(A^{i-j}k-h)]^{-D}
\geq \sum_{h\in Z_\rho(A^{i-j}k, C_0 b^{i-j})} \cdots
\sim b^{i-j}.
$$
Thus,
\begin{align*}
\left|\left( B^{D,E,F}\vec s^{(N)} \right)_{Q_{j,k}}\right|
\gtrsim \sum_{i=0}^j b^{(i-j)(E-\alpha-\frac12)} b^{-j(\alpha+\frac12)}
+ \sum_{i=j+1}^N b^{(j-i)(F+\alpha+\frac12)} b^{-j(\alpha+\frac12)} b^{i-j},
\end{align*}
and hence
\begin{align*}
\left[ \mathcal N_j \left( B^{D,E,F}\vec s^{(N)} \right) \right]^p
&\gtrsim \left[ \sum_{i=0}^j b^{(i-j)(E-\alpha-\frac12)}
+ \sum_{i=j+1}^N b^{(j-i)(F+\alpha-\frac12)} \right]^p\\
&\quad\times
\sum_{k\in Z_\rho(\mathbf 0, C_0 b^j)} b^{-j (1+\gamma)}
[1+\rho(k)]^{\gamma}  \\
&\sim \left[ \sum_{i=0}^j b^{(i-j)(E-\alpha-\frac12)}
+ \sum_{i=j+1}^N b^{(j-i)(F+\alpha-\frac12)} \right]^p,
\end{align*}
where the last equivalence follows from \eqref{7292}.
If $E-\alpha-\frac12\leq 0$ or $F+\alpha-\frac12\leq 0$, then
$$
\left\|B^{D,E,F}\vec s^{(N)}\right\|_{\dot b_{p,q}^{\alpha}(\mathbb A)}
\gtrsim \left\|\{j+1\}_{j=0}^{N-1}\right\|_{\ell^q}
\sim N^{1+\frac1q}.
$$
Combining this with \eqref{block-input} and the boundedness of $B^{D,E,F}$, we obtain
$N^{1+\frac1q} \lesssim N^{\frac1q}$, which yields a contradiction.
Therefore,
\begin{align*}
E>\alpha+\frac12
\quad\text{and}\quad
F>\frac12-\alpha.
\end{align*}

It remains to prove the other two lower bounds of $F$.
Let $\{k_j\}_{j=0}^N$ be a sequence in $\mathbb Z^n$ to be determined.
For any $N\in\mathbb N$ and $Q:=Q_{j,k}\in \mathcal D$, define
\begin{align*}
\vec s_Q^{(N)} :=
\begin{cases}
b^{-j(\alpha+\frac12-\frac{1+\gamma}p)} [1+\rho(k)]^{-\frac\gamma p} \vec e
&\text{if } 0\le j\le N \text{ and } k=k_j,\\
\mathbf 0 &\text{otherwise}.
\end{cases}
\end{align*}
Then, for any $j\in\{0,\ldots,N\}$,
$\mathcal N_j(\vec s^{(N)})=1$.
Consequently,
\begin{align}\label{7.13}
\left\|\vec s^{(N)}\right\|_{\dot b_{p,q}^{\alpha}(\mathbb A)}
= (N+1)^{\frac1q}.
\end{align}
For any $j\in\{0,\ldots,N-1\}$,
\begin{align*}
\left|\left(B^{D,E,F}\vec s^{(N)}\right)_{Q_{j,k_j}}\right|
> \sum_{i=j+1}^N b^{(j-i)F}
[1+ b^j\rho(A^{-j}k_j-A^{-i}k_i)]^{-D}
b^{-i(\alpha+\frac12-\frac{1+\gamma}p)}
[1+\rho(k_i)]^{-\frac\gamma p},
\end{align*}
and hence
\begin{align} \label{7.13(2)}
\mathcal N_j\left(B^{D,E,F}\vec s^{(N)}\right)
> \sum_{i=j+1}^N b^{(j-i)
(F+\alpha+\frac12-\frac{1+\gamma}p)}
\left[ \frac{1+\rho(k_j)}{1+\rho(k_i)} \right]^{\frac{\gamma}{p}}
[1+ \rho(k_j-A^{j-i}k_i)]^{-D}.
\end{align}

Take $k_j:=\mathbf 0$ for all $j\in\{0,\ldots,N\}$.
If $F+\alpha+\frac12-\frac{1+\gamma}{p}\leq 0$,
then \eqref{7.13(2)} implies that
$$
\left\|B^{D,E,F}\vec s^{(N)}\right\|_{\dot b_{p,q}^{\alpha}(\mathbb A)}
> \left\|\{N-j\}_{j=0}^{N-1}\right\|_{\ell^q}
\sim N^{1+\frac1q}.
$$
Combining this with \eqref{7.13}
and the boundedness of $B^{D,E,F}$, we obtain
$N^{1+\frac1q} \lesssim (N+1)^{\frac1q}$, which yields a contradiction.
Therefore,
$$
F>\frac{1+\gamma}{p}-\frac12-\alpha.
$$

Let $x_0\in\mathbb R^n\setminus B_\rho(\mathbf 0,\max\{2HC_0,1\})$.
For any $j\in\{0,\ldots,N\}$, \eqref{noempty} ensures that
we can choose $k_j\in Z_\rho (A^j x_0, C_0)$.
Then Lemma \ref{far} implies that
$$
\frac{1+\rho(k_j)}{1+\rho(k_i)}
\sim \frac{1+\rho(A^j x_0)}{1+\rho(A^i x_0)}
= \frac{1+b^j\rho(x_0)}{1+b^i\rho(x_0)}
\sim b^{j-i}.
$$
By the fact that $i\ge j$ and the definition of $k_j$, we obtain
\begin{align*}
\rho(k_j-A^{j-i}k_i)
&\leq H[\rho(k_j-A^jx_0)+ \rho(A^jx_0-A^{j-i}k_i)] \\
&= H[\rho(k_j-A^jx_0)+ b^{j-i}\rho(A^ix_0-k_i)]
< 2 HC_0.
\end{align*}
Substituting these estimates into \eqref{7.13(2)}, we obtain
\begin{align*}
\mathcal N_j\left(B^{D,E,F}\vec s^{(N)}\right)
\gtrsim \sum_{i=j+1}^N b^{(j-i)(F+\alpha+\frac12-\frac1p)}.
\end{align*}
If $F+\alpha+\frac12-\frac{1}{p}\leq 0$, then
$$
\left\|B^{D,E,F}\vec s^{(N)}\right\|_{\dot b_{p,q}^{\alpha}(\mathbb A)}
> \left\|\{N-j\}_{j=0}^{N-1}\right\|_{\ell^q}
\sim N^{1+\frac1q}.
$$
Combining this with \eqref{7.13}
and the boundedness of $B^{D,E,F}$, we obtain
$N^{1+\frac1q} \lesssim (N+1)^{\frac1q}$, which yields a contradiction.
Therefore,
$$
F>\frac{1}{p}-\frac12-\alpha.
$$
This completes the proof of Theorem \ref{ad Besov sharp}.
\end{proof}

\subsection{Comparison with Existing Results}\label{compare}

The boundedness of almost diagonal operators on
Besov sequence spaces has been extensively studied
(see \cite[Theorem 6.2]{bf6}, \cite[Theorem 4.12]{bf2},
\cite[Theorem 4.5]{yyzong}, and \cite[Theorem 4.2]{gjabownik05}).
In this subsection, we show that the results obtained in
Subsection \ref{BADO} not only coincide with the existing sharp results,
but also improve the remaining ones.

We first compare the results in Euclidean spaces.
In this case, we assume that $A:=2I_n$ and $\rho(x):=|x|^n$.
The boundedness of almost diagonal operators on Besov sequence spaces with matrix $\mathcal A_p$ weights
was first studied by Frazier and Roudenko (see \cite{fr04,ro03}).
Recently, their results were improved by Bu et al. \cite[Theorem 6.2]{bf6},
which can be equivalently stated as follows.

\begin{myenv}\label{ad FJ-YY3}
Let $p\in (0,\infty)$, $q\in(0, \infty]$, $\alpha\in\mathbb{R}$,
and $W\in \mathcal A_{p}(\mathbb R^n,\mathbb C^m,2I_n)$.
Let $B$ be $(D, E, F)$-almost diagonal with parameters
\begin{equation} \label{sharp con1}
D>\frac{1}{1\wedge p},
\quad E>\frac{1}{2} + \alpha,
\quad \text{and}\quad
F>\frac{1}{1\wedge p}-\frac{1}{2}-\alpha.
\end{equation}
Then $B$ is bounded on $\dot b^{\alpha}_{p,q}(2I_n,W)$.
\end{myenv}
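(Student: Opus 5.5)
The plan is to obtain Theorem \ref{ad FJ-YY3} as a special case of Theorem \ref{ad FJ-YY}, exactly as Corollary \ref{ad FJ-YY2} was obtained. We take $A:=2I_n$ and $\rho(x):=|x|^n$. This $\rho$ is equivalent to the step homogeneous quasi-norm associated with $2I_n$, and $b=2^n$. With these choices the sequence space $\dot b^\alpha_{p,q}(2I_n,W)$ and the almost diagonal classes of Definition \ref{almdia} coincide with the Euclidean objects. The factor $\frac{\rho(x_Q-x_R)}{|Q|\vee|R|}$ becomes $\bigl(\frac{|x_Q-x_R|}{\ell(Q)\vee\ell(R)}\bigr)^n$, so the exponent $D$ here corresponds to the Euclidean $nD$ in the normalization of \cite[Definition 4.1]{bf6}. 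This matches \eqref{sharp con1}.

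Given this identification, it suffices to check that condition \eqref{ad FJ} reduces to \eqref{sharp con1}, which amounts to showing $\mathrm{AT}(p,W)=0$ whenever $W\in\mathcal A_p$. First, Proposition \ref{subset}(ii) gives $\mathcal A_p\subset\mathcal A_{p,\infty}$, so Theorem \ref{ad FJ-YY} applies. Second, we use Proposition \ref{prop of sp(W)}:
\begin{itemize}
\item if $p\in(0,1]$, part (ii) gives $v_{W,p}=p=1\wedge p$;
\item if $p\in(1,\infty)$, part (iii) gives $v_{W,p}>1=1\wedge p$.
\end{itemize}
In both cases $(\frac1{v_{W,p}}-\frac1{1\wedge p})_+=0$. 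Since $d^{\mathrm{upper}}_{p,\infty}(W)\ge0$, the minimum defining $\mathrm{AT}(p,W)$ is $0$. Hence $J=\frac1{1\wedge p}$, and Theorem \ref{ad FJ-YY} yields boundedness of $B$ on $\dot b^\alpha_{p,q}(2I_n,W)$ under \eqref{sharp con1}.

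The only delicate point is the bookkeeping between the Euclidean normalization and the anisotropic one. We need to confirm that the $D$ appearing in \cite[Theorem 6.2]{bf6}, as restated, is measured against $\rho=|\cdot|^n$ rather than $|\cdot|$. We also need to confirm that the reducing-operator and $\mathcal A_p$ definitions over anisotropic balls agree with the cube-based ones up to constants, since balls and cubes are comparable here. We will settle both by citing Lemma \ref{yl1201}.
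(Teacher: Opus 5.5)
Your proposal is correct and follows the route the paper itself indicates. The statement is quoted from \cite[Theorem 6.2]{bf6}, and Remark \ref{sharp1} notes that it is recovered by Corollary \ref{ad FJ-YY2}; the proof of that corollary is exactly your reduction $\mathrm{AT}(p,W)=0$ via Proposition \ref{prop of sp(W)}(ii)--(iii), with $\mathcal A_p\subset\mathcal A_{p,\infty}$ from Proposition \ref{subset}(ii), fed into Theorem \ref{ad FJ-YY} with $A=2I_n$.

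The normalization comparison with \cite{bf6} is not needed, because the statement is already written in the paper's anisotropic normalization. If you do carry it out, $E$, $F$ and $\alpha$ are also rescaled by the factor $n$, not only $D$, since $|Q|=\ell(Q)^n$ and $b^{j\alpha}=2^{jn\alpha}$.
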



\begin{remark} \label{sharp1}
In the case $A:=2I_n$ and $\rho(x):=|x|^n$,
Corollary \ref{ad FJ-YY2} coincides with Theorem \ref{ad FJ-YY3}.
Moreover,
the condition \eqref{sharp con1} in Theorem \ref{ad FJ-YY3}
was shown to be sharp (see \cite[Theorem 7.1]{bf6}).
\end{remark}

Later, Bu et al. \cite[Theorem 4.12]{bf2} established the boundedness of
almost diagonal operators for a more general class of matrix weights.
As in Theorem \ref{ad FJ-YY3},
their result can be equivalently stated as follows; we omit the details.

\begin{myenv}\label{bualmost2}
Let $p\in (0,\infty)$, $q\in(0, \infty]$, $\alpha\in\mathbb{R}$,
and $W\in \mathcal A_{p,\infty}(\mathbb R^n, \mathbb C^m,2I_n)$.
Let $B$ be $(D, E, F)$-almost diagonal with parameters
\begin{equation}\label{sharp con2}
\begin{split}
&D>\frac{1}{1\wedge p}+\frac{d_{p,\infty}^{\mathrm{upper}}(W)}{p},
\quad E>\frac{1}{2} + \alpha, \\
&\text{and}\quad
F>\frac{1}{1\wedge p}-\frac{1}{2}-\alpha+\frac{d_{p,\infty}^{\mathrm{upper}}(W)}{p},
\end{split}
\end{equation}
Then $B$ is bounded on $\dot b^{\alpha}_{p,q}(2I_n,W)$.
\end{myenv}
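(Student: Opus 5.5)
The plan is to obtain Theorem \ref{bualmost2} as a direct specialization of Theorem \ref{ad FJ-YY}. No new analysis is needed, only a parameter comparison and a check that the Euclidean setting is a special case of the anisotropic one.

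\emph{Step 1: the Euclidean setting fits the framework.} First I check that $A:=2I_n$ is an expansive matrix with $b=|\det A|=2^n$. The dyadic families $\mathcal D_j=\{2^{-j}([0,1)^n+k)\}$ are then the standard dyadic cubes, with $|Q|=2^{-jn}$.

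The function $\rho(x)=|x|^n$ satisfies $\rho(Ax)=b\rho(x)$ and the quasi-triangle inequality, so it is a homogeneous quasi-norm associated with $A$. By \cite[Lemma 2.4]{Bownik}, it is equivalent to the step quasi-norm used throughout the paper. Consequently:
\begin{itemize}
\item anisotropic balls and Euclidean balls are mutually comparable, with comparable measures;
\item the class $\mathcal A_{p,\infty}(\mathbb R^n,\mathbb C^m,2I_n)$ and the quantity $d^{\mathrm{upper}}_{p,\infty}(W)$ do not depend on which of the two quasi-norms is used;
\item the kernels $b^{D,E,F}_{Q,R}$ in \eqref{BDEF} change only by multiplicative constants, so the notion of $(D,E,F)$-almost diagonality is unchanged.
\end{itemize}
For the upper dimension, this follows from Lemma \ref{2sim}, since replacing $B$ by a comparable ball changes the defining expression only by constants.

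\emph{Step 2: compare the exponents.} Let $J$ be as in \eqref{J}. Since a minimum is bounded by each of its arguments,
$$
J=\frac1{1\wedge p}+\min\left\{\frac{d^{\mathrm{upper}}_{p,\infty}(W)}{p},\left(\frac1{v_{W,p}}-\frac1{1\wedge p}\right)_+\right\}\le \frac1{1\wedge p}+\frac{d^{\mathrm{upper}}_{p,\infty}(W)}{p}.
$$
Here $v_{W,p}\in(0,p]$ is well defined by Proposition \ref{prop of sp(W)}(i), because $W\in\mathcal A_{p,\infty}$. Therefore \eqref{sharp con2} implies
$$
D>J,\qquad E>\tfrac12+\alpha,\qquad F>J-\tfrac12-\alpha,
$$
which is exactly \eqref{ad FJ}. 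Theorem \ref{ad FJ-YY} then shows that $B$ is well defined and bounded on $\dot b^\alpha_{p,q}(2I_n,W)$.

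The strict inequalities in \eqref{sharp con2} are stated with the infimum $d^{\mathrm{upper}}_{p,\infty}(W)$, which may not be attained. This causes no difficulty: inside the proof of Theorem \ref{ad FJ-YY} one picks $d_2\in[\![d^{\mathrm{upper}}_{p,\infty}(W),\infty)$ close enough to the infimum, which strict inequality allows.

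\emph{Main obstacle.} The only delicate point is bookkeeping in Step 1. The sequence space $\dot b^\alpha_{p,q}(2I_n,W)$ and the $(D,E,F)$-normalization here, which measure scale through $|Q|$, must match the classical Euclidean conventions of \cite{bf2}. Those conventions are phrased with side lengths $\ell(Q)=|Q|^{1/n}$ and factors $(1+|x_Q-x_R|/\max\{\ell(Q),\ell(R)\})$.

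I would check this by writing $|x_Q-x_R|^n/(\ell(Q)\vee\ell(R))^n=\rho(x_Q-x_R)/(|Q|\vee|R|)$. This shows that the Euclidean exponents and the paper's exponents differ only by the factor $n$, which is precisely the sense in which the theorem is ``equivalently stated''. Once this translation is in place, the remaining argument is the one-line comparison of Step 2.
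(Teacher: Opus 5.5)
Your argument is correct, but it is not how the paper obtains this statement. The paper does not prove it. It imports it from \cite[Theorem 4.12]{bf2}, a Euclidean result phrased with side lengths, and only asserts that it ``can be equivalently stated'' in the present normalization.

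The inequality $\mathrm{AT}(p,W)\le d^{\mathrm{upper}}_{p,\infty}(W)/p$ behind your Step 2 does appear in the paper, right after the statement and in Remark \ref{rem1}. There it serves to show that Theorem \ref{ad FJ-YY} recovers the cited result for $p\in(0,1]$ and strictly improves it for $p\in(1,\infty)$. You turn that comparison into a proof. This is legitimate and non-circular, since the proof of Theorem \ref{ad FJ-YY} (via Lemmas \ref{ad prelim} and \ref{bq0}) never uses \cite{bf2}. It gives a self-contained derivation valid for every expansive $A$. What the citation route keeps instead is the statement's role as an independent prior benchmark against which Theorem \ref{ad FJ-YY} is measured.

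The translation you call the main obstacle is not needed on your route. The statement is already written in the paper's normalization: $\dot b^\alpha_{p,q}(2I_n,W)$, Definition \ref{almdia} with $\rho(x)=|x|^n$, and Definition \ref{lower}. So your Step 1 is all the bookkeeping required. For the invariance of $\mathcal A_{p,\infty}$ under passing to the equivalent step quasi-norm, use the $\log_+$ characterization in Lemma \ref{Apcha}. The defining expression contains a logarithm of either sign, so it does not compare directly under change of balls.

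That translation is what the citation route needs, and there it is more than a factor $n$ on $D,E,F$. The smoothness index and the upper dimension rescale too. This is because, for $\rho(x)=|x|^n$, the ball $\lambda B$ is the Euclidean dilate of $B$ by $\lambda^{1/n}$.
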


The condition in Theorem \ref{ad FJ-YY} is
$$
D>\frac{1}{1\wedge p}+\mathrm{AT}(p,W),\quad
E>\frac{1}{2}+\alpha,
\quad\text{and}\quad
F>\frac{1}{1\wedge p}-\frac{1}{2}-\alpha+\mathrm{AT}(p,W),
$$
where $$\mathrm{AT}(p,W):=\min\left\{\frac{d_{p,\infty}^{\mathrm{upper}}(W)}{p},
\left(\frac1{v_{W,p}}-\frac1{1\wedge p}\right)_+\right\}
\leq\frac{d_{p,\infty}^{\mathrm{upper}}(W)}{p}.$$
To perform a finer comparison between Theorems \ref{ad FJ-YY} and \ref{bualmost2},
we need the following lemma.

\begin{lemma}\label{comparison}
If $W\in \mathcal A_{p,\infty}$,
then $d_{p,\infty}^{\mathrm{upper}}(W)\leq \frac{p}{v_{W,p}}-1$.
\end{lemma}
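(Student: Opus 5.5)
The plan is to show that, for every $v\in(0,v_{W,p})$, the weight $W$ has $\mathcal A_{p,\infty}$-upper dimension $d:=\frac pv-1$. Once this is done, Definition \ref{lower} gives $d_{p,\infty}^{\mathrm{upper}}(W)\le \frac pv-1$ for all such $v$. Letting $v\to v_{W,p}^-$ then yields the lemma; this limit makes sense because $v_{W,p}\in(0,p]$ by Proposition \ref{prop of sp(W)}(i).

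First, fix $v\in(0,v_{W,p})$. Since the set in \eqref{spW} is an interval, $\|\mathcal M_{W,p}^{(v)}\|_{L^p(\mathbb R^n,M_m(\mathbb C))\to L^p}<\infty$. By Theorem \ref{bounded B2} ((i)$\Rightarrow$(v)), applied with $\mathcal X=(\mathbb R^n,\rho,dx)$, this gives $W\in\mathcal A_{p,u}$ with $u:=\frac{pv}{p-v}$. Note that $\frac pu=\frac pv-1$.

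Next, fix $B\in\mathcal B$ and $\lambda\ge1$. By Lemma \ref{2sim}, the quantity in the upper-dimension condition of Definition \ref{dim} is comparable to $\|A_{\lambda B}A_B^{-1}\|^p$. By Lemma \ref{exchange} this equals $\|A_B^{-1}A_{\lambda B}\|^p$. By Lemma \ref{logabm} with $M=A_{\lambda B}$, the latter is comparable to
\begin{align*}
\exp\left(\fint_B\log\left\|W^{-\frac1p}(y)A_{\lambda B}\right\|^p\,dy\right).
\end{align*}
Applying Jensen's inequality as in the proof of Lemma \ref{fuhe}, this is bounded by $[\fint_B\|W^{-\frac1p}(y)A_{\lambda B}\|^u\,dy]^{p/u}$. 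Enlarging the domain of integration from $B$ to $\lambda B$ costs the factor $(|\lambda B|/|B|)^{p/u}$. By \eqref{ballmea}, $|\lambda B|/|B|\lesssim\lambda$, so this factor is $\lesssim\lambda^{p/u}$. The remaining average $[\fint_{\lambda B}\|A_{\lambda B}W^{-\frac1p}\|^u]^{p/u}$ is bounded by $[W]_{\mathcal A^*_{p,u}}\sim[W]_{\mathcal A_{p,u}}$, using Lemmas \ref{exchange} and \ref{Apu}. Altogether the upper-dimension quantity is $\lesssim\lambda^{\frac pv-1}$ uniformly in $B$ and $\lambda$, which is exactly the claim.

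The only step that needs care is the first one: passing from boundedness of the rescaled maximal operator to the precise class $\mathcal A_{p,\frac{pv}{p-v}}$ with the exact exponent $u$. This is supplied verbatim by Theorem \ref{bounded B2}. The rest is the Jensen-plus-enlargement estimate, in which the anisotropic feature that ball measure is linear in the radius produces the exponent $\frac pu=\frac pv-1$ directly.
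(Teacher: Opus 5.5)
Your proposal is correct and follows essentially the same route as the paper: Theorem \ref{bounded B2} gives $W\in\mathcal A_{p,u}$ with $u=\frac{pv}{p-v}$, and then Jensen's inequality, enlarging $B$ to $\lambda B$, and Lemma \ref{Apu} show that $W$ has $\mathcal A_{p,\infty}$-upper dimension $\frac pu=\frac pv-1$. The only difference is cosmetic. The paper reaches $\exp(\fint_B\log\|A_{\lambda B}W^{-\frac1p}(y)\|^p\,dy)$ in one step by applying Lemma \ref{reduceM} to the inner average pointwise in $y$, whereas you detour through $\|A_{\lambda B}A_B^{-1}\|^p$ using Lemmas \ref{2sim}, \ref{exchange}, and \ref{logabm}; that detour is valid but unnecessary.
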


\begin{proof}
Let $v\in(0,p)$ satisfy
$\| \mathcal M_{W,p}^{(v)}\|_{L^p(\mathcal X,M_m(\mathbb C)) \to L^p(\mathcal X)} <\infty$.
It then follows from Theorem \ref{bounded B2} that
$W\in\mathcal A_{p,u}$, where $u:=\frac{pv}{p-v}$.
This, together with Lemma \ref{Apu}, further implies that
\begin{align*}
\sup_{B\in\mathcal B}
\fint_B \left\|A_B W^{-\frac1p}(y) \right\|^{u}\,dy<\infty.
\end{align*}
Using this, Lemma \ref{reduceM}, and Jensen's inequality, we obtain,
for any $\lambda \in[1, \infty)$ and any $B\in\mathcal B$,
\begin{align*}
&\exp \left(\fint_{B} \log \left(\fint_{\lambda B}\left\|W^{\frac{1}{p}}(x)
W^{-\frac{1}{p}}(y)\right\|^{p} \,dx\right) \,dy\right) \\
&\quad\sim \exp \left(\fint_{B} \log
\left\|A_{\lambda B} W^{-\frac{1}{p}}(y)\right\|^p \,dy\right)
= \left[\exp \left(\fint_{B} \log
\left\|A_{\lambda B} W^{-\frac{1}{p}}(y)\right\|^u \,dy\right) \right]^{\frac pu} \\
&\quad\leq \left[\fint_{B}
\left\|A_{\lambda B} W^{-\frac{1}{p}}(y)\right\|^u \,dy \right]^{\frac pu}
\lesssim \lambda^{\frac pu} \left[\fint_{\lambda B}
\left\|A_{\lambda B} W^{-\frac{1}{p}}(y)\right\|^u \,dy \right]^{\frac pu}
\lesssim \lambda^{\frac pu}.
\end{align*}
Therefore, $W$ has $\mathcal A_{p,\infty}$-upper dimension
$\frac pu=\frac pv-1$.
From the definitions of $d_{p,\infty}^{\mathrm{upper}}(W)$ and $v_{W,p}$,
we deduce that $d_{p,\infty}^{\mathrm{upper}}(W)\leq \frac{p}{v_{W,p}}-1$.
This completes the proof of Lemma \ref{comparison}.
\end{proof}

\begin{remark} \label{rem1}
If $p\in(0,1]$, then Theorem \ref{ad FJ-YY} coincides with Theorem \ref{bualmost2}.
Indeed, Lemma \ref{comparison} implies that
$$
\left( \frac1{v_{W,p}}-\frac1{1\wedge p}\right)_+
= \frac1{v_{W,p}}-\frac1{p}
\geq \frac{d_{p,\infty}^{\mathrm{upper}}(W)}{p},
$$
and hence $\mathrm{AT}(p,W)=\frac{d_{p,\infty}^{\mathrm{upper}}(W)}{p}$.
Thus, the conditions in Theorems \ref{ad FJ-YY} and \ref{bualmost2} coincide.
Moreover, for the boundedness of almost diagonal operators
with $\mathcal A_{p,\infty}$-matrix weights and $p\in(0,1]$,
the condition \eqref{sharp con2} in Theorem \ref{bualmost2}
was shown to be sharp (see \cite[Lemma 4.13]{bf2}).

If $p\in(1,\infty)$, then Theorem \ref{ad FJ-YY} improves Theorem \ref{bualmost2}.
Let $a\in(0,\infty)$ and $W(\cdot):=[\rho(\cdot)]^{a} I_m$.
From Lemmas \ref{scamtr} and \ref{rhoinf}, it follows that
$W\in \mathcal A_{p,\infty}(\mathbb R^n,\mathbb C^m,2I_n)$
satisfies $d_{p,\infty}^{\mathrm{upper}}(W)=a$.
By Lemma \ref{value}, we conclude that $v_{W,p}= \frac p{1+a}$.
These further imply that
$$
\left( \frac1{v_{W,p}}-\frac1{1\wedge p}\right)_+
= \left( \frac{1+a}p - 1 \right)_+
< \frac{a}p=\frac{d_{p,\infty}^{\mathrm{upper}}(W)}{p}.
$$
Thus, the condition in Theorem \ref{ad FJ-YY}
is strictly weaker than that in Theorem \ref{bualmost2}.

When $p\in(1,\infty)$, $(\frac1{v_{W,p}}-\frac1{1\wedge p})_+
< \frac{d_{p,\infty}^{\mathrm{upper}}(W)}{p}$
for all power weights;
however, for some special weights this strict inequality may be reversed.
For example, let $n\geq 2$, $a\in((p-1)\frac n{n-1},\infty)$, and, for any $x:=(x_1,\ldots,x_n)$,
define $W(x):=|x_1|^{na} I_m$.
From \cite[Lemma 4.30]{bf2} and Proposition \ref{prop of sp(w)}(iv),
we deduce that $W\in \mathcal A_{p,\infty}(\mathbb R^n,\mathbb C^m,2I_n)$
satisfies $v_{W,p}=\frac p{1+a}$ and
$d_{p,\infty}^{\mathrm{upper}}(W)=\frac an$. Thus, by $a>(p-1)\frac n{n-1}$, we have
\begin{align*}
\left( \frac1{v_{W,p}}-\frac1{1\wedge p}\right) _+
=  \frac{1+a}p -1
> \frac a{np}
= \frac{d_{p,\infty}^{\mathrm{upper}}(W)}{p}.
\end{align*}
\end{remark}

Very recently, Yang et al. \cite[Theorem 4.5]{yyzong} established the boundedness of
almost diagonal operators for variable matrix weights.
For the convenience of comparison, we recall their result in the constant exponent case.
Let $p\in(0,\infty)$, $W$ be a matrix weight, and
\begin{align*}
\gamma_W := \sup\left\{v \in (0,1]:\
\sup_{j\in\mathbb Z} \left\| \eta^{(v)}_{j,u,W} \right\|_{L^p(\mathbb R^n, \mathbb C^m)\to L^p(\mathbb R^n)} <\infty
\text{ for some } u\in (0,\infty) \right\},
\end{align*}
where, for any $x\in\mathbb R^n$,
\begin{align*}
\eta_{j,u,W}^{(v)}\left(\vec{f}\right)(x) :=
\left[\int_{\mathbb{R}^n} \frac{2^{jn}|W^{\frac1p}(x)W^{-\frac1p}(y)\vec{f}(y)|^{v}}{(1 + 2^j|x-y|)^{v u}}\,dy\right]^\frac{1}{v}.
\end{align*}
Although \cite[Theorem 4.5]{yyzong} established the boundedness of
almost diagonal operators on inhomogeneous Besov sequence spaces,
in the case $p(\cdot)\equiv p$, $q(\cdot)\equiv q$,
and $\alpha(\cdot)\equiv \alpha$,
we can readily adapt the argument used in \cite[Theorem 4.5]{yyzong}
to obtain the boundedness on homogeneous Besov sequence spaces.
As in Theorem \ref{ad FJ-YY3},
this result can be equivalently stated as follows;
we omit the details.

\begin{myenv}\label{zzzalmost}
Let $\alpha\in\mathbb{R}$, $p\in (0,\infty)$, $q\in(0, \infty]$,
and $W\in \mathcal A_{p,\infty}(\mathbb R^n, \mathbb C^m,2I_n)$.
Assume that $B$ is $(D, E, F)$-almost diagonal with parameters
$$
D>\frac{1}{\gamma_W},
\quad E>\frac{1}{2} + \alpha,
\quad \text{and}\quad
F>\frac{1}{\gamma_W}-\frac12-\alpha.
$$
Then $B$ is bounded on $\dot b^{\alpha}_{p,q}(2I_n,W)$.
\end{myenv}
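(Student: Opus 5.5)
The plan is to deduce Theorem \ref{zzzalmost} from Theorem \ref{ad FJ-YY} with $A:=2I_n$ and $\rho(x):=|x|^n$, so that $b=2^n$ and $B_\rho(x,b^{-j})$ is a Euclidean ball of radius $2^{-j}$. Since the conditions on $E$ coincide, it suffices to prove
\begin{align*}
J=\frac1{1\wedge p}+\mathrm{AT}(p,W)\le \frac1{\gamma_W}.
\end{align*}
Bounding $\mathrm{AT}(p,W)$ by its second argument gives $J\le \max\{\frac1{1\wedge p},\frac1{v_{W,p}}\}$. Since $v_{W,p}\le p$ by Proposition \ref{prop of sp(W)}(i), this equals $\max\{1,\frac1{v_{W,p}}\}$. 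Hence the whole theorem reduces to the inequality
\begin{align*}
\gamma_W\le \min\{1,v_{W,p}\}.
\end{align*}
The bound $\gamma_W\le 1$ holds by definition. So the real content is that every admissible $v$ in the definition of $\gamma_W$ satisfies $v\le v_{W,p}$.

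Fix $v\in(0,1]$ and $u\in(0,\infty)$ with $\sup_j\|\eta^{(v)}_{j,u,W}\|_{L^p(\mathbb R^n,\mathbb C^m)\to L^p}<\infty$. First compare with the restricted rescaled maximal operator. For a ball $B$ of radius $r\in[2^{-j-1},2^{-j})$ containing $x$, and any $y\in B$, we have $1+2^j|x-y|\le 3$. Hence
\begin{align*}
\fint_B\left|W^{\frac1p}(x)W^{-\frac1p}(y)\vec f(y)\right|^v\,dy\lesssim 3^{vu}\,\left[\eta^{(v)}_{j,u,W}\left(\vec f\right)(x)\right]^v,
\end{align*}
uniformly in $j$. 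This gives $\sup_{r}\|\mathcal M^{(v,r)}_{W,p}\|_{L^p(\mathbb R^n,\mathbb C^m)\to L^p}<\infty$, which is condition (iv) of Theorem \ref{bounded B2}.

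If $v<p$, then Theorem \ref{bounded B2} ((iv)$\Rightarrow$(i)) gives the boundedness of $\mathcal M^{(v)}_{W,p}$ on $L^p$, and so $v\le v_{W,p}$ by \eqref{spW}. If $v\ge p$, I would replace $v$ by any $v'<p$. H\"older's inequality on each fixed ball shows that the restricted operators $\mathcal M^{(v',r)}_{W,p}$ are dominated by $\mathcal M^{(v,r)}_{W,p}$, so they are still uniformly bounded. Theorem \ref{bounded B2} then yields $v'\le v_{W,p}$ for every $v'<p$, hence $v_{W,p}=p$, and the required inequality still holds since $\gamma_W\le 1$.

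Taking the supremum over admissible $v$ gives $\gamma_W\le\min\{1,v_{W,p}\}$. By the reduction above, $D>\frac1{\gamma_W}\ge J$ and $F>\frac1{\gamma_W}-\frac12-\alpha\ge J-\frac12-\alpha$. Theorem \ref{ad FJ-YY} (together with Lemma \ref{alpha0}, which is already built into it) then gives the boundedness.

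The step I expect to need the most care is the pointwise domination of the restricted maximal function by $\eta^{(v)}_{j,u,W}$. One must check that balls of every radius are covered by some scale $j$ with constants independent of $j$; the choice $2^{-j-1}\le r<2^{-j}$ handles this. One must also check that Theorem \ref{bounded B2} is applied with the vector-valued operator norm, that is, condition (iv) rather than (iii). Both points appear routine.
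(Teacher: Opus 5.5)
Most of your reduction is correct. The bound $J\le\max\{1,1/v_{W,p}\}$ holds, the domination $\mathcal M^{(v,r)}_{W,p}(\vec f)\lesssim\eta^{(v)}_{j,u,W}(\vec f)$ for Euclidean radius $r\in[2^{-j-1},2^{-j})$ holds, and for admissible $v<p$ Theorem \ref{bounded B2} gives $v\le v_{W,p}$. This is the route the paper uses in Lemma \ref{relation} and the remark after it. For the statement itself, the paper instead cites and adapts the direct argument of \cite[Theorem 4.5]{yyzong}.

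The gap is in your case $v\ge p$. There you only obtain $v_{W,p}=p$, so what you must show is $v\le\min\{1,p\}$. For $p\ge1$ this follows from $v\le1$. For $p<1$ you need $v\le p$, and "$\gamma_W\le 1$" does not give it: nothing in your argument excludes an admissible $v\in(p,1]$. This is not cosmetic. For $p<1$ one has $J\ge\frac1p$, so if $\gamma_W$ exceeded $p$ then $\frac1{\gamma_W}<J$, and the hypotheses of the statement would no longer imply those of Theorem \ref{ad FJ-YY}.

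The missing fact is $\gamma_W\le p$, i.e.\ for $v>p$ each single operator $\eta^{(v)}_{j,u,W}$ is unbounded on $L^p$. You can prove this by concentrating the test function.
\begin{itemize}
\item Fix a unit vector $\vec e$, let $w:=|W^{\frac1p}\vec e|^p$, and pick a Lebesgue point $y_0$ of $w$ with $w(y_0)\in(0,\infty)$. Put $\vec f_\epsilon:=\mathbf 1_{B(y_0,\epsilon)}W^{\frac1p}\vec e$ with $\epsilon<2^{-j}$.
\item Then $W^{\frac1p}(x)W^{-\frac1p}(y)\vec f_\epsilon(y)=\mathbf 1_{B(y_0,\epsilon)}(y)\,W^{\frac1p}(x)\vec e$. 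For $x\in B(y_0,2^{-j})$ this gives $[\eta^{(v)}_{j,u,W}(\vec f_\epsilon)(x)]^v\ge 3^{-vu}2^{jn}|B(y_0,\epsilon)|\,|W^{\frac1p}(x)\vec e|^v$.
\item Hence $\|\eta^{(v)}_{j,u,W}(\vec f_\epsilon)\|_{L^p}^p\gtrsim_j \epsilon^{np/v}\,w(B(y_0,2^{-j}))$, while $\|\vec f_\epsilon\|_{L^p}^p=\int_{B(y_0,\epsilon)}w\sim\epsilon^n$ as $\epsilon\to0$.
\item The ratio is therefore $\gtrsim\epsilon^{n(\frac pv-1)}$, which tends to $\infty$ since $v>p$.
\end{itemize}
With $\gamma_W\le 1\wedge p$ established, every $v<\gamma_W$ satisfies $v<p$, Theorem \ref{bounded B2} applies directly, and your case distinction is no longer needed. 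The paper's proof of Lemma \ref{relation} also applies Theorem \ref{bounded B2} to every $v\in(0,\gamma_W)$, so it tacitly relies on this same fact.
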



To compare Theorem \ref{zzzalmost} with the results obtained in
Subsection \ref{BADO},
further properties of $\gamma_W$ are required and stated as follows.

\begin{lemma} \label{sim}
Let $W$ be a matrix weight, $p\in(0,\infty)$, $j\in\mathbb Z$,
and $u,v\in(0,\infty)$ satisfy $uv>n$.
Assume that $\rho(\cdot):=|\cdot|^n$ and $A:=2I_n$.
For any $\vec{f} \in L^1_{\rm loc}(\mathbb R^n, \mathbb C^m)$ and $x\in\mathbb R^n$,
$$
\mathcal{M}_{W,p}^{(v,2^{-jn})} \left(\vec{f}\right) (x)
\lesssim \eta_{j,u,W}^{(v)}\left(\vec{f}\right)(x)
\lesssim \mathcal{M}_{W,p}^{(v)} \left(\vec{f}\right) (x),
$$
where the implicit positive constants depend only on $u$ and $v$.
\end{lemma}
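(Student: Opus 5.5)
The plan is to prove both inequalities directly. In this Euclidean setting, with $\rho(x)=|x|^n$ and $b=2^n$, the anisotropic ball $B_\rho(x,r)$ is the Euclidean ball of radius $r^{1/n}$ and measure comparable to $r$. Throughout we write $F(y):=|W^{\frac1p}(x)W^{-\frac1p}(y)\vec f(y)|^v$, so that $\eta_{j,u,W}^{(v)}(\vec f)(x)^v=\int 2^{jn}F(y)(1+2^j|x-y|)^{-vu}\,dy$.

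For the first inequality, $\mathcal M_{W,p}^{(v,2^{-jn})}(\vec f)(x)\lesssim\eta_{j,u,W}^{(v)}(\vec f)(x)$, fix any ball $B\ni x$ of radius $2^{-jn}$ in the $\rho$ sense, i.e. a Euclidean ball of radius $2^{-j}$ containing $x$. For $y\in B$ we have $|x-y|\le 2\cdot2^{-j}$, so $(1+2^j|x-y|)^{-vu}\ge 3^{-vu}$. Also $|B|\sim 2^{-jn}$. Hence
\[
\fint_B F(y)\,dy\lesssim 2^{jn}\int_B F(y)\,dy\le 3^{vu}\int_{\mathbb R^n}\frac{2^{jn}F(y)}{(1+2^j|x-y|)^{vu}}\,dy .
\]
Taking the $\frac1v$-th power and then the supremum over such $B$ gives the bound, with a constant depending only on $u,v,n$.

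For the second inequality, $\eta\lesssim\mathcal M_{W,p}^{(v)}$, decompose $\mathbb R^n$ into the ball $E_0:=\{|x-y|<2^{-j}\}$ and the annuli $E_k:=\{2^{k-1-j}\le|x-y|<2^{k-j}\}$ for $k\ge1$. On $E_k$ the weight satisfies $(1+2^j|x-y|)^{-vu}\lesssim 2^{-kvu}$. Since $E_k$ is contained in the Euclidean ball $B_k$ centred at $x$ of radius $2^{k-j}$, with $|B_k|\sim 2^{(k-j)n}$, we get
\[
\eta_{j,u,W}^{(v)}(\vec f)(x)^v\lesssim\sum_{k\ge0}2^{-kvu}\,2^{jn}\,2^{(k-j)n}\fint_{B_k}F(y)\,dy\le\sum_{k\ge0}2^{-k(vu-n)}\,\bigl[\mathcal M_{W,p}^{(v)}(\vec f)(x)\bigr]^v .
\]
The key point is that $x$ is fixed inside $W^{\frac1p}(x)$, and $B_k\ni x$, so each average is dominated by the same maximal function at $x$. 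The geometric series converges exactly because $uv>n$. Taking $\frac1v$-th powers finishes the proof.

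There is no real obstacle here. The only care needed is matching the $\rho$-radius convention (radius $r$ in $\rho$ corresponds to Euclidean radius $r^{1/n}$) and checking that the balls used are admissible in the definitions. The resulting constants depend on $n$ as well as on $u$ and $v$, which is harmless since $n$ is fixed.
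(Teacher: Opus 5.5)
Your proposal is correct and follows the paper's proof essentially verbatim. The first inequality uses that the factor $(1+2^j|x-y|)^{-vu}$ is bounded below by $3^{-vu}$ on any ball of $\rho$-radius $2^{-jn}$ containing $x$. The second uses the decomposition into the ball $B_\rho(x,2^{-jn})$ and the dyadic annuli around $x$, summed via the geometric series $\sum_{l}2^{l(n-vu)}$, which converges because $uv>n$. Your remark that the constants also depend on $n$ is accurate, and that dependence is equally implicit in the paper's argument.
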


\begin{proof}
By \eqref{B_k}, we find that, for any $x\in\mathbb R^n$ and any ball $B\in\mathcal B$ with $x\in B$ and $r_B=2^{-jn}$,
\begin{align*}
\fint_B \left|W^{\frac1p}(x)W^{-\frac1p}(y)\vec{f}(y)\right|^{v} \,dy
\sim\int_B \frac{2^{jn}|W^{\frac1p}(x)W^{-\frac1p}(y)\vec{f}(y)|^{v}}{(1 + 2^j |x-y|)^{v u}}\,dy
\leq \left[\eta_{j,u,W}^{(v)}\left(\vec{f}\right)(x)\right]^v,
\end{align*}
and hence $\mathcal{M}_{W,p}^{(v,2^{-jn})} (\vec{f}) (x)
\lesssim \eta_{j,u,W}^{(v)}(\vec{f})(x)$.

For any $x\in\mathbb R^n$,
\begin{align*}
\left[\eta_{j,u,W}^{(v)}\left(\vec{f}\right)(x)\right]^v
&=\int_{B_{\rho}(x,2^{-jn})} \frac{2^{jn}|W^{\frac1p}(x)W^{-\frac1p}(y)\vec{f}(y)|^{v}}{(1 + 2^j |x-y|)^{v u}}\,dy
+\sum_{l=1}^\infty \int_{B_{\rho}(x,2^{(-j+l)n})\setminus B_{\rho}(x,2^{(-j+l-1)n})} \cdots \\
&\lesssim \fint_{B_{\rho}(x,2^{-jn})} \left|W^{\frac1p}(x)W^{-\frac1p}(y)\vec{f}(y)\right|^{v} \,dy
+\sum_{l=1}^\infty 2^{l(n-vu)} \fint_{B_{\rho}(x,2^{(-j+l)n})} \cdots \\
&\leq \left[\mathcal{M}_{W,p}^{(v)} \left(\vec{f}\right) (x) \right]^v \sum_{l=0}^\infty 2^{l(n-vu)}
\sim \left[\mathcal{M}_{W,p}^{(v)} \left(\vec{f}\right) (x) \right]^v.
\end{align*}
This completes the proof of Lemma \ref{sim}.
\end{proof}

\begin{lemma} \label{relation}
If $p\in(0,\infty)$ and $W\in \mathcal A_{p,\infty}$,
then $\gamma_W= 1\wedge v_{W,p}$.
\end{lemma}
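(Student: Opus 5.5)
We prove $\gamma_W\ge 1\wedge v_{W,p}$ and $\gamma_W\le 1\wedge v_{W,p}$ separately, in the Euclidean setting $A=2I_n$, $\rho(\cdot)=|\cdot|^n$. The bridge in both directions is Lemma \ref{sim}, which sandwiches $\eta^{(v)}_{j,u,W}$ between the restricted operator $\mathcal M_{W,p}^{(v,2^{-jn})}$ and the full operator $\mathcal M_{W,p}^{(v)}$, provided $uv>n$. Since both $\gamma_W$ and $v_{W,p}$ are suprema of intervals (by H\"older's inequality), it suffices to compare the admissible $v$'s.

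For the lower bound, fix $v\in(0,1\wedge v_{W,p})$. By the definition \eqref{spW} of $v_{W,p}$, $\mathcal M_{W,p}^{(v)}$ is bounded from $L^p(\mathbb R^n,M_m(\mathbb C))$ to $L^p$. By (i)$\Leftrightarrow$(ii) of Theorem \ref{bounded B2}, it is then also bounded from $L^p(\mathbb R^n,\mathbb C^m)$ to $L^p$. Choose $u>n/v$. The right-hand inequality in Lemma \ref{sim} gives $\sup_j\|\eta^{(v)}_{j,u,W}\|_{L^p(\mathbb R^n,\mathbb C^m)\to L^p}<\infty$. Since $v\le1$, $v$ is admissible in the definition of $\gamma_W$, so $\gamma_W\ge v$. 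Letting $v\uparrow 1\wedge v_{W,p}$ yields $\gamma_W\ge 1\wedge v_{W,p}$. A degenerate endpoint arises if $v_{W,p}=p$ is itself attained, but this does not matter, because only the supremum is compared.

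For the upper bound, fix $v\in(0,\gamma_W)$ together with some $u$ such that $\sup_j\|\eta^{(v)}_{j,u,W}\|<\infty$. We may enlarge $u$ freely, because $\eta$ decreases as $u$ grows. We therefore assume $uv>n$. The left-hand inequality in Lemma \ref{sim} gives $\sup_j\|\mathcal M_{W,p}^{(v,2^{-jn})}\|_{L^p(\mathbb R^n,\mathbb C^m)\to L^p}<\infty$. This controls only the dyadic radii $2^{-jn}$, whereas (iv) of Theorem \ref{bounded B2} requires every radius $r\in(0,\infty)$. We handle this by doubling: any $\rho$-ball $B\ni x$ of radius $r$ lies in a ball $B'\ni x$ of radius $2^{-jn}\in[r,2^n r)$ with $|B'|\sim|B|$. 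Hence $\mathcal M_{W,p}^{(v,r)}\lesssim\mathcal M_{W,p}^{(v,2^{-jn})}$ pointwise, and (iv) holds.

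To invoke Theorem \ref{bounded B2}, we also need $v<p$. Here we use that $\gamma_W\le1$ and that $\gamma_W\le p$ must hold. The latter follows from Proposition \ref{bounded fail}, whose proof uses only balls of fixed small radius within the dyadic stopping and so also shows that the uniform restricted bound fails when $v\ge p$. Alternatively, we simply take $v<p\wedge\gamma_W$ and note separately that $\gamma_W\le p$. With $v<p$ secured, (iv)$\Rightarrow$(i) of Theorem \ref{bounded B2} shows that $\mathcal M_{W,p}^{(v)}$ is bounded on $L^p(\mathbb R^n,M_m(\mathbb C))$, so $v\le v_{W,p}$. Since also $v\le1$, letting $v\uparrow\gamma_W$ gives $\gamma_W\le 1\wedge v_{W,p}$.

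The main obstacle is the inequality $\gamma_W\le p$, which is needed when $p<1$. The first attempt is to adapt Proposition \ref{bounded fail} to the uniform restricted operators. The test function there is $W^{1/p}\mathbf 1_{Q}$ on nested cubes at many scales, and the divergence comes from summing the scales. With $\sup_j$ of fixed-radius operators, one instead obtains for each single scale an averaging inequality $\int \fint_{B(x,r)}\mathbf 1_E\, w\lesssim w(E)$ with $E$ a union of small cubes. This is not contradictory by itself. A fallback is then required: exploit that $v\ge p$ forces $\int_B\|W^{1/p}(x)W^{-1/p}(y)\|^v\,dy$ to be controlled pointwise, making $W^{-1/p}$ essentially bounded relative to $A_B$. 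Combined with Theorem \ref{bounded B2}(v) at $v$ slightly below $p$, this pushes us into the regime of Proposition \ref{prop of sp(W)}, where $v_{W,p}\le p$ is already known, so the conclusion $\gamma_W\le 1\wedge v_{W,p}$ still follows.
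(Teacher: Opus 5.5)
Your route is the paper's. The right inequality of Lemma~\ref{sim} with \eqref{spW} gives $\gamma_W\ge 1\wedge v_{W,p}$, and the left inequality with Theorem~\ref{bounded B2} gives the reverse. Your remarks on enlarging $u$ and on passing from the radii $2^{-jn}$ to all radii are correct details that the paper leaves implicit.

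You are also right that Theorem~\ref{bounded B2} needs $v<p$, so $\gamma_W\le p$ must be proved when $p<1$. The paper applies that theorem to $v\in(0,\gamma_W)$ without checking this, and the inequality is genuinely necessary, since $1\wedge v_{W,p}\le p$. Your argument for it, however, does not work.
\begin{itemize}
\item Proposition~\ref{bounded fail} does not use balls of a single radius. Its contradiction $\frac L2\int_Q w\lesssim\int_Q w$ comes from the $L$ nested cubes of different generations containing $x$, which a fixed-radius operator cannot see, as you later concede.
\item Your claim that the uniform restricted bound fails for all $v\ge p$ is false at $v=p$. For $W=I_m$ one has $[\mathcal M_{W,p}^{(p,r)}(\vec f)(x)]^p\lesssim\fint_{B_\rho(x,2Hr)}|\vec f|^p$, which is bounded on $L^p$ uniformly in $r$ by Tonelli's theorem.
\item Your fallback is a non sequitur. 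Knowing $v_{W,p}\le p$ (always true) or $W\in\mathcal A_p$ gives no information about $\gamma_W$.
\end{itemize}

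What is actually needed is only that no $v>p$ is admissible in the definition of $\gamma_W$. This follows from a Lebesgue-point argument at the single scale $j=0$, valid for any matrix weight and any $u$.

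Fix a unit vector $\vec e$ and let $w_{\vec e}:=|W^{1/p}\vec e|^p$; it is locally integrable and positive a.e. Choose a Lebesgue point $x_0$ of $w_{\vec e}$ with $w_{\vec e}(x_0)\in(0,\infty)$, and set $\vec f:=\mathbf 1_{B(x_0,\varepsilon)}W^{1/p}\vec e$ with $\varepsilon\in(0,1)$. Then $W^{1/p}(x)W^{-1/p}(y)\vec f(y)=\mathbf 1_{B(x_0,\varepsilon)}(y)\,W^{1/p}(x)\vec e$. Hence $\eta^{(v)}_{0,u,W}(\vec f)(x)\gtrsim\varepsilon^{n/v}|W^{1/p}(x)\vec e|$ for all $x\in B(x_0,1)$, and therefore
\[
\frac{\|\eta^{(v)}_{0,u,W}(\vec f)\|_{L^p}^p}{\|\vec f\|_{L^p(\mathbb R^n,\mathbb C^m)}^p}
\gtrsim\frac{\varepsilon^{np/v}\,w_{\vec e}(B(x_0,1))}{w_{\vec e}(B(x_0,\varepsilon))}
\sim\varepsilon^{n(\frac pv-1)}\to\infty
\]
as $\varepsilon\to0^+$ whenever $v>p$.

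Thus $\gamma_W\le 1\wedge p$, so every $v\in(0,\gamma_W)$ satisfies $v<p$ and your application of Theorem~\ref{bounded B2} goes through. With this in place of your last two paragraphs, the proof is complete.
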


\begin{proof}
By Lemma \ref{sim}, we conclude that, for any $v\in (0,1\wedge v_{W,p})$,
if $u\in (\frac nv,\infty)$, then
$$
\sup_{j\in\mathbb Z} \left\| \eta^{(v)}_{j,u,W} \right\|_{L^p(\mathbb R^n, \mathbb C^m)\to L^p(\mathbb R^n)}
\lesssim \left\| \mathcal{M}_{W,p}^{(v)} \right\|_{L^p(\mathbb R^n, \mathbb C^m)\to L^p(\mathbb R^n)} < \infty,
$$
and hence $\gamma_W\geq v$.
Since $v\in (0,1\wedge v_{W,p})$ is arbitrary,
we obtain $\gamma_W\geq 1\wedge v_{W,p}$.

From Lemma \ref{sim} again, we deduce that, for any $v\in (0,\gamma_W)$,
\begin{align*}
\sup_{r\in(0,\infty)} \left\| \mathcal{M}_{W,p}^{(v,r)} \right\|_{L^p(\mathbb R^n, \mathbb C^m)\to L^p(\mathbb R^n)}
&\sim \sup_{j\in\mathbb Z} \left\| \mathcal{M}_{W,p}^{(v,2^{-jn})} \right\|_{L^p(\mathbb R^n, \mathbb C^m)\to L^p(\mathbb R^n)} \\
&\lesssim \sup_{j\in\mathbb Z} \left\| \eta^{(v)}_{j,u,W} \right\|_{L^p(\mathbb R^n, \mathbb C^m)\to L^p(\mathbb R^n)}
<\infty,
\end{align*}
which, together with Theorem \ref{bounded B2}, further implies that
$\| \mathcal{M}_{W,p}^{(v)} \|_{L^p(\mathbb R^n, \mathbb C^m)\to L^p(\mathbb R^n)}<\infty$,
and hence $v\leq v_{W,p}$.
Since $v\in (0,\gamma_W)$ is arbitrary and $\gamma_W\in(0,1]$,
we obtain $\gamma_W\leq 1\wedge v_{W,p}$.
This completes the proof of Lemma \ref{relation}.
\end{proof}

\begin{remark}
Theorem \ref{ad FJ-YY} improves Theorem \ref{zzzalmost}.
Indeed, using Lemma \ref{relation}, the condition in Theorem \ref{zzzalmost} can be equivalently rewritten as
$$ D>\frac{1}{1\wedge v_{W,p}},
\quad E>\frac{1}{2} + \alpha,
\quad \text{and}\quad
F>\frac{1}{1\wedge v_{W,p}}-\frac12-\alpha.
$$
By comparison, the condition in Theorem \ref{ad FJ-YY} is
$$
D>\frac{1}{1\wedge p}+\mathrm{AT}(p,W),\quad
E>\frac{1}{2}+\alpha,
\quad\text{and}\quad
F>\frac{1}{1\wedge p}-\frac{1}{2}-\alpha+\mathrm{AT}(p,W),
$$
where $\mathrm{AT}(p,W):=\min\{\frac{d_{p,\infty}^{\mathrm{upper}}(W)}{p},
(\frac1{v_{W,p}}-\frac1{1\wedge p})_+\}$.
Note that
\begin{equation*}
\frac{1}{1\wedge p}+\mathrm{AT}(p,W)
\leq \frac{1}{1\wedge p}+\left( \frac1{1\wedge v_{W,p}}-\frac1{1\wedge p}\right)
=\frac1{1\wedge v_{W,p}}.
\end{equation*}
Moreover, the example in Remark \ref{rem1} shows that this inequality is strict for some $W$.
Thus, the condition in Theorem \ref{ad FJ-YY}
is strictly weaker than that in Theorem \ref{zzzalmost}.
\end{remark}

Finally, we present a comparison between the results obtained in
Subsection \ref{BADO} and those of Bownik \cite[Theorem 4.2]{gjabownik05}
on their common intersection of scalar-weighted anisotropic Besov spaces.
Now $A$ and $\rho$ are no longer required to take special values.
For any $w\in A_\infty$, let
\begin{align} \label{betaw}
\beta_w:=\sup_{B\in\mathcal B} \log_b \frac{w(bB)}{w(B)}.
\end{align}
Then, for any $B\in\mathcal B$,
\begin{align} \label{betaw2}
w(bB)\leq b^{\beta_w} w(B),
\end{align}
which, together with \eqref{B_k}, further implies that,
for any $k\in\mathbb Z$ and $x\in\mathbb R^n$,
$$
\fint_{x+B_{k+1}} w(y)\,dy
\leq b^{\beta_w-1} \fint_{x+B_k} w(y)\,dy.
$$
Combining this with Lebesgue's differentiation theorem
(see, for instance, \cite[Lemma 2.4]{acm15}),
we obtain $1\leq b^{\beta_w-1}$, and hence $\beta_w\in[1,\infty)$.
The following theorem is a special case of \cite[Theorem 4.2]{gjabownik05} with $d\mu(x)=w(x)dx$.

\begin{myenv}\label{bownikalmost}
Let $\alpha\in\mathbb{R}$, $p\in(0,\infty)$, $q\in(0,\infty]$, and $w\in A_{\infty}$.
Assume that $B$ is $(D, E, F)$-almost diagonal with parameters
$$
D> \widetilde{J},
\quad E>\frac{1}{2} + \alpha,
\quad \text{and}\quad
F> \widetilde{J}-\frac{1}{2}-\alpha,
$$
where
\begin{equation}\label{tildeJ}
\widetilde{J}:= \frac{\beta_w}{p}+\left(1-\frac1p\right)_+
\end{equation}
with $\beta_w$ as in \eqref{betaw}.
Then $B$ is bounded on $\dot b^{\alpha}_{p,q}(w)$.
\end{myenv}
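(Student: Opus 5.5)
The plan is to derive this statement from Theorem \ref{ad FJ-YY} applied with $m=1$, i.e.\ to the scalar weight $W:=w$, for which $\dot b^\alpha_{p,q}(W)=\dot b^\alpha_{p,q}(w)$. By Lemma \ref{scamtr}(i) we have $w\in\mathcal A_{p,\infty}$, so Theorem \ref{ad FJ-YY} applies. Since the hypotheses on $(D,E,F)$ are strict inequalities with the same shape, it suffices to prove
$$
J=\frac1{1\wedge p}+\mathrm{AT}(p,w)\le \widetilde J .
$$
Then $D>\widetilde J\ge J$ and $F>\widetilde J-\frac12-\alpha\ge J-\frac12-\alpha$, which is exactly \eqref{ad FJ}.

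The inequality $J\le\widetilde J$ reduces to a single claim:
$$
d_{p,\infty}^{\mathrm{upper}}(w)\le \beta_w-1 .
$$
Granting this, there are two cases.
\begin{itemize}
\item If $p\in(0,1]$, then $\mathrm{AT}(p,w)\le d_{p,\infty}^{\mathrm{upper}}(w)/p\le(\beta_w-1)/p$. Hence $J\le \frac1p+\frac{\beta_w-1}{p}=\frac{\beta_w}p=\widetilde J$.
\item If $p\in(1,\infty)$, we again bound the minimum in $\mathrm{AT}(p,w)$ by its first argument. This gives $J\le 1+\frac{\beta_w-1}{p}=\frac{\beta_w}{p}+1-\frac1p=\widetilde J$.
\end{itemize}
The $v_{w,p}$ term is never needed here; the improvement it provides is irrelevant to this comparison.

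To prove the claim, I would use Lemma \ref{scamtr}(ii) to work with the $A_\infty$-upper dimension of $w$. Fix $B\in\mathcal B$ and $\lambda\in[1,\infty)$. Because $w\in A_\infty$ (scalar case, $m=1$), we have
$$
\exp\Big(\fint_B\log w^{-1}\Big)\lesssim\Big(\fint_B w\Big)^{-1},
$$
so the quantity in the definition is bounded by $\fint_{\lambda B}w\big/\fint_B w$. Choose $k\in\mathbb Z_+$ with $b^k\le\lambda<b^{k+1}$. Since balls are $\rho$-balls, $\lambda B\subset b^{k+1}B$. Iterating \eqref{betaw2} $k+1$ times gives $w(\lambda B)\le b^{(k+1)\beta_w}w(B)$. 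Using \eqref{ballmea}, $|\lambda B|\gtrsim\lambda|B|$. Therefore
$$
\frac{\fint_{\lambda B}w}{\fint_B w}\lesssim\frac{b^{(k+1)\beta_w}}{\lambda}\lesssim \lambda^{\beta_w-1},
$$
where the last step uses $b^{k}\le\lambda$ and $\beta_w\ge1$. This shows $w$ has $A_\infty$-upper dimension $\beta_w-1$, and the claim follows.

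The only step needing care is this dimension estimate. The points to watch are the reverse Jensen bound, which holds with a constant depending on $[w]_{A_\infty}$, and the discretization of $\lambda$ into powers of $b$ so that \eqref{betaw2} can be iterated.
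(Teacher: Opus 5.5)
Your argument is correct. The paper, however, does not prove this statement itself: it quotes it as the special case $d\mu=w\,dx$ of Bownik's \cite[Theorem 4.2]{gjabownik05}, which is proved there for doubling measures independently of any matrix-weight machinery.

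Your reduction to Theorem \ref{ad FJ-YY} is, step for step, what the paper does in Lemma \ref{dle} and Remark \ref{comparebow}. Both prove $d_{\infty}^{\mathrm{upper}}(w)\le\beta_w-1$ by discretizing $\lambda$ into powers of $b$, iterating \eqref{betaw2}, and using \eqref{ballmea} and the $A_\infty$ condition. Both then bound the minimum in $\mathrm{AT}(p,w)$ by its first argument to get $J\le\widetilde J$. The paper uses this to show that Theorem \ref{ad FJ-YY} improves Theorem \ref{bownikalmost}. There is no circularity, since the proof of Theorem \ref{ad FJ-YY} never invokes Bownik's almost diagonal result.

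Your route gives a proof internal to the paper, and it in fact shows that the smaller threshold $J$ suffices. The cost is that it rests on the whole $\mathcal A_{p,\infty}$ apparatus (reducing operators, upper dimensions, the rescaled maximal operator in Lemma \ref{bq0}). It also only covers $A_\infty$ weights, whereas Bownik's theorem covers arbitrary doubling measures.

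You use $1\le\beta_w<\infty$ without comment. The bound $\beta_w\ge1$ is shown in the paper just before the statement, and it is what makes $\beta_w-1$ an admissible dimension. Finiteness follows from the doubling property of $A_\infty$ weights.
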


When $m=1$, the term $d_{p,\infty}^{\mathrm{upper}}(W)$ is independent of $p$
and is simply denoted by $d_{\infty}^{\mathrm{upper}}(W)$.

\begin{lemma}\label{dle}
If $w\in A_{\infty}$, then $d_{\infty}^{\mathrm {upper}}(w)\le\beta_w-1$,
where $\beta_w$ is as in \eqref{betaw}.
\end{lemma}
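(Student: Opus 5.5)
We must show $d_\infty^{\mathrm{upper}}(w)\le\beta_w-1$. The plan is to verify directly that $w$ has $A_\infty$-upper dimension $\beta_w-1$. Then the definition of $d_\infty^{\mathrm{upper}}(w)$ as an infimum gives the claim.

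Fix a ball $B\in\mathcal B$ and $\lambda\in[1,\infty)$. The first step is to remove the geometric mean. Since $w\in A_\infty$ (scalar case, $m=1$), Jensen's inequality together with the $A_\infty$ condition gives, exactly as in \eqref{expsim},
$$\exp\left(\fint_B\log\left([w(x)]^{-1}\right)\,dx\right)\sim\left[\fint_B w(x)\,dx\right]^{-1}.$$
Only the bound $\lesssim$ is needed, and it is precisely the definition of $[w]_{A_\infty}$. It therefore suffices to show
$$\frac{|B|}{|\lambda B|}\,\frac{w(\lambda B)}{w(B)}\lesssim\lambda^{\beta_w-1}.$$

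The second step uses the doubling estimate \eqref{betaw2}. Choose $k\in\mathbb Z_+$ with $b^{k-1}<\lambda\le b^k$. Then $\lambda B\subset b^kB$, and iterating \eqref{betaw2} $k$ times gives
$$w(\lambda B)\le w(b^kB)\le b^{k\beta_w}w(B)\lesssim\lambda^{\beta_w}w(B),$$
where the last inequality uses $b^k<b\lambda$ and the fact that $\beta_w$ is a fixed finite constant. For the volume factor, \eqref{ballmea} gives $|\lambda B|\ge\lambda r_B$ and $|B|\le b\,r_B$, so $|B|/|\lambda B|\le b\lambda^{-1}$. Multiplying the two bounds yields $\lambda^{\beta_w-1}$ up to constants independent of $B$ and $\lambda$, which is exactly the statement that $w$ has $A_\infty$-upper dimension $\beta_w-1$.

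The main point requiring care is that the radius scaling $\lambda B$ in the definition of upper dimension matches the dilation $bB$ used in \eqref{betaw}; both are radius dilations in the quasi-norm $\rho$, so $b^kB$ is the ball with radius $b^kr_B$. The constant $\beta_w$ is finite because $w\in A_\infty$ is doubling, and $\beta_w-1\ge0$ was shown just before \eqref{tildeJ}, so the upper-dimension parameter lies in $[0,\infty)$ as the definition requires.
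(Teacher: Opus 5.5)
Your proposal is correct and follows essentially the same route as the paper. You iterate the doubling bound \eqref{betaw2} to get $w(\lambda B)\lesssim\lambda^{\beta_w}w(B)$, use \eqref{ballmea} to replace $|B|/|\lambda B|$ by a multiple of $\lambda^{-1}$, and absorb the geometric mean via the $A_\infty$ condition; choosing $b^{k-1}<\lambda\le b^k$ instead of the paper's $b^k\le\lambda<b^{k+1}$ is purely cosmetic.
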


\begin{proof}
For any $\lambda\in[1,\infty)$, there exists $k\in\mathbb Z_+$ such that
$b^k\leq \lambda <b^{k+1}$.
From \eqref{betaw2}, it follows that, for any $B\in \mathcal B$,
\begin{align*}
w(\lambda B)
\leq w(b^{k+1} B)
\leq b^{\beta_w(k+1)} w(B)
\leq \lambda^{\beta_w} b^{\beta_w} w(B).
\end{align*}
Using this, \eqref{ballmea}, and $w\in A_\infty$, we conclude that
\begin{align*}
\fint_{\lambda B}w(x)\,dx
&\lesssim \lambda^{\beta_w} \frac{|B|}{|\lambda B|} \fint_{B}w(x)\,dx
\sim \lambda^{\beta_w-1} \fint_{B} w(x)\,dx\\
&\lesssim \lambda^{\beta_w-1}
\exp\left(\fint_{B}\log w(x)\,dx\right).
\end{align*}
Therefore,
\begin{align*}
\fint_{\lambda B}w(x)\,dx \exp\left(\fint_B
\log\left([w(x)]^{-1}\right)\,dx\right)
\lesssim \lambda^{\beta_w-1},
\end{align*}
and hence $w$ has $A_{\infty}$-upper dimension $\beta_w-1$.
This, together with the definition of $d_{\infty}^{\mathrm {upper}}(w)$,
then completes the proof of Lemma \ref{dle}.
\end{proof}

\begin{remark} \label{comparebow}
Theorem \ref{ad FJ-YY} improves Theorem \ref{bownikalmost}.
Indeed, from Lemma \ref{dle}, we infer that
\begin{align*}
J
\leq \frac{1}{1\wedge p}+\frac{d_{\infty}^{\mathrm{upper}}(w)}{p}
\leq \frac{1}{1\wedge p}+\frac{\beta_w-1}{p}
= \widetilde{J}.
\end{align*}
Moreover, the example in Remark \ref{rem1} shows that this inequality is strict for some scalar weights.
Thus, the condition in Theorem \ref{ad FJ-YY}
is strictly weaker than that in Theorem \ref{bownikalmost}.
\end{remark}

\section{Molecular Characterization}
\label{f z}

In this section, we establish the molecular characterization of
$\dot B^{\alpha}_{p,q}(W)$.
We begin by recalling the concept of smooth molecule.
The study of smooth molecules with the usual dyadic dilation
was initiated in \cite{FJ90,ck110301,ck110302},
and later extended to the anisotropic setting in \cite{gjabownik05,gjabownik06}.
Here we adopt the concept introduced in \cite[Definition 3.4]{bf5}.

\begin{definition}
Let $K, M\in [0,\infty)$ and $L,N\in\mathbb{R}$. For any $j\in\mathbb Z$ and
$Q\in\mathcal D_j$,
a function $m_{Q}$ is called a \emph{$(K,L,M,N)$-molecule on a cube $Q$} if,
for any $x\in\mathbb{R}^n$ and $\gamma\in\mathbb{Z}_+^{n}$,
\begin{align*}
|m_Q(x)|\leq b^{\frac{j}{2}}\left[1+b^j\rho(x-x_Q)\right]^{-K},
\end{align*}
\begin{align*}
\int_{\mathbb{R}^n}x^\gamma m_Q(x)\,dx=0\quad\mathrm{if}\quad|\gamma|\leq L,
\end{align*}
and
\begin{align}\label{last}
|\partial^\gamma m_Q(A^{-j}\cdot)(x)|
\leq b^{\frac{j}{2}}\left[1+\rho(x-A^jx_Q)\right]^{-M}
\quad\mathrm{if}\quad|\gamma|\leq N.
\end{align}
\end{definition}

\begin{remark}
Any condition on  $\gamma$ with a negative length is void.
Moreover, to avoid any ambiguity, \eqref{last}
requires $m_Q$ to have continuous partial derivatives of order $N$.
\end{remark}

Before establishing the molecular characterization of
$\dot B^{\alpha}_{p,q}(W)$, we need several technical lemmas.
The following lemma is precisely \cite[Lemma 3.2]{Bownik}.

\begin{lemma}\label{edbj}
Let $\lambda_{-}$ and $\lambda_{+}$ be two  constants such that
\begin{align*}
1<\lambda_{-}<\min\{|\lambda|:\ \lambda\in\sigma(A)\}
\leq\max\{|\lambda|:\ \lambda\in\sigma(A)\}<\lambda_{+}.
\end{align*}
Then there exists a positive constant $C$ such that,
for any $x\in\mathbb{R}^n$ with $\rho(x)>1$,
\begin{align*}
C^{-1}[\rho(x)]^{\zeta_-}
&\leq |x|\leq C[\rho(x)]^{\zeta_+}
\end{align*}
and, for any $x\in\mathbb{R}^n$ with $\rho(x)\leq1$,
\begin{align*}
C^{-1}[\rho(x)]^{\zeta_+}
&\leq |x|\leq C[\rho(x)]^{\zeta_-},
\end{align*}
where
$\zeta_-:=\frac{\log\lambda_-}{\log b}$ and
$\zeta_+:=\frac{\log\lambda_+}{\log b}$.
\end{lemma}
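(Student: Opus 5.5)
The plan is the standard comparison argument (this is \cite[Lemma 3.2]{Bownik}): write $x=A^k y$ with $y$ in a fixed annulus determined by $\Delta$, and control $|A^{\pm k}|$ by the spectral radius. I would proceed in three steps.

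\emph{Step 1 (operator norm bounds).} By Gelfand's spectral radius formula, $\lim_{k\to\infty}\|A^k\|^{1/k}=\max\{|\lambda|:\lambda\in\sigma(A)\}<\lambda_+$. Since $\sigma(A^{-1})=\{\lambda^{-1}:\lambda\in\sigma(A)\}$, also $\lim_{k\to\infty}\|A^{-k}\|^{1/k}=(\min\{|\lambda|:\lambda\in\sigma(A)\})^{-1}<\lambda_-^{-1}$. Hence there is a constant $C_A$ such that, for all $k\in\mathbb Z_+$,
$$
\|A^k\|\le C_A\lambda_+^k\quad\text{and}\quad\|A^{-k}\|\le C_A\lambda_-^{-k}.
$$

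\emph{Step 2 (reduction to a fixed annulus).} Let $\rho(x)=b^k$, i.e.\ $x\in B_{k+1}\setminus B_k$. Put $y:=A^{-k}x\in B_1\setminus B_0=A\Delta\setminus\Delta$. Since $\Delta=\{|Px|<1\}$ with $P$ invertible, $\Delta$ contains a Euclidean ball around $\mathbf 0$, and $A\Delta$ is bounded. Therefore there are constants $0<c_1\le c_2<\infty$ with $c_1\le|y|\le c_2$ for every such $y$. Note also that $b^{k\zeta_\pm}=\lambda_\pm^{k}$, by the definition of $\zeta_\pm$.

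\emph{Step 3 (the two regimes).} If $\rho(x)>1$, then $k\ge1$. From $x=A^ky$ we get $|x|\le C_A\lambda_+^k c_2=C[\rho(x)]^{\zeta_+}$. From $y=A^{-k}x$ we get $c_1\le|y|\le C_A\lambda_-^{-k}|x|$, so $|x|\ge C^{-1}\lambda_-^k=C^{-1}[\rho(x)]^{\zeta_-}$.

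If $0<\rho(x)\le1$, then $k\le0$; write $k=-k'$ with $k'\ge0$. Now $x=A^{-k'}y$ gives $|x|\le C_A\lambda_-^{-k'}c_2=C\lambda_-^{k}=C[\rho(x)]^{\zeta_-}$. Likewise $y=A^{k'}x$ gives $c_1\le C_A\lambda_+^{k'}|x|$, so $|x|\ge C^{-1}\lambda_+^{k}=C^{-1}[\rho(x)]^{\zeta_+}$.

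The case $x=\mathbf 0$ is trivial, since both sides vanish.

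The only nonroutine point is Step 1. The difficulty is that $A$ need not be diagonalizable, so the strict inequalities $\lambda_-<\min|\lambda|$ and $\lambda_+>\max|\lambda|$ are needed to absorb the polynomial growth coming from Jordan blocks into the constant $C_A$. The spectral radius formula handles exactly this; alternatively, a Jordan form estimate $\|A^k\|\lesssim k^{n}(\max|\lambda|)^k$ would give the same bound.
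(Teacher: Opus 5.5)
Your proof is correct: the spectral-radius bounds $\|A^{k}\|\le C_A\lambda_+^{k}$ and $\|A^{-k}\|\le C_A\lambda_-^{-k}$ for $k\in\mathbb Z_+$, together with writing $x=A^ky$ with $y\in B_1\setminus B_0$ (a set bounded away from $\mathbf 0$ and from $\infty$) and using $b^{k\zeta_\pm}=\lambda_\pm^k$, give all four inequalities. This is essentially the standard argument behind \cite[Lemma 3.2]{Bownik}; the paper simply quotes that lemma without reproducing a proof.
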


For any $k\in\mathbb N$, let $C^k$ be the set of all
$k$-times continuously differentiable functions on $\mathbb R^n$;
for any $k\in\mathbb Z\setminus\mathbb N$, let $C^k:=L^1$.
The following two results are precisely \cite[Lemmas 6.3 and 6.4]{gjabownik06}.

\begin{lemma}\label{fj B1}
Assume that $E\in\mathbb Z_+$, $D>1$, $F>1+E\zeta_+$,
$j,k\in\mathbb Z$ with $k\geq j$,
and $x_0 \in \mathbb{R}^n $.
Suppose that $g\in C^{E+1}$ and $h\in L^1$ satisfy,
for any $ x\in \mathbb{R}^n $,
\begin{align*}
\left| \partial^\gamma g\left(A^{-j}\cdot\right)(x) \right|
\leq b^{\frac{j}{2}}
\left[ 1+\rho(x) \right] ^{-D}\quad\text{if}\quad |\gamma|\leq E+1,
\end{align*}
\begin{align*}
|h(x)|\leq b^{\frac{k}{2}} \left[ 1+b^k\rho(x-x_0) \right]^{-(D\vee F)},
\end{align*}
and
\begin{align*}
\int_{\mathbb R^n}x^\gamma h(x) \, dx=0\quad\text{if}\quad|\gamma|\leq E.
\end{align*}
Then, for any $\theta\in(0,1]$ satisfying
$\left(E+\theta\right)\zeta_-+1<F$,
there exists a positive constant $ C $,
independent of $g$, $h$, $j$, $k$, and $x_0$,
such that, for any $ x \in \mathbb{R}^n $,
\begin{align*}
|g*h(x)| \leq C b^{-(k-j)[(E+\theta)\zeta_-+\frac{1}{2}]}
\left[ 1+b^j\rho(x-x_0) \right]^{-D}.
\end{align*}
\end{lemma}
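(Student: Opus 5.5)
The argument is the classical smooth-molecule estimate (Frazier--Jawerth, Bownik), adapted to the anisotropic quasi-norm. Rescale and put the cancellation of $h$ against a Taylor expansion of $g$. Since $k\ge j$, the function $h$ lives at the finer scale $b^{-k}$ and $g$ at the coarser scale $b^{-j}$. Write
\begin{align*}
g*h(x)=\int_{\mathbb R^n}\Big[g(x-y)-\sum_{|\gamma|\le E}\frac{\partial^\gamma g(x-x_0)}{\gamma!}(x_0-y)^\gamma\Big]h(y)\,dy,
\end{align*}
which is legitimate because $h$ annihilates all polynomials of degree at most $E$. It is convenient first to replace $g$ by $\widetilde g:=g(A^{-j}\cdot)$, so that the hypothesis becomes $|\partial^\gamma\widetilde g(z)|\le b^{j/2}[1+\rho(z)]^{-D}$ for $|\gamma|\le E+1$. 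The Euclidean increment $x_0-y$ is then transformed by $A^{j}$. Lemma \ref{edbj} converts $|A^j(y-x_0)|$ into powers of $b^j\rho(y-x_0)$: for small $b^j\rho(y-x_0)$ it gives the exponent $\zeta_-$, and otherwise the exponent $\zeta_+$.

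Split the $y$-integral according to the size of $\rho(y-x_0)$, using the set $\{\rho(y-x_0)<c\,b^{-j}(1+b^j\rho(x-x_0))\}$ and its complement.

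\textbf{Near region.} Here the Taylor remainder is controlled. Interpolate between the order-$E$ remainder (a difference of $E$-th derivatives) and the order-$(E+1)$ remainder, giving a factor $\min\{|A^j(y-x_0)|^{E},|A^j(y-x_0)|^{E+1}\}\le |A^j(y-x_0)|^{E+\theta}$ when this is at most $1$. The derivatives of $\widetilde g$ are evaluated at points $\rho$-comparable to $A^j(x-x_0)$; this uses the quasi-triangle inequality as in Lemma \ref{far}. This produces $[1+b^j\rho(x-x_0)]^{-D}$. Lemma \ref{edbj} gives $|A^j(y-x_0)|^{E+\theta}\lesssim [b^j\rho(y-x_0)]^{(E+\theta)\zeta_-}$ in the small case. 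In the large case, $|A^j(y-x_0)|^{E+\theta}\lesssim [b^j\rho(y-x_0)]^{(E+\theta)\zeta_+}$, which is absorbed by the decay of $h$ because $F>1+E\zeta_+$.

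Now integrate against $|h|\le b^{k/2}[1+b^k\rho(y-x_0)]^{-F}$ after substituting $y-x_0=A^{-k}z$. This gives
\begin{align*}
b^{j/2}b^{-k/2}\int [b^{j-k}\rho(z)]^{(E+\theta)\zeta_-}(1+\rho(z))^{-F}\,dz\lesssim b^{-(k-j)[(E+\theta)\zeta_-+\frac12]},
\end{align*}
using $F>(E+\theta)\zeta_-+1$ and Lemma \ref{yl111505}(i) together with the polar-type decomposition.

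\textbf{Far region.} Do not use the Taylor expansion. Bound $|g*h|$ separately by the $g$ term and the polynomial terms. The polynomial terms are integrated against the tail of $h$, which decays like $[b^k\rho(y-x_0)]^{-(D\vee F)}$. Since $b^k\rho(y-x_0)\gtrsim b^{k-j}(1+b^j\rho(x-x_0))$ on this region, the tail yields both $[1+b^j\rho(x-x_0)]^{-D}$ and a power $b^{-(k-j)(F-1)}$. That power is no larger than the target, because $F-1>(E+\theta)\zeta_-$. The $g(x-y)$ term is handled with Lemma \ref{intmn}, applied with exponents $D$ and $D\vee F$, again restricted to this region, so that the extra factor $b^{-(k-j)(F-1-\cdot)}$ is retained.

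The main obstacle is the non-Euclidean Taylor step. The quasi-norm $\rho$ is not homogeneous for ordinary Euclidean scaling, so the remainder must be expressed through $|A^j(y-x_0)|$ and then converted to $\rho$ with Lemma \ref{edbj}. Two things need care there: the choice between $\zeta_-$ and $\zeta_+$ in the two regimes, and checking that the intermediate points of the Taylor expansion stay $\rho$-comparable to $x-x_0$. The main-term exponent $(E+\theta)\zeta_-$ comes from the first of these.
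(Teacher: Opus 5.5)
Your overall route is the standard Frazier--Jawerth argument in anisotropic form: subtract the degree-$E$ Taylor polynomial of $g(A^{-j}\cdot)$ at $A^j(x-x_0)$ using the moments of $h$, split at $b^j\rho(y-x_0)\approx c\,[1+b^j\rho(x-x_0)]$, and pass from $|A^j(y-x_0)|$ to $\rho$ via Lemma~\ref{edbj}. The paper gives no proof of its own; it simply quotes \cite[Lemma 6.3]{gjabownik06}.

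However, one step fails as written. Where $1<b^j\rho(y-x_0)<c\,[1+b^j\rho(x-x_0)]$, you keep $|A^j(y-x_0)|^{E+\theta}\lesssim[b^j\rho(y-x_0)]^{(E+\theta)\zeta_+}$ and claim it is absorbed because $F>1+E\zeta_+$. After $y-x_0=A^{-k}z$, this needs $\rho(z)^{(E+\theta)\zeta_+}[1+\rho(z)]^{-(D\vee F)}$ to be integrable at infinity, i.e.\ $D\vee F>1+(E+\theta)\zeta_+$, which is not assumed. Since $\zeta_+>\zeta_-$, take $E=0$, $D=F$ close to $1$, and $\theta\in[(F-1)/\zeta_+,(F-1)/\zeta_-)$; this $\theta$ is admissible. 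On your (bounded) near region the integral then grows like a positive power, or a logarithm, of $1+b^j\rho(x-x_0)$, and the factor $[1+b^j\rho(x-x_0)]^{-D}$ is lost.

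The repair is already contained in your minimum $\min\{|w|^E,|w|^{E+1}\}$ with $w:=A^j(y-x_0)$. Use $|w|^{E+\theta}\lesssim[b^j\rho(y-x_0)]^{(E+\theta)\zeta_-}$ only where $b^j\rho(y-x_0)\le 1$. Where $b^j\rho(y-x_0)>1$, use the order-$E$ remainder, which is $\lesssim b^{j/2}[1+b^j\rho(x-x_0)]^{-D}[b^j\rho(y-x_0)]^{E\zeta_+}$. This is exactly where $F>1+E\zeta_+$ is used. It contributes $b^{-(k-j)(F-\frac12)}[1+b^j\rho(x-x_0)]^{-D}$, which is acceptable since $F-1>(E+\theta)\zeta_-$ and $k\ge j$.

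A second, smaller point concerns the $g(x-y)$ term in the far region. Lemma~\ref{intmn} as stated does not see the region and returns only $b^{-(k-j)/2}$. Moreover, when $F\le D$ you cannot split extra decay off $h$ while keeping an exponent $\ge D$ for it. Use the region directly instead: there $b^k\rho(y-x_0)\ge c\,b^{k-j}[1+b^j\rho(x-x_0)]$. Hence $\sup|h|\cdot\int|g(x-y)|\,dy\lesssim b^{k/2}\,(b^{k-j}[1+b^j\rho(x-x_0)])^{-(D\vee F)}\,b^{-j/2}$, where the $L^1$ bound on $g$ uses $D>1$. This is $\lesssim b^{-(k-j)((D\vee F)-\frac12)}[1+b^j\rho(x-x_0)]^{-D}$. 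With these two changes the argument is complete.
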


\begin{lemma}\label{fj B2}
Let  $D\in(1,\infty)$, $j,k\in\mathbb Z$ with $k\geq j$, and
$x_0 \in \mathbb{R}^n $. Suppose that $g,h\in L^1$ satisfy,
for any $ x \in \mathbb{R}^n $,
\begin{align*}
|g(x)|\leq b^{\frac{j}{2}} \left[1+b^j\rho(x) \right]^{-D}
\quad\text{and}\quad
|h(x)|\leq b^{\frac{k}{2}} \left[ 1+b^k\rho(x-x_0) \right]^{-D}.
\end{align*}
Then there exists a positive constant $ C $,
independent of $g$, $h$, $j$, $k$, and $x_0$,
such that, for any $ x \in \mathbb{R}^n $,
\begin{align*}
|g*h(x)| \leq C b^{-\frac{k-j}{2}} \left[1+b^j\rho(x-x_0) \right]^{-D}.
\end{align*}
\end{lemma}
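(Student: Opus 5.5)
The plan is to estimate the convolution integral pointwise, using only the size bounds on $g$ and $h$, and then to reduce everything to Lemma \ref{intmn}. No cancellation is needed, since the claimed decay factor $b^{-\frac{k-j}{2}}$ is just the mismatch of the $L^2$-normalizations $b^{\frac j2}$ and $b^{\frac k2}$.

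Fix $x\in\mathbb R^n$ and write $g*h(x)=\int_{\mathbb R^n} g(x-w)h(w)\,dw$. Inserting the hypotheses, and noting that $\rho(-z)=\rho(z)$ (because $\Delta=\{|Pz|<1\}$ is symmetric, so every $B_k$ is symmetric), we obtain
\begin{align*}
|g*h(x)|
&\le b^{\frac j2}b^{\frac k2}\int_{\mathbb R^n}
\left[1+b^j\rho(w-x)\right]^{-D}\left[1+b^k\rho(w-x_0)\right]^{-D}dw\\
&= b^{-\frac j2}b^{-\frac k2}\int_{\mathbb R^n}
\frac{b^j}{[1+b^j\rho(w-x)]^{D}}\,\frac{b^k}{[1+b^k\rho(w-x_0)]^{D}}\,dw.
\end{align*}

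Next, apply Lemma \ref{intmn} with $M=N=D\in(1,\infty)$, $y:=x$ and $z:=x_0$. Since $j\le k$, we have $j\wedge k=j$, so the integral is bounded by $Cb^{j}[1+b^j\rho(x-x_0)]^{-D}$. Multiplying by the prefactor gives
\[
|g*h(x)|\lesssim b^{-\frac j2-\frac k2+j}\left[1+b^j\rho(x-x_0)\right]^{-D}
= b^{-\frac{k-j}{2}}\left[1+b^j\rho(x-x_0)\right]^{-D}.
\]
The constant depends only on $D$ and $\rho$, hence is independent of $g$, $h$, $j$, $k$ and $x_0$.

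The only point requiring care is matching the variables to the form in Lemma \ref{intmn}. Its kernels are written as $\rho(x-y)$ with the integration variable in the first slot, while here $\rho(x-w)$ appears. The symmetry of $\rho$ handles this, so beyond that I expect no real obstacle. The substantive work — splitting into the regions $E_y$ and $E_z$ and using $D>1$ for integrability via Lemma \ref{yl111505}(i) — has already been done in Lemma \ref{intmn}.
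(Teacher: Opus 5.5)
Your argument is correct. It differs from the paper only in that the paper gives no proof of this lemma at all: it quotes the statement verbatim as \cite[Lemma 6.4]{gjabownik06}.

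You instead derive it in two lines from Lemma \ref{intmn}, which the paper proves earlier. After writing $b^{\frac{j+k}{2}}=b^{-\frac{j+k}{2}}\,b^{j}b^{k}$, the integral is exactly the left-hand side of \eqref{jkmn} with $M=N=D$, $y=x$, and $z=x_0$. Then $j\wedge k=j$ gives the claim. The one extra input, $\rho(-z)=\rho(z)$, you justify correctly from the symmetry of $\Delta$; the proof of Lemma \ref{intmn} itself relies on the same fact.

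What your route buys is self-containedness and transparency. It shows that Lemma \ref{fj B2} involves no cancellation. The factor $b^{-\frac{k-j}{2}}$ is purely the mismatch of the $L^2$-normalizations, and the decay at the coarser scale $b^{j}$ comes entirely from the two-kernel estimate. It also gives a few by-products:
\begin{itemize}
\item the convolution integral converges absolutely for every $x$, so the pointwise statement makes sense everywhere;
\item the hypothesis $g,h\in L^1$ is automatic from the size bounds and Lemma \ref{yl111505}(i);
\item the same argument handles different decay exponents for $g$ and $h$, with $D$ replaced by their minimum.
\end{itemize}
The citation keeps the paper shorter but leaves this step dependent on an external source.
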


For any $r\in\mathbb R$, let
$\lfloor r\rfloor:=\max\{k\in\mathbb Z:\ k\leq r\}$ and
$\lceil r\rceil:=\min\{k\in\mathbb Z:\ k\geq r\}$.

Motivated by \cite[Lemma 3.7]{bf5}, we have the following lemma.

\begin{lemma}\label{mole1}
Let  $\zeta_-$ and $\zeta_+$ be as in Lemma \ref{edbj}.
Suppose that $m_{Q}$ is a $(K_{m},L_{m},M_{m},N_{m})$-molecule on a cube $Q$
and  that $b_P$ is a $(K_b,L_b,M_b,N_b)$-molecule on a cube $P$,
where $K_m,M_m,K_b,M_b\in(1,\infty)$ and $L_m,N_m,L_b,N_b\in\mathbb R$.
Then, for any $\varepsilon\in(0,\infty)$, there exists a positive constant $C$,
independent of $Q$ and $P$, such that
\begin{equation} \label{aim}
|\langle m_{Q},b_{P}\rangle|\leq Cb_{Q,P}^{M,G,L},
\end{equation}
where $b_{Q,P}^{M,G,L}$ is as in \eqref{BDEF} with
$M:=K_m\wedge M_m\wedge K_b\wedge M_b\in(1,\infty)$,
$$
G := \frac{1}{2} + \zeta_- \left[\min\left\{N_b-1, L_{m},
\frac{K_{m}-1-\varepsilon}{\zeta_+}\right\}\right]_+,
$$
and
$$
L := \frac{1}{2} + \zeta_-\left[\min\left\{N_m-1, L_{b},
\frac{K_{b}-1-\varepsilon}{\zeta_+}\right\}\right]_+.
$$
\end{lemma}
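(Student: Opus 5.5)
By symmetry of the roles of $m_Q$ and $b_P$ (note $\langle m_Q,b_P\rangle=\overline{\langle b_P,m_Q\rangle}$ and $b^{M,G,L}_{Q,P}=b^{M,L,G}_{P,Q}$), we reduce to the case $|P|\le|Q|$, i.e.\ $j_P\ge j_Q$, and we must show
\[
|\langle m_Q,b_P\rangle|\lesssim \left(\frac{|P|}{|Q|}\right)^{L}\left[1+b^{j_Q}\rho(x_Q-x_P)\right]^{-M}.
\]
We write $\langle m_Q,b_P\rangle=\int m_Q(x)\overline{b_P(x)}\,dx=(g*h)(x_Q)$ up to conjugation and reflection, where $g(y):=\overline{m_Q(x_Q-y)}$ is centered at $\mathbf 0$ at scale $j:=j_Q$ and $h:=b_P(\cdot)$-type function at scale $k:=j_P\ge j$ centered at $x_P$. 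Then we apply Lemmas \ref{fj B1} and \ref{fj B2}, choosing the integer $E$ and $\theta\in(0,1]$ so that $(E+\theta)\zeta_-$ approximates $\zeta_-[\min\{N_m-1,L_b,(K_b-1-\varepsilon)/\zeta_+\}]_+$.

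\emph{Case A}: $\min\{N_m-1,L_b,(K_b-1-\varepsilon)/\zeta_+\}\le 0$, so $L=\frac12$. We use Lemma \ref{fj B2} with $D:=M$, using the size bounds $|m_Q|\le b^{j/2}[1+b^j\rho]^{-K_m}$ and $|b_P|\le b^{k/2}[1+b^k\rho(\cdot-x_P)]^{-K_b}$, both dominated by the exponent $M$. This gives $b^{-(k-j)/2}[1+b^j\rho(x_Q-x_P)]^{-M}$, which is exactly what we need.

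\emph{Case B}: the minimum $\mu$ is positive. We take $E:=\lceil\mu\rceil-1\in\mathbb Z_+$ and $\theta:=\mu-E\in(0,1]$, so that $E+\theta=\mu$.
\begin{itemize}
\item Since $E+1\le N_m$ when $\mu\le N_m-1$ (if $\mu$ is an integer then $E+1=\mu\le N_m-1$; otherwise $E+1=\lceil\mu\rceil\le\lceil N_m-1\rceil$), we need care. We prefer to use $E:=\lceil\mu\rceil-1$ so that $E+1\le N_m$ holds whenever $\mu\le N_m-1$, up to an arbitrarily small loss. Any such loss can be absorbed because the lemma's conclusion is stated with $\varepsilon$, and if necessary we shrink $\theta$ slightly.
\item Since $E\le L_b$, the vanishing moments of $b_P$ up to order $E$ hold.
\item We set $D:=M$ and $F:=K_b$. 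Then $F>1+E\zeta_+$ follows from $\mu\le (K_b-1-\varepsilon)/\zeta_+$, and $(E+\theta)\zeta_-+1<F$ follows because $\zeta_-\le\zeta_+$ and $\varepsilon>0$.
\item The derivative condition on $g(A^{-j}\cdot)$ with decay $[1+\rho]^{-M}$ follows from \eqref{last} after translating by $x_Q$, since $M\le M_m$.
\item The size condition on $h$ follows from $D\vee F\le K_b$, because $M\le K_b$.
\end{itemize}
Lemma \ref{fj B1} then gives
\[
b^{-(k-j)[\mu\zeta_-+\frac12]}=(|P|/|Q|)^{L}
\]
times the required decay factor.

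The main obstacle is the integer bookkeeping between $E+1\le N_m$ and the real parameter $N_m-1$. We handle it by choosing $E$ and $\theta$ as above and, where strictness fails, by decreasing $\theta$. This costs only an amount absorbed by the freedom in $\varepsilon$; formally we run the argument with $\varepsilon/2$.
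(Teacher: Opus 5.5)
Your proposal is correct and is essentially the paper's own proof. The steps match: the same reduction to $|P|\le|Q|$, the same convolution identity, Lemma~\ref{fj B2} when the minimum $\mu\le 0$, and Lemma~\ref{fj B1} with $E:=\lceil\mu\rceil-1$, $\theta:=\mu-E$, $D:=M$ and $F:=K_b$.

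The integer-bookkeeping ``obstacle'' does not exist. Since $\lceil\mu\rceil-1<\mu$, you get $E+1<\mu+1\le N_m$, $E<L_b$ and $1+E\zeta_+<K_b-\varepsilon$, all strictly and with no loss. So you should delete the fallback of shrinking $\theta$ and absorbing the loss into $\varepsilon$. That fallback would not work anyway: $\varepsilon$ enters only the $K_b$-term of $L$, and any decrease of $\theta$ pushes the exponent below $L$.
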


\begin{proof}
By symmetry, we only need to consider the case where
$|Q|=b^{-j}\geq b^{-k}=|P|$.
Let $g(\cdot):=m_Q(x_Q-\cdot)$ and $h(\cdot):=\overline{b_P(x_P+\cdot)}$. Then
\begin{align}\label{convo}
\langle m_{Q},b_{P}\rangle
&=\int_{\mathbb{R}^{n}}g(x_{Q}-y)h(y-x_{P})dy\notag\\
&=\int_{\mathbb{R}^{n}}g(x)h(x_{Q}-x_{P}-x)dx=(g*h)(x_{Q}-x_{P}).
\end{align}
Observe that $g$ and $h$ satisfy the assumptions of
Lemma \ref{fj B2} with $x_0=\mathbf0$ and $D=K_m\wedge K_b\in(1,\infty)$.
Let $\varepsilon\in(0,\infty)$.
By \eqref{convo} and Lemma \ref{fj B2}, we have
\begin{align*}
|\langle m_{Q},b_{P}\rangle|
&\lesssim b^{-\frac{k-j}{2}}\left[1+b^j\rho(x_Q-x_P) \right]^{-(K_m\wedge K_b)}\notag\\
&\leq\left(\frac{|P|}{|Q|}\right)^{\frac{1}{2}}
\left[1+\frac{\rho(x_{Q}-x_{P})}{|Q|}\right]
^{-(K_{m}\wedge M_{m}\wedge K_{b}\wedge M_{b})}.
\end{align*}
This completes the proof of \eqref{aim} in the case
where $N_m\le 1$ or $L_b\leq 0$ or $K_b\leq 1+\varepsilon$.

Next, we consider the case where $N_m>1$, $L_b>0$, and $K_b>1+\varepsilon$.
By the definitions of $g$ and $h$, we find that,
for any $\gamma\in\mathbb Z_+^n$ and $x\in\mathbb R^n$,
\begin{align*}
\left| \partial^\gamma g\left(A^{-j}\cdot\right)(x) \right|
\leq b^{\frac{j}{2}}
\left[ 1+\rho(x) \right]^{-(K_m\wedge M_m)}
\quad\text{if}\quad |\gamma|\leq N_m,
\end{align*}
\begin{align*}
|h(x)|\leq b^{\frac{k}{2}} \left[ 1+b^k\rho(x) \right]^{-K_b},
\end{align*}
and
\begin{align*}
\int_{\mathbb R^n}x^\gamma h(x) \, dx=0\quad\text{if}\quad|\gamma|\leq L_b.
\end{align*}
Thus, for any $D,E,F\in\mathbb R$ with
$D\in (1, K_m\wedge M_m\wedge K_b]$,
$E\in\mathbb Z_+\cap(-\infty, (N_m-1)\wedge L_b]$,
and $F\in(1+E\zeta_+, K_b]$,
both $g$ and $h$ satisfy all conditions of Lemma \ref{fj B1},
and this lemma implies that,
for any $\theta\in(0,1]$ with
$\left(E+\theta\right)\zeta_-+1<F$
and any $ x \in \mathbb{R}^n $,
\begin{align} \label{6.21}
|g*h(x)| \lesssim b^{-(k-j)[(E+\theta)\zeta_-+\frac{1}{2}]}
\left[ 1+b^j\rho(x) \right]^{-D}.
\end{align}
Let $D:=K_{m}\wedge M_{m}\wedge K_{b}\wedge M_{b}$,
$\varepsilon\in(0,\infty)$,
$$
s:=\min\left\{N_m-1, L_b,
\frac{K_b-1-\varepsilon}{\zeta_+}\right\}, \quad
E:=\lceil s\rceil -1,
\quad
\theta:= s - E,
$$
and $F:=K_b$. Then the interval $(1+E\zeta_+, K_b]$ is well defined,
and $F$ belongs to this interval.
Substituting $D$, $E$, and $\theta$ into \eqref{6.21},
and combining with \eqref{convo}, we obtain \eqref{aim}.
This completes the proof of Lemma \ref{mole1}.
\end{proof}

\begin{theorem}\label{molecule}
Let $\alpha\in \mathbb{R}$, $p\in ( 0, \infty )$, $q\in ( 0, \infty ]$, and
$W\in \mathcal A_{p,\infty}$.
Let $J$ be as in \eqref{J} and  $\zeta_-$ and $\zeta_+$  as in Lemma \ref{edbj}.
Suppose that
\begin{equation}\label{mm}
\begin{split}
&K_m> \max\left\{J, \frac{\alpha\zeta_+}{\zeta_-} + 1\right\},\quad
L_m>\frac{\alpha}{\zeta_-},\\
&M_m>J,\quad
\begin{cases}
N_m>\frac{J-1-\alpha}{\zeta_-}+1 &\text{if } \alpha\le J-1,\\
N_m=-1 &\text{if } \alpha> J-1
\end{cases}
\end{split}
\end{equation}
and
\begin{equation}\label{mb}
\begin{split}
&K_b>\max\left\{J,\frac{(J-1-\alpha)\zeta_+}{\zeta_-} +1\right\},\quad
L_b>\frac{J-1-\alpha}{\zeta_-},\\
&M_b>J,\quad
\begin{cases}
N_b=-1 &\text{if } \alpha<0,\\
N_b>\frac{\alpha}{\zeta_-}+1 &\text{if } \alpha\ge 0.
\end{cases}
\end{split}
\end{equation}
If $\{m_Q\}_{Q\in\mathcal{D}}$
is a family of $(K_{m}, L_{m}, M_{m}, N_{m})$-molecules
and $\left\{b_{P}\right\}_{P\in \mathcal{D}}$ is a family of
$( K_{b}, L_{b}, M_{b}, N_{b})$-molecules,
then the infinite matrix
$\{\langle m_Q, b_P\rangle\}_{Q, P\in \mathcal{D}}$
is $\dot b_{p, q}^{\alpha}(W)$-almost diagonal.
\end{theorem}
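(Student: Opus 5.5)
The plan is to apply Lemma \ref{mole1} with a small $\varepsilon$ and then check that the resulting exponents $(M,G,L)$ satisfy \eqref{ad FJ}. By Definition \ref{almodiag}, it suffices to show that $\{\langle m_Q,b_P\rangle\}$ is $(D,E,F)$-almost diagonal for some $D>J$, $E>\frac12+\alpha$, $F>J-\frac12-\alpha$. Lemma \ref{mole1} gives $|\langle m_Q,b_P\rangle|\lesssim b^{M,G,L}_{Q,P}$. So I take $D:=M=K_m\wedge M_m\wedge K_b\wedge M_b$, $E:=G$, $F:=L$. The hypotheses $K_m,M_m,K_b,M_b>J\ge1$ in \eqref{mm} and \eqref{mb} give $M\in(1,\infty)$, so the lemma applies, and they also give $D>J$ directly.

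Next I verify $E>\frac12+\alpha$, i.e. $\zeta_-[\min\{N_b-1,L_m,(K_m-1-\varepsilon)/\zeta_+\}]_+>\alpha$. If $\alpha<0$, this is automatic since the left side is nonnegative; this is why $N_b=-1$ is allowed there. If $\alpha\ge0$, it suffices that each entry of the minimum exceeds $\alpha/\zeta_-$:
\begin{itemize}
\item $N_b-1>\alpha/\zeta_-$ is the hypothesis on $N_b$;
\item $L_m>\alpha/\zeta_-$ is assumed;
\item $(K_m-1-\varepsilon)/\zeta_+>\alpha/\zeta_-$ follows from $K_m>\alpha\zeta_+/\zeta_-+1$ once $\varepsilon$ is chosen small, using the strict inequality.
\end{itemize}

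The condition $F>J-\frac12-\alpha$ is handled symmetrically: it reads $\zeta_-[\min\{N_m-1,L_b,(K_b-1-\varepsilon)/\zeta_+\}]_+>J-1-\alpha$. If $\alpha>J-1$, the right side is negative and the condition holds trivially, which is consistent with $N_m=-1$. If $\alpha\le J-1$, the three hypotheses $N_m-1>(J-1-\alpha)/\zeta_-$, $L_b>(J-1-\alpha)/\zeta_-$, and $K_b>(J-1-\alpha)\zeta_+/\zeta_-+1$ give it, again with $\varepsilon$ small.

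The only mild obstacle is bookkeeping. One must make sure a single $\varepsilon$ works for both $G$ and $L$, which is fine by taking the minimum of the two slacks. One must also make sure the positive part $[\cdot]_+$ and the degenerate cases (where some entry of the minimum is nonpositive, such as $N=-1$) are covered exactly by the case split on the sign of $\alpha$ and of $J-1-\alpha$. With these checks the matrix is $(D,E,F)$-almost diagonal with \eqref{ad FJ} satisfied, hence $\dot b^\alpha_{p,q}(W)$-almost diagonal.
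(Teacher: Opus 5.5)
Your proposal is correct and is essentially the paper's proof: the paper likewise applies Lemma \ref{mole1} with a sufficiently small $\varepsilon$ and checks that $M>J$, $G>\frac12+\alpha$, and $L>J-\frac12-\alpha$, splitting into the cases $\alpha<0$, $0\le\alpha\le J-1$, and $\alpha>J-1$. Your two separate splits, on the sign of $\alpha$ and on the sign of $J-1-\alpha$, amount to the same three cases because $J\ge1$.
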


\begin{proof}
By considering the ranges of $\alpha$, i.e., $\alpha<0$, $0\le \alpha\le J-1$, and $\alpha>J-1$,
we find that there exists $\varepsilon\in(0,\infty)$ such that
\begin{align*}
K_{b}\wedge M_{b}\wedge K_{m}\wedge M_{m}
> J,
\end{align*}
\begin{align*}
\zeta_- \left[\min\left\{N_b-1, L_{m},
\frac{K_{m}-1-\varepsilon}{\zeta_+}\right\}\right]_+
>\alpha,
\end{align*}
and
\begin{align*}
\zeta_-\left[\min\left\{N_m-1, L_{b},
\frac{K_{b}-1-\varepsilon}{\zeta_+}\right\}\right]_+
> J-1-\alpha.
\end{align*}
This, together with Lemma \ref{mole1}, further implies that
$\{\langle m_Q, b_P\rangle\}_{Q, P\in \mathcal{D}}$
is $\dot b_{p, q}^{\alpha}(W)$-almost diagonal,
which completes the proof of Theorem \ref{molecule}.
\end{proof}

\begin{definition}\label{moleculedef}
Let $\alpha\in\mathbb{R}, p\in(0,\infty), q\in(0,\infty]$,
and $W\in \mathcal A_{p,\infty}$. Then
\begin{enumerate}[(i)]
\item a $(K_m,L_m,M_m,N_m)$-molecule on a cube $Q$ is called
a \emph{$\dot B_{p,q}^{\alpha}(W)$-analysis molecule on $Q$}
if $K_m,L_m,M_m,N_m$ satisfy \eqref{mm};

\item a $(K_b,L_b,M_b,N_b)$-molecule on a cube $Q$
is called a \emph{$\dot B_{p,q}^{\alpha}(W)$-synthesis molecule on $Q$}
if $K_b,L_b,M_b,N_b$ satisfy \eqref{mb}.
\end{enumerate}

A family of functions $\{m_Q\}_{Q\in\mathcal{D}}$ is called
\emph {a family of $\dot B_{p,q}^{\alpha}(W)$-analysis molecules
(resp. synthesis molecules)} if
there exist $K,L,M,N\in\mathbb{R}$ satisfying \eqref{mm}
[resp. \eqref{mb}] such that, for any $Q\in\mathcal{D}$,
$m_Q$ is a $(K,L,M,N)$-molecule on the respective cube $Q$.
\end{definition}

\begin{corollary}\label{commolecule}
Let $\alpha\in \mathbb{R}$, $p\in(0,\infty)$, $q\in(0,\infty]$,
and $W\in$ $\mathcal A_{p,\infty }$. Assume that $\varphi, \psi\in \mathcal{S}$
satisfy \eqref{hs2}. Suppose that for both $i\in \{1, 2\}$,
$\{m_{Q}^{(i)}\}_{Q\in \mathcal{D}}$ are families of
$\dot B_{p,q}^{\alpha}(W)$-analysis molecules and
$\{b_Q^{(i) }\}_{Q\in \mathcal D}$ are families of
$\dot B_{p, q}^{\alpha}(W)$-synthesis molecules. Then
\begin{enumerate}[\rm(i)]
\item the matrices $\{\langle m_{P}^{(1)},
b_{Q}^{(1)}\rangle\}_{P,Q\in\mathcal{D}}$,
$\{\langle m_{P}^{(1)},\psi_{Q}\rangle\}_{P,Q\in\mathcal{D}}$,
and $\{\langle\varphi_{P},b_{Q}^{(1)}\rangle\}_{P,Q\in\mathcal{D}}$
are all $\dot b_{p,q}^{\alpha}(W)$-almost diagonal;

\item for any $\vec{s}:=\{\vec{s}_{R}\}_{R\in \mathcal{D}}\in
\dot b_{p,q}^{\alpha}(W)$ and $P\in \mathcal{D}$,
$$
\vec{t}_P:=\sum\limits_{Q,R\in\mathcal{D}}
\left\langle m_P^{(1)},b_Q^{(1)}\right\rangle\left\langle
m_Q^{(2)},b_R^{(2)}\right\rangle\vec{s}_R
$$
converges absolutely and there exists a positive constant $C$,
independent of $\vec{s}$, $\{m_Q^{(i)}\}_{Q\in\mathcal{D}}$, and $\{b_Q^{(i)}\}_{Q\in\mathcal{D}}$,
such that $\|\vec{t}\|_{\dot b_{p,q}^{\alpha}(W)}
\leq C\|\vec{s}\|_{\dot b_{p,q}^{\alpha}(W)}$.
\end{enumerate}
\end{corollary}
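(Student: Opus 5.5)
For (i), I will argue via the observation that $\varphi_Q$ and $\psi_Q$ are themselves molecules with arbitrarily good parameters; the rest is Theorem \ref{molecule}. Since $\varphi,\psi\in\mathcal S$ and $\widehat\varphi,\widehat\psi$ are supported in $[-\pi,\pi]^n\setminus\{\mathbf0\}$, both lie in $\mathcal S_\infty$ and have vanishing moments of all orders. The decay and smoothness bounds of $\varphi$ and $\psi$ with respect to $\rho$ hold for every order, by \eqref{S} and the comparison of $|\cdot|$ with $\rho$ in Lemma \ref{edbj}. It follows that, for any prescribed $K,L,M,N$, there is a constant $c$ such that $c\varphi_Q$ and $c\psi_Q$ are $(K,L,M,N)$-molecules on $Q$ for every $Q\in\mathcal D$.

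The derivative condition \eqref{last} needs the one computation in this step. We have $\varphi_Q(A^{-j}x)=b^{j/2}\varphi(x-k)$, so derivatives in $x$ fall directly on $\varphi$. The decay in $1+\rho(x-A^jx_Q)=1+\rho(x-k)$ is then immediate.

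Choosing $K,L,M,N$ large enough to satisfy both \eqref{mm} and \eqref{mb}, the family $\{c\varphi_P\}$ is a family of $\dot B^\alpha_{p,q}(W)$-analysis molecules and $\{c\psi_Q\}$ is a family of synthesis molecules. Theorem \ref{molecule} then gives all three matrix assertions in (i), since almost diagonality is unaffected by the constant $c$.

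For (ii), I set $B^{(i)}:=\{\langle m^{(i)}_P,b^{(i)}_Q\rangle\}_{P,Q}$. Each is $\dot b^\alpha_{p,q}(W)$-almost diagonal by Theorem \ref{molecule}. Replacing every entry by its absolute value preserves the almost diagonal bound. Theorem \ref{ad FJ-YY} then shows that $|B^{(2)}|\,|\vec s|$ is well defined, with $\||B^{(2)}|\vec s\|\lesssim\|\vec s\|$, where I apply the theorem coordinatewise or directly to the nonnegative sequence. Applying the same theorem to $|B^{(1)}|$ yields finiteness of $\sum_{Q,R}|b^{(1)}_{P,Q}||b^{(2)}_{Q,R}||\vec s_R|$, which is the absolute convergence of $\vec t_P$.

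The norm bound follows because $\vec t=B^{(1)}(B^{(2)}\vec s)$: two applications of Theorem \ref{ad FJ-YY} suffice. Alternatively, Proposition \ref{compose} gives that $B^{(1)}\circ B^{(2)}$ is $\dot b^\alpha_{p,q}(W)$-almost diagonal, and the theorem is applied once.

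The constant must be independent of the specific molecules. This holds because the almost diagonal constants in Lemma \ref{mole1} depend only on the molecular parameters, and the bound in Theorem \ref{ad FJ-YY} depends only on the almost diagonal constant.

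The main obstacle is making absolute convergence rigorous for the vector-valued $\vec s$ when only the weighted norm is controlled. Theorem \ref{ad FJ-YY} is stated for matrices acting on $\dot b^\alpha_{p,q}(W)$, and the pointwise majorization by $|\vec s_R|$ is not itself an element of a weighted space. To handle this, I would use the fact, from the proof of Lemma \ref{ad conv}, that the bound in Lemma \ref{ad prelim} actually controls $\sum_R|H_j b_{Q,R}\vec s_R|$. Lemma \ref{ad conv}, applied with $H_j=W^{1/p}$, then gives absolute convergence of the inner sums. Doing this for nonnegative entry matrices twice, with Tonelli's theorem to interchange the order of summation, yields the claim.
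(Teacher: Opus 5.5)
Part (i) of your proposal is essentially the paper's argument. The two-step norm bound via $\vec t=B^{(1)}(B^{(2)}\vec s)$ is also fine, but only once $\vec t_P$ is known to converge absolutely.

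\textbf{The gap is the absolute convergence.} Applying Theorem \ref{ad FJ-YY} to $|B^{(2)}|$ tells you that $u_Q:=\sum_R|b^{(2)}_{Q,R}||\vec s_R|$ is finite for each $Q$. It does not let you apply the theorem again with $|B^{(1)}|$, for two reasons:
\begin{enumerate}
\item $\{u_Q\}_Q$ is a scalar sequence, not an element of $\dot b^\alpha_{p,q}(W)$.
\item $u_Q$ is not controlled by $|(|B^{(2)}|\vec s)_Q|$, since the available inequality $|(|B^{(2)}|\vec s)_Q|\le u_Q$ goes the wrong way.
\end{enumerate}
Your repair, rerunning the proof of Lemma \ref{ad conv} a second time, does not close this with the tools in the paper. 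A second pass through Lemmas \ref{ad prelim} and \ref{bq0} would have to handle $\sum_R|W^{1/p}(x)b^{(2)}_{Q,R}\vec s_R|$, which is not of the form $|W^{1/p}(x)\vec v_Q|$ for a single vector sequence $\{\vec v_Q\}$. The estimates in Lemma \ref{bq0}, namely boundedness of $\mathcal M^{(v)}_{W,p}$ and the reducing-operator bounds, are proved only for single sequences. You would need a vector-valued version, which is not available.

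\textbf{The obstacle you identified does not actually exist.} In Theorem \ref{ad FJ-YY}, ``$B\vec s$ is well defined'' means that $\sum_R b_{Q,R}\vec s_R$ converges absolutely, i.e.\ $\sum_R|b_{Q,R}||\vec s_R|<\infty$. No weighted membership of $\{|\vec s_R|\}$ is required.

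So use the route you listed only as an alternative for the norm bound:
\begin{enumerate}
\item By Tonelli, $\sum_{Q,R}|b^{(1)}_{P,Q}||b^{(2)}_{Q,R}||\vec s_R|=\sum_R c_{P,R}|\vec s_R|$, where $c_{P,R}:=\sum_Q|b^{(1)}_{P,Q}||b^{(2)}_{Q,R}|$.
\item The entrywise absolute values of $B^{(1)}$ and $B^{(2)}$ are still $\dot b^\alpha_{p,q}(W)$-almost diagonal, so Proposition \ref{compose} shows that $\{c_{P,R}\}$ is $\dot b^\alpha_{p,q}(W)$-almost diagonal.
\item One application of Theorem \ref{ad FJ-YY} to $\{c_{P,R}\}$ gives $\sum_R c_{P,R}|\vec s_R|<\infty$, which is the absolute convergence of $\vec t_P$.
\item By Fubini, $\vec t_P=\sum_R\bigl(\sum_Q b^{(1)}_{P,Q}b^{(2)}_{Q,R}\bigr)\vec s_R$. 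This signed composition is dominated entrywise by $c$, so the same theorem yields $\|\vec t\|_{\dot b^\alpha_{p,q}(W)}\lesssim\|\vec s\|_{\dot b^\alpha_{p,q}(W)}$.
\end{enumerate}
This is exactly the paper's proof of (ii).
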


\begin{proof}
We first prove $\rm(i)$.
From Definition \ref{moleculedef}
and Theorem \ref{molecule}, we deduce that
$\{\langle m_{P}^{(1)},b_{Q}^{(1)}\rangle\}_{P,Q\in\mathcal{D}}$
is $\dot b_{p,q}^{\alpha}(W)$-almost diagonal.
Note that   $\varphi, \psi\in \mathcal{S}_\infty$.
Thus, for any $Q\in\mathcal D$, each of $\varphi_Q$ and $\psi_Q$
is a harmless constant multiple of both
a $\dot B_{p, q}^{\alpha}(W)$-analysis molecule
and a $\dot B_{p, q}^{\alpha}(W)$-synthesis molecule on $Q$.
By this and Theorem \ref{molecule}, we conclude that $\{\langle m_{P}^{(1)},
\psi_{Q}\rangle\}_{P,Q\in\mathcal{D}}$
and $\{\langle\varphi_{P},b_{Q}^{(1)}\rangle\}_{P,Q\in\mathcal{D}}$
are $\dot b_{p,q}^{\alpha}(W)$-almost diagonal.
This completes the proof of (i).

Next, we prove (ii). From the just proven (i) and Proposition \ref{compose},
we infer that
$$
B:=\{b_{P,R}\}_{P,R\in\mathcal D}
:=\sum_{Q\in\mathcal{D}}\left|\left\langle
m_P^{(1)},b_Q^{(1)}\right\rangle\right|\left|\left\langle
m_Q^{(2)},b_R^{(2)}\right\rangle\right|
$$
is also $\dot b_{p,q}^{\alpha}(W)$-almost diagonal.
Using this and Theorem \ref{ad FJ-YY}, we conclude that
$$\sum_{Q,R\in\mathcal{D}}\left|\left\langle
m_P^{(1)},b_Q^{(1)}\right\rangle\right|\left|\left\langle
m_Q^{(2)},b_R^{(2)}\right\rangle\right|\left|\vec{s}_R\right|
=\sum_{R\in\mathcal{D}}b_{P,R}\left|\vec{s}_R\right|<\infty$$
and $\|\vec{t}\|_{\dot b_{p,q}^{\alpha}(W)}
\lesssim \|\vec{s}\|_{\dot b_{p,q}^{\alpha}(W)}$.
This completes the proof of (ii) and hence Corollary \ref{commolecule}.
\end{proof}

Elements   in  $\dot{B}_{p,q}^{\alpha}(W)$ are distributions,
whereas the molecules $\Phi$ do not necessarily belong to the Schwartz class;
hence, it is necessary to ensure that the pairing
$\langle \vec{f}, \Phi \rangle$ is well-defined.

\begin{lemma}\label{heli}
Let $\alpha\in\mathbb{R}$,  $p\in(0,\infty)$,
$q\in(0,\infty]$,  and $W\in \mathcal A_{p,\infty}$.
Assume that $\vec{f} \in \dot{B}_{p, q}^{\alpha}(W)$ and
$\Phi$ is a $\dot{B}_{p,q}^{\alpha}(W)$-analysis molecule.
Then, for any $\varphi,\psi\in \mathcal{S}$ satisfying
\eqref{hs2} and \eqref{hs3}, the pairing
\begin{align}\label{welldefi}
\left\langle\vec{f},\Phi\right\rangle:=\sum_{R\in\mathcal{D}}
\left\langle\vec{f},\varphi_{R}\right\rangle\langle\psi_{R},\Phi\rangle
\end{align}
is well defined and its value is independent of the choices of $\varphi$ and $\psi$.
\end{lemma}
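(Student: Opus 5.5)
The plan has two parts: first show that the series in \eqref{welldefi} converges absolutely, then show that its value does not depend on $(\varphi,\psi)$.

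\textbf{Absolute convergence.} The natural route is to view $\{\langle\psi_R,\Phi\rangle\}_R$ as one row of an almost diagonal matrix and apply Theorem \ref{ad FJ-YY}. Suppose $\Phi$ is an analysis molecule on some cube $P_0\in\mathcal D$; if no cube is specified, $P_0$ can be taken to be a fixed cube. Set $m_{P_0}:=\Phi$ and $m_P:=0$ for $P\ne P_0$. Since $\psi\in\mathcal S_\infty$, the family $\{\psi_R\}$ consists of harmless multiples of synthesis molecules, as observed in the proof of Corollary \ref{commolecule}(i). Theorem \ref{molecule} then makes $B:=\{\langle m_P,\psi_R\rangle\}_{P,R}$ a $\dot b^\alpha_{p,q}(W)$-almost diagonal matrix.

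By Theorem \ref{dl1103}(i), $\vec s:=S_\varphi\vec f\in\dot b^\alpha_{p,q}(W)$, with $\vec s_R=\langle\vec f,\varphi_R\rangle$. Theorem \ref{ad FJ-YY} asserts that $B\vec s$ is well defined, i.e.\ its defining series converges absolutely. Its $P_0$-entry is exactly the series in \eqref{welldefi}, so that series converges absolutely. If one prefers not to read this off Theorem \ref{ad FJ-YY}, the same conclusion follows from Lemma \ref{ad conv} with $H_j=W^{1/p}$, using the finiteness of the right-hand side of \eqref{ABt} established in its proof.

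\textbf{Independence of $(\varphi,\psi)$.} Let $(\varphi',\psi')$ be another admissible pair. First I will show that, for $g\in\mathcal S_\infty$, the reproducing formula of Lemma \ref{repro}(ii) gives
\[
\Phi_{\psi}:=\sum_R\langle\Phi,\psi_R\rangle\varphi_R
\]
in an appropriate sense. The aim is to show that both pairings equal
\[
\sum_{R,R'}\langle\vec f,\varphi'_{R'}\rangle\langle\psi'_{R'},\varphi_R\rangle\langle\psi_R,\Phi\rangle.
\]
To do this, substitute $\langle\vec f,\varphi_R\rangle=\sum_{R'}\langle\vec f,\varphi'_{R'}\rangle\langle\psi'_{R'},\varphi_R\rangle$, which is Lemma \ref{repro}(ii) applied to $\vec f\in(\mathcal S_\infty')^m$ paired with $\varphi_R\in\mathcal S_\infty$. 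Then exchange the order of summation. The exchange is justified by absolute convergence, which Corollary \ref{commolecule}(ii) provides once we take $m^{(1)}=m$, $b^{(1)}=\psi$, $m^{(2)}=\varphi$, $b^{(2)}=\psi'$, and $\vec s=S_{\varphi'}\vec f$.

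After the exchange, the inner sum is $\sum_R\langle\psi'_{R'},\varphi_R\rangle\langle\psi_R,\Phi\rangle$. It should equal $\langle\psi'_{R'},\Phi\rangle$, by the $L^2$ version of \eqref{reproduce} applied to $\Phi$ (which lies in $L^2$ since $K_m>1$), paired against $\psi'_{R'}$. Here the roles of $\varphi$ and $\psi$ must be interchanged via conjugate symmetry of \eqref{hs3}. What remains is then exactly \eqref{welldefi} computed with $(\varphi',\psi')$.

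The main obstacle I expect is exactly this pair of interchanges: verifying that the double series is absolutely summable and that the $L^2$ reproducing identity for $\Phi$ may be applied termwise. Both reduce to Corollary \ref{commolecule}(ii) and Lemma \ref{repro}(ii), so I do not anticipate further difficulties.
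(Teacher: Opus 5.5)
Your proposal is correct and is essentially the paper's argument. The paper uses the same triple-product double series, with absolute convergence supplied by Corollary \ref{commolecule}(ii) (this also gives absolute convergence of \eqref{welldefi} itself), and the same Fubini exchange between two applications of Lemma \ref{repro}(ii); the only differences are that the paper reads the chain in the opposite direction and gets the molecule-side identity by expanding the Schwartz function $\psi^{(1)}_R$ in the second frame instead of expanding $\Phi$ with the roles of $\varphi,\psi$ swapped, and that your $P_0$-row actually carries $\langle\Phi,\psi_R\rangle=\overline{\langle\psi_R,\Phi\rangle}$, which is harmless since only absolute convergence is used there.
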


\begin{proof}
For each $i\in\{1,2\}$, let $\varphi^{(i)},\psi^{(i)}\in \mathcal{S}$
be a pair of functions satisfying \eqref{hs2} and \eqref{hs3}.
Applying $\vec{f} \in \dot{B}_{p, q}^{\alpha}(W)$
and Theorem \ref{dl1103}(i), we obtain
$\{\langle\vec{f},\varphi_{R}^{(1)}\rangle\}_{R\in\mathcal D}
\in \dot{b}_{p, q}^{\alpha}(W)$.
By this and Corollary \ref{commolecule}(ii) with
$m_P^{(1)}$, $b_Q^{(1)}$, $m_Q^{(2)}$, $b_R^{(2)}$, and $\vec{s}_R$
replaced, respectively, by $\overline{\Phi}$,
$\overline{\psi_{Q}^{(2)}}$, $\overline{\varphi_{Q}^{(2)}}$,
$\overline{\psi_{R}^{(1)}}$, and $\langle\vec{f},\varphi_{R}^{(1)}\rangle$, we find that
\begin{align*}
\sum_{Q,R\in\mathcal{D}}
\left\langle\vec{f},\varphi_{R}^{(1)}\right\rangle
\left\langle\psi_{R}^{(1)},\varphi_{Q}^{(2)}\right\rangle
\left\langle\psi_{Q}^{(2)},\Phi\right\rangle
=\sum_{Q,R\in\mathcal{D}}
\left\langle\overline{\Phi},\overline{\psi_{Q}^{(2)}}\right\rangle
\left\langle\overline{\varphi_{Q}^{(2)}}, \overline{\psi_{R}^{(1)}}\right\rangle
\left\langle\vec{f},\varphi_{R}^{(1)}\right\rangle
\end{align*}
converges absolutely. From this and Lemma \ref{repro}(ii), we deduce that
\begin{align*}
\sum_{R\in\mathcal{D}}\left\langle\vec{f},\varphi_{R}^{(1)}\right\rangle
\left\langle\psi_{R}^{(1)},\Phi\right\rangle
&=\sum_{R\in\mathcal{D}}
\left\langle\vec{f},\varphi_{R}^{(1)}\right\rangle
\left(\sum_{Q\in\mathcal{D}}\left\langle
\psi_{R}^{(1)},\varphi_{Q}^{(2)}\right\rangle
\left\langle\psi_{Q}^{(2)},\Phi\right\rangle\right)\\
&=\sum_{Q\in\mathcal{D}}\left(\sum_{R\in\mathcal{D}}
\left\langle\vec{f},\varphi_{R}^{(1)}\right\rangle
\left\langle\psi_{R}^{(1)},\varphi_{Q}^{(2)}\right\rangle\right)
\left\langle\psi_{Q}^{(2)},\Phi\right\rangle \\
&=\sum_{Q\in\mathcal{D}}\left\langle\vec{f},\varphi_{Q}^{(2)}\right\rangle
\left\langle\psi_{Q}^{(2)},\Phi\right\rangle.
\end{align*}
Thus, the right-hand side of \eqref{welldefi}
is absolutely convergent and independent of $\varphi$ and $\psi$,
which completes the proof of Lemma \ref{heli}.
\end{proof}

We now establish the molecular characterization of $\dot B_{p,q}^{\alpha}(W)$.

\begin{theorem}\label{molecha}
Let $\alpha\in\mathbb{R}$, $p\in(0,\infty)$, $q\in(0,\infty]$,
and $W\in \mathcal A_{p,\infty}$.
\begin{enumerate}[\rm(i)]
\item If $\left\{m_{Q}\right\}_{Q\in \mathcal{D}}$ is a family of
$\dot B_{p, q}^{\alpha}(W)$-analysis molecules,
then there exists a positive constant $C$ such that,
for any $\vec{f} \in \dot B_{p, q}^{\alpha}(W)$,
$$\left\|\left\{\left\langle\vec{f},m_Q\right\rangle\right\}
_{Q\in\mathcal{D}}\right\|_{\dot b_{p,q}^{\alpha}(W)}
\leq C\left\|\vec{f}\right\|_{\dot B_{p,q}^{\alpha}(W)}.$$

\item
If $\left\{b_{Q}\right\}_{Q\in\mathcal{D}}$ is a family of
$\dot B_{p,q}^{\alpha}(W)$-synthesis molecules,  then, for any
$\vec{s}:=\{\vec{s}_{Q}\}_{Q\in\mathcal{D}}\in \dot b_{p, q}^{\alpha}(W)$,
there exists $\vec{f}\in \dot B_{p, q}^{\alpha}(W)$ such that
$\vec{f}=\sum_{Q\in\mathcal{D}}b_Q\vec{s}_Q$
in $(\mathcal{S_{\infty}'})^m$ and there exists a positive constant $C$,
independent of $\{\vec{s}_Q\}_{Q\in\mathcal{D}}$ and $\{b_Q\}_{Q\in\mathcal{D}}$,
such that
$$\left\|\vec{f}\right\|_{\dot B_{p,q}^{\alpha}(W)}
\leq C\left\|\vec{s}\right\|_{\dot b_{p,q}^{\alpha}(W)}.$$
\end{enumerate}

\end{theorem}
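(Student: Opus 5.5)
The plan is to reduce both parts to the $\varphi$-transform characterization (Theorem \ref{dl1103}) combined with the boundedness of almost diagonal operators (Theorem \ref{ad FJ-YY}). Throughout, $\varphi,\psi\in\mathcal S$ are fixed and satisfy \eqref{hs2} and \eqref{hs3}.

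For (i), let $\vec f\in\dot B^\alpha_{p,q}(W)$. By Lemma \ref{heli}, for each $Q\in\mathcal D$,
$$\left\langle\vec f,m_Q\right\rangle=\sum_{R\in\mathcal D}\left\langle\psi_R,m_Q\right\rangle\left\langle\vec f,\varphi_R\right\rangle.$$
We set $b_{Q,R}:=\overline{\langle m_Q,\psi_R\rangle}$; this equals $\langle\psi_R,m_Q\rangle$ up to the conjugation convention. Then $\{\langle\vec f,m_Q\rangle\}_Q=B(S_\varphi\vec f)$. Corollary \ref{commolecule}(i) says that $\{\langle m_Q,\psi_R\rangle\}$ is $\dot b^\alpha_{p,q}(W)$-almost diagonal. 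Taking absolute values (or complex conjugates) preserves the $(D,E,F)$ bound, so Theorem \ref{ad FJ-YY} gives
$$\|B(S_\varphi\vec f)\|_{\dot b^\alpha_{p,q}(W)}\lesssim\|S_\varphi\vec f\|_{\dot b^\alpha_{p,q}(W)}\lesssim\|\vec f\|_{\dot B^\alpha_{p,q}(W)},$$
where the last step is Theorem \ref{dl1103}(i). Here $\widetilde\varphi$ is harmless by Theorem \ref{dl1103}(iii), since $\widetilde\varphi$ also satisfies \eqref{hs2}.

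For (ii), I would first show that $\vec f:=\sum_Q b_Q\vec s_Q$ converges in $(\mathcal S_\infty')^m$. For $\phi\in\mathcal S_\infty$, expand $\phi=\sum_P\langle\phi,\varphi_P\rangle\psi_P$ using Lemma \ref{repro}(ii), so that $\langle b_Q,\phi\rangle=\sum_P\langle b_Q,\psi_P\rangle\overline{\langle\phi,\varphi_P\rangle}$. Absolute summability of $\sum_{Q,P}|\vec s_Q||\langle\psi_P,b_Q\rangle||\langle\varphi_P,\phi\rangle|$ should follow as in Lemma \ref{hhhk}. The key point is that $\vec t:=\{\sum_Q\langle\psi_P,b_Q\rangle\vec s_Q\}_P$ lies in $\dot b^\alpha_{p,q}(W)$ with the absolute sums finite, by Corollary \ref{commolecule}(i) together with Theorem \ref{ad FJ-YY}. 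Indeed, that theorem, applied to the matrix $\{|\langle\varphi_P,b_Q\rangle|\}$, bounds $\sum_Q|\langle\psi_P,b_Q\rangle||\vec s_Q|$ as a sequence in $\dot b^\alpha_{p,q}(W)$, after swapping the roles of $\varphi$ and $\psi$. The $\phi$-dependent factor $|\langle\varphi_P,\phi\rangle|$ is then controlled by the estimate in Lemma \ref{hhhk} with $\psi$ replaced by $\varphi$, which yields the bound $\|\vec t\|\,\|\phi\|_N$.

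By Fubini, this gives $\vec f=T_\psi\vec t$ in $(\mathcal S_\infty')^m$, where $\vec t_P=\langle\vec f,\varphi_P\rangle$ in the natural sense. Theorem \ref{dl1103}(i) then yields
$$\|\vec f\|_{\dot B^\alpha_{p,q}(W)}\lesssim\|\vec t\|_{\dot b^\alpha_{p,q}(W)}\lesssim\|\vec s\|_{\dot b^\alpha_{p,q}(W)}.$$

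The main obstacle is this justification of convergence and the interchange of sums in (ii). Molecules are not Schwartz functions, so $\langle b_Q,\phi\rangle$ must be handled through the $\varphi$-expansion of $\phi$ rather than directly. To secure absolute convergence, I would rely on the quantitative $\|\phi\|_N$ bound from Lemma \ref{hhhk} applied to the almost diagonal image sequence.
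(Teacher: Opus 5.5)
Your part (i) is the paper's argument. Part (ii), however, has a genuine gap, and it is exactly the step you flag. The step that carries all of (ii) is
\[
\sum_{P,Q\in\mathcal D}|\langle\varphi_P,\phi\rangle|\,|\langle b_Q,\psi_P\rangle|\,|\vec s_Q|<\infty .
\]
This gives both the convergence of $\sum_Q\vec s_Q\langle b_Q,\phi\rangle$ and the interchange of sums, but nothing you cite yields it. Theorem \ref{ad FJ-YY}, applied to the nonnegative matrix $\{|\langle b_Q,\psi_P\rangle|\}_{P,Q}$, controls in $\dot b^\alpha_{p,q}(W)$ the $\mathbb C^m$-valued sequence $\{\sum_Q|\langle b_Q,\psi_P\rangle|\vec s_Q\}_P$. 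Its entries have modulus \emph{at most}, not at least, the scalar $u_P:=\sum_Q|\langle b_Q,\psi_P\rangle||\vec s_Q|$. With a genuinely matrix weight you cannot replace $\vec s_Q$ by $|\vec s_Q|$, so $\{u_P\}_P$ is never shown to be controlled by $\|\vec s\|_{\dot b^\alpha_{p,q}(W)}$. Consequently, Lemma \ref{hhhk} applied to $\vec t$ bounds only $\sum_P|\langle\varphi_P,\phi\rangle||\vec t_P|$. That is a single iterated sum, which neither shows that $\sum_Q\vec s_Q\langle b_Q,\phi\rangle$ converges nor justifies Fubini. Running Lemma \ref{hhhk} directly on $\vec s$ fails as well. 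That lemma uses only the crude bounds \eqref{bb} and \eqref{AQSQ}, so it needs more decay and cancellation of $\langle b_Q,\phi\rangle$ than \eqref{mb} guarantees.

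The missing idea is composition of almost diagonal matrices. The paper treats the normalized test function as a $\dot B^\alpha_{p,q}(W)$-analysis molecule on $Q_{0,\mathbf 0}$. The nonnegative coefficients of the double sum then form one row of the composition of two $\dot b^\alpha_{p,q}(W)$-almost diagonal matrices. That composition is again almost diagonal by Proposition \ref{compose}, and the well-definedness part of Theorem \ref{ad FJ-YY} (absolute convergence for a nonnegative matrix) gives the double sum. This is Corollary \ref{commolecule}(ii). Taking $\phi=\varphi_P$ then exhibits $S_\varphi\vec f$ as an almost diagonal image of $\vec s$. The paper concludes from $\|\vec f\|_{\dot B^\alpha_{p,q}(W)}\sim\|S_\varphi\vec f\|_{\dot b^\alpha_{p,q}(W)}$ rather than through a synthesis operator.

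Alternatively, you can rescue your own route by proving $u_P\lesssim\|A_P^{-1}\||P|^{\frac12+\alpha-\frac1p}\|\vec s\|_{\dot b^\alpha_{p,q}(W)}$. This follows from the absolute form of the estimate in Lemma \ref{ad prelim} (noted in the proof of Lemma \ref{ad conv}), H\"older's inequality, and Lemma \ref{improve D}. You would then rerun the proof of Lemma \ref{hhhk} with $u_P$ in place of $|\vec s_Q|$.

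One bookkeeping point: expanding $\phi=\sum_P\langle\phi,\varphi_P\rangle\psi_P$ gives $\vec f=T_\varphi\vec t$ with $\vec t_P=\sum_Q\langle b_Q,\psi_P\rangle\vec s_Q$, not $T_\psi\vec t$. This $\vec t_P$ is not $\langle\vec f,\varphi_P\rangle$ in general. That is harmless, since $T_\varphi$ is bounded as well.
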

\begin{proof}
We first show $\rm(i)$. Let $\varphi,\psi\in\mathcal{S}$
satisfy  \eqref{hs2} and \eqref{hs3}.
By Lemma \ref{heli}, we find that, for any $Q\in{\mathcal{D}}$,
\begin{align}\label{usedefi}
\left\langle\vec{f},m_{Q}\right\rangle
=\sum_{R\in\mathcal{D}}\left\langle\vec{f},\varphi_{R}\right\rangle
\left\langle\psi_{R},m_{Q}\right\rangle
=\sum_{R\in\mathcal{D}}
\left\langle\overline{m_{Q}},\overline{\psi_{R}}\right\rangle
\left(S_{\varphi}\vec{f}\right)_{R}.
\end{align}
Using Corollary \ref{commolecule}(i), we conclude that
$B:=\{\langle\overline{m_{Q}},\overline{\psi_{R}}\rangle\}_{Q,R\in\mathcal{D}}$
is a $\dot{b}_{p,q}^{\alpha}(W)$-almost diagonal
operator. From this, \eqref{usedefi}, and Theorems \ref{ad FJ-YY}
and  \ref{dl1103}, it follows that
$$\left\|\left\{\left\langle\vec{f},m_Q
\right\rangle\right\}_{Q\in\mathcal{D}}\right\|_{\dot{b}_{p,q}^{\alpha}(W)}
=\left\|B\left(S_{\varphi}\vec{f}\right)\right\|_{\dot{b}_{p,q}^{\alpha}(W)}
\lesssim\left\|S_{\varphi}\vec{f}\right\|_{\dot{b}_{p,q}^{\alpha}(W)}
\lesssim\left\|\vec{f}\right\|_{\dot{B}_{p,q}^{\alpha}(W)}.$$
This completes the proof of (i).

We now prove (ii).
By Corollary \ref{commolecule}(ii) and Lemma \ref{repro}(ii),
we find that, for any $\phi\in \mathcal S_\infty$,
\begin{equation}\label{keykey}
\left\langle\vec{f},\phi\right\rangle
:=\sum_{R\in\mathcal{D}}\langle b_R,\phi\rangle \vec{s}_R
=\sum_{R\in\mathcal{D}}\langle \overline{\phi}, \overline{b_R}\rangle \vec{s}_R
=\sum_{R\in\mathcal{D}}\sum_{Q\in\mathcal{D}}
\langle \overline{\phi},\varphi_Q\rangle
\langle\psi_Q,\overline{b_R}\rangle \vec{s}_R
\end{equation}
with both series converging absolutely.
Therefore, $\vec{f}=\sum_{Q\in\mathcal{D}}b_Q\vec{s}_Q$
in $(\mathcal S_\infty')^m$.
Taking $\phi := \varphi_P$ in \eqref{keykey} and
using Corollary \ref{commolecule}(ii) again, we obtain
$$
\left\| S_\varphi \vec f \right\|_{\dot{b}_{p,q}^{\alpha}(W)}
= \left\| \left\{\left\langle\vec{f},\varphi_P\right\rangle\right\}_{P\in \mathcal{D}} \right\|_{\dot{b}_{p,q}^{\alpha}(W)}
\lesssim \| \vec s \|_{\dot{b}_{p,q}^{\alpha}(W)}.
$$
This, together with the $\varphi$-transform characterization of
$\dot{B}^\alpha_{p,q}(W)$ (Theorem \ref{dl1103}), further implies that
$$
\left\|\vec f\right\|_{\dot{B}_{p,q}^{\alpha}(W)}
\sim \left\| S_\varphi \vec f \right\|_{\dot{b}_{p,q}^{\alpha}(W)}
\lesssim\left\|\vec{s}\right\|_{\dot b_{p,q}^{\alpha}(W)},
$$
which completes the proof of (ii) and hence Theorem \ref{molecha}.
\end{proof}

\begin{remark} \label{Bownik}
Bownik \cite[Theorems 5.5 and 5.7]{gjabownik05} established
the molecular characterization of anisotropic Besov spaces with doubling measures.
In the setting of scalar-weighted anisotropic Besov spaces,
which is the intersection of Bownik's result and Theorem \ref{molecha},
the latter improves upon the former.
Indeed, let $w\in A_\infty$.
Bownik \cite[Theorem 5.7]{gjabownik05} showed that
if $\left\{m_{Q}\right\}_{Q\in \mathcal{D}}$ is a family of
$(\widetilde K,\widetilde L,\widetilde M,\widetilde N)$-molecules satisfying
\begin{equation}\label{bow}
\widetilde K>\max\left\{\widetilde{J},
\frac{\alpha\zeta_+}{\zeta_-}+1\right\},\quad
\widetilde L= \left\lfloor\frac{\alpha}{\zeta_-}\right\rfloor ,\quad
\widetilde M> \widetilde{J},\quad
\widetilde N=\left(\left\lfloor\frac{\widetilde{J}-1-\alpha}{\zeta_-}\right\rfloor +1\right)_+
\end{equation}
with $\widetilde{J}$ as in \eqref{tildeJ},
then, for any $f \in \dot B_{p, q}^{\alpha}(w)$,
\begin{equation}\label{bounded}
\left\|\left\{\left\langle f,m_Q\right\rangle\right\}
_{Q\in\mathcal{D}}\right\|_{\dot b_{p,q}^{\alpha}(w)}
\lesssim \|f\|_{\dot B_{p,q}^{\alpha}(w)}.
\end{equation}
Meanwhile, Theorem \ref{molecha}(i) implies that
if $\left\{m_{Q}\right\}_{Q\in \mathcal{D}}$ is a family of
$(K,L,M,N)$-molecules satisfying
\begin{equation}\label{our}
\begin{split}
&K> \max\left\{J, \frac{\alpha\zeta_+}{\zeta_-} + 1\right\}, \quad
L> \frac{\alpha}{\zeta_-}, \\
&M>J,\quad
\begin{cases}
N>\frac{J-1-\alpha}{\zeta_-}+1 &\text{if } \alpha\le J-1,\\
N=-1 &\text{if } \alpha> J-1
\end{cases}
\end{split}
\end{equation}
with $J$ as in \eqref{J}, then \eqref{bounded} holds.
From Remark \ref{comparebow}, it follows that
$J\leq \widetilde{J}$ and this inequality is strict for some scalar weights.
Moreover, for any $\gamma\in\mathbb Z_+^n$ and $a\in[0,\infty)$,
$|\gamma|\leq \lfloor a \rfloor$
if and only if $|\gamma|\leq a$.
Therefore, condition \eqref{our} is strictly weaker than condition \eqref{bow},
and hence Theorem \ref{molecha}(i) improves \cite[Theorem 5.7]{gjabownik05} in this case.
The proof that Theorem \ref{molecha}(ii) improves \cite[Theorem 5.5]{gjabownik05}
parallels the preceding one; we omit the details.
\end{remark}

As an application of Theorem \ref{molecha},
we obtain the following conclusion.

\begin{proposition}\label{distri}
Let $\alpha\in\mathbb{R}$,  $p\in(0,\infty)$, $q\in (0,\infty]$,
and $W \in\mathcal A_{p,\infty}$.
Then $(\mathcal S_{\infty})^m \subset \dot{B}_{p,q}^{\alpha}(W)$.
Moreover, there exist $M \in\mathbb{Z}_+$ and a positive constant $C$ such that,
for any $\vec{f}\in (\mathcal S_{\infty})^m$,
$$
\left\|\vec{f}\right\|_{\dot{B}_{p,q}^{\alpha}(W)}
\leq C \left\|\vec{f}\right\|_{S_M}
:= C \sup_{\genfrac{}{}{0pt}{}{\gamma\in\mathbb{Z}_+^n}{|\gamma|\leq M}}
\sup_{x\in\mathbb{R}^n}
\left|\partial^\gamma \vec f(x)\right| [1+\rho(x)]^{1+M+|\gamma|}.
$$
\end{proposition}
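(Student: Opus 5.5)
The plan is to use the molecular synthesis result, Theorem \ref{molecha}(ii), through the reproducing formula. Fix $\varphi,\psi\in\mathcal S$ satisfying \eqref{hs2} and \eqref{hs3}. By Lemma \ref{repro}(ii), every $\vec f\in(\mathcal S_\infty)^m$ satisfies $\vec f=\sum_{Q\in\mathcal D}\langle\vec f,\varphi_Q\rangle\psi_Q$ in $(\mathcal S_\infty)^m$, hence in $(\mathcal S_\infty')^m$. The functions $\psi_Q$ are constant multiples of $\dot B^\alpha_{p,q}(W)$-synthesis molecules, as noted in the proof of Corollary \ref{commolecule}. Theorem \ref{molecha}(ii) (or directly Theorem \ref{dl1103}(i) for $T_\psi$) therefore gives
\[
\|\vec f\|_{\dot B^\alpha_{p,q}(W)}\lesssim\|S_\varphi\vec f\|_{\dot b^\alpha_{p,q}(W)}.
\]
Everything then reduces to bounding $\|S_\varphi\vec f\|_{\dot b^\alpha_{p,q}(W)}$ by $\|\vec f\|_{S_M}$ for some $M$.

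For that bound I would observe that, after normalization, $\vec f$ is itself a molecule at the cube $Q_{0,\mathbf 0}$. Put $c:=\|\vec f\|_{S_M}$. Each component of $c^{-1}\vec f$ satisfies the size condition $|\,\cdot\,|\le[1+\rho(x)]^{-(1+M)}$, has vanishing moments of every order, and has derivative bounds of order $\le M$ with decay $[1+\rho(x)]^{-(1+M)}$. So it is a $(K,L,M',N)$-molecule on $Q_{0,\mathbf 0}$ with $K=M'=M+1$, $N=M$, and any $L$. Choosing $M$ large enough that \eqref{mb} holds (which needs $M+1>\max\{J,(J-1-\alpha)\zeta_+/\zeta_-+1\}$ and $M>\alpha/\zeta_-+1$), Lemma \ref{mole1} applies. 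Here $\varphi_Q$ serves as the analysis molecule, with arbitrarily good parameters. This gives
\[
|\langle\vec f,\varphi_Q\rangle|\lesssim c\,b^{D,E,F}_{Q,Q_{0,\mathbf 0}},
\]
where $D$, $E$ and $F$ can be made as large as desired by increasing $M$.

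It then remains to check that the sequence $\vec t_Q:=b^{D,E,F}_{Q,Q_{0,\mathbf 0}}\vec e_i$, with $i$ running over coordinates, lies in $\dot b^\alpha_{p,q}(W)$ for $D,E,F$ large. Equivalently, the single-cube sequence $\vec s$ supported at $Q_{0,\mathbf 0}$ with value $\vec e_i$ must map boundedly under the almost diagonal operator $B^{D,E,F}$. This follows from Theorem \ref{ad FJ-YY}, since $\vec s\in\dot b^\alpha_{p,q}(W)$: its norm equals $(\int_{Q_{0,\mathbf 0}}|W^{1/p}\vec e_i|^p)^{1/p}<\infty$ by local integrability of $W$. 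Summing over $i$ and using Theorem \ref{dj} yields $\|S_\varphi\vec f\|_{\dot b^\alpha_{p,q}(W)}\lesssim c$.

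The main obstacle is the bookkeeping in the second step. One must make sure the vanishing-moment and smoothness requirements in Lemma \ref{mole1}, with its $\zeta_\pm$ conversions, are met by a single finite $M$. One must also make sure the decay exponent $1+M+|\gamma|$ in $\|\cdot\|_{S_M}$ controls condition \eqref{last} after rescaling by $A^{0}$; this is trivial at $j=0$. If this fails, the fallback is to estimate $\langle\vec f,\varphi_Q\rangle$ directly: use the moments of $\vec f$ (all zero) for small cubes and Taylor expansion of $\vec f$ against the moments of $\varphi$ for large cubes, as in Lemmas \ref{fj B1} and \ref{fj B2}.
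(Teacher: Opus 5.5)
Your argument is correct and rests on the same key observation as the paper. For $M$ large, $\|\vec f\|_{S_M}^{-1}f_i$ is a $\dot B^\alpha_{p,q}(W)$-synthesis molecule on $Q_{0,\mathbf 0}$, and the one-cube sequence with value $\|\vec f\|_{S_M}\vec e_i$ has finite $\dot b^\alpha_{p,q}(W)$-norm because $W$ is locally integrable. The paper feeds this one-term expansion into Theorem \ref{molecha}(ii) directly; your route through $S_\varphi$, Lemma \ref{mole1} and Theorem \ref{ad FJ-YY} is just the proof of that theorem specialized to a single molecule, so it collapses to that one-line application.
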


\begin{proof}
Let $\vec{f}:=(f_1,\ldots,f_m)^{T}\in(\mathcal{S}_\infty)^m$
and, for any $i\in\{1,\ldots,m\}$, let
$$\vec{e}_i:=(0,\ldots,0,1,0,\ldots,0)^{T}\in\mathbb{C}^m,$$
where only the $i$-th component is 1.
Then $\vec f=\sum_{i=1}^{m} f_i \vec e_i.$
Note that, for any $i\in\{1,\ldots,m\}$, $f_i\in\mathcal{S}_\infty$.
Consequently, there exists $M\in\mathbb Z_+$,
independent of $\vec f$, such that
$\|\vec f\|_{S_M}^{-1} f_i$ is a
$\dot{B}_{p,q}^{\alpha}(W)$-synthesis molecule on $Q_{0,\mathbf{0}}$.
From this and Theorem \ref{molecha}(ii), we infer that
$$
\left\|\vec{f}\right\|_{\dot{B}_{p,q}^{\alpha}(W)}
\sim\sum_{i=1}^m\left\|f_i \vec{e}_i\right\|_{\dot{B}_{p,q}^{\alpha}(W)}
\lesssim \left\|\vec{f}\right\|_{S_M},
$$
which completes the proof of Proposition \ref{distri}.
\end{proof}

\section{Pseudo-Differential Operators} 

Driven by their important role in the paradifferential calculus of Bony \cite{b81},
pseudo-differential operators with symbols in the class $S^m_{1,1}$
have garnered considerable attention
(see, for example, \cite{b88,h88,h89}).
Recently, several articles have investigated
the boundedness of pseudo-differential operators
on Besov--Triebel--Lizorkin spaces
(see, for example, \cite{gt99,p20,fsyy,syy10,syy}).
In this section, we establish the boundedness of
these operators on $\dot B_{p,q}^{\alpha}(W)$ in Subsection \ref{7.1},
and prove the sharpness of this boundedness result in Subsection \ref{7.2}.

\subsection{Boundedness of Pseudo-Differential Operators} \label{7.1}

Let $\rho_{A^*}$ be the step homogeneous quasi-norm associated with $A^*$,
where $A^*$ is the conjugate transpose of $A$.
We now recall the homogeneous anisotropic class $\dot S_{1,1}^{u}$
(see \cite[Definition 1.1]{bb10}).

\begin{definition}
Let $u\in\mathbb R$.
The homogeneous anisotropic class $\dot S_{1,1}^u$
is the set of all functions
$\sigma\in C^{\infty}(\mathbb R^n\times(\mathbb R^n\setminus\{\mathbf 0\}))$
such that, for any multi-indices $\beta,\gamma\in{\mathbb Z}_+^n$,
\begin{align*}
\sup_{(x,\xi)\in\mathbb R^n\times(\mathbb R^n\setminus\{\mathbf 0\})}
[\rho_{A^*}(\xi)]^{-u}
\left|\partial_x^\gamma\partial_\xi^\beta
\widetilde{\sigma} \left( A^{k}x,(A^*)^{-k}\xi\right) \right|
<\infty,
\end{align*}
where $\widetilde{\sigma}(\cdot,\cdot)
:= \sigma(A^{-k}\cdot, (A^*)^{k}\cdot)$
and $k\in\mathbb{Z}$ is determined by the relation $\rho_{A^{*}}(\xi)= b^{k}$.
\end{definition}

\begin{definition}
Let $u\in\mathbb R$ and $\sigma\in \dot{S}_{1,1}^u$.
Define the pseudo-differential operator $\sigma(x, D) $
with symbol $\sigma$ by setting, for any $ f\in\mathcal{S}_\infty$
and $x\in\mathbb{R}^n$,
$$
\sigma(x,D)(f):= \int_{\mathbb{R}^n}
\sigma(x, \xi) \widehat{f}(\xi) e^{i x \cdot \xi} \, d\xi.
$$
\end{definition}

The following lemma is exactly \cite[Lemma 4.9]{bb10} with $\delta=\gamma=1$.

\begin{lemma}\label{continue}
Let $u\in\mathbb R$ and $\sigma\in \dot{S}_{1,1}^u$.
Then $\sigma(x,D)$ maps $\mathcal S_{\infty}$ continuously
into $\mathcal S$. In particular, its formal
adjoint $\sigma(x, D)^*$ is a continuous linear
operator from $\mathcal S'$ to $\mathcal S_{\infty}'$,
where, for any $f\in\mathcal S'$ and
$\phi\in\mathcal S_{\infty}$,
$$\left\langle \sigma(x, D)^*f, \phi \right\rangle
:=\left\langle f, \sigma(x,D)\phi\right\rangle.$$
\end{lemma}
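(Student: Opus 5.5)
The statement is Lemma \ref{continue}, which the paper says is exactly \cite[Lemma 4.9]{bb10} with $\delta=\gamma=1$. Strictly speaking it can simply be cited. If I were to reprove it, I would proceed in two stages: first the continuity of $\sigma(x,D)$ from $\mathcal S_\infty$ into $\mathcal S$, and then the adjoint statement, which follows formally by duality.

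\textbf{Step 1: Littlewood--Paley decomposition of the input.} Let $f\in\mathcal S_\infty$ and decompose $\widehat f$ with an anisotropic partition of unity $\sum_{k}\widehat{\theta}((A^*)^{-k}\xi)=1$ on $\mathbb R^n\setminus\{\mathbf 0\}$, where each piece is supported in the annulus $\rho_{A^*}(\xi)\sim b^k$. This gives
\[
\sigma(x,D)f=\sum_{k\in\mathbb Z}T_kf,\qquad T_kf(x):=\int_{\mathbb R^n}\sigma(x,\xi)\widehat{\theta}\bigl((A^*)^{-k}\xi\bigr)\widehat f(\xi)e^{ix\cdot\xi}\,d\xi.
\]
Since $f\in\mathcal S_\infty$, the function $\widehat f$ vanishes to infinite order at the origin and decays rapidly at infinity. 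Hence for any $N$,
\[
\sup_{\rho_{A^*}(\xi)\sim b^k}|\partial^\beta\widehat f(\xi)|\lesssim b^{-N|k|}\|f\|_{\beta',\gamma'}
\]
for suitable Schwartz seminorms $\|\cdot\|_{\beta',\gamma'}$. The symbol bound gives $|\sigma(x,\xi)|\lesssim b^{ku}$ on the $k$-th annulus, and this growth is absorbed by the decay $b^{-N|k|}$.

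\textbf{Step 2: spatial decay and derivatives of each piece.} To get decay in $x$, I would integrate by parts in $\xi$ using $e^{ix\cdot\xi}$. Each $\xi$-derivative that falls on $\sigma$ costs only a factor $b^{-k\zeta}$ by the $\dot S^u_{1,1}$ estimates after rescaling, so this yields $|x|^{-L}$ decay with polynomially controlled powers of $b^{\pm k}$. For $\partial_x^\gamma$, a derivative falling on $\sigma$ costs at most $b^{k\zeta_+}$ because of the type $(1,1)$ behavior. A derivative falling on $e^{ix\cdot\xi}$ produces a factor $\xi$, which is also bounded by $b^{k\zeta_+}$ on the annulus. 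All of these are polynomial in $b^{|k|}$ and are beaten by the infinite-order vanishing and decay of $\widehat f$. Summing over $k$ then gives
\[
\|\sigma(x,D)f\|_{\beta,\gamma}\lesssim \sum_{\text{finitely many}}\|f\|_{\beta',\gamma'},
\]
which is the desired continuity $\mathcal S_\infty\to\mathcal S$.

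\textbf{Step 3: the adjoint.} Define $\langle\sigma(x,D)^*f,\phi\rangle:=\langle f,\sigma(x,D)\phi\rangle$ for $f\in\mathcal S'$ and $\phi\in\mathcal S_\infty$. By Step 1--2 this is a composition of continuous maps, so it is a continuous linear functional on $\mathcal S_\infty$. Continuity of $\sigma(x,D)^*$ from $\mathcal S'$ to $\mathcal S_\infty'$ for the weak-$*$ topologies is then immediate from this pairing.

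The main obstacle is the bookkeeping in Step 2: keeping the anisotropic rescaling consistent, in particular translating the dilation-invariant symbol estimates into Euclidean derivative bounds via Lemma \ref{edbj}. This is exactly the content of \cite{bb10}, so in the paper I would simply cite it.
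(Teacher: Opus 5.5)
Your proposal is correct and agrees with the paper, which gives no argument of its own and simply invokes \cite[Lemma 4.9]{bb10} with $\delta=\gamma=1$. Your sketch is the standard argument behind that citation: you localize in frequency, absorb the polynomial losses in $b^{|k|}$ from the rescaled $\dot S^u_{1,1}$ bounds using the infinite-order vanishing and rapid decay of $\widehat f$ for $f\in\mathcal S_\infty$, and then obtain the adjoint statement by duality.
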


The following theorem is the main result of this section.

\begin{theorem} \label{pseudo}
Let $\alpha\in\mathbb R$, $p\in(0,\infty)$, $q\in(0,\infty]$,
and $ W \in \mathcal A_{p,\infty}$.
Assume that $u\in\mathbb R$, $\sigma\in\dot{S}_{1,1}^u$,
and $\sigma(x,D)$ is a pseudo-differential operator with symbol $\sigma$.
Let $J$ be as in \eqref{J}
and $\zeta_-$ as in Lemma \ref{edbj}.
Suppose that $\sigma(x, D)^*$ satisfies,
for any $\gamma\in\mathbb{Z}_+^n$
with $|\gamma|\leq\lfloor\frac{J-1-\alpha}{\zeta_-}\rfloor$,
\begin{equation}\label{irremovable}
\sigma(x, D)^* ( x^\gamma)=0\in \mathcal{S}_\infty'.
\end{equation}
Then $\sigma(x,D)$ can be extended to a continuous linear mapping
from $\dot B_{p,q}^{\alpha+u}(W) $ to $\dot B^{\alpha}_{p,q}(W)$.
\end{theorem}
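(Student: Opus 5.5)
The argument goes through the $\varphi$-transform and molecules. Fix $\varphi,\psi\in\mathcal S$ satisfying \eqref{hs2} and \eqref{hs3}. For $\vec f\in(\mathcal S_\infty)^m$, Lemma \ref{repro}(ii) gives $\vec f=\sum_{R\in\mathcal D}\langle\vec f,\varphi_R\rangle\psi_R$ in $\mathcal S_\infty$. Applying $\sigma(x,D)$, which is continuous by Lemma \ref{continue}, and pairing with $\varphi_Q$, we obtain formally
\begin{align*}
\left\langle \sigma(x,D)\vec f,\varphi_Q\right\rangle
=\sum_{R\in\mathcal D}\left\langle \sigma(x,D)\psi_R,\varphi_Q\right\rangle
\left\langle \vec f,\varphi_R\right\rangle .
\end{align*}
The goal is to show that the matrix $\{b^{-j_R u}\langle\sigma(x,D)\psi_R,\varphi_Q\rangle\}_{Q,R}$, or a harmless rescaling of it, is $\dot b^{\alpha}_{p,q}(W)$-almost diagonal. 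The factor $b^{j_R u}$ converts the smoothness index $\alpha+u$ into $\alpha$, as in Lemma \ref{alpha0}. Once this is known, Theorem \ref{ad FJ-YY} combined with Theorem \ref{dl1103} gives $\|\sigma(x,D)\vec f\|_{\dot B^{\alpha}_{p,q}(W)}\lesssim\|\vec f\|_{\dot B^{\alpha+u}_{p,q}(W)}$ on $(\mathcal S_\infty)^m$.

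\textbf{Step 1 (molecule estimate).} The main step is to show that $b^{-ju}\sigma(x,D)\psi_R$, for $R\in\mathcal D_j$, is a harmless constant multiple of a $\dot B^{\alpha}_{p,q}(W)$-synthesis molecule on $R$ in the sense of Definition \ref{moleculedef}; equivalently, that its parameters satisfy \eqref{mb}.
\begin{itemize}
\item \emph{Decay and smoothness} ($K_b,M_b$ arbitrarily large, $N_b$ arbitrarily large). I would rescale by $A^{j}$ and use the symbol estimates of $\dot S^u_{1,1}$, integrating by parts in $\xi$ exactly as in \cite[proof of Theorem 4.8]{bb10}. Since $\widehat{\psi_R}$ is supported in a dyadic annulus of size $b^{j}$ and the symbol is of type $(1,1)$, derivatives in $x$ cost only $b^{j}$, so the smoothness exponents match those of a molecule on $R$.
\item \emph{Vanishing moments} ($L_b$). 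This is the obstacle. The type-$(1,1)$ structure does not preserve cancellation, so the moments must come from the hypothesis \eqref{irremovable}. Formally, $\int x^\gamma\sigma(x,D)\psi_R\,dx=\langle\sigma(x,D)^*(x^\gamma),\overline{\psi_R}\rangle=0$ for $|\gamma|\le\lfloor\frac{J-1-\alpha}{\zeta_-}\rfloor$. Since $|\gamma|\le\lfloor a\rfloor$ if and only if $|\gamma|\le a$, this yields $L_b>\frac{J-1-\alpha}{\zeta_-}$ when combined with enough decay. Making the pairing rigorous requires the decay just established, since $x^\gamma\notin\mathcal S$; I would approximate $x^\gamma$ by $x^\gamma\eta(\epsilon x)$ and let $\epsilon\to0$, using the decay of $\sigma(x,D)\psi_R$ to justify the limit.
\end{itemize}

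\textbf{Step 2 (almost diagonality).} Since $\varphi_Q$ is a multiple of an analysis molecule (as in the proof of Corollary \ref{commolecule}), Theorem \ref{molecule} gives that $\{\langle b^{-j_Ru}\sigma(x,D)\psi_R,\varphi_Q\rangle\}$ is $\dot b^{\alpha}_{p,q}(W)$-almost diagonal. Absolute convergence of the series for $\vec s\in\dot b^{\alpha+u}_{p,q}(W)$ is also provided by Theorem \ref{ad FJ-YY}.

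\textbf{Step 3 (extension).} For general $\vec f\in\dot B^{\alpha+u}_{p,q}(W)$, I would define
\begin{align*}
\sigma(x,D)\vec f:=\sum_{R\in\mathcal D}\langle\vec f,\varphi_R\rangle\,\sigma(x,D)\psi_R ,
\end{align*}
which converges in $(\mathcal S_\infty')^m$ by Theorem \ref{molecha}(ii) applied to the synthesis molecules $b^{-j_Ru}\sigma(x,D)\psi_R$ and coefficients $b^{j_Ru}\langle\vec f,\varphi_R\rangle\in\dot b^{\alpha}_{p,q}(W)$. The same theorem gives the norm bound and linearity. This definition agrees with $\sigma(x,D)$ on $(\mathcal S_\infty)^m$ and is continuous, and independence of $\varphi,\psi$ follows as in Lemma \ref{heli}.
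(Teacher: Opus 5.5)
Your proposal is correct and follows essentially the paper's argument. The paper also writes $T\vec f=\sum_{Q}[|Q|^{-u}(S_\varphi\vec f)_Q]\,[|Q|^{u}T(\varphi_Q)]$, taking $\psi=\varphi$, and invokes the proof of \cite[Theorem 4.8]{bb10} to see that $|Q|^{u}T(\varphi_Q)$ is a harmless multiple of a $\dot B^{\alpha}_{p,q}(W)$-synthesis molecule. It then concludes directly from Theorem \ref{molecha}(ii) and Theorem \ref{dl1103}(i), so your separate Step 2 is already contained in Theorem \ref{molecha}(ii).

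Some minor points about Step 1:
\begin{itemize}
\item Since $\sigma(x,D)\psi_R\in\mathcal S$ by Lemma \ref{continue}, the identity $\int x^\gamma\sigma(x,D)\psi_R\,dx=\overline{\langle\sigma(x,D)^*(x^\gamma),\psi_R\rangle}$ holds directly, with no approximation of $x^\gamma$ needed.
\item In that identity the conjugation belongs outside the pairing, not on $\psi_R$ as you wrote.
\item The strict inequality $L_b>\frac{J-1-\alpha}{\zeta_-}$ comes from choosing $L_b\in\bigl(\frac{J-1-\alpha}{\zeta_-},\lfloor\frac{J-1-\alpha}{\zeta_-}\rfloor+1\bigr)$.
\end{itemize}
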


\begin{proof}
To simplify the presentation of the present proof,
we denote $\sigma(x, D)$ simply by $T$.
Let $ \varphi \in \mathcal{S}_\infty$
satisfy \eqref{hs2} and
\begin{align*}
\sum_{j\in\mathbb{Z}}
\overline{\widehat{\varphi}((A^*)^j\xi)}
\widehat{\varphi}((A^*)^j\xi)=1.
\end{align*}
By this and Lemma \ref{repro}(ii), we find that,
for any $\vec f \in (\mathcal{S}_\infty)^m$,
$
\vec f=\sum_{Q\in\mathcal{D}}
\langle \vec f, \varphi_Q \rangle \varphi_Q
$
in $(\mathcal{S}_\infty)^m$.
Using this and Lemma \ref{continue}, we conclude that,
for any $\vec f \in (\mathcal{S}_\infty)^m$,
\begin{align}\label{Texpress}
T \left(\vec f\right)
= \sum_{Q\in\mathcal{D}}
\left\langle \vec f, \varphi_Q
\right\rangle T \left( \varphi_Q \right)
= \sum_{Q\in\mathcal{D}}
\left[ |Q|^{-u} \left(S_\varphi \vec f\right)_Q \right]
\left[|Q|^u T \left(\varphi_Q\right)\right]
\end{align}
in $\mathcal{S}^m$.
For any $\vec f \in \dot B_{p, q}^{\alpha + u}(W)$,
we can still define $T (\vec f)$ as in \eqref{Texpress}.
Next, we prove that $T (\vec f)$ is well defined.
In the proof of \cite[Theorem 4.8]{bb10},
B\'enyi and Bownik showed that, for any $K,M\in[0,\infty)$ and $N\in\mathbb R$,
$|Q|^u T(\varphi_Q)$ is a harmless constant multiple of
a $(K,\lfloor\frac{J-1-\alpha}{\zeta_-}\rfloor,M,N)$-molecule on $Q$,
and is consequently a
$\dot B^{\alpha}_{p,q}(W)$-synthesis molecule on $Q$.
From this and Theorem \ref{molecha}(ii), it follows that
$T (\vec f)$ is well defined and
\begin{align*}
\left\|T\left( \vec f\right) \right\|_{\dot B_{p,q}^\alpha(W)}
\lesssim\left\| \left\{ |Q|^{-u} \left( S_\varphi \vec f\right)_Q
\right\}_{Q\in\mathcal D}
\right\|_{\dot b^{\alpha}_{p,q}(W)}.
\end{align*}
This, together with Theorem \ref{dl1103}(i), further implies that
\begin{align*}
\left\|T\left( \vec f\right) \right\|_{\dot B_{p,q}^\alpha(W)}
\lesssim \left\| S_\varphi \vec f
\right\|_{\dot b_{p, q}^{\alpha + u}(W)}
\lesssim \left\| \vec f
\right\|_{\dot B_{p, q}^{\alpha + u}(W)},
\end{align*}
which completes the proof of Theorem \ref{pseudo}.
\end{proof}

\begin{remark}\label{bijiao}
In the case of scalar weighted anisotropic Besov spaces,
the condition corresponding to \eqref{irremovable} in \cite[Theorem 4.8]{bb10} is that
$\sigma(x, D)^* (x^\gamma)=0$
for all $\gamma\in\mathbb{Z}_+^n$
with $|\gamma|\leq\lfloor\frac{\widetilde J-1-\alpha}{\zeta_-}\rfloor$, where $\widetilde J$ is as in \eqref{tildeJ}.
From Remark \ref{comparebow}, it follows that $J\leq \widetilde J$ and
this inequality is strict for some scalar weights.
Therefore, Theorem \ref{pseudo} improves \cite[Theorem 4.8]{bb10} in this case.

In Theorem \ref{pseudo}, if $J-1-\alpha<0$, then \eqref{irremovable} is void.
Conversely, if $J-1-\alpha\geq 0$, then \eqref{irremovable} is
an additional assumption on the symbol $\sigma$ and is sharp
(see Subsection \ref{7.2}).
\end{remark}

\subsection{Sharpness of the Formal Adjoint Condition} \label{7.2}

To simplify the presentation, in this subsection
we assume that $n=1$, $m=1$, $A=2$, and $\rho(\cdot)=|\cdot|$.
In this case, $\sigma\in\dot S_{1,1}^u$ if and only if, for any
$\beta,\gamma\in{\mathbb Z}_+$,
\begin{align*}
\sup_{(x,\xi)\in\mathbb R\times(\mathbb R\setminus\{0\})}
|\xi|^{-u-\gamma+\beta}
\left|\partial_x^\gamma\partial_\xi^\beta \sigma (x,\xi) \right|
<\infty.
\end{align*}
The following theorem establishes
the sharp range of $\alpha$ ensuring the boundedness of pseudo-differential operators
with symbol $\sigma\in\dot S^u_{1,1}$.

\begin{theorem}\label{sharppse2}
Let $\lambda\in(-1,\infty)$ and $w(\cdot):=|\cdot|^\lambda$.
Assume that $\alpha,u\in\mathbb R$, $p\in(0,\infty)$, and $q\in(0,\infty]$.
Then the following statements are equivalent.
\begin{enumerate}[\rm(i)]
\item For any symbol $\sigma\in\dot S^u_{1,1}$, the associated operator
$\sigma(x,D)$ can be extended to a continuous linear mapping
from $\dot B_{p,q}^{\alpha+u}(w) $ to $\dot B^{\alpha}_{p,q}(w)$.

\item
\begin{align*}
\alpha>\theta_{\lambda,p}
:=\max\left\{0,\frac{1}{p}-1,\frac{1+\lambda}{p}-1\right\}.
\end{align*}
\end{enumerate}
\end{theorem}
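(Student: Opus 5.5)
For (ii)$\Longrightarrow$(i) I would apply Theorem \ref{pseudo} directly, and for (i)$\Longrightarrow$(ii) I would construct explicit symbols in $\dot S^u_{1,1}$ together with explicit test functions.

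\emph{Sufficiency.} Set $W:=wI_1$. By Lemmas \ref{scamtr}, \ref{rhoinf}, and \ref{value}, we have $w\in A_\infty$, $d^{\mathrm{upper}}_{\infty}(w)=\lambda_+$, and $v_{w,p}=\frac p{1+\lambda_+}$. The computation in the proof of Theorem \ref{ad Besov sharp} then gives
$$J=\max\left\{1,\frac1p,\frac{1+\lambda}p\right\}=J_{p,\lambda},$$
so that $\theta_{\lambda,p}=J-1$. If $\alpha>\theta_{\lambda,p}$, then $J-1-\alpha<0$. Since $\zeta_->0$, this gives $\lfloor\frac{J-1-\alpha}{\zeta_-}\rfloor<0$, so the moment condition \eqref{irremovable} is void. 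Theorem \ref{pseudo} therefore applies to every $\sigma\in\dot S^u_{1,1}$, which proves (i).

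\emph{Necessity.} Assume $\alpha\le\theta_{\lambda,p}$. I would build a symbol that moves each dyadic frequency band down to frequency near $0$:
$$\sigma(x,\xi):=\sum_{j\in\mathbb Z}2^{ju}\,\widehat\eta(2^{-j}\xi)\,e^{-ix2^j\xi_0},$$
where $\widehat\eta$ is supported in $[1,\frac32]$ and $\xi_0$ is chosen so that the shifted band $2^j([1,\frac32]-\xi_0)$ contains a neighbourhood of $0$ of size about $2^j$. Since every $x$-derivative produces a factor $2^j\sim|\xi|$ and the supports have bounded overlap, $\sigma\in\dot S^u_{1,1}$.

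For $f_j$ with $\widehat{f_j}\subset 2^j[1,\frac32]$, the image $\sigma(x,D)f_j$ equals $2^{ju}$ times a bump of scale $2^{-j}$ whose Fourier transform does not vanish at $0$. In particular it has nonzero mean, so its Littlewood--Paley pieces at every level $k\le j$ are of size about $2^{k-j}$ on scale $2^{-k}$. Each of the three thresholds will be violated by a separate choice of test function.

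\begin{enumerate}[(a)]
\item \emph{Threshold $\frac1p-1$.} Take a single $f_j$ translated far from the origin, so that $w$ is essentially constant on the relevant region. The weighted $L^p$ norm of the level-$k$ piece is then about $2^{k\alpha}2^{k-j}2^{-k/p}$. Summing over $k\le j$ and comparing with $\|f_j\|_{\dot B^{\alpha+u}_{p,q}(w)}\sim 2^{j(\alpha+u)}2^{-j/p}$ gives a ratio that is unbounded in $j$ when $\alpha\le\frac1p-1$.
\item \emph{Threshold $\frac{1+\lambda}p-1$.} Repeat (a) with $f_j$ centred at the origin. Lemma \ref{ballsim} now supplies the factors $2^{-k\lambda/p}$ and $2^{-j\lambda/p}$, and the same comparison gives the threshold $\frac{1+\lambda}p-1$.
\item \emph{Threshold $0$.} Take $f=\sum_{j=0}^N c_jf_j$ with the normalisation $\|c_jf_j\|_{\dot B^{\alpha+u}_{p,q}(w)}\sim 1$, so that $\|f\|\sim N^{1/q}$. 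At a fixed low level $k$ the outputs all have comparable phase and add coherently, giving a lower bound of order $N$ times the normalisation when $\alpha\le0$. As in the proof of Theorem \ref{ad Besov sharp}, this yields $N^{1+1/q}\lesssim N^{1/q}$, a contradiction.
\end{enumerate}

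In each case, (i) together with the closed graph theorem (or simply the assumed boundedness) forces a uniform estimate that these test functions violate.

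The main obstacle is step (c), and more generally ensuring that the low-frequency outputs do not cancel. I would arrange positivity by choosing $\eta$ with $\widehat\eta\ge0$ and all shifts landing in the same real bump, and by placing the test functions at the origin or far from it so that the weight factors are exactly computable via Lemma \ref{ballsim}. A further point to verify is that each such $f$ genuinely lies in $\dot B^{\alpha+u}_{p,q}(w)$ with the claimed norm. For this I would use Theorem \ref{dl1103} with the frequency localisation, exactly as in the proof of Theorem \ref{boundedfail_p}.
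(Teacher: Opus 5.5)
Your sufficiency argument is correct and is the paper's. For the power weight, $J=\max\{1,\frac1p,\frac{1+\lambda}p\}=\theta_{\lambda,p}+1$, so \eqref{irremovable} is void and Theorem~\ref{pseudo} applies.

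\textbf{The gap: the threshold $0$.} The necessity part fails in step (c), which is the case $p\in(1,\infty)$, $\lambda\in(-1,p-1)$.

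\emph{Your test functions cannot detect it.} They are bumps of spatial scale $2^{-j}$ at frequency $\sim2^j$, and their images leak into low bands only with size $2^{k-j}$. With your normalisation $\|c_jf_j\|\sim1$, the weighted level-$k$ output is about $\sum_{j\ge k}2^{-(j-k)(\alpha+1-\frac1p)}$ (with $\frac{1+\lambda}p$ in place of $\frac1p$ if centred at the origin). This stays $O(1)$ as soon as $\alpha>\max\{\frac1p-1,\frac{1+\lambda}p-1\}$. For instance, with $p=2$, $\lambda=0$, $\alpha=-\frac14$, (ii) fails, yet your construction produces no blow-up. Such bumps see only the thresholds $\frac1p-1$ and $\frac{1+\lambda}p-1$.

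\emph{What is needed instead.} You need test functions of \emph{fixed} spatial scale, e.g.\ $e^{i\frac54 2^kx}h(x)$ with $\widehat h$ supported near $\frac54$, as in the paper's Case~(3). Your symbol sends each of them to $2\pi\,2^{ku}h$, all landing in the same band $0$, so the ratio is $2^{-k\alpha}\to\infty$ for $\alpha<0$.

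\emph{The endpoint $\alpha=0$.} Here the coherent sum of $N$ such pieces gives an output $\sim N$ in a single band against an input $\sim N^{1/q}$. It is not $N^{1+1/q}$ against $N^{1/q}$, so this only works for $q>1$. For $\alpha=0$ and $q\in(0,1]$ a new idea is needed. The paper takes the single-band input $N^{-\frac12}2^{-5Nu}e^{i\frac54 2^{5N}x}\sum_{k=N}^{2N-1}e^{i\frac54 2^kx}h(x)$. Its $\dot B^{u}_{p,q}(w)$ norm is $\sim1$ by the weighted Littlewood--Paley square-function theorem, which uses $w\in A_p$. Its image spreads over $N$ bands, each of norm $\sim N^{-\frac12}$, so the output norm is $\sim N^{\frac1q-\frac12}\to\infty$.

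\textbf{A smaller issue in (a) and (b).} A single $f_j$ does not settle the endpoints $\alpha=\frac1p-1$ and $\alpha=\frac{1+\lambda}p-1$ when $q=\infty$. At these endpoints every low band contributes about the input norm, so the ratio is only $(\#\text{bands})^{1/q}$, which equals $1$ for $q=\infty$. The coherent-sum mechanism you placed in (c) is exactly what repairs these endpoints; the paper uses it that way in Cases~(1) and (2). It takes $f_N=N^{-\frac1q}\sum_{k=N}^{2N-1}2^{-k(\alpha+u)}h_k(\cdot-x_N)/\|h_k(\cdot-x_N)\|_{L^p(w)}$ with $x_N=0$ or $x_N=\frac85\pi$. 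The choice $x_N=\frac85\pi$ makes all phases $e^{-i\frac54 2^kx_N}$ equal to $1$. The output is then $\gtrsim N$ against an input of norm $1$, for every $q$.
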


Recall that a unweighted inhomogeneous counterpart of Theorem \ref{sharppse2} was obtained by Park
\cite[Theorem 2.6]{p20}.

For $\alpha\in(-\infty,\theta_{\lambda,p}]$,
the following theorem establishes the sharp order
of the formal-adjoint cancellation condition.

\begin{theorem}\label{sharppse1}
Let $\lambda\in(-1,\infty)$ and $w(\cdot):=|\cdot|^\lambda$.
Assume that $p\in(0,\infty)$,
$\alpha\in(-\infty,\theta_{\lambda,p}]$, $q\in(0,\infty]$,
$u\in\mathbb R$, and $L\in\mathbb Z_+$.
Then the following statements are equivalent.
\begin{enumerate}[\rm(i)]
\item For any symbol $\sigma\in\dot S^u_{1,1}$ satisfying
\begin{align*}
\sigma(x, D)^*(x^r)=0 \in \mathcal{S}_\infty'
\quad\text{for all } r\in\mathbb Z\cap [0,L],
\end{align*}
$\sigma(x,D)$ can be extended to a continuous linear mapping
from $\dot B_{p,q}^{\alpha+u}(w) $ to $\dot B^{\alpha}_{p,q}(w)$.

\item $L\ge\lfloor\theta_{\lambda,p}-\alpha\rfloor$.
\end{enumerate}
\end{theorem}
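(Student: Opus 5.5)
The plan is to prove (ii)$\Rightarrow$(i) from Theorem \ref{pseudo}, and (i)$\Rightarrow$(ii) by contradiction with an explicit symbol whose adjoint annihilates polynomials only up to degree $L$.

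\emph{(ii)$\Rightarrow$(i).} Set $W:=w$ (so $m=1$). By Lemmas \ref{scamtr}, \ref{rhoinf} and \ref{value}, $w\in\mathcal A_{p,\infty}$, $d^{\mathrm{upper}}_{p,\infty}(w)=\lambda_+$ and $v_{w,p}=\frac{p}{1+\lambda_+}$. The computation in the proof of Theorem \ref{ad Besov sharp} then gives
$$J=J_{p,\lambda}=\max\left\{1,\frac1p,\frac{1+\lambda}{p}\right\},\qquad J-1=\theta_{\lambda,p}.$$
Here $n=1$ and $A=2$, so $\lambda_-$ in Lemma \ref{edbj} may be taken arbitrarily close to $2$. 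Hence $\zeta_-<1$ can be chosen arbitrarily close to $1$, and for such $\zeta_-$ we have $\lfloor(\theta_{\lambda,p}-\alpha)/\zeta_-\rfloor=\lfloor\theta_{\lambda,p}-\alpha\rfloor$. This holds also when $\theta_{\lambda,p}-\alpha$ is an integer, and the case $\theta_{\lambda,p}-\alpha<0$ is excluded by $\alpha\le\theta_{\lambda,p}$. Therefore, if $L\ge\lfloor\theta_{\lambda,p}-\alpha\rfloor$, the cancellation hypothesis of (i) contains \eqref{irremovable}, and Theorem \ref{pseudo} yields the boundedness.

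\emph{(i)$\Rightarrow$(ii).} Suppose $L\le K-1$, where $K:=\lfloor\theta_{\lambda,p}-\alpha\rfloor\ge 1$. I will build a paraproduct-type symbol
$$\sigma(x,\xi):=\sum_{j\in\mathbb Z}2^{ju}\,\widehat{\phi}(2^{-j}\xi)\,\eta(2^jx),$$
where $\phi$ satisfies \eqref{hs2}. The function $\eta\in\mathcal S$ will be chosen with compact Fourier support, with vanishing moments of order $0,\ldots,L$ and with a nonzero $(L+1)$-st moment; for example $\eta:=\partial^{L+1}\eta_0$ with $\widehat{\eta_0}$ compactly supported and $\widehat{\eta_0}(0)\ne0$. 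The verification steps are:
\begin{enumerate}[\rm(a)]
\item $\sigma\in\dot S^u_{1,1}$: the sum is locally finite in $\xi$, and every $\partial_x$ costs only $2^j\sim|\xi|$.
\item $\sigma(x,D)^*(x^r)=\sum_j 2^{ju}\,\widetilde{\phi}_j*(\eta(2^j\cdot)\,x^r)$ vanishes for $r\le L$, by the moment conditions on $\eta$ (for instance, via the Fourier side of $\eta(2^j\cdot)x^r$ near the origin).
\item $\sigma(x,D)\phi_{Q_{j,k}}=2^{ju}\,\eta(2^j\cdot)\,\phi_{Q_{j,k}}$, which is a molecule on $Q_{j,k}$ with exactly $L$ vanishing moments.
\end{enumerate}

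The contradiction then comes from a test function $\vec f:=\sum_{j=0}^N a_j\phi_{Q_{j,k_j}}$. Its norm is computed through Theorem \ref{dl1103}: $\|\vec f\|_{\dot B^{\alpha+u}_{p,q}(w)}$ is comparable to the $\dot b^{\alpha+u}_{p,q}(w)$-norm of $\{a_j\}$. For the image, I will bound $\|\sigma(x,D)\vec f\|_{\dot B^{\alpha}_{p,q}(w)}$ from below through its low-frequency pieces $\varphi_i*(\cdot)$ with $i<j$. The nonzero $(L+1)$-st moment gives, roughly,
$$\big|\varphi_i*\big(\eta(2^j\cdot)\phi_{Q_{j,k}}\big)\big|\gtrsim 2^{-(j-i)(L+1)}\,2^{j/2}\,2^{i-j}\quad\text{on a set of measure}\sim 2^{-i}\ \text{near } 2^{-j}k.$$
I will place the $k_j$ according to which term attains the maximum in $\theta_{\lambda,p}$:
\begin{enumerate}[\rm(a)]
\item at the origin, $k_j=0$, for the term $\frac{1+\lambda}p-1$;
\item far away, $k_j\in Z_\rho(2^jx_0,C_0)$, for the term $\frac1p-1$, exactly as in the proof of Theorem \ref{ad Besov sharp};
\item a single $j$ with $q$-summation in $i$, for the term $0$.
\end{enumerate}
The goal is to show that the ratio of output norm to input norm grows like $N^{1}$ or like $2^{N(\theta_{\lambda,p}-\alpha-L-1)}$ whenever $L+1\le\theta_{\lambda,p}-\alpha$. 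By the choice of $L$ this is the case, so boundedness fails.

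\emph{Main obstacle.} The hard step is this last lower bound. One must check that the superposition over $j$ does not cancel; I will avoid this by taking $a_j\ge0$ and locating the $\varphi_i$-pieces in disjoint or positive regions. One must also match the weight factors $[1+\rho(k)]^{\lambda/p}$ so that each of the three terms of $\theta_{\lambda,p}$ is reached separately. In the borderline equality case the divergence is only logarithmic, so I expect to have to use the $N^{1+1/q}$ versus $N^{1/q}$ argument from the proof of Theorem \ref{ad Besov sharp}.
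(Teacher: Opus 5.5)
Your direction (ii)$\Rightarrow$(i) is correct and is the paper's argument. The converse, however, fails at the construction of the symbol. The adjoint cancellation of your paraproduct has nothing to do with the moments of $\eta$. With $g_j:=\eta(2^j\cdot)(\phi_j*f)$ one has $\sigma(x,D)f=2\pi\sum_j2^{ju}g_j$. Computing $\int x^rg_j$ on the Fourier side gives
\[
\langle\sigma(x,D)^*(x^r),f\rangle=c_r\sum_{j\in\mathbb Z}2^{j(u-1-r)}\,\overline{\int_{\mathbb R}\widehat\eta^{(r)}(-2^{-j}\zeta)\,\widehat\phi(2^{-j}\zeta)\,\widehat f(\zeta)\,d\zeta},\qquad |c_r|=1 .
\]
The factor $\widehat\phi(2^{-j}\zeta)$ forces $2^{-j}\zeta\in\operatorname{supp}\widehat\phi$, which stays away from $0$ by \eqref{hs2}. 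So only $\widehat\eta^{(r)}$ on $-\operatorname{supp}\widehat\phi$ enters, and the moment conditions $\widehat\eta^{(r)}(0)=0$ are irrelevant. Your "Fourier side near the origin" is exactly the region that $\widetilde\phi_j$ kills. This leaves two possibilities.
\begin{enumerate}
\item If $\operatorname{supp}\widehat\eta$ misses $-\operatorname{supp}\widehat\phi$ (e.g.\ $\widehat{\eta_0}$ of small support), then $\sigma(x,D)^*(x^r)=0$ for every $r\in\mathbb Z_+$. Each $\eta(2^j\cdot)\phi_{Q_{j,k}}$ has Fourier support in an annulus of size $2^j$, so $\varphi_i*(\eta(2^j\cdot)\phi_{Q_{j,k}})=0$ for $i\ll j$ and your lower bound is false. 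Worse, Theorem \ref{pseudo} then makes $\sigma(x,D)$ bounded for every $\alpha$.
\item If $\widehat\eta$ does meet $-\operatorname{supp}\widehat\phi$, the cancellation is decided there. For $\eta=\partial^{L+1}\eta_0$ it fails in general already for $r=0$, so $\sigma$ is not admissible in (i).
\end{enumerate}
The same confusion underlies your step (c): the moments of $\sigma(x,D)\phi_Q$ are $\langle\phi_Q,\sigma(x,D)^*(x^r)\rangle$, not moments of $\eta$.

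The missing idea is to build the cancellation along the twisted diagonal by putting the derivatives \emph{outside} a frequency-demodulating operator. The paper takes $\sigma(x,\xi)=\sum_k2^{k(u-v)}e^{-i\frac54 2^kx}(\xi-\frac54 2^k)^v\chi(2^{-k}\xi)$ with $v=\lfloor\theta_{\lambda,p}-\alpha\rfloor$. This $\sigma(x,D)$ is a constant multiple of $\partial_x^v\widetilde\sigma(x,D)$, so $\sigma(x,D)^*(x^r)=0$ for $r<v$ follows by integration by parts. Lemma \ref{Tf} then gives the exact action $\sigma(x,D)(e^{i\frac54 2^k\cdot}f)=2\pi 2^{k(u-v)}(-i\partial)^vf$. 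In your language, $\sum_j2^{j(u-L-1)}\partial_x^{L+1}[\eta(2^j\cdot)(\phi_j*f)]$ would work, with $\eta(x)=e^{-icx}\eta_1(x)$, $\widehat{\eta_1}$ of small support and $c$ interior to $\operatorname{supp}\widehat\phi$.

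There is a second, independent problem with your plan for the term $0$ in $\theta_{\lambda,p}$, i.e.\ $p>1$ and $\lambda\in(-1,p-1)$. Single dyadic atoms cannot produce unboundedness there in general, even with a correct symbol having exactly $v$ cancellations. The low-frequency pieces $\varphi_i*\sigma(x,D)\phi_{Q_{j,k}}$, $i<j$, give a ratio of order $2^{(i-j)(\alpha+v+1-\frac1p)}$, or with $\frac{1+\lambda}{p}$ in place of $\frac1p$ for atoms at the origin. This is summable over $i<j$, for instance when $p=2$, $\lambda=0$, $\alpha=-1$, $v=1$, although the theorem asserts failure there since $L=0<1=\lfloor\theta_{\lambda,p}-\alpha\rfloor$. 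The paper treats this case with unit-scale wave packets $e^{i\frac54 2^kx}h(x)$, whose gain is $2^{-k(\alpha+v)}$. In the endpoint case $\alpha+v=0$, $q\in(0,1]$, it packs $N$ such packets into the single Besov block $5N$ and computes the input norm with the weighted Littlewood--Paley square-function theorem for $w\in A_p$. Nothing in your outline supplies this ingredient.
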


To prove Theorems \ref{sharppse2} and \ref{sharppse1},
we need several technical lemmas.
We first construct a class of symbols in $\dot S^u_{1,1}$.

\begin{lemma}\label{possym}
Let $u\in\mathbb R$ and $v\in\mathbb Z_+$.
Assume that $\chi\in C_{\rm c}^\infty$ satisfies
$\operatorname{supp} \chi \subset B(\frac54,2^{-2})$ and
$\chi=1$ on $B(\frac54,2^{-3})$.
For any $x\in\mathbb R$ and $\xi\in\mathbb R\setminus\{0\}$, define
\begin{align*}
\sigma(x,\xi):=
\sum_{k\in\mathbb Z}2^{k(u-v)}e^{-i \frac54 2^kx}
(\xi- \tfrac54 2^k)^v \chi( 2^{-k}\xi).
\end{align*}
Then $\sigma\in\dot S^u_{1,1}$ and
$\sigma(x, D)^*(x^r)=0\in \mathcal{S}_\infty'$
for all $r\in\mathbb Z\cap [0,v)$.
\end{lemma}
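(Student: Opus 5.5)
The proof splits into two independent checks: the symbol estimates that place $\sigma$ in $\dot S^u_{1,1}$, and the vanishing of the formal adjoint on the monomials $x^r$ for $r<v$. In the present one-dimensional setting ($n=1$, $A=2$, $\rho(\cdot)=|\cdot|$) the first reduces to showing that, for all $\beta,\gamma\in\mathbb Z_+$,
\begin{align*}
|\partial_x^\gamma\partial_\xi^\beta\sigma(x,\xi)|\lesssim|\xi|^{u+\gamma-\beta}
\end{align*}
uniformly in $(x,\xi)$ with $\xi\neq0$.

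\textbf{Symbol estimates.} The supports $\{\xi:\ 2^{-k}\xi\in B(\frac54,2^{-2})\}=2^k(1,\frac32)$ are pairwise disjoint as $k$ varies, since $2^k\cdot\frac32<2^{k+1}$. Hence for each $\xi\neq0$ at most one term of the sum is nonzero, namely the one with $|\xi|\sim2^k$. For that term:
\begin{itemize}
\item differentiating in $x$ produces a factor $(\frac54 2^k)^\gamma\sim|\xi|^\gamma$;
\item differentiating $(\xi-\frac54 2^k)^v\chi(2^{-k}\xi)$ in $\xi$ by Leibniz gives terms bounded by $2^{k(v-\beta)}$, because on the support $|\xi-\frac54 2^k|\le 2^{k-2}$ and each derivative falling on $\chi(2^{-k}\cdot)$ costs a factor $2^{-k}$.
\end{itemize}
Multiplying by the prefactor $2^{k(u-v)}$ gives the bound $2^{k(u+\gamma-\beta)}\sim|\xi|^{u+\gamma-\beta}$. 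Smoothness on $\mathbb R\times(\mathbb R\setminus\{0\})$ follows because the sum is locally finite.

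\textbf{Vanishing of $\sigma(x,D)^*(x^r)$.} By Lemma \ref{continue}, $\langle\sigma(x,D)^*x^r,\phi\rangle=\int x^r\,\overline{\sigma(x,D)\phi(x)}\,dx$ for $\phi\in\mathcal S_\infty$ (with the paper's pairing convention). This pairing equals a constant multiple of the $r$-th derivative at the origin of the Fourier transform of $\sigma(x,D)\phi$. For a single term,
\begin{align*}
T_k\phi(x)=e^{-i\frac54 2^kx}\int (\xi-\tfrac54 2^k)^v\chi(2^{-k}\xi)\widehat\phi(\xi)e^{ix\xi}\,d\xi=g_k(x-\cdots)
\end{align*}
is the function whose Fourier transform is $\eta\mapsto 2^{k(u-v)}\eta^v\chi(2^{-k}(\eta+\frac54 2^k))\widehat\phi(\eta+\frac54 2^k)$, i.e.\ the frequency modulation shifts $\xi$ to $\eta=\xi-\frac54 2^k$. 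This transform vanishes to order $v$ at $\eta=0$, so all its derivatives of order $r<v$ vanish at $0$; equivalently, $\int x^r T_k\phi(x)\,dx=0$ for $r<v$.

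\textbf{Main obstacle.} The remaining work is to justify interchanging the sum over $k$ with the pairing against $x^r$, i.e.\ to show that $\sum_k\int|x|^r|T_k\phi(x)|\,dx<\infty$. Each $T_k\phi$ is Schwartz, with Fourier support in $|\eta|\le2^{k-2}$, and its size is controlled by $\widehat\phi$ near $|\xi|\sim2^k$. Since $\phi\in\mathcal S_\infty$, $\widehat\phi$ decays faster than any power both as $2^k\to\infty$ and as $2^k\to0$; the latter holds because $\widehat\phi$ vanishes to infinite order at $0$. One checks, via bounds on derivatives of the transform in $\eta$, that
\begin{align*}
\int|x|^r|T_k\phi(x)|\,dx\lesssim 2^{-N|k|}
\end{align*}
for any $N$, after accounting for the polynomial factors $2^{k(u-v)}$ and those arising from differentiating $\chi(2^{-k}\cdot)$. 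Summability in $k$ then follows, and the pairing vanishes. Alternatively, Lemma \ref{continue} already guarantees that $\sigma(x,D)\phi\in\mathcal S$, so the needed convergence can be taken from its proof.
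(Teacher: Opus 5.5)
Your proof is correct. The symbol estimates (disjoint dyadic supports plus Leibniz) are exactly the paper's argument. For the cancellation you take a different route.

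\textbf{Your route.} You work one dyadic piece at a time. After the modulation, each $T_k\phi$ has Fourier transform equal to a constant multiple of $2^{k(u-v)}\eta^v\chi(2^{-k}\eta+\frac54)\widehat\phi(\eta+\frac54 2^k)$, so its moments of order $r<v$ vanish. You then need the summability $\sum_k\int|x|^r|T_k\phi(x)|\,dx<\infty$. This does hold: $\widehat\phi$ vanishes to infinite order at $0$ and decays rapidly at $\infty$, so the polynomial losses $2^{\pm kl}$ from differentiating $\chi(2^{-k}\cdot)$ are harmless. However, you only sketch this estimate (``one checks'').

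\textbf{The paper's route.} The paper sums first. It introduces $\widetilde\sigma(x,\xi):=\sum_k 2^{k(u-v)}e^{-i\frac54 2^k x}\chi(2^{-k}\xi)$. The same Leibniz computation gives $\widetilde\sigma\in\dot S^{u-v}_{1,1}$, so $\widetilde\sigma(x,D)f\in\mathcal S$ by Lemma \ref{continue}. Since $\partial_x^v e^{ix(\xi-\frac54 2^k)}=i^v(\xi-\frac54 2^k)^v e^{ix(\xi-\frac54 2^k)}$, one has $\sigma(x,D)f=(-i)^v\partial_x^v\widetilde\sigma(x,D)f$. Then $v$ integrations by parts against $x^r$ kill the pairing. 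There are no boundary terms because $\widetilde\sigma(x,D)f$ is Schwartz.

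\textbf{Comparison.} The paper's device replaces your explicit decay-in-$k$ bookkeeping with one more application of the symbol-class machinery, and the convergence issue disappears. Your route is more hands-on and self-contained, but it is more computational.

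Your closing ``alternatively'' is only a pointer, not an argument. Knowing $\sigma(x,D)\phi\in\mathcal S$ does not by itself give the termwise summability or the vanishing moments. The efficient way to use Lemma \ref{continue} is to apply it to the ``antiderivative'' symbol $\widetilde\sigma$, as the paper does.
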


\begin{proof}
For any $k\in\mathbb Z$, $\operatorname{supp} \chi (2^{-k}\cdot)
\subset 2^k B(\frac54,2^{-2})$.
Thus, for any $\xi\in \operatorname{supp} \chi (2^{-k}\cdot)$,
\begin{align}\label{xi}
2^k<|\xi| < \tfrac32 2^k
\quad\text{and}\quad
|\xi- \tfrac54 2^k| < \tfrac14 2^k,
\end{align}
which further implies that
the summands defining $\sigma$ have pairwise disjoint supports for $\xi$.
By \eqref{xi} and Leibniz's rule, we obtain,
for any $\beta,\gamma\in\mathbb Z_+$,
$x\in\mathbb R$, and $\xi\in \operatorname{supp} \chi (2^{-k}\cdot)$,
\begin{align*}
&\left|\partial_x^\gamma\partial_\xi^\beta
\left[2^{k(u-v)}e^{-i \frac54 2^k x}
( \xi- \tfrac54 2^k )^v\chi(2^{-k}\xi)\right]\right|  \\
&\quad\lesssim 2^{k(u-v)} (\tfrac54 2^k)^{\gamma}
\sum_{t=0}^{\beta \wedge v} |\xi-\tfrac54 2^k|^{v-t} 2^{-k(\beta-t)}
\lesssim 2^{k(u+\gamma-\beta)}
\sim |\xi|^{u+\gamma-\beta},
\end{align*}
which further implies that $\sigma\in\dot S^u_{1,1}$.

We next prove the cancellation of the formal adjoint.
Let $f\in\mathcal S_\infty$.
For any $r\in\mathbb Z_+$,
\begin{align*}
\left\langle \sigma(x, D)^*(x^r),f\right\rangle
=\left\langle x^r,\sigma(x,D)f\right\rangle.
\end{align*}
For any $x\in\mathbb R$ and $\xi\in\mathbb R\setminus\{0\}$, define
\begin{equation*}
\widetilde\sigma(x,\xi):=
\sum_{k\in\mathbb Z} 2^{k(u-v)} e^{-i \frac54 2^k x}
\chi\left(2^{-k}\xi\right).
\end{equation*}
Repeating the argument used in the proof of $\sigma\in\dot S^u_{1,1}$,
we obtain $\widetilde\sigma\in\dot S^{u-v}_{1,1}$.
This, together with Lemma \ref{continue},
further implies that $\widetilde\sigma (x, D) f \in \mathcal S$.
From the fact that $\chi,f\in\mathcal S$, it follows that
\begin{align*}
\partial_x^v \widetilde\sigma(x,D)f
=\sum_{k\in\mathbb Z} 2^{k(u-v)}
\int_{\mathbb R} \partial_x^v e^{ix(\xi-\frac54 2^k)}
\chi(2^{-k}\xi)\widehat f(\xi)\,d\xi
=i^v \sigma(x,D)f.
\end{align*}
By these and integration by parts, we obtain,
for any $r\in\mathbb Z$ with $0\le r<v$,
\begin{align*}
\left\langle \sigma(x, D)^*(x^r),f\right\rangle
&= \left\langle x^r,\sigma(x,D)f\right\rangle
= \left\langle x^r, (-i)^v \partial_x^v \widetilde\sigma(x,D)f\right\rangle \\
&= (-i)^v \int_{\mathbb R}  \partial_x^v(x^r) \overline{\widetilde\sigma(x,D)f} \, dx
=0,
\end{align*}
and hence $\sigma(x, D)^*(x^r)=0\in \mathcal{S}_\infty'$.
This completes the proof of Lemma \ref{possym}.
\end{proof}

The following lemma establishes that applying the operator $\sigma(x,D)$
to a certain function is equivalent to taking a scaled derivative.

\begin{lemma} \label{Tf}
Let $u\in\mathbb R$, $v\in\mathbb Z_+$
and $\sigma$ be as in Lemma \ref{possym}.
Assume that $k\in\mathbb Z$, $x_0\in\mathbb R$, and $f\in\mathcal S$ satisfy
$$
\operatorname{supp} \left[ e^{i \frac54 2^k(\cdot -x_0)} f(\cdot-x_0) \right]^\wedge
\subset 2^k B(\tfrac54,2^{-3}).
$$
Then, for any $x\in\mathbb R$,
\begin{align*}
\sigma(x,D) \left( e^{i \frac54 2^k(\cdot -x_0)} f(\cdot -x_0) \right)
= e^{-i \frac54 2^k x_0}
2^{k(u-v)}(2\pi) (-i\partial_x)^v f(x-x_0).
\end{align*}
\end{lemma}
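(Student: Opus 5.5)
The plan is a direct computation on the Fourier side, using that the $\chi$-pieces of $\sigma$ have disjoint frequency supports. Set $g(y):=e^{i\frac54 2^k(y-x_0)}f(y-x_0)$. By the hypothesis, $\widehat g$ is supported in $2^kB(\tfrac54,2^{-3})$. This set is contained in the region where $\chi(2^{-k}\cdot)=1$. By \eqref{xi}, applied for each $k'\neq k$, it is disjoint from $\operatorname{supp}\chi(2^{-k'}\cdot)$, since those annuli are pairwise disjoint. Hence, for every $\xi\in\operatorname{supp}\widehat g$,
\[
\sigma(x,\xi)=2^{k(u-v)}e^{-i\frac54 2^kx}\left(\xi-\tfrac54 2^k\right)^v .
\]
Since $g\in\mathcal S$ and $\widehat g$ has compact support away from the origin, $g\in\mathcal S_\infty$. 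Therefore $\sigma(x,D)g=\int\sigma(x,\xi)\widehat g(\xi)e^{ix\xi}\,d\xi$ is legitimate, and we may substitute the single term above.

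Next we compute $\widehat g$ by the translation and modulation rules: $\widehat g(\xi)=e^{-ix_0\xi}\widehat f(\xi-\tfrac54 2^k)$. Substituting $\eta:=\xi-\tfrac54 2^k$ gives
\[
\sigma(x,D)g(x)=2^{k(u-v)}e^{-i\frac54 2^kx}\int_{\mathbb R}\eta^v\,\widehat f(\eta)\,e^{-ix_0(\eta+\frac54 2^k)}e^{ix(\eta+\frac54 2^k)}\,d\eta .
\]
The factors $e^{\pm i\frac54 2^kx}$ cancel, leaving
\[
2^{k(u-v)}e^{-i\frac54 2^kx_0}\int_{\mathbb R}\eta^v\widehat f(\eta)e^{i(x-x_0)\eta}\,d\eta .
\]

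Finally, we identify the remaining integral. With the paper's normalization $f^\vee(\xi)=(2\pi)^{-1}\widehat f(-\xi)$ in dimension one, we have $\int\widehat f(\eta)e^{iy\eta}\,d\eta=2\pi f(y)$. Also $\eta^v\widehat f=\big((-i\partial)^vf\big)^\wedge$. Hence the integral equals $2\pi(-i\partial_x)^vf(x-x_0)$, which yields the stated identity.

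The only delicate point is the support bookkeeping in the first step. We must check that $2^kB(\tfrac54,2^{-3})$ lies where $\chi(2^{-k}\cdot)\equiv1$, which is immediate from $\chi=1$ on $B(\tfrac54,2^{-3})$. We must also check that it avoids all other pieces, which follows from the disjointness already noted in the proof of Lemma \ref{possym}. Everything else is routine.
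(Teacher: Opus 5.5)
Your proposal is correct and follows essentially the same route as the paper. You reduce $\sigma$ to its $k$-th piece using the disjoint supports and $\chi(2^{-k}\cdot)=1$ on $2^kB(\frac54,2^{-3})$, compute the Fourier transform of the input by translation and modulation, and substitute $\eta=\xi-\frac54 2^k$. You then identify $\int\eta^v\widehat f(\eta)e^{i(x-x_0)\eta}\,d\eta$ via Fourier inversion with the $(2\pi)^{-1}$ normalization, which is what the paper does by pulling $(-i\partial_x)^v$ outside the integral. Your explicit check that the input lies in $\mathcal S_\infty$, so that $\sigma(x,D)$ is defined on it, is a small point the paper leaves implicit.
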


\begin{proof}
By the fact that the summands defining $\sigma$ have pairwise disjoint supports
(as shown in the proof of Lemma \ref{possym})
and that $\chi(2^{-k}\cdot)=1$ on $2^k B(\frac54,2^{-3})$, we obtain
\begin{align*}
\sigma(x,D) \left( e^{i \frac54 2^k(\cdot -x_0)} f(\cdot -x_0) \right)
&= 2^{k(u-v)} \int_{\mathbb R}
( \xi- \tfrac54 2^k)^v
e^{-ix_0 \xi} \widehat{f}( \xi- \tfrac54 2^k)
e^{ix(\xi-\frac54 2^k)}\,d\xi \\
&= e^{-i \frac54 2^k x_0} 2^{k(u-v)}
\int_{\mathbb R}\eta^v\widehat{f}(\eta)e^{i(x-x_0)\eta}\,d\eta \\
&= e^{-i \frac54 2^k x_0} 2^{k(u-v)} (-i\partial_x)^v
\left( \int_{\mathbb R}\widehat{f}(\eta)
e^{i(x-x_0)\eta}\,d\eta \right) \notag\\
&= e^{-i \frac54 2^k x_0}
2^{k(u-v)}(-i\partial_x)^v [2\pi (\widehat{f})^{\vee}(x-x_0)] \\
&= e^{-i \frac54 2^k x_0}
2^{k(u-v)}(2\pi) (-i\partial_x)^v f(x-x_0).
\end{align*}
This completes the proof of Lemma \ref{Tf}.
\end{proof}

%
%

The following lemma provides an estimate for the power-weighted integral of
Schwartz functions when the singularity of the weight lies away from the origin.

\begin{lemma} \label{721}
Let $p\in(0,\infty)$ and $\lambda\in(-1,\infty)$.
Assume that $h\in\mathcal S$ satisfies $\int_{B(0,\frac12)} |h|^p>0$.
Then, for any $y_0\in\mathbb R\setminus(-1,1)$,
\begin{align*}
\int_{\mathbb R} |h(y)|^p |y-y_0|^{\lambda} \, dy
\sim |y_0|^\lambda,
\end{align*}
where the positive equivalence constants are independent of $y_0$.
\end{lemma}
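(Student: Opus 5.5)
We prove the two one-sided bounds separately: the upper bound comes from splitting $\mathbb R$ according to the size of $|y|$ relative to $|y_0|$, and the lower bound comes from the neighbourhood of the origin where $h$ carries positive $L^p$ mass.

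\emph{Upper bound.} Split $\mathbb R$ into $E_1:=\{y:\ |y|\le\frac12|y_0|\}$ and $E_2:=\{y:\ |y|>\frac12|y_0|\}$.

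On $E_1$ we have $\frac12|y_0|\le|y-y_0|\le\frac32|y_0|$, so $|y-y_0|^\lambda\sim|y_0|^\lambda$ for either sign of $\lambda$. Hence
\begin{align*}
\int_{E_1}|h(y)|^p|y-y_0|^\lambda\,dy\lesssim|y_0|^\lambda\|h\|_{L^p}^p.
\end{align*}

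On $E_2$ we use the Schwartz decay $|h(y)|\lesssim_N(1+|y|)^{-N}$ with $N$ large. Since $|y|\gtrsim|y_0|$ there, $|h(y)|^p\lesssim(1+|y_0|)^{-Np/2}(1+|y|)^{-Np/2}$. We then split $E_2$ once more.
\begin{itemize}
\item On $\{|y-y_0|\le1\}$, the bound $\int_{|y-y_0|\le1}|y-y_0|^\lambda\,dy<\infty$ holds because $\lambda>-1$. This piece is therefore bounded by $(1+|y_0|)^{-Np/2}$.
\item On $\{|y-y_0|>1\}$, we use $|y-y_0|^\lambda\le(1+|y|+|y_0|)^{\lambda_+}$. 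With $N$ large this piece is bounded by $(1+|y_0|)^{\lambda_+-Np/4}$.
\end{itemize}
Both quantities are $\lesssim|y_0|^\lambda$ whenever $|y_0|\ge1$, once $Np/4>\lambda_++|\lambda|$. This gives $\int_{\mathbb R}|h|^p|\cdot-y_0|^\lambda\lesssim|y_0|^\lambda$.

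\emph{Lower bound.} For $y\in B(0,\frac12)$ and $|y_0|\ge1$ we have $\frac12|y_0|\le|y-y_0|\le\frac32|y_0|$, so $|y-y_0|^\lambda\gtrsim|y_0|^\lambda$. Therefore
\begin{align*}
\int_{\mathbb R}|h(y)|^p|y-y_0|^\lambda\,dy\ge\int_{B(0,\frac12)}|h(y)|^p|y-y_0|^\lambda\,dy\gtrsim|y_0|^\lambda\int_{B(0,\frac12)}|h|^p,
\end{align*}
and the last integral is a positive constant independent of $y_0$ by hypothesis.

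The only point that needs care is the upper bound near $y=y_0$ when $\lambda<0$. There the integrability of $|y-y_0|^\lambda$ near $y_0$, which uses $\lambda>-1$, is combined with the smallness of $h$ near $y_0$, which uses the Schwartz decay since $|y_0|\ge1$. All constants depend only on $h$, $p$ and $\lambda$.
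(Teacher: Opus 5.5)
Your proof is correct and follows essentially the same route as the paper. Both use the identical lower bound from $B(0,\frac12)$, and both get the upper bound from a region where $|y-y_0|\sim|y_0|$ plus Schwartz decay elsewhere, with $\lambda>-1$ supplying local integrability at $y_0$. The only difference is cosmetic: after reducing to $y_0\ge1$, the paper uses the specific decay $|h(y)|^p\lesssim|y|^{-(2+\lambda)}$ and a near-$y_0$ window $(\frac12y_0,\frac32y_0]$ of length $\sim y_0$, whereas you use a unit window around $y_0$ and decay of arbitrarily high order.
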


\begin{proof}
Without loss of generality, we may assume that $y_0\in[1,\infty)$. Then
\begin{align*}
\int_{\mathbb R} |h(y)|^p |y-y_0|^{\lambda} \, dy
\gtrsim y_0^\lambda \int_{B(0,\frac12)} |h(y)|^p \, dy.
\end{align*}
From $h\in\mathcal S$, it follows that
\begin{align*}
\int_{\mathbb R} |h(y)|^p |y-y_0|^{\lambda} \, dy
&\lesssim \int_{[-\frac32 y_0, \frac12 y_0]} |h(y)|^p y_0^\lambda \, dy
+ \int_{\mathbb R\setminus [-\frac32 y_0, \frac12 y_0]} \frac{|y-y_0|^{\lambda}}{|y|^{2+\lambda}} \, dy \\
&\lesssim \|h\|_{L^p}^p y_0^\lambda
+ \int_{(\frac12 y_0, \frac32 y_0]} \frac{|y-y_0|^{\lambda}}{|y_0|^{2+\lambda}} \, dy
+ \int_{|y|>\frac32 y_0} \frac{|y|^{\lambda}}{|y|^{2+\lambda}} \, dy \\
&\sim y_0^\lambda + y_0^{-1} + y_0^{-1}
\sim y_0^\lambda,
\end{align*}
where the last equivalence follows from $\lambda\in(-1,\infty)$ and $y_0\in[1,\infty)$.
This completes the proof of Lemma \ref{721}.
\end{proof}

The following lemma shows that if the cancellation order is insufficient,
the pseudo-differential operator    fails  to be bounded.

\begin{lemma}\label{sharpcounterexample}
Let $\lambda\in(-1,\infty)$ and $w(\cdot):=|\cdot|^\lambda$.
Assume that $p\in(0,\infty)$,
$\alpha\in(-\infty,\theta_{\lambda,p}]$,
$q\in(0,\infty]$, and  $u\in\mathbb R$.
Then there exists a symbol $\sigma\in\dot S^u_{1,1}$ such that
\begin{equation}\label{countercancellation}
\sigma(x, D)^*(x^r)=0
\quad\text{for all }
r\in\mathbb Z\cap [0,\lfloor\theta_{\lambda,p}-\alpha\rfloor),
\end{equation}
but $\sigma(x,D)$ is not bounded from
$\dot B_{p,q}^{\alpha+u}(w)$ to $\dot B_{p,q}^{\alpha}(w)$.
\end{lemma}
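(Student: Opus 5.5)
The plan is to take the symbol of Lemma \ref{possym} with $v:=\lfloor\theta_{\lambda,p}-\alpha\rfloor\in\mathbb Z_+$, which is nonnegative because $\alpha\le\theta_{\lambda,p}$. That lemma already gives $\sigma\in\dot S^u_{1,1}$ and the cancellation \eqref{countercancellation} for all $r\in\mathbb Z\cap[0,v)$. So everything reduces to building test functions on which $\sigma(x,D)$ fails to be bounded from $\dot B^{\alpha+u}_{p,q}(w)$ to $\dot B^\alpha_{p,q}(w)$.

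\emph{Test functions.} Fix $g\in\mathcal S$ with $\operatorname{supp}\widehat g\subset B(0,2^{-4})$ and $\widehat g\neq0$ near $0$. For $k\in\mathbb Z$ and $x_0\in\mathbb R$ let
\[
f_{k,x_0}(x):=e^{i\frac54 2^k(x-x_0)}g\bigl(2^k(x-x_0)\bigr).
\]
Its Fourier support lies in $2^kB(\frac54,2^{-3})$, a single dyadic band, so the reproducing identity \eqref{phifj} shows that only $O(1)$ Littlewood--Paley pieces are nonzero. This gives
\[
\|f_{k,x_0}\|_{\dot B^{\alpha+u}_{p,q}(w)}\sim 2^{k(\alpha+u)}\Bigl(\int|g(2^k(x-x_0))|^p|x|^\lambda\,dx\Bigr)^{1/p}.
\]
By Lemma \ref{Tf} applied to $g(2^k\cdot)$,
\[
\sigma(x,D)f_{k,x_0}= c\,2^{ku}\,h\bigl(2^k(x-x_0)\bigr),\qquad \widehat h(\eta)=\eta^v\widehat g(\eta).
\]
The key point is that the modulation has been removed, so the output lives at low frequencies $|\eta|\lesssim 2^{k-4}$ and meets every band $j\le k-4$. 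For such $j$, the identity $\varphi_j*h(2^k\cdot)=2^{(j-k)(v+1)}\,h_j(2^j\cdot)$ holds with $\widehat{h_j}(\zeta)=\widehat\varphi(\zeta)\,\zeta^v\,\widehat g(2^{j-k}\zeta)$. These $h_j$ form a family in $\mathcal S$ bounded uniformly in $j-k\le-4$, with $L^p(B(0,\frac12))$ norm bounded below since $\widehat g(0)\neq0$. Consequently
\[
2^{j\alpha}\bigl\|\varphi_j*\sigma(x,D)f_{k,x_0}\bigr\|_{L^p(w)}\sim 2^{ku}\,2^{j\alpha}\,2^{(j-k)(v+1)}\Bigl(\int|h_j(2^j(x-x_0))|^p|x|^\lambda\,dx\Bigr)^{1/p}.
\]

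\emph{Comparison, split according to which term realizes $\theta_{\lambda,p}$.} Each weighted integral is evaluated either by direct scaling (when $x_0=0$) or by Lemma \ref{721} (when $2^j|x_0|\ge1$).
\begin{enumerate}[(a)]
\item If $\theta_{\lambda,p}=\frac{1+\lambda}p-1$, take $x_0=0$. The ratio of output to input is $\gtrsim 2^{(j-k)(\alpha+v+1-\frac{1+\lambda}p)}$. Since $v\le\theta_{\lambda,p}-\alpha$, the exponent is $\le0$, and letting $j\to-\infty$ gives unboundedness when the exponent is negative.
\item If $\theta_{\lambda,p}=\frac1p-1$, take $|x_0|=2^{N}$ with $k=0$. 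For $-N\le j\le-4$, Lemma \ref{721} makes both weights contribute the same factor $|x_0|^{\lambda/p}$. The ratio becomes $2^{(j-k)(\alpha+v+1-\frac1p)}$, and letting $N\to\infty$ again gives blow-up when the exponent is negative.
\item If $\theta_{\lambda,p}=0$, or in the borderline situation where the exponent in (a) or (b) vanishes (which happens exactly when $\theta_{\lambda,p}-\alpha\in\mathbb Z$), the per-band ratio is only $\sim1$. I would then gain through the $\ell^q$ structure instead. For $q<\infty$, the output of a single $f_{k,x_0}$ has about $N$ bands $j\in[k-N,k-4]$ each of size $\sim1$, giving $\gtrsim N^{1/q}$ against an input of size $\sim1$. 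For $q=\infty$, I would superpose $\sum_{k=1}^N f_{k,0}$ with amplitudes normalized so that each band of the input has norm $1$. The low-frequency outputs then add coherently in each fixed low band $j$, yielding a band norm $\gtrsim N$ against an input norm $\sim1$.
\end{enumerate}

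The main obstacle is this last borderline/$\theta_{\lambda,p}=0$ case. Coherent addition requires checking that the contributions $2^{ku}\varphi_j*h(2^k\cdot)$ for different $k$ do not cancel. I would handle this by choosing $x_0=0$ and $\widehat g\ge0$ with $\widehat g(0)>0$, and then evaluating at a point near $0$ where the real parts of all contributions are comparable and of the same sign. The second technical point is that $\sigma(x,D)$ is only defined on $\mathcal S_\infty$. Since each $f_{k,x_0}\in\mathcal S_\infty$ (its Fourier support avoids $0$), failure of the a priori estimate on $\mathcal S_\infty$ already rules out any continuous extension.
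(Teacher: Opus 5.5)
Your choice of $\sigma$ and your cases (a) and (b) are sound. Case (c), however, fails in the regime $p\in(1,\infty)$, $\lambda\in(-1,p-1)$, where $\theta_{\lambda,p}=0>\theta':=\max\{\frac1p-1,\frac{1+\lambda}{p}-1\}$. There your claim that the per-band ratio is $\sim1$ is false. Redo your computation with Lemma \ref{721} for arbitrary $x_0$. The ratio of the $j$-th band ($j\le k-4$) of $\sigma(x,D)f_{k,x_0}$ to the input norm is then bounded, up to constants, by
\[
2^{(k-j)(\frac1p-1-\alpha-v)}\Bigl(\frac{|x_0|+2^{-j}}{|x_0|+2^{-k}}\Bigr)^{\frac{\lambda}{p}}\le 2^{(k-j)(\theta'+\delta_0)},\qquad \delta_0:=-\alpha-\lfloor-\alpha\rfloor\in[0,1),
\]
where the inequality uses $v=\lfloor-\alpha\rfloor$ in this regime. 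The exponent $\theta'+\delta_0$ is negative whenever $\delta_0<-\theta'$, in particular whenever $\alpha\in\mathbb Z$.

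For example, take $p=2$, $\lambda=0$, $\alpha=-\frac3{10}$, so $v=0$ and no cancellation is imposed. Then the ratio is $\lesssim 2^{-(k-j)/5}$ for every $k$ and $x_0$. The low bands of one packet decay geometrically instead of each being of size $\sim1$. In your coherent sum over $k$, the contributions to a fixed band form a convergent geometric series. So nothing in (c) produces growth, although the lemma asserts unboundedness here.

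The obstruction is structural. Your packets have bandwidth $\sim2^k$, so the factor $(\xi-\frac54 2^k)^v$ in $\sigma$ has size $2^{kv}$ on their Fourier support, and the image lives at spatial scale $2^{-k}$. Such packets can only detect $\theta'$, never the $0$ in $\theta_{\lambda,p}$.

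The missing idea, which is the paper's Case (3), is to use packets of \emph{unit} bandwidth: $e^{i\frac54 2^kx}h(x)$ with a fixed $h\in\mathcal S$ and $\operatorname{supp}\widehat h\subset B(\frac54,2^{-5})$. By Lemma \ref{Tf} the image is $2\pi 2^{k(u-v)}(-i\partial)^v h$, which always lies in the same band. The ratio is therefore $2^{-k(\alpha+v)}=2^{k\delta_0}$, unbounded when $\alpha\notin\mathbb Z$. When $\alpha+v=0$, the paper splits by $q$.
\begin{itemize}
\item For $q>1$ it uses $N^{-1/q}\sum_{k=N}^{2N-1}2^{-k(\alpha+u)}e^{i\frac54 2^kx}h$. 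All images coincide, so the ratio is $\sim N^{1-1/q}$.
\item For $q\le1$ it uses the single-band input $N^{-1/2}2^{-5N(\alpha+u)}e^{i\frac54 2^{5N}x}\sum_{k=N}^{2N-1}e^{i\frac54 2^kx}h(x)$. Its $L^p(w)$ norm is $\sim\|h\|_{L^p(w)}$ by the weighted Littlewood--Paley square-function theorem, which applies since $w\in A_p$. Its image splits into $N$ bands of size $\sim1$, giving $N^{\frac1q-\frac12}\to\infty$.
\end{itemize}
This square-function step has no counterpart in your plan.

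A smaller point: your $q=\infty$ superposition at $x_0=0$ only works in case (a). In case (b) with $\lambda<0$ and $\theta_{\lambda,p}-\alpha-v=0$, the displayed formula gives ratio $2^{(k-j)\lambda/p}\to0$. The packets must instead be placed away from the origin with all phases $e^{-i\frac54 2^kx_0}$ equal to $1$, for instance $x_0=\frac85\pi$ with $k,j\ge0$ as in the paper's Case (1).
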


\begin{proof}
By Theorem \ref{dl1103}(iii), the Besov quasi-norms can be computed
using a specifically chosen Littlewood--Paley function.
Let $\varphi$ be as in the proof of Theorem \ref{boundedfail_p}.
Then $\varphi\in\mathcal S$ satisfies \eqref{hs2} and
\begin{enumerate}[{\rm(a)}]
\item $\operatorname{supp} \widehat\varphi
\subset [-2, -\frac34] \cup [\frac34, 2]$,

\item for any $\xi\in J:= [-\frac32, -1] \cup [1, \frac32]$,
we have $\widehat{\varphi}(\xi) = 1$
and $\widehat{\varphi}(2^v \xi) = 0$ for all $v\in\mathbb Z\setminus\{0\}$,

\item for any $\xi\in [-\frac74, -\frac78] \cup [\frac78, \frac74]$,
we have $\widehat{\varphi}(\xi) \geq \frac12$.
\end{enumerate}
Let $h\in\mathcal S$ satisfy
\begin{equation} \label{h}
\operatorname{supp} \widehat{h}
\subset B( \tfrac54,2^{-5})
\subset J
\quad\text{and}\quad
\widehat{h}=1
\text{ on } B( \tfrac54,2^{-6}).
\end{equation}
For any $k\in\mathbb Z$ and $x\in\mathbb R$,
let $h_k(\cdot):=2^kh(2^k\cdot)$
and $g_k(x):=e^{-i \frac54 2^k x}h_k(x)$.
Then
\begin{equation} \label{g}
\widehat{g_k}(\xi)=\widehat{h_k}(\xi+\tfrac54 2^k).
\end{equation}
Let $v:=\lfloor \theta_{\lambda,p}-\alpha \rfloor$
and $\sigma$ be as in Lemma \ref{possym}.
Then $\sigma$ satisfies \eqref{countercancellation}.
To prove the present lemma, we consider
the following three cases for $p$ and $\lambda$.

\emph{Case (1)} $p\in(0,1]$ and $\lambda\in(-1,0)$.
In this case, $\theta_{\lambda,p}=\frac{1}{p}-1$
and $v=\lfloor \frac{1}{p}-1-\alpha \rfloor$.
For any $N\in\mathbb N$, let
\begin{equation} \label{fN}
f_N := N^{-\frac1q} \sum_{k=N}^{2N-1} 2^{-k(\alpha+u)}
\frac{h_k(\cdot-x_N)}{\|h_k(\cdot-x_N)\|_{L^p(w)}},
\end{equation}
where $x_N\in\mathbb R$ will be determined later.
Note that, for any $j\in\mathbb Z$,
\begin{align*}
[h_j(\cdot-x_N)]^\wedge(\xi)
= e^{-ix_N\xi} \widehat{h_j}(\xi).
\end{align*}
This, together with (b) and \eqref{h},
further implies that, for any $j,k\in\mathbb Z$,
\begin{align*}
\varphi_j*[h_k(\cdot-x_N)]=
\begin{cases}
h_k(\cdot-x_N) &\text{if } j=k, \\
0 & \text{if } j\neq k.
\end{cases}
\end{align*}
Thus, for any $j\in\mathbb Z$,
\begin{align*}
\varphi_j* f_N
= \begin{cases}
N^{-\frac1q} 2^{-j(\alpha+u)}
\frac{h_j(\cdot-x_N)}{\|h_j(\cdot-x_N)\|_{L^p(w)}}
&\text{if } N\leq j \leq 2N-1, \\
0 & \text{otherwise}.
\end{cases}
\end{align*}
Then
\begin{align} \label{FN2}
\|f_N\|_{\dot B_{p,q}^{\alpha+u}(w)}
=\left\| \left\{ N^{-\frac1q} \right\}_{j=N}^{2N-1} \right\|_{\ell^q}
=1.
\end{align}

Next, we estimate $\|\sigma(x,D)f_N\|_{\dot B_{p,q}^{\alpha}(w)}$.
Note that, for any $k\in\mathbb Z$,
$$
\operatorname{supp} \left[e^{i \frac54 2^k (\cdot -x_N)} g_k(\cdot-x_N)\right]^\wedge
= \operatorname{supp} [h_k(\cdot-x_N)]^\wedge
= \operatorname{supp} \widehat{h_k}
\subset 2^k B(\tfrac54,2^{-3}).
$$
This, together with Lemma \ref{Tf}, further implies that
\begin{align*}
\sigma(x,D) h_k (\cdot-x_N)
&=  \sigma(x,D) \left( e^{i \frac54 2^k (\cdot -x_N)} g_k(\cdot-x_N) \right) \\
&= e^{-i \frac54 2^k x_N}
2^{k(u-v)}(2\pi) (-i\partial_x)^v g_k(x-x_N).
\end{align*}
Thus,
\begin{align} \label{GN}
G_N(x)
:=\sigma(x,D) f_N
= 2\pi  N^{-\frac1q}
\sum_{k=N}^{2N-1} e^{-i \frac54 2^k x_N} 2^{-k(\alpha+v)}
\frac{(-i\partial_x)^v g_k(x-x_N)}{\|h_k(\cdot-x_N)\|_{L^p(w)}}.
\end{align}
By \eqref{g} and \eqref{h}, we conclude that,
for any $k\in\mathbb Z$ and $\xi\in B(0,2^{k-6})$,
\begin{align*}
[(-i\partial)^v g_k(\cdot-x_N)]^\wedge(\xi)
&= \xi^v [g_k(\cdot-x_N)]^\wedge(\xi)
= \xi^v e^{-ix_N\xi} \widehat{g_k}(\xi) \\
&= \xi^v e^{-ix_N\xi} \widehat{h_k} (\xi + \tfrac54 2^k)
= \xi^v e^{-ix_N\xi}.
\end{align*}
This, together with (a), further implies that,
for any $j,k\in\mathbb Z$ with $j\leq k-7$,
\begin{align} \label{722}
\varphi_j * [(-i\partial)^v g_k(\cdot-x_N)]
= (\widehat{\varphi_j} \xi^v e^{-ix_N\xi})^\vee
=(-i\partial)^v \varphi_j(\cdot-x_N).
\end{align}
Let $x_N:=\frac85\pi$.
Then $e^{-i\frac{5}{4} 2^k x_N} = e^{-i2\pi 2^k} = 1$.
By \eqref{h} and \cite[p.\,225]{fs97}, we conclude that
$\{x\in\mathbb R:\ h(x)=0\}$ is a countable discrete set.
From this and Lemma \ref{721}, we deduce that, for any $k\in\mathbb Z_+$,
\begin{align*}
\|h_k(\cdot-\tfrac85\pi)\|_{L^p(w)}^p
&= \int_{\mathbb R} |h_k(x-\tfrac85\pi)|^p |x|^{\lambda} \, dx \\
&= 2^{k(p-1-\lambda)} \int_{\mathbb R} |h(y)|^p
|y+2^k \tfrac85\pi|^{\lambda} \, dy \\
&\sim 2^{k(p-1-\lambda)} (2^k\tfrac85\pi)^{\lambda}
\sim 2^{k(p-1)}.
\end{align*}
Thus, for any $N\in\mathbb N$ with $N\geq 7$,
\begin{align} \label{G_N}
\|G_N\|_{\dot B_{p,q}^{\alpha}(w)}
\gtrsim \sum_{k=N}^{2N-1} 2^{k\delta}
\left\| \left\{ N^{-\frac1q} 2^{j\alpha}
\|(-i\partial)^v \varphi_j(\cdot-\tfrac85\pi)\|_{L^p(w)}
\right\}_{j=0}^{N-7} \right\|_{\ell^q},
\end{align}
where $\delta:= \frac1p-1-\alpha - v
= \frac1p-1-\alpha - \lfloor\frac1p-1-\alpha\rfloor \in[0,1)$.
By the fact that
$$
\operatorname{supp} (\partial^v\varphi)^\wedge
=\operatorname{supp} \widehat{\varphi}
\subset [-2, 2]
$$
and \cite[p.\,225]{fs97}, we conclude that
$\{x\in\mathbb R:\ \partial^v\varphi(x)=0\}$ is a countable set.
From this and Lemma \ref{721}, we deduce that, for any $j\in\mathbb Z_+$,
\begin{align*}
\|(-i\partial)^v \varphi_j(\cdot-\tfrac85\pi)\|_{L^p(w)}^p
&= \int_{\mathbb R} |\partial_x^v \varphi_j(x-\tfrac85\pi)|^p |x|^{\lambda} \, dx \\
&= 2^{j(p+pv-1-\lambda)} \int_{\mathbb R} |\partial^v \varphi(y)|^p |y+2^j \tfrac85\pi|^{\lambda} \, dy \\
&\sim 2^{j(p+pv-1-\lambda)} (2^j \tfrac85\pi)^{\lambda}
\sim 2^{j(p+pv-1)}.
\end{align*}
Substituting this estimate into \eqref{G_N}, we obtain,
for any $N\in\mathbb N$ with $N\geq 7$,
\begin{align*}
\|G_N\|_{\dot B_{p,q}^{\alpha}(w)}
\gtrsim \sum_{k=N}^{2N-1} 2^{k\delta}
\left\| \left\{ N^{-\frac1q} 2^{-j\delta}
\right\}_{j=0}^{N-7} \right\|_{\ell^q}.
\end{align*}
If $\delta\in(0,1)$, retaining only the term with $k=N$ and $j=0$ gives
\begin{align*}
\|G_N\|_{\dot B_{p,q}^{\alpha}(w)}
\gtrsim 2^{N\delta} N^{-\frac1q} \to \infty
\end{align*}
as $N\to\infty$. If $\delta=0$, then
\begin{align*}
\|G_N\|_{\dot B_{p,q}^{\alpha}(w)}
\gtrsim N \left\| \left\{ N^{-\frac1q}
\right\}_{j=0}^{N-7} \right\|_{\ell^q}
\sim N\to \infty
\end{align*}
as $N\to\infty$. However, $\|f_N\|_{\dot B_{p,q}^{\alpha+u}(w)}=1$.
Consequently, $\sigma(x,D)$ is not bounded from
$\dot B_{p,q}^{\alpha+u}(w)$ to $\dot B_{p,q}^{\alpha}(w)$.

\emph{Case (2)} Either $p\in(0,1]$ and $\lambda\in[0,\infty)$,
or $ p\in(1,\infty)$ and $\lambda\in[p-1,\infty)$.
In this case, $\theta_{\lambda,p}=\frac{1+\lambda}{p}-1$
and $v=\lfloor \frac{1+\lambda}{p}-1-\alpha \rfloor$.
For any $N\in\mathbb N$, let
$f_N$ be as in \eqref{fN} with $x_N:=0$.
Then \eqref{FN2} and \eqref{GN} imply that
$\|f_N\|_{\dot B_{p,q}^{\alpha+u}(w)}=1$ and
$$
G_N (x)
:=\sigma(x,D) f_N
= 2\pi N^{-\frac1q} \sum_{k=N}^{2N-1} 2^{-k(\alpha+v)}
\frac{(-i\partial)^v g_k (x)}{\|h_k\|_{L^p(w)}}.
$$
By \eqref{722}, we conclude that, for any $j,k\in\mathbb Z$ with $j\leq k-7$,
$
\varphi_j * [(-i\partial)^v g_k]
=(-i\partial)^v \varphi_j.
$
For any $k\in\mathbb Z$,
$\|h_k\|_{L^p(w)}=2^{k(1-\frac{1+\lambda}{p})} \|h\|_{L^p(w)}$.
Thus, for any $N\in\mathbb N$,
\begin{align} \label{GN2}
\|G_N\|_{\dot B_{p,q}^{\alpha}(w)}
\gtrsim \sum_{k=N}^{2N-1} 2^{k\delta}
\left\| \left\{ N^{-\frac1q} 2^{j\alpha}
\|(-i\partial)^v \varphi_j\|_{L^p(w)}
\right\}_{j=-\infty}^{N-7} \right\|_{\ell^q},
\end{align}
where $\delta:= \frac{1+\lambda}{p}-1-\alpha - v
= \frac{1+\lambda}{p}-1-\alpha - \lfloor\frac{1+\lambda}{p}-1-\alpha\rfloor \in[0,1)$.
For any $j\in\mathbb Z$,
\begin{align*}
\|(-i\partial)^v \varphi_j\|_{L^p(w)}^p
=\int_{\mathbb R} |\partial^v \varphi_j(x)|^p |x|^{\lambda} \, dx
= 2^{jp(1+v-\frac{1+\lambda}{p})} \|\partial^v \varphi\|_{L^p(w)}^p.
\end{align*}
Substituting this estimate into \eqref{GN2}, we obtain,
for any $N\in\mathbb N$ with $N\geq 7$,
$$
\|G_N\|_{\dot B_{p,q}^{\alpha}(w)}
\gtrsim \sum_{k=N}^{2N-1} 2^{k\delta}
\left\| \left\{ N^{-\frac1q} 2^{-j\delta}
\right\}_{j=0}^{N-7} \right\|_{\ell^q}.
$$
We have just proven in Case (1) that the right-hand side of
the above inequality tends to $\infty$ as $N\to\infty$.
However, $\|f_N\|_{\dot B_{p,q}^{\alpha+u}(w)}=1$.
Consequently, $\sigma(x,D)$ is not bounded from
$\dot B_{p,q}^{\alpha+u}(w)$ to $\dot B_{p,q}^{\alpha}(w)$.

\emph{Case (3)} $p\in(1,\infty)$ and $\lambda\in(-1,p-1)$.
In this case, $\theta_{\lambda,p}=0$,
$v=\lfloor -\alpha \rfloor$, and $w\in A_p$.
For any $N\in\mathbb N$ and $x\in\mathbb R$, let
\begin{align*}
f_N(x):=N^{-\frac1q}\sum_{k=N}^{2N-1}
2^{-k(\alpha+u)}e^{i \frac54 2^kx}h(x).
\end{align*}
For any $k\in\mathbb Z$ with $k\geq 3$,
\begin{equation*}
\operatorname{supp} \left[ e^{i \frac54 2^k\cdot} h(\cdot) \right]^\wedge
=\tfrac54 2^k + \operatorname{supp} \widehat{h}
\subset B(\tfrac54 + \tfrac542^k,2^{-5})
\end{equation*}
and
\begin{equation*}
\widehat{\varphi_k}
= \widehat{\varphi}(2^{-k}\cdot)
= 1
\text{ on } 2^k [1,\tfrac32] \supset  B(\tfrac54 (2^k+1),2^{-5}).
\end{equation*}
Thus, for any $j,k\in\mathbb Z$ with $k\geq 3$,
\begin{align} \label{amazing}
\varphi_j*[e^{i \frac54 2^k\cdot} h(\cdot)]=
\begin{cases}
e^{i \frac54 2^k\cdot} h(\cdot) &\text{if } j=k, \\
0 & \text{if } j\neq k.
\end{cases}
\end{align}
Then, for any $N\in\mathbb N$ with $N\ge 3$,
\begin{align*}
\|f_N\|_{\dot B_{p,q}^{\alpha+u}(w)}
= \left\| \left\{ N^{-\frac1q}
\| h \|_{L^p(w)} \right\}_{j=N}^{2N-1} \right\|_{\ell^q}
= \| h \|_{L^p(w)}.
\end{align*}

Next, we estimate $\|\sigma(x,D)f_N\|_{\dot B_{p,q}^{\alpha}(w)}$.
Note that, for any $k\in\mathbb Z$ with $k\geq 4$,
$$
\operatorname{supp} \left[ e^{i \frac54 2^k\cdot} h(\cdot) \right]^\wedge
\subset B(\tfrac54 + \tfrac542^k,2^{-5})
\subset 2^k B(\tfrac54,2^{-3}).
$$
By this and Lemma \ref{Tf}, we obtain
\begin{align*}
\sigma(x,D) \left( e^{i \frac54 2^k\cdot} h(\cdot) \right)
= 2^{k(u-v)}(2\pi) (-i\partial)^v h(x).
\end{align*}
Thus, for any $N\in\mathbb N$ with $N\ge 4$,
\begin{align*}
G_N(x)
:=\sigma(x,D) f_N
= 2\pi N^{-\frac1q}
\sum_{k=N}^{2N-1} 2^{-k(\alpha+v)}
(-i\partial_x)^v h(x).
\end{align*}
This, together with (b) and \eqref{h}, further implies that
\begin{align*}
\|G_N\|_{\dot B_{p,q}^{\alpha}(w)}
= 2\pi N^{-\frac1q} \sum_{k=N}^{2N-1} 2^{-k(\alpha+v)}
\|(-i\partial)^v h\|_{L^p(w)}
\sim N^{-\frac1q} \sum_{k=N}^{2N-1} 2^{-k(\alpha+v)}.
\end{align*}
Note that $\alpha+v=\alpha+\lfloor-\alpha\rfloor\in(-1,0]$.
If $\alpha+v\in(-1,0)$, retaining only the term with $k=N$ gives
\begin{align*}
\|G_N\|_{\dot B_{p,q}^{\alpha}(w)}
\gtrsim N^{-\frac1q} 2^{-N(\alpha+v)} \to \infty
\end{align*}
as $N\to\infty$. If $\alpha+v=0$ and $q\in(1,\infty]$, then
\begin{align*}
\|G_N\|_{\dot B_{p,q}^{\alpha}(w)}
\gtrsim N^{1-\frac1q}
\to \infty
\end{align*}
as $N\to\infty$.
However, $\|f_N\|_{\dot B_{p,q}^{\alpha+u}(w)}= \| h \|_{L^p(w)}$.
Consequently, $\sigma(x,D)$ is not bounded from
$\dot B_{p,q}^{\alpha+u}(w)$ to $\dot B_{p,q}^{\alpha}(w)$.

It remains only to consider the case where $\alpha+v=0$ and $q\in(0,1]$.
For any $N\in\mathbb N$, let
\begin{align*}
f_N(x):= N^{-\frac12} 2^{-5N(\alpha+u)} e^{i \frac54 2^{5N}x} H_N(x),
\end{align*}
where $H_N(x):=\sum_{k=N}^{2N-1}e^{i \frac54 2^k x}h(x)$.
Then
\begin{align} \label{723}
\operatorname{supp} \widehat{f_N}
&=\operatorname{supp}
\left[ e^{i \frac54 2^{5N}\cdot} H_N(\cdot) \right]^\wedge
= \operatorname{supp} \sum_{k=N}^{2N-1}
\widehat{h} (\cdot - \tfrac54 (2^{5N}+2^k)) \notag \\
&\subset \bigcup_{k=N}^{2N-1}
B(\tfrac54 + \tfrac54 (2^{5N}+2^k),2^{-5})
\subset 2^{5N} [1,\tfrac32].
\end{align}
Note that $\widehat{\varphi_{5N}}=1$ on $2^{5N} [1,\tfrac32]$.
Thus, for any $j\in\mathbb Z$,
\begin{align*}
\varphi_j*f_N=
\begin{cases}
f_N &\text{if } j=5N, \\
0 & \text{if } j\neq 5N,
\end{cases}
\end{align*}
and hence
\begin{align} \label{fN3}
\|f_N\|_{\dot B_{p,q}^{\alpha+u}(w)}
= N^{-\frac12} \left\| e^{i \frac54 2^{5N}\cdot} H_N(\cdot) \right\|_{L^p(w)}
= N^{-\frac12} \| H_N \|_{L^p(w)}.
\end{align}
From \eqref{amazing} and
the weighted Littlewood--Paley square-function theorem
(see, for instance, \cite[Theorem 1]{k80}),
it follows that, for any $N\in\mathbb N$ with $N\geq 3$,
\begin{align*}
\| H_N \|_{L^p(w)}
\sim \left\| \left[ \sum_{j=N}^{2N-1}
\left| e^{i \frac54 2^j \cdot} h(\cdot) \right|^2
\right]^{\frac12} \right\|_{L^p(w)}
=N^{\frac12} \|h\|_{L^p(w)}.
\end{align*}
Substituting this estimate into \eqref{fN3}, we obtain
$\|f_N\|_{\dot B_{p,q}^{\alpha+u}(w)}\sim\|h\|_{L^p(w)}$.

Next, we estimate $\|\sigma(x,D)f_N\|_{\dot B_{p,q}^{\alpha}(w)}$.
By \eqref{723}, we obtain
$$
\operatorname{supp} \left[ e^{i \frac54 2^{5N}\cdot} H_N(\cdot) \right]^\wedge
\subset \bigcup_{k=N}^{2N-1}
B(\tfrac54 + \tfrac54 (2^{5N}+2^k),2^{-5})
\subset 2^{5N} B(\tfrac54, 2^{-3}).
$$
This, together with Lemma \ref{Tf}, further implies that
\begin{align*}
\sigma(x,D) \left( e^{i \frac54 2^{5N}\cdot} H_N(\cdot) \right)
= 2^{5N(u-v)}(2\pi) (-i\partial)^v H_N(x).
\end{align*}
From this and $\alpha+v=0$,
we deduce that, for any $N\in\mathbb N$,
\begin{align*}
G_N(x)
&:=\sigma(x,D) f_N
= N^{-\frac12} 2^{-5N(\alpha+v)} (2\pi) (-i\partial)^v H_N(x) \\
&\phantom{:}= 2\pi N^{-\frac12} \sum_{k=N}^{2N-1}
(-i\partial)^v \left( e^{i \frac54 2^k \cdot}h(\cdot) \right) (x).
\end{align*}
This, together with \eqref{amazing}, further implies that,
for any $N\in\mathbb N$ with $N\geq 3$,
\begin{align} \label{GN5}
\|G_N\|_{\dot B_{p,q}^{\alpha}(w)}
\gtrsim N^{-\frac12} \left\| \left\{ 2^{j\alpha}
\left\|(-i\partial)^v \left( e^{i \frac54 2^j \cdot}h(\cdot) \right) \right\|_{L^p(w)}
\right\}_{j=N}^{2N-1} \right\|_{\ell^q}.
\end{align}
For any $j\in\mathbb Z$ and $x\in\mathbb R$,
\begin{align*}
(-i\partial)^v \left( e^{i \frac54 2^j \cdot}h(\cdot) \right) (x)
- (\tfrac54 2^j)^v e^{i \frac54 2^j x} h(x)
= \sum_{l=1}^{v} \binom vl
(\tfrac54 2^j)^{v-l} e^{i \frac54 2^j x} (-i\partial)^l h(x),
\end{align*}
and hence
\begin{align*}
2^{-jv}\left\|(-i\partial)^v \left( e^{i \frac54 2^j \cdot}h(\cdot) \right)
- (\tfrac54 2^j)^v e^{i \frac54 2^j \cdot} h \right\|_{L^p(w)}
\lesssim \sum_{l=1}^{v} 2^{-jl}
\| (-i\partial)^l h \|_{L^p(w)}
\to 0
\end{align*}
as $j\to\infty$. This, combined with $\alpha+v=0$ and the fact that
$2^{-jv} \| (\tfrac54 2^j)^v e^{i \frac54 2^j \cdot} h\|_{L^p(w)}
= (\tfrac54)^v \|h\|_{L^p(w)}$, further implies that
\begin{align*}
2^{j\alpha}
\left\|(-i\partial)^v \left( e^{i \frac54 2^j \cdot}h(\cdot) \right) \right\|_{L^p(w)}
= 2^{-jv}
\left\|(-i\partial)^v \left( e^{i \frac54 2^j \cdot}h(\cdot) \right) \right\|_{L^p(w)}
\to (\tfrac54)^v \|h\|_{L^p(w)}
\end{align*}
as $j\to\infty$.
Substituting this estimate into \eqref{GN5}, we obtain
\begin{align*}
\|G_N\|_{\dot B_{p,q}^{\alpha}(w)}
\gtrsim N^{-\frac12} \left\| \{ \|h\|_{L^p(w)}
\}_{j=N}^{2N-1} \right\|_{\ell^q}
= \|h\|_{L^p(w)} N^{\frac1q-\frac12}
\to \infty
\end{align*}
as $N\to \infty$.
However, $\|f_N\|_{\dot B_{p,q}^{\alpha+u}(w)}\sim \| h \|_{L^p(w)}$.
Consequently, $\sigma(x,D)$ is not bounded from
$\dot B_{p,q}^{\alpha+u}(w)$ to $\dot B_{p,q}^{\alpha}(w)$.
This completes the proof of Lemma \ref{sharpcounterexample}.
\end{proof}

Now, we can prove Theorems \ref{sharppse2} and \ref{sharppse1}.

\begin{proof}[Proof of Theorem \ref{sharppse2}]
The implication (ii) $\Longrightarrow$ (i) follows as a special case of Theorem \ref{pseudo}.
Conversely, if $\alpha\leq\theta_{\lambda,p}$,
then Lemma \ref{sharpcounterexample} shows that
there exists $\sigma\in\dot S^u_{1,1}$
such that the operator $\sigma(x,D)$ fails to be bounded
from $\dot B_{p,q}^{\alpha+u}(w)$ to $\dot B^{\alpha}_{p,q}(w)$.
This further implies that (i) $\Longrightarrow$ (ii),
which completes the proof of Theorem \ref{sharppse2}.
\end{proof}

\begin{proof}[Proof of Theorem \ref{sharppse1}]
The implication (ii) $\Longrightarrow$ (i) follows as a special case of Theorem \ref{pseudo}.
Conversely, if $L<\lfloor\theta_{\lambda,p}-\alpha\rfloor$,
then Lemma \ref{sharpcounterexample} shows that
there exists $\sigma\in\dot S^u_{1,1}$ such that
\begin{equation*}
\sigma(x, D)^*(x^r)=0 \in \mathcal{S}_\infty'
\quad\text{for all } r\in\mathbb Z\cap [0,L],
\end{equation*}
and the operator $\sigma(x,D)$ fails to be bounded
from $\dot B_{p,q}^{\alpha+u}(w)$ to $\dot B^{\alpha}_{p,q}(w)$.
This further implies that (i) $\Longrightarrow$ (ii),
which completes the proof of Theorem \ref{sharppse1}.
\end{proof}

\bigskip

\noindent Fan Bu

\medskip

\noindent Department of Mathematics, Faculty of Arts and Sciences,
Beijing Normal University, Zhuhai 519087, The People's Republic of China

\smallskip

\noindent{\it E-mail:} \texttt{fanbu@bnu.edu.cn}

\bigskip

\noindent Shuaijun Feng, Qingying Xue, Dachun Yang (Corresponding author) and Wen Yuan

\medskip

\noindent Laboratory of Mathematics and Complex Systems (Ministry of Education of China),
School of Mathematical Sciences,  Institute for Advanced Study,
Beijing Normal University, Beijing 100875, The People's Republic of China

\smallskip

\noindent{\it E-mails:} \texttt{sjfeng@mail.bnu.edu.cn} (S. Feng)

\noindent\phantom{{\it E-mails:}} \texttt{qyxue@bnu.edu.cn} (Q. Xue)

\noindent\phantom{{\it E-mails:}} \texttt{dcyang@bnu.edu.cn} (D. Yang)

\noindent\phantom{{\it E-mails:}} \texttt{wenyuan@bnu.edu.cn} (W. Yuan)

\end{document}